\newtheorem{lemma}{Lemma}[section]
\newtheorem{theorem}[lemma]{Theorem}
\newtheorem{corollary}[lemma]{Corollary}
\newtheorem{proposition}[lemma]{Proposition}
\theoremstyle{definition}
\newtheorem{definition}[lemma]{Definition}
\theoremstyle{remark}
\newtheorem{remark}{Remark}
\newcommand{\C}{\mathbb{C}}
\newcommand{\D}{\mathbb{D}}
\newcommand{\N}{\mathbb{N}}
\newcommand{\R}{\mathbb{R}}
\newcommand{\Z}{\mathbb{Z}}
\newcommand{\cC}{\mathcal{C}}
\DeclareMathOperator{\re}{Re}
\DeclareMathOperator{\im}{Im}
\DeclareMathOperator{\Int}{int}
\renewcommand{\epsilon}{\varepsilon}
\renewcommand{\phi}{\varphi}
\DeclareMathOperator{\Deg}{deg}
\DeclareMathOperator{\ind}{ind}
\date{\today}
\begin{document}

\title[Schwarz reflections and correspondences]{Schwarz reflections and anti-holomorphic correspondences}

\author[S.-Y. Lee]{Seung-Yeop Lee}
\address{Department of Mathematics and Statistics, University of South Florida, Tampa, FL 33620, USA}
\email{lees3@usf.edu}

\author[M. Lyubich]{Mikhail Lyubich}
\address{Institute for Mathematical Sciences, Stony Brook University, NY, 11794, USA}
\email{mlyubich@math.stonybrook.edu}

\author[N. G. Makarov]{Nikolai G. Makarov}
\address{Department of Mathematics, California Institute of Technology, Pasadena, California 91125, USA}
\email{makarov@caltech.edu}

\author[S. Mukherjee]{Sabyasachi Mukherjee}
\address{School of Mathematics, Tata Institute of Fundamental Research, 1 Homi Bhabha Road, Mumbai 400005, India}
\email{sabya@math.tifr.res.in}

\maketitle

\begin{abstract}
In this paper, we continue exploration of the dynamical and parameter planes of one-parameter families of Schwarz reflections that was initiated in \cite{LLMM1,LLMM2}. Namely, we consider a family of quadrature domains obtained by restricting the Chebyshev  cubic polynomial to various univalent discs. Then we perform a quasiconformal surgery that turns these reflections to parabolic rational maps (which is the crucial technical ingredient of our theory). It induces a straightening map between the parameter plane of Schwarz reflections and the parabolic Tricorn. We describe various properties of this straightening highlighting the issues related to its anti-holomorphic nature. We complete the discussion by comparing our family with the classical Bullett-Penrose family of matings between groups and rational maps induced by holomorphic correspondences. More precisely, we show that the Schwarz reflections give rise to anti-holomorphic correspondences that are matings of parabolic anti-rational maps with the abstract modular group. We further illustrate our mating framework by studying the correspondence associated with the Schwarz reflection map of a deltoid.
\end{abstract}

\setcounter{tocdepth}{1}

\tableofcontents

\section{Introduction}\label{intro}

As we know from classical geometric function theory, any analytic curve
(and some piecewise analytic curves) in $\mathbb{C}$ can serve as a mirror
for a local anti-holomorphic reflection map, called the \emph{Schwarz reflection map}. If the curve bounds some domain
$\Omega $ then it can happen that this reflection can be extended to a
(generally non-invertible) anti-holomorphic map in $\Omega $. Such a domain
is called a \emph{quadrature domain}.

Quadrature domains first appeared in the work of Aharonov and Shapiro
\cite{AS1,AS2,AS}, and since then have been extensively studied in connection
with various problems of complex analysis, field theory, and fluid dynamics
(see \cite[\S 1.1]{LLMM1} for more references). Dynamics of Schwarz reflections
was first used in \cite{LM} to address some questions of interest in statistical
physics concerning topology and singular points of quadrature domains.
In \cite{LLMM1,LLMM2}, a systematic exploration of Schwarz dynamics was
launched. These works demonstrated that Schwarz dynamics can combine features of dynamics
of rational maps and Fuchsian groups, and provided explicit models for
the corresponding dynamical and parameter loci.

The phenomenon of ``mating" of rational maps with Fuchsian groups was discovered in the 1990s by Bullett and Penrose in the context of iterated algebraic correspondences \cite{BP}. It appears that Schwarz dynamical systems provide a general framework for this phenomenon. Indeed, in the current paper we will describe one more family of Schwarz reflections
with similar features, which gives rise to anti-holomorphic versions of the Bullett-Penrose correspondences. Note that together with the families considered in \cite{LLMM1,LLMM2}, this family will exhaust the list of one-parameter families of quadratic Schwarz reflection maps.

To put the contents of the present paper in perspective, let us briefly recall the principal results of \cite{LLMM1,LLMM2}. In these papers, we carried out a detailed dynamical study of Schwarz reflection with respect to a deltoid, and a one-parameter family of Schwarz reflections with respect to a cardioid and a family of circumscribing circles (referred to as the C$\&$C family). The simplest example of the aforementioned mating phenomenon comes from the Schwarz reflection map of a deltoid, which produces a conformal mating of the anti-holomorphic polynomial $\overline{z}^2$ and the ideal triangle reflection group \cite[\S 5]{LLMM1}. The main results on the C$\&$C family include a description of the geometrically finite maps in this family as conformal matings of quadratic anti-holomorphic polynomials and the ideal triangle group, and the construction of a homeomorphism between the combinatorial models of the connectedness locus of the C$\&$C family and the basilica limb of the Tricorn \cite[Theorems~1.1, 1.4]{LLMM2}.

In this paper, we focus on a family $\mathcal{S}$ of Schwarz reflections associated with simply connected bounded quadrature domains that appear as univalent images of round disks under a fixed cubic polynomial $f$. For such a (maximal) round disk centered at $a$, the corresponding Schwarz reflection map is denoted by $\sigma_a$. 

In accordance with the general dynamical decomposition of Schwarz reflections indicated in \cite[\S 1.5]{LLMM1}, the dynamical plane of each $\sigma_a\in\mathcal{S}$ can be partitioned into two invariant sets: the \emph{tiling set} and the \emph{non-escaping set} (see Subsection~\ref{dyn_plane}). A careful study of the mapping properties of the members of $\mathcal{S}$ shows that each Schwarz reflection map $\sigma_a$ gives rise to a \emph{pinched anti-quadratic-like map} (see Definition~\ref{pinched_def}), which can be thought of as a degenerate version of (anti-)quadratic-like maps. This pinched anti-quadratic-like map completely captures the dynamics of $\sigma_a$ on its non-escaping set. Since every (anti-)polynomial-like map is hybrid conjugate to an actual (anti-)polynomial \cite{DH2}, it is natural to expect that an analogous statement should hold true for pinched anti-quadratic-like maps. However, the existence of the pinching point (which results in the fundamental domain of a pinched anti-quadratic-like map being a degenerate annulus) adds significant subtlety to the situation. At the technical heart of the paper lies a ``straightening theorem'' for pinched anti-quadratic-like maps that allows one to find a quadratic anti-holomorphic rational map with a parabolic fixed point that is hybrid conjugate to a pinched anti-quadratic-like map (see Lemma~\ref{straightening_lemma} and~Theorem~\ref{straightening_schwarz}). We should mention that in \cite{Lu}, Lomonaco proved a related straightening theorem for \emph{parabolic-like maps}. However, since the maps $\sigma_a$ do not have an (anti-)holomorphic extension in a neighborhood of the pinching point, the main result of \cite{Lu} cannot be directly applied to our setting. On the contrary, our definition of pinched anti-quadratic-like maps does not require a local (anti-)holomorphic extension in a neighborhood of the pinching point, and the corresponding straightening theorem handles this difficulty by an analysis of boundary behavior of conformal maps which allows for quasiconformal interpolation in infinite strips.

At the level of parameter spaces, this defines a \emph{straightening map} $\chi$ from the connectedness locus $\cC(\mathcal{S})$ of the family $\mathcal{S}$ (i.e., the set of maps in $\mathcal{S}$ with connected non-escaping set, see Subsection~\ref{conn_locus_def}) to the \emph{parabolic Tricorn}, which is the connectedness locus $\cC(\mathfrak{L}_0)$ of a suitable slice $\mathfrak{L}_0$ of quadratic anti-holomorphic rational maps with a (persistent) parabolic fixed point (see Appendix~\ref{anti_rational_parabolic} for details on the persistently parabolic family $\mathfrak{L}_0$). It is well-known that continuity of straightening maps is a ``miracle'' that one typically cannot expect in general parameter spaces. This is indeed the case in our setting. However, a thorough analysis of the continuity and surjectivity properties of the straightening map (defined above) permits us to prove that the straightening map induces a homeomorphism between the locally connected models of the above two connectedness loci.

\begin{theorem}[Combinatorial Model of Connectedness Locus]\label{abstract_homeo}
The abstract connectedness locus $\widetilde{\cC(\mathcal{S})}$ (of the family $\mathcal{S}$) is homeomorphic to the abstract parabolic Tricorn $\widetilde{\cC(\mathfrak{L}_0)}$.
\end{theorem}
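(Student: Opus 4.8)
The plan is to promote the straightening map $\chi \colon \cC(\mathcal{S}) \to \cC(\mathfrak{L}_0)$ supplied by Theorem~\ref{straightening_schwarz} to a homeomorphism of abstract models, by showing that $\chi$ is a bijection which transports the combinatorial (parameter lamination) structure of one locus onto the other. First I would establish that $\chi$ is a bijection. Injectivity should follow from the uniqueness built into the straightening: if $\chi(\sigma_{a_1}) = \chi(\sigma_{a_2})$, then the two pinched anti-quadratic-like maps of Definition~\ref{pinched_def} are hybrid conjugate to a common parabolic anti-rational map, and a rigidity argument (a hybrid conjugacy between two members of $\mathcal{S}$ must be conformal, forcing $a_1 = a_2$) yields injectivity. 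Surjectivity is the more substantial half: given any $R \in \cC(\mathfrak{L}_0)$, one performs the inverse surgery, quasiconformally modifying $R$ outside its non-escaping set so as to glue in the reflection part coming from the cubic $f$, and appeals to the measurable Riemann mapping theorem to produce a member $\sigma_a \in \mathcal{S}$ with $\chi(\sigma_a) = R$.

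Second, I would confront continuity, where the anti-holomorphic nature of the problem is decisive. As in the case of the Tricorn, one cannot expect $\chi$ to be globally continuous, and indeed discontinuities are forced at certain boundary parameters. The remedy is to avoid pushing forward the topology directly and instead to argue combinatorially. The abstract connectedness loci $\widetilde{\cC(\mathcal{S})}$ and $\widetilde{\cC(\mathfrak{L}_0)}$ are the locally connected quotients of the closed disk by their respective parameter laminations, i.e.\ by the equivalence relation that identifies pairs of co-landing parameter rays (equivalently, by collapsing fibers). I would then show that $\chi$ carries parameter rays of $\mathcal{S}$ to parameter rays of $\mathfrak{L}_0$ via a prescribed correspondence of external angles that is angle-preserving up to the relevant complex-conjugation symmetry; this follows by first matching the \emph{dynamical} ray portraits on the tiling sets under the hybrid conjugacy, and then transferring the identity to parameter space.

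The crux, and the step I expect to be the main obstacle, is to prove that this angle correspondence preserves co-landing: a pair of parameter rays of $\mathcal{S}$ lands at a common point precisely when the corresponding pair of rays of $\mathfrak{L}_0$ does. Although $\chi$ is discontinuous, it is continuous along transversal (radial) approaches to $\cC(\mathcal{S})$ through the tiling set, and this directional continuity is exactly what is needed to match landing patterns, while the global discontinuities are precisely those that get collapsed in the quotient. One must simultaneously control the degenerate-annulus structure at the pinching point, so that the quasiconformal interpolation in the infinite strip (cf.\ Lemma~\ref{straightening_lemma}) does not distort the ray portraits; this is where the delicate boundary analysis of conformal maps near the pinching point reenters.

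Finally, once the angle correspondence is shown to be a (possibly orientation-reversing) homeomorphism of the boundary circle that conjugates the two parameter laminations, it descends to the desired homeomorphism $\widetilde{\cC(\mathcal{S})} \to \widetilde{\cC(\mathfrak{L}_0)}$ of quotient disks. As a sanity check one may note that both abstract loci are compact, so a continuous bijection between them is automatically a homeomorphism, confirming that the lamination-conjugation step is the essential content and completing the proof.
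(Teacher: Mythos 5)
Your proposal breaks down at its very first step: the claim that $\chi$ is a \emph{bijection}, with surjectivity obtained by an ``inverse surgery'' on a given $R\in\cC(\mathfrak{L}_0)$. This is precisely what the paper cannot do, and the obstruction is not technical fussiness but a genuine conformal incompatibility. To glue the reflection-group structure onto $R$ one must replace the dynamics on the parabolic basin (externally modelled by the Blaschke product $B$ of~(\ref{blaschke_map}), which has \emph{one} parabolic and \emph{two repelling} fixed points on the circle) by the dynamics of the ideal-triangle model $\rho$, which has \emph{three neutral} fixed points on $\partial\mathcal{Q}$; the topological conjugacy $\mathcal{E}$ between them therefore sends repelling fixed points to parabolic ones and is not quasisymmetric. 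Concretely, the fundamental tile $\mathcal{Q}_1$ of the $\rho$-system has a zero-angle cusp at its boundary fixed point (the circles $C_i$ are mutually tangent), whereas any fundamental region for the basin dynamics of $R$ at $\infty$ along which one can cut is wedge-shaped; a conformal map from a wedge to a cusp has logarithmic boundary behavior, while the equivariance condition $\xi\circ R=\rho\circ\xi$ on the inner boundary forces linear behavior, so no quasiconformal interpolation in the pinched strip exists. (The forward surgery of Lemma~\ref{straightening_lemma} survives exactly because there one \emph{discards} the cusped tiling structure and cuts along an artificial wedge $\gamma_a$ inside $\Omega_a$, gluing wedge-to-wedge with matching asymptotics $\overline{z}+\tfrac12$.) Accordingly, the paper never proves surjectivity of $\chi$: it proves only that $\chi(\cC(\mathcal{S}))$ contains the closure of the hyperbolic parameters (Corollary~\ref{onto_hyperbolic_closure}), via parameter rays, dyadic tips (Proposition~\ref{onto_dyadic}), and closure/qc-deformation arguments (Lemma~\ref{image_closed}, Corollary~\ref{qc_closed}), and explicitly remarks that bijectivity of $\chi$ is equivalent in practice to the \emph{open} conjecture that hyperbolic parameters are dense in $\cC(\mathfrak{L}_0)$. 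Surjectivity of the induced map on abstract models is then recovered differently: the image meets every periodically repelling combinatorial class (Corollary~\ref{onto_ray_impression}), such classes are collapsed to points by the quotient, and the interior is picked up by a compactness-plus-fullness (Alexander--Kolmogorov cohomology) argument.

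A second, independent gap is your assertion that the discontinuities of $\chi$ ``are precisely those that get collapsed in the quotient.'' They are not. The quotient defining $\widetilde{\cC(\mathcal{S})}$ collapses only the \emph{non-bifurcating} sub-arcs of parabolic arcs (and closures of periodically repelling classes, and $\mathcal{I}$); on the bifurcating sub-arcs, which survive into the quotient, $\chi$ can genuinely fail to be continuous, since continuity there would force the fixed-point-index functions $\ind_{\cC}$ and $\ind_{\chi(\cC)}$ to coincide identically (Lemma~\ref{uniform}), which is not expected. This is why the paper does not push $\chi$ itself to the quotient but first replaces it by the modified map $\mathfrak{X}$: on the closure of each odd-period hyperbolic component, $\chi$ is redefined using the index-respecting continuous extension $\xi$ of $\chi\vert_{H'}$ from the bifurcating period-doubled components (Propositions~\ref{index_increasing},~\ref{index_increasing_1},~\ref{cont_ext}), and only this corrected map descends continuously. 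Your plan contains no analogue of this correction, so even granting your lamination-transfer step (whose core — that co-landing of parameter rays is preserved — itself quietly requires the almost-surjectivity machinery, since $\chi$ is undefined on the escape locus where the rays live), the map you would induce on the quotients need not be continuous.
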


A few words on the proof of the above theorem are in order. Since every member of $\cC(\mathcal{S})$ has the same ``external map'' (i.e., they are conformally conjugate to each other on the tiling set, see Proposition~\ref{schwarz_group}), it is easy to adapt the classical proof of injectivity of straightening maps for the current setting (Proposition~\ref{chi_injective_prop}). The first step towards ``almost surjectivity'' of the straightening map $\chi$ is to construct a uniformization of the \emph{escape locus} of $\mathcal{S}$ (i.e., the complement of the connectedness locus in the parameter space) in terms of the conformal position of the escaping critical point, which is carried out in Theorem~\ref{escape_unif_thm}. This uniformization (more precisely, the parameter rays coming from it) is then used to show that certain critically pre-periodic maps in the parabolic Tricorn $\cC(\mathfrak{L}_0)$ lie in the image of $\chi$ (Proposition~\ref{onto_dyadic}). Subsequently, approximating parabolic parameters by these critically pre-periodic ones, and using our knowledge of the hyperbolic components and their bifurcation structure (see Section~\ref{hyp_comp_sec}), we conclude that the image of $\chi$ contains the closure of all ``hyperbolic parameters'' in $\cC(\mathfrak{L}_0)$. 

As mentioned earlier, the map $\chi$ is not everywhere continuous on $\cC(\mathcal{S})$ (see Subsection~\ref{chi_discont_subsec}). On the other hand, in Subsection~\ref{chi_cont_subsec}, we use classical arguments to show that $\chi$ is continuous at all hyperbolic and quasiconformally rigid parameters. Finally, defining the abstract connectedness loci $\widetilde{\cC(\mathcal{S})}$ and $\widetilde{\cC(\mathfrak{L}_0)}$ as locally connected combinatorial models of the corresponding connectedness loci, we show that $\chi$ descends to a homeomorphism between $\widetilde{\cC(\mathcal{S})}$ and $\widetilde{\cC(\mathfrak{L}_0)}$. Intuitively speaking, passing to the abstract connectedness loci `tames' the straightening map $\chi$.

To obtain our desired mating description, we define a $2:2$ anti-holomorphic correspondence $\widetilde{\sigma_a}^*$ on the Riemann sphere $\widehat{\C}$ (for every $a\in\cC(\mathcal{S})$) by lifting the action of $\sigma_a$ by the cubic polynomial $f$ (see Section~\ref{sec_corr}). The correspondences $\widetilde{\sigma_a}^*$ can be viewed as anti-holomorphic analogues of Bullett-Penrose correspondences, and have several similarities with their holomorphic counterparts (see \cite{BP} and the recent work of Bullett and Lomonaco \cite{BuLo1} where it is shown that each Bullett-Penrose correspondence is a mating between the modular group and a parabolic quadratic rational map). In particular, the two branches of the correspondence $\widetilde{\sigma_a}^*$ (respectively, of a Bullett-Penrose correspondence) are given by compositions of the (non-trivial) ``deck maps'' of $f$ with an anti-holomorphic involution of $\widehat{\C}$ (respectively, a holomorphic involution of $\widehat{\C}$). A key difference between the two settings is that our correspondences $\widetilde{\sigma_a}^*$ naturally arise from the maps $\sigma_a$, and hence can be profitably studied by looking at the dynamics of $\sigma_a$. This allows us to apply the above straightening theorem to suitable branches of the correspondences in question. This, combined with a careful analysis of the ``deck maps'' of the cubic polynomial $f$, yields the following mating theorem, an expanded version of which is proved in Theorem~\ref{group_rational_mating_thm}.

\begin{theorem}[Anti-holomorphic Correspondences as Matings]\label{group_rational_mating_thm_intro}
For each $a\in\cC(\mathcal{S})$, the Riemann sphere $\widehat{\C}$ can be decomposed into two $\widetilde{\sigma_a}^*$-invariant subsets; namely, the lifted tiling set and the lifted non-escaping set. On the lifted tiling set, the dynamics of the correspondence $\widetilde{\sigma_a}^*$ is equivalent to the action of the abstract modular group $\Z/2\Z\ast\Z/3\Z$, and on a suitable subset of the lifted non-escaping set, a forward branch of the correspondence is conjugate to the anti-rational map $R_{\chi(a)}$.
\end{theorem}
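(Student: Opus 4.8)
The plan is to handle the two pieces of the decomposition separately, using the intertwining relation $f\circ\widetilde{\sigma_a}^*=\sigma_a\circ f$ established in Section~\ref{sec_corr} as the bridge between the correspondence on $\widehat{\C}$ and the single map $\sigma_a$ on its dynamical plane. First I would set up the decomposition itself: writing $T_a$ and $K_a$ for the tiling set and the non-escaping set of $\sigma_a$ (Subsection~\ref{dyn_plane}), I define the \emph{lifted tiling set} and \emph{lifted non-escaping set} as $\widetilde{T}_a:=f^{-1}(T_a)$ and $\widetilde{K}_a:=f^{-1}(K_a)$. Since $\widehat{\C}$ is the (essentially disjoint) union $T_a\sqcup K_a$ and $f$ is surjective, these two sets cover $\widehat{\C}$; their $\widetilde{\sigma_a}^*$-invariance is immediate from the intertwining relation, because any $(z,w)$ in the correspondence satisfies $f(w)=\sigma_a(f(z))$, so $f(z)\in K_a$ forces $f(w)\in K_a$ by $\sigma_a$-invariance of $K_a$, and symmetrically for $T_a$.

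For the group part, I would first recall that on its tiling set $\sigma_a$ realizes an ideal-polygon reflection group, exactly as in the deltoid and C\&C families of \cite{LLMM1,LLMM2}, and that all members of $\cC(\mathcal{S})$ are conformally conjugate there (Proposition~\ref{schwarz_group}). The task is then to identify what this structure becomes after lifting by the Chebyshev cubic $f$. Here I would exploit that $f$ is semiconjugate to the power map $\zeta\mapsto\zeta^{3}$ via a Joukowski-type projection $z=\zeta+\zeta^{-1}$: the non-trivial deck correspondence of $f$ is the $2\!:\!2$ sibling relation $\delta$, which in the $\zeta$-coordinate is generated by the order-three rotation $\zeta\mapsto e^{2\pi i/3}\zeta$, while the anti-holomorphic involution $\tau$ entering $\widetilde{\sigma_a}^*=\delta\circ\tau$ supplies an element of order two. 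Writing the two generators explicitly on $\widetilde{T}_a$, I would then run a ping-pong argument on the tiles of the lifted tiling, using them as the ping-pong table, to show that the generators produce a free product with no further relations; this is precisely the statement that the action of $\widetilde{\sigma_a}^*$ on $\widetilde{T}_a$ is equivalent to that of the abstract modular group $\Z_2\ast\Z_3$.

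For the rational part, the straightening theorem does most of the work. By Theorem~\ref{straightening_schwarz} there is a hybrid conjugacy $h$ between $\sigma_a$ on its non-escaping set $K_a$ and the parabolic anti-rational map $R_{\chi(a)}$ on its filled Julia set. I would pick the component $U:=(f|_{\D_a})^{-1}(K_a)$ of $\widetilde{K}_a$, on which $f$ is univalent by construction of the quadrature domain $\Omega_a=f(\D_a)$, so that $f|_{U}$ conjugates the relevant forward branch of $\widetilde{\sigma_a}^*$ to $\sigma_a|_{K_a}$. Composing with $h$ then yields a conjugacy between that forward branch, restricted to the suitable subset $U$, and $R_{\chi(a)}$, which is exactly the asserted statement (the expanded bookkeeping is recorded in Theorem~\ref{group_rational_mating_thm}).

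I expect the group-theoretic step to be the main obstacle. Verifying that the lifted generators genuinely produce the free product $\Z_2\ast\Z_3$ rather than a smaller quotient requires controlling the combinatorics of the lifted tiling along every orbit, and the anti-holomorphic nature of $\tau$—together with the parabolic pinching point sitting on the common boundary of $\widetilde{T}_a$ and $\widetilde{K}_a$—makes the ping-pong bookkeeping delicate near the cusp. A secondary subtlety arises in the rational part: because $\widetilde{\sigma_a}^*$ is only $2\!:\!2$, one must check that a single global forward branch, rather than one that jumps between sheets, is conjugate to $R_{\chi(a)}$ on all of $U$, which again hinges on the boundary behavior at the pinching point analyzed in the proof of Theorem~\ref{straightening_schwarz}.
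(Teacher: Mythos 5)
Your proposal follows essentially the same route as the paper's proof of Theorem~\ref{group_rational_mating_thm}: the same decomposition $\widetilde{T_a^\infty}=f_a^{-1}(T_a^\infty)$, $\widetilde{K_a}=f_a^{-1}(K_a)$ with invariance from the lifting relations; the same group argument (an order-three deck symmetry of $f_a$ on the lifted tiling set composed with the involution $\iota$, with freeness verified by a ping-pong/tile-rank argument exactly as in Proposition~\ref{grand_orbit_group}); and the same rational part (the single-valued branch over $\widetilde{K_a}\cap\overline{\D}$, conjugated by $f_a$ to $\sigma_a$ on $K_a$ and then straightened, as in Proposition~\ref{corr_filled_prop}). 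One small correction: your blanket relation $f(w)=\sigma_a(f(z))$ for all pairs in the correspondence holds only when $z\in\overline{\D}$ — for $z$ outside the closed disk the correct relation is $\sigma_a(f_a(w))=f_a(z)$ (cf. Proposition~\ref{inverse_lift_corr}) — but complete invariance of both lifted sets follows just as easily from this corrected form.
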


Finally, our knowledge of the image of the straightening map $\chi$ (Corollary~\ref{onto_hyperbolic_closure}) combined with the mating description of the correspondence $\widetilde{\sigma_a}^*$ given in Theorem~\ref{group_rational_mating_thm} readily imply the following.

\begin{theorem}[Realizing Matings as Correspondences]\label{almost_all_maps_mated}
For every $(\alpha,A)\in\cC(\mathfrak{L}_0)$ that lies in the closure of hyperbolic parameters, there exists a unique $a\in\cC(\mathcal{S})$ such that the correspondence $\widetilde{\sigma_a}^*$ is a mating of the rational map $R_{\alpha,A}$ and the abstract modular group $\Z/2\Z\ast\Z/3\Z$.
\end{theorem}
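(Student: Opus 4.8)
The plan is to derive this statement as a direct synthesis of the surjectivity, injectivity, and mating results established earlier, without introducing any new construction; indeed, as indicated in the introduction, the assertion should follow \emph{readily} once those ingredients are in place. The crucial point is that the hypothesis on $(\alpha,A)$ — that it lies in the closure of the hyperbolic parameters of $\cC(\mathfrak{L}_0)$ — is exactly the condition under which Corollary~\ref{onto_hyperbolic_closure} guarantees that $(\alpha,A)$ belongs to the image of the straightening map $\chi$. Accordingly, I would first invoke Corollary~\ref{onto_hyperbolic_closure} to produce a parameter $a\in\cC(\mathcal{S})$ with $\chi(a)=(\alpha,A)$, so that the associated anti-rational map satisfies $R_{\chi(a)}=R_{\alpha,A}$.

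For uniqueness, I would first observe that if $\widetilde{\sigma_{a'}}^{*}$ is a mating of $R_{\alpha,A}$ with the modular group for some $a'\in\cC(\mathcal{S})$, then by Theorem~\ref{group_rational_mating_thm} the anti-rational part of $\widetilde{\sigma_{a'}}^{*}$ is conjugate to $R_{\alpha,A}$, which forces $\chi(a')=(\alpha,A)$ within the slice $\mathfrak{L}_0$. Injectivity of the straightening map (Proposition~\ref{chi_injective_prop}) — which rests on the fact that all members of $\cC(\mathcal{S})$ are conformally conjugate on the tiling set (Proposition~\ref{schwarz_group}), together with hybrid rigidity on the non-escaping set — then yields $a'=a$. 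This pins down $a$ uniquely.

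With $a$ fixed, I would apply the full mating theorem (Theorem~\ref{group_rational_mating_thm}) to the correspondence $\widetilde{\sigma_a}^{*}$. By that theorem, $\widehat{\C}$ decomposes into the lifted tiling set, on which $\widetilde{\sigma_a}^{*}$ realizes the action of the abstract modular group $\Z_2\ast\Z_3$, and the lifted non-escaping set, on a suitable subset of which a forward branch of $\widetilde{\sigma_a}^{*}$ is conjugate to $R_{\chi(a)}=R_{\alpha,A}$. This is precisely the assertion that $\widetilde{\sigma_a}^{*}$ is a mating of $R_{\alpha,A}$ with the modular group, completing the argument.

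The genuine difficulty does not lie in this deduction but in its two prerequisites, both established beforehand: the surjectivity onto the hyperbolic closure (Corollary~\ref{onto_hyperbolic_closure}), which must circumvent the discontinuity of $\chi$ through the parameter-ray uniformization of the escape locus and subsequent approximation of parabolic parameters by critically pre-periodic ones, and the mating theorem itself. At this final stage the only point requiring care is the bookkeeping that matches $R_{\chi(a)}$ with $R_{\alpha,A}$ and that confirms the group appearing on the lifted tiling set is genuinely $\Z_2\ast\Z_3$ rather than merely an abstractly isomorphic copy; both are immediate once $\chi(a)=(\alpha,A)$ has been fixed.
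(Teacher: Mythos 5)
Your proposal is correct and follows essentially the same route as the paper, whose proof consists of exactly the citation of Corollary~\ref{onto_hyperbolic_closure} (for existence of $a$ with $\chi(a)=(\alpha,A)$) and Theorem~\ref{group_rational_mating_thm} (for the mating structure). Your additional explicit uniqueness step via Proposition~\ref{chi_injective_prop} is the right way to fill in what the paper leaves implicit, since ``mating of $R_{\alpha,A}$'' in the paper's terminology means precisely $\chi(a')=(\alpha,A)$, so injectivity of $\chi$ pins down $a$.
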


We end the paper with the simplest examples of $d:d$ anti-holomorphic correspondences that are matings of anti-polynomials and groups. These correspondences arise from univalent restrictions of suitable rational maps of degree $d+1$, and realize matings of the anti-polynomial $\overline{z}^d$ with the abstract Hecke group $\Z/2\Z\ast\Z/(d+1)\Z$.

Let us now elaborate on the organization of the paper. In Section~\ref{schwarz_background}, we recall some basic definitions and results on quadrature domains, and give a classification of one-parameter families of quadratic Schwarz reflection maps. This shows that from the point of view of mating rational maps with groups, the only one-parameter families (of quadratic Schwarz reflections) of interest are the C$\&$C family and the family $\mathcal{S}$ studied in this paper. Section~\ref{family_schwarz} is devoted to the definition of the family $\mathcal{S}$. More precisely, in Subsection~\ref{univalence_sec}, we give a complete description of the set of parameters $a$ (in the right half-plane) for which the cubic polynomial $f(u)=u^3-3u$ is univalent on the round disk $\Delta_a:=B(a,\vert a-1\vert)$. For such parameters, the image of the disk $\Delta_a$ under $f$ is a simply connected quadrature domain $\Omega_a$ with a cusp point on its boundary (the corresponding Schwarz reflection map is denoted by $\sigma_a$). In Subsection~\ref{setup}, we study some basic mapping properties of the maps $\sigma_a$, which allow us to define the family $\mathcal{S}$. An important feature of the maps $\sigma_a$ in $\mathcal{S}$ is that $\sigma_a:\sigma_a^{-1}(\Omega_a)\to\Omega_a$ is a $2:1$ branched covering branched at a unique simple critical point (this mapping behavior and the existence of a fixed cusp on the boundary of $\Omega_a$ are precursors to the anti-quadratic-like restriction of $\sigma_a$). In Section~\ref{schwarz_family}, we first describe the decomposition of the dynamical plane of $\sigma_a$ into non-escaping and tiling sets. In Subsection~\ref{dyn_near_cusp}, we compute the asymptotic development of $\sigma_a$ near the cusp point on $\partial\Omega_a$. Subsequently, in Subsection~\ref{conn_locus_def}, we define the connectedness locus $\cC(\mathcal{S})$ of the family $\mathcal{S}$, and study some of its elementary properties. We conclude Section~\ref{schwarz_family} by giving a dynamical uniformization of the tiling set of the maps $\sigma_a$ in $\mathcal{S}$. Among other things, it is shown here that all maps in $\cC(\mathcal{S})$ are conformally conjugate on their tiling sets. Section~\ref{sec_straightening} contains a general straightening theorem for pinched anti-quadratic-like maps (which is introduced in Definition~\ref{pinched_def}), and its application to the maps $\sigma_a$ in $\mathcal{S}$. The asymptotics of $\sigma_a$ near the cusp point on $\partial\Omega_a$ (obtained in Subsection~\ref{dyn_near_cusp}) are of fundamental importance in the proof of the Straightening Theorem~\ref{straightening_schwarz}. In Section~\ref{hyp_comp_sec}, we study (the closures of) the hyperbolic components in $\mathcal{S}$ and their bifurcation structure. The next Section~\ref{parameter_tessellation} uses the dynamical uniformization of the tiling set of $\sigma_a$ (given in Subsection~\ref{dyn_unif_tiling_sec}) to furnish a uniformization of the exterior of the connectedness locus of $\mathcal{S}$. Section~\ref{chi_prop_sec} is dedicated to a detailed study of the straightening map $\chi$ (defined in Section~\ref{sec_straightening}) from $\cC(\mathcal{S})$ to the parabolic Tricorn $\cC(\mathfrak{L}_0)$, which is the connectedness locus of a suitable slice $\mathfrak{L}_0$ of quadratic anti-holomorphic rational maps with a persistent parabolic fixed point (the family $\mathfrak{L}_0$ is introduced and studied in Appendix~\ref{anti_rational_parabolic}). In Subsection~\ref{continuity}, we analyze (dis)continuity properties of the map $\chi$ at various parameters. After proving continuity of $\chi$ at the hyperbolic and quasiconformally rigid parameters of $\cC(\mathcal{S})$, we show that discontinuity of $\chi$ may occur on quasiconformally deformable parabolic parameters. To conclude our analysis of the straightening map $\chi$, we study its surjectivity properties in Subsection~\ref{almost_surjective} which culminates in the statement that the image of $\chi$ contains the closure of all hyperbolic parameters in $\cC(\mathfrak{L}_0)$. The proof of this fact uses the results of Sections~\ref{hyp_comp_sec} and~\ref{parameter_tessellation} in an essential way. In Section~\ref{model_homeo}, we construct the abstract connectedness locus $\widetilde{\cC(\mathcal{S})}$ as a locally connected topological model of $\cC(\mathcal{S})$, and use the results of Section~\ref{chi_prop_sec} to show that $\chi$ induces a homeomorphism between $\widetilde{\cC(\mathcal{S})}$ and the abstract connectedness locus of $\widetilde{\cC(\mathfrak{L}_0)}$ of $\cC(\mathfrak{L}_0)$ (the construction of $\widetilde{\cC(\mathfrak{L}_0)}$ is carried out in Appendix~\ref{anti_rational_parabolic}). This completes the proof of Theorem~\ref{abstract_homeo}. In the final Section~\ref{sec_corr}, we define the anti-holomorphic counterpart of Bullett-Penrose correspondences by taking all possible lifts of $\sigma_a^{\pm 1}$ under the cubic polynomial $f$ (and throwing away the anti-diagonal). This is followed by a meticulous study of the dynamics of these correspondences on the two dynamically invariant subsets (i.e., the lifted tiling set and the lifted non-escaping set), which requires a good understanding of the deck maps of $f$ on suitable regions. Combining this with the Straightening Theorem~\ref{straightening_schwarz}, we give a proof of our Mating Theorem~\ref{group_rational_mating_thm_intro} (in fact, we prove a more detailed version of this theorem in Theorem~\ref{group_rational_mating_thm}). Putting together the Mating Theorem and the surjectivity results on $\chi$ (from Subsection~\ref{almost_surjective}), the proof of Theorem~\ref{almost_all_maps_mated} follows immediately. Appendix~\ref{deltoid_corr} contains the construction of a $d:d$ anti-holomorphic correspondence that is a mating of the anti-polynomial $\overline{z}^d$ with the abstract Hecke group $\Z/2\Z\ast\Z/(d+1)\Z$.
\bigskip

\noindent\textbf{Acknowledgements.} The second author was partially supported by NSF grants DMS-1600519 and 1901357, and a fellowship from the Hagler Institute for Advanced Study. The fourth author was supported by the Institute for Mathematical Sciences at Stony Brook University, an endowment from Infosys Foundation, and SERB grant SRG/2020/000018 during parts of the work on this project. 

\section{Quadrature Domains, and Schwarz Reflection Maps}\label{schwarz_background}

Although we will deal with explicit quadrature domains and Schwarz reflection maps in this paper, we would like to remind the readers the general definitions of these objects. For a more detailed exposition on quadrature domains and Schwarz reflection maps, and their connection with various areas of complex analysis and statistical physics, we refer the readers to \cite[\S 1,~\S 4]{LLMM1} and the references therein.

\subsection{Basic definitions}\label{background_def}

\begin{definition}[Schwarz Function]
Let $\Omega\subsetneq\widehat{\C}$ be a domain such that $\infty\notin\partial\Omega$ and $\Int{\overline{\Omega}}=\Omega$. A \emph{Schwarz function} of $\Omega$ is a meromorphic extension of $\overline{z}\vert_{\partial\Omega}$ to all of $\Omega$. More precisely, a continuous function $S:\overline{\Omega}\to\widehat{\C}$ is called a Schwarz function of $\Omega$ if it satisfies the following two properties:
\begin{enumerate}
\item $S$ is meromorphic on $\Omega$,

\item $S(z)=\overline{z}$ on $\partial \Omega$.
\end{enumerate}
\end{definition}

It is easy to see from the definition that a Schwarz function of a domain (if it exists) is unique. 

\begin{definition}[Quadrature Domains]
A domain $\Omega\subsetneq\widehat{\C}$ with $\infty\notin\partial\Omega$ and $\Int{\overline{\Omega}}=\Omega$ is called a \emph{quadrature domain} if $\Omega$ admits a Schwarz function.
\end{definition}

Therefore, for a quadrature domain $\Omega$, the map $\sigma:\overline{\Omega}\to\widehat{\C},\ z\mapsto\overline{S(z)}$ is an anti-meromorphic extension of the Schwarz reflection map with respect to $\partial \Omega$ (the reflection map fixes $\partial\Omega$ pointwise). We will call $\sigma$ the \emph{Schwarz reflection map of} $\Omega$.

Simply connected quadrature domains are of particular interest, and these admit a simple characterization (see \cite[Theorem~1]{AS}).

\begin{proposition}[Simply Connected Quadrature Domains]\label{simp_conn_quad}
A simply connected domain $\Omega\subsetneq\widehat{\C}$ with $\infty\notin\partial\Omega$ and $\Int{\overline{\Omega}}=\Omega$ is a quadrature domain if and only if the Riemann uniformization $\phi:\mathbb{D}\to\Omega$ extends to a rational map on $\widehat{\C}$. 

In this case, the Schwarz reflection map $\sigma$ of $\Omega$ is given by $\phi\circ(1/\overline{z})\circ(\phi\vert_{\D})^{-1}$. Moreover, if the degree of the rational map $\phi$ is $d$, then $\sigma:\sigma^{-1}(\Omega)\to\Omega$ is a (branched) covering of degree $(d-1)$, and $\sigma:\sigma^{-1}(\Int{\Omega^c})\to\Int{\Omega^c}$ is a (branched) covering of degree $d$.
\end{proposition}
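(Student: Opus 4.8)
The plan is to treat the two implications separately and then count preimages for the degree statement. Throughout I work with the reflection $r(z)=1/\overline z$ across $\partial\D$, which fixes $\partial\D$ pointwise and interchanges $\D$ with $\D^*:=\widehat{\C}\setminus\overline{\D}$.

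First I would prove the ``if'' direction together with the formula for $\sigma$. Assuming $\phi$ extends to a rational map $R$ of degree $d$ with $R|_{\D}=\phi$ univalent onto $\Omega$, I define $\sigma:=\phi\circ r\circ(\phi|_{\D})^{-1}$ on $\Omega$ and set $S:=\iota\circ\sigma$. Since $(\phi|_{\D})^{-1}$ is holomorphic, $r$ is anti-holomorphic, and $R$ is holomorphic, the map $\sigma$ is anti-meromorphic and $S$ is meromorphic on $\Omega$; note that evaluating $\sigma(w)=R(r(z))$ is exactly where the rational extension is used, since $r(z)\in\D^*$. To see that $S$ is a Schwarz function it remains to check $S=\iota$ on $\partial\Omega$: for $w=\phi(z)$ with $z\in\partial\D$ one has $r(z)=z$, hence $\sigma(w)=\phi(z)=w$ and $S(w)=\overline{w}$. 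Continuity of $S$ up to $\partial\Omega$ follows from continuity of $R$ on $\widehat{\C}$ together with boundary continuity of $\phi$. This exhibits a Schwarz function, so $\Omega$ is a quadrature domain and $\sigma$ is its Schwarz reflection.

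For the converse I would use the Schwarz reflection principle to manufacture the rational extension out of the Schwarz function $S$. For $|z|>1$ set $\Phi(z):=\iota\bigl(S(\phi(r(z)))\bigr)$; as the conjugate of an anti-meromorphic function this is meromorphic on $\D^*$, and it is meromorphic at $\infty$ since $r(\infty)=0\in\D$. On $\partial\D$ one has $r(z)=z$ and $\phi(z)\in\partial\Omega$, so $S(\phi(z))=\overline{\phi(z)}$ and therefore $\Phi(z)=\phi(z)$; thus $\phi$ and $\Phi$ agree on $\partial\D$. By the reflection principle the two functions glue to a single meromorphic function on $\widehat{\C}$, that is, a rational map extending $\phi$.

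Finally I would obtain the degree statement by a direct preimage count using the extension $R$. Fix a generic $w_0\in\Omega$, so that $R^{-1}(w_0)$ consists of $d$ distinct points, none on $\partial\D$ (as $R(\partial\D)=\partial\Omega$) and exactly one in $\D$ (by univalence of $R|_{\D}$), leaving $d-1$ points in $\D^*$. A point $w=\phi(z)\in\Omega$ satisfies $\sigma(w)=w_0$ precisely when $R(r(z))=w_0$, i.e. when $\zeta:=r(z)\in\D^*$ is one of these $d-1$ solutions; conversely each such $\zeta$ yields $z=r(\zeta)\in\D$ and hence a preimage $w\in\sigma^{-1}(\Omega)$. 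Thus $\sigma\colon\sigma^{-1}(\Omega)\to\Omega$ is a proper, hence branched covering, map of degree $d-1$, branched over the critical values of $R$ lying in $\Omega$. The step I expect to be the main obstacle is the boundary regularity needed to run the reflection principle in the converse (and to verify continuity of $S$ in the forward direction): a priori $\partial\Omega$ need not be a Jordan curve, so the continuous matching of $\phi$ and $\Phi$ on $\partial\D$ must be justified from the hypotheses $\Int{\overline{\Omega}}=\Omega$ and the continuity of $S$ on $\overline{\Omega}$ rather than taken for granted, e.g. via a Carath\'eodory-type boundary extension or a local form of the reflection principle.
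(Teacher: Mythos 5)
You should first know that the paper itself contains no proof of this proposition: it is quoted directly from Aharonov--Shapiro (\cite[Theorem~1]{AS}), so your proposal has to be measured against the standard argument rather than anything in the text. Your outline follows that standard route, and two of its three pieces are essentially sound. The ``if'' direction is correct: $\sigma=\phi\circ(1/\overline{z})\circ(\phi|_{\D})^{-1}$ is anti-meromorphic and $S=\iota\circ\sigma$ is meromorphic, and the rational extension is indeed what makes $R(1/\overline{z})$ meaningful. One gloss to fix there: $(\phi|_{\D})^{-1}$ need \emph{not} extend continuously to $\overline{\Omega}$ (quadrature-domain boundaries can have double points -- this very paper exhibits them for parameters on $\mathfrak{T}^{\pm}$), so continuity of $S$ should not be attributed to ``boundary continuity of $\phi$''. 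The correct argument: if $w_n\to\partial\Omega$ then $z_n:=(\phi|_{\D})^{-1}(w_n)$ satisfies $|z_n|\to 1$, hence $1/\overline{z_n}-z_n\to 0$, and uniform spherical continuity of $R$ gives $d\bigl(\sigma(w_n),w_n\bigr)=d\bigl(R(1/\overline{z_n}),R(z_n)\bigr)\to 0$; this yields the continuous extension $\sigma=\mathrm{id}$ on $\partial\Omega$ without ever asking the inverse Riemann map to converge. The preimage count for the degree statement (one generic preimage in $\D$ by univalence, none on $\partial\D$, hence $d-1$ in $\widehat{\C}\setminus\overline{\D}$, transported by $z\mapsto 1/\overline{z}$) is correct.

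The genuine gap is exactly the one you flagged and then left open: the gluing of $\phi$ with $\Phi=\iota\circ S\circ\phi\circ(1/\overline{z})$ in the converse. The classical reflection principle requires continuous boundary values matching along $\partial\D$, and a priori $\phi$ admits no continuous extension to $\overline{\D}$, since $\partial\Omega$ is not known to be locally connected, let alone a Jordan curve; flagging this obstacle is not the same as overcoming it, and this step \emph{is} the substance of the Aharonov--Shapiro theorem. The standard repair uses a uniform estimate in place of pointwise boundary values: $S-\iota$ is continuous on the compact set $\overline{\Omega}$ and vanishes on $\partial\Omega$, so $|S(w)-\overline{w}|\to 0$ uniformly as $\mathrm{dist}(w,\partial\Omega)\to 0$; since $\phi$ is proper, this gives $\sup_{|z|=r}\bigl|\phi(z)-\Phi(1/\overline{z})\bigr|\to 0$ as $r\to 1^-$. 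Now choose $r_0<1$ so that the (finitely many) poles of $\phi$ in $\D$ lie in $\{|z|<r_0\}$ and those of $\Phi$ lie in $\{|z|>1/r_0\}$, and expand $\phi(z)=\sum_{n\in\Z}a_nz^n$ on $r_0<|z|<1$ and $\Phi(z)=\sum_{n\in\Z}b_nz^n$ on $1<|z|<1/r_0$. The $n$-th Fourier coefficient of $\phi(re^{i\theta})-\Phi(e^{i\theta}/r)$ is $a_nr^n-b_nr^{-n}$, which is dominated by the sup norm and hence tends to $0$, forcing $a_n=b_n$ for all $n$. Thus the two Laurent series are one convergent series on $r_0<|z|<1/r_0$, so $\phi$ and $\Phi$ are restrictions of a single meromorphic function on $\widehat{\C}$, i.e.\ of a rational map. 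Until you run this coefficient (or equivalent Cauchy-integral) argument -- or otherwise establish boundary regularity first -- your converse is incomplete at its key step.
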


\begin{figure}[ht!]
\centering
\includegraphics[scale=0.28]{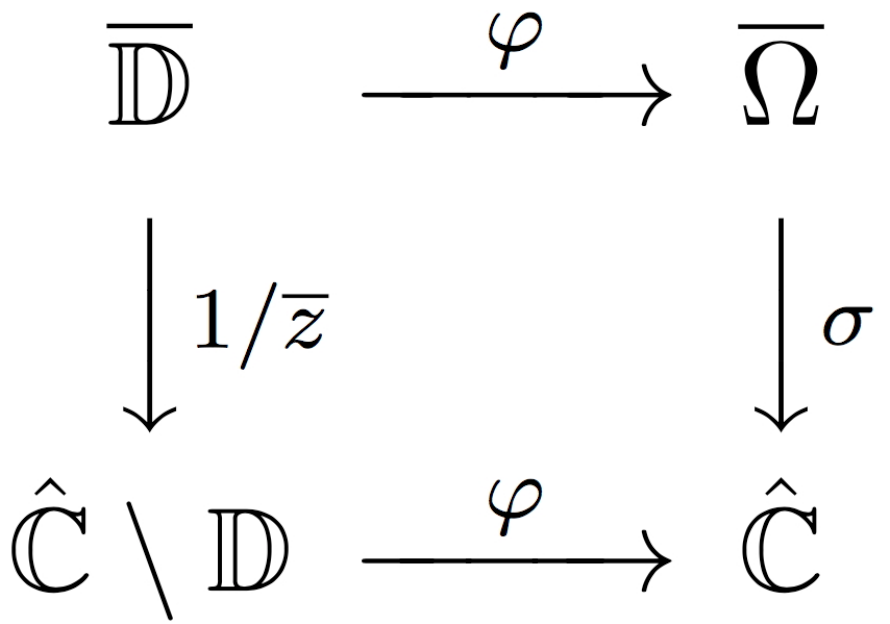}
\caption{The rational map $\phi$ semi-conjugates the reflection map $1/\overline{z}$ of $\D$ to the Schwarz reflection map 
$\sigma$ of $\Omega$ .}
\label{comm_diag_schwarz}
\end{figure}

\begin{remark}\label{prelim_quad}
1) For a simply connected quadrature domain, the Riemann map $\phi$ semi-conjugates the reflection map $1/\overline{z}$ of the unit disk to the Schwarz reflection map $\sigma$ of $\Omega$ (see Figure~\ref{comm_diag_schwarz}). This yields an explicit description of $\sigma$.

2) If $\Omega$ is a simply connected quadrature domain with associated Schwarz reflection map $\sigma$, and $M$ is a M{\"o}bius transformation, then $M(\Omega)$ is also a quadrature domain with Schwarz reflection map $M\circ\sigma\circ M^{-1}$.
\end{remark}

\subsection{Classification of One-parameter Families of Quadratic Schwarz Reflection Maps}\label{classi_subsec}
In this subsection, we will classify one-parameter families of Schwarz reflection maps of ``degree" two associated with disjoint unions of simply connected quadrature domains. This will lead to the family of Schwarz reflection maps that is the principal object of study of this paper. 

Consider a finite collection of disjoint simply connected quadrature domains $\Omega_j(\subsetneq\widehat{\C})$ ($j=1,\cdots,k$) with associated Schwarz reflection maps $\sigma_j$. We define $\displaystyle\Omega:=\displaystyle\bigsqcup_{j=1}^k\Omega_j$, and the Schwarz reflection map 
$$
\sigma:\overline{\Omega}\to\widehat{\C},\ w \mapsto \begin{array}{ll}
                    \sigma_j(w) & \mbox{if}\ w\in\overline{\Omega}_j
                                          \end{array}. 
$$

Let us also set $T_j:=\widehat{\C}\setminus\Omega_j$, $T:=\widehat{\C}\setminus\Omega$, and $T^0:=T\setminus\{$ Singular points on $\partial T\}$. We define the \emph{tiling set} $T^\infty$ of $\sigma$ as the set of all points that eventually land in $T^0$; i.e., 
$$
T^\infty:=\displaystyle\bigcup_{n=0}^\infty \sigma^{-n}(T^0).
$$ 
The \emph{non-escaping set} of $\sigma$ is its complement $\widehat{\C}\setminus T^\infty$.

Let $\phi_j:\overline{\D}\to\overline{\Omega}_j$ be the Riemann uniformizations of the simply connected quadrature domains $\Omega_j$ such that each $\phi_j$ extends as a rational map of $\widehat{\C}$ of degree $d_j$. It follows that $\sigma_j:\sigma_j^{-1}(\Omega_j)\to\Omega_j$ is a branched covering of degree $(d_j-1)$, and $\sigma_j:\Int{\sigma_j^{-1}(T_j)}\to\Int{T_j}$ is a branched covering of degree $d_j$. Therefore, $\sigma:\sigma^{-1}(\Omega)\to\Omega$ is a (possibly branched) covering of degree $\displaystyle(\sum_{j=1}^k d_j-1)$.

We will now focus on the case when $\sigma:\sigma^{-1}(\Omega)\to\Omega$ has degree $2$; i.e., 
$$
\displaystyle\sum_{j=1}^k d_j-1=2\implies\displaystyle\sum_{j=1}^k d_j=3.
$$ 
Since each $d_j\geq 1$, we have that $k\leq 3$. We also restrict our attention to families of Schwarz reflection maps for which $\sigma:\sigma^{-1}(\Omega)\to\Omega$ has at least one critical point, and such that the connectedness loci of the families are non-empty.
\bigskip

\noindent\textbf{Case 1: $k=3$.} In this case, each $\phi_j$ is a M{\"o}bius map, and hence each $\Omega_j$ is a round disk. In particular, each $\sigma_j$ is the reflection in a round circle (thus, $\sigma$ has no critical point), and the resulting dynamics of $\sigma$ is completely understood.
\vspace{2mm}

\noindent\textbf{Case 2: $k=2$.} We can assume that $\Deg{\phi_1}=2$, and $\Deg{\phi_2}=1$. Note that post-composing $\phi_1$ and $\phi_2$ with a (common) M{\"o}bius map does not alter the conformal conjugacy class of $\sigma$ (see Remark~\ref{prelim_quad}). Hence, after a non-dynamical change of coordinates (i.e., different change of coordinates in the domain and the range), we can assume that $\phi_1(w)=w^2$, $\Omega_1$ is the univalent image of a round disk of the form $B(1,r)$ (where $r\in(0,1]$) under $\phi_1$, and $\Omega_2$ is a round disk in $\widehat{\C}$. If the boundaries of the domains $\Omega_1$ and $\Omega_2$ are disjoint, then the non-escaping set of $\sigma$ is necessarily disconnected. To avoid this, we will assume that $\partial\Omega_1\cap\partial\Omega_2$ is a singleton.
\vspace{2mm}

\noindent\textbf{Subcase 2.1: $r\in(0,1).$} The moduli space of this family has real dimension $3$.
\vspace{2mm}

\noindent\textbf{Subcase 2.2: $r=1$.} In this case, $\partial\Omega_1$ has a singularity (which is a simple cusp at $0$), and $\Omega_1$ is a cardioid. Now if $\Omega_2\subset\C$, then $\sigma:\sigma^{-1}(\Omega)\to\Omega$ has no critical point (the only critical point of $\sigma$ in this case is $\infty$), and the non-escaping set of $\sigma$ is necessarily disconnected (more precisely, $\sigma^{-1}(\Omega_2)$ is the disjoint union of two domains each of which maps univalently onto $\Omega_2$, and these two domains disconnect the non-escaping set). So we may assume that $\Omega_2$ is an exterior disk; i.e., $\partial\Omega_2$ is a circumcircle of $\partial\Omega_1$ with a single point of intersection. Therefore, up to conformal conjugacy, the moduli space of such maps is obtained by fixing a cardioid as $\Omega_1$, and varying the center of the exterior disk $\Omega_2$ that touches $\Omega_1$ at a unique point. This one-parameter family has been studied in our earlier work \cite{LLMM1,LLMM2}.
\vspace{1mm}

\noindent\textbf{Case 3: $k=1$.} In this case, $\Omega=\Omega_1$ is a singe quadrature domain that is the univalent image of $\D$ under some cubic rational map. We need to consider three cases here.
\vspace{2mm}

\noindent\textbf{Subcase 3.1.} Suppose that the rational map $\phi_1$ has two double critical points. Then under a non-dynamical change of coordinates, we can assume that $\phi_1(w)=w^3$. Remark~\ref{prelim_quad} now implies that up to conformal equivalence, $\sigma$ is the Schwarz reflection map of $\Omega$, where $\Omega$ is the univalent image of some round disk under $\phi_1$. It now easily follows from the commutative diagram in Figure~\ref{comm_diag_schwarz} that in this case, $\sigma^{-1}(\Omega)$ is the disjoint union of two topological disks compactly contained in $\Omega$, and $\sigma$ maps each of these two disks univalently onto $\Omega$ (in particular, $\sigma:\sigma^{-1}(\Omega)\to\Omega$ has no critical point). Moreover, $\sigma:\sigma^{-1}(\Omega)\to\Omega$ is an expanding map, and the corresponding non-escaping set is a Cantor set.
\vspace{2mm}

\noindent\textbf{Subcase 3.2.} Now suppose that the rational map $\phi_1$ has a unique double critical point. Then under a non-dynamical change of coordinates, we can assume that $\phi_1(w)=w^3-3w$; and up to conformal equivalence, $\sigma$ is the Schwarz reflection map of $\Omega$, where $\Omega$ is the univalent image of a round disk under $\phi_1$. Suitably varying the round disk (whose image under $\phi_1$ is $\Omega$) now leads to a one-parameter family of Schwarz reflection maps. The current paper is dedicated to the study of this family.
\vspace{1mm}

\noindent\textbf{Subcase 3.3.} In the last remaining case, $\phi_1$ is a rational map with four simple critical points. A specific example of this type of quadrature domains is the exterior of a deltoid, whose associated Schwarz reflection map was studied in \cite[\S 5]{LLMM1}. The full moduli space of such Schwarz reflection maps arises from all possible univalent images of round disks in $\widehat{\C}$ under a one-parameter family of cubic rational maps with fixed critical points at $0, 1$, and $\infty$. Thus, the moduli space has real dimension $5$.

\section{A Family of Schwarz Reflections}\label{family_schwarz}

The main goal of this paper is to study the dynamics and parameter plane of the family of Schwarz reflection maps arising from Subcase 3.2 above.

Let $f(u)=u^3-3u$. Note that the map $f$ is the cubic Chebychev polynomial. In particular, the finite critical points $\pm1$ of $f$ map to the repelling fixed points $\mp2$ in one iterate; i.e., the critical orbits are given by $\pm 1\rightarrow \mp 2\circlearrowleft$. Moreover, $f:\left[-2,2\right]\to\left[-2,2\right]$ is a triple branched cover such that each of the intervals $\left[-2,-1\right], \left[-1,1\right],$ and $\left[1,2\right]$ map monotonically onto $\left[-2,2\right]$. It follows that the Julia set of $f$ is $\left[-2,2\right]$. These properties of the map $f$ will be useful in what follows.

\subsection{Univalence Properties of The Cubic Chebychev Polynomial}\label{univalence_sec}

We will consider suitable maximal disks on which $f$ is univalent. Clearly, such a disk must not contain any critical point of $f$. We will focus on the case where the disk has exactly one critical point on the boundary. 

To this end, consider $a\in\C$ with $\re(a)>0,\ a\neq1$, and set $\Delta_a:=B(a,\vert a-1\vert)$. We define 
$$
\widehat{S}:=\{a\in\C: \re(a)>0,\ a\neq1,\ \mathrm{and}\ f(\partial \Delta_a)\ \mathrm{is\ a\ Jordan\ curve}\}.
$$ 
Note that when $\re(a)>0$, none of the critical points of $f$ lies in $\Delta_a$, and hence, the requirement that $f(\partial\Delta_a)$ is a Jordan curve implies that $f$ is univalent on $\Delta_a$. For $a\in\widehat{S}$, we set $\Omega_a:=f(\Delta_a)$. Since $\partial\Delta_a$ contains the simple critical point $1$ of $f$, its image $\partial\Omega_a$ has a conformal cusp at $f(1)=-2$. 

\begin{proposition}\label{univalent_disk}
$$
\widehat{S}\subset\{a\in\C:0<\re(a)\leq4,\ a\neq1\}.
$$
\end{proposition}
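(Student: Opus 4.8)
The plan is to prove the contrapositive: if $\re(a)>4$ (with $\re(a)>0,\ a\neq 1$ as in the definition of $\hat S$), then $f(\partial\Delta_a)$ is not a Jordan curve, i.e. $a\notin\hat S$; together with the constraint $\re(a)>0$ built into $\hat S$, this yields the claimed inclusion. First I would set up a topological reduction. Suppose $a\in\hat S$. Then by the definition of $\hat S$ together with Proposition~\ref{simp_conn_quad}, the map $f$ is univalent on $\Delta_a$ and $\Omega_a=f(\Delta_a)$ is the Jordan domain bounded by $f(\partial\Delta_a)$; in particular $f(\Delta_a)\cap f(\partial\Delta_a)=\emptyset$. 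Hence it suffices to exhibit, for every $a$ with $\re(a)>4$, two distinct points $u_1\in\partial\Delta_a$ and $u_2\in\Delta_a$ with $f(u_1)=f(u_2)$: such a ``boundary--interior collision'' would place $f(u_2)$ simultaneously in $f(\Delta_a)=\Omega_a$ and in $f(\partial\Delta_a)=\partial\Omega_a$, which is impossible since $\Omega_a\cap\partial\Omega_a=\emptyset$. This reduction is what lets me work purely locally near the boundary critical point $1$, so that I never need to analyze the global shape of the image curve.

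Next I would introduce the local deck (partner) map at the critical point. Writing $w=u-1$, one has $f(1+w)=-2+3w^2+w^3$, so two points $1+w$ and $1+\tilde w$ share an image exactly when $3w^2+w^3=3\tilde w^2+\tilde w^3$. Solving for the nontrivial branch $\tilde w=\rho(w)$ fixing $w=0$ yields a holomorphic involution, and matching the $w^2$- and $w^3$-coefficients forces $\rho(w)=-w-\tfrac13 w^2+O(w^3)$. By construction $f(1+\rho(w))=f(1+w)$, and $\rho(w)\neq w$ for small $w\neq0$, so $\{\,1+w,\ 1+\rho(w)\,\}$ is a genuine collision pair for every small $w\neq0$.

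The heart of the argument is a second-order contact computation. I parametrize $\partial\Delta_a$ near $1$ by $w(\theta)=c(1-e^{i\theta})$ with $c=a-1$ (so $1+w(\theta)\in\partial\Delta_a$ and $w(0)=0$), and I set $\Phi(W)=|W-c|^2-|c|^2$, so that $1+W\in\Delta_a$ if and only if $\Phi(W)<0$. Using the circle identity $|w(\theta)|^2=2\re(\bar c\,w(\theta))$, a direct expansion gives
\[
\Phi\big(\rho(w(\theta))\big)=\tfrac23\,|a-1|^2\,(4-\re a)\,\theta^2+O(\theta^3).
\]
For $\re(a)>4$ the leading coefficient is negative, hence $\Phi(\rho(w(\theta)))<0$ for all small $\theta\neq0$; that is, $u_2:=1+\rho(w(\theta))\in\Delta_a$ while $u_1:=1+w(\theta)\in\partial\Delta_a$, with $f(u_1)=f(u_2)$ and $u_1\neq u_2$. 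This is precisely the boundary--interior collision demanded by the reduction, so $a\notin\hat S$.

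The step I expect to be the main obstacle is obtaining this quadratic coefficient uniformly in complex $a$. For real $a$ one can instead intersect $\partial\Delta_a$ with the hyperbola $\{\,3(\re u)^2-(\im u)^2=3\,\}$ (the locus where conjugate points collide) and read off the threshold $a=4$ directly; but for general $a$ the symmetry $u\mapsto\bar u$ is lost, and one is forced to track the contact of the partner branch $\rho$ with $\partial\Delta_a$ at the cusp preimage $1$. The delicate point in the expansion is that $\re(\bar c\,w^2)$ contributes to $\Phi(\rho(w(\theta)))$ at the \emph{same} order $\theta^2$ as $|w|^2$ and $\re(\bar c\,w)$ rather than at higher order, so all three terms must be retained; once they are, the factor $4-\re a$ emerges from $2-\tfrac23\re(a-1)$ and pins down the asserted bound.
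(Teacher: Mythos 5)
Your proof is correct, but it takes a genuinely different route from the paper's. The paper works entirely with the \emph{image} curve: it expands $f(z(t))$ near the cusp $-2$ to third order, observes that for $\re(a)>4$ the cubic term places the two smooth branches of $f(\partial\Delta_a)$ on opposite sides of the tangent ray at angle $2\theta_0=2\arg(a-1)$, and then invokes orientation-preservation of $f$ on $\partial\Delta_a$ (forced by univalence) to conclude that the two branches must cross near $-2$ --- a self-intersection contradicting the Jordan curve hypothesis. You instead work in the \emph{source}: you build the local deck involution $\rho(w)=-w-\tfrac13w^2+O(w^3)$ at the critical point $1$ and show, via the expansion $\Phi\bigl(\rho(w(\theta))\bigr)=\tfrac23|a-1|^2(4-\re a)\,\theta^2+O(\theta^3)$, that for $\re(a)>4$ the $f$-partner of every nearby boundary point lies strictly inside $\Delta_a$; this boundary--interior collision contradicts $f(\Delta_a)\cap f(\partial\Delta_a)=\emptyset$. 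I checked the key computation: with $c=a-1$ and the circle identity $|w|^2=2\re(\bar c\,w)$ one gets $\Phi(\rho)=2|w|^2+\tfrac23\re(\bar c\,w^2)+O(\theta^3)$, and $w^2=-c^2\theta^2+O(\theta^3)$ yields the coefficient $\tfrac23|c|^2\bigl(3-\re(c)\bigr)=\tfrac23|a-1|^2\bigl(4-\re(a)\bigr)$, exactly as you claim. Two remarks on the reduction: the disjointness you need is legitimate even without quoting univalence --- by the argument principle the winding number of $f\circ\gamma$ about exterior points of the Jordan curve is $0$, so $f(\Delta_a)$ avoids the exterior, and being open it avoids the curve itself --- though note that the univalence statement you want is the remark following the definition of $\hat{S}$ in Subsection~\ref{univalence_sec} rather than Proposition~\ref{simp_conn_quad}, which concerns Schwarz functions. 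As for what each approach buys: yours replaces the paper's somewhat informal crossing/orientation argument by a clean sign computation plus the argument principle, and isolates the threshold $\re(a)=4$ as a second-order contact condition between $\rho(\partial\Delta_a)$ and $\partial\Delta_a$; the paper's third-order expansion of the image curve, on the other hand, does double duty, since essentially the same asymptotics are reused in Subsection~\ref{dyn_near_cusp} to describe the dynamics of $\sigma_a$ near the cusp.
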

\begin{proof}
Let us fix $a\in\widehat{S}$, and set $a=1+\vert a-1\vert e^{i\theta_0}$, for some $\theta_0\in(-\pi,\pi]$. Then, 
$$
f(1+\epsilon e^{i\theta_0})=-2+\epsilon^2 e^{2i\theta_0}(3+\epsilon e^{i\theta_0})\in\Omega_a,
$$
for $\epsilon>0$ sufficiently small. It follows that $(-2+\delta e^{2i\theta_0})\in\Omega_a$, for $\delta>0$ sufficiently small.

We parametrize the circle $\partial\Delta_a$ as $\{z(t):=a+(1-a)e^{it}:-\pi\leq t\leq\pi\}$. Then, $z(0)=1$, and $z(\pi)=z(-\pi)=2a-1$. A straightforward computation shows that 
\begin{align*}
f(z(t))
&=z(t)^3-3z(t)\\
&=(a^3-3a)-3e^{it}(1+a)(a-1)^2+3ae^{2it}(a-1)^2-e^{3it}(a-1)^3\\
&=-2-3e^{2i\theta_0}\vert a-1\vert^2t^2+ie^{2i\theta_0}\vert a-1\vert^2(a-4)t^3+O(t^4).
\end{align*}

\begin{figure}[ht!]
\begin{center}
\includegraphics[scale=0.215]{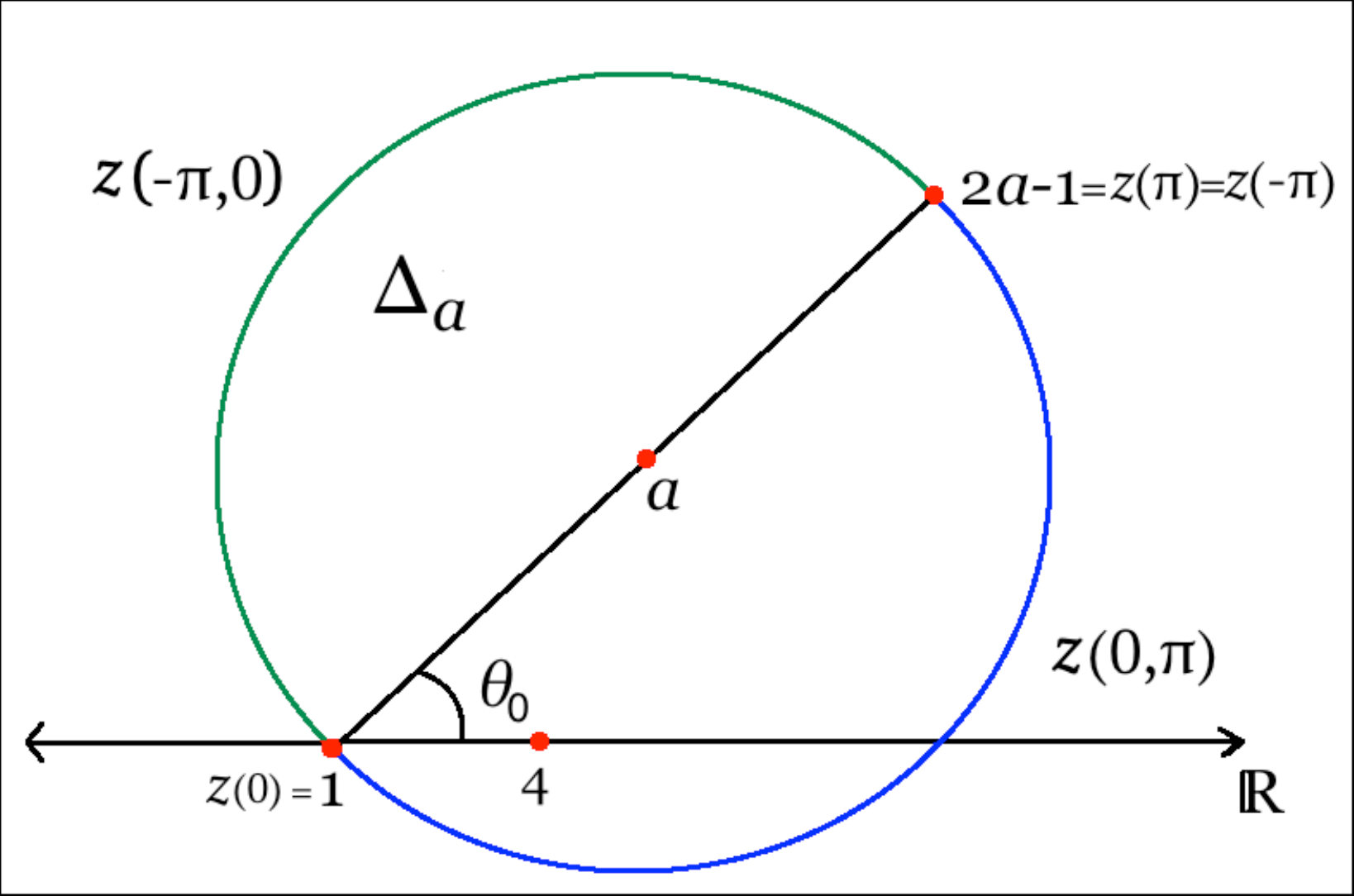}\ \includegraphics[scale=0.12]{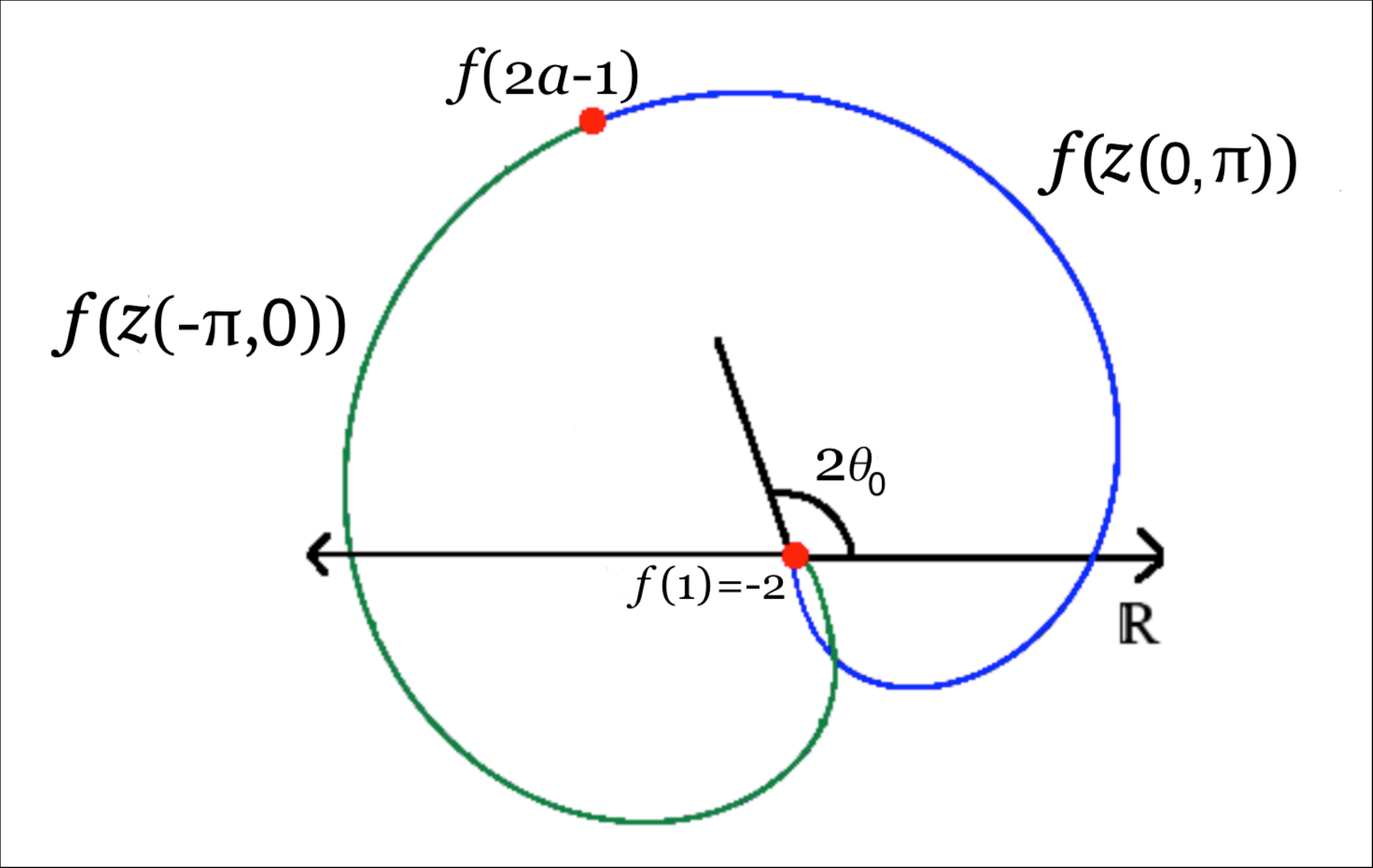}
\end{center}
\noindent\caption{Left: The disk $\Delta_a$ for some parameter $a$ with $\re(a)>4$ is shown. The diameter connecting $1$ and $2a-1$ makes an angle $\theta_0$ with the positive real axis. The green (respectively, blue) part of the boundary circle $\partial\Delta_a$ is $z(-\pi,0)$ (respectively, $z(0,\pi)$. Right: The image of $\Delta_a$ under $f$ with the image of $z(-\pi,0)$ (respectively, $z(0,\pi)$) shaded in green (respectively, blue). Near the cusp point $-2$, these two smooth branches of $f(\partial\Delta_a)\setminus\{f(2a-1)\}$ cross.}
\label{crossing_pic}
\end{figure}

By way of contradiction, let us assume that $\re(a)>4$; i.e., $a=4+p+iq$, for some $p>0$, and $q\in\R$. The above computation yields that 
\begin{equation}
f(z(t))=-2-e^{2i\theta_0}\vert a-1\vert^2(3t^2+qt^3)+ipe^{2i\theta_0}\vert a-1\vert^2t^3+O(t^4).
\label{p_positive}
\end{equation}

For the remainder of the proof, we will choose the argument of a complex number $z$ in the interval $[2\theta_0, 2\theta_0+2\pi)$, and denote it by $\arg{z}$. Relation~\ref{p_positive} shows that for $t>0$ small enough, $\arg{(f(z(t))+2)}\in(2\theta_0,2\theta_0+\pi)$; while for $t<0$ small enough, $\arg{(f(z(t))+2)}\in(2\theta_0+\pi,2\theta_0+2\pi)$. If $f\vert_{\Delta_a}$ is univalent with $f(\partial\Delta_a)$ a Jordan curve, then $f$ will be orientation-preserving on $\partial\Delta_a$. But this implies that $f(\partial\Delta_a)$ must have a self-crossing; i.e., $f(z(-\pi,0))$ and $f(z(0,\pi))$ must intersect (see Figure~\ref{crossing_pic}). This contradicts the fact that $f(\partial\Delta_a)$ is a Jordan curve, and completes the proof.
\end{proof}

\begin{proposition}\label{univalence_non_empty}
$\widehat{S}\cap\R=(0,4]\setminus\{1\}$. In particular, $\widehat{S}\neq\emptyset$.
\end{proposition}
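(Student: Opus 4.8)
The plan is to combine Proposition~\ref{univalent_disk}, which already gives $\hat{S}\cap\R\subseteq(0,4]\setminus\{1\}$, with a direct verification of the reverse inclusion: for every real $a\in(0,4]\setminus\{1\}$ I will show that $f$ is univalent on $\Delta_a$ and that $f(\partial\Delta_a)$ is a Jordan curve, so that $a\in\hat{S}$. The guiding idea is that for real $a$ the disk $\Delta_a$ is symmetric about $\R$ and $f$ commutes with complex conjugation, so it suffices to control how $f$ maps the two half-disks $\Delta_a^{\pm}:=\Delta_a\cap\{\pm\im(w)>0\}$ and the real diameter.

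The computational heart is the location of $f^{-1}(\R)$. Writing $w=x+iy$, a short computation gives $\im f(w)=y\,(3x^2-y^2-3)$, so that $f^{-1}(\R)=\R\cup H$, where $H$ is the hyperbola $\{y^2=3x^2-3\}$ (note $H$ passes through the critical points $\pm1$, consistent with $f(\pm1)\in\R$). I would then show that for $a\in(0,4]\setminus\{1\}$ the open disk $\Delta_a$ is disjoint from $H$ and meets $\R$ only in its diameter. To see the disjointness from $H$, intersect the circle $\partial\Delta_a$, i.e. $(x-a)^2+y^2=(a-1)^2$, with $H$; eliminating $y$ reduces to the quadratic $2x^2-ax+(a-2)=0$, whose roots are $x=1$ and $x=(a-2)/2$. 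The value $x=1$ is merely the cusp preimage, while $x=(a-2)/2$ yields a genuine point of $H$ (one needs $y^2=3x^2-3\geq0$) only when $|a-2|\geq2$, i.e. $a\geq4$ or $a\leq0$. Thus for $a\in(0,4)$ the only contact of $\partial\Delta_a$ with $H$ is at the point $1$, where the two curves are tangent (both with vertical tangent); comparing the quadratic coefficients $\tfrac{1}{2(a-1)}$ (circle) and $\tfrac16$ (hyperbola) shows $H$ stays outside $\overline{\Delta_a}$ near $1$, and since each branch of $H$ is connected and the left branch lies in $\{x\leq-1\}$ away from $\overline{\Delta_a}$, we get $\Delta_a\cap H=\emptyset$.

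With the disjointness in hand, univalence follows cleanly. Since the critical values $f(\pm1)=\mp2$ are real, the open upper and lower half-planes contain no critical values, so $f$ restricts to a degree-$3$ proper (unbranched) covering of each open half-plane; as the half-planes are simply connected, $\C\setminus f^{-1}(\R)$ splits into six components, each mapped conformally (hence univalently) onto a half-plane. Because $\Delta_a$ meets $\R\cup H$ only along its diameter, each half-disk $\Delta_a^{\pm}$ lies in a single such component, so $f|_{\Delta_a^{\pm}}$ is univalent; on the diameter $f$ is strictly monotone, since for $a>1$ the interval $[1,2a-1]$ lies in $[1,\infty)$ and for $0<a<1$ the interval $[2a-1,1]$ lies in $[-1,1]$, each a monotonicity interval of $f$. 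As $f(\Delta_a^+)$ and $f(\Delta_a^-)$ lie in opposite open half-planes (by the relation $f(\overline{w})=\overline{f(w)}$) while $f$ maps the diameter into $\R$, these three pieces have pairwise disjoint images and therefore assemble to show $f$ is injective on all of $\overline{\Delta_a}$; being a continuous injection on a compact set, $f|_{\overline{\Delta_a}}$ is a homeomorphism onto its image and $f(\partial\Delta_a)$ is a Jordan curve, giving $a\in\hat{S}$.

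I expect the main obstacle to be the borderline parameter $a=4$. There the circle and the hyperbola have second-order contact at $1$ (the coefficients $\tfrac{1}{2(a-1)}$ and $\tfrac16$ coincide), so the comparison above is inconclusive and one must expand to fourth order; the expansions $x_C=1+\tfrac{y^2}{6}+\tfrac{y^4}{216}+\cdots$ and $x_H=1+\tfrac{y^2}{6}-\tfrac{y^4}{72}+\cdots$ give $x_C-x_H=\tfrac{y^4}{54}+\cdots>0$, confirming that $H$ again stays outside $\overline{\Delta_4}$, so the argument of the previous paragraph applies verbatim and $4\in\hat{S}$. A secondary point requiring care is the passage from univalence on the open disk to injectivity on $\partial\Delta_a$ (needed for the Jordan property); this is exactly what the conformal-components argument delivers, since the open upper semicircle, being disjoint from $\R\cup H$ away from its endpoints, also lies in the same component as $\Delta_a^+$.
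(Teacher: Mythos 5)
Your proof is correct and follows essentially the same route as the paper's: both arguments hinge on the fact that $\overline{\Delta_a}$ meets the right branch of the hyperbola $x^2-\frac{y^2}{3}=1$ only at the point $1$ (for $a\in(0,4]\setminus\{1\}$), and that $f$ is univalent on each of the six components of the complement of $\R\cup H$, so that injectivity on the closed disk follows from the half-plane separation together with monotonicity of $f$ on the real diameter. The only difference is presentational: the paper identifies $H$ as the union of the $\frac16$- and $\frac56$-dynamical rays of $f$ via the semi-conjugacy $h(w)=w+\frac1w$ with $w^3$, whereas you obtain it directly from $\im f(w)=y\,(3x^2-y^2-3)$ and an unbranched-covering argument, and you supply explicitly the intersection quadratic, the tangency comparison at $1$, the borderline fourth-order analysis at $a=4$, and the assembly of injectivity on $\overline{\Delta_a}$ — steps the paper compresses into ``a simple computation'' and ``it follows that $f$ is injective.''
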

\begin{figure}[ht!]
\begin{center}
\includegraphics[scale=0.1]{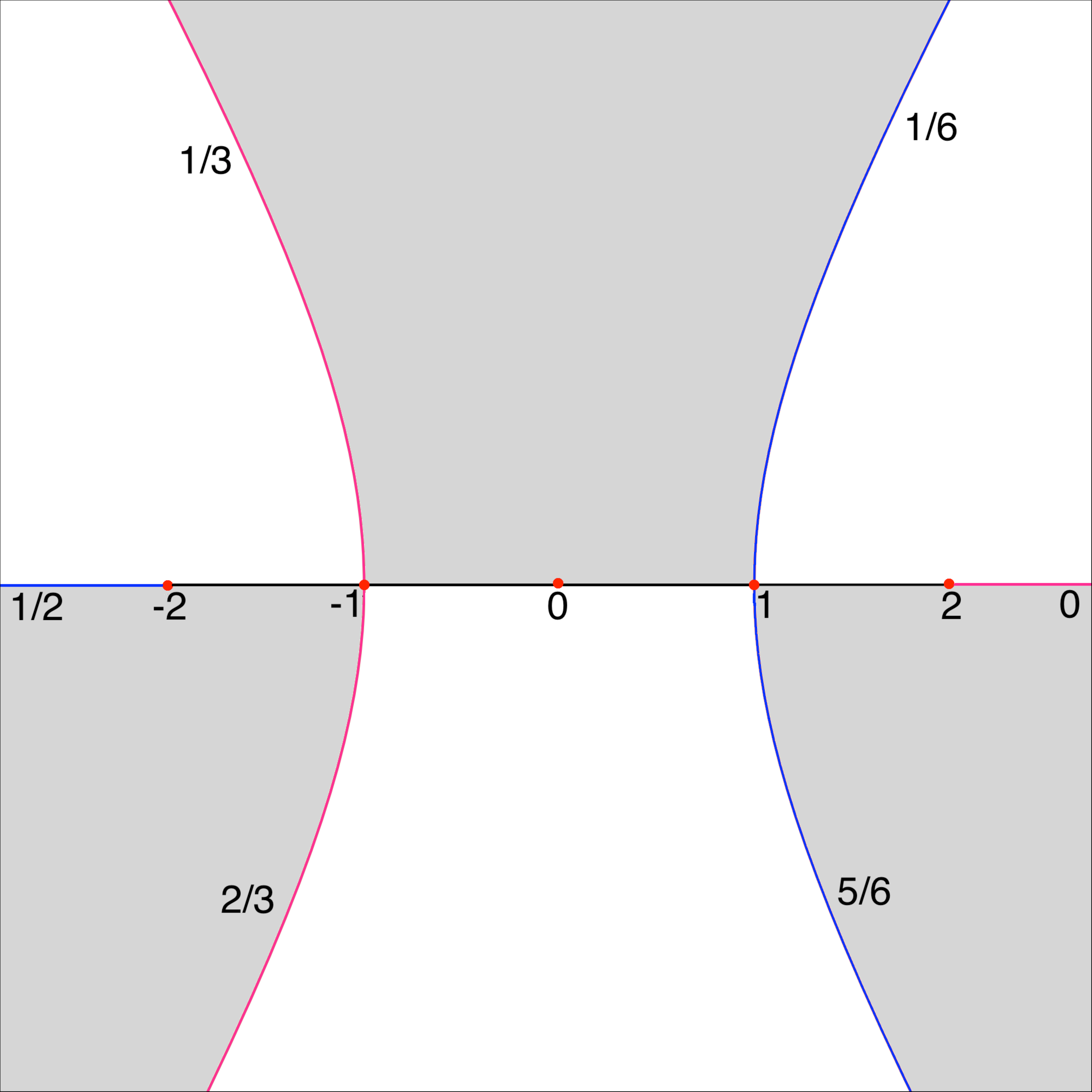}
\end{center}
\noindent\caption{The dynamical plane of $f$ with the critical points $\pm 1$ and the critical values $\pm 2$ marked on the Julia set. The dynamical rays at angles $1/6, 5/6$ (which land at $1$), and $1/2$ (which lands at $-2$) are shown in blue; while the dynamical rays at angles $1/3, 2/3$ (which land at $-1$), and $0$ (which lands at $2$) are shown in red. The closure of every white (respectively, gray) component maps univalently onto the closed upper (respectively, lower) half-plane under $f$.}
\label{f_dyn_pic}
\end{figure}
\begin{proof}
Note that the map $h(w)=w+\frac1w$ is a conformal isomorphism from the basin of infinity $\widehat{\C}\setminus\overline{\D}$ of the polynomial $g(w)=w^3$ onto the basin of infinity $\widehat{\C}\setminus\left[-2,2\right]$ of $f(u)=u^3-3u$. Moreover, $h$ conjugates $g$ to $f$ on their respective basins of infinity. In other words, $h$ is the B{\"o}ttcher coordinate for the basin of infinity of $f$, normalized to be tangent to the identity at $\infty$. We define the \emph{external ray} of $f$ at angle $\theta$ to be the image of the radial ray at angle $\theta$ in $\C\setminus\overline{\D}$ under $h$.

It clearly follows from the definition of external rays that the $0$-ray (respectively, the $1/2$-ray) of $f$ lands at $2$ (respectively, at $-2$). Pulling back the $0$ and $1/2$-rays under $f$, it is easy to see that the $1/3$ and $2/3$-rays (respectively, the $1/6$ and $5/6$-rays) of $f$ land at $-1$ (respectively, at $1$). Moreover, the explicit description of the external rays (of $f$) given in the previous paragraph also shows that the union of the (closures of the) $1/6$ and $5/6$-rays of $f$ are given by the branch of the hyperbola $x^2-\frac{y^2}{3}=1$ in the right half-plane.

There are six unbounded complementary components of the union of the fixed and pre-fixed rays of $f$ and its Julia set $\left[-2,2\right]$. It is easy to see from the mapping properties of $f$ that $f$ is univalent on each such unbounded component. More precisely, the closure of every white (respectively, gray) component shown in Figure~\ref{f_dyn_pic} maps univalently onto the closed upper (respectively, lower) half-plane under $f$.

A simple computation now shows that for $a\in(0,4]\setminus\{1\}$, the closed disk $\overline{\Delta_a}$ does not intersect the $1/6$ and $5/6$-rays of $f$. It follows that $f$ is injective on $\overline{\Delta_a}$; i.e., $a\in\widehat{S}$. This completes the proof.
\end{proof}

Note that the boundary of $\widehat{S}$ in $\{0<\re(a)\leq4, a\neq1\}$ is contained in the set of parameters for which either $f(\partial\Delta_a)$ has a tangential self-intersection, or $-2$ is a higher order cusp (i.e., not a simple cusp) of $\partial\Omega_a$.

A direct (but tedious) computation shows that the locus of parameters in $\{0<\re(a)\leq 4, a\neq1\}$ for which $f(\partial\Delta_a)$ has a tangential self-intersection is the union of two real-symmetric arcs $\mathfrak{T}^\pm$ that are contained in the upper (respectively, lower) half-plane. In fact, we have 
$$
\mathfrak{T}^+=\{a=x+iy: x>0, y>\sqrt{3}, R(x,y)=0\},
$$ 
where $R(x,y)=x^6+3x^4y^2+3x^2y^4+y^6-6x^5-12x^3y^2-6xy^4+6x^4-6y^4+16x^3-30xy^2-12x^2-15y^2-24x-8$.\footnote{We would like to thank Bernhard Reinke for helping us with this computation.} 

\begin{figure}[ht!]
\begin{tikzpicture}
  \node[anchor=south west,inner sep=0] at (2.5,0) {\includegraphics[width=0.5\textwidth]{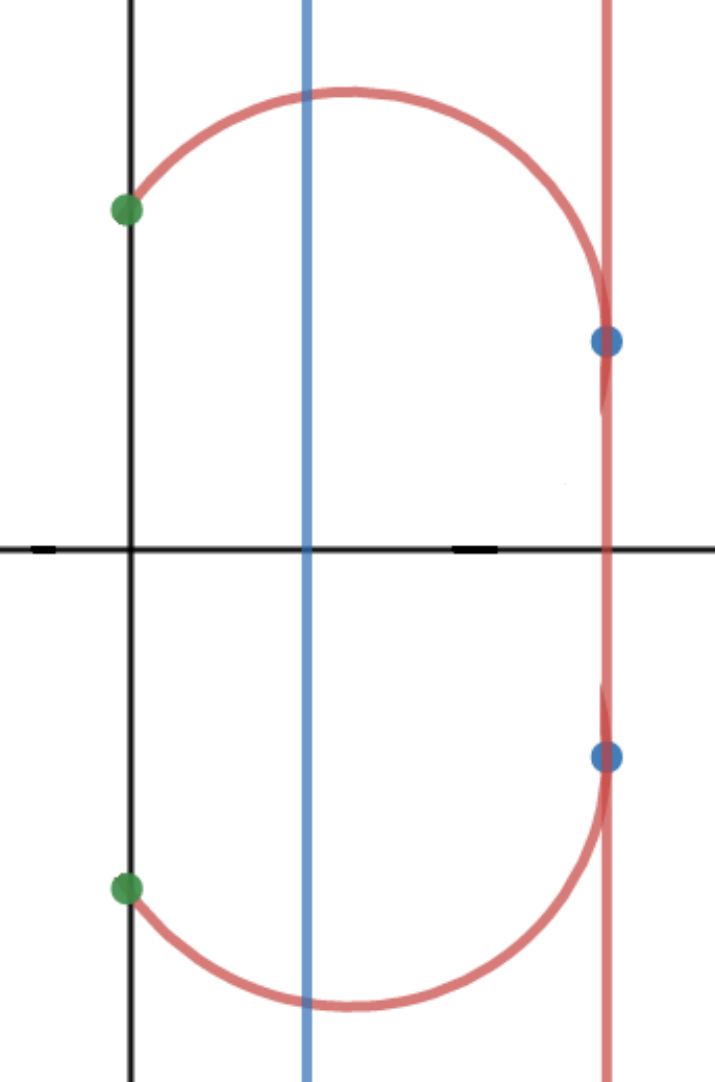}};
  \node at (3.45,4.5) {$0$};
  \node at (5,4.4) {$\frac32$};
  \node at (8.2,4.5) {$4$};
  \node at (8.8,6.6) {$4+i\sqrt{3}$};
  \node at (8.8,2.9) {$4-i\sqrt{3}$};
  \node at (2.9,7.7) {$2\sqrt{2}i$};
  \node at (2.8,1.8) {$-2\sqrt{2}i$};
  \node at (6.6,8.9) {$\mathfrak{T}^+$};
  \node at (6.6,0.5) {$\mathfrak{T}^-$}; 
  \end{tikzpicture}
\caption{The interior of $\widehat{S}$ is the open region bounded by the vertical line segments $\{\re(a)=0, \vert\im(a)\vert\leq 2\sqrt{2}\}$, $\{\re(a)=4,\vert\im(a)\vert\leq\sqrt{3}\}$, and the curves $\mathfrak{T}^\pm$ (with a puncture at the point $1$). $\widehat{S}$ is the union of $\Int{\widehat{S}}$ and the vertical line segment $\{\re(a)=4,\vert\im(a)\vert\leq\sqrt{3}\}$. Our parameter space $S$ is the open subset of $\widehat{S}$ lying between the vertical lines $\{\re(a)=\frac32\}$ and $\{\re(a)=4\}$.}
\label{fig:para_space_boundary}
\end{figure}

It is easy to see that $\mathfrak{T}^+$ is a real-algebraic arc connecting $2\sqrt{2}i$ and $4+ i\sqrt{3}$. In fact, $\mathfrak{T}^+$ is completely contained in $\{0<\re(a)<4, \im(a)>\sqrt{3}\}$ (see Figure~\ref{fig:para_space_boundary}).

The proof of Proposition~\ref{univalent_disk} implies that the set of parameters in $\{0<\re(a)\leq4,\ a\neq1\}$ for which $-2$ is a higher order cusp of $\partial\Omega_a$ is $\{\re(a)=4\}$. It follows that
\begin{equation}
\partial\widehat{S}=\{\re(a)=0, \vert\im(a)\vert\leq 2\sqrt{2}\}\cup\mathfrak{T}^\pm\cup\{\re(a)=4,\vert\im(a)\vert\leq\sqrt{3}\}.
\label{para_space_boundary}
\end{equation}
Also, $\mathfrak{T}^\pm\cap\widehat{S}=\emptyset$, and $\{\re(a)=4, \vert\im(a)\vert\leq\sqrt{3}\}\subset\widehat{S}$.

\textbf{Notation}: We will denote the closed interval $\{\re(a)=4,\vert\im(a)\vert\leq\sqrt{3}\}$ by $\mathcal{I}$, and the open interval $\{\re(a)=4,\vert\im(a)\vert<\sqrt{3}\}$ by $\mathcal{I}^0$.

\subsection{The Corresponding Schwarz Reflections and Their Critical Points}\label{setup}

We will now assume that $a\in\widehat{S}$.

Let $T_a=\widehat{\C}\setminus\Omega_a$. Since 
$$
f\vert_{\Delta_a}:=f:\Delta_a\to\Omega_a
$$ 
is a biholomorphism, Proposition~\ref{simp_conn_quad} implies that $\Omega_a$ is a quadrature domain.

We will denote the reflection in the circle $\partial\Delta_a$ by $\iota_a$.

It will be convenient to introduce the new coordinate $z=\frac{u-a}{1-a}$ to study the Schwarz reflection map of $\Omega_a$. The disk $B(a,\vert 1-a\vert)$ in the $u$-coordinate becomes the unit disk $\D$ in the $z$-coordinate.

Define $f_a(z):=f(u)=f(a+(1-a)z)$. Note that $f_a(0)=f(a)$, and $f_a(1)=-2$. Moreover, the critical points of $f_a$ are $1, \frac{a+1}{a-1}$, and $\infty$ with associated critical values $-2, 2$, and $\infty$ respectively. By our choice of $a$, the critical point $\frac{a+1}{a-1}$ of $f_a$ lies outside the unit disk. Since $f_a:\D\to\Omega_a$ is univalent, the Schwarz reflection map $\sigma_a$ of $\Omega_a$ is given by $f\circ\iota_a\circ \left(f\vert_{\overline{\Delta}_a}\right)^{-1}=f_a\circ\iota\circ \left(f_a\vert_{\overline{\D}}\right)^{-1}$, where $\iota$ is the reflection in the unit circle, and $\iota_a$ is the reflection in the circle $\partial\Delta_a$.

\begin{figure}[ht!]
\begin{tikzpicture}
  \node[anchor=south west,inner sep=0] at (0,0) {\includegraphics[width=0.4\textwidth]{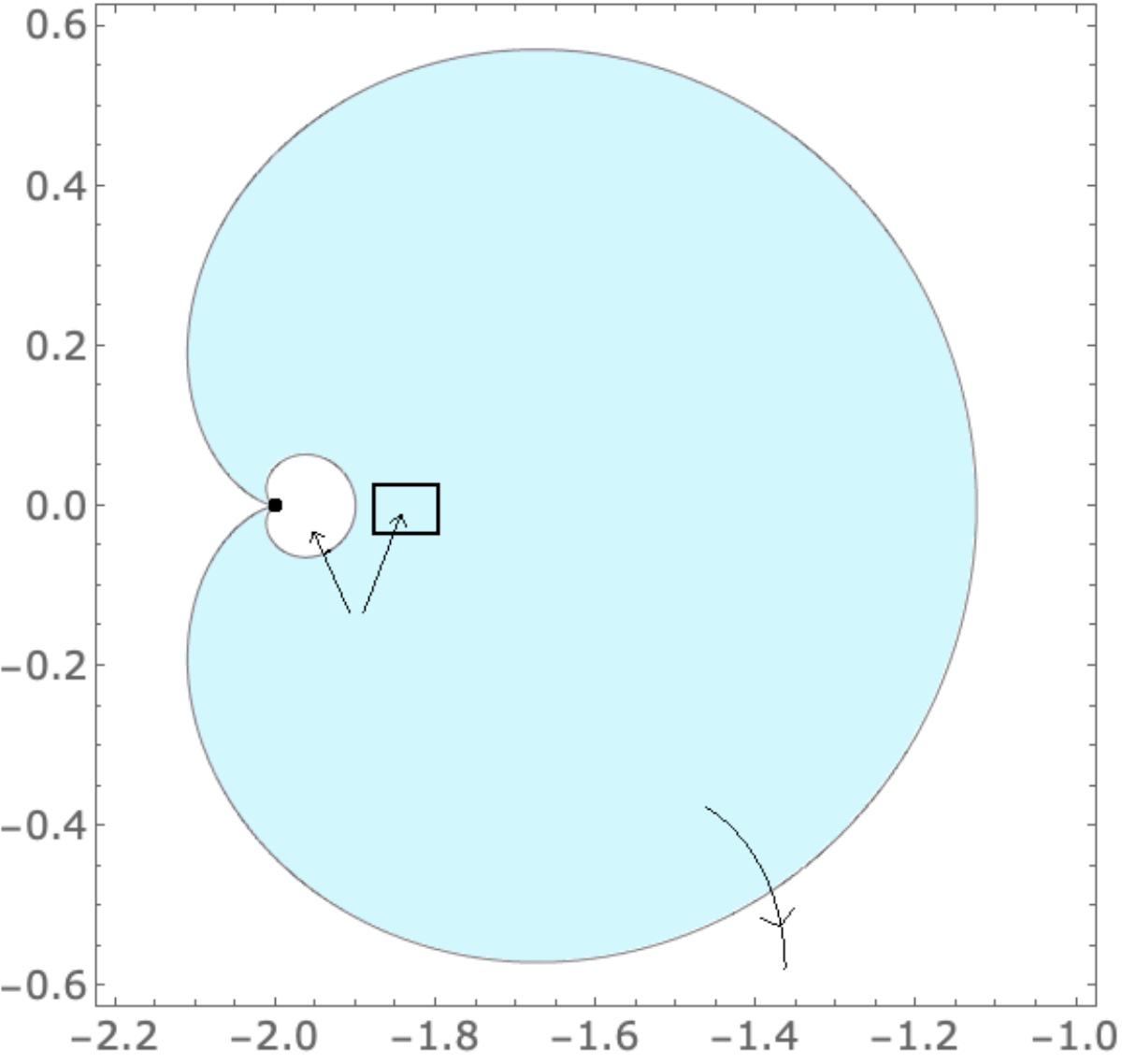}};
\node[anchor=south west,inner sep=0] at (5.5,0) {\includegraphics[width=0.58\textwidth]{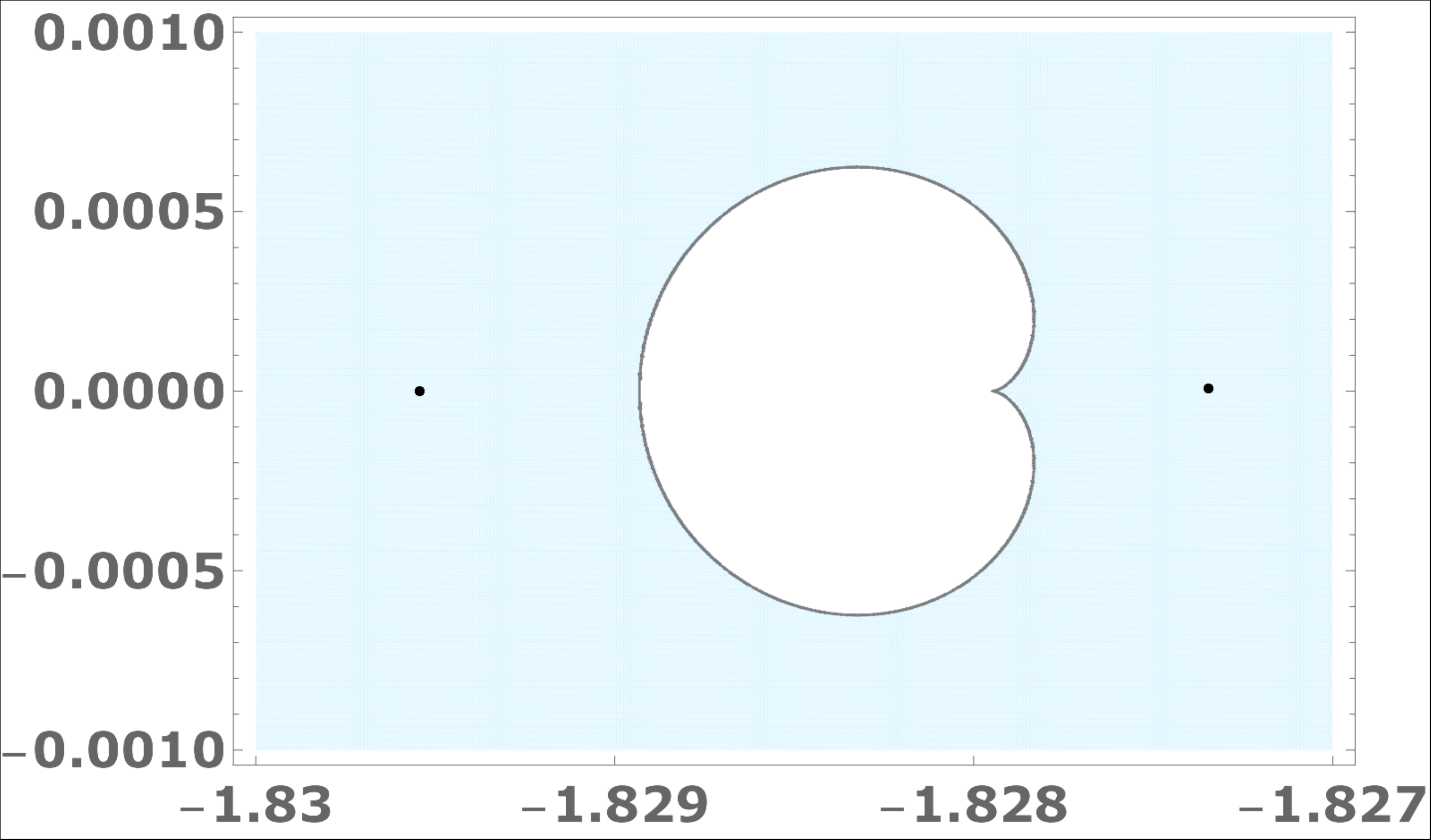}};
  \node at (2.8,3.6) {$\sigma_a^{-1}(\Int{T_a})$};
   \node at (2,1.8) {\begin{small}$\sigma_a^{-1}(\Omega_a)$\end{small}};
  \node at (4.4,4.5) {$\Int{T_a}$};
    \node at (4.2,0.9) {$3:1$};
  \node at (7.7,2.1) {$c_a$};
  \node at (11.8,2.1) {$c_a^*$};
  \node at (0.8,2.5) {\begin{tiny}$-2$\end{tiny}};
  \node at (9.8,2.2) {\begin{tiny}$\sigma_a^{-1}(\Omega_a)$\end{tiny}};
  \end{tikzpicture}
\noindent\caption{Left: For parameters $a\in\widehat{S}$ with $0<\re(a)<\frac32$, the unique tile of rank one (in blue) is a conformal annulus which contains the double critical point $c_a^\ast$ and the simple critical point $c_a$ (of $\sigma_a$). The Schwarz reflection map $\sigma_a$ maps $\sigma_a^{-1}(\Int{T_a})$ as a $3:1$ branched covering onto $\Int{T_a}$. On the other hand, $\Omega_a'=\sigma_a^{-1}(\Omega_a)$ consists of two simply connected components each of which is mapped univalently onto $\Omega_a$ by $\sigma_a$. One of these components, which is visible in the picture, has the cusp point $-2$ on its boundary. The other component, which is too small to be seen in this scale, lies in the box shown. Right: A blow-up of the box shows the other component of $\Omega_a'$.}
\label{schwarz_dyn_pic_less_3_2}
\end{figure}

\begin{figure}[ht!]
\begin{tikzpicture}
  \node[anchor=south west,inner sep=0] at (0,0) {\includegraphics[width=0.4\textwidth]{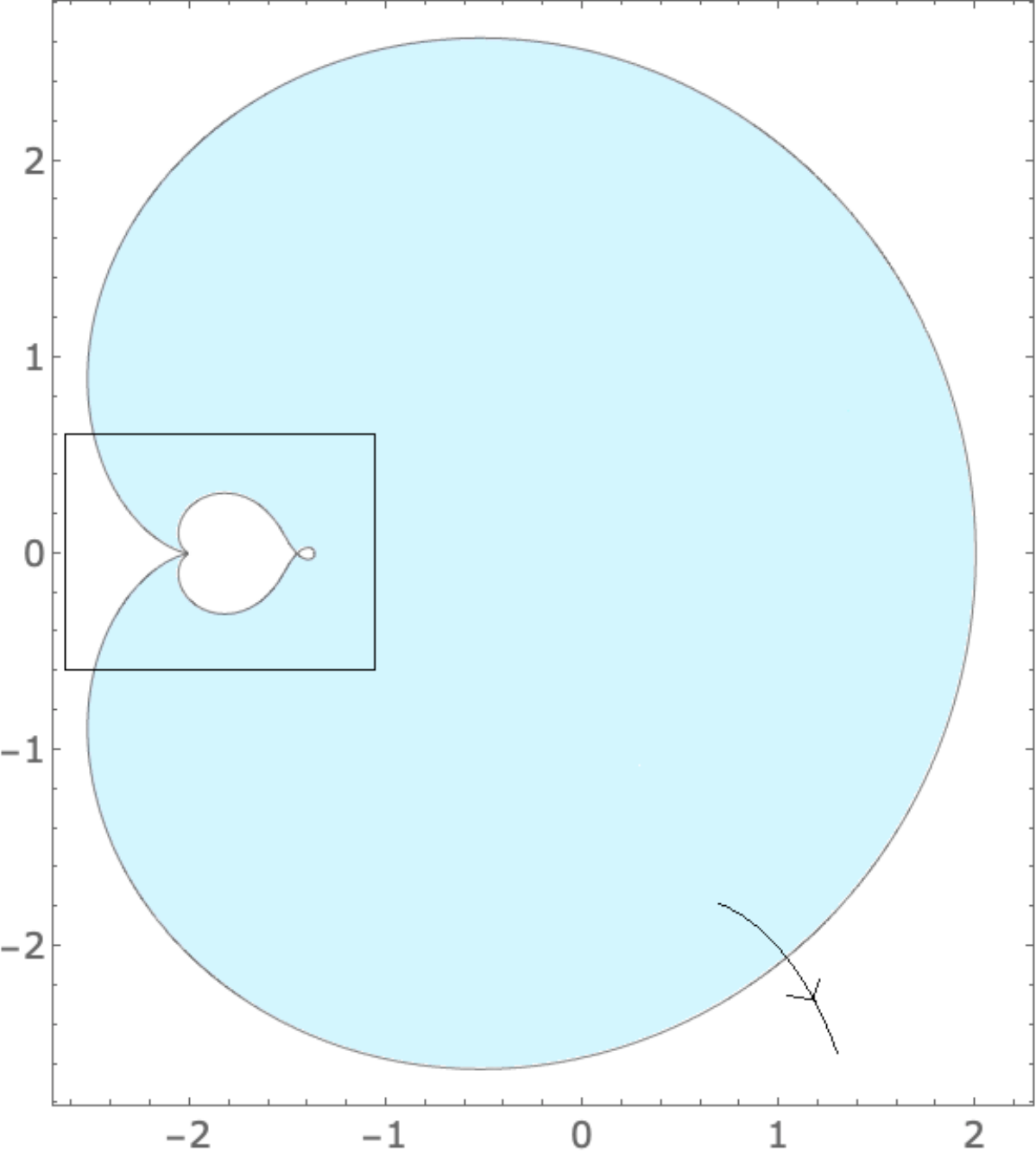}};
\node[anchor=south west,inner sep=0] at (5.5,0) {\includegraphics[width=0.58\textwidth]{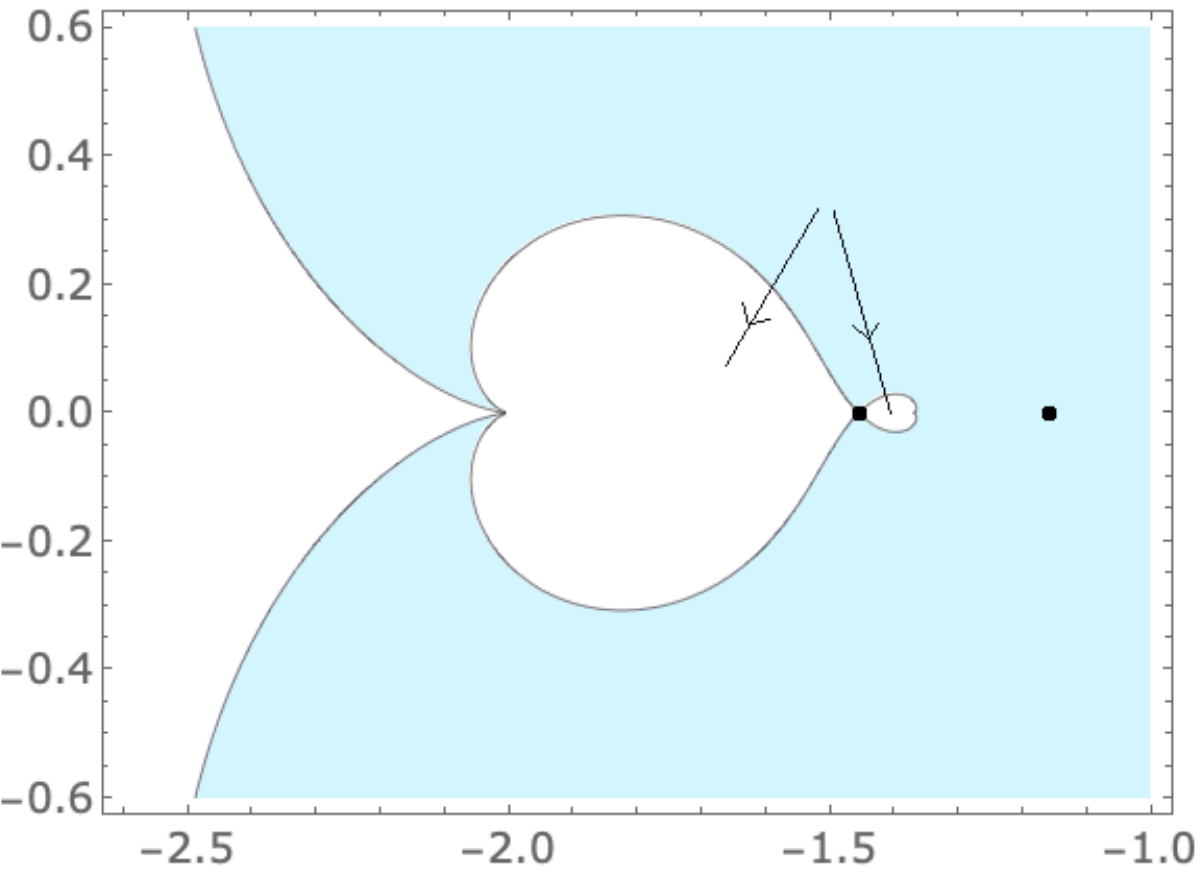}};
  \node at (2.8,4.5) {$\sigma_a^{-1}(\Int{T_a})$};
  \node at (4.5,5.3) {$\Int{T_a}$};
    \node at (4.5,0.9) {$3:1$};
  \node at (10.8,2.5) {$c_a$};
  \node at (11.9,2.5) {$c_a^*$};
  \node at (8.9,2.8) {$-2$};
  \node at (10.5,4.4) {$\sigma_a^{-1}(\Omega_a)$};
  \end{tikzpicture}
\noindent\caption{For parameters $a\in\widehat{S}$ with $\re(a)=\frac32$, the boundary of the unique tile of rank one (in blue) is pinched at the simple critical point $c_a$ (of $\sigma_a$). In particular, this tile is not simply connected (although its interior is so). On the other hand, $\Omega_a'=\sigma_a^{-1}(\Omega_a)$  consists of two simply connected domains, each of which is mapped univalently onto $\Omega_a$. In particular, $\Omega_a'$ does not contain any critical point of $\sigma_a$. (The figure on the right is a blow-up of the left figure around $\Omega_a'$.)}
\label{schwarz_dyn_pic_3_2}
\end{figure}

It follows from the definition of $\sigma_a$ that the map $\sigma_a$ has two distinct critical points; namely, $c=c_a:=f(\iota_a(-1))=f_a(\frac{\overline{a}-1}{\overline{a}+1})$ and $c^\ast=c_a^\ast:=f(\iota_a(\infty))=f_a(0)$. In fact, $c_a$ is a simple critical point, while $c_a^\ast$ is a double critical point. Moreover, 
$$
\sigma_a(c_a)=2,\ \mathrm{and}\ \sigma_a(c_a^\ast)=\infty.
$$ 

Since $f$ is a degree $3$ polynomial which sends $\Delta_a$ univalently onto $\Omega_a$, it follows that $\sigma_a:\Omega_a':=\sigma_a^{-1}(\Omega_a)\to\Omega_a$ is a two-to-one (possibly branched) covering. On the other hand, $\sigma_a:\sigma_a^{-1}(\Int{T_a})\to \Int{T_a}$ is a branched covering of degree three. (In the language of \cite[Lemma 4.1]{LM}, $\Omega_a$ is a bounded quadrature domain of order three.) 

Note that for each parameter $a\in\widehat{S}$, the set $\sigma_a^{-1}(\Int{T_a})$ contains the critical point $c_a^\ast$ of $\sigma_a$. Let us denote the set of parameters $a\in\widehat{S}$ for which the free critical point $c_a$ lies in $\Omega_a'$ by $X$. We will now give a precise description of $X$.
\begin{align*}
X
&=\{a\in\widehat{S}: c_a\in\Omega_a'\}=\{a\in\widehat{S}: \sigma_a\left(c_a\right)\in\Omega_a\}\\
&=\{a\in\widehat{S}: 2\in\Omega_a\}=\{a\in\widehat{S}: 2\in\Delta_a\}\\
&=\{a\in\widehat{S}: \vert a-2\vert<\vert a-1\vert\}=\{a\in\widehat{S}: \re(a)>\frac32\},
\end{align*}
where we used the fact that $f^{-1}(2)=\{-1,2\}$, and $-1\notin\Delta_a$ (in the second line of the above chain of equalities).

It now follows by the Riemann-Hurwitz formula that for parameters $a\in\widehat{S}$ with $\re(a)>\frac32$, the set $\Omega_a'$ is a simply connected domain which maps onto $\Omega_a$ as a two-to-one branched cover (branched only at $c_a$) under $\sigma_a$ (see Figure~\ref{schwarz_dyn_pic}(left)). Moreover for such parameters, $\sigma_a^{-1}(\Int{T_a})$ is a simply connected domain which maps (under $\sigma_a$) onto $\Int{T_a}$ as a three-to-one branched cover branched only at $c_a^\ast$.

\begin{remark}
For $a\in\widehat{S}$ with $\re(a)\leq\frac32$, the set $\Omega_a'$ contains no critical point of $\sigma_a$, and hence is a union of two disjoint simply connected domains each of which maps univalently onto $\Omega_a$ under $\sigma_a$ (see Figures~\ref{schwarz_dyn_pic_less_3_2} and~\ref{schwarz_dyn_pic_3_2}).
\end{remark}

\subsection{Quasiconformal Deformations}\label{qcdef_sec} We will now prove a lemma that will allow us to talk about quasiconformal deformations of the Schwarz reflection maps under consideration.

\begin{proposition}[Quasiconformal Deformation of Schwarz Reflections]\label{schwarz_pre_qcdef}
Let $a\in\widehat{S}$ with $\re(a)>\frac32$, $\mu$ be a $\sigma_a$-invariant Beltrami coefficient on $\widehat{\C}$, and $\Phi$ be a quasiconformal map satisfying $\Phi_{\overline{z}}/\Phi_{z}=\mu$ a.e. such that $\Phi$ fixes $\pm 2$ and $\infty$. Then, there exists $b\in\widehat{S}$ with $\re(b)>\frac32$ such that $\Phi(\Omega_a)=\Omega_b$, and $\Phi\circ\sigma_a\circ\Phi^{-1}=\sigma_b$ on $\Omega_b$.
\end{proposition}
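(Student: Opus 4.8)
The plan is to conjugate $\sigma_a$ by $\Phi$, observe that the conjugated map is again an anti-meromorphic Schwarz reflection of a cubic quadrature domain, and then use the normalization $\Phi(\pm 2)=\pm 2$, $\Phi(\infty)=\infty$ to recognize this domain as $\Omega_b=f(\Delta_b)$. First I would set $\widetilde{\sigma}:=\Phi\circ\sigma_a\circ\Phi^{-1}$ on $\Phi(\overline{\Omega}_a)$. Since $\Phi$ solves the $\sigma_a$-invariant Beltrami coefficient $\mu$ and $\sigma_a$ is anti-holomorphic, $\widetilde{\sigma}$ preserves the standard complex structure and reverses orientation; by Weyl's lemma it is anti-meromorphic on $\Omega_b':=\Phi(\Omega_a)$. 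As $\Phi$ is a homeomorphism of $\widehat{\C}$ fixing $\infty$, the set $\Omega_b'$ is a bounded simply connected domain with $\Int\overline{\Omega_b'}=\Omega_b'$ and $\infty\notin\partial\Omega_b'$, and $\widetilde{\sigma}$ fixes $\partial\Omega_b'$ pointwise because $\sigma_a$ fixes $\partial\Omega_a$; hence $\iota\circ\widetilde{\sigma}$ is a Schwarz function and $\Omega_b'$ is a quadrature domain with reflection $\widetilde{\sigma}$. By Proposition~\ref{simp_conn_quad} its Riemann map $\psi:\D\to\Omega_b'$ extends to a rational map of some degree $d$ with $d-1=2$ (as $\widetilde{\sigma}$ is conjugate to the two-to-one map $\sigma_a$), so $d=3$.

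Next I would pin down $\psi$. The critical points of $\widetilde{\sigma}$ are $\Phi(c_a)$ (simple) and $\Phi(c_a^*)$ (double), with critical values $\Phi(2)=2$ and $\Phi(\infty)=\infty$. Writing $\widetilde{\sigma}=\psi\circ(1/\bar z)\circ\psi^{-1}$, a critical point of $\widetilde{\sigma}$ occurs exactly where $1/\overline{\psi^{-1}(\cdot)}$ hits a critical point of $\psi$; the double critical value $\infty$ forces $\psi$ to have a triple pole, which after precomposing $\psi$ with the automorphism of $\D$ sending $\psi^{-1}(\Phi(c_a^*))$ to $0$ sits at $\infty$. Thus $\psi$ is a cubic polynomial. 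Of its two finite critical points, one ($\zeta_2$, outside $\overline{\D}$) reflects to the simple critical point and has $\psi(\zeta_2)=2$; since $\widetilde{\sigma}$ has only two critical points and $\psi$ is univalent on $\D$, the other critical point $\zeta_c$ must lie on $\partial\D$, producing the unique cusp of $\partial\Omega_b'$ at $\psi(\zeta_c)$.

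The crux is to show $\psi(\zeta_c)=-2$, i.e. that $\Phi$ carries the cusp to the cusp. I would characterize the cusp $-2$ of $\partial\Omega_a$ as the unique fixed point of $\sigma_a$ whose two-sided anti-meromorphic germ is not an involution: at smooth boundary points $\sigma_a$ is a genuine reflection, so $\sigma_a^2=\mathrm{id}$, whereas the germ at a cusp is parabolic. Because $\Phi$ conjugates the global extensions of $\sigma_a$ and $\widetilde{\sigma}$, this distinguished point is carried to the corresponding point $\Phi(-2)=-2$ of $\widetilde{\sigma}$, which is its cusp $\psi(\zeta_c)$; hence $\psi(\zeta_c)=-2$. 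Now $\psi$ is a cubic polynomial whose finite critical values are exactly $\{-2,2\}$, the critical values of $f$. Choosing the affine map $A$ with $A(1)=\zeta_c$, $A(-1)=\zeta_2$, the cubic $\psi\circ A$ has critical points $\pm 1$, hence equals $\alpha f+\delta$; matching the critical values $\mp 2$ gives $\alpha=1$, $\delta=0$, so $\psi=f\circ A^{-1}$ and $\Omega_b'=f\bigl(A^{-1}(\D)\bigr)$. Since $A^{-1}(\D)$ is a round disk whose boundary contains $A^{-1}(\zeta_c)=1$, it is of the form $B(b,\vert b-1\vert)=\Delta_b$, so $\Omega_b'=\Omega_b$ and $\widetilde{\sigma}=\sigma_b$. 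Finally $f(\partial\Delta_b)=\partial\Omega_b'$ is a Jordan curve, so $b\in\hat{S}$; and $c_b=\Phi(c_a)\in\Phi(\Omega_a')=\sigma_b^{-1}(\Omega_b)$, which by the description of $X$ in Subsection~\ref{setup} forces $\re(b)>\frac32$.

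I expect the main obstacle to be precisely the cusp-preservation step: since $\Phi$ is merely quasiconformal, it need not respect the smooth/singular structure of boundaries a priori, so one must extract a genuinely topological (dynamical) characterization of the cusp and argue that the global conjugacy preserves it — this is what supplies the third normalization (the value $-2$) needed to force the Chebyshev form. A secondary technical point is the normalization of $\psi$ into a cubic polynomial with its pole at $\infty$; this is routine given the automorphism freedom of $\D$, but must be carried out compatibly with the placement of the critical values $2$ and $\infty$.
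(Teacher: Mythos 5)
Your proposal follows the paper's own route almost step for step: conjugate $\sigma_a$ by $\Phi$ and use Weyl's lemma to see that $\check{\Omega}:=\Phi(\Omega_a)$ is a simply connected quadrature domain with Schwarz reflection $\check{\sigma}:=\Phi\circ\sigma_a\circ\Phi^{-1}$; invoke Proposition~\ref{simp_conn_quad} to get a degree-$3$ rational uniformizing map; use the double critical point with value $\infty$ to place a triple pole at $\infty$ (so the map is a cubic polynomial); locate the remaining critical points (one outside $\overline{\D}$ with value $2$, one on $\partial\D$ giving the cusp); and normalize affinely so that the finite critical points sit at $\pm 1$ with values $\mp 2$, forcing the uniformization to be $f_b$. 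The concluding bookkeeping ($b\in\hat{S}$ from the Jordan curve property, $\re(b)>\frac32$ from $c_b=\Phi(c_a)\in\sigma_b^{-1}(\Omega_b)$) is also exactly the paper's; the only point you leave implicit there is that $\re(b)>0$ (needed before one can even write $b\in\hat{S}$), which follows since $-1\notin\overline{A^{-1}(\D)}$ gives $\vert b+1\vert>\vert b-1\vert$.

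The one step where you and the paper diverge is the one you flag: why the cusp of $\partial\check{\Omega}$ is at $\Phi(-2)=-2$. The paper asserts this without proof, so you are, if anything, more careful than the printed argument; but your stated justification has a flaw. There are no ``global extensions'' of $\sigma_a$ and $\check{\sigma}$ that $\Phi$ is known to conjugate: the hypothesis gives the conjugacy only on $\overline{\Omega}_a$, and the two-sided reflection germ of $\sigma_a$ at a smooth boundary point is extra data whose $\Phi$-conjugate is a priori only a topological involution, not an anti-holomorphic one. The repair is short. If $\tau$ is the local two-sided reflection at a smooth point $p\in\partial\Omega_a$, then $\tau^{*}\mu=\mu$ a.e.\ on $\Omega_a$ near $p$ (there $\tau=\sigma_a$), and because $\tau$ is an involution this identity automatically propagates to the exterior side (invariance under an involution on one side of its fixed curve is equivalent to invariance on the other side, since $\tau^*\circ\tau^*=\mathrm{id}$). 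Hence $\tau$ preserves $\mu$ on a full neighborhood of $p$, so $\Phi\circ\tau\circ\Phi^{-1}$ is an anti-holomorphic involution fixing $\partial\check{\Omega}$ pointwise near $\Phi(p)$, and the fixed-point set of an anti-holomorphic involution is a real-analytic curve. Thus every smooth point of $\partial\Omega_a$ is carried to a real-analytic point of $\partial\check{\Omega}$, so the cusp of $\partial\check{\Omega}$ --- which exists by your critical-point count $\zeta_c\in\partial\D$ --- can only be $\Phi(-2)=-2$. (Alternatively, a purely one-sided dynamical characterization works: near a real-analytic boundary point every interior point exits $\overline{\check{\Omega}}$ in one step, whereas the expansions of Subsection~\ref{dyn_near_cusp} produce interior points arbitrarily close to $-2$ whose $\sigma_a$-image stays in $\Omega_a$; this dichotomy is manifestly preserved by the one-sided conjugacy.) With this patch your argument is complete and coincides with the paper's.
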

\begin{proof}
The assumption that $\mu$ is $\sigma_a$-invariant implies that $\Phi\circ\sigma_a\circ\Phi^{-1}$ is anti-meromorphic on $\check{\Omega}:=\Phi(\Omega_a)$ that continuously extends to the identity map on $\Phi(\partial\Omega_a)=\partial\check{\Omega}$. Since $\Phi$ fixes $\infty$, it follows that $\check{\Omega}$ is a bounded simply connected quadrature domain with Schwarz reflection map $\check{\sigma}:=\Phi\circ\sigma_a\circ\Phi^{-1}$. 

The Schwarz reflection map $\check{\sigma}$ of $\check{\Omega}$ has a double critical point at $\Phi(c_a^\ast)$, and a simple critical point at $\Phi(c_a)$. Moreover, by our normalization of $\Phi$, the corresponding critical values are $\infty$ and $2$ respectively. Also note that $\check{\sigma}$ maps $\check{\sigma}^{-1}(\check{\Omega})=\Phi(\Omega_a')$ onto $\check{\Omega}$ as a two-to-one branched cover.

By Proposition~\ref{simp_conn_quad}, there exists a rational map $R$ of degree $3$ on $\widehat{\C}$ such that $R:\D\to\check{\Omega}$ is a Riemann uniformization of $\check{\Omega}$. We can assume that $R(0)=\Phi(c_a^\ast)$, and $R(1)=-2$. 

\begin{figure}[ht!]
\begin{tikzpicture}
  \node[anchor=south west,inner sep=0] at (0,0) {\includegraphics[width=0.9\textwidth]{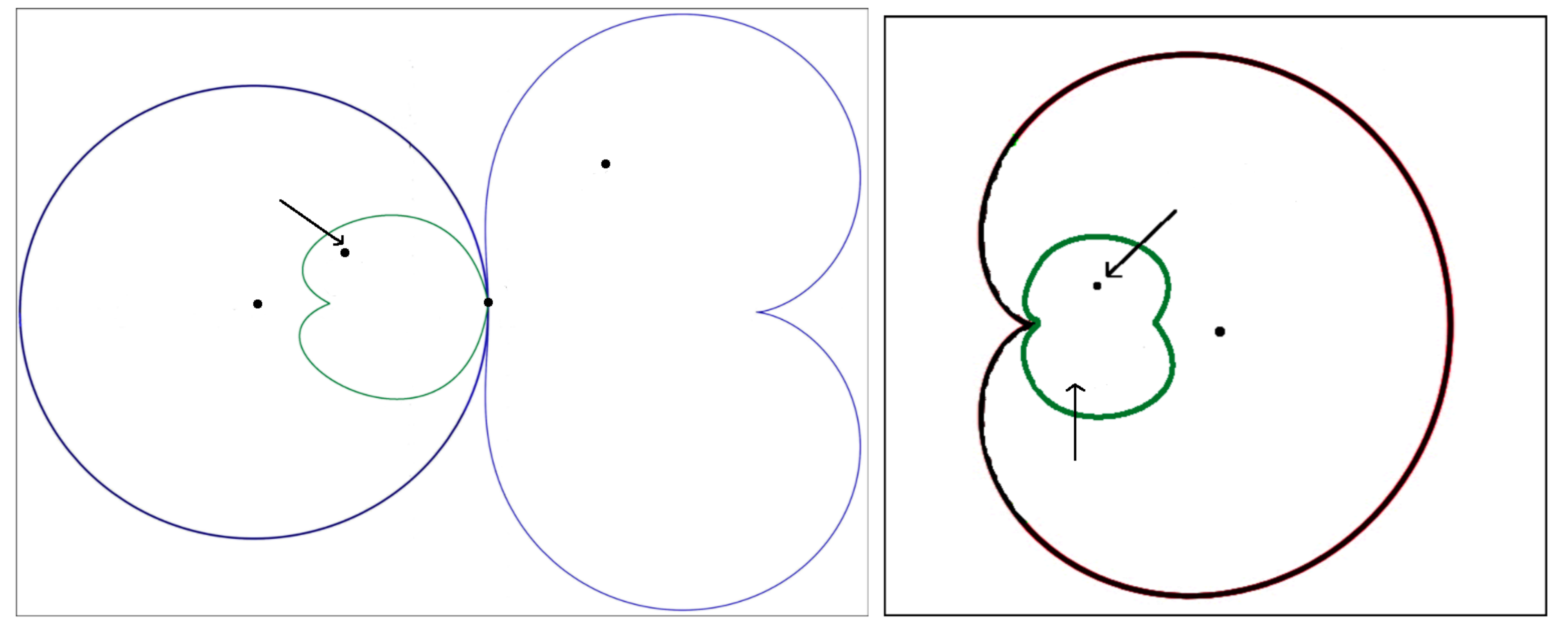}};
  \node at (1.2,2.4) {$\D$};
  \node at (1.92,2.2) {$0$};
  \node at (5,2.4) {$V$};
  \node at (3.8,2.4) {$1$};
  \node at (2.7,2.2) {$\iota(V)$};
  \node at (2,3.36) {$\iota(\frac{b+1}{b-1})$};
  \node at (4.4,3) {$\frac{b+1}{b-1}$};
  \node at (9.5,2.4) {$\check{\Omega}$};
  \node at (9,1.9) {$R(0)$};
  \node at (8,1) {$\tiny{R(\iota(V))}$};
  \node at (8.8,3.36) {$R(\iota(\frac{b+1}{b-1}))$};
  \end{tikzpicture}
\caption{Left: The pre-image of $\check{\Omega}$ under the cubic polynomial $R$ is $\D\cup V$. The set $\D$ is mapped univalently onto $\check{\Omega}$ by $R$. On the other hand, $V$ contains a simple critical point $\frac{b+1}{b-1}$ (where $\re(b)>0$) of $R$, and hence is simply connected. $V$ is mapped as a two-to-one branched covering onto $\check{\Omega}$. Right: $\check{\sigma}$ is a two-to-one branched covering from $\check{\sigma}^{-1}(\check{\Omega})=R(\iota(V))$ onto $\check{\Omega}$ branched only at the simple critical point $R(\iota(\frac{b+1}{b-1}))$ of $\check{\sigma}$.}
\label{pre_image_under_poly}
\end{figure}

Since $\partial\Omega_a$ has a cusp at $-2$, it follows that $\check{\Omega}$ has a cusp at $-2$ as well. Hence, $R'(1)=0$. Moreover, by the commutative diagram in Figure~\ref{comm_diag_schwarz}, we deduce that $R$ has a double critical point at $\infty$ with $R(\infty)=\infty$. Hence, $R$ is a polynomial of degree $3$.

The assumption that $\re(a)>\frac32$ implies that $c_a\in\Omega_a'$, and hence the simple critical point $\Phi(c_a)$ of $\check{\sigma}$ lies in $\check{\sigma}^{-1}(\check{\Omega})$. Hence, $R$ has a simple critical point in $\left(\C\setminus\overline{\D}\right)\cap R^{-1}(\check{\Omega})$ with corresponding critical value $2$. Let us denote this critical point by $\frac{b+1}{b-1}$, for some $b$ with $\re(b)>0$, and $b\neq1$ (see Figure~\ref{pre_image_under_poly}).

We will now bring the critical points of $R$ to $\pm1$ by pre-composing it with an affine map. To this end, let us set $u=b+(1-b)z$, and define $\check{f}(u)=R(z)=R(\frac{u-b}{1-b})$. The finite critical points of $\check{f}$ are easily seen to be $\pm 1$ with corresponding critical values $\mp2$ (respectively). Therefore, $\check{f}(u)=u^3-3u$, and $R\equiv f_b$. 

It follows that $\check{\Omega}=f_b(\D)=\Omega_b$, and $\check{\sigma}\equiv\sigma_b$ on $\Omega_b$. As $\partial\Omega_b$ is a Jordan curve, it follows that $b\in\widehat{S}$. Moreover, the fact that the simple critical point of $\sigma_b$ lies in $\sigma_b^{-1}(\Omega_b)$ implies that $\re(b)>\frac32$. 

This completes the proof.
\end{proof}

\section{The Family $\mathcal{S}$}\label{schwarz_family}

For $a\in\widehat{S}$, we define $T_a^0:=T_a\setminus\{-2\}$, and call it the \emph{desingularized droplet} or the \emph{fundamental tile}. 

One of the main goals of this paper is to demonstrate that suitable Schwarz reflection maps defined in Subsection~\ref{setup} produce matings of the abstract modular group $\Z/2\Z\ast\Z/3\Z$ with certain anti-holomorphic rational maps (anti-rational maps for short). The group structure will essentially appear in the iterated pre-images of $T_a^0$. For this, we will need $\sigma_a^{-1}(T_a^0)$ to be a topological triangle (with its vertices removed). Equivalently, the ``rational map" feature will essentially come from $\sigma_a:\Omega_a'\to\Omega_a$, and hence we need to focus on parameters for which $\Omega_a'$ contains a critical point (note that the dynamics of a rational map is dictated by that of its critical points). Therefore, we will only focus on parameters $a\in\widehat{S}$ with $\re(a)>\frac32$.

This leads to the parameter space; i.e., the set of parameters $a\in\widehat{S}$ that we will be interested in:
$$
S:=\lbrace a\in\widehat{S}:\frac32<\re(a)<4\rbrace.
$$ 
(See Figure~\ref{fig:para_space_boundary}.)

\begin{remark}
The reason for not considering parameters on $\mathcal{I}\subset\widehat{S}$ will become apparent in Subsection~\ref{dyn_near_cusp}.  
\end{remark}

\begin{definition}
The family $\mathcal{S}$ of Schwarz reflection maps is defined as 
$$
\mathcal{S}:=\{\sigma_a:\Omega_a\to\widehat{\C}\ \vert\ a\in S\}.
$$
\end{definition}

\subsection{Invariant Partition of The Dynamical Plane}\label{dyn_plane} 

Since the critical point $c_a^\ast$ is mapped to $\infty$ in one iterate for all $a\in S$, this critical point is ``passive'' throughout the parameter space. Thus, the dynamics of $\sigma_a$ is largely controlled by the unique free critical point $c_a$.

\begin{figure}[ht!]
\begin{center}
\includegraphics[scale=0.128]{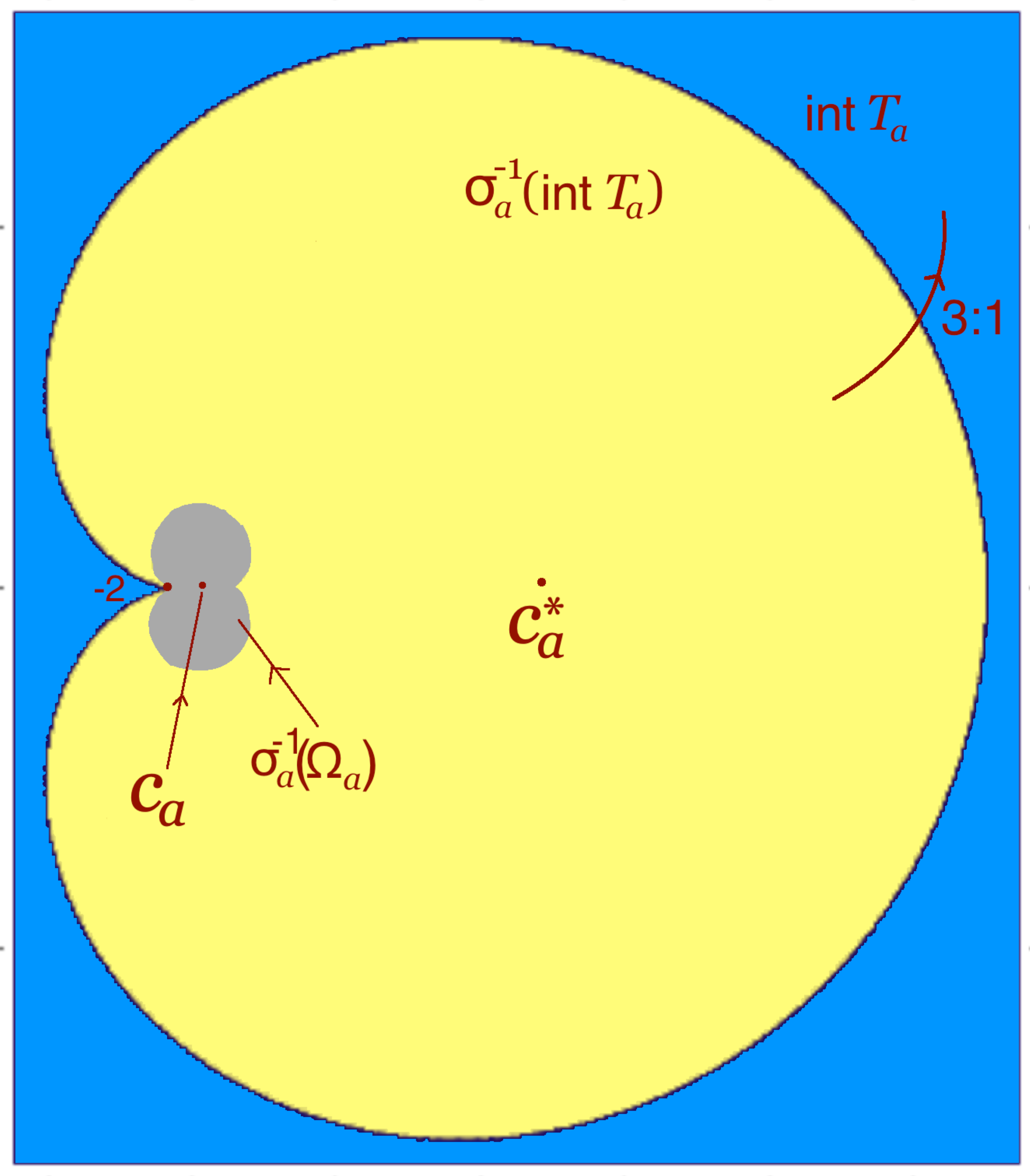}\ \includegraphics[scale=0.125]{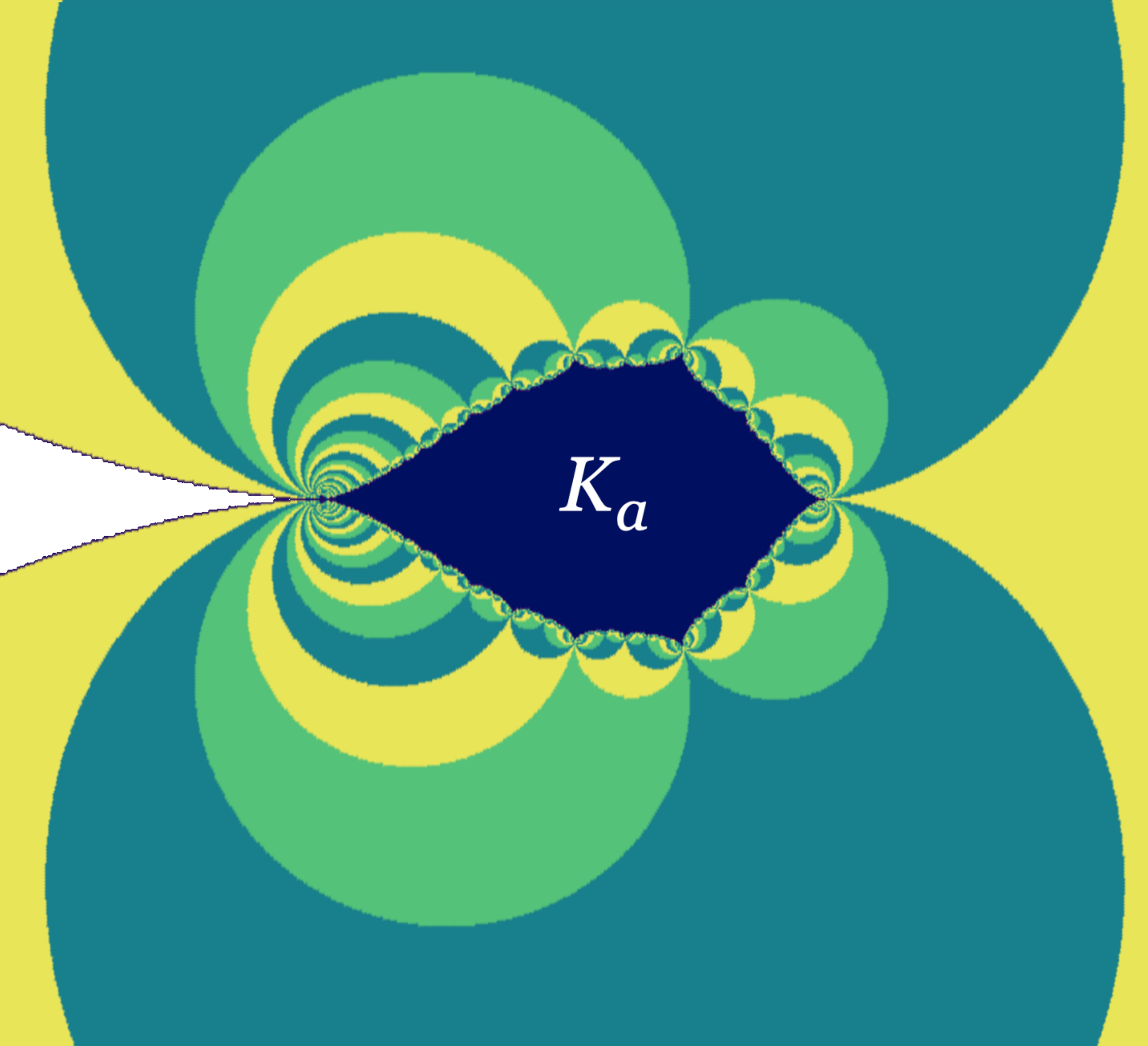}
\caption{Left: For parameters $a$ with $\re(a)>\frac32$, the unique tile of rank one (in yellow) contains the double critical point $c_a^\ast$ of $\sigma_a$. The Schwarz reflection map $\sigma_a$ maps $\sigma_a^{-1}(\Int{T_a})$ (respectively, the rank one tile) as a $3:1$ branched covering onto $\Int{T_a}$ (respectively, onto $T_a^0$) branched only at $c_a^\ast$. On the other hand, $\Omega_a'$ (in gray) is mapped as a $2:1$ branched cover onto $\Omega_a$ by $\sigma_a$ branched only at $c_a$. Right: A zoom of the non-escaping set $K_a$, which is contained in $\sigma_a^{-1}(\Omega_a\cup\{-2\})$. The fixed point $-2$ is on the boundary of $K_a$.}
\label{schwarz_dyn_pic}
\end{center}
\end{figure}

\begin{definition}[Tiles, Tiling set, and Non-escaping Set]\label{non_escaping_def}
\noindent\begin{enumerate}\upshape
\item Connected components of $\sigma_a^{-n}(T_a^0)$ are called \emph{tiles} of $\sigma_a$ of rank $n$.

\item The \emph{tiling set} $T_a^\infty$ is defined as $\displaystyle\bigcup_{n\geq0}\sigma_a^{-n}(T_a^0)$; i.e., it is the union of tiles of all ranks.

\item The \emph{non-escaping set} $K_a$ is defined as the complement of $T_a^\infty$ in the Riemann sphere. In other words, 
$$
K_a=\{z\in\Omega_a\cup\{-2\}: \sigma_a^{\circ n}(z)\in \Omega_a\cup\{-2\}\ \forall\ n\geq 0\}.
$$ 
Connected components of $\Int{K_a}$ are called \emph{Fatou components} of $\sigma_a$. 
\end{enumerate}
\end{definition}

\begin{remark}
Note that the critical point $c_a^\ast$ lies in the tiling set, more precisely in the tile of rank one, for all $a\in S$. In fact, $\sigma_a$ is a three-to-one branched cover from the tile of rank one onto the fundamental tile branched only at $c_a^\ast$.
\end{remark}

\begin{proposition}[Tiling Set is Open and Connected]\label{tiling_connected}
For each $a\in S$, the tiling set $T_a^\infty$ is an open, connected set, and the non-escaping set $K_a$ is a full, compact subset of $\C$.
\end{proposition}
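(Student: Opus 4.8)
The plan is to establish openness and connectedness separately, then deduce closedness of $K_a$ as an immediate consequence. Openness is the routine part: the fundamental tile $T_a^0 = T_a\setminus\{-2\}$ is open, since $T_a = \widehat{\C}\setminus\Omega_a$ is closed and we have removed the boundary cusp point $-2$ to obtain its interior (recall $\Int{\overline{\Omega}_a}=\Omega_a$, so $\Int{T_a}$ together with the smooth part of $\partial T_a$ makes up $T_a^0$). Since $\sigma_a$ is an open anti-holomorphic map on $\Omega_a$, each preimage $\sigma_a^{-n}(T_a^0)$ is open, and hence the tiling set $T_a^\infty = \bigcup_{n\geq 0}\sigma_a^{-n}(T_a^0)$ is a union of open sets, therefore open. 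Its complement $K_a$ is then closed, which is the last clause of the statement.

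For connectedness I would argue inductively on the rank, showing that the union of all tiles up to rank $n$, namely $U_n := \bigcup_{k=0}^{n}\sigma_a^{-k}(T_a^0)$, is connected for every $n$, and that $U_n \subset U_{n+1}$; connectedness of the nested union $T_a^\infty = \bigcup_n U_n$ then follows since an increasing union of connected sets sharing a common point (here all $U_n$ contain $T_a^0$) is connected. The base case is that $T_a^0$ is connected, which holds because $T_a = \widehat{\C}\setminus\Omega_a$ is the complement of a bounded Jordan domain (as $a\in S\subset\hat S$, $\partial\Omega_a$ is a Jordan curve), hence connected, and removing the single boundary point $-2$ does not disconnect it. For the inductive step, the key structural input is the covering description established in Subsection~\ref{setup}: for $a\in S$ (so $\re(a)>\tfrac32$), the map $\sigma_a:\sigma_a^{-1}(\Int{T_a})\to\Int{T_a}$ is a degree-three branched cover from the \emph{connected} rank-one tile $\sigma_a^{-1}(\Int{T_a})$ (which is simply connected, branched only at $c_a^\ast$). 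The point is that the rank-one tile attaches to the fundamental tile $T_a^0$ along their common boundary, and more generally each tile of rank $k+1$ is attached to a tile of rank $k$ through boundary identifications created by the reflection $\sigma_a$.

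The step I expect to be the main obstacle is controlling \emph{how} the new tiles of rank $n+1$ attach to the already-connected set $U_n$, i.e. verifying that no tile of rank $n+1$ is ``floating free'' disjoint from the closure of the lower-rank tiles. The clean way to handle this is to pass to closures: one shows $\overline{U_n}$ is connected and that every rank-$(n+1)$ tile shares boundary points with $\overline{U_n}$, so that $U_n \cup (\text{rank } n{+}1 \text{ tiles})$, being obtained from a connected set by adjoining open sets each of which meets the closure of the connected set along their common boundary arcs, remains connected. Concretely, a point $w\in\partial\Omega_a$ on the boundary between a rank-$k$ tile and $\Omega_a$ is fixed by $\sigma_a$ (as $\sigma_a=\iota$ on $\partial\Omega_a$), and pulling back the adjacency of $T_a^0$ to its neighbors under the branched covering $\sigma_a$ propagates the attachment inductively. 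One must take care at the preimages of the cusp point $-2$ and at the critical values, where the local degree of $\sigma_a$ changes; these are precisely the places where tiles meet along pinch points rather than open arcs, but since pinching only adds identifications it cannot break connectedness. Assembling these boundary-adjacency facts with the branched-covering structure from Subsection~\ref{setup} completes the induction and yields connectedness of $T_a^\infty$.
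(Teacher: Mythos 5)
There is a genuine gap in your openness argument. You assert that the fundamental tile $T_a^0=T_a\setminus\{-2\}$ is open, but it is not: $T_a=\widehat{\C}\setminus\Omega_a$ is closed, and deleting the single point $-2$ leaves every other boundary point of $\Omega_a$ inside $T_a^0$ (your own parenthetical, that $T_a^0$ consists of $\Int{T_a}$ together with the smooth part of $\partial T_a$, already contradicts openness). Consequently tiles of every rank contain arcs of their boundaries and are not open sets, so $T_a^\infty$ is not exhibited as a union of open sets in the way you claim, and the ``routine part'' of your proof collapses. Note also that even if $T_a^0$ were open, $\sigma_a^{-n}(T_a^0)$ is a preimage under a map defined only on $\overline{\Omega_a}$, so openness in $\widehat{\C}$ would still require an argument.

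The correct route, and the one the paper takes, is to make openness a consequence of exactly the adjacency structure you invoke for connectedness. Let $E_a^k$ denote the union of all tiles of rank $0$ through $k$. Since every tile of rank $k\geq 1$ is attached to a tile of rank $k-1$ along a boundary curve, $\Int{E_a^k}$ is connected; and, crucially, any point on the boundary of a tile of rank $k$ lies in the \emph{interior} of $E_a^{k+1}$, because its neighborhood is filled up by the adjacent tiles of the next rank (here the reflection property, $\sigma_a$ fixing $\partial\Omega_a$ pointwise, is what glues a tile to its reflected neighbors). Hence $T_a^\infty=\bigcup_{k\geq 0}\Int{E_a^k}$ is an increasing union of open connected sets, therefore open and connected, and $K_a$ is closed. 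Your connectedness induction is essentially this same argument; the repair needed is to recognize that the boundary-absorption fact does double duty and yields openness as well, rather than treating openness as trivial.
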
 
\begin{proof}
Let us denote the union of the tiles of rank $0$ through $k$ by $E_a^k$. Since every tile of rank $k\geq1$ is attached to a tile of rank $(k-1)$ along a boundary curve and $\Int{E_a^0}$ is connected, it follows that $\Int{E_a^k}$ is connected. Moreover, $\Int{E_a^k}\subsetneq\Int{E_a^{k+1}}$ for each $k\geq 0$. 

Also note that if $z\in T_a^\infty$ belongs to the boundary of a tile of rank $k$, then it lies in the interior of $E_a^{k+1}$. Hence, 
$$
T_a^\infty=\bigcup_{k\geq 0}\Int{E_a^k}.
$$ 
Thus, $T_a^\infty$ is an increasing union of open, connected sets, and hence itself is such. Finally, since $\infty\in\Int{T_a^\infty}$, it follows that the complement $K_a$ of $T_a^\infty$ is a full, compact subset of $\C$.
\end{proof}

Note that the non-escaping set of each $a\in S$ has the fixed point $-2$ on its boundary (see Figure~\ref{schwarz_dyn_pic}(right)).

\subsection{Dynamics Near Cusp Point}\label{dyn_near_cusp}
Since $\partial\Omega_a\setminus\{-2\}$ is a real-analytic curve, it follows that the Schwarz reflection map $\sigma_a$ is anti-holomorphic in a neighborhood of $\partial\Omega_a\setminus\{-2\}$. We will now analyze the behavior of $\sigma_a$ near the cusp point $-2$.

Recall that for $a\in\widehat{S}$, we have $\sigma_a=f_a\circ\iota\circ\left(f_a\vert_{\overline{\D}}\right)^{-1}=f\circ\iota_a\circ \left(f\vert_{\overline{\Delta}_a}\right)^{-1}$, where $\iota_a$ is the reflection in the circle $\partial\Delta_a$. Near $1$, $f$ has a Taylor series 
\begin{equation}
f(1+\epsilon)=-2+3\epsilon^2+\epsilon^3.
\label{riemann_cusp_expansion}
\end{equation}
It follows that near $f(1)=-2$, the inverse function $\left(f\vert_{\overline{\Delta}_a}\right)^{-1}$ can be expanded as a Puiseux series 
\begin{equation}
\left(f\vert_{\overline{\Delta}_a}\right)^{-1}(-2+\delta)=1+\frac{\delta^{\frac12}}{\sqrt{3}}-\frac{\delta}{18}+\frac{5}{216\sqrt{3}}\delta^{\frac32}-\frac{1}{243}\delta^2+\frac{77}{31104\sqrt{3}}\delta^{\frac52}+O(\delta^3)
\label{f_inverse_cusp}
\end{equation}
where $\delta\approx 0$, $(-2+\delta)\in\overline{\Omega}_a$, and the branch of square root has been chosen so that $\left(f\vert_{\overline{\Delta}_a}\right)^{-1}(-2+\delta)\in\overline{\Delta_a}$. 

\subsubsection{Case I: $a\in\widehat{S}, \frac32<\re(a)<4$}\label{cusp_asymp_sec_1}
By Relation~(\ref{f_inverse_cusp}), and the definition of $\sigma_a$, we have that 
\begin{align*}
\sigma_a(-2+\delta)
&=  f\left(\iota_a\left(1+\frac{\delta^{\frac12}}{\sqrt{3}}-\frac{\delta}{18}+\frac{5}{216\sqrt{3}}\delta^{\frac32}-\frac{1}{243}\delta^2+\frac{77}{31104\sqrt{3}}\delta^{\frac52}+O(\delta^3)\right)\right)
\\
&= -2+\left(\frac{a-1}{\overline{a}-1}\right)^2\overline{\delta}+\frac{2}{3\sqrt{3}}\frac{\left(a-1\right)^2(4-\re(a))}{\left(\overline{a}-1\right)^3}\overline{\delta}^{\frac32}+O(\overline{\delta}^2) \;.
\end{align*}

Let $\arg(a-1)=\theta_0\in(-\pi,\pi]$. Since $\re(a)>\frac32$, we have that $\vert\theta_0\vert<\frac{\pi}{2}$. Since $f(1+\epsilon e^{i\theta_0})=-2+\epsilon^2 e^{2i\theta_0}(3+\epsilon e^{i\theta_0}),$ it follows that $\arg(f(1+\epsilon e^{i\theta_0})+2)\approx2\theta_0$, for $\epsilon>0$ small enough. In particular, $(-2+\delta e^{2i\theta_0})\in\Omega_a$, for $\delta=\delta(\theta_0)>0$ sufficiently small.
Now, 
\begin{equation}
\sigma_a(-2+\delta e^{2i\theta_0})=-2+\delta e^{2i\theta_0}+\frac{2}{3\sqrt{3}}\frac{(4-\re(a))}{\vert a-1\vert}\delta^{\frac32}e^{2i\theta_0}+O(\delta^2),
\label{schwarz_cusp_expansion}
\end{equation}
for sufficiently small positive $\delta$.

\begin{figure}[ht!]
\begin{tikzpicture}
\node[anchor=south west,inner sep=0] at (0,0) {\includegraphics[width=0.42\textwidth]{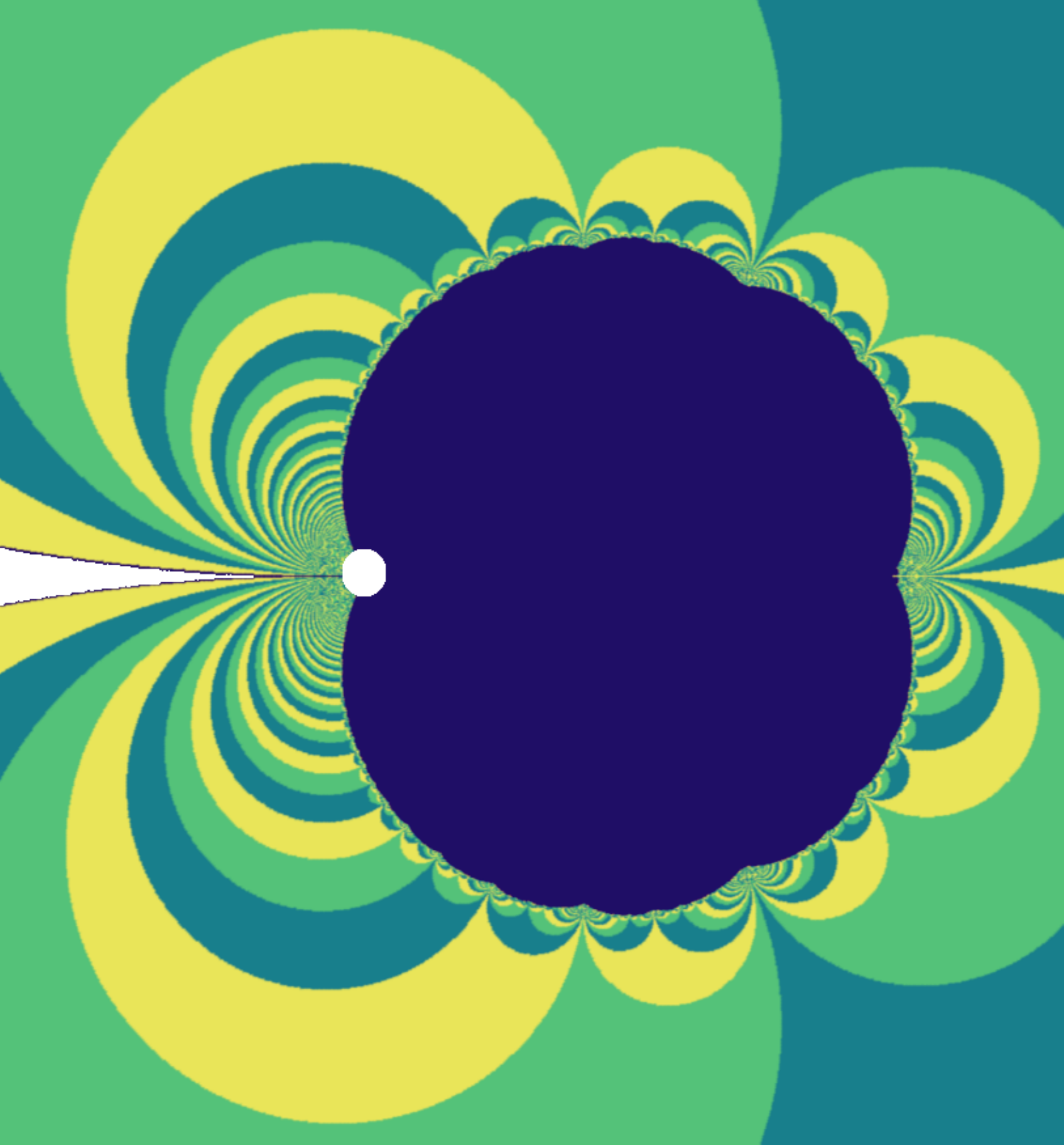}};
\node[anchor=south west,inner sep=0] at (6,0) {\includegraphics[width=0.54\textwidth]{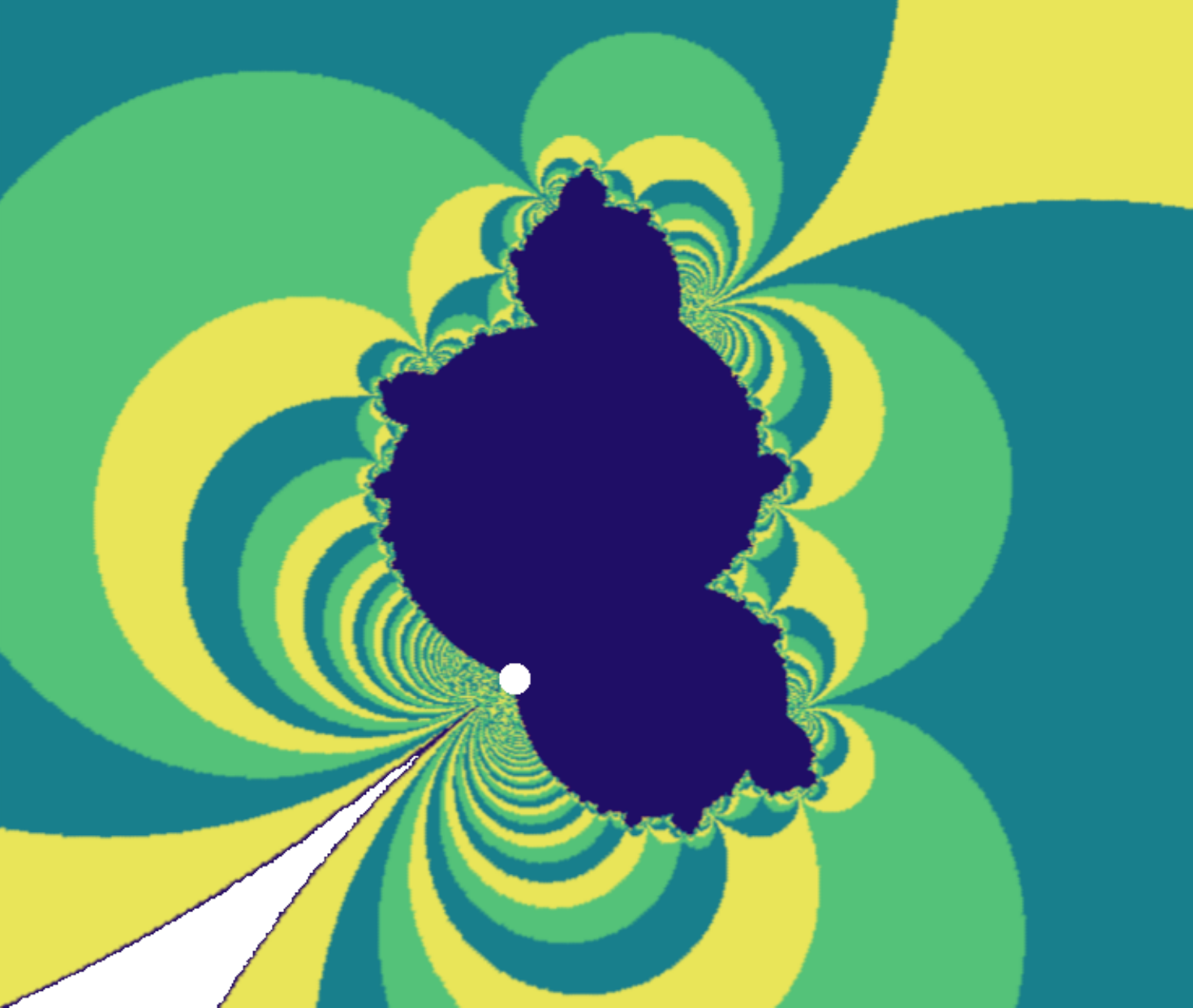}};
\node[anchor=south west,inner sep=0] at (2.8,-8) {\includegraphics[width=0.6\textwidth]{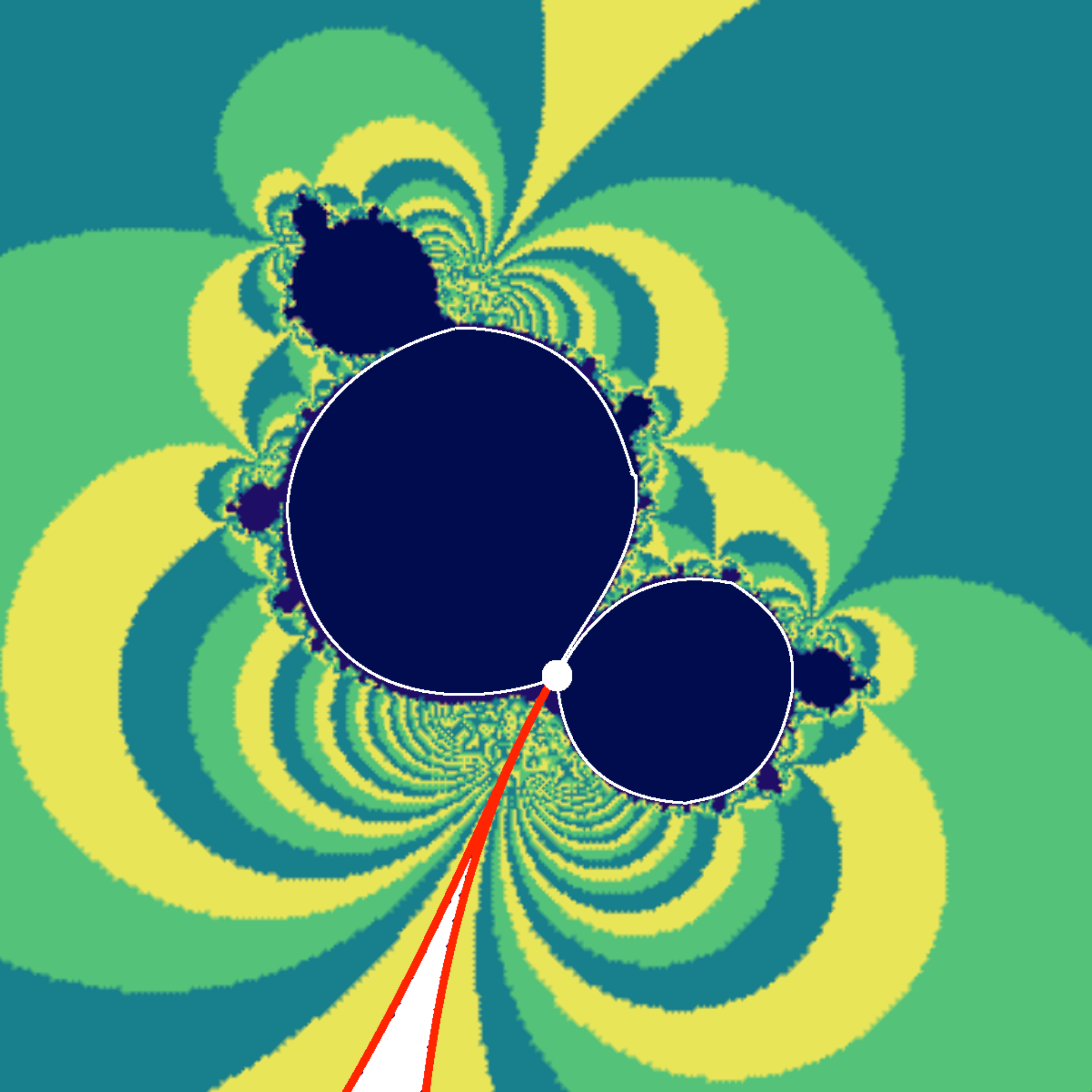}};
\node at (3.5,3.3) {\begin{Large}{\textcolor{white}{$U$}}\end{Large}};
\node at (2.3,2.84) {\textcolor{white}{$-2$}};
\node at (9.4,3.2) {\begin{Large}{\textcolor{white}{$U$}}\end{Large}};
\node at (9.32,1.88) {\textcolor{white}{$-2$}};
\node at (7.6,-5.3) {\begin{Large}{\textcolor{white}{$U$}}\end{Large}};
\node at (5.8,-3.8) {\begin{Large}{\textcolor{white}{$\sigma_a(U)$}}\end{Large}};
\node at (6.2,-4.9) {\textcolor{white}{$-2$}};
\end{tikzpicture}
\caption{Top: The dynamical planes of $\sigma_a$, where $a=4$ (left) and $a=4+i$ (right), show the unique invariant Fatou component $U$ with $-2\in\partial U$. The forward orbit of every point in $U$ converges to the fixed point $-2$. The critical value $2$ lies in $U$. For $a=4$, the critical {\'E}calle height of $\sigma_a$ is $0$; while for $a=4+i$, the critical {\'E}calle height of $\sigma_a$ is some large positive real number. Bottom: The dynamical plane of $\sigma_a$, where $a=4+i\sqrt{3}$, shows the $2$-cycle of Fatou components $\{U, \sigma_a(U)\}$ with $-2\in\partial U\cap\partial\sigma_a(U)$. The forward orbit of every point in $U\cup\sigma_a(U)$ converges to the fixed point $-2$. The critical value $2$ lies in $U$.}
\label{special_arc_dynamics}
\end{figure}

For $a\in S$, it follows that $-2$ repels nearby points in the direction $2\theta_0$ under application of $\sigma_a$. In particular, points in $K_a$ close to $-2$ are repelled from $-2$ under iterates of $\sigma_a$.

\subsubsection{Case II: $a\in\mathcal{I}\subset\widehat{S}$}\label{cusp_asymp_sec_2}
Here we will explain the reason for not considering parameters lying on the segment $\{a:\re(a)=4,\vert\im(a)\vert\leq\sqrt{3}\}$.

First let $a\in\mathcal{I}^0$; i.e., $\re(a)=4$, and $\vert\im(a)\vert<\sqrt{3}$. Set $\arg(a-1)=\theta_0$. Then, 
\begin{equation}
\sigma_a^{\circ 2}(-2+\delta e^{2i\theta_0})=-2+\delta e^{2i\theta_0}-\frac{2(3-\im(a)^2)}{9\sqrt{3}\vert a-1\vert^3}\delta^{\frac52}e^{2i\theta_0}+O(\delta^3),
\label{schwarz_cusp_expansion_1}
\end{equation}
for sufficiently small positive $\delta$.

Hence, for parameters $a\in\mathcal{I}^0$, the fixed point $-2$ attracts nearby points in the direction $2\theta_0$ under application of $\sigma_a^{\circ 2}$; i.e., there is a forward invariant Fatou component for $\sigma_a$ such that the forward orbit of every point in this component converges to the fixed point $-2$ on its boundary (see Figure~\ref{special_arc_dynamics}). 

Now suppose that $a=4+i\sqrt{3}$. Then,
\begin{equation}
\sigma_a^{\circ 2}(-2+\delta)=-2+\delta-\frac{(\sqrt{3}+i)}{972}\delta^{\frac72}+O(\delta^4),
\label{schwarz_cusp_expansion_2}
\end{equation}
for $\delta$ with sufficiently small absolute value such that $(-2+\delta)\in\Omega_a'$ (where the chosen branch of square root sends the radial line at angle $2\theta_0$ to the radial line at angle $\theta_0$). It follows that $-2$ has two attracting directions, which are permuted by $\sigma_a$. Therefore, for $a=4+i\sqrt{3}$, there is a $2$-cycle of Fatou components with the fixed point $-2$ on their common boundary such that the forward orbit of every point in these components converges to $-2$ (see Figure~\ref{special_arc_dynamics}).

A completely analogous analysis shows that for $a=4-i\sqrt{3}$, there is a $2$-cycle of Fatou components with the fixed point $-2$ on their common boundary such that the forward orbit of every point in these components converges to $-2$.

To sum up, we ignore parameters $a$ with $\re(a)=4$ and $\vert\im(a)\vert\leq\sqrt{3}$ so that all maps in the parameter space have the same qualitative dynamics near the fixed point $-2$; namely, they repel nearby points in $K_a$.

We end this subsection with a couple of consequences of the above analysis on the local dynamics of $\sigma_a$ near $-2$.

\begin{proposition}\label{schwarz_qcdef}
1) Let $a\in S$, $\mu$ be a $\sigma_a$-invariant Beltrami coefficient on $\widehat{\C}$, and $\Phi$ be a quasiconformal map satisfying $\Phi_{\overline{z}}/\Phi_{z}=\mu$ a.e. such that $\Phi$ fixes $\pm 2$ and $\infty$. Then, there exists $b\in S$ such that $\Phi(\Omega_a)=\Omega_b$, and $\Phi\circ\sigma_a\circ\Phi^{-1}=\sigma_b$ on $\Omega_b$.

2) Let $a\in\widehat{S}$ with $\re(a)=4$ and $\vert\im(a)\vert<\sqrt{3}$. Let $\mu$ be a $\sigma_a$-invariant Beltrami coefficient on $\widehat{\C}$, and $\Phi$ be a quasiconformal map satisfying $\Phi_{\overline{z}}/\Phi_{z}=\mu$ a.e. such that $\Phi$ fixes $\pm 2$ and $\infty$. Then, there exists $b\in\widehat{S}$ with $\re(b)=4$ and $\vert\im(b)\vert<\sqrt{3}$ such that $\Phi(\Omega_a)=\Omega_b$, and $\Phi\circ\sigma_a\circ\Phi^{-1}=\sigma_b$ on $\Omega_b$.
\end{proposition}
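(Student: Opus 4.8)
The plan is to reduce both parts to the deformation result already established in Proposition~\ref{schwarz_pre_qcdef}, and then to pin down the precise location of the deformed parameter $b$ using the local dynamical analysis near the cusp point $-2$ carried out in Subsection~\ref{dyn_near_cusp}. In both parts the hypothesis $\re(a)>\frac32$ holds (in Part~1 because $a\in S$, in Part~2 because $\re(a)=4$). Hence Proposition~\ref{schwarz_pre_qcdef} applies verbatim and produces $b\in\hat{S}$ with $\re(b)>\frac32$ such that $\Phi(\Omega_a)=\Omega_b$ and $\Phi\circ\sigma_a\circ\Phi^{-1}=\sigma_b$ on $\Omega_b$. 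It therefore only remains to show that $b$ lands in the prescribed sub-region: $b\in S$ (i.e. $\re(b)<4$) in Part~1, and $\re(b)=4$ with $\vert\im(b)\vert<\sqrt{3}$ in Part~2.

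The key observation is that $\Phi$ is a homeomorphism of $\widehat{\C}$, and the conjugacy $\Phi\circ\sigma_a=\sigma_b\circ\Phi$ holds on $\Omega_a$. Since $\Phi(\Omega_a)=\Omega_b$ and $\Phi$ fixes $-2$, it maps $T_a=\widehat{\C}\setminus\Omega_a$ to $T_b$, hence the tiling set $T_a^\infty$ onto $T_b^\infty$ and the non-escaping set $K_a$ onto $K_b$, intertwining $\sigma_a\vert_{K_a}$ with $\sigma_b\vert_{K_b}$. Because $\Phi(-2)=-2$ and $-2\in\partial K_a\cap\partial K_b$, the local dynamics of $\sigma_a$ on $K_a$ near $-2$ is topologically conjugate to that of $\sigma_b$ on $K_b$ near $-2$. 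Consequently, the qualitative classification of the fixed point $-2$ from Subsection~\ref{dyn_near_cusp}---namely, whether nearby points of the non-escaping set are repelled from $-2$ or converge to it, and, in the latter case, the period of the attracting cycle of Fatou components---is invariant under $\Phi$. From that analysis, for $b\in\hat{S}$ with $\re(b)>\frac32$ there are exactly three regimes: if $\re(b)<4$, then by~(\ref{schwarz_cusp_expansion}) the point $-2$ repels nearby points of $K_b$; if $\re(b)=4$ and $\vert\im(b)\vert<\sqrt{3}$, then by~(\ref{schwarz_cusp_expansion_1}) the point $-2$ carries a forward-invariant (period one) attracting basin inside $K_b$; and if $b=4\pm i\sqrt{3}$, then by~(\ref{schwarz_cusp_expansion_2}) the point $-2$ carries a period-two attracting basin inside $K_b$.

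This trichotomy closes both parts. In Part~1, $a\in S$ means $-2$ repels nearby points of $K_a$, so by conjugation invariance $-2$ must repel nearby points of $K_b$; the two attracting regimes are thereby excluded, whence $\re(b)<4$ and $b\in S$. In Part~2, $a$ satisfies $\re(a)=4,\ \vert\im(a)\vert<\sqrt{3}$, so $-2$ carries a period-one attracting basin in $K_a$; invariance of this structure rules out both the repelling regime ($\re(b)<4$) and the corners $4\pm i\sqrt{3}$ (whose basins have period two), forcing $\re(b)=4$ and $\vert\im(b)\vert<\sqrt{3}$, exactly as required.

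The step requiring the most care---and the only one going beyond a direct appeal to Proposition~\ref{schwarz_pre_qcdef}---is the assertion that the local behavior of $\sigma_a$ on $K_a$ near the cusp $-2$ is a topological conjugacy invariant. This is delicate precisely because $\sigma_a$ fails to be anti-holomorphic at $-2$, so one cannot invoke the usual holomorphic parabolic normal-form theory. Instead, I would argue directly from the asymptotic expansions~(\ref{schwarz_cusp_expansion}), (\ref{schwarz_cusp_expansion_1}), and~(\ref{schwarz_cusp_expansion_2}): each of these encodes the dichotomy ``repelled from $-2$'' versus ``converging to $-2$'', together with the period of the attracting cycle, purely in terms of the orbit behavior of points of $K_a$ under $\sigma_a$, and such orbit-theoretic data are transported verbatim by the homeomorphism $\Phi$ that conjugates $\sigma_a\vert_{K_a}$ to $\sigma_b\vert_{K_b}$ and fixes $-2$.
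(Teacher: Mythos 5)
Your proposal is correct and follows essentially the same route as the paper: first invoke Proposition~\ref{schwarz_pre_qcdef} to obtain $b\in\hat{S}$ with $\re(b)>\frac32$, then use the fact that the local dynamical regime at the cusp $-2$ (repelling for parameters in $S$, forward-invariant attracting basin for $\re(a)=4$, $\vert\im(a)\vert<\sqrt{3}$, period-two basin at $4\pm i\sqrt{3}$) is an orbit-theoretic property preserved by the conjugating homeomorphism $\Phi$, which pins $b$ into the required sub-region. Your write-up is in fact slightly more explicit than the paper's, as it spells out the full trichotomy from Subsection~\ref{dyn_near_cusp} and explicitly excludes the corner parameters $4\pm i\sqrt{3}$ in Part~2, a point the paper leaves implicit.
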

\begin{proof}
1) By Proposition~\ref{schwarz_pre_qcdef}, there exists some $b\in\widehat{S}$ with $\re(b)>\frac32$ satisfying the desired properties. 

Since $a\in S$, the fixed point $-2$ repels nearby points in $K_a$ (under application of $\sigma_a$). This property is preserved under the quasiconformal conjugacy $\Phi$, and hence the same is true for the map $\sigma_b$. It now follows that $b\in S$.

2) Once again by Proposition~\ref{schwarz_pre_qcdef}, there exists some $b\in\widehat{S}$ with $\re(b)>\frac32$ satisfying the desired properties. 

Since $\re(a)=4$ and $\vert\im(a)\vert<\sqrt{3}$, the map $\sigma_a$ has a forward invariant Fatou component such that the forward orbit of every point in this component converges to the fixed point $-2$ on its boundary. The quasiconformal conjugacy $\Phi$ preserves this property, and hence the same is true for the map $\sigma_b$. It now follows that $\re(b)=4$ and $\vert\im(b)\vert<\sqrt{3}$.
\end{proof}

\begin{proposition}\label{special_neutral_critical}
For all maps $\sigma_a$ with $a\in\mathcal{I}\subset\widehat{S}$, the forward critical orbit $\{\sigma_a^{\circ n}(2)\}_n$ converges to $-2$.
\end{proposition}
\begin{proof}
Let us fix some $a\in\mathcal{I}$. Note that the second iterate $\sigma_a^{\circ 2}$ has at least one attracting direction at the fixed point $-2$. Let $U\subset\Omega_a$ be an immediate basin of attraction of $-2$; i.e., a connected component of the set of all points converging to $-2$ (under iterates of $\sigma_a^{\circ 2}$) having $-2$ on its boundary. Using the asymptotics of $\sigma_a^{\circ 2}$ near the cusp point $-2$ (see Subsection~\ref{cusp_asymp_sec_2}), one can easily adapt the proof of existence of attracting Fatou coordinates at parabolic fixed points (for instance, see \cite[Theorem~10.9]{M1new}) to show that there exists a $\sigma_a^{\circ 2}$-invariant open set $\mathcal{P}\subset U$ with $-2\in\partial \mathcal{P}$ and a conformal map from $\mathcal{P}$ onto some right half-plane that conjugates $\sigma_a^{\circ 2}$ to the translation $\zeta\mapsto\zeta+1$ (this conformal coordinate is unique up to addition by a complex constant). One can now argue as in \cite[Theorem~10.15]{M1new} to conclude that the boundary of the maximal domain of definition of this conjugacy contains a critical point of $\sigma_a^{\circ 2}$. Hence, $U$ contains an infinite critical orbit of $\sigma_a^{\circ 2}$.

Clearly, the sequence $\{\sigma_a^{\circ n}(2)\}_n$ is the union of two infinite critical orbits of $\sigma_a^{\circ 2}$. Moreover, these are the only infinite critical orbits of $\sigma_a^{\circ 2}$, and these orbits are related by $\sigma_a$. It follows that if $a\in\mathcal{I}^0$; i.e., if $\sigma_a^{\circ 2}$ has exactly one attracting direction at the fixed point $-2$, then $\sigma_a(U)=U$, and hence, $\{\sigma_a^{\circ n}(2)\}_n\subset U$. On the other hand, if $a=4\pm i\sqrt{3}$, then $\sigma_a^{\circ 2}$ has exactly two attracting directions at the fixed point $-2$, and these two attracting directions are permuted by $\sigma_a$. In this case, the sequence $\{\sigma_a^{\circ n}(2)\}_n$ is contained in the immediate basin of attraction $U\cup\sigma_a(U)$ of $-2$ (see Figure~\ref{special_arc_dynamics}). Hence, in both cases, the critical orbit $\{\sigma_a^{\circ n}(2)\}_n$ converges to $-2$. 
\end{proof}

\subsection{The Connectedness Locus}\label{conn_locus_def} 

\begin{proposition}\label{connected_critical}
For $a\in S$, $K_a$ is connected if and only if $2\in K_a$.
\end{proposition}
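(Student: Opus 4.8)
# Proof Proposal

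The plan is to establish the equivalence through the standard dichotomy for (anti-)polynomial-like dynamics, adapted to the pinched anti-quadratic-like structure that $\sigma_a$ carries. The forward implication is immediate: if $K_a$ is connected, then since $2 = \sigma_a(c_a)$ lies in the non-escaping set whenever the free critical point $c_a$ does not escape, and connectivity of $K_a$ is tied to the non-escape of $c_a$, we only need to verify that $2 \in K_a$. More directly, if $K_a$ is connected, I would argue that the free critical value $2$ cannot lie in the tiling set, since the escape of $2$ (equivalently, of $c_a$) to the fundamental tile $T_a^0$ forces a disconnection of the non-escaping set. The substantive content is the reverse implication.

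For the reverse direction, I would assume $2 \in K_a$ and show $K_a$ is connected. Recall from Subsection~\ref{setup} that for $a \in S$ (so $\re(a) > \frac{3}{2}$), the map $\sigma_a \colon \Omega_a' \to \Omega_a$ is a $2\!:\!1$ branched covering branched only at the simple critical point $c_a$, with $c_a \in \Omega_a'$ and $\sigma_a(c_a) = 2$. This is precisely the setting of an anti-quadratic-like map (degenerate at the cusp $-2$), and the non-escaping set $K_a$ plays the role of the filled Julia set. The classical argument of Douady--Hornbeck--Hubbard for filled Julia sets of quadratic-like maps then applies: one considers the decreasing sequence of closed sets $K_a^{(n)} := \{z : \sigma_a^{\circ k}(z) \in \overline{\Omega_a} \cup \{-2\} \text{ for } 0 \le k \le n\}$, whose intersection is $K_a$. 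The first set $\overline{\Omega_a}$ is connected (it is a closed Jordan domain), and each $K_a^{(n+1)} = \sigma_a^{-1}(K_a^{(n)}) \cap (\overline{\Omega_a}\cup\{-2\})$ is obtained by pulling back under the proper degree-two branched cover $\sigma_a$.

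The key step is an inductive connectivity argument. The hypothesis $2 \in K_a$ means the critical value stays in every $K_a^{(n)}$, so the critical point $c_a \in K_a^{(n)}$ for all $n$ as well. When pulling back a connected, full compact set $L \subset \Omega_a$ containing the critical value $2$ under the branched double cover $\sigma_a\colon \Omega_a' \to \Omega_a$, the preimage $\sigma_a^{-1}(L)$ is connected: a proper degree-two branched covering of a connected, simply connected set, branched at a point over which the branch value $2$ lies in $L$, yields a connected preimage (the two sheets meet over the critical value). I would make this precise using the Riemann--Hurwitz formula together with the fact that $\Omega_a'$ is simply connected (Subsection~\ref{setup}), so that each $\Int K_a^{(n)}$ remains connected and simply connected, and the nested intersection of connected compacta is connected. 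I expect the main obstacle to be the degenerate cusp at $-2$: unlike the genuine anti-quadratic-like case, $\sigma_a$ has no (anti-)holomorphic extension past $-2$, so I must check that the pinching point does not sever $K_a$. Here the asymptotic development~(\ref{schwarz_cusp_expansion}) is decisive — it shows that $-2$ repels nearby points of $K_a$ in a single direction, confirming that $-2$ attaches to $K_a$ as a boundary cusp rather than a cut point, so that the inductive connectivity passes to the limit intact.
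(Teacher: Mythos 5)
Your degree-counting idea (Riemann--Hurwitz on pullbacks, \`a la Douady--Hubbard) is the natural dual of what the paper does, but there is a genuine gap at the very first step: the identification $\bigcap_n K_a^{(n)}=K_a$ is false. Since $-2\in\partial\Omega_a$, your set $\overline{\Omega_a}\cup\{-2\}$ is just $\overline{\Omega_a}$; and since $\sigma_a$ fixes $\partial\Omega_a$ pointwise, every point of $\partial\Omega_a$ --- and, more generally, every point of $\Omega_a$ whose orbit lands on $\partial\Omega_a$ before ever entering $\Int{T_a}$, i.e.\ every point on the boundary of every tile --- lies in $K_a^{(n)}$ for all $n$. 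Thus $\bigcap_n K_a^{(n)}$ is not $K_a$ but $K_a$ together with the entire skeleton $\bigcup_{n\geq0}\sigma_a^{-n}(\partial\Omega_a)$ of the tiling (recall from Definition~\ref{non_escaping_def} that $K_a$ is defined using the set $\Omega_a\cup\{-2\}$, and points of $\partial\Omega_a\setminus\{-2\}$ belong to $T_a^0$, hence to the tiling set). Your induction therefore proves connectivity of this larger set, which does not suffice: the skeleton is connected, meets $K_a$ exactly in the iterated preimages of the cusp $-2$, and can glue distinct components of $K_a$ together, so connectivity of the union carries no information about connectivity of $K_a$. Replacing $\overline{\Omega_a}$ by $\Omega_a\cup\{-2\}$ does not rescue the argument either, since then the sets $K_a^{(n)}$ are no longer compact and the nested-intersection principle fails.

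The underlying issue is one you misdiagnosed: the degeneracy is not confined to the cusp. Because $\sigma_a$ fixes \emph{all} of $\partial\Omega_a$, pullbacks of $\overline{\Omega_a}$ never peel away from the boundary, so there is no fundamental annulus at all. To run your scheme one must first cut a wedge at $-2$ and pass to the subdomain $V_a$ of Theorem~\ref{straightening_schwarz}, using the asymptotics~(\ref{schwarz_cusp_expansion}) to show that every point of $\Omega_a\setminus\overline{V_a}$ (in particular every tile-boundary point other than a preimage of $-2$) escapes, so that $K_a$ is honestly the nested intersection of the pullbacks of $\overline{V_a}$ under $\sigma_a\colon\sigma_a^{-1}(V_a)\to V_a$; this is essentially the route the paper takes later when straightening, but it is considerably heavier than what the proposition needs. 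The paper's own proof sidesteps all of this by working on the complement side: with $E_a^k$ the union of tiles of rank $\leq k$, one has $K_a=\bigcap_k\bigl(\widehat{\C}\setminus\Int{E_a^k}\bigr)$; when $2\in K_a$, every tile of rank $\geq2$ is unramified, each $\widehat{\C}\setminus\Int{E_a^k}$ is a full continuum, and a nested intersection of full continua is a full continuum. Note finally that your forward implication is asserted rather than proved; the paper supplies the mechanism, namely that when $2$ escapes, the ramified tile containing $c_a$ disconnects $K_a$.
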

\begin{proof}
Let $E_a^k$ be the union of the tiles of rank $\leq k$. Note that although the tile of rank one is mapped to $T_a^0$ under $\sigma_a$ as a ramified cover, $\widehat{\C}\setminus \Int{E_a^1}$ is a full continuum. 

If $2\in T_a^\infty$, then the tile containing the free critical point $c_a$ is ramified and it disconnects $K_a$. On the other hand, if $c_a$ does not escape to $T_a^0$ under iterates of $\sigma_a$, then every tile of rank $\geq 2$ is unramified, and $\widehat{\C}\setminus\Int{E_a^k}$ is a full continuum for each $k\geq 0$. Therefore, 
$$
K_a=\bigcap_{k\geq 0}\left(\widehat{\C}\setminus\Int{E_a^k}\right)
$$ 
is a nested intersection of full continua, and hence is a full continuum itself. 
\end{proof}

\begin{figure}[ht!]
\begin{center}
\includegraphics[scale=0.8]{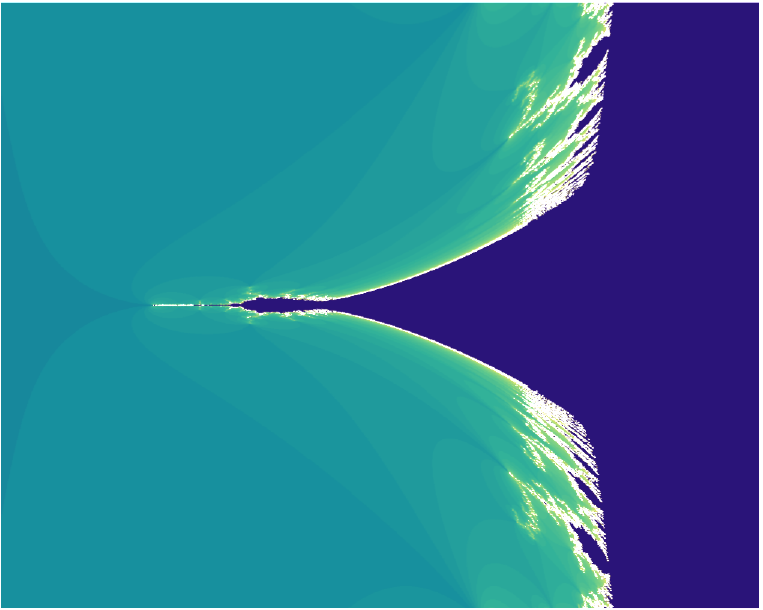}
\caption{A part of the connectedness locus $\cC(\mathcal{S})$ is shown in blue.}
\label{conn_loc_pic}
\end{center}
\end{figure}

Proposition~\ref{connected_critical} leads to the following definition.

\begin{definition}[Connectedness Locus and Escape Locus]\label{conn_escape_def}
The connectedness locus of the family $\mathcal{S}$ is defined as 
$$
\cC(\mathcal{S})=\{a\in S: 2\notin T_a^\infty\}=\{a\in S: K_a\ \textrm{is\ connected}\}.
$$ 
The complement of the connectedness locus in the parameter space is called the \emph{escape locus}. (See Figure~\ref{conn_loc_pic} for a part of the connectedness locus.)
\end{definition}

We now record some basic properties of the connectedness and escape loci.

\begin{proposition}\label{escape_locus_non_empty}
$(\frac32,\frac52)\subset S\setminus\cC(\mathcal{S})$. In particular, $S\setminus\cC(\mathcal{S})\neq\emptyset$.
\end{proposition}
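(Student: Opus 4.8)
The plan is to show that every real parameter $a\in(\tfrac32,\tfrac52)$ lies in the escape locus by tracking the forward orbit of the free critical point $c_a$ and showing it eventually lands in the fundamental tile $T_a^0$. By Definition~\ref{conn_escape_def}, it suffices to prove that for such $a$ the critical value $2=\sigma_a(c_a)$ belongs to $T_a^\infty$, equivalently that $2\notin K_a$. Since $K_a\subset\Omega_a\cup\{-2\}$, the cleanest route is to exploit the real symmetry of the configuration: for $a\in\R$ the disk $\Delta_a=B(a,|a-1|)$ is centered on the real axis, so $f(\Delta_a)=\Omega_a$ is symmetric about $\R$, the Schwarz reflection $\sigma_a$ commutes with complex conjugation $\iota$, and the real axis is forward invariant. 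Thus the whole analysis reduces to the one-dimensional dynamics of $\sigma_a$ restricted to $\Omega_a\cap\R$.

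First I would pin down the relevant real interval. For $a\in(\tfrac32,\tfrac52)$ the critical value $2$ lies in $\Omega_a$ (indeed $2\in\Delta_a$ precisely when $\re(a)>\tfrac32$, as computed in Subsection~\ref{setup}), while the right endpoint of $\Omega_a\cap\R$ is $f(2a-1)=(2a-1)^3-3(2a-1)$, the image of the rightmost point of $\partial\Delta_a$. The key point is that $2$ sits \emph{beyond} this boundary value for $a$ in the stated range, or more robustly, that the real orbit of $2$ under $\sigma_a$ is pushed monotonically to the right and escapes $\Omega_a$ through its right boundary into $\Int T_a$. Concretely, I would show $\sigma_a(2)>2$ and that $\sigma_a$ has no fixed point in $[2,\sup(\Omega_a\cap\R))$, so that the orbit $\{\sigma_a^{\circ n}(2)\}$ is increasing and must leave $\Omega_a$ in finitely many steps; the only way to leave through the real slice is into $T_a^0$, giving $2\in T_a^\infty$. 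The value $\sigma_a(2)$ and the geometry here are explicit: using $\sigma_a=f\circ\iota_a\circ f^{-1}|_{\overline\Omega_a}$, one computes $f^{-1}(2)=2$ (the branch landing in $\overline{\Delta_a}$), then reflects in $\partial\Delta_a$ and applies $f$ again, all of which is a finite real computation.

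The main obstacle I anticipate is handling the endpoint behavior uniformly across the whole open interval $(\tfrac32,\tfrac52)$ rather than at a single parameter, and in particular ruling out an attracting or neutral \emph{real} fixed point of $\sigma_a$ in the relevant interval that could trap the orbit of $2$ inside $K_a$. To control this I would analyze $\sigma_a|_\R$ directly: its real fixed points correspond to points $x\in\Omega_a\cap\R$ with $\iota_a(f^{-1}(x))=f^{-1}(x)$, i.e. $f^{-1}(x)\in\partial\Delta_a\cap\R=\{1,2a-1\}$, so the only candidate fixed point in the right part is the cusp $-2$ (from the critical point $1$) and the boundary image $f(2a-1)$; since $-2$ is repelling along $\R$ for $a\in S$ by Relation~(\ref{schwarz_cusp_expansion}), and the other candidate lies on $\partial\Omega_a$, there is no interior real fixed point to the right of $2$ to absorb the orbit.

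Finally, I would assemble these pieces: real symmetry confines the dynamics to $\R$, the absence of a trapping real fixed point forces monotone escape of the orbit of $2$, and escape through the real boundary of $\Omega_a$ lands the orbit in the open fundamental tile $T_a^0$, whence $2\in T_a^\infty$ and $a\notin\cC(\mathcal{S})$ by definition. Establishing the containment $(\tfrac32,\tfrac52)\subset S$ itself is immediate from Proposition~\ref{univalence_non_empty}, which gives $\hat S\cap\R=(0,4]\setminus\{1\}$, together with the defining inequality $\tfrac32<\re(a)<4$ for $S$. The upper cutoff $\tfrac52$ (rather than something larger) should emerge naturally as the threshold beyond which $2$ could already lie in a tile of rank one or beyond which the monotone-escape estimate first fails; I would expect the honest content to be the explicit verification that for $a\in(\tfrac32,\tfrac52)$ the critical value $2$ and its immediate images do not reenter $K_a$, which is the step requiring the most care.
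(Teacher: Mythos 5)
Your reduction to showing $2\in T_a^\infty$ and the inclusion $(\tfrac32,\tfrac52)\subset S$ (via Proposition~\ref{univalence_non_empty}) are fine, but the dynamical core of your plan fails on half of the interval. Since $f^{-1}(2)=\{-1,2\}$ and $-1\notin\Delta_a$, one has $\sigma_a(2)=f(\iota_a(2))=f\bigl(\tfrac{1}{2-a}\bigr)$. For $a\in(\tfrac32,2)$ indeed $\tfrac{1}{2-a}>2a-1$, so $\sigma_a(2)>f(2a-1)$ and the critical value leaves $\Omega_a$ through the right in a \emph{single} step, so no monotonicity or fixed-point discussion is needed. But for $a\in(2,\tfrac52)$ one has $\iota_a(2)+2=\tfrac{5-2a}{2-a}<0$, hence $\sigma_a(2)=f(\iota_a(2))<f(-2)=-2$: the orbit jumps to the \emph{left} of the cusp, into the part of $T_a^0$ on the negative real axis; and at $a=2$ the point $2$ is the center of $\Delta_a$, so $\sigma_a(2)=f(\infty)=\infty$. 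Your claims ``$\sigma_a(2)>2$'' and ``escape through the right boundary'' are therefore false on $[2,\tfrac52)$, and the proposed monotone-rightward-escape scheme cannot be carried out there (even though escape does occur --- immediately). The paper's proof is exactly this one-step computation, split into the three cases $a\in(\tfrac32,2)$, $a=2$, $a\in(2,\tfrac52)$, combined with the fact (from the proof of Proposition~\ref{univalence_non_empty}) that $\Omega_a\cap\R\subset(-2,f(2a-1))$; the threshold $\tfrac52$ is simply where $\iota_a(2)=-2$, i.e.\ where $\sigma_a(2)$ hits the cusp $-2\in K_a$.

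There is a second, independent error: your characterization of real fixed points. The identity $\sigma_a(x)=x$ means $f(\iota_a(w))=f(w)$ for $w=f^{-1}\vert_{\overline{\Omega}_a}(x)$, and since $f$ is $3$-to-$1$ this does \emph{not} force $\iota_a(w)=w$ (i.e.\ $w\in\partial\Delta_a$); it only forces $\iota_a(w)$ to lie in the same $f$-fiber as $w$, which can happen with $\iota_a(w)\neq w$. Concretely, for $a=3$ one gets $\iota_3(2)=-1$ and $\sigma_3(2)=f(-1)=2$, an interior fixed point (the center of the period-one hyperbolic component); and already for $a\in(2,\tfrac52)$ the equation $\iota_a(w)^2+\iota_a(w)\,w+w^2=3$ has a solution $w^\ast$ in the interior of $\Delta_a\cap\R$ (numerically $w^\ast\approx 1.84$ for $a=\tfrac94$), so $\sigma_a$ has an interior real fixed point near $f(w^\ast)\approx 0.74$. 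Your conclusion that no fixed point lies in $[2,\sup(\Omega_a\cap\R))$ happens to be true --- any fixed point satisfies $x=f(w)$ with both $w$ and $\iota_a(w)$ on the ellipse $u^2+uv+v^2=3$, hence in $[-2,2]$, forcing $x<2$ --- but your argument does not establish it, and in any case it cannot rescue the approach on $[2,\tfrac52)$, where the orbit of $2$ never moves to the right at all.
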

\begin{proof}
By Proposition~\ref{univalence_non_empty} and the definition of $S$, we know that $(\frac32,\frac52)\subset S$. 

Note that for $a>\frac32$, the critical value $2$ lies in $\Omega_a$, and hence $2\in\Delta_a$. Also, $-1\notin\Delta_a$. It follows by the commutative diagram in Figure~\ref{comm_diag_schwarz} that $\sigma_a(2)=f(\iota_a(2))=f\left(\frac{1}{2-a}\right)$.

\noindent\textbf{Case 1: $a\in(2,\frac52)$.} In this case, we have
\begin{align*}
\iota_a(2)+2=\frac{5-2a}{2-a}<0
&\implies \iota_a(2)<-2\\
&\implies f(\iota_a(2))<f(-2)=-2\\
&\implies \sigma_a(2)<-2.
\end{align*}

By the proof of Proposition~\ref{univalence_non_empty}, we see that $(-\infty,-2)\cap\Omega_a=\emptyset$. Therefore, $\sigma_a(2)\in T_a^0\subset T_a^\infty$. It follows that $(2,\frac52)\in S\setminus\cC(\mathcal{S})$.

\noindent\textbf{Case 2: $a=2$.} For this parameter, we have $\sigma_a(2) =f(\infty)=\infty\in T_a^0$. Hence, $2\in S\setminus\cC(\mathcal{S})$.

\noindent\textbf{Case 3: $a\in(\frac32,2)$.} In this case, we have 
\begin{align*}
\frac{1}{2-a}-(2a-1)=\frac{2(a-1)(a-\frac32)}{2-a}>0
&\implies \frac{1}{2-a}>(2a-1)>2\\
&\implies f\left(\frac{1}{2-a}\right)>f(2a-1)>f(2)\\
&\implies \sigma_a(2)>f(2a-1)>2.
\end{align*}

The proof of Proposition~\ref{univalence_non_empty} shows that $(f(2a-1),+\infty)\cap\Omega_a=\emptyset$, and hence, $\sigma_a(2)\in T_a^0$. Therefore, $a$ lies in the escape locus. It follows that $(\frac32,2)\subset S\setminus\cC(\mathcal{S})$.

This completes the proof.
\end{proof}

\begin{proposition}\label{conn_locus_non_empty}
$\cC(\mathcal{S})\cap\R=\left[\frac52,4\right)$. In particular, $\cC(\mathcal{S})\neq\emptyset$.
\end{proposition}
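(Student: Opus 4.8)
The plan is to reduce the statement to a purely real, one-dimensional analysis of the forward orbit of the critical value $2$, and to combine it with what has already been proved in this subsection. First I would pin down the real slice of the parameter space: by Proposition~\ref{univalence_non_empty} and the definition of $S$ we have $S\cap\R=(\frac32,4)$, and Proposition~\ref{escape_locus_non_empty} gives $(\frac32,\frac52)\subset S\setminus\cC(\mathcal{S})$. Hence $\cC(\mathcal{S})\cap\R\subseteq[\frac52,4)$, and it remains only to prove the reverse inclusion. By Proposition~\ref{connected_critical}, for $a\in S$ the condition $a\in\cC(\mathcal{S})$ is equivalent to $2\in K_a$, so the goal becomes: for every $a\in[\frac52,4)$ the critical value $2$ does not escape.

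To show $2\in K_a$ I would study $\sigma_a$ on the real line. Since $f$ has real coefficients and $\Delta_a$ is a disk centred on $\R$, both $\Omega_a$ and $\sigma_a$ are symmetric under complex conjugation; concretely $\overline{\Omega_a}\cap\R=[-2,f(2a-1)]$ with $f(2a-1)>2$ for all $a>\frac32$, and on this interval $\sigma_a=f\circ\iota_a\circ(f\vert_{[1,2a-1]})^{-1}$, where $(f\vert_{[1,2a-1]})^{-1}$ is the increasing branch landing in $\overline{\Delta_a}\cap\R=[1,2a-1]$ and $\iota_a(x)=a+\frac{(a-1)^2}{x-a}$. The heart of the argument is to prove that the interval $[-2,2]$ is forward invariant under $\sigma_a$ for every $a\in[\frac52,4)$. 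I would establish this by composing the three maps on subintervals: $(f\vert_{[1,2a-1]})^{-1}$ carries $[-2,2]$ onto $[1,2]$ (as $f(2)=2$); since $a>2$, the reflection $\iota_a$ is decreasing on $[1,2]$ and carries it onto $[\tfrac{1}{2-a},1]$, where $\frac{1}{2-a}=\iota_a(2)\in[-2,-\tfrac12)$; and finally $f$ maps $[\frac{1}{2-a},1]$ into $[-2,2]$. Because $[-2,2]\subset\Omega_a\cup\{-2\}$ and $\sigma_a(-2)=-2$, forward invariance of $[-2,2]$ forces the entire orbit of $2$ to remain in $\Omega_a\cup\{-2\}$, i.e.\ $2\in K_a$, which completes the reverse inclusion.

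The main obstacle is not conceptual but bookkeeping: one must track the correct univalent branch of $f^{-1}$ and the orientation-reversing reflection $\iota_a$ on each subinterval, and verify the final containment $f([\frac{1}{2-a},1])\subseteq[-2,2]$ across the fold of $f$ at its critical point $-1$. This last step requires a short case distinction according to whether $-1$ lies in $[\frac{1}{2-a},1]$, i.e.\ $a\le 3$ (where $f$ maps the interval onto all of $[-2,2]$) versus $a>3$ (where $f$ is monotone on the interval and the image is a proper subinterval); in both cases the image stays inside $[-2,2]$. The endpoint computations give reassuring checks: at $a=\frac52$ one finds $2\mapsto f(-2)=-2$, and at $a=3$ one finds $2\mapsto f(-1)=2$ so that $2$ is fixed, confirming that the orbit of $2$ is trapped throughout $[\frac52,4)$.
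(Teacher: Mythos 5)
Your proof is correct and takes essentially the same route as the paper: both arguments reduce the claim to a real one-dimensional trapping argument showing that for $a\in\left[\frac52,4\right)$ the orbit of the critical value $2$ stays in a forward-invariant real interval inside $\Omega_a\cup\{-2\}$, with the threshold $a\geq\frac52$ entering through the same inequality (your $\iota_a(2)=\frac{1}{2-a}\geq -2$ is equivalent to the paper's $\iota_a(-2)=\frac{4a-1}{a+2}\geq 2$). The only cosmetic difference is that the paper uses the slightly larger invariant interval $\left[-2,f\left(\frac{4a-1}{a+2}\right)\right]$ together with the monotonicity of $\sigma_a$ on the two pieces $\left[-2,c_a\right]$ and $\left[c_a,f\left(\frac{4a-1}{a+2}\right)\right]$, whereas you track $\left[-2,2\right]$ directly through the composition $f\circ\iota_a\circ\left(f\vert_{\left[1,2a-1\right]}\right)^{-1}$.
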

\begin{proof}
In view of Proposition~\ref{escape_locus_non_empty}, it suffices to show that $\left[\frac52,4\right)\subset\cC(\mathcal{S})$. 

Let us now fix $a\in\left[\frac52,4\right)$. 

Note that $-2\notin\Delta_a$, and hence $\iota_a(-2)=\frac{4a-1}{a+2}\in\Delta_a$. A direct computation using the commutative diagram in Figure~\ref{comm_diag_schwarz} now shows that 
$$
f(\iota_a(-2))=f\left(\frac{4a-1}{a+2}\right)\in\Omega_a,\ \mathrm{and}\ \sigma_a\left(f\left(\frac{4a-1}{a+2}\right)\right)=-2.
$$

We also have 
$$
1<\frac{3a-1}{a+1}<\frac{4a-1}{a+2}<a,
$$ 
and hence, 
$$
-2=f(1)<f\left(\frac{3a-1}{a+1}\right)=c_a<f\left(\frac{4a-1}{a+2}\right)<f(a)=c_a^\ast.
$$

Moreover, $\sigma_a$ is a monotone increasing function from $\left[-2,c_a\right]$ onto $\left[-2,2\right]$, and a monotone decreasing function from $\left[c_a,f\left(\frac{4a-1}{a+2}\right)\right]$ onto $\left[-2,2\right]$.

The assumption that $a\geq\frac52$ implies that 
\begin{align*}
\frac{4a-1}{a+2}\geq 2
\implies f\left(\frac{4a-1}{a+2}\right)\geq f(2)=2.
\end{align*}

Therefore, the interval $\left[-2,f\left(\frac{4a-1}{a+2}\right)\right]$ contains $2$, and is invariant under $\sigma_a$. It follows that the critical value $2$ (of $\sigma_a$) does not escape to $T_a^\infty$ under the iterates of $\sigma_a$; i.e., $a\in\cC(\mathcal{S})$. Therefore, $\left[\frac52,4\right)\subset\cC(\mathcal{S})$.
\end{proof}

\begin{corollary}\label{hyp_per_one}
For $a\in\left[3,4\right)$, the critical orbit $\{\sigma_a^{\circ n}(2)\}_n$ converges to a fixed point different from $-2$.
\end{corollary}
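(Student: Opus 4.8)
The plan is to reduce the statement to the one-dimensional real dynamics of $\sigma_a$ on the invariant interval $[-2,c_a]$, whose mapping properties were already recorded in the proof of Proposition~\ref{conn_locus_non_empty}. First I would observe that for $a\in[3,4)$ one has $c_a=f\left(\frac{3a-1}{a+1}\right)\geq 2$, with equality exactly at $a=3$: indeed $\frac{3a-1}{a+1}\geq 2\iff a\geq 3$, and $f$ is increasing on $[2,\infty)$ with $f(2)=2$. The case $a=3$ is then immediate, since there $c_a=2$, so $2$ is a fixed critical point of $\sigma_a$ and the critical orbit is constant, converging to the fixed point $2\neq -2$.

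For $a\in(3,4)$ I would use that $\sigma_a$ is a monotone increasing map from $[-2,c_a]$ onto $[-2,2]$ with $\sigma_a(-2)=-2$ and $\sigma_a(c_a)=2$. Since now $c_a>2$, the interval $[-2,c_a]$ is forward invariant (its image $[-2,2]$ is contained in it), it contains the critical value $2$, and $\sigma_a(2)<\sigma_a(c_a)=2$. Because $\sigma_a$ is increasing, the critical orbit $x_n:=\sigma_a^{\circ n}(2)$ is then strictly decreasing, and it remains strictly above $-2$ (if $x_n>-2$ then $x_{n+1}=\sigma_a(x_n)>\sigma_a(-2)=-2$). Being monotone and bounded, $x_n$ converges to a fixed point $p\in[-2,2)$ of $\sigma_a$.

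It remains to rule out $p=-2$, and this is where the cusp asymptotics become essential. For real $a$ we have $\theta_0=\arg(a-1)=0$, so Relation~(\ref{schwarz_cusp_expansion}) reads
\begin{equation*}
\sigma_a(-2+\delta)=-2+\delta+\frac{2}{3\sqrt{3}}\frac{(4-a)}{\vert a-1\vert}\delta^{\frac32}+O(\delta^2)
\end{equation*}
for small $\delta>0$. As $a<4$, the coefficient of $\delta^{\frac32}$ is strictly positive, so $\sigma_a(x)>x$ on some punctured right neighborhood $(-2,-2+\epsilon)$ of the cusp. If the decreasing orbit converged to $-2$, then eventually $x_n\in(-2,-2+\epsilon)$, forcing $x_{n+1}=\sigma_a(x_n)>x_n$ and contradicting monotone decrease. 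Hence $p>-2$, and the critical orbit converges to a fixed point different from $-2$, as claimed.

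I expect the only genuine subtlety to be the behavior at the cusp: since $-2$ is a parabolic (cusp) fixed point rather than a hyperbolic one, its one-sided repelling character along the positive real direction cannot be read off from a derivative and must be extracted from the precise $\delta^{\frac32}$ term in~(\ref{schwarz_cusp_expansion}). Everything else is the standard monotone-convergence principle for increasing interval maps, which guarantees both the existence of the limit and its being a fixed point.
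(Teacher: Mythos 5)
Your proof is correct and takes essentially the same route as the paper's: the inequality $c_a=f\bigl(\tfrac{3a-1}{a+1}\bigr)\geq 2$ together with the monotonicity of $\sigma_a$ on $\left[-2,c_a\right]$ (from Proposition~\ref{conn_locus_non_empty}) yields a monotonically decreasing critical orbit converging to a real fixed point, and the cusp asymptotics of Subsection~\ref{cusp_asymp_sec_1} (the positive $\delta^{3/2}$ term for $\re(a)<4$) exclude $-2$ as the limit. The only differences are cosmetic: you treat $a=3$ as a separate (constant-orbit) case and spell out the repulsion contradiction, both of which the paper leaves implicit.
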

\begin{proof}
This is a simple extension of the arguments of Proposition~\ref{conn_locus_non_empty}. 

For $a\in\left[3,4\right)$, we have that 
$$
\frac{3a-1}{a+1}\geq2\implies c_a=f\left(\frac{3a-1}{a+1}\right)\geq f(2)=2.
$$ 
Since $\sigma_a$ is monotone increasing on $\left[-2,c_a\right]$, it follows that the sequence $\{\sigma_a^{\circ n}(2)\}_n\subset\left[-2,2\right]$ is monotonically decreasing and hence converges to a (real) fixed point of $\sigma_a$. By Subsection~\ref{cusp_asymp_sec_1}, the fixed point $-2$ repels real points on its right side; and hence the fixed point that the critical orbit $\{\sigma_a^{\circ n}(2)\}_n$ converges to is different from $-2$.
\end{proof}

\begin{figure}[ht!]
\begin{center}
\includegraphics[scale=0.16]{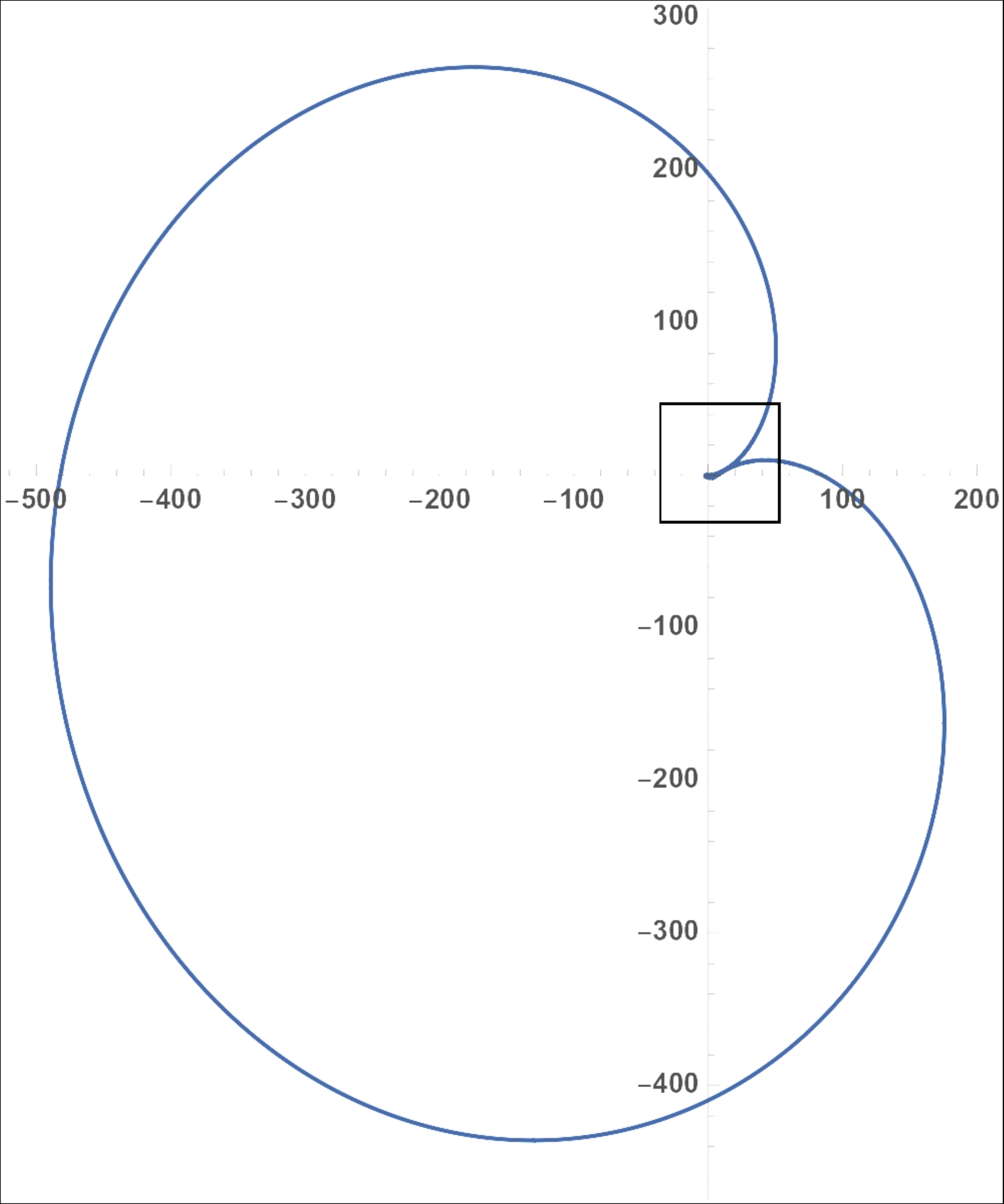}\ \includegraphics[scale=0.16]{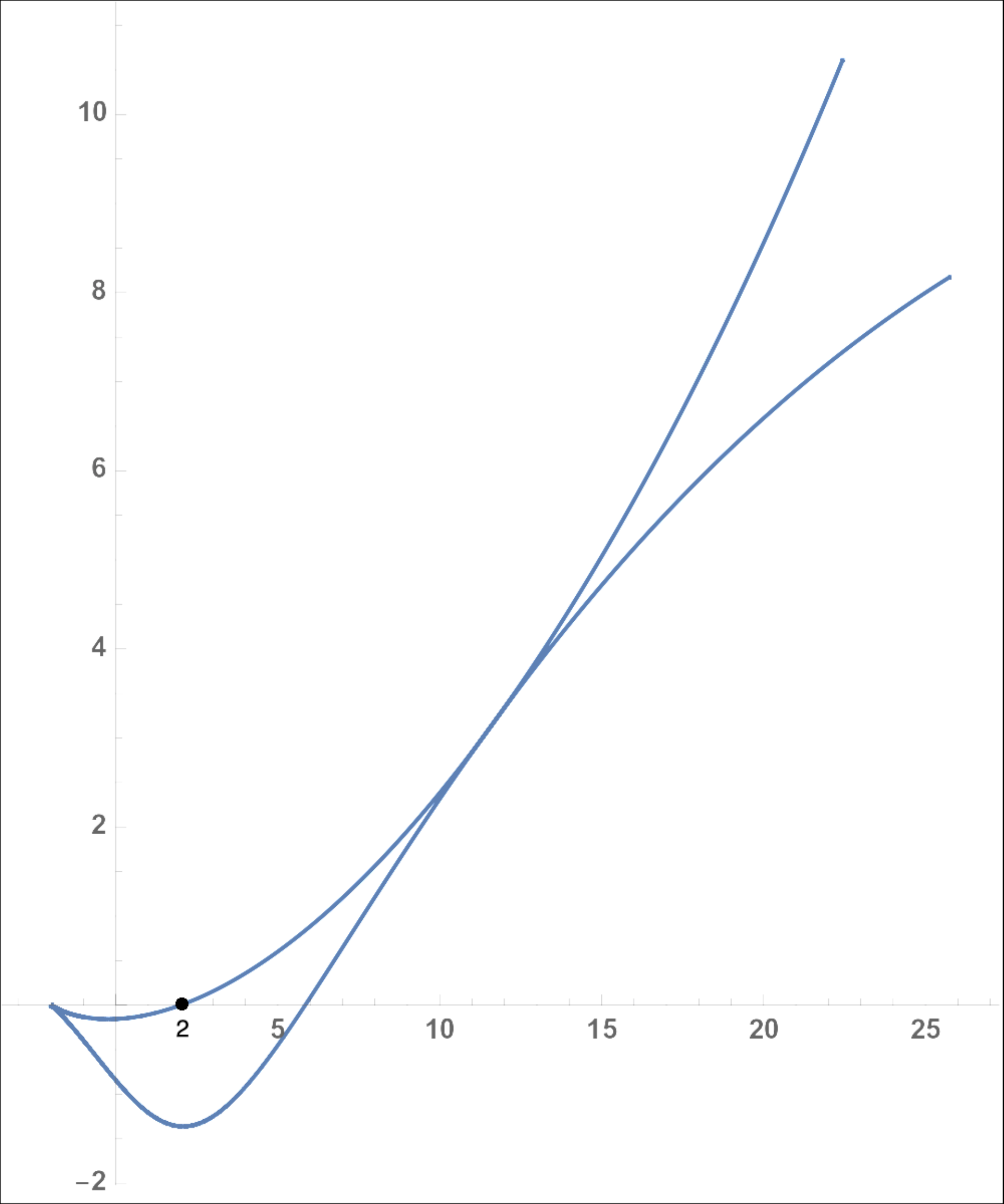}
\end{center}
\caption{Left: The quadrature domain $\Omega_{a_0}$ for the unique intersection point $a_0=\frac32+i\frac{\sqrt{17+18\sqrt{5}}}{2}$ of the curve $\mathfrak{T}^+$ and the vertical line $\{\re(a)=\frac32\}$. Right: A zoom of $\Omega_{a_0}$ around the cusp $-2$ shows a double point on $\partial\Omega_{a_0}$. Moreover, the critical value $2$ of the Schwarz reflection map $\sigma_{a_0}$ lies on the boundary of the bounded connected component $^{b}T_{a_0}^0$ of the desingularized droplet.}
\label{fig:double_3_2}
\end{figure}

\begin{proposition}\label{conn_locus_almost_closed}
$\cC(\mathcal{S})$ is closed in the parameter space $S$.
\end{proposition}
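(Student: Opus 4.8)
The plan is to prove the equivalent assertion that the escape locus $S\setminus\cC(\mathcal{S})$ is open in $S$, so throughout fix $a_0\in S$ with $2\in T_{a_0}^\infty$ and look for a neighborhood of $a_0$ consisting of escaping parameters. Since the free critical value equals the fixed complex number $2$ for every parameter (indeed $\sigma_a(c_a)=2$), what I really track is the forward orbit of the single point $2$ under the varying maps $\sigma_a$. By definition of $T_{a_0}^\infty$ there is a minimal $n\ge1$ with $q:=\sigma_{a_0}^{\circ n}(2)\in T_{a_0}^0=T_{a_0}\setminus\{-2\}$ and $\sigma_{a_0}^{\circ k}(2)\in\Omega_{a_0}$ for $0\le k<n$. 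I would first observe that this entire finite orbit avoids the cusp $-2$: if some iterate equalled $-2$, then, as $-2$ is a fixed point lying in $\Omega_{a_0}\cup\{-2\}$, the orbit of $2$ would be non-escaping, contradicting $a_0\in S\setminus\cC(\mathcal{S})$. Thus every point of the orbit, except possibly $q$, lies in the open set $\Omega_{a_0}$ and at positive distance from $\partial\Omega_{a_0}$, where the formula $\sigma_a=f\circ\iota_a\circ f^{-1}$ shows that $(a,z)\mapsto\sigma_a(z)$ is jointly continuous and $a\mapsto\overline{\Omega}_a=f(\overline{\Delta}_a)$ is continuous in the Hausdorff metric (using univalence of $f$ on $\Delta_a$ throughout $S$).

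The easy case is $q\in\Int T_{a_0}$. Here pure continuity of the (finite) orbit together with the lower semicontinuity of $\Int T_a=\widehat{\C}\setminus\overline{\Omega}_a$ gives, for all $a$ sufficiently close to $a_0$, that $\sigma_a^{\circ k}(2)\in\Omega_a$ for $k<n$ and $\sigma_a^{\circ n}(2)\in\Int T_a\subset T_a^0$; hence $2\in T_a^\infty$ and $a$ lies in the escape locus.

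The step requiring genuine care --- and the main obstacle --- is the degenerate case in which the orbit of $2$ first meets $T_{a_0}^0$ \emph{on the reflection curve}, i.e.\ $q\in\partial\Omega_{a_0}\setminus\{-2\}$; a priori, under perturbation the perturbed iterate could fall back inside $\Omega_a$ and one loses control of whether it ever escapes. The key structural fact that rescues the argument is that $q$ is a \emph{regular} boundary point of the quadrature domain (the only singular boundary point is the cusp $-2\ne q$), so in a fixed neighborhood $U$ of $q$ the Schwarz reflection $\sigma_a$ is, for all $a$ near $a_0$, the anti-holomorphic reflection across the analytic arc $\partial\Omega_a\cap U$; in particular it is an involution that interchanges $\Omega_a\cap U$ and $\Int T_a\cap U$ and fixes $\partial\Omega_a\cap U$. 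Now for $a$ near $a_0$ the iterate $q_a:=\sigma_a^{\circ n}(2)$ is defined and lies in $U$ (with $q_a\to q\ne-2$). If $q_a\in\Int T_a$ or $q_a\in\partial\Omega_a$, then $2$ has already escaped; and if $q_a\in\Omega_a\cap U$, then applying the reflection once more sends it to $\sigma_a(q_a)\in\Int T_a\cap U$, so $2$ escapes at time $n+1$. In every case $a$ belongs to the escape locus, which completes the proof of openness, and hence of the closedness of $\cC(\mathcal{S})$ in $S$.

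I expect the routine (but necessary) bookkeeping to be the joint continuity of $a\mapsto\sigma_a$ and of the domains away from the cusp, which is immediate from the explicit description of $\sigma_a$; the conceptually delicate point is precisely the boundary-landing case above, where one must exploit that Schwarz reflection across a regular analytic boundary arc interchanges the two sides. Note that this is also the place where restricting to $S$ --- so that $-2$ is the unique boundary singularity and all other boundary points of $\Omega_a$ are regular, by Subsection~\ref{univalence_sec} --- is used in an essential way.
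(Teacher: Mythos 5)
Your proof is correct and takes essentially the same route as the paper's: the paper's one-line assertion that for all $a$ near $a_0$ one has $\sigma_a^{\circ n_0}(2)\in T_a^0$ or $\sigma_a^{\circ (n_0+1)}(2)\in T_a^0$ is precisely your dichotomy between the interior case and the boundary-landing case, the latter resolved by the fact that near a regular point of $\partial\Omega_a$ the map $\sigma_a$ is the local anti-holomorphic reflection interchanging the two sides of the arc. Your additional bookkeeping (the orbit avoiding the cusp $-2$, continuity of $(a,z)\mapsto\sigma_a(z)$ and of $T_a^0$) simply makes explicit what the paper leaves implicit.
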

\begin{proof}
Note that the fundamental tile $T_a^0$ varies continuously with the parameter as $a$ runs over $S$. Now let $a_0\in S$ be a parameter outside the connectedness locus $\cC(\mathcal{S})$. Then there exists some integer $n_0\geq0$ such that $\sigma_{a_0}^{\circ n_0}(2)\in T_{a_0}^0$. It follows that for all $a\in S$ sufficiently close to $a_0$, we have $\sigma_{a}^{\circ n_0}(2)\in T_a^0$ or $\sigma_{a}^{\circ (n_0+1)}(2)\in T_a^0$. Hence, $\cC(\mathcal{S})$ is closed in $S$.
\end{proof}

\begin{proposition}\label{limit_point_conn_locus}
Every limit point of $\cC(\mathcal{S})$ outside $S$ must lie on $\mathcal{I}$.
\end{proposition}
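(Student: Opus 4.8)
The plan is to reduce the statement to an analysis of $\partial S$ and then rule out every boundary portion other than $\mathcal{I}$. Since $S$ is open and $\cC(\mathcal{S})$ is closed in $S$ (Proposition~\ref{conn_locus_almost_closed}), any limit point of $\cC(\mathcal{S})$ lying outside $S$ must belong to $\partial S=\overline{S}\setminus S$. Using the description of $\partial\hat{S}$ in~(\ref{para_space_boundary}) together with $S=\hat{S}\cap\{\frac{3}{2}<\re(a)<4\}$, I would decompose $\partial S$ into three pieces: the left segment $L:=\{\re(a)=\frac{3}{2}\}\cap\overline{\hat{S}}$, the arcs $\mathfrak{T}^{\pm}$ restricted to $\{\frac{3}{2}\leq\re(a)\leq4\}$, and the right segment $\mathcal{I}$. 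Note that $\overline{\cC(\mathcal{S})}\subset\{\re(a)\geq\frac{3}{2}\}$, so only the portions of $\mathfrak{T}^{\pm}$ with $\re(a)\geq\frac{3}{2}$ are relevant, and $\overline{S}\cap\{\re(a)=4\}=\mathcal{I}$. It therefore suffices to show that $\cC(\mathcal{S})$ does not accumulate on $L$ or on $\mathfrak{T}^{\pm}$.

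To rule out $L$, I would show that the free critical value escapes in a single step for every $a\in S$ sufficiently close to $L$. Fix $a_{*}=\frac{3}{2}+it_0\in L$; since $\re(a_*)=\frac{3}{2}$, we have $\vert 2-a_*\vert=\vert a_*-1\vert$, so $2\in\partial\Delta_{a_*}$ is a regular point of $f$ (indeed $f'(2)=9\neq0$) and $\iota_{a_*}(2)=2$. For $a\in S$ near $a_*$ we have $\re(a)>\frac{3}{2}$, hence $2\in\Delta_a$ (recall $\{2\in\Delta_a\}=\{\re(a)>\frac{3}{2}\}$ from Subsection~\ref{setup}), so its reflection $\iota_a(2)$ lies just outside $\overline{\Delta_a}$ and tends to $2$ as $a\to a_*$. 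Because $f$ is a local biholomorphism at $2\in\partial\Delta_{a_*}$ carrying the $\Delta_a$-side of the boundary to the $\Omega_a$-side, the image $\sigma_a(2)=f(\iota_a(2))$ lies just outside $\overline{\Omega_a}$, i.e. in $T_a$, and is close to $f(2)=2\neq-2$; thus $\sigma_a(2)\in T_a^0\subset T_a^\infty$ and $a\notin\cC(\mathcal{S})$. This is the complex-analytic counterpart of the one-step escape computed along the real segment $(\frac{3}{2},\frac{5}{2})$ in Proposition~\ref{escape_locus_non_empty}. Consequently no point of the open segment $L^0$ can be a limit of $\cC(\mathcal{S})$; the two endpoints $L\cap\mathfrak{T}^{\pm}$ will be handled in the next step.

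The main obstacle is to rule out accumulation on $\mathfrak{T}^{\pm}$. Along these arcs the Jordan curve $f(\partial\Delta_a)$ acquires a tangential self-intersection: the boundary $\partial\Omega_a$ develops a double point $d\neq-2$ that pinches off a bounded complementary bubble ${}^{b}T_{a}^{0}$ (see Figure~\ref{fig:double_3_2}), while for $a\in S$ the curve is genuinely Jordan and $\Omega_a$ has a thin neck near $d$. Suppose, for contradiction, that $a_n\in\cC(\mathcal{S})$ with $a_n\to a_*\in\mathfrak{T}^{\pm}$ and $a_*\neq4\pm i\sqrt{3}$ (the latter already lie on $\mathcal{I}$). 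I would argue that the pinching neck is incompatible with the simultaneous requirements that $2\in K_{a_n}$ and that $K_{a_n}$ be connected with the cusp $-2$ on its boundary (Propositions~\ref{connected_critical} and~\ref{tiling_connected}). Concretely, I would first locate the double point $d$ and the forming bubble relative to the critical value $2$ and the cusp $-2$; near the corner $a_0^{\pm}=L\cap\mathfrak{T}^{\pm}$ the computation behind Figure~\ref{fig:double_3_2} shows $2\in\partial({}^{b}T^{0})$, so that $d$ forms in the immediate vicinity of the critical value, and then show that for $a\in S$ close to $\mathfrak{T}^{\pm}$ the critical value is swept into the tiling set as the neck closes, whence $a\notin\cC(\mathcal{S})$. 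The delicate point is the precise mechanism of this escape and the uniform control of the location of $d$ as one moves along $\mathfrak{T}^{\pm}$ toward $4\pm i\sqrt{3}$; the cleanest route is a degeneration argument in which one passes to a Hausdorff limit of the sets $K_{a_n}$ inside the degenerating domains $\overline{\Omega_{a_n}}\to\overline{\Omega_{a_*}}$ and derives a contradiction from the fact that the limiting non-escaping set would have to remain connected while being pinched at the boundary double point $d$, which is neither the cusp $-2$ nor an interior point of $\Omega_{a_*}$.

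Putting the two steps together, every limit point of $\cC(\mathcal{S})$ in $\partial S$ must lie on $\mathcal{I}$, which is exactly the assertion. I expect the $L$-step to be straightforward given the explicit reflection formula, whereas the $\mathfrak{T}^{\pm}$-step, controlling the interaction of the pinching neck with the non-escaping set, is where the real work lies.
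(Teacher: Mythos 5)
Your reduction to $\partial S$ and your treatment of the left segment $\{\re(a)=\tfrac32\}$ are fine; the latter is essentially the paper's own argument (for $\re(a)=\tfrac32$ the critical value $2$ lies on $\partial\Omega_a$, so for nearby $a\in S$ it lands in the rank one tile), and your local-biholomorphism version can be made rigorous by noting that all other preimages of points near $2$ lie near $-1$, uniformly far from $\overline{\Delta_a}$, so ``locally outside'' really does imply membership in $T_a^0$. The genuine gap is in the $\mathfrak{T}^{\pm}$ step, which you correctly identify as the crux but do not resolve. Your degeneration argument proves too little: if $a_n\in\cC(\mathcal{S})$ and $a_n\to a_*\in\mathfrak{T}^{\pm}$, then passing to limits (using $\overline{\Omega_{a_n}}\to\overline{\Omega_{a_*}}$ and local uniform convergence $\sigma_{a_n}\to\sigma_{a_*}$ on compact subsets of $\Omega_{a_*}$) shows only that the forward orbit of $2$ under $\sigma_{a_*}$ remains in $\overline{\Omega_{a_*}}$ forever. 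To obtain a contradiction you must then show that such trapping is impossible for $a_*\in\mathfrak{T}^{\pm}$, i.e.\ that the critical value of $\sigma_{a_*}$ escapes to its tiling set --- but that is precisely the statement whose proof is the hard content here, so the Hausdorff-limit step merely restates the problem. Worse, the contradiction you propose --- that the limit set ``would have to remain connected while being pinched at the boundary double point $d$'' --- is not a contradiction: a compact connected set can perfectly well be pinched at a point, and nothing in your argument forces the limit set to come anywhere near $d$.

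There is also a structural reason a soft compactness argument cannot succeed: it must exploit something that distinguishes $\mathfrak{T}^{\pm}$ from $\mathcal{I}$, since on $\mathcal{I}$ the critical orbit genuinely is trapped (it converges to $-2$, which has an attracting direction there, Subsection~\ref{cusp_asymp_sec_2}) and $\cC(\mathcal{S})$ does accumulate on $\mathcal{I}$. The only distinguishing feature you invoke is the double point, but you never derive anything from it. The paper instead proves, for every $a'\in\mathfrak{T}^{+}$, that the critical value $2$ escapes into the bounded component ${}^{b}T_{a'}^{0}$ of the desingularized droplet in finitely many steps: starting from the corner parameter $a_0$ where $2\in\partial({}^{b}T_{a_0}^{0})$, a quasiconformal perturbation argument produces consecutive real-analytic sub-arcs $\gamma_n\subset\mathfrak{T}^{+}$ on which the escape time is exactly $n$, and the cusp asymptotics of Subsection~\ref{dyn_near_cusp} show that this chain of arcs can only terminate at a parameter where $-2$ acquires an attracting direction, i.e.\ at $4+i\sqrt{3}$; hence $\mathfrak{T}^{+}=\bigcup_{n\geq1}\gamma_n$. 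Continuity of the droplet and of its finitely many relevant preimages then transfers the escaping property to an open neighborhood of $\mathfrak{T}^{\pm}$ in $S$. Note finally that away from the corner $a_0$ the escape takes arbitrarily many iterates, so your picture of the critical value itself being ``swept into the tiling set as the neck closes'' is accurate only near $a_0$; elsewhere it is a high iterate of the critical value that enters the bubble, which is exactly why the pointwise escape along $\mathfrak{T}^{\pm}$ requires the global continuation argument rather than a perturbative or limiting one.
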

\begin{proof}
First note that for parameters on the line $\{\re(a)=\frac32\}$, the critical value $2$ lies on $\partial\Omega_a$. Hence, for parameters with real part greater than but sufficiently close to $\frac32$, the critical value $2$ lies in the rank one tile; i.e., such parameters belong to the escape locus $S\setminus\cC(\mathcal{S})$. Therefore, $\cC(\mathcal{S})$ has no limit point on the line $\{\re(a)=\frac32\}$.

To show that $\cC(\mathcal{S})$ does not accumulate on $\mathfrak{T}^+\cap\partial S$, we need to analyze the structure of $\mathfrak{T}^+\cap\partial S$ in greater detail. To this end, first note that for each $a\in\mathfrak{T}^+\cap\partial S$ (see Figure~\ref{fig:para_space_boundary}), the cubic polynomial $f$ is univalent on $\Delta_{a}$, but $\partial\Omega_{a}=f(\partial\Delta_{a})$ is not a Jordan curve. More precisely, there exists a double point (i.e., a point of tangential self-intersection) on $\partial\Omega_{a}$. Hence for such a parameter $a$, $\Omega_{a}=f(\Delta_{a})$ is a simply connected quadrature domain, and the corresponding desingularized droplet (i.e., the set obtained by removing the cusp $-2$ and the point of tangential self-intersection from the droplet $T_{a}=\widehat{\C}\setminus\Omega_{a}$) has two connected components. We denote the corresponding bounded component by $^{b}T_{a}^0$.

\begin{lemma}\label{arc_escape_time_lemma}
$\mathfrak{T}^+\cap\partial S=\bigcup_{n\geq0}\gamma_n$,  where 
$$
\gamma_n=\{a\in\mathfrak{T}^+\cap\partial S: \sigma_a^{\circ n}(2)\in\ ^{b}T_a^0\}.
$$
\end{lemma}
\begin{proof}[Proof of Lemma]
The curve $\mathfrak{T}^+\cap\partial S$ intersects the vertical line $\{\re(a)=\frac32\}$ at $a_0=\frac32+i\frac{\sqrt{17+18\sqrt{5}}}{2}$. Since $\re(a_0)=\frac32$, it follows that $2\in\partial\Delta_{a_0}$, and hence, $2=f(2)\in\partial\Omega_{a_0}=\partial T_{a_0}^0$. In fact, we claim that $2$ lies on the boundary of $^{b}T_{a_0}^0$. To see this, let us denote the point of tangential self-intersection on $\partial T_{a_0}^0$ by $p$, and set $f^{-1}(p)\cap\partial\Delta_{a_0}=\{p_1,p_2\}$. Then, an explicit computation using the value of $a_0$ shows that the points $1$ and $2$ lie in the shorter connected component of $\partial\Delta_{a_0}\setminus\{p_1,p_2\}$. As $f$ is an orientation-preserving continuous map from $\partial\Delta_{a_0}$ onto $\partial T_{a_0}^0$ with $f(p_1)=f(p_2)=p$ (and injective elsewhere), it carries the shorter component of $\partial\Delta_{a_0}\setminus\{p_1,p_2\}$ onto the boundary of $^{b}T_{a_0}^0$, and the longer one onto the boundary of $T_{a_0}\setminus\overline{^{b}T_{a_0}^0}$. It follows that both $f(1)=-2$ and $f(2)=2$ lie on the boundary of $^{b}T_{a_0}^0$ (see Figure~\ref{fig:double_3_2}). Moreover, since $2$ is different from the singular points $-2$ and $p$ on $\partial T_{a_0}^0$, it follows that $2\in\ ^{b}T_{a_0}^0$. In particular, $a_0\in\gamma_0$.

Let us choose a continuous parametrization $\gamma:[0,1]\to\overline{\mathfrak{T}^+}\cap\partial S$ such that $\gamma(0)=a_0$, and $\gamma(1)=4+i\sqrt{3}$. As the desingularized droplet and the Schwarz reflection map vary continuously with the parameter, it follows that for $s>0$ sufficiently small, either $2\in\ ^{b}T_{\gamma(s)}^0$ or $2\in \sigma_{\gamma(s)}^{-1}(^{b}T_{\gamma(s)}^0)$. However, since $\re(\gamma(s))>\frac32$ for $s>0$, we know that $2$ must lie in the quadrature domain $\Omega_{\gamma(s)}$; i.e., $2\notin\ ^{b}T_{\gamma(s)}^0$, and hence, $\sigma_{\gamma(s)}(2)\in\ ^{b}T_{\gamma(s)}^0$ (for $s>0$ sufficiently small). We set $s_0=0$, and define 
$$
s_1:=\sup \{s>s_0: \sigma_{\gamma(t)}(2)\in\ ^{b}T_{\gamma(t)}^0\ \forall\ t\in\left(0,s\right)\}.
$$
It is now easy to see that $\gamma((s_0,s_1])\subset\gamma_1$, and $\sigma_{\gamma(s_1)}(2)\in \partial\ ^{b}T_{\gamma(s_1)}^0$. Once again, since the desingularized droplet and the Schwarz reflection map vary continuously with the parameter, it follows that for $s>s_1$ sufficiently close to $s_1$, either $2\in \sigma_{\gamma(s)}^{-1}(^{b}T_{\gamma(s)}^0)$, or $2\in \sigma_{\gamma(s)}^{-2}(^{b}T_{\gamma(s)}^0)$. By definition of $s_1$, we must have that $\sigma_{\gamma(s)}^{\circ 2}(2)\in\ ^{b}T_{\gamma(s)}^0$ (for $s>s_1$ sufficiently close to $s_1$). Defining 
$$
s_2:=\sup \{s>s_1: \sigma_{\gamma(t)}^{\circ 2}(2)\in\ ^{b}T_{\gamma(t)}^0\ \forall\ t\in\left(s_1,s\right)\},
$$
we now see that $\gamma((s_1,s_2])\subset\gamma_2$, and $\sigma_{\gamma(s_2)}^{\circ 2}(2)\in \partial\ ^{b}T_{\gamma(s_2)}^0$. This way, we inductively define $s_n$ once $s_{n-1}$ has already been defined. If some $s_n$ is equal to $1$, then $\mathfrak{T}^+\cap\partial S$ is the union of finitely many $\gamma_n$, and we are done. Otherwise, we obtain a strictly increasing infinite sequence $\{s_n\}_{n\geq 0}\subset[0,1)$ such that $\gamma((s_n,s_{n+1}])\subset\gamma_{n+1}$, and $\sigma_{\gamma(s_{n+1})}^{\circ (n+1)}(2)\in \partial\ ^{b}T_{\gamma(s_{n+1})}^0$. Since $\{s_n\}_{n\geq 0}\subset[0,1)$ is an increasing sequence, it converges to some $s_\infty\in(0,1]$. As the critical value $2$ of $\sigma_{\gamma(s_{n+1})}$ escapes to $\partial\Omega_{\gamma(s_{n+1})}$ in $n+1$ iterates, it follows that in the dynamical plane of $\sigma_{\gamma(s_\infty)}$, the infinite forward orbit of the critical value $2$ is well-defined, and this orbit accumulates on the boundary of $\Omega_{\gamma(s_\infty)}$. By Subsection~\ref{dyn_near_cusp}, this can happen only if the cusp point $-2$ has an attracting direction (for $\sigma_{\gamma(s_\infty)}$), and if $2$ lies in one of its attracting petals. In particular, we must have that $\gamma(s_\infty)\in\mathcal{I}$. As $\overline{\mathfrak{T}^+}$ intersects $\mathcal{I}$ only at $4+i\sqrt{3}$, we conclude that $\gamma(s_\infty)=4+i\sqrt{3}$; i.e., $s_\infty=1$. Therefore, $\mathfrak{T}^+\cap\partial{S}=\gamma(s_0)\bigcup_{n\geq0}\gamma((s_n,s_{n+1}])$. Since $\gamma(s_0)\in\gamma_0$, and $\gamma((s_n,s_{n+1}])\subset\gamma_{n+1}$, the proof is now complete.
\end{proof}

By Lemma~\ref{arc_escape_time_lemma}, for all $a'\in\mathfrak{T}^+\cap\partial{S}$, the critical value $2$ lies in the tiling set $T_{a'}^\infty$. The arguments of Proposition~\ref{conn_locus_almost_closed} now apply verbatim to show that for parameters $a\in S$ sufficiently close to $\mathfrak{T}^+$, the critical value $2$ lies in the tiling set $T^\infty_a$. Hence, $\cC(\mathcal{S})$ does not accumulate on $\mathfrak{T}^+\cap\partial S$. A completely analogous argument shows that $\cC(\mathcal{S})$ does not accumulate on $\mathfrak{T}^-\cap\partial S$.

It follows from the above and the description of the boundary of $\widehat{S}$ given in (\ref{para_space_boundary}) (compare Figure~\ref{fig:para_space_boundary}) that any limit point of $\cC(\mathcal{S})$ outside $S$ must lie on $\mathcal{I}$. 
\end{proof}

\subsection{Dynamics on The Tiling set}\label{dyn_unif_tiling_sec}

The goal of this subsection is to construct a conformal model of $\sigma_a$ on a suitable subset of its tiling set. To this end, we first need to define a reflection map on a suitable simply connected domain.

\subsubsection{The Reflection Map $\rho$}\label{reflection_subsec}

Consider the open unit disk $\D$ in the complex plane. Let $C_1$, $C_2$, $C_3$ be the circles with centers at $(1,\sqrt{3}), (-2,0),$ and $(1,-\sqrt{3})$ of radius $\sqrt{3}$ each. We denote the intersection of $\D$ and $C_i$ by $\widetilde{C}_i$. Then $\widetilde{C}_1$, $\widetilde{C}_2$, and $\widetilde{C}_3$ are hyperbolic geodesics in $\D$, and they form an ideal triangle which we call $\widetilde{T}$ (see Figure~\ref{ideal_triangle_pic}). They bound a closed (in the topology of $\D$) region $\Pi$.

\begin{figure}[ht!]
\begin{center}
\includegraphics[scale=0.33]{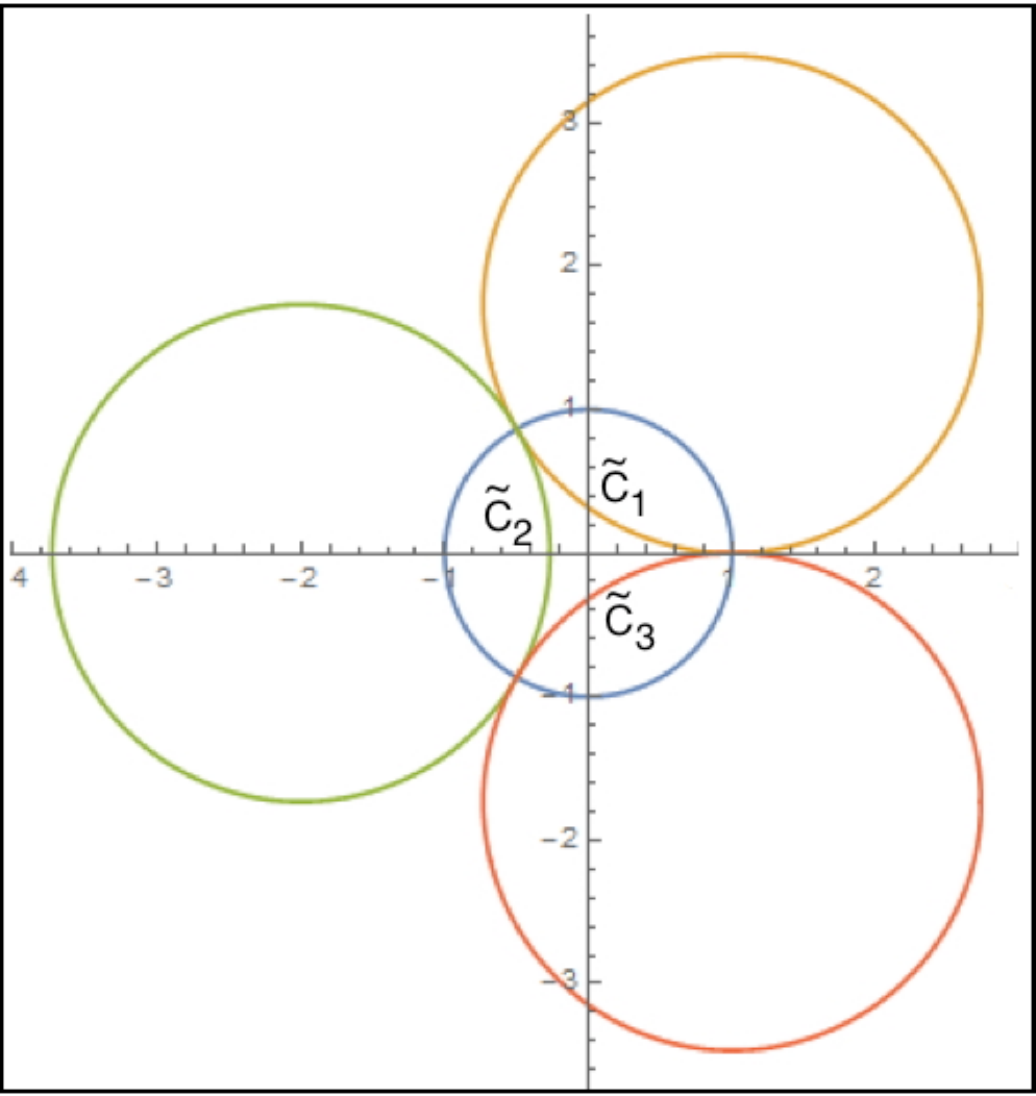}\ \includegraphics[scale=0.09]{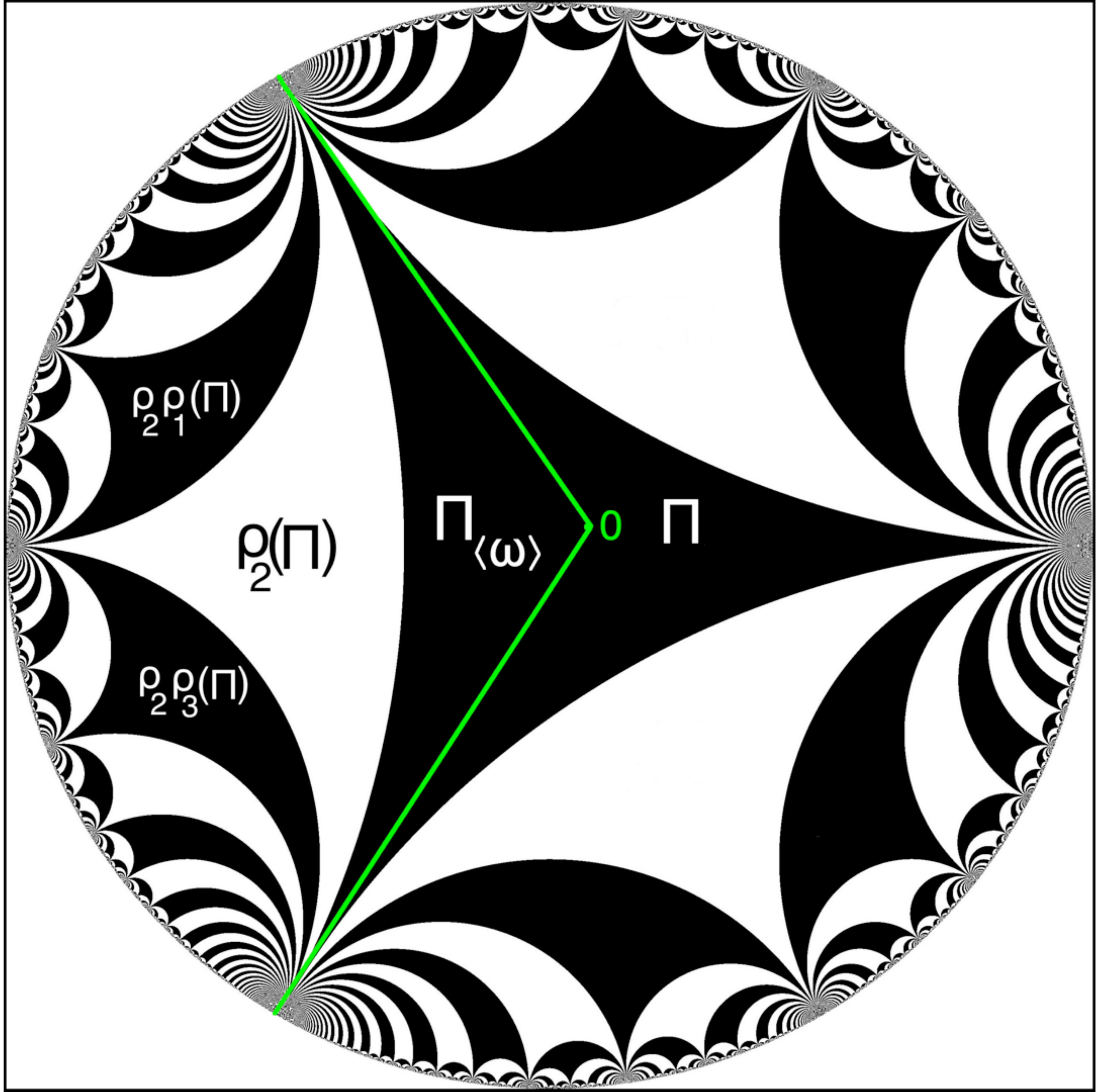}
\end{center}
\caption{Left: The hyperbolic geodesics $\widetilde{C}_1$, $\widetilde{C}_2$ and $\widetilde{C}_3$, which are sub-arcs of the circles $C_1$, $C_2$ and $C_3$ respectively, form an ideal triangle in $\D$. Right: The image of $\Pi$ under $\rho_2$, and the fundamental domain $\Pi_{\langle\omega\rangle}$ are shown.}
\label{ideal_triangle_pic}
\end{figure}

Let $\rho_i$ be the reflection with respect to the circle $C_i$, and $\D_i$ be the connected component of $\D\setminus \Pi$ containing $\Int{\rho_i(\Pi)}$. The maps $\rho_1$, $\rho_2$, and $\rho_3$ generate a subgroup $\mathcal{G}$ of $\mathrm{Aut}(\D)$. The group $\mathcal{G}$ is called the \emph{ideal triangle group}. As an abstract group, it is given by the generators and relations 
$$
\langle\rho_1, \rho_2, \rho_3: \rho_1^2=\rho_2^2=\rho_3^2=\mathrm{id}\rangle.
$$

We consider the Riemann surface $\mathcal{Q}:=\faktor{\D}{\langle\omega\rangle}$, where $\omega=e^{\frac{2\pi i}{3}}$. Note that a fundamental domain of $\D$ under the action of $\langle\omega\rangle$ is given by 
$$
\D_{\langle\omega\rangle}:=\{\vert z\vert<1,\ \frac{2\pi}{3}\leq\arg{z}<\frac{4\pi}{3}\}\cup\{0\},
$$ 
and hence $\mathcal{Q}$ is biholomorphic to the surface obtained from $\D_{\langle\omega\rangle}$ by identifying the radial line segments $\{re^{\frac{2\pi i}{3}}:0<r<1\}$ and $\{re^{\frac{4\pi i}{3}}:0<r<1\}$ by $z\mapsto\omega z$. This endows $\mathcal{Q}$ with a preferred choice of conformal coordinates. In these coordinates, the identity map is an embedding of the surface $\D_2\cup\widetilde{C}_2$ into $\mathcal{Q}$.

The map $\rho_2$ induces a map $\rho:\D_2\cup\widetilde{C}_2\to\mathcal{Q}$. Connected components of $\rho^{-n}(\rho_2(\Pi))$ are called \emph{tiles of rank $n$} of $\D_2$, and each such component is of the form $\rho_2\circ\cdots\circ\rho_i(\Pi)$. 

Clearly, $\rho$ extends continuously as an orientation-reversing double covering of $\partial\mathcal{Q}$ with three neutral fixed points. Note that $\partial\mathcal{Q}$ is obtained by gluing the end-points $\omega$ and $\omega^2$ of the arc $\{e^{i\theta}:\frac{2\pi}{3}\leq\theta\leq\frac{4\pi}{3}\}$. Moreover, the map $\rho:\partial\mathcal{Q}\to\partial\mathcal{Q}$ admits a Markov partition $\partial\mathcal{Q}=\{e^{i\theta}:\frac{2\pi}{3}\leq\theta\leq\pi\}\cup\{e^{i\theta}:\pi\leq\theta\leq\frac{4\pi}{3}\}$ with transition matrix 
$$
M:=\begin{bmatrix} 1 & 1\\ 1 & 1 \end{bmatrix}.
$$  

Incidentally, the map \begin{equation} B:\mathbb{S}^1\to\mathbb{S}^1, B(z)=\frac{3\overline{z}^2+1}{3+\overline{z}^2}, \label{blaschke_map}\end{equation} which models the dynamics of maps in $\cC(\mathfrak{L}_0)$ on their Julia sets (see Appendix~\ref{anti_rational_parabolic}), admits a Markov partition $\mathbb{S}^1=\{e^{i\theta}:0\leq\theta\leq\pi\}\cup\{e^{i\theta}:\pi\leq\theta\leq 2\pi\}$ with the same transition matrix $M$ as above. Using expansiveness of the maps $\rho\vert_{\partial\mathcal{Q}}$ and $B\vert_{\mathbb{S}^1}$, one now easily sees that $\rho\vert_{\partial\mathcal{Q}}$ and $B\vert_{\mathbb{S}^1}$ are topologically conjugate by a homeomorphism $\mathcal{E}:\partial\mathcal{Q}\to\mathbb{S}^1$ which maps $\omega, -1\in\partial\mathcal{Q}$ to $1,-1\in\mathbb{S}^1$ respectively.

We will denote the set of all angles in $\partial\mathcal{Q}\cong\faktor{\left[\frac13,\frac23\right]}{\{\frac13\sim\frac23\}}$ (respectively, in $\R/\Z$) that are pre-periodic under $\rho$ (respectively, under $B$) by $\mathrm{Per}(\rho)$ (respectively, by $\mathrm{Per}(B)$). Clearly, $\mathcal{E}$ maps $\mathrm{Per}(\rho)$ onto $\mathrm{Per}(B)$.

Note that in \cite[\S~3.1]{LLMM1}, we defined $\mathcal{G}$-rays of $\D$. $\mathcal{G}$-rays of $\D$ with angles in $\left[\frac13,\frac23\right]$ yield rays in $\mathcal{Q}$ such that the image of the ray at angle $\theta$ under $\rho$ is the ray at angle $\rho(\theta)$.

The map $\rho$ will be used below to describe a conformal model of $\sigma_a$ on its tiling set.

\subsubsection{Dynamical Uniformization of The Tiling set}\label{schwarz_group_sec}

\begin{definition}[Depth]\label{def_depth}
For any $a$ in the escape locus of $\mathcal{S}$, the \emph{smallest} positive integer $n(a)$ such that $\sigma_a^{\circ n(a)}(2)\in T_a^0$ is called the \emph{depth} of $a$.
\end{definition} 

Let us denote the sub-surface $\faktor{\Pi}{\langle\omega\rangle}$ of $\mathcal{Q}$ by $\mathcal{Q}_1$. Note that $\faktor{\Pi}{\langle\omega\rangle}$ is homeomorphic to the surface obtained from $\Pi_{\langle\omega\rangle}:=\{z\in\Pi: 2\pi/3\leq\arg{z}\leq4\pi/3\}\cup\{0\}$ by identifying the radial line segments $\{re^{2\pi i/3}:0<r<1\}$ and $\{re^{4\pi i/3}:0<r<1\}$ under $z\mapsto\omega z$ (see Figure~\ref{ideal_triangle_pic}). 

\begin{proposition}\label{schwarz_group}
1) For $a\in\cC(\mathcal{S})$, the map $\sigma_a:T_a^\infty\setminus\Int{T_a^0}\to T_a^\infty$ is conformally conjugate to $\rho:\D_2\cup\widetilde{C}_2\to\mathcal{Q}$.

2) For $a\in S\setminus\cC(\mathcal{S})$, 
$$
\sigma_a:\displaystyle\bigcup_{n=1}^{n(a)} \sigma_a^{-n}(T_a^0)\to\displaystyle\bigcup_{n=0}^{n(a)-1} \sigma_a^{-n}(T_a^0)
$$ 
is conformally conjugate to 
$$
\rho:\displaystyle\bigcup_{n=1}^{n(a)} \rho^{-n}(\mathcal{Q}_1)\to\displaystyle\bigcup_{n=0}^{n(a)-1}\rho^{-n}(\mathcal{Q}_1).
$$
\end{proposition}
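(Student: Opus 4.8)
The plan is to construct the conformal conjugacy by exploiting the fact established earlier that $\sigma_a:\sigma_a^{-1}(\Int T_a)\to\Int T_a$ is a $3:1$ branched cover branched only at the double critical point $c_a^\ast$ (see Figure~\ref{schwarz_dyn_pic}), exactly mirroring the degree and ramification structure of $\rho:\D_2\cup\widetilde C_2\to\mathcal Q$. The key point is that the desingularized droplet $T_a^0$ plays the role of the fundamental sub-surface $\mathcal Q_1=\faktor{\Pi}{\langle\omega\rangle}$, and the reflection $\rho$ was precisely designed (in Subsection~\ref{reflection_subsec}) so that its mapping scheme on the tiles of $\D_2$ matches that of $\sigma_a$ on the tiles of its tiling set.

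\emph{First} I would build the conjugacy on the fundamental pieces. The rank-one tile of $\sigma_a$ is $\sigma_a^{-1}(T_a^0)$, which by the analysis of Subsection~\ref{setup} is mapped as a $3:1$ branched cover onto $T_a^0$ with a single (double) branch point at $c_a^\ast$; on the $\rho$-side, $\mathcal Q$ is the quotient $\faktor{\D}{\langle\omega\rangle}$ and $\rho$ covers $\partial\mathcal Q$ (and hence $\mathcal Q_1$ via its reflected copies) as a degree-two map in the double-cover sense, with the branching encoded by the order-three identification $\omega$. One first fixes a Riemann map $\Psi_0$ from $T_a^0$ onto $\mathcal Q_1$ (or more naturally onto the fundamental sub-surface) respecting the three ideal vertices/cusp structure; the cusp $-2\in\partial\Omega_a$ and the three boundary fixed points of $\rho$ must correspond. \emph{Then} one lifts $\Psi_0$ through the two branched coverings: since $\sigma_a$ and $\rho$ are both $3:1$ branched over their fundamental tiles with matching ramification (a single double point over the vertex), the map $\Psi_0\circ\sigma_a$ and $\rho$ have the same local covering data, so $\Psi_0$ lifts to a conformal homeomorphism $\Psi_1$ of the rank-one tiles intertwining $\sigma_a$ and $\rho$. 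One continues this lifting inductively over all ranks: every tile of rank $n\ge 2$ is unramified (as in the proof of Proposition~\ref{connected_critical}, all higher tiles are unramified when $a\in\cC(\mathcal S)$), so each successive lift is an unbranched conformal isomorphism, and the maps glue across shared boundary arcs because $\rho$ extends continuously as a $C^1$ covering of $\partial\mathcal Q$ and $\sigma_a$ extends anti-analytically across $\partial\Omega_a\setminus\{-2\}$.

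\emph{For part (2)} the same construction applies verbatim, except that the lifting is stopped at rank $n(a)$: when $a$ lies in the escape locus, the tile of depth $n(a)$ containing the critical value $2$ is itself ramified (compare Proposition~\ref{connected_critical}), and one only carries the inductive lift through the ranks $0,\dots,n(a)$, obtaining the asserted conjugacy between the two truncated systems $\displaystyle\bigcup_{n=1}^{n(a)}\sigma_a^{-n}(T_a^0)$ and $\displaystyle\bigcup_{n=1}^{n(a)}\rho^{-n}(\mathcal Q_1)$. The combinatorics of which tiles appear at each rank are governed by the common transition matrix $M$ recorded in Subsection~\ref{reflection_subsec}, which guarantees that the two tilings have the same adjacency tree up to depth $n(a)$.

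\emph{The main obstacle} I anticipate is showing that the inductively defined conjugacy, which is manifestly conformal on the interior of each tile and continuous across the interior boundary arcs, matches up consistently at the ideal vertices and—crucially—at the cusp/pinching point. One must verify that the conformal map $\Psi_0$ can be chosen so that its lifts close up correctly around the branch point $c_a^\ast$ (this is where the order-three symmetry $\omega$ and the double critical point must be made to correspond precisely), and that the resulting global map is a genuine homeomorphism of $T_a^\infty$ rather than merely a local biholomorphism. Controlling the behavior at the parabolic cusp $-2$, whose preimages accumulate on $\partial K_a$, requires the asymptotic development of $\sigma_a$ near $-2$ from Subsection~\ref{dyn_near_cusp} together with the matching neutral-fixed-point structure of $\rho$ on $\partial\mathcal Q$; the topological conjugacy $\mathcal E$ between $\rho|_{\partial\mathcal Q}$ and the Blaschke model $B|_{\mathbb T}$ (already established in Subsection~\ref{reflection_subsec}) should provide the boundary data needed to glue everything into a single conformal isomorphism of the tiling sets.
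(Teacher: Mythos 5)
Your proposal follows essentially the same route as the paper's proof: choose a uniformization of the fundamental tile $T_a^0$ onto $\mathcal{Q}_1$ (normalized so that $\infty\mapsto 0$ and the cusp $-2\mapsto\omega$), lift it through the two $3:1$ branched coverings — which is possible because both are branched at a single double point and both $\sigma_a$ and $\rho$ fix their respective fundamental boundaries pointwise, making the lift equivariant — and then propagate inductively to all higher-rank tiles, which map diffeomorphically onto the rank-one tile when $a\in\cC(\mathcal{S})$, truncating at rank $n(a)$ in the escape locus. One small correction to part (2): the ramified tile is the rank-$(n(a)+1)$ tile containing the critical \emph{point} $c_a$, not the rank-$n(a)$ tile containing the critical \emph{value} $2$ (compare Proposition~\ref{connected_critical}), which is precisely why the lifting goes through rank $n(a)$ and no further.
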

\begin{proof}
Since $\mathcal{Q}_1$ is simply connected, we can choose a homeomorphism $\psi_a$ between $T_a^0$ and $\mathcal{Q}_1$ such that it is conformal on the interior. We can further assume that $\psi_a(\infty)=0$, and its continuous extension sends the cusp point $-2\in\partial T_a^0$ to the point $\omega$ on $\partial\mathcal{Q}_1$.

Note that $\sigma_a:\sigma_a^{-1}(T_a^0)\to T_a^0$ is a three-to-one branched cover branched only at $c_a^\ast$, and $\rho:\rho_2(\Pi)\to\mathcal{Q}_1$ is a three-to-one branched cover branched only at $\rho_2(0)$. Moreover, $\sigma_a$ fixes $\partial T_a^0$ pointwise, and $\rho$ fixes $\widetilde{C}_2\cup\{\omega\}\cong\partial\mathcal{Q}_1$ pointwise.

This allows one to lift $\psi_a$ to a conformal isomorphism from $\sigma_a^{-1}(T_a^0)$ onto $\rho_2(\Pi)$ such that the lifted map sends $c_a^\ast$ to $\rho_2(0)$, and continuously matches with the initial map $\psi_a$ on $\partial T_a^0$. We denote this extended conformal isomorphism by $\psi_a$. By construction, $\psi_a$ is equivariant with respect to the actions of $\sigma_a$ and $\rho$ on $\partial\sigma_a^{-1}(T_a^0)$. 

1) If $a\in\cC(\mathcal{S})$, then every tile of $T_a^\infty$ (of rank greater than one) maps diffeomorphically onto $\sigma_a^{-1}(T_a^0)$ under some iterate of $\sigma_a$, and each tile of $\D_2$ (of rank greater than one) maps diffeomorphically onto $\rho_2(\Pi)$ under some iterate of $\rho$. This fact, along with the equivariance property of $\psi_a$ mentioned above, enables us to lift $\psi_a$ to all tiles using the iterates of $\sigma_a$ and $\rho$. This produces the desired biholomorphism $\psi_a$ between $T_a^\infty$ and $\mathcal{Q}$ which conjugates $\sigma_a$ to $\rho$.

2) For $a\in S\setminus\cC(\mathcal{S})$, the above construction of $\psi_a$ can be carried out on the tiles of $T_a^\infty$ that map diffeomorphically onto $\sigma_a^{-1}(T_a^0)$, which includes all tiles of rank up to $n(a)$. This completes the proof.
\end{proof}

\begin{remark}\label{external_conj_rem}
Since $\psi_a$ (obtained in Proposition~\ref{schwarz_group}) conjugates $\sigma_a$ to the model map $\rho$ ``outside" the non-escaping set, the conjugacy $\psi_a$ is referred to as the ``external conjugacy'', and the model map $\rho$ is called the ``external map" of $\sigma_a$.
\end{remark}

\begin{remark}\label{thrice_punc_sphere_rem}
$\rho$ maps each of the two connected components of $\D_2\setminus\rho_2(\Pi)$ univalently onto $\D_2$. These two univalent restrictions of $\rho$ as well as their inverses act on $\D_2\setminus\rho_2(\Pi)$ and generate a partially defined dynamical system. A fundamental domain of $\D_2\setminus\rho_2(\Pi)$ under the action of the conformal maps (i.e., under words of even length) of this dynamical system can be identified with $\Int{\rho_2\rho_1(\Pi)}\cup\rho_2\rho_1\rho_2(\Pi)$. The quotient of $\D_2\setminus\rho_2(\Pi)$ by this conformal dynamical system is a thrice punctured sphere. For $a\in \cC(\mathcal{S})$, the Schwarz reflection map $\sigma_a$ induces an anti-conformal involution on the thrice punctured sphere fixing the punctures.
\end{remark}

We can use the map $\psi_a$ to define dynamical rays for the maps $\sigma_a$.

\begin{definition}[Dynamical Rays of $\sigma_a$]\label{dyn_ray_schwarz}
The pre-image of a ray at angle $\theta$ in $\mathcal{Q}$ under the map $\psi_a$ is called a $\theta$-dynamical ray of $\sigma_a$.
\end{definition}

Clearly, the image of a dynamical $\theta$-ray under $\sigma_a$ is a dynamical ray angle $\rho(\theta)$.

\begin{proposition}[Landing of Pre-periodic Rays]\label{per_rays_land}
Let $a\in\cC(\mathcal{S})$, and $\theta\in\mathrm{Per}(\rho)$. Then the following statements hold true.

1) The dynamical $\theta$-ray of $\sigma_a$ lands on $\partial T_a^\infty$. 

2) The $\frac13=\frac23$-ray of $\sigma_a$ lands at $-2$, and no other ray lands at $-2$. The iterated pre-images of the $\frac13$-ray land at the iterated pre-images of $-2$ (under $\sigma_a$).

3) Let $\theta\in\mathrm{Per}(\rho)\setminus\displaystyle\bigcup_{n\geq0}\rho^{-n}\left(\left\{\frac{1}{3}\right\}\right)$. Then, the dynamical ray of $\sigma_a$ at angle $\theta$ lands at a repelling or parabolic (pre-)periodic point on $\partial T_a^\infty$. 
\end{proposition}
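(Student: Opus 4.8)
The plan is to adapt the classical Douady--Hubbard theory of landing of (pre-)periodic external rays, using the external conjugacy $\psi_a$ of Proposition~\ref{schwarz_group} to transport the expansion of the model map $\rho$ into the tiling set, and the cusp asymptotics of Subsection~\ref{dyn_near_cusp} to handle the distinguished parabolic point $-2$ by hand. I would begin with part~2, since $-2$ is the degenerate point of the entire picture and its ray must be treated separately. Under $\psi_a$ the cusp $-2\in\partial T_a^0$ corresponds to the point $\omega\in\partial\mathcal{Q}_1$, at which a single model ray (the $\frac13$-ray of $\rho$) accumulates; the corresponding $\frac13$-ray $\mathcal{R}_{1/3}$ of $\sigma_a$ is $\sigma_a$-invariant because $\frac13$ is fixed by $\rho$. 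Setting $\theta_0=\arg(a-1)$ and invoking (\ref{schwarz_cusp_expansion}), the point $-2$ is a parabolic fixed point of $\sigma_a$ with a repelling petal in the direction $2\theta_0$; the invariant ray $\mathcal{R}_{1/3}$ lies in this petal, so its (invariant, connected) accumulation set is forced to be $\{-2\}$, i.e. $\mathcal{R}_{1/3}$ lands at $-2$. Uniqueness (``no other ray lands at $-2$'') follows from the corresponding statement in the model $\mathcal{Q}$ transported by $\psi_a$, and the assertion about iterated pre-images is obtained by pulling the landing back through the branches of $\sigma_a$, which are local homeomorphisms away from the critical points.

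For parts~1 and~3 I would first treat periodic angles and then reduce the pre-periodic case to these. Let $\theta\in\mathrm{Per}(\rho)$ be $\rho$-periodic of period $k$, so that $\sigma_a^{\circ k}$ maps $\mathcal{R}_\theta$ onto itself. Its accumulation set $\Lambda_\theta=\overline{\mathcal{R}_\theta}\setminus\mathcal{R}_\theta\subset\partial T_a^\infty=\partial K_a$ is a nonempty, connected, compact set that is forward-invariant under $\sigma_a^{\circ k}$, and the crux is to prove that $\Lambda_\theta$ is a single point. When the $\rho$-orbit of $\theta$ stays uniformly away from the neutral fixed points of $\rho\vert_{\partial\mathcal{Q}}$, the map $\rho$ is expanding along this orbit, so the consecutive ray-tiles in $\mathcal{Q}$ shrink in the hyperbolic metric; transporting this through the conformal conjugacy $\psi_a$ and running a standard normal-families argument, the Euclidean diameters of the corresponding ray-tiles of $\sigma_a$ tend to $0$, which forces $\Lambda_\theta$ to be a point. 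The landing point is then periodic and lies on $\partial K_a$; it cannot be attracting (attracting cycles lie in $\Int{K_a}$), and the Snail Lemma rules out irrationally neutral and Cremer points as landing points of an invariant ray, leaving precisely the repelling or parabolic alternatives asserted in part~3. The pre-periodic case then follows by pulling the landing of the corresponding periodic ray back through iterates of $\sigma_a$, which is legitimate as long as the ray avoids the free critical point $c_a$; for the rational angles in question this is arranged by the separate control of the critical orbit, so that each pull-back is a genuine local homeomorphism and landing is preserved.

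It remains to address the finitely many periodic angles whose $\rho$-orbit meets a neutral fixed point of $\rho\vert_{\partial\mathcal{Q}}$. Those lying in the grand orbit $\bigcup_{n\ge0}\rho^{-n}(\{\tfrac13\})$ of $\omega$ are exactly the rays landing at (pre-images of) $-2$; they are covered by part~2 together with the pull-back argument, which is precisely why they are excluded from the statement of part~3. For the remaining two neutral fixed angles of $\rho$ I emphasize that the $\rho$-multiplier does \emph{not} determine the nature of the $\sigma_a$-landing point, since $\psi_a$ is a conjugacy only on the tiling set and the type of a boundary periodic point is a property of the dynamics of $\sigma_a$ on $K_a$. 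For such a fixed ray the tile-shrinking estimate degenerates, so I would instead argue landing directly from invariance: a nondegenerate invariant accumulation continuum in $\partial K_a$ is excluded, and then the Snail Lemma again identifies the landing point as repelling or parabolic.

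The step I expect to be the main obstacle is reconciling the two distinct sources of degeneracy, namely the parabolic cusp $-2$ of $\sigma_a$ and the neutral fixed points of the exterior model $\rho$ (which, unlike the uniformly expanding exterior dynamics in the polynomial case, make the naive tile-shrinking estimate fail near those points). Away from these points the argument is a routine transport of the polynomial landing theorem through $\psi_a$; the genuine work lies in (i) the quantitative shrinking of ray-tiles near the neutral fixed points, where one must replace the lost $\rho$-expansion by the local petal structure and the Snail Lemma, and (ii) the explicit cusp asymptotics (\ref{schwarz_cusp_expansion}), which pin down the landing of $\mathcal{R}_{1/3}$ and its pre-images at $-2$ and thereby cleanly separate the two regimes.
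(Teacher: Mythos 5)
Your overall skeleton --- transporting the model through the external conjugacy $\psi_a$, treating the cusp ray separately, establishing landing by shrinking of ray pieces, invoking the Snail Lemma for the repelling/parabolic dichotomy, and pulling back to strictly pre-periodic angles --- is the same as the paper's, whose proof is literally an invocation of \cite[Proposition~6.34]{LLMM1}. The problem is that at the very point you identify as the main obstacle, the proposal contains no argument. For periodic angles whose orbit meets the fixed points of $\rho$ on $\partial\mathcal{Q}$, you propose to ``argue landing directly from invariance: a nondegenerate invariant accumulation continuum in $\partial K_a$ is excluded,'' and then to apply the Snail Lemma. Excluding such a continuum \emph{is} the landing statement, and nothing in your write-up excludes it; the Snail Lemma cannot help here, since it classifies a landing point only \emph{after} landing is known. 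Moreover, the dichotomy around which you organize the proof (expansion of $\rho$ along the orbit versus its degeneracy at the boundary fixed points) is a red herring: the contraction driving the classical argument does not come from expansion of the external map at all. In the model, $\rho$ maps tiles onto tiles by restrictions of isometries of $\D$, so for a $k$-periodic angle the arcs of the model ray cut out by the rank-$n$ tiles recur isometrically with period $k$ and hence have uniformly bounded hyperbolic diameter; the conformal isomorphism $\psi_a$ transports this bound to $T_a^\infty$. Since these arcs eventually leave every compact subset of $T_a^\infty$ (a compact set meets only finitely many tiles), their spherical diameters tend to $0$; every accumulation point of the ray is then a fixed point of the holomorphic map $\sigma_a^{\circ 2k}$, those fixed points are isolated, and connectedness of the accumulation set forces landing. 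This argument is uniform over all periodic angles, so the case you single out never requires separate treatment.

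Second, in part 2 your uniqueness claim is circular: you transport ``no other ray lands at $-2$'' through $\psi_a$, but $\psi_a$ is a conjugacy on the tiling set only and does not extend to $\partial K_a$; the relation between ideal boundary points of $\mathcal{Q}$ and landing points on $\partial K_a$ is exactly what the proposition is establishing, so nothing can be transported. A genuine argument must be run in the dynamical plane --- for instance, using the asymptotics (\ref{schwarz_cusp_expansion}) to see that $K_a$ approaches $-2$ only along the direction $2\theta_0$, so that $-2$ admits a single access from $T_a^\infty$, combined with the standard sector argument that two distinct rays cannot land at one point through a common access. Relatedly, your positive claim that the $\frac13$-ray ``lies in the repelling petal'' is not automatic: the ray is by definition the $\psi_a$-preimage of a model ray, and whether its tail sits inside $\Omega_a$ (in the chain of high-rank tiles at the cusp) or in the closed fundamental tile depends on the definition of G-rays. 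In either case the clean route to landing is the one already built into Proposition~\ref{schwarz_group}: $\psi_a$ restricts to a homeomorphism of each closed tile sending $-2$ to $\omega$, and the tiles adjacent to the cusp shrink to it, so petal dynamics are not needed for this step.
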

\begin{proof}
The proof of \cite[Proposition~6.34]{LLMM1} applies mutatis mutandis to the present setting.
\end{proof}

Let us also state a converse which can be proved following \cite[Theorem~24.5, Theorem~24.6]{L6}.

\begin{proposition}[Repelling and Parabolic Points are Landing Points of Rays]\label{rep_para_landing_point}
Let $a\in\cC(\mathcal{S})$. Then, every repelling and parabolic periodic point of $\sigma_a$ is the landing point of finitely many (at least one) dynamical rays. Moreover, all these rays have the same period under $\sigma_a^{\circ 2}$.
\end{proposition}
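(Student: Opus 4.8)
The plan is to transplant the classical argument of Douady for polynomials (see \cite[Theorems~24.5,~24.6]{L6}) to the present setting, using the conformal conjugacy $\psi_a$ of Proposition~\ref{schwarz_group} in place of the B\"ottcher coordinate on the basin of infinity. Since $\sigma_a$ is anti-holomorphic, I would pass to the holomorphic iterate $g:=\sigma_a^{\circ 2}$ and replace the period of a point by its period under $g$. Under $\psi_a$ the boundary dynamics of $\sigma_a$ is conjugated to $\rho\colon\partial\mathcal{Q}\to\partial\mathcal{Q}$, a degree-two circle map with only finitely many periodic angles of each period (conjugate via $\mathcal{E}$ to the map $B$ of \eqref{blaschke_map}); the rays of $\sigma_a$ at these periodic angles land by Proposition~\ref{per_rays_land}. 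After passing to a further iterate it suffices to treat a repelling or parabolic \emph{fixed} point $z_0$ of $g$ and to produce a non-empty finite set $\mathcal{A}$ of periodic ray angles landing at $z_0$; the assertion about a common period under $\sigma_a^{\circ 2}$ is then recovered by undoing the reductions. A preliminary remark is that $z_0\in\partial T_a^\infty=\partial K_a$: as in the polynomial case such a periodic point lies neither in the open tiling set (whose points escape to $T_a^0$) nor in the interior of $K_a$ (where orbits are governed by attracting or parabolic basins and rotation domains).

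Existence is the heart of the matter. For a repelling fixed point $z_0$ I would fix a linearizing (Koenigs) neighborhood on which $g$ is conformally conjugate to $\zeta\mapsto\lambda\zeta$ with $\vert\lambda\vert>1$, and examine the external rays passing close to $z_0$. Using that the inverse branch of $g$ fixing $z_0$ contracts toward $z_0$, one produces a ray whose accumulation set contains $z_0$; the planar linearization then forces any such ray to land exactly at $z_0$ and its angle to be $\rho$-periodic, which is the crux of \cite[Theorem~24.5]{L6}. For a parabolic fixed point $z_0$ (after a further iterate, with multiplier $1$) the linearization is replaced by the Leau--Fatou flower: the attracting petals lie in $\Int K_a$, while the repelling petals abut $\partial T_a^\infty$ and the external rays approach $z_0$ along them, so the same accumulation argument applies. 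I expect this existence step, carried out with no local connectivity hypothesis on $K_a$, to be the main obstacle, the parabolic case requiring the more delicate flower analysis of \cite[Theorem~24.6]{L6}.

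Finally, finiteness and the common period. The set $\mathcal{A}$ of angles of rays landing at $z_0$ satisfies $\rho(\mathcal{A})\subseteq\mathcal{A}$, because $g$ sends a ray landing at $z_0$ to a ray landing at $g(z_0)=z_0$; since $g$ is a local homeomorphism at $z_0$, the induced map is an order-preserving bijection of $\mathcal{A}$ for the cyclic order that $\mathcal{A}$ inherits from $\partial\mathcal{Q}$ (the landing rays being pairwise disjoint and cutting a punctured neighborhood of $z_0$ into sectors). The local repelling or parabolic structure at $z_0$ shows that these sectors have definite size in the linearizing or Fatou coordinate, so $g$ permutes $\mathcal{A}$ as a cyclic rotation with rational combinatorial rotation number; hence $\mathcal{A}$ is finite and all its angles are $\rho$-periodic of one common period. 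Undoing the reduction to fixed points shows that the rays landing at the original repelling or parabolic periodic point are finite in number and share a common period under $\sigma_a^{\circ 2}$, as claimed.
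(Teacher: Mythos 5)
Your proposal is correct and follows essentially the same route as the paper, which disposes of this proposition in one line by invoking the classical arguments of \cite[Theorems~24.5,~24.6]{L6}; your sketch (passing to $\sigma_a^{\circ 2}$, replacing the B\"ottcher coordinate by the uniformization $\psi_a$ of Proposition~\ref{schwarz_group}, existence via linearization/Leau--Fatou petals, and finiteness plus common period via the cyclic-order rotation argument) is precisely the adaptation the authors have in mind. No gap to report.
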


\section{A Straightening Theorem}\label{sec_straightening}

The goal of this section is to prove a straightening theorem that will allow us to show that the dynamics of $\sigma_a$ on its non-escaping set is topologically equivalent to a suitable anti-rational map. We will prove our straightening theorem for a class of \emph{pinched anti-quadratic-like maps} with controlled geometry and prescribed asymptotics at the pinching point. 

\begin{definition}[Pinched Anti-quadratic-like Map]\label{pinched_def}
A continuous map $\mathbf{F}:\left(\overline{\mathbf{U}},\infty\right)\to\left(\overline{\mathbf{V}},\infty\right)$ of degree $2$ is called a \emph{pinched anti-quadratic-like map} if
\begin{enumerate}
\item $\mathbf{F}(\partial\mathbf{U})=\partial\mathbf{V}$,
 
\item $\mathbf{F}$ is anti-holomorphic on $\mathbf{U}$,

\item $\mathbf{U}\subset\mathbf{V}(\subset\widehat{\C})$ are Jordan domains, and $\left(-\infty,-x\right)\subset\mathbf{U}$ for some $x>0$, 

\item there exists $M>0$ such that $\partial\mathbf{V}\cap\{\vert z\vert\geq M\}=\{m e^{\pm\frac{2\pi i}{3}}: m\geq M\}$,

\item $\partial\mathbf{U}\cap\partial\mathbf{V}=\{\infty\}$,

\item $\mathbf{F}(z)= \overline{z}+\frac12+O(\frac{1}{\overline{z}})$ as $z\to\infty$,

\item $\partial\mathbf{V}$ is smooth except at $\infty$, and $\partial\mathbf{U}$ is smooth except at $\mathbf{F}^{-1}(\infty)$.
\end{enumerate}
\end{definition}
(see Figure~\ref{fig:anti_quad_like}).

\begin{remark}
1) It follows from the definition that $\mathbf{F}^{-1}(\infty)\subset\partial\mathbf{U}$, and $\partial\mathbf{U}$ meets the circle at infinity $\{\infty\cdot e^{2\pi i\theta}:\theta\in\R/\Z\}$ at $\infty\cdot e^{\pm\frac{2\pi i}{3}}$. Moreover, near $\infty$, the boundaries $\partial\mathbf{U}$ and $\partial\mathbf{V}$ bound two infinite strips each of asymptotic width $\frac{\sqrt{3}}{4}$.

2) The assumption that $\partial\mathbf{V}$ contains two infinite rays at angles $\pm\frac{2\pi}{3}$ is not a serious restriction. In fact, any unbounded Jordan domain (containing the negative real axis) whose boundary meets the circle at infinity at $\infty\cdot e^{\pm\frac{2\pi i}{3}}$ can be mapped to a domain $\mathbf{V}$ of the above type by a conformal map that is asymptotically linear near $\infty$.

3) The negative real axis is a repelling direction of $\mathbf{F}$ at $\infty$; i.e., points on the negative real axis with large absolute value are repelled away from $\infty$ under the action of $\mathbf{F}$.
\end{remark}

\begin{figure}[ht!]
\begin{tikzpicture}
\node[anchor=south west,inner sep=0] at (0,0) {\includegraphics[width=0.48\textwidth]{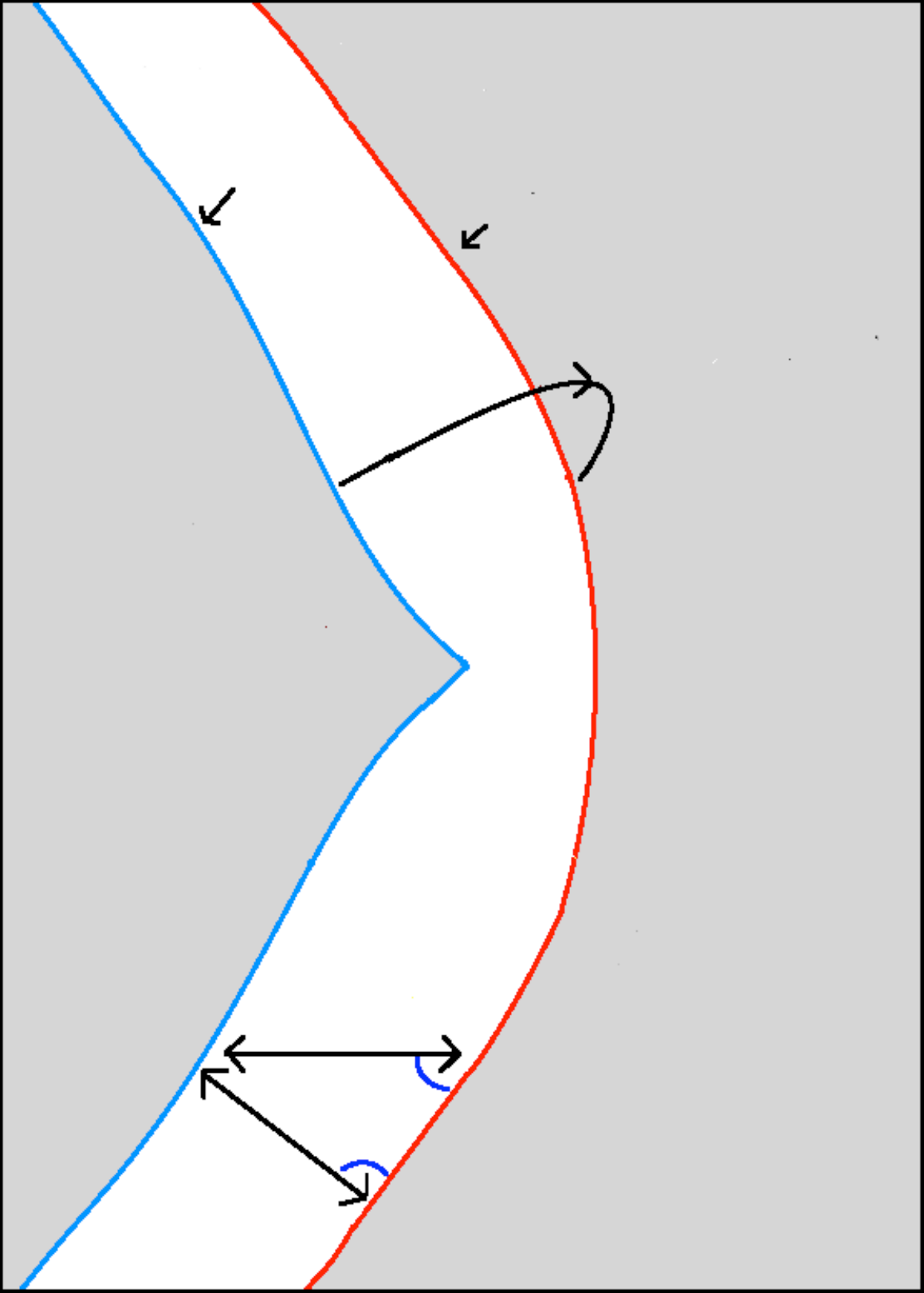}};
\node[anchor=south west,inner sep=0] at (7,0) {\includegraphics[width=0.52\textwidth]{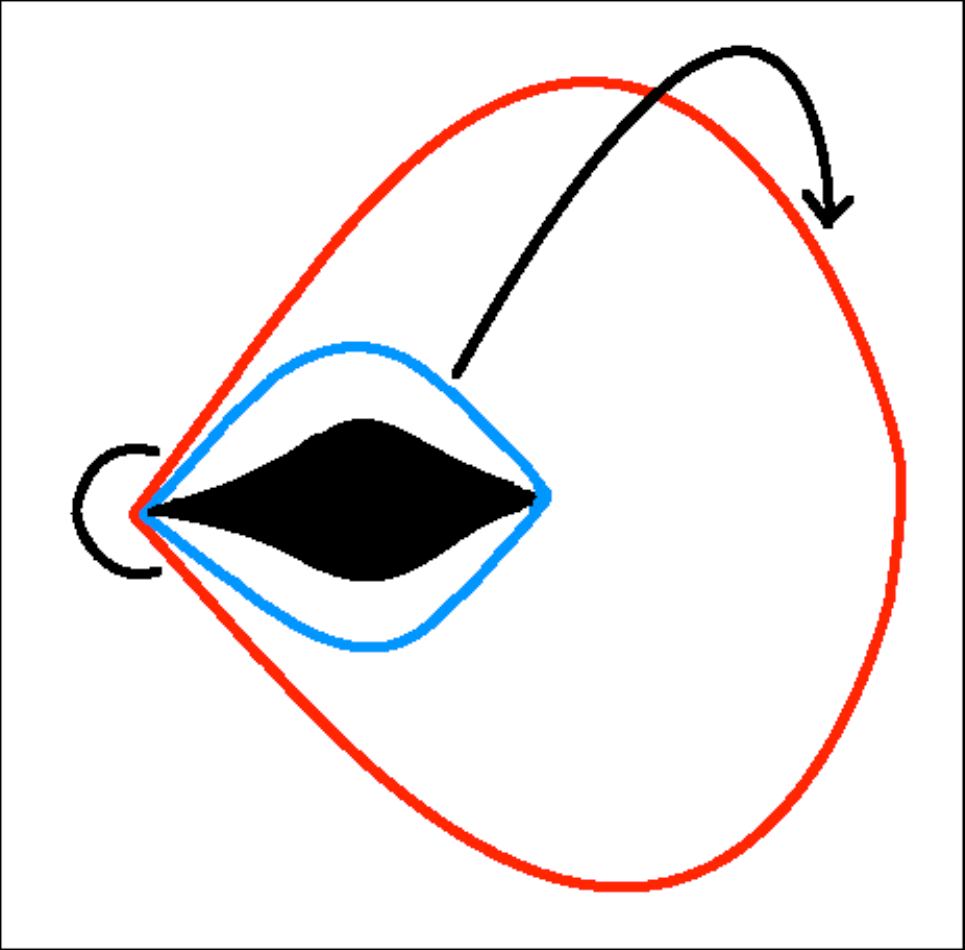}};
\node at (1.64,7.5) {$\mathbf{\partial\mathbf{U}}$};
\node at (3.6,7.2) {$\mathbf{\partial\mathbf{V}}$};
\node at (4,6.4) {$\mathbf{F}$};
\node at (1.4,4.8) {$\mathbf{U}$};
\node at (5.2,2.8) {$\widehat{\C}\setminus\overline{\mathbf{V}}$};
\node at (2.44,1.06) {\begin{footnotesize}$\pi/2$\end{footnotesize}};
\node at (2.5,1.35) {\begin{footnotesize}$\pi/3$\end{footnotesize}};
\node at (2.5,1.95) {$\frac12$};
\node at (1.6,0.8) {$\frac{\sqrt{3}}{4}$};
\node at (8.4,3) {$\infty$};
\node at (9.5,2.3) {$\mathbf{U}$};
\node at (12,2) {$\mathbf{V}$};
\node at (7.35,3.3) {$\frac{4\pi}{3}$};
\node at (12.45,6.28) {$\mathbf{F}$};
\end{tikzpicture}
\caption{Left: A pinched anti-quadratic-like map $\mathbf{F}$. Right: The same map in different coordinates, where the pinching point corresponds to $\infty$. The filled Julia set (schematic) is shown in black.}
\label{fig:anti_quad_like}
\end{figure}

\begin{definition}[Filled Julia Set, and Hybrid Conjugacy]\label{hybrid_def}
1) The \emph{filled Julia set} $K_\mathbf{F}$ of a pinched anti-quadratic-like map $\mathbf{F}:\overline{\mathbf{U}}\to\overline{\mathbf{V}}$ is defined as 
$$
K_\mathbf{F}:=\{z\in\overline{\mathbf{U}}:\mathbf{F}^{\circ n}(z)\in\overline{\mathbf{U}}\ \forall\ n\geq 0\}.
$$
 
2) Two pinched anti-quadratic-like maps $\mathbf{F}_i:\overline{\mathbf{U}_i}\to\overline{\mathbf{V}_i}$, $i\in\{1,2\}$, are said to be \emph{hybrid conjugate} if there exists a homeomorphism $\Phi:\overline{\mathbf{V}_1}\to\overline{\mathbf{V}_2}$, quasiconformal on $\mathbf{V}_1$ and $\overline{\partial}\Phi=0$ a.e. on $K_{\mathbf{F}_1}$, that conjugates $\mathbf{F}_1$ to $\mathbf{F}_2$.
\end{definition}

We now prove one of our key results that allows us to ``straighten'' pinched anti-quadratic-like maps to quadratic anti-rational maps with a simple parabolic fixed point (see Appendix~\ref{anti_rational_parabolic} for a detailed description of the family of such maps).

\begin{lemma}\label{straightening_lemma}
Every pinched anti-quadratic-like map $\mathbf{F}$ is hybrid conjugate to a pinched anti-quadratic-like restriction of some member of the family $\mathfrak{L}_0$.
\end{lemma}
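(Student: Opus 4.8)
The plan is to construct the hybrid conjugacy by the standard Douady–Hubbard gluing recipe, adapted to the degenerate geometry at the pinching point $\infty$. First I would fix a target model: a map $R_{\alpha,A}\in\mathfrak{L}_0$ restricts to a pinched anti-quadratic-like map $\mathbf{G}:\overline{\mathbf{U}'}\to\overline{\mathbf{V}'}$ whose behaviour at the parabolic fixed point matches condition~(6) of Definition~\ref{pinched_def}, i.e. $\mathbf{G}(z)=\overline{z}+\tfrac12+O(1/\overline{z})$, so the two maps have the same asymptotics at $\infty$. The goal is to build a quasiconformal homeomorphism $\Phi:\overline{\mathbf{V}}\to\overline{\mathbf{V}'}$ with $\overline{\partial}\Phi=0$ on $K_\mathbf{F}$ that conjugates $\mathbf{F}$ to $\mathbf{G}$, and then appeal to the Measurable Riemann Mapping Theorem to realize $\Phi$ by an actual member of $\mathfrak{L}_0$.

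The construction proceeds in the usual three stages. First I would produce a conjugacy on the ``collar'' $\overline{\mathbf{V}}\setminus\mathbf{U}$: here $\mathbf{F}$ is an (anti-holomorphic) boundary covering, and on the corresponding annular region $\overline{\mathbf{V}'}\setminus\mathbf{U}'$ for $\mathbf{G}$ one has the same combinatorial covering data. One chooses a diffeomorphism on the outer boundaries $\partial\mathbf{V}\to\partial\mathbf{V}'$ that is compatible with the prescribed rays at angles $\pm\tfrac{2\pi}{3}$ and extends it by the equivariance $\Phi\circ\mathbf{F}=\mathbf{G}\circ\Phi$ to the whole fundamental collar; since $\mathbf{U}\subset\mathbf{V}$ are Jordan domains this fundamental collar is topologically an annulus and the extension is standard. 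Second, one spreads $\Phi$ over the entire tiling/escaping region by pulling back through the iterates of $\mathbf{F}$ and $\mathbf{G}$, obtaining a quasiconformal conjugacy off the filled Julia set $K_\mathbf{F}$; the dilatation stays bounded because it is the pullback of the bounded dilatation on a single fundamental domain. Third, one sets $\Phi$ equal to any quasiconformal extension (the identity in suitable coordinates, or a $K$-qc interpolation) on $K_\mathbf{F}$, pulls back the standard complex structure, obtains a $\mathbf{G}$-invariant Beltrami coefficient $\mu$ supported off $K_\mathbf{F}$, integrates it, and checks that the resulting straightened map lies in $\mathfrak{L}_0$ (using that $\mathfrak{L}_0$ is defined precisely as the universal persistently-parabolic slice, per Appendix~\ref{anti_rational_parabolic}) and is hybrid, i.e. $\overline{\partial}\Phi=0$ a.e. on $K_\mathbf{F}$.

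The main obstacle — flagged in the introduction — is the first stage near the pinching point $\infty$, where the fundamental domain of $\mathbf{F}$ degenerates from an annulus into a pair of infinite strips of asymptotic width $\tfrac{\sqrt 3}{4}$, meeting the circle at infinity at $\infty\cdot e^{\pm 2\pi i/3}$. An ordinary quasiconformal interpolation on an annulus is routine, but here the ``annulus'' has infinite modulus and the conjugacy must remain quasiconformal uniformly out to $\infty$ while respecting the prescribed translation-type asymptotics of condition~(6). The plan to overcome this is to pass to logarithmic-type coordinates linearizing the glide reflection $z\mapsto\overline z+\tfrac12$, in which both $\mathbf{F}$ and $\mathbf{G}$ become asymptotically the same translation/glide on a half-infinite strip; there the error terms $O(1/\overline z)$ are uniformly small, and one can perform a quasiconformal interpolation between two boundary arcs of an infinite strip with dilatation tending to $0$ at the end (a Fatou-coordinate / strip-interpolation argument). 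This is exactly the ``analysis of boundary behavior of conformal maps which allows for quasiconformal interpolation in infinite strips'' mentioned in the introduction, and it is where the precise asymptotics recorded in condition~(6) and in Relation~(\ref{schwarz_cusp_expansion}) do the essential work.

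Once the conjugacy is built with controlled dilatation up to $\infty$, the remaining steps are formal: equivariant spreading preserves the bound, the invariant Beltrami coefficient integrates by MRMT, and the straightened map is forced to lie in $\mathfrak{L}_0$ by its asymptotics at the parabolic point together with its degree-$2$ anti-holomorphic covering structure. Injectivity-type uniqueness of the target is not needed here, only existence; hence the lemma follows.
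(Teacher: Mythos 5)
Your toolkit (equivariant boundary maps, quasiconformal interpolation in the infinite strips at the pinching point, MRMT) is the right one, and your strip-coordinate idea for handling the degenerate fundamental annulus is essentially how the paper does it (via Warschawski's theorem on conformal maps of infinite strips, giving boundary correspondences of the form $g_{\pm}(x)=g(x)+O(1)$ that can be linearly interpolated). But the architecture of your argument has a genuine gap: you begin by \emph{fixing} a target $R_{\alpha,A}\in\mathfrak{L}_0$ and aim to conjugate $\mathbf{F}$ to its pinched restriction $\mathbf{G}$. No such target can be chosen in advance: the member of $\mathfrak{L}_0$ whose internal dynamics matches $\mathbf{F}$ is exactly what the lemma must produce, so your starting point is circular. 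Your stages 1--2 only yield a conjugacy off $K_{\mathbf{F}}$ (which indeed works for \emph{any} target, because the external dynamics is universal), while your stage 3 extension over $K_{\mathbf{F}}$ is explicitly \emph{not} a conjugacy there --- and in general none exists: if, say, $\mathbf{F}$ has an attracting cycle in $K_{\mathbf{F}}$ while the chosen $R_{\alpha,A}$ has only repelling and Cremer cycles, the two maps are not even topologically conjugate near their filled Julia sets, so no MRMT correction can turn $\Phi$ into a conjugacy.

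The second, related, gap is that your MRMT step is applied to data living only on $\overline{\mathbf{V}}$. Pulling back the standard structure by a map $\overline{\mathbf{V}}\to\overline{\mathbf{V}'}$ and integrating produces, at best, another \emph{pinched anti-quadratic-like map} hybrid conjugate to $\mathbf{F}$ whose escaping dynamics is conformally the model's; it does not produce a globally defined anti-rational map. What is missing is the gluing step of the Douady--Hubbard surgery, which is what the paper actually performs: keep $\mathbf{F}$ on $\mathbf{U}$, and on $\widehat{\C}\setminus\mathbf{U}$ graft the petal dynamics $\mathfrak{q}$ of the \emph{model} parabolic map $q(z)=\overline{z}+\overline{z}^2$ (transported by the interpolated map $\xi$), so as to obtain a quasiregular degree-$2$ map $\mathbf{G}$ of the \emph{whole sphere}. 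Only for such a globally defined map does straightening a $\mathbf{G}$-invariant Beltrami form yield an anti-rational map $R$; membership $R\in\mathfrak{L}_0$ is then verified a posteriori (simple parabolic fixed point at $\infty$ coming from the matched attracting/repelling directions, pole at $0$ and critical point at $1$ by normalization, and critical Ecalle height $0$ inherited from $q$). Replacing your ``fixed target'' by this glued external model --- proving existence by surgery rather than by conjugation to a presupposed member --- is precisely what repairs the argument.
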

\begin{proof}
We will glue a suitably chosen attracting petal of the model parabolic map $q(z)=\overline{z}+\overline{z}^2$ outside $\mathbf{U}$ (note that $q$ restricted to its parabolic basin is conformally conjugate to the anti-Blaschke product $B$ on $\D$, see (\ref{blaschke_map})). The change of coordinates $\eta: z\mapsto-\frac{1}{2z}$ conjugates $q$ to $z\mapsto \overline{z}+\frac12+O(\frac{1}{\overline{z}})$ as $z\to\infty$. Let us choose an attracting petal $P$ that subtends an angle $\frac{4\pi}{3}$ at the parabolic fixed point $0$ of $q$ and such that $\partial P$ contains the critical point $-\frac12$ (see Figure \ref{petal_pic}). We can also require that $\partial P$ is smooth except at $0$, and $q^{-1}(P)$ is simply connected. Then, we have that $q:q^{-1}(P)\to P$ is a two-to-one branched covering. Moreover, $P$ can be chosen so that the change of coordinate $\eta$ maps $P$ to an unbounded domain containing a part of the positive real axis such that the domain is bounded by the infinite rays $\{re^{\pm\frac{2\pi i}{3}}:r\geq M_0\}$ (for some $M_0>0$) and some smooth curve connecting $M_0 e^{\pm\frac{2\pi i}{3}}$. 

\begin{figure}[ht!]
\begin{tikzpicture}
\node[anchor=south west,inner sep=0] at (0,0) {\includegraphics[width=0.6\textwidth]{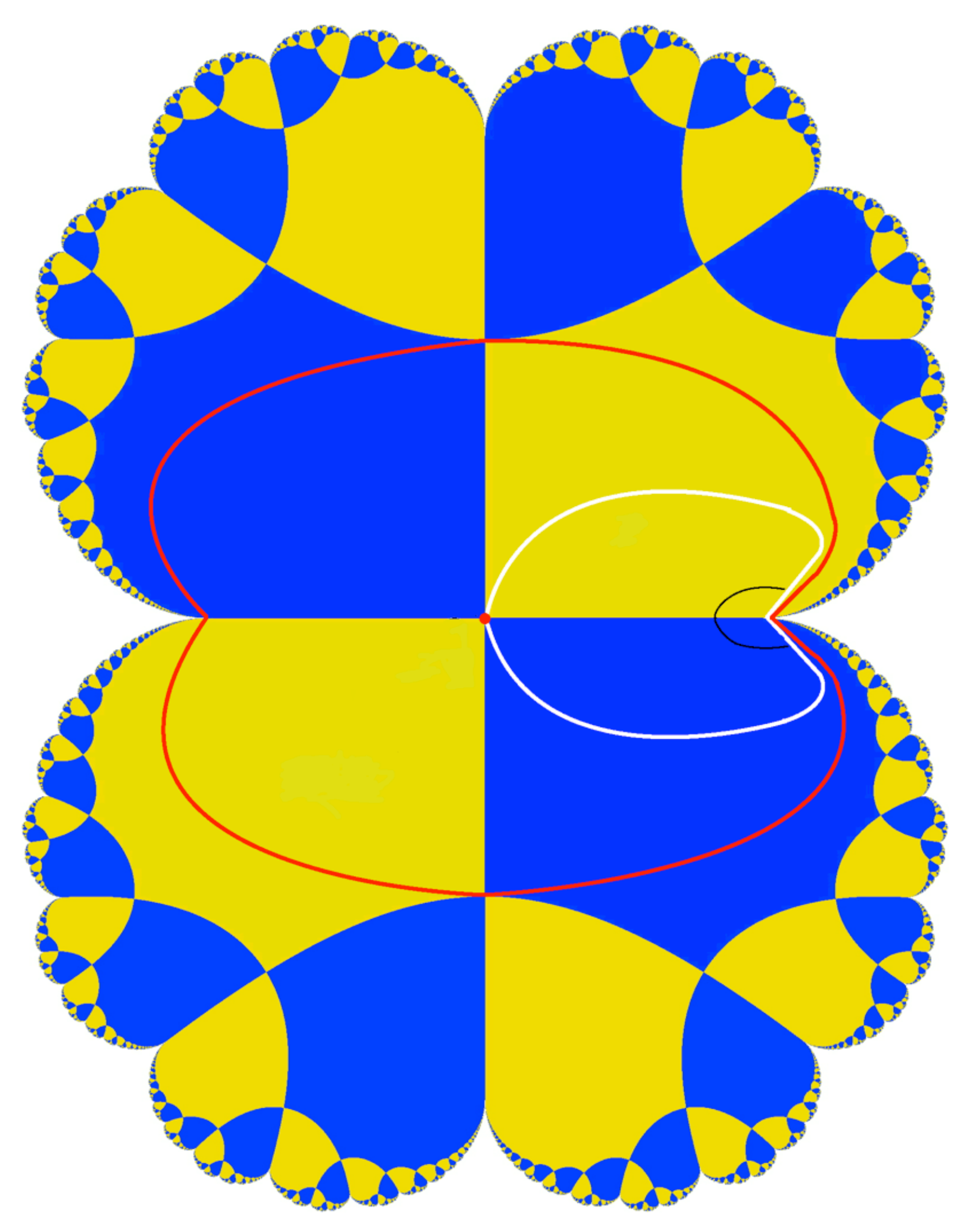}};
\node at (5,5.25) {$P$};
\node at (2.8,3.5) {$q^{-1}(P)$};
\node at (3.35,4.45) {$-\frac12$};
\node at (5.4,4.4) {\textcolor{white}{$\frac{4\pi}{3}$}};
\end{tikzpicture}
\caption{The filled Julia set of $q(z)=\overline{z}+\overline{z}^2$ is shown. The attracting petal $P$ subtends an angle $\frac{4\pi}{3}$ at the parabolic fixed point $0$. The critical point $-\frac12$ (of $q$) lies on the boundary of the petal $P$. The pre-image of $P$ (under $q$) is a simply connected domain, which maps two-to-one onto $P$ branched only at $-\frac12$.}
\label{petal_pic}
\end{figure}

Let us choose a conformal map $\xi$ from $\widehat{\C}\setminus\overline{\mathbf{V}}$ onto $\mathfrak{P}:=\eta(P)$ which sends $\infty$ to $\infty$ (see Figure~\ref{interpolation}). Then, $\xi$ extends as a homeomorphism between $\partial\mathbf{V}$ and $\partial \mathfrak{P}$. Moreover, since the boundaries $\partial\mathbf{V}$ and $\partial \mathfrak{P}$ of both the domains (here, the boundaries are taken in the Riemann sphere $\widehat{\C}$) make a corner angle $\frac{4\pi}{3}$ at $\infty$, the extended map $\xi$ is approximately linear near $\infty$; i.e., $\xi(z)=\lambda_0 z+o(z)$ (for some $\lambda_0>0$) as $z\in\partial\mathbf{V}$ and $\vert\im(z)\vert\to+\infty$.

We take the pre-image of the curve $\partial\mathbf{V}$ under $\mathbf{F}$, and the pre-image of $\partial \mathfrak{P}$ under $\mathfrak{q}:=\eta\circ q\circ\eta^{-1}$. The homeomorphism $\xi:\partial\mathbf{V}\to\partial \mathfrak{P}$ can be lifted to obtain a homeomorphism $\xi:\partial\mathbf{U}\to \mathfrak{q}^{-1}(\partial \mathfrak{P})$ fixing $\infty$ (see Figure~\ref{interpolation}). Since the covering maps $\mathbf{F}:\partial\mathbf{U}\to\partial\mathbf{V}$ and $\mathfrak{q}: \mathfrak{q}^{-1}(\partial \mathfrak{P})\to\partial \mathfrak{P}$ are tangent to $\overline{z}$ near $\infty$, it follows that $\xi(z)$ is of the form $\lambda_0 z+o(z)$ where $z\in\partial\mathbf{U}$ and $\vert\im(z)\vert\to+\infty$.

\begin{figure}[ht!]
\begin{tikzpicture}
\node[anchor=south west,inner sep=0] at (0,0) {\includegraphics[width=\textwidth]{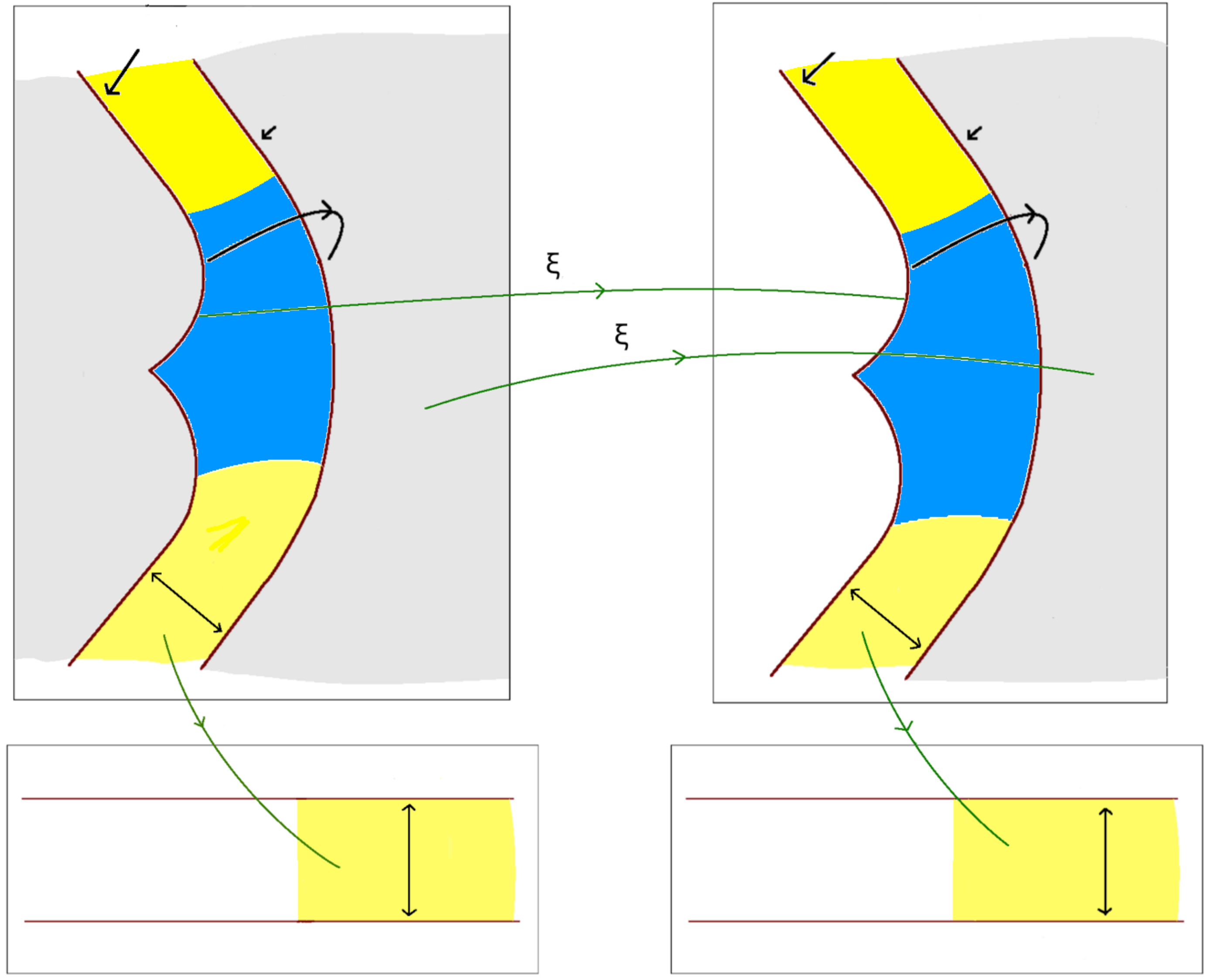}};
\node at (1.64,10) {$\partial\mathbf{U}$};
\node at (3.1,9.1) {$\partial\mathbf{V}$};
\node at (0.5,7.5) {$\mathbf{U}$};
\node at (2,8.5) {$\mathbf{V}$};
\node at (3.6,4) {$\widehat{\C}\setminus\overline{\mathbf{V}}$};
\node at (3.6,8.32) {$\mathbf{F}$};
\node at (3.8,2.2) {$y=\frac{\sqrt{3}}{8}$};
\node at (3.8,0.35) {$y=-\frac{\sqrt{3}}{8}$};
\node at (2.2,4.25) {$\frac{\sqrt{3}}{4}$};
\node at (9.5,4) {$\frac{\sqrt{3}}{4}$};
\node at (4.6,1.2) {$\frac{\sqrt{3}}{4}$};
\node at (10.8,2.2) {$y=\frac{\sqrt{3}}{8}$};
\node at (10.8,0.35) {$y=-\frac{\sqrt{3}}{8}$};
\node at (12,1.2) {$\frac{\sqrt{3}}{4}$};
\node at (11,4) {$\mathfrak{P}$};
\node at (2.5,2.7) {$\beta_1$};
\node at (9.8,2.7) {$\beta_2$};
\node at (10.5,9.08) {$\partial \mathfrak{P}$};
\node at (8.8,9.88) {$\mathfrak{q}^{-1}(\partial \mathfrak{P})$};
\node at (11.2,8.2) {$\mathfrak{q}$};
\end{tikzpicture}
\caption{The map $\xi$ is quasiconformally interpolated on the strip $\mathbf{S}$ bounded by $\partial\mathbf{U}$ and $\partial\mathbf{V}$ such that it maps $\mathbf{S}$ onto the strip $\mathfrak{S}$ bounded by $\partial \mathfrak{P}$ and $\mathfrak{q}^{-1}(\partial \mathfrak{P})$. The interpolation is done by mapping the top and bottom accesses of $\mathbf{S}$ and $\mathfrak{S}$ (to $\infty$) conformally onto the right-half of the horizontal strip $\{x+iy:\vert y\vert<\frac{\sqrt{3}}{8}\}$.}
\label{interpolation}
\end{figure}

Let us denote the strip between $\partial\mathbf{U}$ and $\partial\mathbf{V}$ by $\mathbf{S}$. We claim that $\xi$ can be quasiconformally interpolated on $\mathbf{S}$ so that the image of $\mathbf{S}$ under the interpolating map is the strip $\mathfrak{S}$ bounded by $\partial \mathfrak{P}$ and $\mathfrak{q}^{-1}(\partial \mathfrak{P})$. 

We will first justify the existence of such an interpolating map in two accesses of $\mathbf{S}$ to $\infty$ (shaded in yellow in Figure~\ref{interpolation}). To this end, let us consider the bottom access of $\mathbf{S}$ to $\infty$. Rotating the bottom access of $\mathbf{S}$ to $\infty$ anti-clockwise by an angle $\frac{2\pi}{3}$, we obtain a horizontal strip bounded by suitable right halves of the curves $y=0$, and $y=-\frac{\sqrt{3}}{4}+O(\frac1x)$ as $x\to+\infty$ (this follows from Property (6) of a pinched anti-quadratic-like map). It now follows from \cite{War} that this horizontal strip can be mapped onto the right-half of the horizontal strip $\{x+iy:\vert y\vert<\frac{\sqrt{3}}{8}\}$ by a conformal map $\widehat{\beta}_1$ such that $\widehat{\beta}_1(w)=w+o(w)$ as $\re(w)\to+\infty$. Therefore, the conformal map $\beta_1(z)=\widehat{\beta}_1(\omega z)$ sends the bottom access of $\mathbf{S}$ to $\infty$ onto the right-half of the horizontal strip $\{x+iy:\vert y\vert<\frac{\sqrt{3}}{8}\}$ such that $\beta_1(z)=\omega z+o(z)$ as $z\to\infty$.

The same is true for the bottom access of $\mathfrak{S}$ to $\infty$. More precisely, there exists a conformal map $\beta_2$ from the bottom access of $\mathfrak{S}$ to $\infty$ onto the right-half of the horizontal strip $\{x+iy:\vert y\vert<\frac{\sqrt{3}}{8}\}$ such that $\beta_2(z)=\omega z+o(z)$ as $z\to\infty$.

Thus, we obtain two maps between pairs of horizontal rays 
$$
\beta_2\circ\xi\circ\beta_1^{-1}:\{x\pm i\frac{\sqrt{3}}{8}:x>0\}\to\{x\pm i\frac{\sqrt{3}}{8}:x>0\}.
$$ 
Clearly, the two maps are of the form $x\pm i\frac{\sqrt{3}}{8}\mapsto g_{\pm}(x)\pm i\frac{\sqrt{3}}{8}$. Moreover, it follows from our analysis of the asymptotics of $\xi, \beta_1$, and $\beta_2$ that 
$$
g_{\pm}(x)=g(x)+O(1),\ \textrm{where}\ g(x)=\lambda_0x+o(x)\ \textrm{as}\ x\to+\infty.
$$ 
Therefore, we can linearly interpolate between these two maps to obtain a quasiconformal homeomorphism 
$$
x+iy\mapsto \left(\left(\frac12-\frac{4\sqrt{3}}{3}y\right)g_{-}(x)+\left(\frac12+\frac{4\sqrt{3}}{3}y\right)g_+(x)\right)+iy
$$ 
on the right-half of the horizontal strip $\{x+iy:\vert y\vert<\frac{\sqrt{3}}{8}\}$.

Going back by the change of coordinates $\beta_1$ and $\beta_2$, we obtain our desired quasiconformal map defined on the bottom access of $\mathbf{S}$ to $\infty$. This proves that $\xi$ can be quasiconformally interpolated on the bottom access to $\infty$ of $\mathbf{S}$ so that the image of the interpolating map is the bottom access to $\infty$ of the strip $\mathfrak{S}$. A completely analogous argument shows that a similar interpolating quasiconformal homeomorphism exists on the top access to $\infty$ of $\mathbf{S}$. These extensions define a quasisymmetric map $\xi$ on the boundary of a bounded Jordan domain with piecewise smooth boundary and no cusp (the blue region in Figure~\ref{interpolation}). The quasiconformal extension of $\xi$ to this bounded part of $\mathbf{S}$ now follows from the Ahlfors-Beurling extension theorem on quasidisks \cite[\S 2.3, Proposition~2.30]{BrFa}. This completes the proof of the claim that $\xi$ can be quasiconformally interpolated on $\mathbf{S}$ so that the image of $\mathbf{S}$ under the interpolating map is the strip $\mathfrak{S}$. Moreover, this quasiconformal map, which we denote by $\xi$, is equivariant on the boundary (with respect to $\mathbf{F}$ and $\mathfrak{q}$).

We now define a quasiregular map $\mathbf{G}$ of degree two on $\widehat{\C}$ as follows:

\begin{equation}
\mathbf{G}:=\left\{\begin{array}{ll}
                    \mathbf{F} & \mbox{on}\ \mathbf{U}, \\
                     \xi^{-1}\circ \mathfrak{q}\circ\xi & \mbox{on}\ \widehat{\C}\setminus\mathbf{U}.
                                          \end{array}\right. 
\label{quasi_regular_def}
\end{equation}

\noindent The fact that $\mathbf{F}$ and $\xi^{-1}\circ \mathfrak{q}\circ\xi$ match on the boundary of their domains of definition follows from the equivariance property of $\xi$ mentioned above. 

Let us now define an ellipse field $\mu$ (i.e., a Beltrami form) on $\widehat{\C}$. On $\left(\widehat{\C}\setminus\mathbf{V}\right)\cup K_\mathbf{F}$, we define $\mu$ as circles. On $\mathbf{V}\setminus K_\mathbf{F}$, we define $\mu$ in a $\mathbf{G}$-invariant manner. Since $\mathbf{G}$ is conformal outside of $\overline{\mathbf{V}}\setminus\mathbf{U}$, it follows that $\mu$ is a $\mathbf{G}$-invariant Beltrami form with $\vert\vert\mu\vert\vert_\infty<k<1$.

By the Measurable Riemann Mapping Theorem, we get a quasiconformal map $\Phi$ which straightens $\mu$. We can normalize $\Phi$ so that it fixes $\infty$, sends the unique finite pole of $\mathbf{G}$ to $0$, and sends the critical point $\xi^{-1}(1)$ of $\mathbf{G}$ to $+1$. Then, $\Phi$ conjugates $\mathbf{G}$ to a degree two anti-rational map $R$ with a fixed point at $\infty$. By construction, $\mathbf{G}$ has a unique attracting direction (coming from the attracting direction of $\mathfrak{q}$ at $\infty$), and a unique repelling direction (coming from the repelling direction of $F$ at $\infty$) at $\infty$ each of which is invariant under $\mathbf{G}$. Hence, the same is true for $R$. Therefore, $R^{\circ 2}$ has a simple parabolic fixed point of multiplier $1$ at $\infty$. Moreover, $R$ has a pole at $0$, and a critical point at $+1$. Hence, $R\in\mathcal{F}$ (see Subsection~\ref{appendix_subsec_1}). Finally, since the critical {\'E}calle height of the map $q(z)=\overline{z}+\overline{z}^2$ is $0$ (see Appendix~\ref{appendix_subsec_3} for the definition of critical {\'E}calle height), the {\'E}calle height of the critical value $R(1)$ (which lies in an attracting petal of the neutral fixed point $\infty$ of $R$) is $0$ as well. Therefore, $R\in\mathfrak{L}_0$.

Note that by our construction, $\overline{\partial}\Phi\equiv0$ a.e. on $K_\mathbf{F}$. Therefore, $\Phi$ is the desired hybrid conjugacy between $\mathbf{F}:\overline{\mathbf{U}}\to\overline{\mathbf{V}}$ and a pinched anti-quadratic-like restriction of $R$.
\end{proof}

We will now apply Lemma~\ref{straightening_lemma} to extract parabolic quadratic anti-rational maps from the Schwarz reflection maps in the family $\mathcal{S}$.

\begin{theorem}[Straightening Schwarz Reflections]\label{straightening_schwarz}
1) For $a\in S$, there exists $V_a\subset\Omega_a$ and a univalent map $\eta_a$ on $\overline{V_a}$ sending $-2$ to $\infty$ such that with the notations 
$$
\pmb{\sigma}_a:=\eta_a\circ\sigma_a\circ\eta_a^{-1},\ \mathbf{V}_a:=\eta_a(V_a),\ \textrm{and}\ \mathbf{U}_a:=\eta_a(\sigma_a^{-1}(V_a)),
$$ 
the map 
$$
\pmb{\sigma}_a:\left(\overline{\mathbf{U}_a},\infty\right)\to\left(\overline{\mathbf{V}_a},\infty\right)
$$ 
is a pinched anti-quadratic-like map with filled Julia set $\eta_a(K_a)$. Hence, $\sigma_a$ is hybrid conjugate to a pinched anti-quadratic-like restriction of some member of the family $\mathfrak{L}_0$.

2) If $a\in\cC(\mathcal{S})$, then $\sigma_a:\overline{\sigma_a^{-1}(V_a)}\to\overline{V_a}$ is hybrid conjugate to a unique member $R_{\alpha,A}$ of the parabolic Tricorn $\cC(\mathfrak{L}_0)$. This unique map in $\cC(\mathfrak{L}_0)$ (equivalently, this unique parameter) is called \emph{the straightening} of $\sigma_a$.
\end{theorem}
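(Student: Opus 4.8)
The plan is to realize $\sigma_a$ (near the cusp) as a pinched anti-quadratic-like map in the sense of Definition~\ref{pinched_def} and then invoke Lemma~\ref{straightening_lemma}; everything rests on the cusp asymptotics from Subsection~\ref{dyn_near_cusp}. The technical core of Part~1 is the construction of the uniformizing coordinate $\eta_a$ at $-2$, after which the seven axioms of Definition~\ref{pinched_def} can be checked essentially by inspection, and Part~2 follows from the standard rigidity of straightening.

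\textbf{Construction of $\eta_a$.} Write $\theta_0=\arg(a-1)$. By relation~(\ref{schwarz_cusp_expansion}) in Subsection~\ref{cusp_asymp_sec_1}, passing to the coordinate $w_2:=e^{-2i\theta_0}(w+2)$ (the rotation sending the repelling direction $2\theta_0$ to the positive real axis) turns the germ of $\sigma_a$ at $-2$ into $w_2\mapsto\overline{w_2}+C'\,\overline{w_2}^{\,3/2}+O(\overline{w_2}^{\,2})$, where $C'=\tfrac{2(4-\re a)}{3\sqrt3\,|a-1|}$ is real and strictly positive for $a\in S$. Setting $\zeta=w_2^{-1/2}$ (a fixed branch with cut along the cusp direction) sends $-2$ to $\infty$ and, by a direct expansion, conjugates this germ to $\zeta\mapsto\overline{\zeta}-\tfrac{C'}{2}+O(1/\overline{\zeta})$. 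A final real scaling $z=\lambda\zeta$ with $\lambda=-1/C'<0$ (which rotates by $\pi$, carrying the repelling direction to the negative real axis, matching Remark~3 of Definition~\ref{pinched_def}) normalizes the translation constant to $+\tfrac12$. The composition $\eta_a$ is univalent on any sector at $-2$ of opening strictly less than $2\pi$ avoiding the cut, sends $-2\mapsto\infty$, and gives $\pmb{\sigma}_a(z)=\overline{z}+\tfrac12+O(1/\overline{z})$, i.e. axiom~(6). Single-valuedness of $\eta_a$ is exactly what dictates the opening of $V_a$ below.

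\textbf{Construction of $V_a$ and verification of the axioms.} Since $\Omega_a$ has interior angle $2\pi$ at $-2$, I first define $\eta_a$ on a sector at $-2$ of opening $\tfrac{4\pi}{3}$ symmetric about the repelling axis $2\theta_0$, then take the boundary arcs of $V_a$ near $-2$ to be the $\eta_a$-preimages of the two rays at $\pm\tfrac{2\pi}{3}$; this makes axiom~(4) hold on the nose. Globally I extend $V_a\subset\Omega_a$ to a Jordan domain enclosing $K_a$, which is possible because $K_a$ approaches $-2$ tangentially to the repelling axis inside an arbitrarily thin cusp. Put $\mathbf{V}_a=\eta_a(V_a)$ and $\mathbf{U}_a=\eta_a(\sigma_a^{-1}(V_a))$. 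Then axiom~(2) is immediate from anti-holomorphicity of $\sigma_a$ and holomorphicity of $\eta_a$, axiom~(1) from $\sigma_a(\partial\sigma_a^{-1}(V_a))=\partial V_a$, and the requisite degree-two branched-covering structure from the fact that $\sigma_a:\Omega_a'\to\Omega_a$ is $2{:}1$ branched only at $c_a$. The nesting $\mathbf{U}_a\subset\mathbf{V}_a$ with $\partial\mathbf{U}_a\cap\partial\mathbf{V}_a=\{\infty\}$ (axioms~(3),(5)) is where the repelling dynamics at $-2$ enters: in the $z$-coordinate $\pmb{\sigma}_a^{-1}$ is asymptotically $z\mapsto\overline{z}-\tfrac12$, so $\partial\mathbf{U}_a$ is asymptotic to $\partial\mathbf{V}_a$ shifted inward by perpendicular distance $\tfrac{\sqrt3}{4}$, forcing $\mathbf{U}_a$ strictly inside $\mathbf{V}_a$ except at the pinching point $\infty$; in the bounded part one arranges $\overline{\sigma_a^{-1}(V_a)}\setminus\{-2\}\subset V_a$ using that $-2$ repels $K_a$. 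Axiom~(7) follows from real-analyticity of $\partial\Omega_a\setminus\{-2\}$. Hence $\pmb{\sigma}_a:(\overline{\mathbf{U}_a},\infty)\to(\overline{\mathbf{V}_a},\infty)$ is a pinched anti-quadratic-like map whose non-escaping set is $\eta_a(K_a)$, and Lemma~\ref{straightening_lemma} provides a hybrid conjugacy to a pinched anti-quadratic-like restriction of some $R\in\mathfrak{L}_0$, proving Part~1.

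\textbf{Part 2 and the main obstacle.} For $a\in\cC(\mathcal{S})$ the set $K_a$ is connected (Definition~\ref{conn_escape_def}), and since the hybrid conjugacy carries $\eta_a(K_a)$ onto the filled Julia set of the straightened map, the latter is connected, so the straightening lies in $\cC(\mathfrak{L}_0)$. Uniqueness is the classical straightening rigidity: two maps $R_1,R_2\in\mathfrak{L}_0$ both hybrid conjugate to $\sigma_a$ on the relevant domains yield, by composition, a hybrid equivalence between $R_1$ and $R_2$ that is conformal on their connected filled Julia sets; spreading the invariant Beltrami coefficient over the complement and applying the Measurable Riemann Mapping Theorem upgrades this to a global quasiconformal conjugacy with $\overline{\partial}=0$ almost everywhere, hence a M\"obius conjugacy, which the normalization defining $\mathfrak{L}_0$ forces to be the identity, so $R_1=R_2$. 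The genuinely hard step is the cusp analysis: obtaining a univalent $\eta_a$ that realizes the normal form $\overline{z}+\tfrac12+O(1/\overline{z})$ requires the precise $\tfrac32$-order asymptotics~(\ref{schwarz_cusp_expansion}), while simultaneously one must choose $V_a$ with opening exactly $\tfrac{4\pi}{3}$ (to keep $\eta_a$ single-valued and secure axiom~(4)) yet large enough to enclose $K_a$ and to satisfy $\overline{\sigma_a^{-1}(V_a)}\setminus\{-2\}\subset V_a$ across the degenerate, pinched fundamental annulus.
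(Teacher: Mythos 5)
Your Part 1 follows the paper's own route almost verbatim: your composite coordinate $z=-\tfrac{1}{C'}e^{i\theta_0}(w+2)^{-1/2}$ is exactly the paper's $\eta_a(w)=\tfrac{3\sqrt{3}e^{i\theta_0}\vert a-1\vert}{2(\re(a)-4)}(w+2)^{-1/2}$, and your wedge bounded by the two segments at angles $2\theta_0\pm\tfrac{2\pi}{3}$ is the paper's curve $\gamma_a$ with $L^{\pm}$, giving the same $V_a$ and the same verification of the axioms of Definition~\ref{pinched_def}. One caveat: the assertion that ``$K_a$ approaches $-2$ tangentially to the repelling axis inside an arbitrarily thin cusp'' is precisely the fact that must be \emph{proved} before you may choose $V_a$ enclosing $K_a$ and conclude that the non-escaping set of $\pmb{\sigma}_a$ is all of $\eta_a(K_a)$. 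The paper derives it from the normal form: since $\pmb{\sigma}_a^{\circ 2}(z)\approx z+1$ and $\eta_a(\partial\Omega_a)$ is asymptotically vertical, every point of $\eta_a(\Omega_a)$ with large $\vert\im z\vert$ (in particular every point of $B(-2,\delta_0)\setminus\overline{V_a}$) marches rightward and escapes. You have the ingredients for this but should write the escape argument out; asserted as a property of $K_a$ it is circular.

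The genuine gap is in your uniqueness argument for Part 2. ``Spreading the invariant Beltrami coefficient over the complement and applying the Measurable Riemann Mapping Theorem'' cannot produce a global conjugacy with $\overline{\partial}=0$ almost everywhere. The complement of the filled Julia set is the parabolic basin --- an open set of \emph{positive measure} on which a hybrid equivalence carries no conformality --- and its Beltrami data cannot be propagated there dynamically: the attracting petal is forward invariant, so no forward orbit of a petal point ever re-enters the region $\overline{\mathbf{V}}$ where the hybrid conjugacy lives, while push-forward from the fundamental strip $\mathbf{V}\setminus\mathbf{U}$ is not well defined because the hybrid conjugacy satisfies no equivariance on that strip (in particular its Beltrami coefficient need not be invariant under the deck involution of $R$ there). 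Even granting some extension, integrating it gives a quasiconformal conjugacy of $R_{\alpha_1,A_1}$ to a \emph{quasiconformal deformation} of it, with nonzero dilatation on the basin, so Weyl's lemma never enters. The missing idea is the one the paper uses: by Proposition~\ref{unicritical_parabolic}, the parabolic basins of any two candidates in $\cC(\mathfrak{L}_0)$ are conformally conjugate to the \emph{same} normalized Blaschke model $B$ (this is where connectedness and critical Ecalle height $0$ are used); one then glues this conformal basin conjugacy $\pmb{\psi}_{\alpha_2,A_2}^{-1}\circ\pmb{\psi}_{\alpha_1,A_1}$ to the hybrid equivalence $\widehat{\Phi}$ along the Julia set, using the matching argument of \cite[\S 1.5, Lemma~1]{DH2} (via the three fixed accesses to the basin boundary) and the Bers--Rickman lemma, and only for this glued map --- conformal on $\mathcal{K}_{\alpha_1,A_1}$ by the hybrid property and on $\mathcal{B}_{\alpha_1,A_1}$ by construction --- does Weyl's lemma give a M{\"o}bius map, which the normalization at $\infty$, $0$, $1$ forces to be the identity. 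Without this common external model on the basin, hybrid equivalence near the filled Julia sets alone does not pin down the parameter.
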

\begin{proof}
1) Note that $\partial\Omega_a$ has a cusp at $-2$. We will now create a wedge on the boundary of $\Omega_a$ which will produce the desired pinched anti-quadratic-like restriction of $\sigma_a$. 

Let $\arg{(a-1)}=\theta_0\in(-\pi,\pi)$. We consider a closed curve $\gamma_a$ that is the union of some curve $\gamma'_a\subset\partial\Omega_a$, the line segments $L^\pm:=\{-2+\delta e^{i(2\theta_0\pm\frac{2\pi}{3})}:\delta\in\left[0,\delta_0\right)\}$ (for some $\delta_0>0$), and a pair of curves joining the end-points of $L^\pm$ to the end-points of $\gamma'_a$. Let us denote the bounded complementary component of $\gamma_a$ by $V_a$. We can choose $\gamma_a$ such that $K_a$ is contained in $V_a\cup B(-2,\delta_0)$, and $\gamma_a$ is smooth except at $-2$ (see Figure \ref{schwarz_wedge_pic}).

\begin{figure}[ht!]
\begin{tikzpicture}
  \node[anchor=south west,inner sep=0] at (-1,0) {\includegraphics[width=0.52\textwidth]{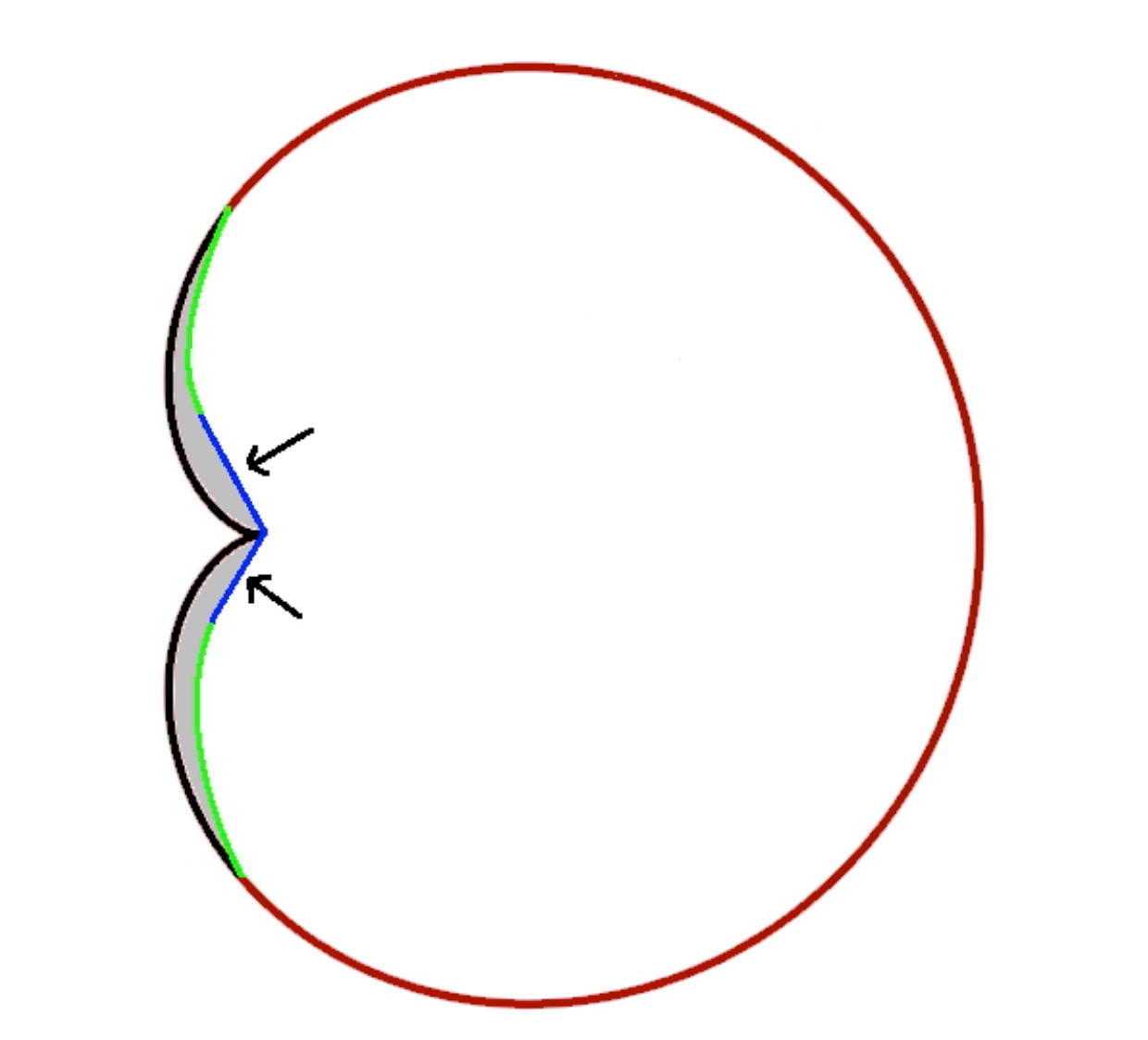}};
  \node[anchor=south west,inner sep=0] at (5.5,-0.1) {\includegraphics[width=0.48\textwidth]{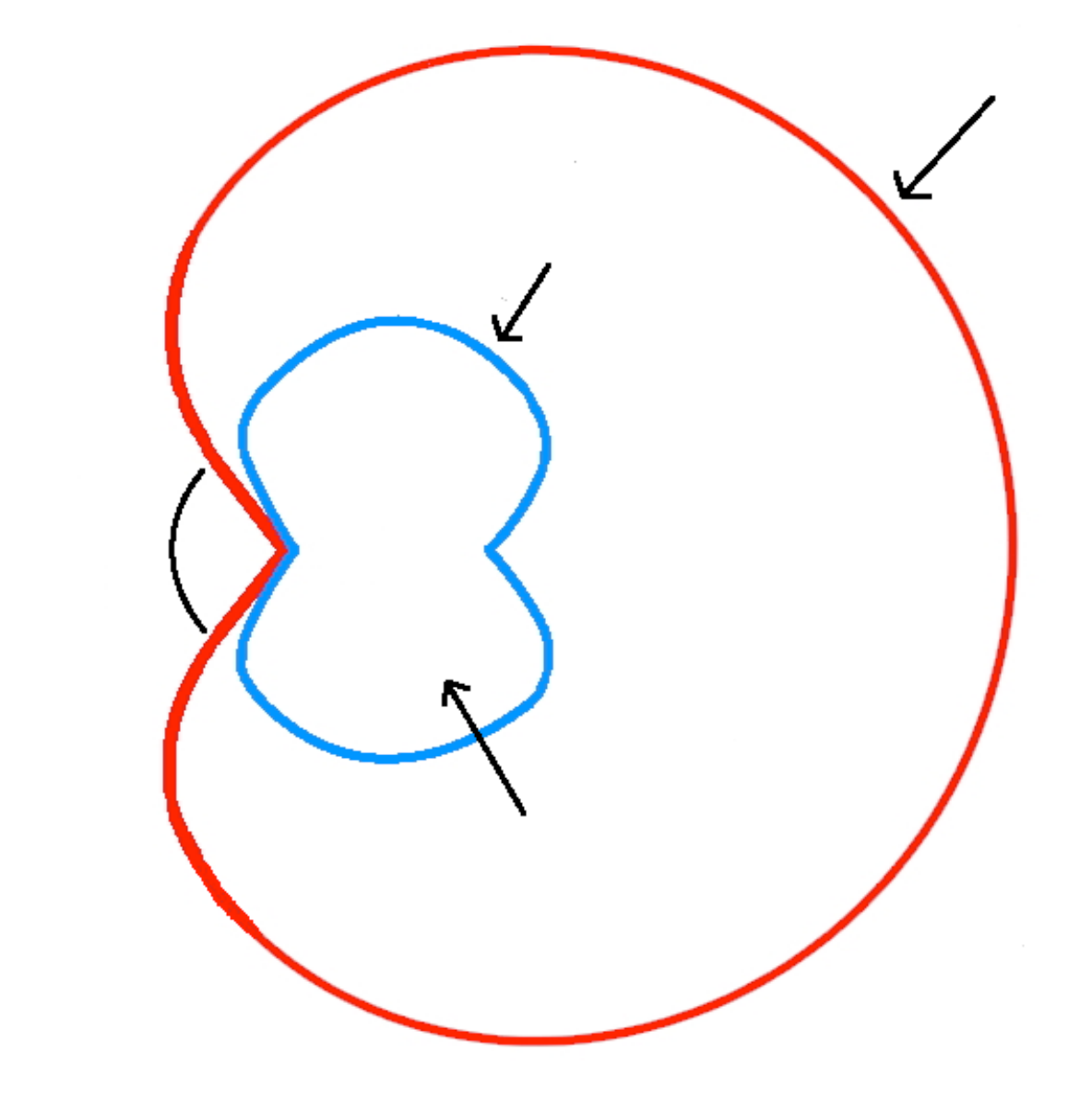}};
  \node at (1.4,3.6) {\begin{huge}$L^+$\end{huge}};
  \node at (1.2,2.4) {\begin{huge}$L^-$\end{huge}};
  \node at (4.6,5.5) {\begin{huge}$\gamma_a'\subset\partial\Omega_a$\end{huge}};
  \node at (6,3) {\begin{huge}$\frac{2\pi}{3}$\end{huge}};
   \node at (7.5,3) {\begin{Large}$-2$\end{Large}};
    \node at (10,3) {\begin{huge}$V_a$\end{huge}};
    \node at (11.4,5.6) {\begin{huge}$\gamma_a$\end{huge}};
   \node at (9,1.32) {\begin{Large}$\sigma_a^{-1}(V_a)$\end{Large}};
   \node at (9,5) {\begin{Large}$\sigma_a^{-1}(\gamma_a)$\end{Large}};
\end{tikzpicture}
\caption{Left: The brown curve is $\gamma_a'$, the blue line segments are $L^\pm$, and the green curves connect the end-points of $\gamma_a'$ to those of $L^\pm$. The curve $\gamma_a$ is the union of the brown, green, and blue curves. It is obtained from $\partial\Omega_a$ by replacing the black curves by the union of the green and blue curves. The grey region indicates $\Omega_a\setminus\overline{V}_a$. These points escape $\Omega_a$ in finitely many steps. Right: The bounded complementary component of the red curve $\gamma_a$ is $V_a$. The domain $V_a$ subtends an angle $\frac{4\pi}{3}$ at the point $-2$. The Schwarz reflection map $\sigma_a$ is a two-to-one covering from the blue curve $\sigma_a^{-1}(\gamma_a)$ to the red curve $\gamma_a$.}
\label{schwarz_wedge_pic}
\end{figure}

We will now argue that $K_a\cap B(-2,\delta_0)$ is contained in $\overline{V_a}$; i.e., $K_a$ does not intersect $B(-2,\delta_0)\setminus\overline{V_a}$. To this end, let us introduce a change of coordinate $\eta_a(w):=\frac{3\sqrt{3}e^{i\theta_0}\vert a-1\vert}{2(\re(a)-4)}\frac{1}{\sqrt{w+2}}$ (where the branch of the square root sends the radial line at angle $2\theta_0$ to the radial line at angle $\theta_0$). Then, we have that $\pmb{\sigma}_a(z)= \overline{z}+\frac12+O(\frac{1}{\overline{z}})$ near $\infty$ (this follows from the asymptotics of $\sigma_a$ near $-2$ obtained in Subsection~\ref{cusp_asymp_sec_1}). The change of coordinate $\eta_a$ maps points of the form $(-2+\delta e^{2i\theta_0})$ to the negative real axis so that $\eta_a(-2+\delta e^{2i\theta_0})\to-\infty$ as $\delta\to 0^+$. 

Moreover, under the change of coordinate $\eta_a$, the line segments $L^\pm$ map to the infinite rays at angles $\pm\frac{2\pi}{3}$. Since $\pmb{\sigma}_a$ is approximately $\overline{z}+\frac12$ for $\vert\im(z)\vert$ large enough, it follows that points between $\eta_a(L^\pm)$ and $\eta_a(\partial\Omega_a)$ with sufficiently large imaginary part eventually escape $\eta_a(\Omega_a)$. Therefore, we can choose $\delta_0>0$ sufficiently small so that points in $B(-2,\delta_0)\setminus\overline{V_a}$ eventually escape $\Omega_a$. It now follows that $K_a$ is contained in $\overline{\sigma_a^{-1}(V_a)}$. Hence, we have that 
$$
K_a=\{z\in\overline{\sigma_a^{-1}(V_a)}: \sigma_a^{\circ n}(z)\in \overline{\sigma_a^{-1}(V_a)}\ \forall\ n\geq 0\}.
$$ 

It is now easy to see that $\pmb{\sigma}_a:\left(\overline{\mathbf{U}_a},\infty\right)\to\left(\overline{\mathbf{V}_a},\infty\right)$ is a pinched anti-quadratic-like map (in the sense of Definition~\ref{pinched_def}) with filled Julia set $\eta_a(K_a)$. The result now follows from Lemma~\ref{straightening_lemma}.

2) We now assume that $K_a$ is connected. Therefore, $K_a$ contains the critical point $c_a$ of $\sigma_a$. It now follows that the basin of attraction of the parabolic fixed point $\infty$ (of $R$) contains exactly one critical point of $R$, and hence the filled Julia set of $R$ is connected. Moreover, since the critical {\'E}calle height of $\mathfrak{q}$ is $0$, and since the hybrid conjugacy is conformal on the non-escaping set of $\sigma_a$, we conclude that the critical {\'E}calle height of $R$ (associated with the parabolic fixed point $\infty$) is also $0$. Therefore, up to M{\"o}bius conjugation, $R\in\cC(\mathfrak{L}_0)$. 

To finish the proof of the theorem, we need to prove uniqueness of $R$. The proof follows the standard argument for uniqueness of straightening of polynomial-like maps with connected Julia sets (see \cite[\S 1.5]{DH2}). 

Let us fix $a\in\cC(\mathcal{S})$, and suppose that there exist $(\alpha_i,A_i)\in\cC(\mathfrak{L}_0)$, and hybrid conjugacies $\Phi_{i}$ between $\mathbf{G}_a$ (where $\mathbf{G}_a$ is the quasiregular extension of the pinched anti-quadratic-like map $\pmb{\sigma}_a:\overline{\mathbf{U}_a}\to\overline{\mathbf{V}_a}$ constructed in the first part of this theorem and Lemma~\ref{straightening_lemma}) and the anti-rational maps $R_{\alpha_i,A_i}$ ($i=1,2$).

Then, $\widehat{\Phi}:=\Phi_{2}\circ\Phi_{1}^{-1}$ is a quasiconformal homeomorphism of the plane that is conformal on $\mathcal{K}_{\alpha_1,A_1}$ (where $\mathcal{K}_{\alpha_1,A_1}$ is the complement of the basin of attraction of the parabolic fixed point at $\infty$ of $R_{\alpha_1,A_1}$), and conjugates $R_{\alpha_1,A_1}:\mathcal{K}_{\alpha_1,A_1}\to\mathcal{K}_{\alpha_1,A_1}$ to $R_{\alpha_2,A_2}:\mathcal{K}_{\alpha_2,A_2}\to\mathcal{K}_{\alpha_2,A_2}$.

Note that by Appendix~\ref{anti_rational_parabolic}, the basin of attraction $\mathcal{B}_{\alpha_i,A_i}$ of the parabolic fixed point at infinity (of $R_{\alpha_i,A_i}$) is simply connected. Moreover by Proposition~\ref{unicritical_parabolic}, there exists a conformal isomorphism $\pmb{\psi}_{\alpha_i,A_i}:\mathcal{B}_{\alpha_i,A_i}\to\D$ that conjugates $R_{\alpha_i,A_i}$ to the (anti-)Blaschke product $B(z)=\frac{3\overline{z}^2+1}{3+\overline{z}^2}$. Hence, the map $\pmb{\psi}_{\alpha_1, A_1}^{\alpha_2, A_2}:=\pmb{\psi}_{\alpha_2, A_2}^{-1}\circ\pmb{\psi}_{\alpha_1, A_1}$ is a conformal isomorphism from $\mathcal{B}_{\alpha_1,A_1}$ onto $\mathcal{B}_{\alpha_2,A_2}$ that conjugates $R_{\alpha_1,A_1}$ to $R_{\alpha_2,A_2}$.

Since $R_{\alpha_1,A_1}:\mathcal{B}_{\alpha_1,A_1}\to\mathcal{B}_{\alpha_1,A_1}$ is conformally conjugate to the (anti-)Blaschke product $B:\D\to\D$ (and $B$ is topologically conjugate to $\overline{z}^2$ on $\mathbb{S}^1$), it follows that $R_{\alpha_1,A_1}$ has three fixed accesses to $\partial\mathcal{B}_{\alpha_1,A_1}$. As both maps $\widehat{\Phi}$ and $\pmb{\psi}_{\alpha_1, A_1}^{\alpha_2, A_2}$ send the parabolic fixed point at $\infty$ of $R_{\alpha_1,A_1}$ to the parabolic fixed point at $\infty$ of $R_{\alpha_2,A_2}$, the arguments of \cite[\S 1.5, Lemma~1]{DH2} imply that $\widehat{\Phi}$ and $\pmb{\psi}_{\alpha_1, A_1}^{\alpha_2, A_2}$ match continuously on $\partial\mathcal{K}_{\alpha_1,A_1}$. By the Bers-Rickman lemma \cite[\S 1.5, Lemma~2]{DH2}, this defines a quasiconformal homeomorphism 
\begin{equation}
H:=\left\{\begin{array}{ll}
                   \widehat{\Phi} & \mbox{on}\ \mathcal{K}_{\alpha_1,A_1}, \\
                   \pmb{\psi}_{\alpha_1, A_1}^{\alpha_2, A_2} & \mbox{on}\ \mathcal{B}_{\alpha_1,A_1},
                                          \end{array}\right.
\end{equation}
that conjugates $R_{\alpha_1,A_1}$ to $R_{\alpha_2,A_2}$. Moreover, $\overline{\partial}H=\overline{\partial}\widehat{\Phi}=0$ a.e. on $\mathcal{K}_{\alpha_1,A_1}$. By Weyl's lemma \cite[\S II.B, Corollary~2]{A3}, $H$ is conformal on $\widehat{\C}$ and hence a M{\"o}bius map. 

The fact that $H$ conjugates $R_{\alpha_1,A_1}$ to $R_{\alpha_2,A_2}$ implies that $H$ fixes the parabolic point $\infty$, the pre-parabolic point $0$, and the critical point $1$. Therefore, $H\equiv\mathrm{id}$, and $(\alpha_1,A_1)=(\alpha_2,A_2)$.
\end{proof}

\begin{corollary}\label{fatou_comp_classification}
\begin{enumerate}
\item Every Fatou component of $\sigma_a$ is eventually periodic.

\item Every periodic Fatou component of $\sigma_a$ is either the immediate basin of attraction of a (super-)attracting/parabolic periodic point or a Siegel disk.
\end{enumerate}
\end{corollary}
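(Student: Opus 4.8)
The plan is to transfer the classical Fatou-component dichotomy from a genuine (anti-)rational map to $\sigma_a$ by means of the Straightening Theorem~\ref{straightening_schwarz}. The obstruction to arguing directly is that $\sigma_a$ is only defined on $\overline{\Omega}_a$ and is not a globally defined dynamical system on $\widehat{\C}$, so Sullivan's theorem is not immediately available. However, Theorem~\ref{straightening_schwarz}(1) produces, for every $a\in S$, a member $R=R_{\alpha,A}$ of $\mathfrak{L}_0$ together with a hybrid conjugacy; restricting this conjugacy to the non-escaping set yields a homeomorphism $h:=\Phi\circ\eta_a$ from $K_a$ onto the filled Julia set $\mathcal{K}_R$ of $R$ (the complement of the parabolic basin of $\infty$) that conjugates $\sigma_a\vert_{K_a}$ to $R\vert_{\mathcal{K}_R}$. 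Since $h$ is the restriction of a homeomorphism of neighborhoods, it carries $\Int K_a$ onto $\Int\mathcal{K}_R$ and matches the Fatou components of $\sigma_a$ bijectively and dynamically with the Fatou components of $R$ that are disjoint from the parabolic basin.

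For part (1), I would pass to the holomorphic iterate $R^{\circ 2}$, a rational map of degree $4$ with the same Fatou set as $R$, and invoke Sullivan's No Wandering Domains Theorem: every Fatou component of $R^{\circ 2}$, hence of $R$, is eventually periodic. Transporting this through the conjugacy $h$ shows that every Fatou component of $\sigma_a$ is eventually periodic.

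For part (2), I would apply the classical classification of periodic Fatou components to $R^{\circ 2}$: each such component is an attracting (possibly super-attracting) basin, a parabolic basin, a Siegel disk, or a Herman ring (see \cite{L6}). Transporting once more through $h$, a periodic Fatou component of $\sigma_a$ falls into one of the same four types, which correspond precisely to the immediate basin of a (super-)attracting or parabolic periodic point, a Siegel disk, or a Herman ring. It then remains only to rule out Herman rings.

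The key step — the only genuinely dynamical input beyond citing the classical theorems — is to show that every Fatou component of $\sigma_a$ is simply connected, which excludes the doubly connected Herman rings. Here I would use that $\widehat{\C}\setminus K_a=T_a^\infty$ is connected by Proposition~\ref{tiling_connected}, so that $K_a$ is full. A standard topological argument then shows that every component $U$ of $\Int K_a$ is simply connected: if some Jordan curve $\gamma\subset U$ enclosed a disk $D$ meeting $\widehat{\C}\setminus K_a$, then $\gamma\subset K_a$ would trap a component of $\widehat{\C}\setminus K_a$ inside $D$, separating it from $\infty$ and contradicting the connectedness of $T_a^\infty$; hence $D\subset\Int K_a$, which forces $D\subset U$ and proves that $U$ is simply connected. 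As Herman rings are annuli, none can occur, and the classification reduces to the three asserted types. The main obstacle is exactly this simple-connectivity step, together with the bookkeeping needed to verify that the hybrid conjugacy sets up a faithful dynamical correspondence between the Fatou components of $\sigma_a$ and those of $R$ lying off the parabolic basin; the anti-holomorphicity is handled throughout by passing to the holomorphic square $R^{\circ 2}$.
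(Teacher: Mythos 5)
Your proof is correct and follows essentially the same route as the paper: straighten $\sigma_a$ via Theorem~\ref{straightening_schwarz}, invoke Sullivan's theorem and the classification of periodic Fatou components for the holomorphic iterate, and exclude Herman rings by a full-set/simple-connectivity argument. The only (cosmetic) difference is that you run the Herman-ring exclusion on the Schwarz side, using connectedness of $T_a^\infty$ (Proposition~\ref{tiling_connected}), whereas the paper runs the identical argument on the rational-map side, using connectedness of the parabolic basin $\mathcal{B}_{\alpha,A}$ of $R_{\alpha,A}\in\mathfrak{L}_0$; the two are equivalent under the hybrid conjugacy.
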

\begin{proof}
This follows from Theorem~\ref{straightening_schwarz} and classification of Fatou components for rational maps combined with the fact that maps in $\mathfrak{L}_0$ do not have Herman rings (this follows from the fact that for any $R_{\alpha,A}\in\mathfrak{L}_0$, the basin of attraction $\mathcal{B}_{\alpha,A}$ of the parabolic fixed point at infinity is connected, and hence every connected component of $\Int{\mathcal{K}_{\alpha,A}}$ is simply connected).
\end{proof}

\begin{corollary}\label{non_repelling_prop}
If $\sigma_a$ has a non-repelling cycle, then $a\in\cC(\mathcal{S})$. More precisely, the following statements hold true.
\begin{enumerate}
\item If $\sigma_a$ has an attracting or parabolic cycle, then the forward orbit of the critical value $2$ (of $\sigma_a$) converges to this cycle.

\item If $z_0$ is a parabolic periodic point of $\sigma_a$ of period $n$, and if there is an attracting direction to $z_0$ that is invariant under $\sigma_a^{\circ n}$, then it is the only attracting direction to $z_0$.

\item If $\sigma_a$ has a Siegel disk, then the boundary of the Siegel disk is contained in the closure of the forward orbit of $2$.

\item If $\sigma_a$ has a Cremer cycle, then this cycle lies in the closure of the forward orbit of $2$.
\end{enumerate}
\end{corollary}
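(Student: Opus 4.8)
The plan is to transport the neutral dynamical features in question from $\sigma_a$ to the straightened anti-rational map via the hybrid conjugacy of Theorem~\ref{straightening_schwarz}, and then to invoke the classical relationship between non-repelling cycles and the post-critical set of a rational map. First I would record that any periodic cycle $\mathcal{O}$ lying in $\Omega_a$ (hence distinct from the boundary cusp $-2$, which is the ever-present parabolic fixed point) is automatically contained in the non-escaping set $K_a$: every point of the open tiling set $T_a^\infty$ lands in $T_a^0$, which is not in the domain of $\sigma_a$, after finitely many iterates, so no periodic orbit can meet $T_a^\infty$. By Theorem~\ref{straightening_schwarz}(1), $\eta_a$ conjugates $\sigma_a$ near $K_a$ to the pinched anti-quadratic-like map $\pmb{\sigma}_a$, and Lemma~\ref{straightening_lemma} supplies a hybrid conjugacy $\Phi$ onto a member $R\in\mathfrak{L}_0$; write $h_a:=\Phi\circ\eta_a$. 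Then $h_a$ maps $K_a$ onto the filled Julia set $\mathcal{K}$ of $R$, carries $-2$ to the parabolic fixed point $\infty\in\partial\mathcal{K}$, and sends $\mathcal{O}$ to a non-repelling cycle $\mathcal{O}'=h_a(\mathcal{O})\subset\mathcal{K}$ of the same type: on $\mathrm{int}\,K_a$ the conjugacy is genuinely conformal, preserving multipliers and rotation numbers, while on $\partial K_a$ quasiconformality preserves the topological dichotomy between parabolic and Cremer points. The crucial structural input is the critical bookkeeping of $R$: as a degree-two branched cover of the sphere it has exactly two critical points, one of which (the point $+1$, arising from the parabolic gluing in Lemma~\ref{straightening_lemma}) lies in the simply connected parabolic basin $\mathcal{B}_{\alpha,A}$ of $\infty$ and is absorbed by the parabolic cycle there, while the remaining \emph{free} critical point is $c_R=h_a(c_a)$ with critical value $v_R=R(c_R)=h_a(2)$, since $\sigma_a(c_a)=2$.

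With this in place the connectedness statement and item (1) follow quickly. If the orbit of $v_R$ escaped into $\mathcal{B}_{\alpha,A}$, then (the other critical point being committed to $\mathcal{B}_{\alpha,A}$) the map $R$ would be a degree-two anti-quadratic-like map with escaping free critical point, whose filled Julia set $\mathcal{K}$ is a Cantor set carrying only repelling cycles; this contradicts the existence of $\mathcal{O}'$. Hence $v_R$, and therefore $2=h_a^{-1}(v_R)$, does not escape, giving $2\in K_a$ and $a\in\cC(\mathcal{S})$ by Definition~\ref{conn_escape_def}. Moreover, applying to $R^{\circ 2}$ the classical fact that the immediate basin of every (super-)attracting or parabolic cycle contains a critical point, the free critical point $c_R$, and hence the forward orbit of $v_R$, must be absorbed by $\mathcal{O}'$ whenever $\mathcal{O}'$ is attracting or parabolic; pulling back by $h_a^{-1}$ shows that the forward orbit of $2$ under $\sigma_a$ converges to $\mathcal{O}$, which is item (1).

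For items (3) and (4) I would observe that the post-critical set of $R^{\circ 2}$ splits into the forward orbit of $R(+1)$, which lies in $\mathcal{B}_{\alpha,A}$ and converges to $\infty$, and the closure $\overline{\{R^{\circ n}(v_R)\}_{n\geq 1}}$ of the forward orbit of the free critical value. Any Siegel disk or Cremer cycle of $\sigma_a$ corresponds under $h_a$ to one contained in $\mathcal{K}$, hence disjoint from $\mathcal{B}_{\alpha,A}$, so the classical Fatou theorems placing Siegel-disk boundaries and Cremer points in the post-critical set force them into $\overline{\{R^{\circ n}(v_R)\}_{n\geq 1}}$; transporting back by $h_a^{-1}$ puts the Siegel-disk boundary (resp. the Cremer cycle) of $\sigma_a$ in the closure of the forward orbit of $2$. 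Finally, item (2) is a counting statement: a parabolic periodic point $z_0$ of period $n$ with an attracting direction invariant under $\sigma_a^{\circ n}$ corresponds to a parabolic cycle of $R$ each of whose cycles of attracting petals must contain the free critical point $c_R$ in its basin; as $c_R$ is the only free critical point available, there can be at most one such invariant attracting direction.

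The main obstacle is the careful transport of these neutral features across the hybrid conjugacy, which is conformal only on $K_a$ rather than on a full neighborhood, together with the anti-holomorphic bookkeeping. One must pass to the holomorphic second iterate $R^{\circ 2}$ (respectively $\sigma_a^{\circ 2}$) before invoking classical Fatou theory, verify that quasiconformality suffices to preserve the parabolic/Siegel/Cremer trichotomy on $\partial K_a$ (using that parabolic and Cremer points are distinguished topologically by the presence or absence of attached petals), and keep precise track of which of the two critical points of $R$ is committed to the parabolic basin at $\infty$, so that exactly one free critical point, carrying the value $2$, remains to control all the remaining non-repelling behaviour.
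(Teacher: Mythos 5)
Your proposal follows exactly the route the paper takes: the paper's own proof of Corollary~\ref{non_repelling_prop} is a one-line reduction to the Straightening Theorem~\ref{straightening_schwarz} plus the classical Fatou relations between non-repelling cycles and critical orbits, and your argument is a correct fleshing-out of that reduction. The confinement of cycles to $K_a$, the transport across the hybrid conjugacy, the bookkeeping that $R\in\mathfrak{L}_0$ has one critical point ($+1$) committed to the parabolic basin $\mathcal{B}_{\alpha,A}$ and a single free critical point carrying $h_a(2)$, and the petal-counting arguments for items (1) and (2) are all sound. Two small remarks: the appeal to a ``Cantor set'' dichotomy for pinched anti-quadratic-like maps with escaping critical point is neither established in the paper nor needed---the conclusion you actually use, that $\mathcal{K}$ carries only repelling cycles once both critical points lie in $\mathcal{B}_{\alpha,A}$, follows from the same critical-point count you invoke elsewhere; and in item (2) your stated conclusion (``at most one \emph{invariant} attracting direction'') should be upgraded to ``no other attracting direction at all,'' which your mechanism does deliver, since a second direction, invariant or not, would generate a second cycle of immediate-basin components requiring a second critical point.

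There is, however, one genuine gap, in the step for items (3) and (4). You argue that since a Siegel boundary or Cremer cycle is disjoint from $\mathcal{B}_{\alpha,A}$, its inclusion in the closure of the post-critical set of $R^{\circ 2}$ forces it into $\overline{\{R^{\circ n}(v_R)\}_{n\geq 1}}$. As written this implication is false: the closure of the post-critical set is not the union of the orbit $O^{+}(R(1))$ with $\overline{O^{+}(v_R)}$, but the union of their \emph{closures}, and $\overline{O^{+}(R(1))}$ contains the parabolic point $\infty$, which lies in $\mathcal{K}_{\alpha,A}$ (on $\partial\mathcal{B}_{\alpha,A}$), not in $\mathcal{B}_{\alpha,A}$. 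So disjointness from the open basin leaves open the possibility that the Siegel boundary, or a Cremer point, passes through $\infty$ without lying in $\overline{O^{+}(v_R)}$. The repair is short but must be supplied. For the Cremer case, $\infty$ is a parabolic fixed point of $R^{\circ 2}$ (multiplier $1$), hence not a Cremer point, so no point of the Cremer cycle can equal $\infty$, and each Cremer point therefore lies in $\overline{O^{+}(v_R)}$. For the Siegel case, the boundary $\partial\Delta$ of the Siegel disk is a nondegenerate continuum, hence perfect; so if $\infty\in\partial\Delta$, then $\infty$ is a limit of points of $\partial\Delta\setminus\{\infty\}\subset\overline{O^{+}(v_R)}$, and since $\overline{O^{+}(v_R)}$ is closed this forces $\infty\in\overline{O^{+}(v_R)}$ as well; in either case $\partial\Delta\subset\overline{O^{+}(v_R)}$, and pulling back by $h_a$ completes items (3) and (4).
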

\begin{proof}
This follows from Theorem~\ref{straightening_schwarz} and well-known relations between the non-repelling cycles and critical points of a rational map.
\end{proof}

Recall from Subsection~\ref{dyn_unif_tiling_sec} that $\mathcal{E}:\partial\mathcal{Q}\to\mathbb{S}^1$ is a homeomorphism that conjugates the external map $\rho$ (of the Schwarz family $\cC(\mathcal{S})$) to the anti-Blaschke product $B$ (which is the external map of the family $\cC(\mathfrak{L}_0)$), and sends $\omega, -1\in\partial\mathcal{Q}$ to $1,-1\in\mathbb{S}^1$ respectively.

\begin{proposition}\label{hybrid_preserves_symbol}
Let $a\in\cC(\mathcal{S})$, and $w\in\partial K_a$ be a (pre-)periodic point. Then, $\mathcal{E}$ maps the angles of the dynamical rays (of $\sigma_a$) landing at $w$ onto the angles of the dynamical rays (of $R_{\chi(a)}$) landing at $\Phi_a\circ\eta_a(w)\in\mathcal{J}_{\chi(a)}=\partial\mathcal{B}_{\chi(a)}$, where $\Phi_a\circ\eta_a$ is the hybrid equivalence between $\sigma_a$ and $R_{\chi(a)}$.
\end{proposition}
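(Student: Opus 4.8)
The plan is to compare the two ``external parametrizations'' that define the angles in question — the conformal conjugacy $\psi_a$ of Proposition~\ref{schwarz_group} (which records $\sigma_a$-angles) and the B\"ottcher-type coordinate $\pmb{\psi}_{\chi(a)}$ of the parabolic basin from Proposition~\ref{unicritical_parabolic} (which records $R_{\chi(a)}$-angles) — and to show that the hybrid conjugacy $H:=\Phi_a\circ\eta_a$ intertwines them \emph{exactly} through the circle homeomorphism $\mathcal{E}$. Write $R:=R_{\chi(a)}$ and let $\mathcal{B}:=\mathcal{B}_{\chi(a)}$ be its parabolic basin, so that $H$ is a homeomorphism of $\overline{V_a}$ conjugating $\sigma_a$ to $R$, conformal on $K_a$, carrying $K_a$ onto $\mathcal{K}_{\chi(a)}$ and $\partial K_a$ homeomorphically onto $\mathcal{J}_{\chi(a)}=\partial\mathcal{B}$. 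Since a conjugacy between the orientation-reversing maps $\sigma_a$ and $R$ must itself preserve orientation, $H$ is orientation-preserving; in particular it carries the finitely many accesses to a (pre-)periodic $w\in\partial K_a$ bijectively and in cyclic order to the accesses to $H(w)$.

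First I would form, on a one-sided collar of $\partial\mathcal{Q}$ inside $\mathcal{Q}$, the composition $\widetilde{\Theta}:=\pmb{\psi}_{\chi(a)}\circ H\circ\psi_a^{-1}$. On its domain $\widetilde{\Theta}$ is an orientation-preserving homeomorphism onto a collar of $\mathbb{T}$ in $\D$ that conjugates $\rho$ to $B$ (because $\psi_a$ conjugates $\sigma_a$ to $\rho$, $H$ conjugates $\sigma_a$ to $R$, and $\pmb{\psi}_{\chi(a)}$ conjugates $R$ to $B$). As $H$ is continuous up to $\partial K_a$ and the conformal maps $\psi_a^{\pm1},\pmb{\psi}_{\chi(a)}^{\pm1}$ extend continuously to the respective ideal boundaries, $\widetilde{\Theta}$ extends to a circle homeomorphism $\partial\mathcal{Q}\to\mathbb{T}$ conjugating $\rho|_{\partial\mathcal{Q}}$ to $B|_{\mathbb{T}}$. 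Under $\psi_a$ the cusp $-2$ corresponds to $\omega$, under $H$ it is sent to the parabolic fixed point $\infty$ of $R$, and $\pmb{\psi}_{\chi(a)}$ sends $\infty$ to the neutral fixed point $1$ of $B$; hence the boundary extension of $\widetilde{\Theta}$ sends $\omega$ to $1$, just as $\mathcal{E}$ does. Now I invoke rigidity: since $\rho|_{\partial\mathcal{Q}}$ and $B|_{\mathbb{T}}$ are degree-two coverings both topologically conjugate to $\overline{z}^2$, an orientation-preserving topological conjugacy between them is uniquely determined once the image of one fixed point is prescribed (iterated preimages of that point are dense and their images are forced by equivariance). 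As both $\mathcal{E}$ and the boundary extension of $\widetilde{\Theta}$ are such conjugacies sending $\omega$ to $1$, they coincide: $\widetilde{\Theta}|_{\partial\mathcal{Q}}=\mathcal{E}$ (the normalization $-1\mapsto-1$ is then automatic).

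Finally I would read off the landing pattern. Fix $\theta\in\mathrm{Per}(\rho)$ so that the $\theta$-ray of $\sigma_a$ lands at $w$ (Proposition~\ref{per_rays_land}). In the $\psi_a$-coordinate a terminal arc of this ray is the radial segment at angle $\theta$, approaching the boundary point $\theta\in\partial\mathcal{Q}$; applying $\widetilde{\Theta}=\mathcal{E}$ on the boundary, its $H$-image — a curve in $\mathcal{B}$ landing at $H(w)$ — approaches the boundary point $\mathcal{E}(\theta)\in\mathbb{T}$ in the $\pmb{\psi}_{\chi(a)}$-coordinate, i.e.\ it shares the prime end of the $\mathcal{E}(\theta)$-ray of $R$. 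Since $\mathcal{E}(\theta)\in\mathrm{Per}(B)$, that ray lands (by the landing theorem for the family $\mathfrak{L}_0$, Appendix~\ref{anti_rational_parabolic}), and a standard prime-end argument forces it to land at the common point $H(w)=\Phi_a\circ\eta_a(w)$. Running the same reasoning with $H^{-1}$ and $\mathcal{E}^{-1}$ shows that every ray of $R$ landing at $\Phi_a\circ\eta_a(w)$ arises in this way, so $\mathcal{E}$ is a bijection between the two sets of angles. The argument is identical when $w$ is pre-periodic, using the pre-periodic parts of Propositions~\ref{per_rays_land} and~\ref{rep_para_landing_point}.

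The main obstacle is precisely the rigidity step together with the passage to the ideal boundary: because $H$ is only quasiconformal (not conformal) on the tiling set, it does not map rays to rays, and one cannot match angles by transporting curves directly. What rescues the argument is that the angle labels are conformal invariants recorded by $\psi_a$ and $\pmb{\psi}_{\chi(a)}$, while the topological conjugacy between the two boundary models $\rho|_{\partial\mathcal{Q}}$ and $B|_{\mathbb{T}}$ is unique; the delicate points are verifying continuity of $\widetilde{\Theta}$ up to the ideal boundary and pinning down the marked-point normalization $\omega\mapsto 1$, which is exactly what makes these two pieces of information compatible and yields $\widetilde{\Theta}|_{\partial\mathcal{Q}}=\mathcal{E}$.
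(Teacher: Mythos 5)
Your strategy — uniformize both sides, identify the induced circle map with $\mathcal{E}$ by rigidity, then transfer landing via a Lindel\"of-type prime-end argument — could in principle be made to work, but the pivotal step is unjustified as written. You extend $\widetilde{\Theta}=\pmb{\psi}_{\chi(a)}\circ H\circ\psi_a^{-1}$ to a homeomorphism $\partial\mathcal{Q}\to\mathbb{T}$ on the grounds that ``$H$ is continuous up to $\partial K_a$'' and that ``$\psi_a^{\pm1},\pmb{\psi}_{\chi(a)}^{\pm1}$ extend continuously to the respective ideal boundaries.'' If ``extend continuously'' means continuously to the Euclidean closures, this is Carath\'eodory's theorem and requires local connectivity of $\partial K_a$ and of $\mathcal{J}_{\chi(a)}$, which is established nowhere in the paper and can genuinely fail (the family admits Cremer parameters, cf.\ Corollary~\ref{non_repelling_prop}). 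If instead it means extension to the prime-end compactifications, the claim is vacuous but useless: $H$ is continuous only for the Euclidean topology near $\partial K_a$, and a plane homeomorphism does \emph{not} in general induce a map on prime ends, so the three maps cannot be composed at the ideal boundary. The fact that would rescue the construction is one you never invoke: $H$ is \emph{quasiconformal} on the collar $V_a\setminus K_a$ (this is part of the definition of hybrid conjugacy), hence $\widetilde{\Theta}$ is a quasiconformal homeomorphism between one-sided neighborhoods of the analytic circles $\partial\mathcal{Q}$ and $\mathbb{T}$, and quasiconformal maps — unlike general homeomorphisms — do extend homeomorphically to such boundary circles (e.g.\ factor $\widetilde{\Theta}$ as a conformal map post-composed with a global quasiconformal map via the measurable Riemann mapping theorem, and apply Schwarz reflection to the conformal factor). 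Ironically, you cite quasiconformality of $H$ only as an obstacle (``it does not map rays to rays''), never as the tool that makes your boundary extension exist. A second, smaller defect: the natural domain of $\widetilde{\Theta}$ is not a full collar, since $V_a\subset\Omega_a$ is disjoint from the fundamental tile $T_a^0$, whose image $\mathcal{Q}_1$ touches $\partial\mathcal{Q}$ at $\omega$; so the extension at the marked point $\omega$ needs separate care. Also, your claim that any conjugacy between the orientation-reversing maps $\sigma_a$ and $R_{\chi(a)}$ must preserve orientation is false as stated ($z\mapsto\overline{z}$ conjugates $\overline{z}^2$ to itself); $H$ preserves orientation because it is quasiconformal, not for the reason you give.

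For comparison, the paper's proof never constructs a global boundary conjugacy and thereby avoids all of these issues: it works only with the finitely many \emph{accesses} at (pre-)periodic points of $\partial K_a$ and $\mathcal{J}_{\chi(a)}$ (these exist by Propositions~\ref{per_rays_land}, \ref{rep_para_landing_point} and~\ref{rays_land_para}), notes that the hybrid conjugacy maps accesses to accesses equivariantly and respects the Markov partitions of Subsection~\ref{dyn_unif_tiling_sec} (with the normalization $\frac13\mapsto 0$, $\frac12\mapsto\frac12$), and concludes that corresponding angles have identical symbolic itineraries, hence correspond under $\mathcal{E}$; equality of the number of accesses at $w$ and at $\Phi_a\circ\eta_a(w)$ then gives surjectivity. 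Your rigidity step (density of iterated preimages of a fixed point) is exactly the same uniqueness phenomenon, but you apply it to a global circle conjugacy that your argument has not legitimately produced; the paper applies it, in effect, only on the countable set $\mathrm{Per}(\rho)$, where no boundary-extension theorem is needed.
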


\begin{figure}[ht!]
\begin{tikzpicture}
  \node[anchor=south west,inner sep=0] at (-1,0) {\includegraphics[width=1\textwidth]{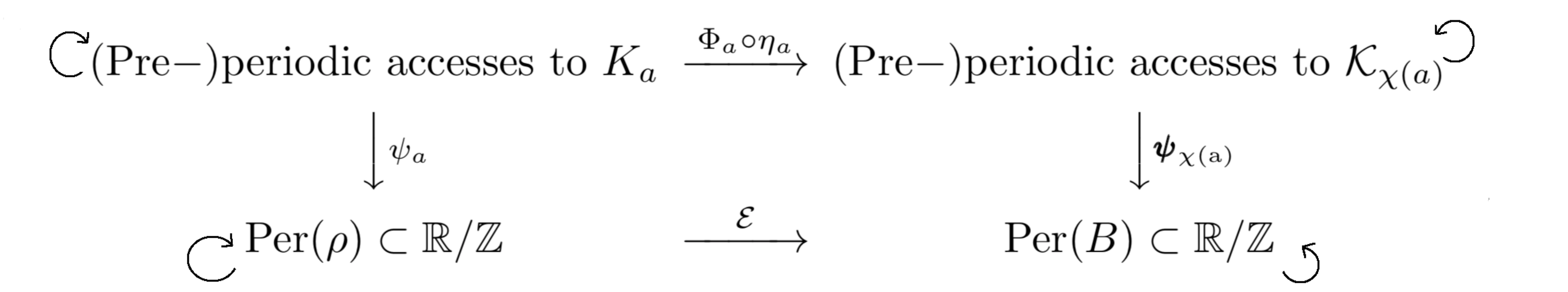}};
  \node at (-0.8,2.2) {$\sigma_a$};
  \node at (0.28,0) {$\rho$};
  \node at (10,0) {$B$};
  \node at (11.4,2.32) {$R_{\chi(a)}$};
\end{tikzpicture}
\caption{The commutative diagram shows the external straightening map $\mathcal{E}$ between the external map $\rho$ of the pinched anti-quadratic-like restriction of $\sigma_a$ and the external map $B$ of $R_{\chi(a)}$.}
\label{action_access}
\end{figure}

\begin{proof}
Recall that the action of $\sigma_a$ on angles of (pre-)periodic accesses to $K_a$ is conjugated to $\rho:\mathrm{Per}(\rho)\subset\R/\Z\to\mathrm{Per}(\rho)$ via the external conjugacy $\psi_a$. Similarly, the action of $R_{\chi(a)}$ on angles of (pre-)periodic accesses to $\mathcal{K}_{\chi(a)}$ is conjugated to $B:\mathrm{Per}(B)\to\mathrm{Per}(B)$ via the Riemann map $\pmb{\psi}_{\chi(a)}$ of $\mathcal{B}_{\chi(a)}$. Finally, $\Phi_a\circ\eta_a$ conjugates the action of $\sigma_a$ on angles of accesses to $K_a$ to the action of $R_{\chi(a)}$ on angles of accesses to $\mathcal{K}_{\chi(a)}$, and sends the accesses to $K_a$ at angles $\frac13,\frac12$ to the accesses to $\mathcal{K}_{\chi(a)}$ at angles $0,\frac12$ respectively.

Thus, we get a conjugacy between $\rho:\mathrm{Per}(\rho)\subset\R/\Z\to\mathrm{Per}(\rho)$ and $B:\mathrm{Per}(B)\to\mathrm{Per}(B)$ that respects the corresponding Markov partitions. It follows that if $\theta$ is the angle of a (pre-)periodic access to $w\in\partial K_a$ and $\theta'$ is the angle of the image access to $\Phi_a\circ\eta_a(w)\in\partial \mathcal{K}_{\chi(a)}$, then the $\rho$-orbit of $\theta$ and the $B$-orbit of $\theta'$ have the same symbolic representation with respect to the Markov partitions described in Subsection~\ref{dyn_unif_tiling_sec}. But this implies that $\theta'=\mathcal{E}(\theta)$. Since $\Phi_a\circ\eta_a(w)$ is a (pre-)periodic point on $\mathcal{J}_{\chi(a)}$, it follows that the dynamical ray of $R_{\chi(a)}$ at angle $\mathcal{E}(\theta)$ lands at $\Phi_a\circ\eta_a(w)$ on $\mathcal{J}_{\chi(a)}$. 

Finally, since $\sigma_a$ and $R_{\chi(a)}$ are topologically conjugate around their non-escaping set and filled Julia set (respectively), it follows that the number of accesses to $w\in\partial K_a$ is equal to the number of accesses to $\Phi_a\circ\eta_a(w)\in\mathcal{J}_{\chi(a)}$. This shows that the angles of the dynamical rays landing at $w\in\partial K_a$ are mapped onto the angles of the dynamical rays landing at $\Phi_a\circ\eta_a(w)\in\mathcal{J}_{\chi(a)}$ by $\mathcal{E}$.
\end{proof}

\begin{definition}[Straightening Map]\label{chi_def}
The \emph{straightening map} 
$$
\chi:\cC(\mathcal{S})\to\cC(\mathfrak{L}_0)
$$ 
is defined as $\chi(a):=(\alpha,A)$, where $\sigma_a:\overline{\sigma_a^{-1}(V_a)}\to\overline{V_a}$ is hybrid conjugate to the quadratic anti-rational map $R_{\alpha,A}\in\cC(\mathfrak{L}_0)$.
\end{definition}

\begin{proposition}[Injectivity of Straightening]\label{chi_injective_prop}
The map $\chi:\cC(\mathcal{S})\to\cC(\mathfrak{L}_0)$ is injective.
\end{proposition}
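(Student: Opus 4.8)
The plan is to adapt the classical Douady--Hubbard argument for injectivity of straightening maps (\cite[\S 1.5]{DH2}), exploiting the crucial feature recorded in Proposition~\ref{schwarz_group} that every member of $\cC(\mathcal{S})$ has the \emph{same} external map $\rho$. Suppose $\chi(a_1)=\chi(a_2)=(\alpha,A)$. By Definition~\ref{chi_def} and Theorem~\ref{straightening_schwarz}, the pinched anti-quadratic-like restrictions of $\sigma_{a_1}$ and $\sigma_{a_2}$ are both hybrid conjugate to $R_{\alpha,A}$; composing the two hybrid equivalences produces a quasiconformal map $\Phi$ that conjugates $\sigma_{a_1}$ to $\sigma_{a_2}$ on a neighborhood of their non-escaping sets and satisfies $\overline{\partial}\Phi=0$ on the closed set $K_{a_1}$. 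On the complementary side, Proposition~\ref{schwarz_group} supplies conformal conjugacies $\psi_{a_i}$ between $\sigma_{a_i}|_{T_{a_i}^\infty}$ and $\rho$, so that $\Psi:=\psi_{a_2}^{-1}\circ\psi_{a_1}$ is a conformal conjugacy from $T_{a_1}^\infty$ onto $T_{a_2}^\infty$ which, by construction, carries the $\theta$-dynamical ray of $\sigma_{a_1}$ to the $\theta$-dynamical ray of $\sigma_{a_2}$ for every angle $\theta$.

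The heart of the argument is to glue $\Phi$ and $\Psi$ into a single conjugacy across the common boundary $\partial K_{a_1}=\partial T_{a_1}^\infty$. First I would verify that $\Phi$ and $\Psi$ agree there. For a pre-periodic angle $\theta\in\mathrm{Per}(\rho)$, the $\theta$-ray of $\sigma_{a_i}$ lands at a well-defined (pre-)periodic point (Propositions~\ref{per_rays_land} and~\ref{rep_para_landing_point}); the external conjugacy $\Psi$ sends the landing point of the $\theta$-ray of $\sigma_{a_1}$ to that of the $\theta$-ray of $\sigma_{a_2}$, while Proposition~\ref{hybrid_preserves_symbol}, applied to both $a_1$ and $a_2$ through the common target $R_{\alpha,A}$, shows that $\Phi$ does the same. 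Hence $\Phi$ and $\Psi$ coincide on the landing points of pre-periodic rays, a set dense in $\partial K_{a_1}$, and therefore on all of $\partial K_{a_1}$ by continuity. The Bers--Rickman lemma \cite[\S 1.5 Lemma~2]{DH2} then upgrades this matched pair to a global quasiconformal homeomorphism
$$H:=\begin{cases} \Phi & \text{on }K_{a_1},\\ \Psi & \text{on }T_{a_1}^\infty,\end{cases}$$
which conjugates $\sigma_{a_1}$ to $\sigma_{a_2}$.

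Finally, $\overline{\partial}H=0$ on the closed set $K_{a_1}$ (the hybrid condition) and $\overline{\partial}H=0$ on the open tiling set $T_{a_1}^\infty$ (where $H=\Psi$ is conformal), so $H$ is conformal almost everywhere; by Weyl's lemma \cite[\S II.B, Corollary~2]{A3}, $H$ is a M\"obius transformation. Since $H$ conjugates $\sigma_{a_1}$ to $\sigma_{a_2}$, it fixes the common cusp fixed point $-2$, the common critical value $2$, and the point $\infty\in T_{a_1}^0$ (which $\Psi$ fixes, as $\psi_{a_i}(\infty)=0$); having three fixed points, $H=\mathrm{id}$. Thus $\sigma_{a_1}=\sigma_{a_2}$, forcing $\Omega_{a_1}=\Omega_{a_2}$ and hence $\Delta_{a_1}=\Delta_{a_2}$, i.e.\ $a_1=a_2$. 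I expect the main obstacle to be the boundary-matching step: it depends on knowing that pre-periodic rays land and that the internal (hybrid) and external conjugacies act compatibly on ray angles, which is exactly the content of Propositions~\ref{per_rays_land}, \ref{rep_para_landing_point}, and~\ref{hybrid_preserves_symbol}, and some care is needed so that the conformal boundary structure at the cusp point $-2$ is respected by the gluing.
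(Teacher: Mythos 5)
Your overall strategy is the same as the paper's: compose the two hybrid equivalences to obtain a conjugacy $\Phi$ between $\sigma_{a_1}$ and $\sigma_{a_2}$ near the non-escaping sets, use the common external map $\rho$ (Proposition~\ref{schwarz_group}) to obtain the conformal conjugacy $\Psi=\psi_{a_2}^{-1}\circ\psi_{a_1}$ on the tiling sets, glue the two by the Bers--Rickman lemma, and finish with Weyl's lemma and the three fixed points $-2$, $2$, $\infty$. Your justification of the boundary matching (landing points of pre-periodic rays together with Proposition~\ref{hybrid_preserves_symbol}, plus density) is actually more explicit than the paper, which only asserts that $\Psi$ "matches continuously with $\Phi$ on $\partial K_{a_1}$".

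There is, however, a genuine gap at the pinching point. You assert that $\Phi$ is a quasiconformal conjugacy "on a neighborhood of their non-escaping sets". This is false as stated: by Theorem~\ref{straightening_schwarz}, the hybrid conjugacies are defined on $\overline{\sigma_{a_i}^{-1}(V_{a_i})}$, and these domains are pinched at the cusp $-2$, which lies in $K_{a_i}$. Hence $\Phi$ is not a priori defined, let alone quasiconformal, on any open set containing $K_{a_1}$. This matters because the Bers--Rickman gluing lemma requires the glued map $H$ to agree on the closed set $K_{a_1}$ with a quasiconformal map defined on an open neighborhood of $K_{a_1}$; with a pinched domain this hypothesis fails precisely at $-2$ (and at its preimage on $\partial K_{a_1}$), so your appeal to the lemma does not go through as written. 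The paper's proof spends its first half repairing exactly this: it chooses a conformal isomorphism $\kappa$ between $\widehat{\C}\setminus V_{a_1}$ and $\widehat{\C}\setminus V_{a_2}$, observes that both $\kappa$ and $\Phi$ are asymptotically linear near $-2$ (both wedges subtend angle $\frac{2\pi}{3}$, and in the coordinates $\eta_{a_i}$ the boundary maps are tangent to $\overline{z}+\frac12$ near $\infty$), and then quasiconformally interpolates in the two infinite strips, exactly as in Lemma~\ref{straightening_lemma}, to extend $\Phi$ to a global quasiconformal homeomorphism of the sphere; only this extended $\Phi$ is eligible for the gluing with $\Psi$. This is not a routine detail one may suppress: coping with the degenerate (pinched) fundamental annulus is the central technical difficulty of the straightening theory in this paper, and your proposal silently assumes it away.
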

\begin{proof}
Let us assume that $\chi(a_1)=\chi(a_2)=\left(\alpha,A\right)$.

We choose (the homeomorphic extension of) a conformal isomorphism $\kappa$ between $\widehat{\C}\setminus V_{a_1}$ and $\widehat{\C}\setminus V_{a_2}$ with $\kappa(-2)=-2$. Since both of these domains make an angle $\frac{2\pi}{3}$ at $-2$, it follows that $\kappa$ is asymptotically linear near $-2$.

Again, $\Phi:=\eta_{a_2}^{-1}\circ \Phi_{a_2}^{-1}\circ\Phi_{a_1}\circ\eta_{a_1}:\overline{\sigma_{a_1}^{-1}(V_{a_1})}\to\overline{\sigma_{a_2}^{-1}(V_{a_2})}$ is a hybrid conjugacy between $\sigma_{a_1}$ and $\sigma_{a_2}$. Since the conformal map $\Phi_{a_2}^{-1}\circ\ \Phi_{a_1}:\widehat{\C}\setminus\overline{\eta_{a_1}(V_{a_1})}\to\widehat{\C}\setminus\overline{\eta_{a_2}(V_{a_2})}$ is asymptotically linear near $\infty$, the same is true for $\Phi_{a_2}^{-1}\circ\Phi_{a_1}:\eta_{a_1}(\sigma_{a_1}^{-1}(\gamma_{a_1}))\to\eta_{a_2}(\sigma_{a_2}^{-1}(\gamma_{a_2}))$. It follows that $\Phi:\sigma_{a_1}^{-1}(\gamma_{a_1})\to\sigma_{a_2}^{-1}(\gamma_{a_2})$ is also asymptotically linear near $-2$.

Following the arguments of Lemma~\ref{straightening_lemma}, we can now interpolate between $\Phi$ and $\kappa$ to obtain a quasiconformal homeomorphism of the sphere as an extension of $\Phi$. 

Since $a_1,a_2\in\cC(\mathcal{S})$, their dynamics on the tiling sets are conjugate to the reflection map $\rho$ via the conformal maps $\psi_{a_1}$ and $\psi_{a_2}$ respectively. Then, $\psi_{a_1}^{a_2}:=\psi_{a_2}^{-1}\circ\psi_{a_1}$ is a conformal conjugacy between $\sigma_{a_1}\vert_{T_{a_1}^\infty}$ and $\sigma_{a_2}\vert_{T_{a_2}^{\infty}}$, which matches continuously with $\Phi$ on $\partial K_{a_1}$.

Finally, we define a map on the sphere as follows

\begin{equation}
H:=\left\{\begin{array}{ll}
                   \Phi & \mbox{on}\ K_{a_1}, \\
                   \psi_{a_1}^{a_2} & \mbox{on}\ T_{a_1}^\infty.
                                          \end{array}\right.
\end{equation}

By the Bers-Rickman lemma, $H$ is a quasiconformal homeomorphism of the sphere that conjugates $\sigma_{a_1}$ to $\sigma_{a_1}$. Moreover, $\overline{\partial}H=\overline{\partial}\Phi=0$ a.e. on $K_{a_1}$. By Weyl's lemma, $H$ is conformal on $\widehat{\C}$ and hence a M{\"o}bius map. Since such an $H$ must fix $-2$, $2$, and $\infty$, we have that $H\equiv\mathrm{id}$. Hence, $a_1=a_2$.
\end{proof}

\section{Hyperbolic Components in $\mathcal{S}$, and Their Boundaries}\label{hyp_comp_sec}

We say that a parameter $a\in\mathcal{S}$ is \emph{hyperbolic} if $\sigma_a$ has an attracting cycle. By Corollary~\ref{non_repelling_prop}, a hyperbolic parameter of $\mathcal{S}$ belongs to $\cC(\mathcal{S})$.

We now discuss the structure of the closures of hyperbolic components in $\cC(\mathcal{S})$. Since the results (and the proof techniques) of this section are similar to those for the family of Schwarz reflection maps with respect to a circle and a cardioid which was considered in \cite{LLMM1,LLMM2}, we only sketch the proofs. 

\subsection{Uniformization of Hyperbolic Components}\label{hyp_comp_subsec}
Since $\sigma_a$ depends real-analy-tically on $a$, a straightforward application of the implicit function theorem shows that attracting periodic points can be locally continued as real-analytic functions of $a$. Hence, the set of all hyperbolic parameters is an open set. A connected component of the set of all hyperbolic parameters is called a \emph{hyperbolic component}. It is easy to see that every hyperbolic component $H$ has an associated positive integer $k$ such that each parameter in $H$ has an attracting cycle of period $k$. We refer to such a component as a hyperbolic component of period $k$.

A \emph{center} of a hyperbolic component is a parameter $a$ for which $\sigma_a$ has a super-attracting periodic cycle; i.e., the critical value $2$ is periodic. 

If $\sigma_a$ has an attracting cycle, then this attracting cycle must lie in $K_a$. By Corollary~\ref{non_repelling_prop}, the attracting cycle of $\sigma_a$ attracts the free critical point, and $a\in\cC(\mathcal{S})$. 

Moreover, we can associate a dynamically defined conformal invariant to every hyperbolic map $\sigma_a$; namely multiplier if the attracting cycle (of $\sigma_a$) has even period, and Koenigs ratio if the attracting cycle (of $\sigma_a$) has odd period (see \cite[\S 2.1.1]{LLMM2} for the corresponding definitions for anti-polynomials, since the definitions are local, they apply to any anti-holomorphic map). 

Let us now fix a hyperbolic component $H$ of odd (respectively, even) period $k$ in $\cC(\mathcal{S})$. For $a\in H$, the restriction of $\sigma_{a}^{\circ k}$ to the connected component $U_a$ of $\textrm{int}(K_a)$ containing $c_a$ is a degree $2$ proper anti-holomorphic (respectively, holomorphic) map. Moreover, $\sigma_{a}^{\circ k}$ has exactly three fixed points (respectively, has a unique fixed point) on $\partial U_a$. Exactly one of them is a cut point of $\partial K_a$, this point is called the \emph{dynamical root point} of $\sigma_a$ on $\partial U_a$ (when $k=1$, all these fixed points are non-cut points of $\partial K_a$; in this case, we call the cusp point $-2$ the \emph{root point}). Choosing a Riemann map of $U_a$ that maps the attracting periodic point to $0$ and the dynamical root point to $1$, we obtain a conjugacy between $\sigma_a^{\circ k}\vert_{U_a}$ and an anti-holomorphic (respectively, holomorphic) Blaschke product of degree $2$ on $\D$. By construction, such a Blaschke product must be of the form 
$$
B^{-}_{a,\lambda}(z)=\lambda\overline{z}\frac{(\overline{z}-a)}{(1-\overline{az})},\quad \textrm{or}\quad B_{a,\lambda}^{+}(z)=\lambda z\frac{(z-a)}{(1-\overline{a}z)},
$$
with $a\in\mathbb{D}$ and $\lambda\in\mathbb{S}^1$ such that $z=1$ is fixed by $B_{a,\lambda}^{\pm}$. The unique such Blaschke product with a super-attracting fixed point is $B_{0,1}^{\pm}$.

Let $\mathcal{B}^{\pm}$ be the space of all (anti-)holomorphic Blaschke products $B_{a,\lambda}^{\pm}$ with $a\in\mathbb{D}$ and $\lambda\in\mathbb{S}^1$ such that $z=1$ is fixed by $B_{a,\lambda}^{\pm}$. The following proposition describes the topology and dynamical uniformizations of hyperbolic components in $\cC(\mathcal{S})$.

\begin{figure}[ht!]
\centering
\includegraphics[scale=0.6]{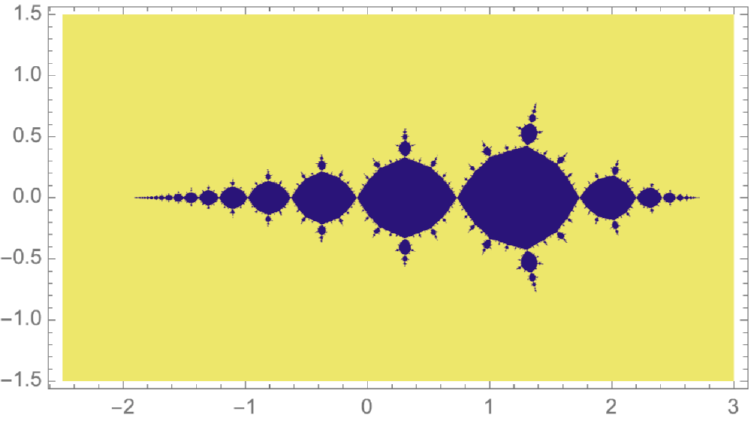}\ \includegraphics[scale=0.4]{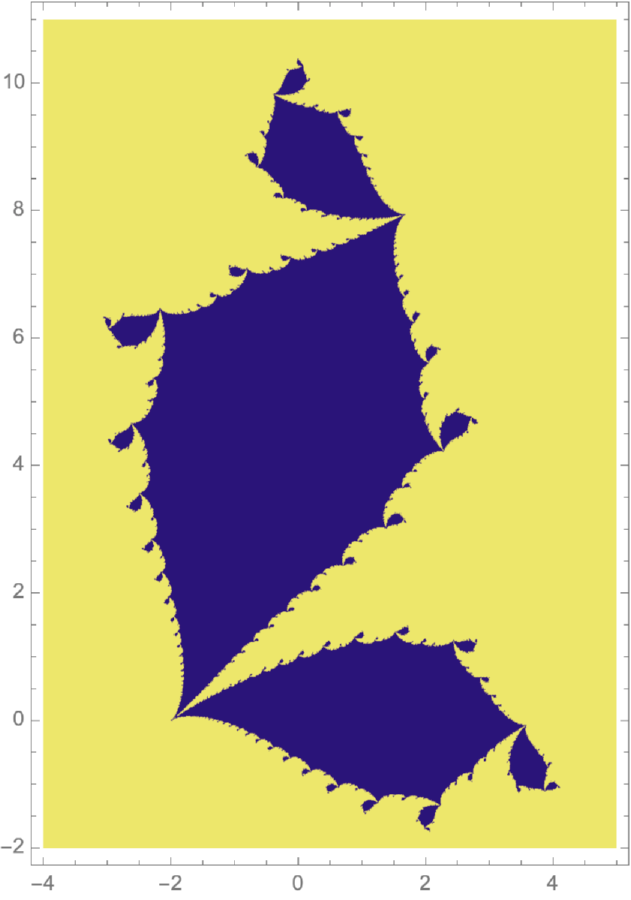}
\caption{Left: The non-escaping set of the center $\frac{5+\sqrt{33}}{4}$ of the unique period two hyperbolic component intersecting the real line. Right: The non-escaping set of the center $\frac72+ \frac{i\sqrt{3}}{2}$ of the unique period two hyperbolic component contained in the upper half-plane.}
\label{fig:dyn_period_two}
\end{figure}

\begin{proposition}[Dynamical Uniformization of Hyperbolic Components]\label{unif_hyp_schwarz}
Let $H$ be a hyperbolic component in $\cC(\mathcal{S})$.
\begin{enumerate}
\item If $H$ is of odd period, then there exists a homeomorphism $\widetilde{\eta}_H:H\to\mathcal{B}^-$ that respects the Koenigs ratio of the attracting cycle. In particular, the Koenigs ratio map is a real-analytic $3$-fold branched covering from $H$ onto the open unit disk, ramified only over the origin.

\item If $H$ is of even period, then there exists a homeomorphism $\widetilde{\eta}_H:H\to\mathcal{B}^+$ that respects the multiplier of the attracting cycle. In particular, the multiplier map is a real-analytic diffeomorphism from $H$ onto the open unit disk.
\end{enumerate}
In both cases, $H$ is simply connected and has a unique center.
\end{proposition}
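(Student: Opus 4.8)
The plan is to build $\widetilde{\eta}_H$ out of the first-return dynamics on the Fatou component carrying the free critical point, to prove it is a homeomorphism by the same surgery-plus-rigidity scheme used for the family of \cite{LLMM2}, and then to transport the structural statements about the model spaces $\mathcal{B}^{\pm}$ across this homeomorphism. First I would fix $a\in H$ of period $k$. By Corollary~\ref{non_repelling_prop} the free critical point $c_a$ lies in $K_a$ and is attracted to the (unique) attracting cycle; let $U_a\subset\Int{K_a}$ be the Fatou component containing $c_a$. Since only $U_a$ carries the critical point, the first-return map $\sigma_a^{\circ k}:U_a\to U_a$ is a degree-two branched covering, holomorphic when $k$ is even and anti-holomorphic when $k$ is odd (as $\sigma_a$ is anti-holomorphic on $\Int{K_a}$). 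Uniformizing $U_a$ by its Riemann map and using the attracting dynamics, this return map is conformally conjugate to a normalized member of $\mathcal{B}^{+}$ (even $k$) or $\mathcal{B}^{-}$ (odd $k$); I define $\widetilde{\eta}_H(a)$ to be that model. Real-analytic dependence of $\widetilde{\eta}_H$ on $a$ follows from the real-analytic continuation of the attracting cycle (implicit function theorem, as already recorded) together with holomorphic dependence of Riemann maps on their domains.

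For injectivity I would argue as in Proposition~\ref{chi_injective_prop}. If $\widetilde{\eta}_H(a_1)=\widetilde{\eta}_H(a_2)$, then the return maps of $\sigma_{a_1}$ and $\sigma_{a_2}$ on $U_{a_1},U_{a_2}$ are conformally conjugate; pulling this conjugacy back through the cycle spreads it to a conformal conjugacy on the whole basin $\Int{K}$, and it extends continuously across $\partial K$ by the landing of (pre-)periodic rays (Propositions~\ref{per_rays_land} and~\ref{rep_para_landing_point}). Gluing this to the external conjugacy furnished by Proposition~\ref{schwarz_group} (all maps in $\cC(\mathcal{S})$ share the external map $\rho$), the Bers--Rickman lemma produces a global quasiconformal conjugacy that is conformal off a set of measure zero. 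Weyl's lemma then forces it to be a Möbius map fixing $-2,2,\infty$, hence the identity, so $a_1=a_2$.

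For surjectivity I would run the reverse surgery. Starting from a base parameter $a_0\in H$ and an arbitrary target model $\mathcal{B}\in\mathcal{B}^{\pm}$, I interpolate quasiconformally inside $U_{a_0}$ so as to replace the return map of $\sigma_{a_0}$ by $\mathcal{B}$, spread the resulting Beltrami coefficient over the grand orbit to make it $\sigma_{a_0}$-invariant, solve it, and invoke Proposition~\ref{schwarz_qcdef} to conclude that the straightened map equals $\sigma_b$ for some $b\in S$. The surgery preserves hyperbolicity of period $k$, so $b\in H$ with $\widetilde{\eta}_H(b)=\mathcal{B}$, giving a bijection. Continuous dependence of quasiconformal maps on Beltrami coefficients (the Measurable Riemann Mapping Theorem) makes both $\widetilde{\eta}_H$ and its surgical inverse continuous, and a properness check (the internal invariant tends to $\partial\D$ exactly as the cycle ceases to be attracting) upgrades this to a homeomorphism.

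Finally, the concrete conclusions are inherited from $\mathcal{B}^{\pm}$. By the analysis of the model spaces in \cite[\S 2]{LLMM2}, the multiplier map $\mathcal{B}^{+}\to\D$ is a real-analytic diffeomorphism, while the Koenigs-ratio map $\mathcal{B}^{-}\to\D$ is a real-analytic three-fold branched covering ramified only over the origin (the factor three being the degree-two anti-holomorphic analogue of the $(d+1)$-fold phenomenon on odd-period components of multicorns); composing with $\widetilde{\eta}_H$ yields the two stated assertions on $H$. Since $\mathcal{B}^{\pm}$ is simply connected and carries a single super-attracting model (the unique ramification/center point over $0$), $H$ is simply connected with a unique center. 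The hard part will be the surjectivity surgery: one must verify that the quasiconformal interpolation inside $U_{a_0}$ is compatible with the pinched structure at the cusp $-2$ and that the solved map lands back in the family $\mathcal{S}$ (not merely in $\hat{S}$) and in the same component $H$; the odd-period case additionally requires care in tracking the Koenigs ratio through the surgery to pin down the branched-covering degree.
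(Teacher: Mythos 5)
Your proposal is correct and follows essentially the same route as the paper: the paper's proof (which defers to \cite[Theorems~5.6,~5.9]{NS}) is exactly the quasiconformal surgery you describe, gluing an arbitrary Blaschke product from $\mathcal{B}^{\pm}$ into the Fatou component containing $c_a$ and transporting the structure of the model spaces back to $H$. The "hard part" you flag at the end is indeed the point the paper singles out: the quasiregular modification $\mathbf{G}_a$ retains the mapping properties of $\sigma_a$ (two-to-one onto $\Omega_a$, three-to-one onto $T_a^0$), so an adaptation of Proposition~\ref{schwarz_qcdef} guarantees that the straightened map is some $\sigma_b$ with $b\in S$.
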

\begin{proof}
See \cite[Theorem~5.6, Theorem~5.9]{NS} for a proof of the corresponding facts for unicritical anti-polynomials. It is straightforward to adapt the proof in our case. The main idea is to change the conformal dynamics of the first return map of a periodic Fatou component. More precisely, one can glue any Blaschke product belonging to the family $\mathcal{B}^{\pm}$ in the connected component of $\textrm{int}(K_a)$ containing $c_a$ by quasiconformal surgery. This gives the required homeomorphism between $H$ and $\mathcal{B}^{\pm}$.

However, there is an important detail here. Since the original dynamics $\sigma_a$ is modified only in a part of the connected component of $\textrm{int}(K_a)$ containing $c_a$ (this is precisely where an iterate of $\sigma_a$ is replaced by a Blaschke product), the resulting quasiregular modification $\mathbf{G}_a$ shares some of the mapping properties of $\sigma_a$. In particular, $\mathbf{G}_a$ sends $\mathbf{G}_a^{-1}(\Omega_a)$ onto $\Omega_a$ as a two-to-one branched covering, and $\mathbf{G}_a^{-1}(T_a^0)$ onto $T_a^0$ as a three-to-one branched covering. Hence, we can adapt the proof of Proposition~\ref{schwarz_qcdef} to show that $\mathbf{G}_a$ is quasiconformally conjugate to some map $\sigma_b$ in our family $\mathcal{S}$.  
\end{proof}

\begin{remark}\label{center_compute}
$a=3$ is the only parameter for which the critical point $c_a$ is equal to $2$. Hence, $3$ is the center of the unique hyperbolic component of period one of $\cC(\mathcal{S})$. On the other hand, the centers of the hyperbolic components of period two are $\frac{5+\sqrt{33}}{4}$, and $\frac72\pm \frac{i\sqrt{3}}{2}$ (see Figure~\ref{fig:dyn_period_two} for the non-escaping sets of the centers of two period two components). 
\end{remark}

\subsection{Boundaries of Hyperbolic Components}\label{hyp_comp_para_subsec}

We will start this subsection with a brief description of neutral parameters and boundaries of hyperbolic components of \emph{even} period of $\cC(\mathcal{S})$.

A parameter $a\in\cC(\mathcal{S})$ is called a \emph{parabolic} parameter if $\sigma_a$ has a periodic cycle with multiplier a root of unity. The following proposition states that every neutral (in particular, parabolic) parameter lies on the boundary of a hyperbolic component of the same period.

\begin{proposition}[Neutral Parameters on Boundary]\label{ThmIndiffBdyHyp_schwarz} 
If $\sigma_{a_0}$ has a neutral periodic point of period $k$, then every neighborhood of $a_0$ in $S$ contains parameters with attracting periodic points of period $k$, so the parameter $a_0$ is on the boundary of a hyperbolic component of period $k$ of $\cC(\mathcal{S})$. 
\end{proposition}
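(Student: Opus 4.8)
The plan is to adapt the classical argument that indifferent cycles bifurcate into attracting ones, while carefully separating the cases according to the parity of $k$, since the relevant conformal invariant behaves quite differently in the anti-holomorphic setting. Let $z_0$ be the given neutral periodic point of $\sigma_{a_0}$ of period $k$. For \emph{even} $k$ the first-return map $\sigma_{a_0}^{\circ k}$ is holomorphic near $z_0$, so $z_0$ carries a genuine multiplier $\rho_0$ with $\vert\rho_0\vert=1$. For \emph{odd} $k$ the first-return map is anti-holomorphic, and since conjugation by a rotation shows that only $\vert\rho_0\vert$ is a conformal invariant of an anti-holomorphic germ $z\mapsto\rho_0\overline{z}+\cdots$, the double return map $\sigma_{a_0}^{\circ 2k}$ automatically has multiplier $\vert\rho_0\vert^2=1$ at $z_0$; thus every odd-period neutral cycle is in fact parabolic. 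I would first dispose of the non-degenerate case $\rho_0\neq 1$ (which forces $k$ even, and covers both the irrationally neutral and the non-unit root-of-unity subcases): here $z_0$ is a simple solution of $\sigma_a^{\circ k}(z)=z$, so the implicit function theorem continues it real-analytically to a period-$k$ point $z(a)$ with multiplier $\rho(a)$, $\rho(a_0)=\rho_0$, for $a$ near $a_0$.

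It then remains to produce parameters arbitrarily close to $a_0$ at which the continued cycle is attracting, i.e. with $\vert\rho(a)\vert<1$. The essential subtlety, and the main obstacle, is that $\sigma_a$ depends on both $a$ and $\overline{a}$, so the multiplier $\rho(a)$ is only real-analytic (not holomorphic) in $a$; consequently $\log\vert\rho\vert$ is not harmonic, and one cannot invoke the open-mapping/minimum principle that settles the holomorphic (Mandelbrot) case. I would instead import the anti-holomorphic transversality input that underlies Proposition~\ref{unif_hyp_schwarz}: the continued map $a\mapsto\rho(a)$ extends to a real-analytic local diffeomorphism onto a neighbourhood of $\rho_0\in\partial\mathbb{D}$ (equivalently, its real Jacobian at $a_0$ is non-degenerate). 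This is the analogue for our family of the transversality of the multiplier map established for unicritical anti-polynomials in \cite{NS}, and rests on the fact that the free critical value $2$ genuinely moves with $a$. Non-degeneracy immediately yields a point $a$ near $a_0$ with $\rho(a)\in\mathbb{D}$, whence $\sigma_a$ has an attracting cycle of period $k$; since $\{\vert\rho\vert<1\}$ is then a nonempty open set with $a_0$ on its boundary, $a_0$ lies on $\partial H$ for the corresponding period-$k$ hyperbolic component.

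The remaining (parabolic) cases are $\rho_0=1$ with $k$ even, and all neutral cycles with $k$ odd; in both $z_0$ is a multiple fixed point of the relevant iterate and the implicit function theorem no longer applies directly. Here I would run the standard parabolic-bifurcation (implosion) argument: passing to Fatou coordinates at the parabolic cycle, a suitably chosen small perturbation of $a_0$ splits the parabolic cycle and creates an attracting cycle of period exactly $k$, exactly as in the multicorn analysis of \cite{NS,MNS} (the odd-period case carries the extra anti-holomorphic feature that makes these parameters lie on \emph{parabolic arcs} in the sense of \cite[\S 3]{MNS}, along which a period-$k$ cycle can be rendered attracting by moving the critical Ecalle height, cf. the deformation argument already used in Proposition~\ref{special_para_arc}). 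As such perturbations can be taken arbitrarily small, $a_0$ is again a boundary point of a period-$k$ hyperbolic component, completing the proof. I expect the genuinely hard points to be precisely the transversality of the multiplier map in the non-parabolic even case and the control of the bifurcation in the odd/degenerate cases, since both are where the anti-holomorphy of $\sigma_a$ must be confronted rather than circumvented.
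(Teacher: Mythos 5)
The paper's own ``proof'' here is a one-line citation: it invokes \cite[Theorem~2.1]{MNS} and observes that the argument given there uses only local dynamical properties of anti-holomorphic maps near neutral periodic points, so it transfers verbatim to the family $\mathcal{S}$. Your proposal instead tries to reconstruct the argument, and it breaks exactly where you yourself flag the difficulty: the even-period case with $\rho_0\neq 1$. You correctly note that $\rho(a)$ is merely real-analytic in $a$, so non-constancy does not give openness; but your fix --- that $a\mapsto\rho(a)$ is a real-analytic local diffeomorphism at $a_0$, ``imported'' from the transversality of the multiplier map in \cite{NS} --- is not an available input. What \cite[Theorems~5.6,~5.9]{NS} prove (and what Proposition~\ref{unif_hyp_schwarz} adapts here), via quasiconformal surgery, is that the multiplier map is a real-analytic diffeomorphism from an even-period \emph{hyperbolic component} onto $\D$; this concerns only the open region where $\vert\rho\vert<1$ and gives no non-degeneracy statement at a parameter with $\vert\rho(a_0)\vert=1$ --- least of all at a Cremer or Siegel parameter, which is not yet known to lie on the boundary of any component (that is precisely what is being proved). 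Neither \cite{NS} nor \cite{MNS} establishes non-vanishing of the real Jacobian of $\rho$ at neutral parameters; if such transversality were known, \cite[Theorem~2.1]{MNS} would be immediate and would not require its own argument. Your one-sentence justification (``the free critical value $2$ genuinely moves with $a$'') is not a proof, so the crux of your main case is a genuine gap.

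There is a second, smaller error in the parabolic cases: moving the critical Ecalle height does \emph{not} render the period-$k$ cycle attracting. Ecalle-height deformations are supported in the parabolic basin and preserve the parabolic cycle; they move the parameter \emph{along} the parabolic arc, which consists entirely of (quasiconformally conjugate, hence still parabolic) parameters --- see Proposition~\ref{parabolic_arcs_schwarz}. To create attracting cycles one must perturb transversally to the arc, and showing that such a perturbation can be chosen to produce an attracting cycle of period exactly $k$ (rather than $2k$; note Proposition~\ref{ThmBifArc_schwarz} only yields period-$2k$ components) is again exactly the perturbation analysis carried out in \cite{MNS}. In the end both halves of your argument reduce to ``as in \cite{MNS}'', which is what the paper does cleanly by citing \cite[Theorem~2.1]{MNS} outright; to make your write-up a proof, you would need to either reproduce that perturbation argument or replace the unproven transversality claim by it.
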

\begin{proof}
See \cite[Theorem~2.1]{MNS} for a proof in the Tricorn family. Since the proof given there only uses local dynamical properties of anti-holomorphic maps near neutral periodic points, it applies to the family $\mathcal{S}$ as well.
\end{proof}

The next result describes the bifurcation structure of even period hyperbolic components of $\cC(\mathcal{S})$. Once again, its proof in the Tricorn family is given in \cite[Theorem~1.1]{MNS}, which can be easily adapted for our setting.

\begin{proposition}[Bifurcations From Even Period Hyperbolic Components]\label{ThmEvenBif_schwarz} 
If $\sigma_a$ has a $2k$-periodic cycle with multiplier 
$e^{2\pi ip/q}$ with $\mathrm{gcd}(p,q)=1$, then the parameter $a$ sits on the boundary of a hyperbolic component of 
period $2kq$ (and is the root thereof) of $\cC(\mathcal{S})$. 
\end{proposition}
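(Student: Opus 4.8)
The plan is to follow the strategy of \cite[Theorem~1.1]{MNS}, whose arguments are entirely local near the neutral cycle and therefore transfer to the family $\mathcal{S}$ once the right structural observation is in place. The key point that unlocks the classical holomorphic theory is that the cycle has \emph{even} period $2k$: hence the first-return map $g_a := \sigma_a^{\circ 2k}$, restricted to a small neighborhood of a point $z_0$ of the cycle, is \emph{holomorphic} (being a composition of an even number of anti-holomorphic maps), with a multiplier $\lambda(a)$ that is holomorphic in the dynamical variable and real-analytic in $a$, and $\lambda(a_0) = e^{2\pi i p/q}$. Consequently the local Leau--Fatou theory of holomorphic parabolic germs (Fatou coordinates, attracting and repelling petals) applies in the dynamical plane of $g_{a_0}$ verbatim.

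First I would record the flower structure at $a_0$. Since $\mathrm{gcd}(p,q) = 1$, the iterate $g_{a_0}^{\circ q}$ has a parabolic fixed point at $z_0$ of multiplier $1$; assuming parabolic multiplicity one (the generic and relevant case, as in \cite{MNS}), it has exactly $q$ attracting petals that $g_{a_0}$ permutes cyclically with combinatorial rotation number $p/q$. These $q$ petals form a single $g_{a_0}$-orbit of period $q$, i.e., a $\sigma_{a_0}$-orbit of period $2kq$. Next I would run the parabolic bifurcation (implosion) argument on the dynamical side: perturbing $g_{a_0}$ so that the period-$2k$ multiplier $\lambda(a)$ moves into the open unit disk in an appropriate direction, the parabolic cycle splits and a genuinely attracting cycle of period $q$ for $g_a$ --- equivalently, an attracting $2kq$-cycle for $\sigma_a$ --- is born out of the merged petals. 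By Corollary~\ref{non_repelling_prop} and Theorem~\ref{straightening_schwarz} such parameters lie in $\cC(\mathcal{S})$ and fill (a portion of) a hyperbolic component $H'$ of period $2kq$, with $a_0 \in \partial H'$. Tracking the multiplier of the $2kq$-cycle along $\partial H'$ shows it tends to $1$ exactly as $a \to a_0$, which identifies $a_0$ as the \emph{root} of $H'$; combined with Proposition~\ref{ThmIndiffBdyHyp_schwarz} (placing $a_0$ on the boundary of a period-$2k$ component) this yields the full satellite picture.

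The main obstacle is the parameter-side non-degeneracy: one must guarantee that, as $a$ varies, the period-$2k$ multiplier $\lambda(a)$ actually crosses the value $e^{2\pi i p/q}$ transversally, so that the implosion occurs on an open set of parameters rather than degenerating. In a holomorphic family this is the standard transversality of multiplier maps; here the family is only real-analytic in $a$, so I would supply this non-degeneracy from Proposition~\ref{unif_hyp_schwarz}(2), which asserts that the multiplier map on an even-period hyperbolic component is a real-analytic diffeomorphism onto $\D$. Together with Proposition~\ref{ThmIndiffBdyHyp_schwarz}, this shows that $\lambda(a)$ sweeps out a full neighborhood of $e^{2\pi i p/q}$ in $\overline{\D}$ near $a_0$, which is precisely the hypothesis the parabolic-bifurcation machinery needs. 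The remaining bookkeeping --- counting petals, matching them to the combinatorics of the satellite cycle, and verifying rootedness --- is the same Fatou-coordinate argument as in \cite{MNS} and requires no new ideas.
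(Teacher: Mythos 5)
Your framework is the one the paper intends (the paper's own ``proof'' is just the remark that \cite[Theorem~1.1]{MNS} adapts, and your reduction to the holomorphic first-return map $g_a=\sigma_a^{\circ 2k}$, the Leau--Fatou flower, and the rootedness bookkeeping all match that route; even your ``parabolic multiplicity one'' assumption is harmless, since each cycle of attracting petals must absorb a critical orbit and $\sigma_a$ has only one free critical point). But the crux of your argument runs backwards. An attracting cycle of period $2kq$ is born when the multiplier $\lambda(a)$ of the $2k$-cycle \emph{leaves} the closed unit disk near $e^{2\pi i p/q}$, not when it ``moves into the open unit disk'': if $\lambda(a)\in\D$ then the $2k$-cycle is attracting, so by Corollary~\ref{non_repelling_prop} (which you yourself invoke) the unique free critical orbit converges to it, and no attracting $2kq$-cycle can coexist; in that regime the satellite cycle split off from the parabolic point is repelling. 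Concretely, continuity of the holomorphic fixed-point index under perturbation gives the first-order relation $\mu\approx 1+q\,(1-\lambda^{q})$ between the multiplier $\mu$ of the satellite $q$-cycle of $g_a$ and $\lambda$, so $|\mu|<1$ exactly when $|\lambda|>1$; this is the familiar picture at $c=-3/4$ in the Mandelbrot set, where the period-two component lies where the fixed point is repelling.

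This is not a reparable sign slip, because your transversality input then sits on the wrong side of the unit circle. Proposition~\ref{unif_hyp_schwarz}(2) together with Proposition~\ref{ThmIndiffBdyHyp_schwarz} only shows that multiplier values in $\D$ (with limits on $\partial\D$) are realized near $a$ --- i.e.\ it reproduces the period-$2k$ component we already have. What the theorem actually requires is that $\lambda$, which is merely real-analytic in $a$ (so non-constancy does \emph{not} imply openness, unlike in holomorphic families), attains values \emph{outside} $\overline{\D}$ at parameters arbitrarily close to $a$; the diffeomorphism property of $\lambda$ restricted to the period-$2k$ component does not exclude, by itself, that $\lambda$ folds along the component's boundary back into $\overline{\D}$. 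Ruling out such folding --- producing even one nearby parameter with an attracting $2kq$-cycle --- is precisely the delicate point of the anti-holomorphic setting and the actual content that must be imported from \cite{MNS}; as written, your proposal never produces such a parameter, so the existence of the period-$2kq$ component, let alone its root structure, is not established.
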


By Theorem~\ref{unif_hyp_schwarz}, each hyperbolic component of period $k$ in $\cC(\mathcal{S})$ contains a unique parameter with a superattracting cycle of period $k$ (i.e., a unique \emph{center}). According to Remark~\ref{center_compute}, there is a unique hyperbolic component of period one in $\cC(\mathcal{S})$ and the center of this component is $3$. We will denote this hyperbolic component by $\widetilde{H}$.

\begin{proposition}[Neutral Dynamics of Odd Period]\label{hyp_odd_parabolic}  
1) The boundary of a hyperbolic component of odd period $k>1$ of $\cC(\mathcal{S})$ is contained in $S$, and consists entirely of parameters having a parabolic orbit of exact period $k$. In suitable local conformal coordinates, the $2k$-th iterate of such a map has the form 
$z\mapsto z+z^{q+1}+\ldots$ with $q\in\{1,2\}$. 

2) Every parameter on the boundary of the hyperbolic component $\widetilde{H}$ of period one is either contained in $\mathcal{I}$ or has a parabolic fixed point (with local power series as above).
\end{proposition}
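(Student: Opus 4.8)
The plan is to exploit the defining feature of odd period in the anti-holomorphic setting: if $\sigma_a$ has a cycle of odd period $k$, then the first return map $\sigma_a^{\circ k}$ to a neighborhood of any point of the cycle is \emph{anti}-holomorphic, whereas its square $\sigma_a^{\circ 2k}$ is holomorphic. Writing $\sigma_a^{\circ k}(z)=\lambda(a)\overline{z}+O(\overline{z}^2)$ in a local coordinate centered at a cycle point, one computes $\sigma_a^{\circ 2k}(z)=\vert\lambda(a)\vert^2 z+\cdots$, so the multiplier of the holomorphic return map $\sigma_a^{\circ 2k}$ is the \emph{real, non-negative} quantity $\vert\lambda(a)\vert^2$. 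On the hyperbolic component $H$ we have $\vert\lambda(a)\vert<1$, and by the dynamical uniformization of odd-period components (Proposition~\ref{unif_hyp_schwarz}), the Koenigs-ratio invariant, hence the modulus $\vert\lambda(a)\vert$, extends continuously to $\overline{H}$ and attains the value $1$ precisely on $\partial H$. Thus for $a_0\in\partial H$ the cycle is neutral with $\vert\lambda(a_0)\vert^2=1$, i.e. $\sigma_{a_0}^{\circ 2k}$ has multiplier exactly $+1$ at the cycle and the parameter is parabolic. The crucial point is that, unlike the even-period case, there is no room for irrationally neutral behaviour, because $\vert\lambda\vert^2$ can only take the boundary value $1$ and never a non-real unit-modulus number.

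To see that the parabolic orbit has \emph{exact} period $k$, I would argue that the cycle persists as a real-analytic family on $\overline{H}$ (via the implicit function theorem, using that attracting cycles continue and cannot collide while $a\in H$), so the limiting cycle at $a_0$ consists of $k$ distinct points; its period divides $k$ and cannot be a proper divisor, since it is precisely along this $k$-cycle that $\vert\lambda(a)\vert\to 1$. For the local normal form, normalizing $\sigma_{a_0}^{\circ k}(z)=\overline{z}+a_2\overline{z}^2+\cdots$ gives $\sigma_{a_0}^{\circ 2k}(z)=z+2\re(a_2)\,z^2+\cdots$, which is already of the form $z\mapsto z+z^{q+1}+\cdots$. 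The assertion that only $q\in\{1,2\}$ occurs is the exact analogue of the description of parabolic arcs and their endpoints (cusps) for the multicorn, and I would adapt the structure theorem of \cite{MNS}: generic boundary parameters lie on real-analytic parabolic arcs parametrized by the critical Ecalle height, along which $q=1$, and these arcs terminate at isolated parabolic cusps, where $q=2$; the single free critical orbit of $\sigma_{a_0}$ together with this Ecalle-height parametrization rules out higher multiplicity.

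For Part (2), let $\widetilde{H}$ be the period-one component (with center $a=3$, Remark~\ref{center_compute}), so for $a\in\widetilde{H}$ the map $\sigma_a$ has an attracting fixed point $p(a)\neq-2$ (the cusp $-2$ is repelling for $a\in S$, hence cannot be the attracting fixed point); here $k=1$ is odd and the mechanism of Part (1) applies verbatim to fixed points. Fix $a_0\in\partial\widetilde{H}$ and a sequence $a_n\in\widetilde{H}$ with $a_n\to a_0$ and attracting fixed points $p(a_n)\neq-2$, and distinguish two cases. If, after passing to a subsequence, $p(a_n)\to p_0\neq-2$, then $p_0$ is a fixed point of $\sigma_{a_0}$ with $\vert\sigma_{a_0}'(p_0)\vert=1$ (it is neutral, being non-attracting as $a_0\notin\widetilde{H}$), whence $\sigma_{a_0}^{\circ 2}$ has multiplier $+1$ and $a_0$ carries a parabolic fixed point of the stated form. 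If instead $p(a_n)\to-2$, the attracting fixed point collides with the cusp; by the local analysis of $\sigma_a$ near $-2$ in Subsection~\ref{dyn_near_cusp}, the cusp $-2$ can absorb a nearby attracted orbit only once it acquires an attracting direction in $\Omega_{a_0}$, which happens exactly when $\re(a_0)=4$ and $\vert\im(a_0)\vert\le\sqrt{3}$, i.e. $a_0\in\mathcal{I}$. This is consistent with Proposition~\ref{limit_point_conn_locus}, which forces any limit point of $\cC(\mathcal{S})$ outside $S$ to lie on $\mathcal{I}$, and yields the desired dichotomy.

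The main obstacle will be the precise determination $q\in\{1,2\}$: while the real, non-negative multiplier argument cleanly forces the whole boundary to be parabolic, bounding the parabolic multiplicity requires the finer arc/cusp structure, since an abstract anti-holomorphic germ $\sigma^{\circ k}(z)=\overline{z}+\cdots$ can in principle have $\sigma^{\circ 2k}$ of arbitrarily high parabolic multiplicity; excluding this rests on the Ecalle-height parametrization of the boundary and the presence of a single free critical point, carried out for the multicorn in \cite{MNS} and adapted here. A secondary subtlety lies in the collision case of Part (2): one must verify that the only way the attracting fixed point can leave every compact subset of $\Omega_{a_0}$ as $a_n\to a_0$ is by limiting onto the cusp $-2$, and that this forces $a_0\in\mathcal{I}$ rather than some other boundary behaviour.
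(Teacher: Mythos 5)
Your reconstruction of the core mechanism in Part 1 --- that the holomorphic return map $\sigma_a^{\circ 2k}$ has multiplier $\vert\lambda\vert^2\in[0,\infty)$, so a neutral cycle of odd period is automatically parabolic with multiplier exactly $+1$ --- is precisely the argument of \cite[Lemma~2.5]{MNS} that the paper cites, and that part is sound. But your Part 1 has a genuine gap: you tacitly assume that every boundary point of an odd-period component $H$ (with $k>1$) lies in the parameter space $S$, so that $\sigma_{a_0}$ is a member of the family and the multiplier argument applies. Since $S$ is a bounded open region, $\partial H$ could a priori meet $\partial S$; this is not a hypothetical worry, since it is exactly what happens for the period-one component $\widetilde{H}$, whose boundary contains $\mathcal{I}$, and it is the entire reason the proposition treats $k>1$ and $k=1$ separately. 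The portion of the paper's proof that is not simply quoted from \cite{MNS} is devoted to this point: by Proposition~\ref{limit_point_conn_locus}, any limit point of $\cC(\mathcal{S})$ (hence of $H$) outside $S$ must lie on $\mathcal{I}$; for $a'\in\mathcal{I}$ the map $\sigma_{a'}$ is still defined (as $\mathcal{I}\subset\hat{S}$), but by Subsection~\ref{dyn_near_cusp} its critical orbit converges to the cusp $-2$, so $\sigma_{a'}$ cannot also carry a non-repelling cycle of period $k>1$ attracting that orbit; hence $\partial H\subset S$. Your Part 2 dichotomy handles the analogous issue for $k=1$, but nothing in your Part 1 does.

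Two further steps would fail as written. First, your exact-period argument is invalid: persistence of the attracting cycle inside $H$ does not prevent its points from colliding in the limit on $\partial H$ --- in the holomorphic quadratic family this is exactly what happens at roots of satellite components. The correct exclusion is again anti-holomorphic: if the limit cycle had period $k'<k$, its return-map multiplier is $\vert\lambda'\vert^2\ge 0$; were it attracting, nearby parameters in $H$ would carry two attracting cycles competing for the single free critical orbit, and were it neutral its multiplier would be exactly $1$, into which genuine cycles of higher period cannot merge (the local multiplicity of $\sigma_{a_0}^{\circ 2k}(z)-z$ at the parabolic point is already exhausted by fixed points of the return map). Relatedly, Proposition~\ref{unif_hyp_schwarz} does not by itself give continuity of $\vert\lambda\vert$ up to $\overline{H}$; one must argue through limits of attracting cycles. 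Second, your route to $q\in\{1,2\}$ via parabolic arcs, cusps and the Ecalle-height parametrization is circular: both in \cite{MNS} and in this paper, that arc/cusp structure (Propositions~\ref{parabolic_arcs_schwarz} and~\ref{Exactly 3}) is \emph{derived from} the present proposition, not the other way around. The bound $q\le 2$ comes directly from petal counting: the $q$ attracting petals of $\sigma_{a_0}^{\circ 2k}$ at a parabolic point are permuted by the anti-holomorphic return map $\sigma_{a_0}^{\circ k}$, each cycle of petals must absorb a critical orbit, and there is only one free critical orbit, so $q\ge 3$ is impossible (and $q=2$ can occur only when the two petals are swapped).
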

\begin{proof} 
1) Let $H$ be a hyperbolic component of period $k>1$. By Proposition~\ref{limit_point_conn_locus}, if $H$ has a boundary point $a'$ outside $S$, then $a'$ must be contained in $\mathcal{I}$. But then $a'\in\widehat{S}$, and the corresponding Schwarz reflection map $\sigma_{a'}$ is well-defined. Hence, the arguments of \cite[Lemma 2.5]{MNS} show that $\sigma_{a'}$ must have a parabolic cycle of period $k>1$ that attracts the forward orbit of the critical point $c_{a'}$. However, this is impossible as the forward orbit of the critical point $c_{a'}$ (under $\sigma_{a'}$) converges to the fixed point $-2$, for all parameters $a'\in\mathcal{I}$ (see Proposition~\ref{special_neutral_critical}). This proves that $H$ has no boundary point outside $S$. Finally, \cite[Lemma 2.5]{MNS} combined with the fact that the Schwarz reflection maps under consideration have unique critical points also show that for every parameter on the boundary of $H$, the $k$-cycle to which the critical orbit converges must be parabolic with the desired local Taylor series expansion.

2) The proof is similar to that of the previous part. The only difference is that the boundary of $\widetilde{H}$ may contain points on the interval $\mathcal{I}$.
\end{proof}

This leads to the following classification of odd periodic parabolic points.

\begin{definition}[Parabolic Cusps]\label{DefCusp_schwarz}
A parameter $a$ will be called a {\em parabolic cusp} if it has a parabolic 
periodic point of odd period such that $q=2$ in the previous proposition. Otherwise, it is called a \emph{simple} parabolic parameter.
\end{definition}

Let us now fix a hyperbolic component $H$ of odd period $k$, and let $a\in H$. Note that the first return map $\sigma_a^{\circ k}$ of a $k$-periodic Fatou component of $\sigma_a$ has precisely three fixed points (necessarily repelling when $k>1$) on the boundary of the component. As $a$ tends to a simple parabolic parameter on the boundary $\partial H$, the unique attracting periodic point of this Fatou component tends to merge with one of the repelling periodic points on its boundary. Similarly, as $a$ tends to a parabolic cusp on the boundary $\partial H$, the unique attracting periodic point of this Fatou component and two boundary repelling periodic points merge together.

Now let $a\in\mathcal{I}^0$ or $a$ be a simple parabolic parameter of odd (parabolic) period $k$. Consider an attracting petal of $\sigma_a$ that contains the critical value $2$ (for $a\in\mathcal{I}^0$, the existence of such petals follows from the proof of Proposition~\ref{special_neutral_critical}). The arguments of \cite[Lemma~3.1]{MNS} (also see \cite[\S 7.1]{LLMM1}) imply that there exists a \emph{Fatou coordinate} defined on the attracting petal which conjugates the first anti-holomorphic return map $\sigma_a^{\circ k}$ (of the attracting petal) to the map $\zeta\mapsto\overline{\zeta}+1/2$ on a right half-plane. This coordinate is unique up to addition of a real constant. The pre-image of the real line (which is invariant under $\zeta\mapsto\overline{\zeta}+1/2$) under this Fatou coordinate is called the \emph{attracting equator}. By construction, the attracting equator is invariant under the dynamics $\sigma_a^{\circ k}$.

The imaginary part of the critical value $2$ (whose forward orbit converges to the parabolic cycle or to the fixed point $-2$ when $a\in\mathcal{I}^0$) under this special Fatou coordinate is called the \emph{critical {\'E}calle height} of $\sigma_a$ (since this Fatou coordinate is unique up to addition of a real constant, the critical {\'E}calle height is well-defined). It is easy to see that critical {\'E}calle height is a conformal conjugacy invariant of simple parabolic parameters of odd period (respectively, of parameters on $\mathcal{I}^0$). One can change the critical {\'E}calle height of simple parabolic parameters by a quasiconformal deformation argument to obtain real-analytic arcs of parabolic parameters on the boundaries of odd period hyperbolic components. An analogous deformation can be performed for parameters on $\mathcal{I}^0$ as well.

\begin{proposition}[Parabolic Arcs]\label{parabolic_arcs_schwarz}
1) Let $\widetilde{a}$ be a simple parabolic parameter of odd period. Then $\widetilde{a}$ is on a parabolic arc in the following sense: there exists a real-analytic arc of simple parabolic parameters $a(h)$ (for $h\in\mathbb{R}$) with quasiconformally equivalent but conformally distinct dynamics of which $\widetilde{a}$ is an interior point, and the {\'E}calle height of the critical value $2$ of $\sigma_{a(h)}$ is $h$. This arc is called a \emph{parabolic arc}.

2) All maps $\sigma_a$ with $a\in\mathcal{I}^0\subset\widehat{S}$ are quasiconformally conjugate.
\end{proposition}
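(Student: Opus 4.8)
The plan is to realize the arc as a quasiconformal deformation of $\sigma_{\widetilde a}$ that changes \emph{only} the critical Ecalle height, following the scheme of \cite[Theorem~3.2]{MNS} for the Tricorn, while keeping every deformed map inside the family $\mathcal{S}$ via Proposition~\ref{schwarz_qcdef}. Fix the odd period $k$ and let $h_0$ denote the critical Ecalle height of $\sigma_{\widetilde a}$, defined (as recalled before the statement) through the attracting Fatou coordinate $\Psi$ conjugating the first anti-holomorphic return map $\sigma_{\widetilde a}^{\circ k}$ of the relevant petal to $\zeta\mapsto\overline{\zeta}+\tfrac12$. The attracting equator and the critical Ecalle height are intrinsic conformal data of this parabolic germ, so two simple parabolic parameters of odd period with distinct critical Ecalle heights cannot be conformally conjugate (recall, in addition, that all maps in $\cC(\mathcal{S})$ share the same rigid external map $\rho$); this already yields the ``conformally distinct'' clause.

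For each real $t$ I would build a $\sigma_{\widetilde a}^{\circ k}$-invariant Beltrami form on the parabolic basin whose effect, read in the Ecalle coordinate, is a quasiconformal self-map of the anti-holomorphic Ecalle cylinder (the quotient of the petal by $\sigma_{\widetilde a}^{\circ k}$) that is the identity near both ends of the cylinder, is compatible with the glide-reflection involution induced by $\sigma_{\widetilde a}^{\circ k}$, and displaces the marked point corresponding to the critical value $2$ by Ecalle height $t$. This is precisely the deformation of \cite[Theorem~3.2]{MNS}; the only genuinely new point is to verify its compatibility with the local model of $\sigma_a$ near the cusp $-2$ obtained in Subsection~\ref{cusp_asymp_sec_1} (this is exactly where the $\overline{z}+\tfrac12$ asymptotics enter). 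Spreading this form over the entire grand orbit of the basin by the dynamics and declaring it equal to the standard structure on the rest of $\widehat{\C}$ (the tiling set, its complement, and the part of $K_a$ off the basin) produces a $\sigma_{\widetilde a}$-invariant Beltrami coefficient $\mu_t$ with $\|\mu_t\|_\infty<1$ and $\mu_0=0$.

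Next I would solve $\mu_t$ by a quasiconformal map $\Phi_t$ normalized to fix $\pm2$ and $\infty$. By Proposition~\ref{schwarz_qcdef}(1), $\Phi_t$ conjugates $\sigma_{\widetilde a}$ to $\sigma_{a(t)}$ for a unique $a(t)\in S$, with $a(0)=\widetilde a$. Since $\Phi_t$ is a global topological conjugacy, $\sigma_{a(t)}$ again carries a parabolic cycle of exact period $k$, and the formal invariant $q=1$ (simplicity) is preserved; thus $a(t)$ is again a simple parabolic parameter of odd period $k$ in the sense of Proposition~\ref{hyp_odd_parabolic}. As $\Phi_t$ is conformal on the non-escaping set away from the deformed basin and transports the Ecalle structure of $\sigma_{\widetilde a}$ to that of $\sigma_{a(t)}$, the critical Ecalle height of $\sigma_{a(t)}$ equals $h_0+t$; reparametrizing by $h=h_0+t$ yields the family $a(h)$ with $a(h_0)=\widetilde a$, so $\widetilde a$ is an interior point and $h$ ranges over all of $\mathbb{R}$. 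The maps $\sigma_{a(h)}$ are pairwise quasiconformally conjugate by construction and, by the first paragraph, pairwise conformally distinct.

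Real-analyticity of $h\mapsto a(h)$ would then follow from the real-analytic dependence of $\mu_h$ on $h$, the analytic dependence of the normalized solution of the Beltrami equation on parameters (Ahlfors--Bers), and the real-analytic identification $b\mapsto\sigma_b$ of $S$ with $\mathcal{S}$. I expect the main obstacle to be the second step: constructing a genuinely $\sigma_{\widetilde a}$-invariant Beltrami form that shifts the critical Ecalle height by an \emph{arbitrary} real amount (in particular through $h=0$ and to both signs) while remaining trivial near the ends of the Ecalle cylinder, and simultaneously ensuring that its straightening returns to $\mathcal{S}$. The first difficulty is resolved by the anti-holomorphic Ecalle-cylinder analysis of \cite{MNS}, and the second is handled exactly by Proposition~\ref{schwarz_qcdef}, which is why the quasiconformal-deformation theory of Subsection~\ref{qcdef_sec} was needed.
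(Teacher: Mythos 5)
Your proposal is correct and follows essentially the same route as the paper: the paper's proof is precisely the combination of the Ecalle-height deformation of \cite[Theorem~3.2]{MNS}, carried out in the attracting petals of the odd-period parabolic cycle, with Proposition~\ref{schwarz_qcdef}, which guarantees that the straightened maps remain in the family $\mathcal{S}$. The only immaterial quibble is your aside that one must separately verify compatibility with the cusp asymptotics of Subsection~\ref{cusp_asymp_sec_1}: the Beltrami form is supported on the grand orbit of the basin of the odd-period cycle, and the repelling behavior at $-2$ is preserved automatically under the quasiconformal conjugacy, which is exactly the mechanism used in the proof of Proposition~\ref{schwarz_qcdef}, so no additional check at the cusp is needed.
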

\begin{proof} 
1) See \cite[Theorem 3.2]{MNS} for a proof in the case of unicritical anti-polynomials (also compare \cite[Figure~2.6]{HS} for an illustration of the quasiconformal deformation used to change the critical {\'E}calle height). One uses the same quasiconformal deformation in the attracting petal at $-2$ for the map $\sigma_{\widetilde{a}}$, and Proposition~\ref{schwarz_qcdef} guarantees that the quasiconformal deformations of $\sigma_{\widetilde{a}}$ also lie in the family $\mathcal{S}$.

2) Let us start with the parameter $\widetilde{a}:=4\in\mathcal{I}^0$. Since $\sigma_{\widetilde{a}}$ commutes with complex conjugation, the {\'E}calle height of the critical value $2$ is $0$ for this map; i.e., the critical {\'E}calle height of $\sigma_{\widetilde{a}}$ is $0$. As in the previous case, one can change the critical {\'E}calle height by a quasiconformal deformation argument as in \cite[Theorem~3.2]{MNS} (and invoking Proposition~\ref{schwarz_qcdef}), and prove the existence of a real-analytic arc $\widehat{I}\subset\widehat{S}$ such that as $a$ runs over $\widehat{I}$, the critical {\'E}calle height of $\sigma_a$ varies from $-\infty$ to $+\infty$. Moreover, since all maps on $\widehat{I}$ are quasiconformally conjugate, it follows that for every parameter $a\in\widehat{I}$, the map $\sigma_a$ has a unique attracting direction at $-2$. Hence, by Subsection~\ref{dyn_near_cusp}, the arc $\widehat{I}$ is contained in $\mathcal{I}^0$. Therefore, the closure of $\widehat{I}$ is contained in $\overline{\mathcal{I}^0}=\mathcal{I}\subset\widehat{S}$. We claim that the limit points of this arc, as the critical {\'E}calle height goes to $\pm\infty$, must lie outside $\mathcal{I}^0$. Indeed, for each $a\in\mathcal{I}^0$, there is a unique attracting direction at $-2$ (under $\sigma_a$), and hence all such maps have finite critical {\'E}calle heights. Hence, no $a\in\mathcal{I}^0$ can be an accumulation point of a sequence of parameters on $\widehat{I}$ with critical {\'E}calle height diverging to $\pm\infty$. Thus, the limit points of $\widehat{I}$, as the critical {\'E}calle height goes to $\pm\infty$, must lie in $\mathcal{I}\setminus\mathcal{I}^0=\{4\pm i\sqrt{3}\}$; i.e., $\widehat{I}=\mathcal{I}^0$.
\end{proof}

Let us fix a parabolic arc $\cC\subset\cC(\mathcal{S})$ of period $k>1$, and its critical {\'E}calle height parametrization $a:\R\to\cC$. By Propositions~\ref{ThmIndiffBdyHyp_schwarz}, the arc $\cC$ must lie on the boundary of a single hyperbolic component of period $k$. By Proposition~\ref{hyp_odd_parabolic}, every accumulation point of $\cC$ is contained in $S$. Arguing as in \cite[Lemma 3.3]{MNS}, we see that such an accumulation point is a parabolic cusp of period $k$. In particular, $\overline{\cC}$ is a compact connected set in $S$. Moreover, $\chi$ maps parabolic cusps in $\cC(\mathcal{S})$ to parabolic cusps of the same period in $\cC(\mathfrak{L}_0)$. Since there are only finitely many cusps of a given period in $\cC(\mathfrak{L}_0)$ and $\chi$ is injective, it follows that there are only finitely many cusps of a given period in $\cC(\mathcal{S})$. Hence, $\cC$ limits at parabolic cusp points on both ends. This allows one to adapt the arguments of \cite[Proposition 3.7]{HS} for our setting to prove the following result.

\begin{proposition}[Fixed Point Index on Parabolic Arc]\label{index_to_infinity_schwarz}
Along any parabolic arc of odd period greater than one, the holomorphic fixed point index of the parabolic cycle is a real valued real-analytic function that tends to $+\infty$ at both ends.
\end{proposition}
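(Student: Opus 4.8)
The plan is to adapt the proof of \cite[Proposition~3.7]{HS} (carried out there for the multicorns) to our family, the only substantive new point being to check that the relevant parabolic germs are genuinely anti-holomorphic. Fix a parabolic arc $\cC$ of odd period $k>1$ together with its critical Ecalle height parametrization $a:\R\to\cC$, $h\mapsto a(h)$, supplied by Proposition~\ref{parabolic_arcs_schwarz}. For each $h$, the map $\sigma_{a(h)}$ has a simple parabolic cycle $\mathcal{O}(h)$ of exact period $k$; since $k>1$, this cycle is disjoint from the cusp point $-2$, so $\sigma_{a(h)}$ is anti-holomorphic in a full neighborhood of every point of $\mathcal{O}(h)$. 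Because $k$ is odd, the first return map $\sigma_{a(h)}^{\circ k}$ is then a bona fide anti-holomorphic germ with a simple parabolic fixed point of normal form $z\mapsto\overline{z}+\overline{z}^2+\cdots$. In particular the Puiseux asymptotics of $\sigma_a$ at $-2$ (Subsection~\ref{cusp_asymp_sec_1}) play no role here, and the full local theory of anti-holomorphic parabolic germs from \cite{MNS,HS} applies. Define $\iota(h):=\frac{1}{2\pi i}\oint\frac{dz}{z-\sigma_{a(h)}^{\circ 2k}(z)}$, the holomorphic fixed point index computed around any one point of $\mathcal{O}(h)$; the value is independent of the chosen point since the points of the cycle are conjugate under iterates of $\sigma_{a(h)}$.

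First I would settle real-analyticity and reality. Real-analyticity is immediate: the arc $a(h)$ is real-analytic by Proposition~\ref{parabolic_arcs_schwarz}, the parabolic cycle $\mathcal{O}(h)$ continues real-analytically by the implicit function theorem, and the residue integral defining $\iota$ depends real-analytically on the germ. Real-valuedness is a purely local feature of simple anti-holomorphic parabolic germs: the fact that the orientation-reversing germ $\sigma_{a(h)}^{\circ k}$ is a ``square root'' of its holomorphic second iterate imposes a reflection symmetry on the formal/Écalle data at the parabolic point, and this symmetry forces the holomorphic index of $\sigma_{a(h)}^{\circ 2k}$ to be real. This is exactly the computation performed in \cite{MNS} and invoked in \cite{HS}, and it transfers verbatim since our germs have the same normal form.

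The heart of the matter is the behavior as $h\to\pm\infty$. As shown in the discussion preceding this statement (using Proposition~\ref{limit_point_conn_locus}, the finiteness of cusps of each period in $\cC(\mathfrak{L}_0)$, and injectivity of $\chi$), the closure $\overline{\cC}$ limits at a parabolic cusp of period $k$ at each end. At such a cusp $q=2$, so the anti-holomorphic return map has the form $z\mapsto\overline{z}+c\,\overline{z}^3+\cdots$, whose square is $z\mapsto z+2\re(c)\,z^3+\cdots$ with $\re(c)\neq0$; in particular the quadratic coefficient of $\sigma_{a(h)}^{\circ 2k}$ vanishes in the limit. Writing the germ at $w\in\mathcal{O}(h)$ as $z+a_2z^2+a_3z^3+\cdots$, a routine residue computation gives $\iota=a_3/a_2^2$. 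As $h\to\pm\infty$ we have $a_2\to0$ while $a_3\to2\re(c)$ stays bounded away from $0$, so $\lvert\iota(h)\rvert\to\infty$; combined with the reality of $\iota$, this yields divergence to $\pm\infty$ at each end.

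The main obstacle is pinning the sign: upgrading $\lvert\iota(h)\rvert\to\infty$ to $\iota(h)\to+\infty$. The coalescence argument alone only gives blow-up, and ruling out $-\infty$ requires the quantitative relation between the index and the critical Ecalle height, i.e. reading off the real part of $\iota$ from the translation part of the lifted horn map at the ends of the Écalle cylinder and using the anti-holomorphic reflection symmetry to control its sign. This is precisely the estimate of \cite[Proposition~3.7]{HS}, and the only thing to verify is that our germs meet its hypotheses—which they do, being honest simple anti-holomorphic parabolic germs of period $k>1$ with the standard normal form. A secondary, routine point is to perform the Fatou-coordinate constructions and residue integrals uniformly along the arc, which follows from the compactness of $\overline{\cC}$ in $S$ established before the statement.
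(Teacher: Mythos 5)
Your proposal is correct and takes essentially the same route as the paper, whose proof consists of exactly the two steps you outline: first showing that both ends of the arc converge to parabolic cusps inside $S$ (via Proposition~\ref{limit_point_conn_locus}, injectivity of $\chi$, and the finiteness of cusps of a given period in $\cC(\mathfrak{L}_0)$), and then adapting \cite[Proposition~3.7]{HS}, for which your observation that the cycle avoids $-2$ (so the return germs are genuine anti-holomorphic parabolic germs to which the local theory of \cite{MNS,HS} applies) is precisely the needed verification. One minor remark on your final paragraph: the sign at the ends can be pinned without horn-map estimates, by index conservation --- the repelling cycle merging with the parabolic one at the cusp is fixed by the anti-holomorphic return map, hence its multiplier $\lambda$ for the second iterate is real and greater than $1$, so its index $1/(1-\lambda)$ tends to $-\infty$ while the total index near the forming double parabolic point stays bounded, forcing the parabolic index to tend to $+\infty$; but since you, like the paper, ultimately defer this step to \cite{HS}, this does not affect the correctness of your argument.
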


It now follows by arguments similar to the ones used in \cite[Theorem~3.8, Corollary~3.9]{HS} that:

\begin{proposition}[Bifurcations Along Arcs]\label{ThmBifArc_schwarz}
Every parabolic arc of odd period $k>1$ intersects the boundary of a hyperbolic component of period $2k$ along an arc consisting of the set of parameters where the parabolic fixed point index is at least $1$. In particular, every parabolic arc has, at both ends, an interval of positive length at which bifurcation from a hyperbolic component of odd period $k$ to a hyperbolic component of period $2k$ occurs.
\end{proposition}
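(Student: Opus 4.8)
The plan is to transplant the analysis of Hubbard and Schleicher \cite[Theorem~3.8, Corollary~3.9]{HS} to the family $\mathcal{S}$, with Proposition~\ref{index_to_infinity_schwarz} as the key input. Fix a parabolic arc $\cC$ of odd period $k>1$, together with its critical Ecalle height parametrization $a:\R\to\cC$, $h\mapsto a(h)$. For each $h$, the map $\sigma_{a(h)}$ carries a simple parabolic cycle of exact period $k$, so the holomorphic return map $\sigma_{a(h)}^{\circ 2k}$ has a parabolic fixed point of multiplier $1$ at each point of the cycle; write $\mathrm{ind}(h)$ for its holomorphic fixed-point index. By Proposition~\ref{index_to_infinity_schwarz}, $\mathrm{ind}$ is a real-valued, real-analytic function of $h$ with $\mathrm{ind}(h)\to+\infty$ as $h\to\pm\infty$. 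The goal is to identify the sub-arc $\{a(h):\mathrm{ind}(h)\geq 1\}$ as precisely the portion of $\cC$ lying on the boundary of a period-$2k$ hyperbolic component.

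First I would carry out a local study of the first return map $g:=\sigma_a^{\circ k}$ to a $k$-periodic Fatou component near a parabolic parameter $a(h_0)\in\cC$. Since $k$ is odd, $g$ is anti-holomorphic with a parabolic fixed point $z_0$ whose local normal form (Proposition~\ref{hyp_odd_parabolic}) is $z\mapsto\overline{z}+\overline{z}^2+\cdots$, so that $g^{\circ 2}$ is holomorphic parabolic with index $\mathrm{ind}(h_0)$. Perturbing $a$ transversally to $\cC$, the parabolic fixed point of $g^{\circ 2}$ (a double solution of $g^{\circ 2}(z)=z$) splits, and one tracks separately the genuine fixed points of $g$ — whose appearance as an attracting point accounts for the period-$k$ hyperbolic component $H$ along whose boundary $\cC$ runs — and the genuine $2$-cycles of $g$, i.e. the period-$2k$ cycles of $\sigma_a$. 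Following the Fatou-coordinate and Ecalle-height deformation computation of \cite{HS}, the multiplier of the bifurcating period-$2k$ cycle crosses the unit circle exactly as $\mathrm{ind}$ crosses the value $1$: on the appropriate side of $\cC$ an attracting period-$2k$ cycle exists precisely when $\mathrm{ind}(h)\geq 1$. That such parameters genuinely lie on the boundary of a period-$2k$ hyperbolic component of $\cC(\mathcal{S})$ (and do not merely exhibit a formal bifurcation) is then supplied by Proposition~\ref{ThmIndiffBdyHyp_schwarz}, while the fact that the deformations stay inside $\mathcal{S}$ is guaranteed by Proposition~\ref{schwarz_qcdef}.

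The ``in particular'' statement follows by continuity: since $\mathrm{ind}(h)\to+\infty$ at both ends, there exist $h_-<h_+$ with $\mathrm{ind}(h)\geq 1$ on $(-\infty,h_-]$ and on $[h_+,+\infty)$, so each end of $\cC$ carries an interval of positive length along which a bifurcation from $H$ (period $k$) to a hyperbolic component of period $2k$ occurs. Combined with the previous paragraph, this yields both assertions of the proposition.

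The main obstacle I anticipate is the local perturbative computation pinning the bifurcation threshold to $\mathrm{ind}=1$. One must use the anti-holomorphic Fatou coordinates carefully to follow how the multiplier-$1$ parabolic point of $g^{\circ 2}$ resolves, under a transverse perturbation, into fixed points of $g$ versus $2$-cycles of $g$, and to show that the $2$-cycle's multiplier passes through $1$ exactly at $\mathrm{ind}=1$ (this is where the reality of the index, special to the anti-holomorphic setting, is essential). The remaining work is bookkeeping: checking that the purely local arguments of \cite{HS}, originally written for unicritical anti-polynomials, transfer verbatim to the parabolic cycle of $\sigma_a$. This is legitimate because all of the relevant dynamics is local near the parabolic cycle and our deformations preserve membership in $\mathcal{S}$ by Propositions~\ref{schwarz_qcdef} and~\ref{parabolic_arcs_schwarz}.
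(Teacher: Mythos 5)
Your proposal follows essentially the same route as the paper: the paper proves this proposition by combining Proposition~\ref{index_to_infinity_schwarz} (real-analyticity of the index and divergence to $+\infty$ at both ends of the arc) with the local bifurcation analysis of Hubbard--Schleicher \cite[Theorem~3.8, Corollary~3.9]{HS}, which is exactly the structure of your argument, including pinning the bifurcation threshold at index $1$ and invoking Propositions~\ref{ThmIndiffBdyHyp_schwarz} and~\ref{schwarz_qcdef} to stay within the family $\mathcal{S}$. Your outline is correct and, if anything, spells out the transplanted Hubbard--Schleicher mechanism in more detail than the paper does.
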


\begin{proposition}\label{index_increasing}
Let $H$ be a hyperbolic component of odd period $k$ in $\cC(\mathcal{S})$, $\mathcal{C}$ be a parabolic arc on $\partial H$, $a:\mathbb{R}\to\mathcal{C}$ be the critical {\'E}calle height parametrization of $\mathcal{C}$, and let $H'$ be a hyperbolic component of period $2k$ bifurcating from $H$ across $\mathcal{C}$. Then there exists some $h_0>0$ such that 
$$
\cC\cap\partial H'=a[h_0,+\infty).
$$
Moreover, the function 
\begin{center}
$\begin{array}{rccc}
  \ind_{\mathcal{C}}: & [h_0,+\infty) & \to & [1,+\infty) \\
  & h & \mapsto & \ind_{\cC}(\sigma_{a(h)}^{\circ 2})\\
   \end{array}$
\end{center}
is strictly increasing, and hence a bijection (where $\ind_{\cC}(\sigma_{a(h)}^{\circ 2})$ stands for the holomorphic fixed point index of the $k$-periodic parabolic cycle of $\sigma_{a(h)}^{\circ 2}$).
 \end{proposition}
 \begin{proof}
The proof of \cite[Lemma~2.13, Corollary~2.21]{IM2} can be applied mutatis mutandis to our setting.
 \end{proof}

Recall that there are exactly three distinct combinatorial ways in which parabolic arcs (respectively parabolic cusps) are formed on the boundary $\partial H$. One can now argue as in \cite[\S~7]{LLMM2} to prove the following structure theorem for boundaries of odd period hyperbolic components of $\cC(\mathcal{S})$ (see Figure~\ref{fig:period_three_hyp}).

\begin{figure}[ht!]
\centering
\includegraphics[scale=0.4]{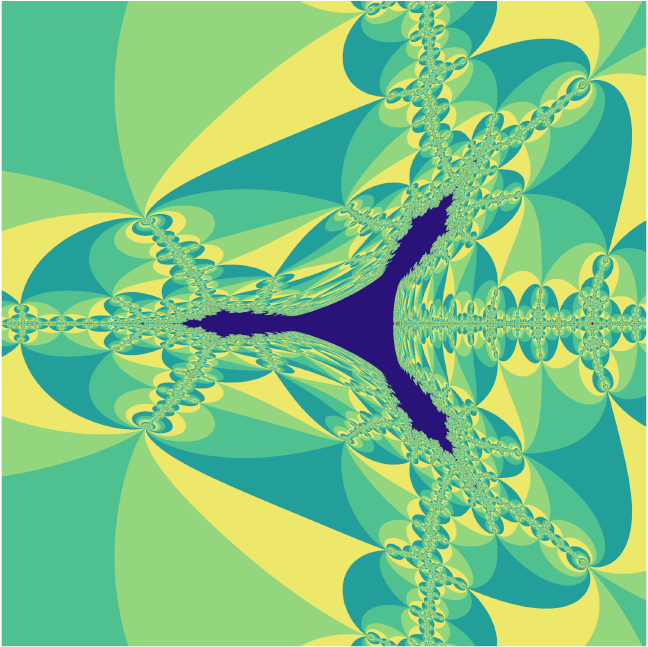}
\caption{A period three hyperbolic component in $\cC(\mathcal{S})$.}
\label{fig:period_three_hyp}
\end{figure}

\begin{proposition}[Boundaries Of Odd Period Hyperbolic Components]\label{Exactly 3}
The boundary of every hyperbolic component of odd period $k>1$ of $\cC(\mathcal{S})$ is a topological triangle having parabolic cusps as vertices and parabolic arcs as sides.
\end{proposition}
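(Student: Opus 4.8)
The plan is to combine the \emph{local} structure of $\partial H$ (that it consists solely of simple parabolic parameters and parabolic cusps of period $k$) with the \emph{global} combinatorics governing how the attracting cycle collides with the repelling boundary cycle of the characteristic Fatou component, and then to verify that these pieces assemble into a triangle. The result should follow by transporting, essentially verbatim, the framework of \cite[\S 7]{LLMM2}.

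First I would record the building blocks. By Proposition~\ref{hyp_odd_parabolic}(1), every parameter on $\partial H$ lies in $S$ and carries a parabolic orbit of exact period $k$; by Definition~\ref{DefCusp_schwarz} each such parameter is either a simple parabolic parameter or a parabolic cusp. By Proposition~\ref{parabolic_arcs_schwarz}, every simple parabolic parameter on $\partial H$ is an interior point of a parabolic arc $\cC$, real-analytically parametrized by critical Ecalle height $h\in\R$. As explained in the discussion preceding Proposition~\ref{index_to_infinity_schwarz}, $\overline{\cC}$ is a compact connected subset of $S$ whose two ends (the limits as $h\to\pm\infty$) are parabolic cusps of period $k$; moreover there are only finitely many cusps of period $k$ in $\cC(\mathcal{S})$, since $\chi$ is injective and $\cC(\mathfrak{L}_0)$ has finitely many cusps of each period. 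Consequently $\partial H$ is a finite union of parabolic arcs, glued together at parabolic cusps.

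Next I would pin down the count. Fix $a\in H$ and let $U_a$ be the characteristic Fatou component, i.e. the $k$-periodic Fatou component containing the critical value $2$ (which it does by Corollary~\ref{non_repelling_prop}); its first return map $\sigma_a^{\circ k}$ has exactly three repelling fixed points on $\partial U_a$, each a landing point of dynamical rays (Proposition~\ref{rep_para_landing_point}). As $a$ approaches $\partial H$, the attracting periodic point of $U_a$ merges with one of these three boundary repelling points (producing a simple parabolic parameter) or with two of them simultaneously (producing a parabolic cusp). This merging datum is locally constant along any parabolic arc, since the dynamics is quasiconformally equivalent along an arc by Proposition~\ref{parabolic_arcs_schwarz} and so the combinatorics of the ray-pairs bounding the parabolic basin cannot jump; it is governed by precisely the three combinatorial possibilities recalled before the statement. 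Following \cite[\S 7]{LLMM2}, I would argue that each of the three combinatorial types is realized by exactly one parabolic arc, and that the two cusps bounding such an arc arise by collapsing the attracting point onto the two repelling points adjacent to the selected access.

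Finally, to close the arcs and cusps into a topological triangle, I would show that consecutive arcs share exactly one cusp: at each cusp two \emph{distinct} combinatorial types meet (the attracting point hits two of the three boundary repelling points), so each cusp is the common endpoint of exactly two of the three arcs, and the incidence is cyclic. Hence $\partial H$ is homeomorphic to a circle subdivided by three cusp-vertices into three parabolic-arc sides. The main obstacle is exactly this last combinatorial bookkeeping — establishing that there are neither more nor fewer than three arcs and three cusps, and that they are glued in the unique cyclic (triangular) pattern, rather than, say, several arcs abutting a single cusp or an arc returning to the same cusp. This is where the detailed analysis of accesses to $U_a$ and of the root-point structure (imported from \cite[\S 7]{LLMM2}) does the real work; the index statement of Proposition~\ref{index_to_infinity_schwarz} is what guarantees that each arc genuinely terminates at cusps at both ends rather than accumulating elsewhere, so that the gluing is finite and well-defined.
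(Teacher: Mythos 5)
Your proposal is correct and takes essentially the same route as the paper: the paper likewise assembles Proposition~\ref{hyp_odd_parabolic}, the existence of parabolic arcs through simple parabolic parameters (Proposition~\ref{parabolic_arcs_schwarz}), the compactness/finiteness argument showing each arc terminates at period-$k$ cusps at both ends (the discussion preceding Proposition~\ref{index_to_infinity_schwarz}, via injectivity of $\chi$ and finiteness of cusps in $\cC(\mathfrak{L}_0)$), and the observation that there are exactly three combinatorial ways arcs and cusps can form on $\partial H$, delegating the final triangle bookkeeping to \cite[\S 7]{LLMM2} exactly as you do. The only harmless slip is your closing attribution: in the paper the termination of arcs at cusps is established by the finiteness-of-cusps argument \emph{before} (and independently of) the index statement of Proposition~\ref{index_to_infinity_schwarz}, rather than being a consequence of it.
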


The situation for the hyperbolic component $\widetilde{H}$ of period one is slightly different.

\begin{proposition}\label{hyp_one_not_proper}
The boundary of $\widetilde{H}$ consists of two parabolic arcs, a parabolic cusp, and the arc $\mathcal{I}$. In particular, $\partial\widetilde{H}$ is not contained in $S$.
\end{proposition}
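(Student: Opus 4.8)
The plan is to run essentially the same analysis as in the proof of Proposition~\ref{Exactly 3}, keeping careful track of the three combinatorial degenerations that occur on $\partial\widetilde{H}$, and to observe that for period one exactly one of these degenerations is governed by the cusp point $-2$ rather than by an ordinary repelling fixed point; this degeneration will produce the arc $\mathcal{I}$ in place of a genuine parabolic arc.

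First I would pin down the internal dynamics. For $a\in\widetilde{H}$ the map $\sigma_a$ has an attracting fixed point $p_a\neq-2$ (e.g.\ real for $a\in[3,4)$ by Remark~\ref{center_compute} and Corollary~\ref{hyp_per_one}), and a degree count shows that its immediate basin $U_a$ is all of $\Int{K_a}$: indeed $\sigma_a|_{U_a}\colon U_a\to U_a$ is a two-to-one branched cover containing the free critical point, so $\sigma_a^{-1}(U_a)\cap K_a=U_a$ and there are no further Fatou components. Hence $\partial U_a=\partial K_a$ contains the cusp point $-2$. Uniformizing $U_a$ and conjugating $\sigma_a|_{U_a}$ to an anti-Blaschke product of $\D$ fixing $0$, the induced boundary map is an orientation-reversing degree-two covering of the circle and therefore has exactly three fixed points; these correspond to three fixed points of $\sigma_a$ on $\partial U_a$, one of which is $-2$ while the other two, say $q_1^a,q_2^a$, are repelling. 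By Proposition~\ref{hyp_odd_parabolic}(2), every boundary parameter of $\widetilde{H}$ is either contained in $\mathcal{I}$ or carries a parabolic fixed point.

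Next I would classify the parabolic degenerations exactly as in \cite[\S 7]{LLMM2} and Proposition~\ref{Exactly 3}. As $a\to\partial\widetilde{H}$ the attracting fixed point $p_a$ must collide with one of the three boundary fixed points $\{-2,q_1^a,q_2^a\}$ (a simple parabolic parameter, producing a parabolic arc in the sense of Proposition~\ref{parabolic_arcs_schwarz}) or with two of them simultaneously (a parabolic cusp). Collisions with $q_1^a$ and $q_2^a$ produce two genuine period-one parabolic arcs $A_1,A_2$, which by the real symmetry $a\mapsto\overline{a}$ of the family are complex conjugates of one another. A collision with $-2$, on the other hand, is precisely the event in which the cusp $-2$ acquires an attracting direction; by the asymptotic developments of Subsections~\ref{cusp_asymp_sec_1} and~\ref{cusp_asymp_sec_2} this cannot happen for $a\in S$, but occurs exactly on $\mathcal{I}$ (compare Proposition~\ref{special_para_arc}). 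Thus the third ``side'' of $\partial\widetilde{H}$ is the arc $\mathcal{I}$ rather than a parabolic arc.

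Finally I would assemble the triangle. The two conjugate arcs $A_1,A_2$ meet at a single real parabolic cusp — the parameter at which $p_a$ collides with both $q_1^a$ and $q_2^a$ — and, following Proposition~\ref{limit_point_conn_locus}, their remaining ends accumulate only on $\mathcal{I}$; the cusp asymptotics of Subsection~\ref{cusp_asymp_sec_2} identify these limits with the endpoints $4\pm i\sqrt{3}$ of $\mathcal{I}$, where $-2$ acquires a period-two basin. This yields $\partial\widetilde{H}=A_1\cup A_2\cup\{\text{cusp}\}\cup\mathcal{I}$, consisting of two parabolic arcs, one parabolic cusp, and the arc $\mathcal{I}$; since $\mathcal{I}\subset\{\re(a)=4\}$ lies outside $S$, the boundary $\partial\widetilde{H}$ is not contained in $S$. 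I expect the main obstacle to be the counting and endpoint control: rigorously matching the three combinatorial degenerations to the geometry so that exactly one parabolic cusp survives while the two would-be corners degenerate into the endpoints $4\pm i\sqrt{3}$, and verifying that $\mathcal{I}$ is genuinely accumulated by parameters of $\widetilde{H}$. This should follow by combining the fixed-point-index estimates of Proposition~\ref{index_to_infinity_schwarz} (controlling the cusp end of each arc) with the limit-point statement of Proposition~\ref{limit_point_conn_locus} and the local $\mathcal{I}$-dynamics (controlling the other end).
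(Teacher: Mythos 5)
Your proposal is correct and follows essentially the same route as the paper's proof: you count the three fixed points of $\sigma_a$ on the boundary of the unique Fatou component (two repelling points plus the cusp $-2$), identify mergers of the attracting fixed point with the two repelling ones as the two parabolic arcs meeting at a parabolic cusp, and identify the merger with $-2$ as occurring precisely along $\mathcal{I}$, with the arcs meeting $\mathcal{I}$ at $4\pm i\sqrt{3}$ where $-2$ has two attracting directions. The additional details you supply (the anti-Blaschke boundary fixed-point count, the degree argument for uniqueness of the Fatou component, the real symmetry of the two arcs) are elaborations of steps the paper leaves implicit.
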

\begin{proof}
Note that for any $a\in\widetilde{H}$, the map $\sigma_a$ has precisely three fixed points on the boundary of the unique Fatou component. Two of these are repelling, and the other one is $-2$. As $a$ tends to the boundary of $\partial\widetilde{H}$, the unique attracting fixed point of $\sigma_a$ tends to merge with these boundary fixed points. The merger of the attracting fixed point with one of the (two) repelling fixed points happens on a parabolic arc on $\partial\widetilde{H}$. This accounts for the two parabolic arcs on $\partial\widetilde{H}$, and they meet at a parabolic cusp such that in the corresponding dynamical plane, the two repelling fixed points and the attracting fixed point meet to produce a double parabolic fixed point.

On the other hand, the merger of the attracting fixed point with the fixed point $-2$ happens precisely along the arc $\mathcal{I}$ which is the set of parameters for which $-2$ attracts the forward orbit of the free critical point. Moreover, the two parabolic arcs on $\partial\widetilde{H}$ (described above) meet $\mathcal{I}$ at parameters for which $-2$ has two attracting directions; hence these parameters are $4\pm i\sqrt{3}$.  
\end{proof}

\section{Tessellation of The Escape Locus}\label{parameter_tessellation}

The goal of this section is to prove a uniformization theorem for the escape locus of the family $\mathcal{S}$. The uniformizing map will be defined in terms of the conformal position of the critical value $2$ (of $\sigma_a$) under $\psi_a$ (see Proposition~\ref{schwarz_group}).

\begin{theorem}[Uniformization of The Escape Locus]\label{escape_unif_thm}
The map 
$$
\pmb{\Psi}:S\setminus\cC(\mathcal{S})\to\D_2,
$$ 
$$
a\mapsto\psi_a(2)
$$ is a homeomorphism.
\end{theorem}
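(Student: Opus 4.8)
The plan is to establish that $\pmb{\Psi}$ is a well-defined continuous injection that is moreover proper, and then to invoke invariance of domain together with the connectedness of $\D_2$ to promote it to a homeomorphism onto all of $\D_2$. First I would record well-definedness: for $a$ in the escape locus the fundamental tile $T_a^0$ is a Jordan domain, so the normalization $\psi_a(\infty)=0$, $\psi_a(-2)=\omega$ pins down the conformal isomorphism $T_a^0\to\mathcal{Q}_1$ uniquely, and by Proposition~\ref{schwarz_group}(2) it extends canonically through tiles of rank up to $n(a)$ (Definition~\ref{def_depth}). Since $a$ lies in the escape locus, $2\in T_a^\infty$ genuinely lands in the interior of a tile of rank $n(a)\geq 1$; indeed, if the orbit of $2$ ever hit $-2$ or a preimage thereof then $2$ would be non-escaping and $a\in\cC(\mathcal{S})$, a contradiction. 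Hence $\psi_a(2)$ is a well-defined interior point of $\D_2$. Continuity of $a\mapsto\psi_a(2)$ then follows from Carath\'eodory kernel convergence of the tiles (which move continuously with $a$) together with the boundary equivariance of $\psi_a$: even when the critical value crosses a tile boundary and the depth $n(a)$ jumps, the uniformizing images agree along the shared boundary, so $\psi_a(2)$ traces a continuous path in the interior of $\D_2$.

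For injectivity I would adapt the surgery argument of Proposition~\ref{chi_injective_prop}. If $\psi_{a_1}(2)=\psi_{a_2}(2)=w$, then $a_1,a_2$ have the same depth and the same tile combinatorics, and matching the critical values forces the partial conjugacy $\psi_{a_2}^{-1}\circ\psi_{a_1}$ to lift consistently over every tile, producing a conformal conjugacy between $\sigma_{a_1}$ and $\sigma_{a_2}$ on the whole tiling set. As in Lemma~\ref{straightening_lemma}, one interpolates this with a conformal identification of the two non-escaping sets (which in the escape regime carry no moduli) to obtain a global quasiconformal conjugacy; by the Bers--Rickman and Weyl lemmas it is a M\"obius map fixing $-2,2,\infty$, hence the identity, so $a_1=a_2$.

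The substantive step is properness: I must show that $\psi_{a_k}(2)$ leaves every compact subset of $\D_2$ whenever $a_k$ approaches the frontier of the escape locus. There are two regimes. If $a_k\to a_\infty\in\cC(\mathcal{S})$, then the depth $n(a_k)\to\infty$, so $\psi_{a_k}(2)$ sits in tiles of unbounded rank, which cluster on the ideal boundary $\partial\D\cap\partial\D_2$; hence the image escapes to $\partial\D_2$. If instead $a_k\to\partial S$, then by Proposition~\ref{limit_point_conn_locus} the accumulation is governed by the degeneration of the quadrature domain analyzed in Subsections~\ref{univalence_sec} and~\ref{dyn_near_cusp}: near $\{\re(a)=\tfrac32\}$ the critical value tends to $\partial T_a^0$, so $\psi_{a_k}(2)\to\widetilde{C}_2\subset\partial\D_2$, while near $\mathfrak{T}^\pm$ the host tile pinches and its conformal modulus degenerates, again driving $\psi_{a_k}(2)$ to $\partial\D_2$. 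I expect this boundary-degeneration analysis near $\mathfrak{T}^\pm$ to be the main obstacle, since it requires quantitative control of the uniformization as $\partial\Omega_a$ develops a tangential self-intersection.

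Finally, combining the pieces: $\pmb{\Psi}$ is a continuous injection between open subsets of $\R^2$ (the escape locus is open since $\cC(\mathcal{S})$ is closed in $S$ by Proposition~\ref{conn_locus_almost_closed}), so by invariance of domain it is an open map and a homeomorphism onto its image $U\subseteq\D_2$. Properness makes $U$ closed in $\D_2$; and since $U$ is nonempty, open, and closed in the connected set $\D_2$, we conclude $U=\D_2$. Therefore $\pmb{\Psi}$ is a homeomorphism onto $\D_2$.
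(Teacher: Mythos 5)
The main gap is in your injectivity step. In the escape regime the non-escaping set $K_a$ is a Cantor set (the critical point escapes and, by Corollary~\ref{non_repelling_prop}, all cycles are repelling), and while matching the critical values does let you lift $\psi_{a_2}^{-1}\circ\psi_{a_1}$ through the ramified tiles and obtain a conformal conjugacy between the two tiling sets, your next move --- ``interpolate this with a conformal identification of the two non-escaping sets, as in Lemma~\ref{straightening_lemma}'' --- is not available. There is no a priori conformal identification of $K_{a_1}$ with $K_{a_2}$ (producing one is tantamount to what you are trying to prove), and the interpolation in Lemma~\ref{straightening_lemma} takes place across a strip bounded by two curves, which is a completely different situation from extending a map across a fractal Cantor set. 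What is actually needed is: (a) that the tiling-set conjugacy extends continuously over $K_{a_1}$ --- salvageable via expansion, since nested puzzle pieces shrink; and (b) that the resulting homeomorphism of the sphere, conformal off $K_{a_1}$, is M{\"o}bius. Point (b) is a removability statement, and measure zero alone does not suffice (by Ahlfors--Beurling there are non-M{\"o}bius homeomorphisms of the sphere conformal off a line segment); one needs conformal removability of the hyperbolic Cantor set $K_{a_1}$ (e.g.\ Jones--Smirnov type results, or a quasiconformal pullback argument exploiting the expanding dynamics). Your parenthetical ``carry no moduli'' (absence of invariant line fields) addresses a different kind of rigidity and does not construct the extension.

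It is worth noting how the paper sidesteps this entirely: it proves \emph{local} invertibility (by a quasiconformal surgery that moves the conformal position of the escaping critical value) together with properness, concludes that $\pmb{\Psi}$ is a covering map onto the simply connected domain $\D_2$, hence a homeomorphism from each connected component of $S\setminus\cC(\mathcal{S})$ onto $\D_2$, and then obtains global injectivity and the connectedness of the escape locus simultaneously from the observation that $\rho_2(0)$ has the unique preimage $a=2$. Your invariance-of-domain endgame is sound, but it is only as strong as the injectivity input feeding it. Two further loose ends: your properness near $\mathfrak{T}^\pm$ is flagged but not carried out (the paper's argument is that the hyperbolic geodesic in the fundamental tile joining $\sigma_{a_k}^{\circ n(a_k)}(2)$ to $\infty$ must pass through the pinching channel created by the nascent double point, so hyperbolic distances blow up and $\psi_{a_k}(2)\to\partial\D_2$); and your claim that $n(a_k)\to\infty$ when $a_k\to\cC(\mathcal{S})$ requires the small supporting argument that bounded depth along a subsequence forces the limit parameter into the escape locus (continuity of $a\mapsto T_a^0$ and of finitely many iterates), which is precisely the paper's argument stated in contrapositive form.
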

\begin{proof}
The proof is analogous to that of \cite[Theorem~1.3]{LLMM2}. We only indicate the key differences.

Note that for all $a\in S$, the critical value $2$ of $\sigma_a$ lies in $\Omega_a$; i.e., $2\notin T_a^0$. It now follows from the definition of $\psi_a$ that $\psi_a(2)\in\D_2$ for each $a\in S\setminus\cC(\mathcal{S})$.

The map $\pmb{\Psi}$ is easily seen to be continuous. We will show that $\pmb{\Psi}$ is proper, and locally invertible. This will imply that $\pmb{\Psi}$ is a covering map from $S\setminus\cC(\mathcal{S})$ onto the simply connected domain $\D_2$, and hence a homeomorphism from each connected component of $S\setminus\cC(\mathcal{S})$ onto $\D_2$. However, $a_0=2$ is the only parameter in $S\setminus\cC(\mathcal{S})$ satisfying $\pmb{\Psi}(a_0)=\rho_2(0)$. So, $S\setminus\cC(\mathcal{S})$ must be connected; i.e., $\pmb{\Psi}$ is a homeomorphism. 

Local invertibility follows from a quasiconformal deformation/surgery argument as in \cite[Theorem~1.3]{LLMM2}.

We need to consider several cases to show that $\pmb{\Psi}$ is proper. Let us first assume that $\{a_k\}_k$ is a sequence in $S\setminus\cC(\mathcal{S})$ such that $\re(a_k)\to\frac32$. It follows from Subsection~\ref{setup} that the critical point $c_{a_k}$ tends to $\partial\sigma_{a_k}^{-1}(\Omega_{a_k})$, and hence $d_{\mathrm{sph}}\left(\sigma_a\left(c_{a_k}\right),\partial\Omega_{a_k}\right)=d_{\mathrm{sph}}\left(2,\partial\Omega_{a_k}\right)$ tends to $0$ as $k\to+\infty$. Therefore, $\pmb{\Psi}(a_k)=\psi_{a_k}(2)$ accumulates on $\widetilde{C}_2\subset\partial\D_2$.

Now suppose that $\{a_k\}_k\subset S\setminus\cC(\mathcal{S})$ is a sequence with $\{a_k\}_k\to a'\in\mathfrak{T}^\pm$ with $\re(a')\in\left(\frac32,4\right)$. Then, $f_{a'}\vert_{\D}$ is univalent and $f_{a'}(\mathbb{S}^1)$ is a closed curve with a point of tangential self-intersection (compare the proof of Proposition~\ref{limit_point_conn_locus} and Figure~\ref{fig:double_3_2}). It also follows from the proof of Lemma~\ref{arc_escape_time_lemma} that the critical value $2$ of $\sigma_{a'}$ lands in the bounded component $^{b}T_{a'}^0$ of the corresponding desingularized droplet (where the desingularized droplet is the set obtained by removing the cusp $-2$ and the point of tangential self-intersection from the droplet $T_{a'}=\widehat{\C}\setminus\Omega_{a'}$) under exactly $n'$ iterates of $\sigma_{a'}$ (for some $n'\equiv n'(a')\geq 1$). Note that for $k$ sufficiently large, $\sigma_{a_k}$ is a small perturbation of $\sigma_{a'}$. We set $U_k:= \Int{(T_{a_k}^0\cup\sigma_{a_k}^{-1} (T_{a_k}^0))}$, where $T_{a_k}^0$ is the desingularized droplet. Then, for $k$ large enough, the critical value $2$ of $\sigma_{a_k}$ lands in $T_{a_k}^0$ under $\sigma_{a_k}^{\circ n''}$ (where $n''\in\{n',n'+1\}$), and the hyperbolic geodesic in $U_k$ connecting $\sigma_{a_k}^{\circ n''}(2)$ and $\infty$ passes through an extremely narrow channel formed by the splitting of the double point on $\partial T_{a'}$ (the thickness of this channel decreases as $k\to+\infty$, and gets pinched in the limit). Since this part of the geodesic lies extremely close to the boundary of $U_k$, the hyperbolic distance between $\sigma_{a_k}^{\circ n''}(2)$ and $\infty$ (in $U_k$) tends to $\infty$ as $k$ increases. Furthermore, as $\psi_{a_k}$ is a conformal isomorphism from $U_k$ onto $\mathcal{Q}_1\cup\rho^{-1}(\mathcal{Q}_1)$, we have that the hyperbolic distance between $\psi_{a_k}(\sigma_{a_k}^{\circ n''}(2))$ and $0$ (in $\mathcal{Q}_1\cup\rho^{-1}(\mathcal{Q}_1)$) tends to $\infty$ as $k$ increases. Consequently, $\{\psi_{a_k}(\sigma_{a_k}^{\circ n''}(2))\}_k$ escapes to the boundary of $\mathcal{Q}_1\cup\rho^{-1}(\mathcal{Q}_1)$ as $k\to+\infty$. But the sequence $\{\psi_{a_k}(\sigma_{a_k}^{\circ n''}(2))\}_k$ is contained in $\mathcal{Q}_1$, and hence, $\{\psi_{a_k}(\sigma_{a_k}^{\circ n''}(2))\}_k$ must converge to $\omega\in\partial\mathcal{Q}_1$. In fact, the dynamical properties of $\sigma_{a_k}$ and the geometry of $T_{a_k}^0$ now imply that for $k$ sufficiently large, each $\psi_{a_k}(\sigma_{a_k}^{\circ j}(2))$ ($0\leq j\leq n''$) is close to $\omega\in\partial\mathbb{D}_2$, and hence $\pmb{\Psi}(a_k)=\psi_{a_k}(2)$ converges to $\omega\in\partial\mathbb{D}_2$ as $k\to+\infty$.

Finally let $\{a_k\}_k\subset S\setminus\cC(\mathcal{S})$ be a sequence accumulating on $\cC(\mathcal{S})$. Suppose that $\{\pmb{\Psi}(a_k)\}_k$ converges to some $u\in\D_2$. Then, $\{\psi_{a_k}(2)\}_k$ is contained in a compact subset $\mathcal{X}$ of $\D_2$. After passing to a subsequence, we can assume that $\mathcal{X}$ is contained in a single tile of $\D_2$. But this implies that each $a_k$ has a common depth $n_0$ (see Definition~\ref{def_depth}), and $\psi_{a_k}(\sigma_{a_k}^{\circ n_0}(2))$ is contained in the compact set $\rho^{\circ n_0}(\mathcal{X})\subset\mathcal{\mathcal{Q}}_1$ for each $k$. Note that the map $\sigma_a$, the fundamental tile $T_a^0$ as well as (the continuous extension of) the Riemann map $\psi_a^{-1}:\mathcal{Q}_1\to T_a^0$ change continuously with the parameter as $a$ runs over $S$. Therefore, for every accumulation point $a'$ of $\{a_k\}_k$, the point $\sigma_{a'}^{\circ n_0}(2)$ belongs to the compact set $\psi_{a'}^{-1}(\rho^{\circ n_0}(\mathcal{X}))$. In particular, the critical value $2$ of $\sigma_{a'}$ lies in the tiling set $T_{a'}^\infty$. This contradicts the assumption that $\{a_k\}_k$ accumulates on $\cC(\mathcal{S})$, and proves that $\{\pmb{\Psi}(a_k)\}_k$ must accumulate on the boundary of $\D_2$.
\end{proof}

\begin{definition}[Parameter Rays of $\mathcal{S}$]\label{para_ray_schwarz}
The pre-image of a ray at angle $\theta\in(\frac13,\frac23)$ in $\D_2$ under the map $\pmb{\Psi}$ is called a $\theta$-parameter ray of $\mathcal{S}$.
\end{definition}

\section{Properties of The Straightening Map}\label{chi_prop_sec}

In this section, we will study continuity and surjectivity properties of the straightening map $\chi$. The results of this section will allow us to show that $\chi$ is ``almost" a homeomorphism between $\cC(\mathcal{S})$ and the parabolic Tricorn $\cC(\mathfrak{L}_0)$.

\subsection{Continuity Properties}\label{continuity}
The goal of this subsection is to demonstrate that the straightening map is continuous at most parameters. Let us first discuss some basic topological properties of $\chi$.

\subsubsection{Properness of $\chi$}\label{chi_proper_subsec}

There are three period $2$ hyperbolic components bifurcating from $\widetilde{H}$ with centers at $\frac54+\frac{\sqrt{33}}{4}$, and $\frac72\pm \frac{i\sqrt{3}}{2}$. Let $a$ be a parameter on the boundary of any of these period two hyperbolic components (say, $H$) with a $2$-periodic cycle of multiplier $e^{2\pi ip/q}$, where $\mathrm{gcd}(p,q)=1$. By Proposition~\ref{ThmEvenBif_schwarz}, $a$ is the root of a hyperbolic component of period $2q$ that bifurcates from $H$. In particular, $\cC(\mathcal{S})\setminus\{a\}$ consists of two distinct connected components. We define the $p/q$-limb of $H$ as the closure of the connected component of $\cC(\mathcal{S})\setminus\{a\}$ not containing $H$.
 
\begin{proposition}\label{limbs_compact}
The topological closure (in $\C$) of every limb of the period two hyperbolic components of $\cC(\mathcal{S})$ is contained in $S$.
\end{proposition}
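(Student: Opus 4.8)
The plan is to reduce the statement to a non-accumulation property on the interval $\mathcal{I}$ and then to rule out such accumulation by exhibiting a stable combinatorial invariant carried by every parameter of the limb.

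First I would reduce to a statement about $\mathcal{I}$. By definition a limb $L=L_{p/q}$ of a period two hyperbolic component $H$ is a subset of $\cC(\mathcal{S})$, hence of $S$. Since $\cC(\mathcal{S})$ is closed in $S$ (Proposition~\ref{conn_locus_almost_closed}), every limit point of $L$ lying in $S$ already belongs to $\cC(\mathcal{S})\subset S$. By Proposition~\ref{limit_point_conn_locus}, any limit point of $\cC(\mathcal{S})$, and in particular of $L$, lying outside $S$ must lie on the interval $\mathcal{I}$. Therefore it suffices to prove that $L$ does not accumulate on $\mathcal{I}$; this immediately gives $\overline{L}\subset S$.

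Next I would exploit the combinatorics forced on the whole limb by its root. Let $a_\ast$ be the root of $L$, so that $\sigma_{a_\ast}$ has a parabolic period two cycle of multiplier $e^{2\pi i p/q}$ with $q\geq 2$ (Proposition~\ref{ThmEvenBif_schwarz}). For every $a$ in the open limb, the cycle that was attracting inside $H$ persists as a repelling period two cycle carrying a ray portrait of combinatorial rotation number $p/q\neq 0$: by Proposition~\ref{rep_para_landing_point} the repelling cycle is the landing point of finitely many dynamical rays, and by stability of repelling cycles together with Proposition~\ref{per_rays_land} this portrait is locally constant as long as the cycle stays repelling. A nonzero rotation number confines the critical value $2$, together with the tail of its forward orbit, to a region of $K_a$ separated from the cusp $-2$ (equivalently, from the $\frac13=\frac23$-ray, which by Proposition~\ref{per_rays_land}(2) is the unique ray landing at $-2$) by the rays of this portrait. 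I would make this quantitative by fixing a neighborhood $N$ of $-2$ and an integer $n_0$ so that, uniformly over $L$, the points $\sigma_a^{\circ n}(2)$ with $n\geq n_0$ avoid $N$.

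Finally I would derive a contradiction from continuity. Suppose $a_k\in L$ with $a_k\to a_\infty\in\mathcal{I}$. By Subsection~\ref{cusp_asymp_sec_2} and Proposition~\ref{hyp_one_not_proper}, for parameters on $\mathcal{I}$ the free critical orbit of $\sigma_{a_\infty}$ converges to the fixed point $-2$, so there is some $n\geq n_0$ with $\sigma_{a_\infty}^{\circ n}(2)\in N$. But $a\mapsto\sigma_a^{\circ n}(2)$ is continuous on $\hat{S}$ for each fixed $n$ (as $\sigma_a$ depends real-analytically on $a$ and the orbit stays in the domain), so $\sigma_{a_k}^{\circ n}(2)\to\sigma_{a_\infty}^{\circ n}(2)\in N$, contradicting the uniform separation $\sigma_{a_k}^{\circ n}(2)\notin N$ from the previous step. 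Hence $L$ does not accumulate on $\mathcal{I}$, and $\overline{L}\subset S$; the argument applies to the limbs of all three period two components. The main obstacle I anticipate is precisely the uniform separation claim: showing that the $p/q$-ray portrait of the repelling period two cycle persists, with the critical value kept in a fixed sector away from $-2$, for all parameters of the limb, including those arbitrarily close to $\partial S$ and those where the cycle could a priori collide with the critical orbit or degenerate to a parabolic one. Controlling the landing pattern of the dynamical rays and the position of the critical value up to the boundary of $S$, and near the endpoints $4\pm i\sqrt{3}$ of $\mathcal{I}$, is the delicate point; it should follow by combining the stability of repelling ray portraits with the explicit description of the degenerate dynamics near $-2$ on $\mathcal{I}$ from Subsection~\ref{cusp_asymp_sec_2}, but this is where the real work lies.
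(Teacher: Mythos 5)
Your reduction to non-accumulation on $\mathcal{I}$ (via Propositions~\ref{conn_locus_almost_closed} and~\ref{limit_point_conn_locus}) is correct and coincides with the paper's first step, but the heart of your argument --- the uniform separation claim --- is not just unproven, it is false. Limbs of period two components contain dyadic tips, i.e.\ parameters whose critical orbit lands exactly on the cusp. The simplest is $a=\frac52$: it is the left endpoint of $\cC(\mathcal{S})\cap\R=\left[\frac52,4\right)$, it lies in the $\frac12$-limb of the real period two component (the connected interval from $\frac52$ up to that component's real root lies in $\cC(\mathcal{S})$ and avoids the component itself), and it satisfies $\sigma_{5/2}(2)=-2$, so $\sigma_{5/2}^{\circ n}(2)=-2$ for every $n\geq1$. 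Hence no neighborhood $N$ of $-2$ and no $n_0$ can work uniformly over the limb --- nor even for this single parameter. The conceptual error is that a ray portrait at a repelling cycle is not a forward-invariant separator: the rays landing at the period two cycle may well separate the critical \emph{value} from $-2$, but the sectors they cut out are not invariant under $\sigma_a$ (after finitely many iterates each sector spreads over the whole non-escaping set), so nothing confines the \emph{tail} of the critical orbit, which can land on, or come arbitrarily close to, the cusp. Since your final continuity argument consumes exactly this uniform avoidance, the proof collapses at this step, and no amount of care with "stability of ray portraits near $\partial S$" can repair a statement that already fails at an interior parameter of the limb.

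The paper's proof takes an entirely different and much softer route, which never analyzes the dynamics near $-2$ for limb parameters: it is a planar separation argument in the parameter plane. By Proposition~\ref{hyp_one_not_proper}, the interval $\mathcal{I}$ already lies on the boundary of the period one component $\widetilde{H}$, and by Proposition~\ref{limit_point_conn_locus} any accumulation of a limb outside $S$ would have to occur on $\mathcal{I}$. If the closure of a limb $L$ of a period two component $H$ reached $\mathcal{I}$, then $\overline{L}$, together with $\overline{H}\cup\overline{\widetilde{H}}$ (to which $L$ is attached through the chain of roots), would enclose a bounded region of the plane, forcing the escape locus $S\setminus\cC(\mathcal{S})$ to have at least two connected components. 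This contradicts Theorem~\ref{escape_unif_thm}, which shows that $S\setminus\cC(\mathcal{S})$ is homeomorphic to $\D_2$ and in particular connected. In other words, the invariant that genuinely persists over the limb and fails on $\mathcal{I}$ is not a dynamical confinement of the critical orbit but the global connectivity of the escape locus; any salvage of your approach would have to replace your claim by an invariant of this kind.
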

\begin{proof}
It follows from Propositions~\ref{limit_point_conn_locus} and~\ref{hyp_one_not_proper} that if a limit point of a limb of a period two hyperbolic component of $\cC(\mathcal{S})$ lies outside $S$, then such a limit point must be $4\pm i\sqrt{3}$. Since $\mathcal{I}\subset\partial\widetilde{H}$, it follows that if the closure (in $\C$) of such a limb contains $4\pm i\sqrt{3}$, then $S\setminus\cC(\mathcal{S})$ must have at least two connected components. But this contradicts Theorem~\ref{escape_unif_thm}. Hence, the topological closure of every limb of a period two hyperbolic component of $\cC(\mathcal{S})$ is contained in $S$.
\end{proof}

\begin{lemma}[Properness of of $\chi$ and $\chi^{-1}$]\label{chi_proper}
Let $\{a_n\}_n\subset\cC(\mathcal{S})$.
\begin{enumerate}
\item If $a_n\to a\in\overline{\cC(\mathcal{S})}\setminus\cC(\mathcal{S})$, then every accumulation point of $\{\chi(a_n)\}_n$ lies in $\overline{\cC(\mathfrak{L}_0)}\setminus\cC(\mathfrak{L}_0)$.

\item If $\chi(a_n)\to(\alpha,A)\in\overline{\cC(\mathfrak{L}_0)}\setminus\cC(\mathfrak{L}_0)$, then every accumulation point of $\{a_n\}_n$ lies in $\overline{\cC(\mathcal{S})}\setminus\cC(\mathcal{S})$.
\end{enumerate}
\end{lemma}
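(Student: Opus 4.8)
The plan is to first pin down the two ``degenerate parabolic'' boundary sets and then to show that $\chi$ matches them up. By Proposition~\ref{conn_locus_almost_closed}, $\cC(\mathcal{S})$ is closed in $S$, so $\overline{\cC(\mathcal{S})}\setminus\cC(\mathcal{S})$ consists only of limit points lying outside $S$; by Proposition~\ref{limit_point_conn_locus} these all lie on the arc $\mathcal{I}$. On $\mathcal{I}$ the distinguishing dynamical feature (Subsection~\ref{cusp_asymp_sec_2} together with Proposition~\ref{hyp_one_not_proper}) is that the cusp $-2$ has an attracting direction absorbing the forward orbit of the free critical value $2$, whereas for every $a\in S$ the point $-2$ is purely repelling along $K_a$ (Subsection~\ref{cusp_asymp_sec_1}). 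I would record from Appendix~\ref{anti_rational_parabolic} the analogous statement on the target side: $\overline{\cC(\mathfrak{L}_0)}\setminus\cC(\mathfrak{L}_0)$ consists of parameters at which the persistent parabolic point $\infty$ acquires an attracting petal absorbing the free critical orbit. Thus both parts reduce to the single assertion that $\chi$ carries ``free critical orbit captured by the cusp/parabolic point'' on one side to the same phenomenon on the other.

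For part (1), assume $a_n\to a\in\mathcal{I}$ and, seeking a contradiction, that $\chi(a_{n_k})\to(\alpha,A)\in\cC(\mathfrak{L}_0)$. Since the maps in $\mathfrak{L}_0$ depend continuously on $(\alpha,A)$, we have $R_{\chi(a_{n_k})}\to R_{\alpha,A}$, where $R_{\alpha,A}$ has connected filled Julia set, its free critical point sits at a definite distance from the parabolic basin $\mathcal{B}_{\alpha,A}$, and the repelling petal at $\infty$ of $R_{\chi(a_{n_k})}$ is uniformly nondegenerate. On the source side, the asymptotics of Subsections~\ref{cusp_asymp_sec_1} and~\ref{cusp_asymp_sec_2} show that as $a_n\to a\in\mathcal{I}$ the time spent by the free critical orbit of $\sigma_{a_n}$ in a fixed neighborhood of $-2$ tends to $+\infty$ (in the limit the orbit converges to $-2$). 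Transporting this through the hybrid conjugacy $\Phi_{a_n}\circ\eta_{a_n}$, which is conformal on $K_{a_n}$ and sends $-2$ to $\infty$, the free critical orbit of $R_{\chi(a_n)}$ would have to spend unbounded time arbitrarily close to $\infty$, which is incompatible with the uniform nondegeneracy of the repelling petal and the definite separation of the critical point from $\mathcal{B}_{\alpha,A}$ in the limit. Hence $(\alpha,A)\notin\cC(\mathfrak{L}_0)$.

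Part (2) is the mirror image. Assuming $\chi(a_n)\to(\alpha,A)\in\overline{\cC(\mathfrak{L}_0)}\setminus\cC(\mathfrak{L}_0)$, the free critical point of $R_{\chi(a_n)}$ approaches (and in the limit is absorbed by) the parabolic basin. If some subsequence $a_{n_k}$ converged to $\tilde a\in\cC(\mathcal{S})$, then $\sigma_{\tilde a}$, being a limit of $\sigma_{a_{n_k}}$ in the real-analytic family, would have $2\in K_{\tilde a}$ with forward orbit bounded away from the purely repelling cusp $-2$, hence a free critical orbit with definite room; transporting this back through the hybrid conjugacies would contradict the capture of the straightened critical orbit by the parabolic petal. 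Since by Propositions~\ref{conn_locus_almost_closed} and~\ref{limit_point_conn_locus} the only alternative for an accumulation point of $\{a_{n_k}\}$ is a point of $\mathcal{I}$, every accumulation point of $\{a_n\}$ lies in $\overline{\cC(\mathcal{S})}\setminus\cC(\mathcal{S})$.

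The main obstacle is that $\chi$ is not known to be continuous near these parabolic degenerations, so one cannot simply pass to the limit in the hybrid conjugacies $\Phi_{a_n}$ (indeed, genuine discontinuity of $\chi$ occurs here, cf.\ Subsection~\ref{chi_discont_subsec}). I would circumvent this by phrasing all transported information purely in terms of quantities that vary continuously in $a$ and in $(\alpha,A)$ and are conjugacy invariant: namely the capture time of the free critical orbit into a fixed linearizing neighborhood of the cusp/parabolic point (equivalently, its real part in the repelling Fatou/Ecalle coordinate on the cylinder at the cusp), which diverges to $+\infty$ exactly on $\mathcal{I}$ and on the corresponding boundary arc of $\cC(\mathfrak{L}_0)$. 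In this formulation the argument invokes the hybrid conjugacy only at each fixed index $n$, never its limit, and the contradictions above become quantitative comparisons of divergent capture times against uniformly bounded ones.
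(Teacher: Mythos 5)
Your reduction of both parts to the single dichotomy ``the free critical orbit is captured by the cusp/parabolic point if and only if the parameter degenerates'' is where the argument breaks, and no formulation in terms of capture times can repair it. The quantity you propose to transport --- the time the critical orbit spends in a fixed neighborhood of the cusp, equivalently the real part of its entry point in a repelling Fatou coordinate --- does \emph{not} diverge ``exactly on $\mathcal{I}$'': it also diverges at, and near, parameters lying inside the connectedness loci. Concretely, at the dyadic tip $a=\frac52\in\cC(\mathcal{S})$ one has $\sigma_{5/2}(2)=-2$, so the critical orbit lands on the cusp and never leaves; likewise $\cC(\mathfrak{L}_0)$ contains dyadic tips with $R_{\alpha,A}^{\circ k}(-1)=\infty$, and sequences in $\cC(\mathfrak{L}_0)$ approaching such a tip have critical orbits that enter a fixed neighborhood of $\infty$ at bounded time and linger there for unboundedly many iterates, since repulsion at a parabolic point is only polynomial. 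Hence in part (1) the conclusion ``the critical orbit of $R_{\chi(a_{n_k})}$ spends unbounded time arbitrarily close to $\infty$'' is perfectly compatible with $\chi(a_{n_k})\to(\alpha,A)\in\cC(\mathfrak{L}_0)$, and no contradiction arises (note also that the critical point of a map in $\cC(\mathfrak{L}_0)$ need not be at ``definite distance'' from $\mathcal{B}_{\alpha,A}$; at Misiurewicz parameters it lies on $\partial\mathcal{B}_{\alpha,A}$). In part (2) the error is explicit: you assert that for $\tilde a\in\cC(\mathcal{S})$ the critical orbit is ``bounded away from the purely repelling cusp $-2$,'' which $a=\frac52$ already refutes. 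Local dynamical data near the fixed point simply cannot distinguish ``parameter tending to $\mathcal{I}$'' (repulsion rate tending to $0$) from ``parameter tending to a dyadic tip'' (orbit entering deeper and deeper along the repelling axis), and your invariant conflates exactly these two mechanisms.

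For comparison, the paper's proof is combinatorial rather than local-dynamical: by Proposition~\ref{limbs_compact} each individual $\frac{p}{q}$-limb of a period-two hyperbolic component has compact closure in $S$, so a sequence $a_n\to a\in\mathcal{I}$ must, after passing to a subsequence, either remain in the closures of the period one/two hyperbolic components or lie in $\frac{p_n}{q_n}$-limbs with $q_n\to+\infty$; since $\chi$ carries $\frac{p}{q}$-limbs of $\cC(\mathcal{S})$ to $\frac{p}{q}$-limbs of $\cC(\mathfrak{L}_0)$ (it respects hyperbolic components, multipliers and the bifurcation structure), the accumulation points of $\{\chi(a_n)\}_n$ can only be parameters for which $\infty$ is a double/triple parabolic fixed point, i.e.\ points of $\overline{\cC(\mathfrak{L}_0)}\setminus\cC(\mathfrak{L}_0)$; part (2) is the mirror image. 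Any rescue of your approach would require an invariant separating ``weakening repulsion'' from ``deep entry along the repelling axis'' that moreover survives conjugacies whose dilatation is not uniformly bounded as $a_n\to\mathcal{I}$; the capture time does neither, and the global combinatorial route is what actually makes the lemma work.
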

\begin{proof}
1) It follows by Propositions~\ref{conn_locus_almost_closed} and~\ref{limit_point_conn_locus} that if a sequence $\{a_n\}_n\subset\cC(\mathcal{S})$ has a limit point $a'\in\overline{\cC(\mathcal{S})}\setminus\cC(\mathcal{S})$, then $a'\in\mathcal{I}$. Moreover, the proof of Proposition~\ref{limbs_compact} implies that possibly after passing to a subsequence, $\{a_n\}_n$ is either contained in (the closures of) the hyperbolic components of period one or two; or each $a_n$ belongs to some $\frac{p_n}{q_n}$--limb of a period two hyperbolic component with $q_n\to+\infty$ as $n\to+\infty$. Since $\chi$ maps the $\frac{p_n}{q_n}$--limb of a period two hyperbolic component of $\cC(\mathcal{S})$ to the $\frac{p_n}{q_n}$--limb of a period two hyperbolic component of $\cC(\mathfrak{L}_0)$, it follows that for every finite accumulation point of $\{\chi(a_n)\}_n$, the point at $\infty$ is a multiple parabolic fixed point. But such parameters do not lie in $\cC(\mathfrak{L}_0)$; i.e., every accumulation point of $\{\chi(a_n)\}_n$ belongs to $\overline{\cC(\mathfrak{L}_0)}\setminus\cC(\mathfrak{L}_0)$. 

2) Let $\{a_n\}_n\subset\cC(\mathfrak{L}_0)$ be a sequence such that $\chi(a_n)\to(\alpha,A)\in\overline{\cC(\mathfrak{L}_0)}\setminus\cC(\mathfrak{L}_0)$. It follows that $R_{\alpha,A}$ has a multiple parabolic fixed point at $\infty$, and possibly passing to a subsequence, $\{\chi(a_n)\}_n$ is either contained in (the closures of) the hyperbolic components of period one or two; or each $\chi(a_n)$ belongs to some $\frac{p_n}{q_n}$--limb of a period two hyperbolic component of $\cC(\mathfrak{L}_0)$ with $q_n\to+\infty$ as $n\to+\infty$. Therefore, every accumulation point of $\{a_n\}_n$ lies on $\mathcal{I}=\overline{\cC(\mathcal{S})}\setminus\cC(\mathcal{S})$.
\end{proof}

\subsubsection{Continuity on Rigid and Hyperbolic parameters}\label{chi_cont_subsec}

A parameter $a\in\cC(\mathcal{S})$ is called \emph{qc rigid} if no map in $\cC(\mathcal{S})\setminus\{a\}$ is quasiconformally conjugate to $\sigma_a$.

\begin{lemma}\label{rigid_equiv}
Let $a\in\cC(\mathcal{S})$. Then, $a$ is qc rigid in $\cC(\mathcal{S})$ if and only if $\chi(a)$ is qc rigid in $\cC(\mathfrak{L}_0)$.
\end{lemma}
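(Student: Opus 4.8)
The plan is to exploit the fact that on both sides the entire quasiconformal conjugacy class of a map is concentrated on its non-escaping set, while the ``external'' part of the dynamics is rigid across the whole family. Indeed, by Proposition~\ref{schwarz_group} every $\sigma_a$ with $a\in\cC(\mathcal{S})$ is conformally conjugate on its tiling set to the fixed model $\rho$, and by Appendix~\ref{anti_rational_parabolic} (Proposition~\ref{unicritical_parabolic}) every $R_{\alpha,A}\in\cC(\mathfrak{L}_0)$ is conformally conjugate on its parabolic basin $\mathcal{B}_{\alpha,A}$ to the fixed Blaschke model $B$. Consequently, any qc conjugacy between two members of $\cC(\mathcal{S})$ (resp. of $\cC(\mathfrak{L}_0)$) transmits information only through the non-escaping set (resp. the filled Julia set), and the hybrid conjugacies of Theorem~\ref{straightening_schwarz} identify precisely these two pieces of data. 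I will prove both implications in contrapositive form.

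\emph{($a$ not qc rigid $\Rightarrow$ $\chi(a)$ not qc rigid.)} Suppose $\sigma_b$ (with $b\neq a$) is qc conjugate to $\sigma_a$ by a map $h$. Since qc conjugacy preserves connectivity of the non-escaping set, $b\in\cC(\mathcal{S})$, and by injectivity (Proposition~\ref{chi_injective_prop}) $\chi(b)\neq\chi(a)$. Transporting $h$ through the hybrid conjugacies $\Phi_a\circ\eta_a$ and $\Phi_b\circ\eta_b$ yields a qc map $G=(\Phi_b\circ\eta_b)\circ h\circ(\Phi_a\circ\eta_a)^{-1}$ from $\mathcal{K}_{\chi(a)}$ onto $\mathcal{K}_{\chi(b)}$ conjugating $R_{\chi(a)}$ to $R_{\chi(b)}$ there. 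On the basins, the conformal map $\pmb{\psi}_{\chi(b)}^{-1}\circ\pmb{\psi}_{\chi(a)}$ (coming from the common Blaschke model $B$) conjugates $R_{\chi(a)}$ to $R_{\chi(b)}$ on $\mathcal{B}_{\chi(a)}$. Exactly as in the uniqueness argument of Theorem~\ref{straightening_schwarz} and the injectivity argument of Proposition~\ref{chi_injective_prop}, the ray-angle correspondence of Proposition~\ref{hybrid_preserves_symbol} shows that these two maps agree continuously on $\mathcal{J}_{\chi(a)}=\partial\mathcal{B}_{\chi(a)}$, so by the Bers--Rickman lemma they glue to a global qc conjugacy between $R_{\chi(a)}$ and $R_{\chi(b)}$. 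Hence $\chi(a)$ is not qc rigid in $\cC(\mathfrak{L}_0)$.

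\emph{($\chi(a)$ not qc rigid $\Rightarrow$ $a$ not qc rigid.)} Now suppose $R_{\alpha',A'}$, with $(\alpha',A')\in\cC(\mathfrak{L}_0)$ and $(\alpha',A')\neq\chi(a)$, is qc conjugate to $R_{\chi(a)}$ by $H$; then $\mu:=\mu(H)$ is an $R_{\chi(a)}$-invariant Beltrami form. I transport it to the dynamical plane of $\sigma_a$: since $\Phi_a\circ\eta_a$ is conformal on $K_a$, the pullback $(\Phi_a\circ\eta_a)^*\mu$ defines a Beltrami form on $K_a$, and extending it by the standard structure $\mu_0$ on the tiling set produces a $\sigma_a$-invariant Beltrami form $\nu$ on $\widehat{\C}$ with $\Vert\nu\Vert_\infty<1$ (this is legitimate because the tiling-set dynamics, being conformally conjugate to $\rho$, carries no invariant modulus). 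By Proposition~\ref{schwarz_qcdef}(1), solving $\nu$ with a qc map $\Phi$ fixing $\pm2,\infty$ yields $\sigma_b$ with $b\in S$, and $b\in\cC(\mathcal{S})$ since $\Phi$ conjugates $\sigma_a$ to $\sigma_b$. A short Beltrami-chain-rule computation, using $\Phi^*\mu_0=\nu$, $(\Phi_a\circ\eta_a)^*\mu=\nu$ on $K_a$, and $H^*\mu_0=\mu$, shows that $W:=H\circ(\Phi_a\circ\eta_a)\circ\Phi^{-1}$ satisfies $W^*\mu_0=\mu_0$ on $K_b$, i.e. $W$ is conformal on $K_b$ and conjugates $\sigma_b$ to $R_{\alpha',A'}$. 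Thus $W$ is a hybrid conjugacy of the pinched anti-quadratic-like restriction of $\sigma_b$ with $R_{\alpha',A'}$, whence by the uniqueness part of Theorem~\ref{straightening_schwarz} we get $\chi(b)=(\alpha',A')\neq\chi(a)$. Therefore $b\neq a$ while $\sigma_b$ is qc conjugate to $\sigma_a$, so $a$ is not qc rigid.

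The genuinely delicate point is the continuous matching on the Julia set in the forward direction: the transported conjugacy $G$ and the conformal basin conjugacy must agree on $\partial\mathcal{B}_{\chi(a)}$. This rests on the fact that both respect the canonical external markings (the landing pattern of pre-periodic rays), which is guaranteed by Proposition~\ref{hybrid_preserves_symbol}, the preservation of the cusp ray at angle $\tfrac13=\tfrac23$ pinning down the marking. Since this is exactly the mechanism already used to establish uniqueness of straightening and injectivity of $\chi$, the verification reduces to invoking those arguments; the reverse direction is then analytically routine.
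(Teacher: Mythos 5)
Your proof is correct, and it rests on the same two pillars as the paper's own argument: the hybrid conjugacy of Theorem~\ref{straightening_schwarz} is conformal on the non-escaping set, and the external dynamics (the model $\rho$ on the tiling set, the Blaschke model $B$ on the parabolic basin) is the same for all maps in $\cC(\mathcal{S})$, resp. $\cC(\mathfrak{L}_0)$, so all quasiconformal deformation data is concentrated on $K_a$, resp. $\mathcal{K}_{\chi(a)}$. The difference is one of packaging. The paper compresses both implications into a single symmetric statement --- $a$ (resp. $\chi(a)$) is qc non-rigid if and only if there exists a non-trivial invariant Beltrami coefficient supported on $K_a$ (resp. on $\mathcal{K}_{\chi(a)}$) --- and then transports such coefficients back and forth through the hybrid conjugacy; the gluing with the external conjugacy and the precise meaning of ``non-trivial'' are left implicit in that characterization. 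You instead treat the two directions asymmetrically: the forward one by transporting the conjugacy itself and gluing it to the basin conjugacy via Bers--Rickman (exactly the mechanism of Proposition~\ref{chi_injective_prop} and of the uniqueness part of Theorem~\ref{straightening_schwarz}), the backward one by pulling back the Beltrami form, invoking Proposition~\ref{schwarz_qcdef} and then the uniqueness of straightening. Your route is longer but buys something concrete: by tracking parameters through injectivity of $\chi$ and uniqueness of straightening (so that $\chi(b)=(\alpha',A')\neq\chi(a)$ forces $b\neq a$), you sidestep the need to formalize when an invariant Beltrami coefficient is ``non-trivial,'' which is the one informal point in the paper's two-line proof. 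The delicate continuous matching on the Julia set in your forward direction is justified by the same ray-correspondence argument (Proposition~\ref{hybrid_preserves_symbol} together with the Douady--Hubbard access argument) that the paper itself invokes, so it is handled at the same level of rigor as the surrounding text.
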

\begin{proof}
Note that $a$ (respectively, $\chi(a)$) is not qc rigid in $\cC(\mathcal{S})$ (respectively, in $\cC(\mathfrak{L}_0)$) if and only if there exists a non-trivial $\sigma_a$-invariant (respectively, $R_{\chi(a)}$-invariant) Beltrami coefficient supported on $K_a$ (respectively, on $\mathcal{K}_{\chi(a)}$). Moreover, such a non-trivial Beltrami coefficient can be pulled back by the hybrid conjugacy between the pinched anti-quadratic-like restrictions of $\sigma_a$ and $R_{\chi(a)}$ (or its inverse), and the resulting Beltrami coefficient will be non-trivial, invariant under the dynamics, and supported on the non-escaping set (or the filled Julia set). The result follows.
\end{proof}

The next proposition shows that the straightening map $\chi:\cC(\mathcal{S})\to\cC(\mathfrak{L}_0)$ is continuous at qc rigid parameters. 

\begin{proposition}[Continuity at Rigid Parameters]\label{continuity_rigid}
Let $\widetilde{a}\in\cC(\mathcal{S})$ be a qc rigid parameter in $\cC(\mathcal{S})$. Then $\chi$ is continuous at $\widetilde{a}$.
\end{proposition}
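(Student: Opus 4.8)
The plan is to adapt the classical argument for continuity of straightening maps at rigid parameters, using three ingredients: the properness of $\chi$ (Lemma~\ref{chi_proper}) to keep accumulation points inside the connectedness locus, a uniform dilatation bound to extract a limiting quasiconformal conjugacy, and the rigidity hypothesis --- transported to the target family by Lemma~\ref{rigid_equiv} --- to identify the limit. Let $a_n\to\widetilde a$ in $\cC(\mathcal{S})$. Since the parabolic Tricorn $\cC(\mathfrak{L}_0)$ is compact, it suffices to show that every accumulation point of $\{\chi(a_n)\}_n$ equals $\chi(\widetilde a)$; so, passing to a subsequence, I would assume $\chi(a_n)\to(\alpha,A)\in\overline{\cC(\mathfrak{L}_0)}$. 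First I would rule out boundary accumulation: if $(\alpha,A)\in\overline{\cC(\mathfrak{L}_0)}\setminus\cC(\mathfrak{L}_0)$, then Lemma~\ref{chi_proper}(2) forces every accumulation point of $\{a_n\}_n$ into $\overline{\cC(\mathcal{S})}\setminus\cC(\mathcal{S})$, contradicting $a_n\to\widetilde a\in\cC(\mathcal{S})$. Hence $(\alpha,A)\in\cC(\mathfrak{L}_0)$.

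Next I would record the relevant convergences. Since $\sigma_a$ depends real-analytically on $a$ (Subsection~\ref{hyp_comp_subsec}) and the straightening data of Theorem~\ref{straightening_schwarz} --- the coordinate $\eta_a$, the wedge $\gamma_a$, the domain $V_a$, and the quasiregular extension $\mathbf{G}_a$ of Lemma~\ref{straightening_lemma} --- vary continuously with $a$, the pinched anti-quadratic-like restrictions $\pmb{\sigma}_{a_n}$ converge in the Carath\'eodory sense to $\pmb{\sigma}_{\widetilde a}$, with $\mathbf{G}_{a_n}\to\mathbf{G}_{\widetilde a}$ locally uniformly and $R_{\chi(a_n)}\to R_{\alpha,A}$. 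The key quantitative point is that the hybrid conjugacies $\Phi_{a_n}$ furnished by Lemma~\ref{straightening_lemma} (normalized to fix $\infty,0,1$) are $K$-quasiconformal with $K$ independent of $n$: their dilatation is carried by the strip between $\partial\mathbf{U}_{a_n}$ and $\partial\mathbf{V}_{a_n}$ and is governed by the interpolation performed there, whose distortion stays uniformly controlled for $a$ in a compact neighborhood of $\widetilde a$ thanks to the prescribed cusp asymptotics (Subsection~\ref{cusp_asymp_sec_1}) and the controlled geometry built into Definition~\ref{pinched_def}. By compactness of normalized $K$-quasiconformal maps I would extract a locally uniform limit $\Phi_\infty$, which is $K$-quasiconformal, and passing to the limit in the conjugacy relation shows that $\Phi_\infty$ conjugates $\mathbf{G}_{\widetilde a}$ to $R_{\alpha,A}$.

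Let $\Phi$ denote the genuine hybrid conjugacy from $\mathbf{G}_{\widetilde a}$ to $R_{\chi(\widetilde a)}$. Then $\Phi_\infty\circ\Phi^{-1}$ conjugates $R_{\chi(\widetilde a)}$ to $R_{\alpha,A}$ on neighborhoods of their filled Julia sets. Gluing this with the conformal conjugacy $\pmb{\psi}_{\alpha,A}^{-1}\circ\pmb{\psi}_{\chi(\widetilde a)}$ between the parabolic basins --- which match along the boundary by the ray-landing dictionary of Proposition~\ref{hybrid_preserves_symbol} --- via the Bers--Rickman lemma, exactly as in the uniqueness part of Theorem~\ref{straightening_schwarz}(2), I would obtain a global quasiconformal conjugacy between $R_{\chi(\widetilde a)}$ and $R_{\alpha,A}$. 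Now the rigidity hypothesis enters: since $\widetilde a$ is qc rigid in $\cC(\mathcal{S})$, Lemma~\ref{rigid_equiv} gives that $\chi(\widetilde a)$ is qc rigid in $\cC(\mathfrak{L}_0)$; as $R_{\alpha,A}\in\cC(\mathfrak{L}_0)$ is quasiconformally conjugate to $R_{\chi(\widetilde a)}$, rigidity forces $(\alpha,A)=\chi(\widetilde a)$. Thus every accumulation point of $\{\chi(a_n)\}_n$ is $\chi(\widetilde a)$, so $\chi(a_n)\to\chi(\widetilde a)$ and $\chi$ is continuous at $\widetilde a$.

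The main obstacle I anticipate is the pair of technical claims underlying the second paragraph: establishing the $n$-independent dilatation bound together with the Carath\'eodory convergence of the degenerating (cusped and pinched) domains. One must check that the strip interpolation of Lemma~\ref{straightening_lemma} can be carried out with uniformly bounded distortion despite the pinching at the cusp $-2$, and that the extracted limit $\Phi_\infty$ genuinely intertwines the limiting dynamics $\mathbf{G}_{\widetilde a}$ and $R_{\alpha,A}$ rather than some degenerate map. Once this uniform control is in place, the remaining steps --- the properness dichotomy and the rigidity conclusion --- are formal.
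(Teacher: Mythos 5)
Your proposal is correct and follows essentially the same route as the paper: extract a convergent subsequence landing in $\cC(\mathfrak{L}_0)$ via the properness Lemma~\ref{chi_proper}, use the locally uniform dilatation bound on the conjugacies $\Phi_a$ between $\mathbf{G}_a$ and $R_{\chi(a)}$ to run the Douady--Hubbard limiting argument (\cite[\S II.7, Page 313]{DH2}) and conclude that $R_{\alpha,A}$ is quasiconformally conjugate to $R_{\chi(\widetilde a)}$, then invoke Lemma~\ref{rigid_equiv} and rigidity of $\chi(\widetilde a)$ to force $(\alpha,A)=\chi(\widetilde a)$. The paper compresses the limiting step into the citation of \cite{DH2}, whereas you spell out the normalized-family compactness and the Bers--Rickman gluing on the parabolic basin; the content is the same.
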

\begin{proof}
Let $\widetilde{a}\in\cC(\mathcal{S})$ be a qc rigid parameter. By Lemma~\ref{rigid_equiv}, the map $R_{\chi(\widetilde{a})}$ admits no non-trivial quasiconformal deformation. Let us pick a sequence $\{a_n\}$ in $\cC(\mathcal{S})$ converging to $\widetilde{a}\in\cC(\mathcal{S})$. By Lemma~\ref{chi_proper}, we can extract a convergent subsequence $\{\chi(a_{n_k})\}\to(\alpha,A)\in\cC(\mathfrak{L}_0)$. By our construction of the anti-quasiregular extension $\mathbf{G}_{a_{n_k}}$ of the pinched anti-quadratic-like map $\eta_{a_{n_k}}\circ\sigma_{a_{n_k}}\circ\eta_{a_{n_k}}^{-1}$, it follows that the dilatation ratios of the quasiconformal conjugacies $\Phi_{a_{n_k}}$ between $\mathbf{G}_{a_{n_k}}$ and $R_{\chi(a_{n_k})}$ stay uniformly bounded. By compactness of families of quasiconformal maps with uniformly bounded dilatation, we conclude that there is a subsequential limit of $\{\Phi_{a_{n_k}}\}$, that quasiconformally conjugates $\mathbf{G}_{\widetilde{a}}$ to $R_{\alpha,A}$ (compare \cite[\S II.7, Lemma, Page 313]{DH2}). On the other hand, by definition of $\chi$, the maps $\mathbf{G}_{\widetilde{a}}$ and $R_{\chi(\widetilde{a})}$ are quasiconformally conjugate. Therefore, $R_{\alpha,A}$ and $R_{\chi(\widetilde{a})}$ are quasiconformally conjugate. By the rigidity assumption on $\chi(\widetilde{a})$, it follows that $\chi(\widetilde{a})=(\alpha,A)$. Therefore, for every sequence $\{a_n\}$ converging to $\widetilde{a}$, there exists a convergent subsequence $\{\chi(a_{n_k})\}$ converging to $\chi(\widetilde{a})$. Hence, $\chi(a_n)\to\chi(\widetilde{a})$, whenever $a_n\to \widetilde{a}$. This proves that $\chi$ is continuous at $\widetilde{a}$.
\end{proof} 

Although hyperbolic parameters are not qc rigid (except the centers) in $\cC(\mathcal{S})$, we can prove continuity of $\chi$ on hyperbolic components thanks to the dynamical parametrization of hyperbolic components in $\cC(\mathcal{S})$ and $\cC(\mathfrak{L}_0)$.

\begin{proposition}[Continuity at Hyperbolic Parameters]\label{continuity_hyperbolic}
Let $H$ be a hyperbolic component in $\cC(\mathcal{S})$. Then the straightening map $\chi$ is a real-analytic diffeomorphism from $H$ onto a hyperbolic component in $\cC(\mathfrak{L}_0)$.
\end{proposition}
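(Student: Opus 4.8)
The plan is to exploit the fact that the straightening map $\chi$, being realized by a hybrid conjugacy, transports the dynamically defined conformal invariant attached to the attracting cycle, and to compare this invariant with the dynamical uniformizations of the hyperbolic components on both sides (Proposition~\ref{unif_hyp_schwarz} and its counterpart for $\cC(\mathfrak{L}_0)$ in Appendix~\ref{anti_rational_parabolic}).

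First I would record that $\chi$ maps $H$ into the hyperbolic locus of $\cC(\mathfrak{L}_0)$, preserving the period. Indeed, for $a\in H$ the attracting cycle of $\sigma_a$ lies in the non-escaping set $K_a$, and by Definition~\ref{hybrid_def} the hybrid conjugacy $\Phi_a\circ\eta_a$ furnished by Theorem~\ref{straightening_schwarz} is conformal on $K_a$; hence it carries the attracting $k$-cycle of $\sigma_a$ to an attracting $k$-cycle of $R_{\chi(a)}$, so $\chi(a)$ is hyperbolic of the same period $k$ as $H$. Moreover, being conformal on $K_a$, the hybrid conjugacy conformally conjugates the first-return map of $\sigma_a$ on the immediate basin component containing $c_a$ to the corresponding first-return map of $R_{\chi(a)}$. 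Consequently $\chi$ preserves the full conformal invariant of this return map, i.e. the point of the Blaschke product space $\mathcal{B}^{-}$ (odd period) or $\mathcal{B}^{+}$ (even period) assigned to it by the respective uniformizations.

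Next I would establish that $\chi$ is real-analytic on $H$. On a hyperbolic component the non-escaping set $K_a$, the attracting cycle, and the associated linearizing/Fatou coordinates vary real-analytically with $a$, and there is no parabolic or Misiurewicz degeneration forcing discontinuity of $K_a$; hence the Beltrami coefficient produced by the straightening surgery of Lemma~\ref{straightening_lemma} and Theorem~\ref{straightening_schwarz} depends real-analytically on $a\in H$. The Ahlfors--Bers measurable Riemann mapping theorem with parameters then yields that the integrating maps, and therefore the straightened maps $R_{\chi(a)}$, depend real-analytically on $a$. In particular $\chi|_H$ is continuous, so its image $\chi(H)$ is connected; since distinct hyperbolic components of $\cC(\mathfrak{L}_0)$ are disjoint open sets, $\chi(H)$ is contained in a single hyperbolic component $H'$ of period $k$.

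Finally I would identify $\chi|_H$ with the transfer map between the two uniformizations. Writing $\widetilde{\eta}_H\colon H\to\mathcal{B}^{\pm}$ and $\widetilde{\eta}_{H'}\colon H'\to\mathcal{B}^{\pm}$ for the real-analytic uniformizations, the invariant-preservation established above reads $\widetilde{\eta}_{H'}\circ\chi=\widetilde{\eta}_H$ on $H$, so that $\chi|_H=\widetilde{\eta}_{H'}^{-1}\circ\widetilde{\eta}_H$. As both uniformizations are real-analytic diffeomorphisms onto $\mathcal{B}^{\pm}$, this exhibits $\chi|_H$ as a real-analytic diffeomorphism of $H$ onto $H'$ (injectivity being moreover guaranteed by Proposition~\ref{chi_injective_prop}); in particular $\chi(H)=H'$. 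The main obstacle I anticipate is the bookkeeping needed for the odd-period case: there the invariant is not captured by a single scalar (the Koenigs-ratio map is only a $3$-fold branched cover of the disk), so one must work with the full Blaschke-space coordinate $\mathcal{B}^{-}$ and check that the surgery identifies the $\cC(\mathcal{S})$- and $\cC(\mathfrak{L}_0)$-uniformizations compatibly, while simultaneously verifying the real-analytic dependence of the surgery data in a regime where the relevant return-map dynamics is only anti-holomorphic.
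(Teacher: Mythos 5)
Your proposal is correct and takes essentially the same approach as the paper: both hinge on the observation that the hybrid conjugacy is conformal on the non-escaping set, hence preserves the Blaschke-space invariant of the first return map, so that $\chi\vert_H$ coincides with the transfer map $\eta_{H^{\sharp}}^{-1}\circ\widetilde{\eta}_H$ between the dynamical uniformizations of Propositions~\ref{unif_hyp_schwarz} and~\ref{unif_hyp_rat}. The only cosmetic difference is that the paper pins down the target component $H^{\sharp}$ as the one whose center is the image of the center of $H$, whereas you do so by first establishing continuity of $\chi\vert_H$ (via Ahlfors--Bers with parameters) and using connectedness of the image.
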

\begin{proof}
Let $a$ be the center of $H$, and $(\alpha,A):=\chi(a)$. Then $(\alpha,A)$ is the center of some hyperbolic component $H^{\sharp}$ of $\cC(\mathfrak{L}_0)$. Let $\widetilde{\eta}_H:H\to\mathcal{B}^{\pm}$ and $\eta_{H^{\sharp}}:H^{\sharp}\to\mathcal{B}^{\pm}$ be the dynamical uniformizations of the hyperbolic components $H$ and $H^{\sharp}$ (see Propositions~\ref{unif_hyp_schwarz} and~\ref{unif_hyp_rat}). It is now easy to see that $\chi=\eta_{H^{\sharp}}^{-1}\circ\widetilde{\eta}_H$ on $H$. It follows that $\chi$ is a real-analytic diffeomorphism from $H$ onto the hyperbolic component $H^{\sharp}$ in $\cC(\mathfrak{L}_0)$.
\end{proof}

\subsubsection{Discontinuity of $\chi$ on Parabolic Arcs}\label{chi_discont_subsec} 

Since parabolic parameters of even period (respectively, parabolic cusps of odd period) are qc rigid in $\cC(\mathcal{S})$, Proposition~\ref{continuity_rigid} implies that $\chi$ is continuous at all parabolic parameters of even period (respectively, parabolic cusps of odd period) of $\cC(\mathcal{S})$. However, note that the simple parabolic parameters of odd period in $\cC(\mathcal{S})$ admit non-trivial quasiconformal deformations, and hence the above results do not say anything about continuity of $\chi$ at the simple parabolic parameters of odd period of $\cC(\mathcal{S})$. Recall that by Proposition~\ref{ThmBifArc_schwarz}, every parabolic arc has, at both ends, an interval of positive length at which bifurcation from a hyperbolic component of odd period $k$ to a hyperbolic component of period $2k$ occurs. We will now carry out a finer analysis of continuity properties of $\chi$ near these bifurcating sub-arcs.

Let $H$ be a hyperbolic component of odd period $k$ in $\cC(\mathcal{S})$, $\mathcal{C}$ be a parabolic arc of $\partial H$, $a:\mathbb{R}\to\mathcal{C}$ be the critical {\'E}calle height parametrization of $\mathcal{C}$, and $H'$ be a hyperbolic component of period $2k$ bifurcating from $H$ across $\mathcal{C}$. Let us start with an easy observation.

\begin{lemma}\label{arc_to_arc}
Let $\cC$ be a parabolic arc in $\cC(\mathcal{S})$. Then the following hold true.

1) $\chi$ is a homeomorphism from $\cC$ onto a parabolic arc in $\cC(\mathfrak{L}_0)$.

2) If $\{a_n\}\subset\cC(\mathcal{S})$ converges to $a\in\cC$, then every accumulation point of $\{\chi(a_n)\}$ lies on the parabolic arc $\chi(\cC)$ in $\cC(\mathfrak{L}_0)$.
\end{lemma}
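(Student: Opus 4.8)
The plan is to handle the two parts in sequence, the organizing principle being that $\chi$ preserves critical Ecalle height. For part (1), I would first observe that the straightening $R_{\chi(a)}$ of a simple parabolic parameter $a\in\cC$ of odd period $k$ is again a \emph{simple} parabolic parameter of period $k$: the hybrid conjugacy of Theorem~\ref{straightening_schwarz} carries the parabolic cycle of $\sigma_a$ to one of $R_{\chi(a)}$ of the same period and local form, so (in the sense of Definition~\ref{DefCusp_schwarz}) no cusp is created. Hence $\chi(a)$ lies on some parabolic arc $\cC^\sharp\subset\cC(\mathfrak{L}_0)$, using the $\mathfrak{L}_0$-analogue of Proposition~\ref{parabolic_arcs_schwarz} from Appendix~\ref{anti_rational_parabolic}. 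The crucial point is that $\chi$ preserves critical Ecalle height: the hybrid conjugacy is conformal on $K_a$, in particular on the parabolic basins carrying the critical orbit, so it intertwines the attracting Fatou coordinates and therefore matches the critical Ecalle height of $\sigma_a$ with that of $R_{\chi(a)}$.

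Let $a(\cdot):\R\to\cC$ and $A(\cdot):\R\to\cC^\sharp$ be the critical Ecalle height parametrizations, normalized so that $A(0)=\chi(a(0))$. I would then prove $\chi(a(h))=A(h)$ for all $h$ by transporting deformations. The parameter $a(h)$ is obtained from $a(0)$ by integrating the Ecalle-height-$h$ Beltrami coefficient of \cite[Theorem~3.2]{MNS}, which is supported in the parabolic basins, hence inside $K_{a(0)}$; since the hybrid conjugacy is conformal there, it pushes this coefficient forward to the corresponding Ecalle-height-$h$ deformation of $R_{A(0)}$, whose integral is $A(h)$. The pushed-forward conjugacy is again a hybrid conjugacy between the pinched anti-quadratic-like restriction of $\sigma_{a(h)}$ and $R_{A(h)}$, so uniqueness of straightening for connected non-escaping sets (Theorem~\ref{straightening_schwarz}(2)) forces $\chi(a(h))=A(h)$. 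Thus $\chi|_\cC=A\circ a^{-1}$ is a homeomorphism of $\cC$ onto the parabolic arc $\cC^\sharp$, proving (1).

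For part (2), I would run the bounded-dilatation limiting argument of Proposition~\ref{continuity_rigid}. Fix $a_n\to a\in\cC$ and, after passing to a subsequence, assume $\chi(a_n)\to(\alpha,A)$. Since $a\in\cC\subset\cC(\mathcal{S})$, the contrapositive of Lemma~\ref{chi_proper}(2) gives $(\alpha,A)\in\cC(\mathfrak{L}_0)$. The dilatations of the hybrid conjugacies $\Phi_{a_n}$ between the quasiregular extensions $\mathbf{G}_{a_n}$ and the maps $R_{\chi(a_n)}$ are locally uniformly bounded near $a$ (by the construction in Lemma~\ref{straightening_lemma} and Theorem~\ref{straightening_schwarz}), so extracting a quasiconformal limit and running the argument of \cite[\S II.7]{DH2} shows that $R_{(\alpha,A)}$ is quasiconformally conjugate to $R_{\chi(a)}$ (gluing the limit conjugacy on the filled Julia set to the conformal conjugacy on the parabolic basin, which is rigid because every member of $\mathfrak{L}_0$ shares the external map $B$). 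Finally, the quasiconformal deformation orbit of a simple odd-period parabolic parameter in $\cC(\mathfrak{L}_0)$ is precisely the parabolic arc on which it lies (the $\mathfrak{L}_0$-version of \cite[Theorem~3.2, Lemma~3.3]{MNS}; compare Proposition~\ref{special_para_arc}). Since $\chi(a)\in\cC^\sharp$ by part (1), this places $(\alpha,A)$ on $\cC^\sharp=\chi(\cC)$. Note that $(\alpha,A)$ need \emph{not} equal $\chi(a)$, which is exactly the source of the discontinuity of $\chi$ analyzed in this subsection.

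The main obstacle I anticipate is the limiting step in part (2). Unlike the rigid case, $K_a$ may fail to move continuously as $a_n\to a$ along a parabolic parameter, so one must take care to extract the limiting conjugacy on the filled Julia set and its external map only, and to verify that the limit genuinely conjugates the limiting dynamics rather than degenerating an access to the parabolic point. The conceptual step of identifying the quasiconformal class of a simple odd-period parabolic parameter with a single parabolic arc is the other place requiring attention; once the $\mathfrak{L}_0$-version of Ecalle-height rigidity is in hand, however, both the arc identification in (1) and the accumulation statement in (2) follow directly.
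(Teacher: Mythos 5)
Your proposal is correct and follows essentially the same route as the paper: part (1) via preservation of critical Ecalle height under the hybrid conjugacy (the paper leaves the deformation-transport step as ``easy to see''), and part (2) via the bounded-dilatation limiting argument of \cite[\S II.7]{DH2} combined with the fact that the quasiconformal class of a map on $\chi(\cC)$ within $\cC(\mathfrak{L}_0)$ is the arc itself. Your write-up simply makes explicit the Beltrami push-forward and uniqueness-of-straightening steps that the paper's proof compresses.
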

\begin{proof}
1) Let $a=a(0)$ be the parameter on $\cC$ with critical {\'E}calle height $0$, and $(\alpha,A):=\chi(a)$. Then $(\alpha,A)$ lies on some parabolic arc $\cC^{\sharp}$ of $\cC(\mathfrak{L}_0)$, and has critical {\'E}calle height $0$. Since critical {\'E}calle heights are preserved under hybrid equivalences, it is now easy to see that for each $a(h)\in\cC$ (for $h\in(-\infty,+\infty)$, its image under $\chi$ is the unique parameter on $\cC^{\sharp}$ with critical {\'E}calle height $h$. It follows that $\chi$ is a homeomorphism from $\cC$ onto the parabolic arc $\cC^{\sharp}$ in $\cC(\mathfrak{L}_0)$.

2) Let $\{a_n\}\subset\cC(\mathcal{S})$ be a sequence converging to $a\in\cC$. By Lemma~\ref{chi_proper}, we can extract a convergent subsequence $\{\chi(a_{n_k})\}\to(\alpha,A)\in\cC(\mathfrak{L}_0)$. By our construction of the anti-quasiregular extension $\mathbf{G}_{a_{n_k}}$ of the pinched anti-quadratic-like map $\eta_{a_{n_k}}\circ\sigma_{a_{n_k}}\circ\eta_{a_{n_k}}^{-1}$, it follows that the dilatation ratios of the quasiconformal conjugacies $\Phi_{a_{n_k}}$ between $\mathbf{G}_{a_{n_k}}$ and $R_{\chi(a_{n_k})}$ are uniformly bounded. By compactness of families of quasiconformal maps with uniformly bounded dilatation, we conclude that there is a subsequential limit of $\{\Phi_{a_{n_k}}\}$, that quasiconformally conjugates $\mathbf{G}_{a}$ to $R_{\alpha,A}$ (compare \cite[\S II.7, Lemma, Page 313]{DH2}). On the other hand, by definition of $\chi$, the maps $\mathbf{G}_{a}$ and $R_{\chi(a)}$ are quasiconformally conjugate. Therefore, $R_{\alpha,A}$ and $R_{\chi(a)}$ are quasiconformally conjugate.

Let $\widehat{\Phi}$ be a quasiconformal map conjugating $R_{\chi(a)}$ to $R_{\alpha,A}$. We set $\mu:=\partial_{\overline{z}}\widehat{\Phi}/\partial_z\widehat{\Phi}$, and define the Beltrami path $\{t\mu:t\in[0,1]\}$. By construction, each $t\mu$ is $R_{\chi(a)}$-invariant. By the parametric version of the measurable Riemann mapping theorem, there exists a continuous family of quasiconformal homeomorphisms $\{\widehat{\Phi}_t\}_{t\in[0,1]}$ such that $\partial_{\overline{z}}\widehat{\Phi}_t/\partial_z\widehat{\Phi}_t=t\mu$, and $\widehat{\Phi}_1=\widehat{\Phi}$. It follows from the $R_{\chi(a)}$-invariance of the Beltrami coefficients that $\widehat{\Phi}_t\circ R_{\chi(a)}\circ\widehat{\Phi}_t^{-1}=R_{\alpha(t),A(t)}\in\cC(\mathfrak{L}_0)$, for $t\in[0,1]$ with $R_{\alpha(1),A(1)}=R_{\alpha,A}$, and $t\mapsto (\alpha(t),A(t))$ is continuous. Since $\widehat{\Phi}_0$ is conformal; i.e., a M{\"o}bius map, and no two distinct maps in $\cC(\mathfrak{L}_0)$ are M{\"o}bius conjugate, we conclude that $R_{\alpha(0),A(0)}=R_{\chi(a)}$. Therefore, $R_{\alpha,A}$ and $R_{\chi(a)}$ are connected by a path of quasiconformally conjugate parameters in $\cC(\mathfrak{L}_0)$. Since $\chi(a)$ lies in the parabolic arc $\chi(\cC)\subset\cC(\mathfrak{L}_0)$ (by part 1), it follows from the definition of parabolic arcs that $(\alpha,A)\in\chi(\cC)$. Therefore, every subsequential limit of $\{\chi(a_n)\}$ lies on the parabolic arc $\chi(\cC)\subset\cC(\mathfrak{L}_0)$.
\end{proof}

We will denote the critical {\'E}calle height parametrization of the parabolic arc $\chi(\cC)$ by $c$. For any $h$ in $\mathbb{R}$, let us denote the fixed point index of the unique parabolic cycle of $\sigma_{a(h)}^{\circ 2}$ (respectively, the unique parabolic cycle of $R_{c(h)}^{\circ 2}$ other than the fixed point at $\infty$) by $\ind_{\mathcal{C}}(\sigma_{a(h)}^{\circ 2})$ (respectively, by $\ind_{\chi(\mathcal{C})}(R_{c(h)}^{\circ 2})$). This defines a pair of real-analytic functions (which we will refer to as index functions)

\begin{center}
$\begin{array}{rccc}
  \ind_{\mathcal{C}}: & \mathbb{R} & \to & \mathbb{R} \\
  & h & \mapsto & \ind_{\mathcal{C}}(\sigma_{a(h)}^{\circ 2})\\
   \end{array}$
\end{center}

and

\begin{center}
$\begin{array}{rccc}
  \ind_{\mathcal{\chi(C)}}: & \mathbb{R} & \to & \mathbb{R} \\
  & h & \mapsto & \ind_{\chi(\mathcal{C)}}(R_{c(h)}^{\circ 2}).\\
   \end{array}$
\end{center}

Our next goal is to use Proposition \ref{index_increasing} to show that $\chi\vert_{H'}$ has a dynamically defined continuous extension to $\cC\cap\partial H'$. By Propositions~\ref{index_increasing} and~\ref{index_increasing_1}, we can define a map $\xi : \overline{\mathcal{C}}\cap\partial H'\to \overline{\chi(\mathcal{C})}\cap\partial \chi(H')$ by sending the parabolic cusp on $\overline{\mathcal{C}}\cap\partial H'$ to the parabolic cusp on $\overline{\chi(\mathcal{C})}\cap\partial \chi(H')$, and the unique parameter on $\mathcal{C}\cap\partial H'$ with a parabolic cycle of index $\tau$ to the unique parameter on $\chi(\mathcal{C})\cap\partial \chi(H')$ with a parabolic cycle of index $\tau$ (compare \cite[\S 8]{IM2}). 

\begin{proposition}\label{cont_ext}
$\xi$ extends $\chi\vert_{H'}$ continuously to $\overline{\cC}\cap\partial H'$ preserving the index of the parabolic cycle. 
\end{proposition}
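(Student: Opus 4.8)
The plan is to prove three things: that $\xi$ preserves the index (immediate from its very definition), that $\xi$ is continuous on $\overline{\cC}\cap\partial H'$, and — the real content — that $\chi|_{H'}$ extends to $\xi$, i.e. $\chi(a_n)\to\xi(a')$ whenever $\{a_n\}\subset H'$ with $a_n\to a'\in\overline{\cC}\cap\partial H'$. Together with the continuity of $\chi$ on $H'$ (Proposition~\ref{continuity_hyperbolic}), these give the asserted continuous extension.

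First I would record the continuity of $\xi$ along the arc. By Proposition~\ref{index_increasing} the index function $\ind_{\mathcal{C}}\colon[h_0,+\infty)\to[1,+\infty)$, $h\mapsto\ind_{\mathcal{C}}(\sigma_{a(h)}^{\circ 2})$, is a strictly increasing real-analytic bijection, and by the companion Proposition~\ref{index_increasing_1} the same holds for $\ind_{\chi(\mathcal{C})}$ on the bifurcating sub-arc $\overline{\chi(\mathcal{C})}\cap\partial\chi(H')$. Hence on $\mathcal{C}\cap\partial H'=a[h_0,+\infty)$ the map $\xi$ is the composition $c\circ\ind_{\chi(\mathcal{C})}^{-1}\circ\ind_{\mathcal{C}}\circ(a|_{[h_0,+\infty)})^{-1}$, a composition of homeomorphisms, which sends the parabolic cusp at the end $h\to+\infty$ (where both indices tend to $+\infty$ by Proposition~\ref{index_to_infinity_schwarz}) to the parabolic cusp. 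Thus $\xi\colon\overline{\mathcal{C}}\cap\partial H'\to\overline{\chi(\mathcal{C})}\cap\partial\chi(H')$ is a homeomorphism preserving the index, settling the first two points.

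The main step is the limiting behaviour from inside $H'$. Fix $a_n\in H'$ with $a_n\to a'\in\overline{\mathcal{C}}\cap\partial H'$. If $a'$ is the parabolic cusp, then, cusps of odd period being qc rigid, Proposition~\ref{continuity_rigid} already yields $\chi(a_n)\to\chi(a')=\xi(a')$; so assume $a'=a(h)$ is a simple parabolic parameter on $\mathcal{C}\cap\partial H'$. By Lemma~\ref{chi_proper}(1) the accumulation points of $\{\chi(a_n)\}$ stay in $\cC(\mathfrak{L}_0)$, by Lemma~\ref{arc_to_arc}(2) they lie on $\chi(\mathcal{C})$, and since $\chi(a_n)\in\chi(H')$ by Proposition~\ref{continuity_hyperbolic}, every accumulation point $(\alpha,A)$ lies in $\overline{\chi(\mathcal{C})}\cap\partial\chi(H')$. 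It therefore suffices to match indices, i.e. to show $\ind_{\chi(\mathcal{C})}(\alpha,A)=\ind_{\mathcal{C}}(a')$, for then $(\alpha,A)=\xi(a')$ by injectivity of $\ind_{\chi(\mathcal{C})}$, and as all accumulation points coincide the convergence follows. To match them I would work with the \emph{summed} holomorphic fixed point index, which is stable under the merging of cycles. Choose, around each point of the parabolic $k$-cycle of $\sigma_{a'}$, a small disk containing no other fixed point of $\sigma_{a'}^{\circ 2k}$, and for $a$ near $a'$ let $\Sigma(a)$ be the sum of the holomorphic fixed point indices of all fixed points of $\sigma_a^{\circ 2k}$ in these disks (the perturbed repelling $k$-cycle together with the attracting $2k$-cycle). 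As a parameter-continuous contour integral, $\Sigma(a)$ depends continuously on $a$ and satisfies $\Sigma(a')=\ind_{\mathcal{C}}(a')$, even though the individual indices of the nearly-parabolic cycles blow up. Since the hybrid conjugacy between the pinched anti-quadratic-like restriction of $\sigma_a$ and $R_{\chi(a)}$ is conformal on the non-escaping set $K_a$ (Definition~\ref{hybrid_def}, Theorem~\ref{straightening_schwarz}), and the relevant cycles lie in $K_a$, it preserves each individual fixed point index, whence $\Sigma(a)=\Sigma^{\sharp}(\chi(a))$ for the corresponding summed index $\Sigma^{\sharp}$ in $\mathfrak{L}_0$. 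Letting $a=a_n\to a'$ and $\chi(a_n)\to(\alpha,A)$, continuity of $\Sigma$ and $\Sigma^{\sharp}$ gives $\ind_{\mathcal{C}}(a')=\lim_n\Sigma(a_n)=\lim_n\Sigma^{\sharp}(\chi(a_n))=\ind_{\chi(\mathcal{C})}(\alpha,A)$, as needed.

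The main obstacle is exactly this index-matching step, where one must be careful that the object transported by $\chi$ is a genuinely continuous invariant. The individual fixed point indices of the repelling $k$-cycle and of the attracting $2k$-cycle both diverge as their multipliers approach $1$, so it is essential to pass to the total index of the whole cluster of fixed points of $\sigma_a^{\circ 2k}$ near the degenerating parabolic location, which remains continuous by the argument principle and converges to the parabolic index on the arc. Verifying that the hybrid conjugacy is conformal across this entire cluster (so that the \emph{sum} of indices, not merely the Julia-set combinatorics, is preserved) and that the local odd-to-even period-doubling bifurcation produces precisely these fixed points — along the lines of \cite{HS,IM2} — is the delicate piece that must be carried out in full.
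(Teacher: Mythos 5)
Your overall strategy is the same as the paper's: pass to subsequential limits of $\{\chi(a_n)\}$, locate them in $\overline{\chi(\cC)}\cap\partial\chi(H')$ (via Lemma~\ref{chi_proper}, Lemma~\ref{arc_to_arc} and Proposition~\ref{continuity_hyperbolic}), pin the limit down by matching the parabolic fixed point index and invoking the injectivity of the index function from Proposition~\ref{index_increasing_1}, and treat the cusp separately by qc rigidity. The paper executes the index-matching by noting that for $a_n\in H'$ the map $\sigma_{a_n}^{\circ 2}$ has two attracting $k$-cycles with complex conjugate multipliers $\lambda_n,\overline{\lambda}_n$, that $\chi$ preserves these multipliers, and that $\frac{1}{1-\lambda_n}+\frac{1}{1-\overline{\lambda}_n}$ converges to the parabolic index on both sides.

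Your version of this step, however, contains a genuine error in the dynamical picture. You describe the cluster of fixed points of $\sigma_{a_n}^{\circ 2k}$ near a degenerating parabolic point as ``the perturbed repelling $k$-cycle together with the attracting $2k$-cycle.'' Since a simple parabolic point is a \emph{double} fixed point of the return map, Rouch{\'e}'s theorem forces exactly two fixed points of $\sigma_{a_n}^{\circ 2k}$ in a small disk around each parabolic point of $\sigma_{a'}$, and inside $H'$ these are precisely the two points of the attracting $2k$-cycle (one from each of the two $k$-cycles of $\sigma_{a_n}^{\circ 2}$, carrying the conjugate multipliers $\lambda_n,\overline{\lambda}_n$). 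There is no repelling $k$-cycle in the cluster: the attracting--repelling pair of $k$-cycles visible on the $H$-side merges on the arc and ceases to exist as a fixed point set of the anti-holomorphic map $\sigma_{a}^{\circ k}$ on the $H'$-side; this is exactly the anti-holomorphic period-doubling mechanism. The error is not cosmetic, because your transport identity $\Sigma(a)=\Sigma^{\sharp}(\chi(a))$ is justified by the claim that the hybrid conjugacy ``preserves each individual fixed point index.'' That claim is valid only for cycles in $\Int{K_a}$: there $\overline{\partial}\Phi=0$ a.e.\ on an open set, so Weyl's lemma makes $\Phi$ genuinely conformal and multipliers (hence indices $\frac{1}{1-\lambda}$) are preserved. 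For a repelling cycle on $\partial K_a$, a.e.-conformality on $K_a$ gives no such control (the tiling set has positive measure near every boundary point), and hybrid conjugacies do \emph{not} preserve repelling multipliers in general -- this is exactly the data carried by the external map. So if the cluster contained a repelling cycle, as you assert, your summed index could not be pushed through $\chi$. Once the cluster is correctly identified as the two attracting cycles, your argument collapses to the paper's computation and the proof goes through; as written, the key step is unjustified.
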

\begin{proof}
Let us pick the unique parameter $a$ on $\cC\cap\partial H'$ having parabolic fixed point index $\tau$. Consider a sequence $\{a_n\} \in H'$ with $a_n\to a$. Suppose that $\{\chi(a_n)\}\subset\chi(H')$ converges to some $(\alpha,A)\in\mathfrak{L}_0$. By Proposition~\ref{arc_to_arc} (part 2), we have that $(\alpha,A)\in\chi(\cC)$. Furthermore, since the sequence $\{\chi(a_n)\}$ is contained in $\chi(H')$, its limit $(\alpha,A)$ must lie in $\overline{\chi(H')}$. It follows that $(\alpha,A)\in\chi(\cC)\cap\partial\chi(H')$ (see Figure~\ref{bifurcation_point_2}).

For any $n$, the map $\sigma_{a_n}^{\circ 2}$ has two distinct $k$-periodic attracting cycles (which are born out of the parabolic cycle) with multipliers $\lambda_{a_n}$ and $\overline{\lambda}_{a_n}$. Since $a_n$ converges to $a$, we have that

\begin{equation}\label{multiplier_index}
\frac{1}{1-\lambda_{a_n}} + \frac{1}{1-\overline{\lambda}_{a_n}} \longrightarrow \tau
\end{equation}
as $n\to\infty$. 

\begin{figure}[ht!]
\begin{center}
\includegraphics[scale=0.16]{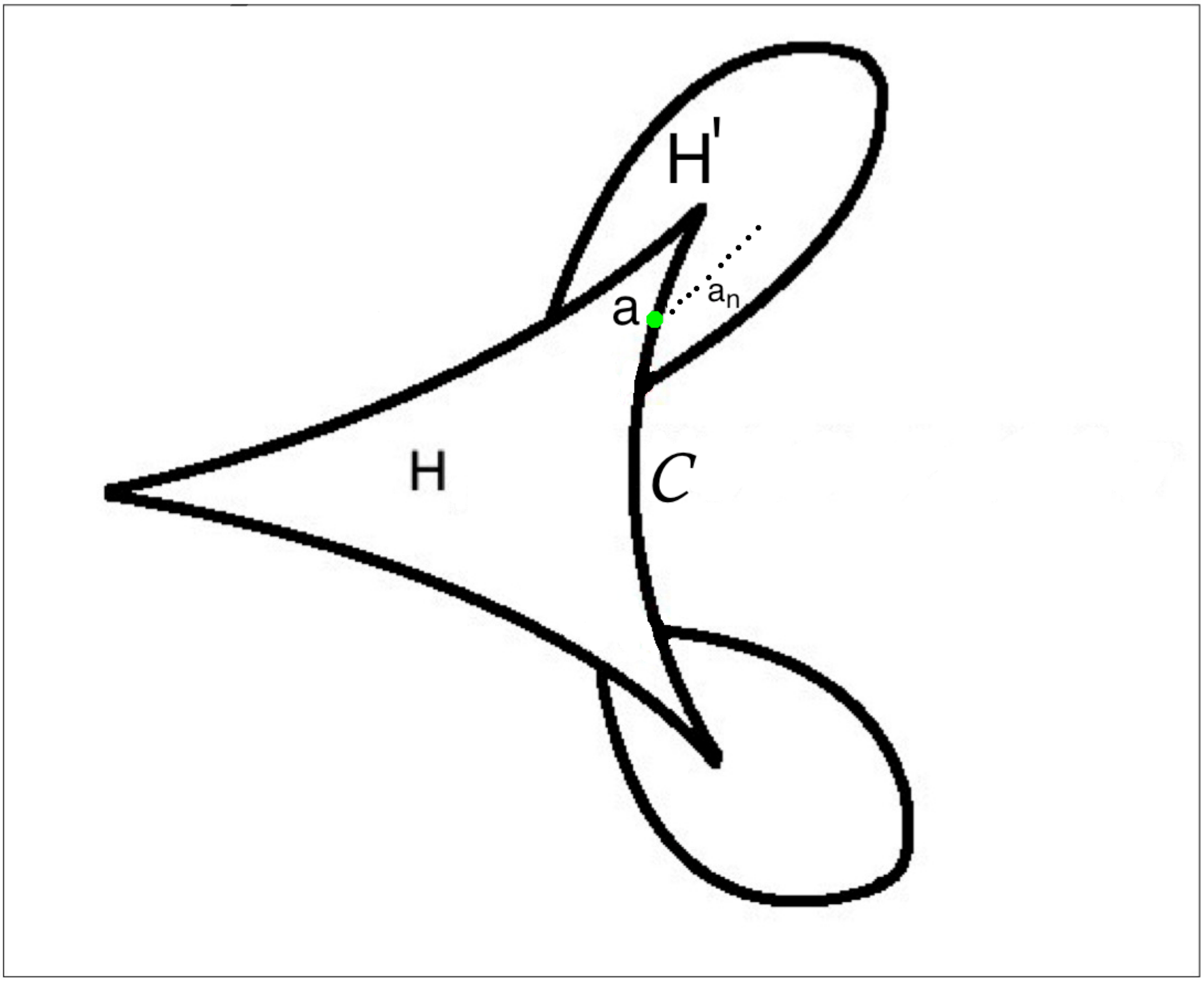}\ \includegraphics[scale=0.16]{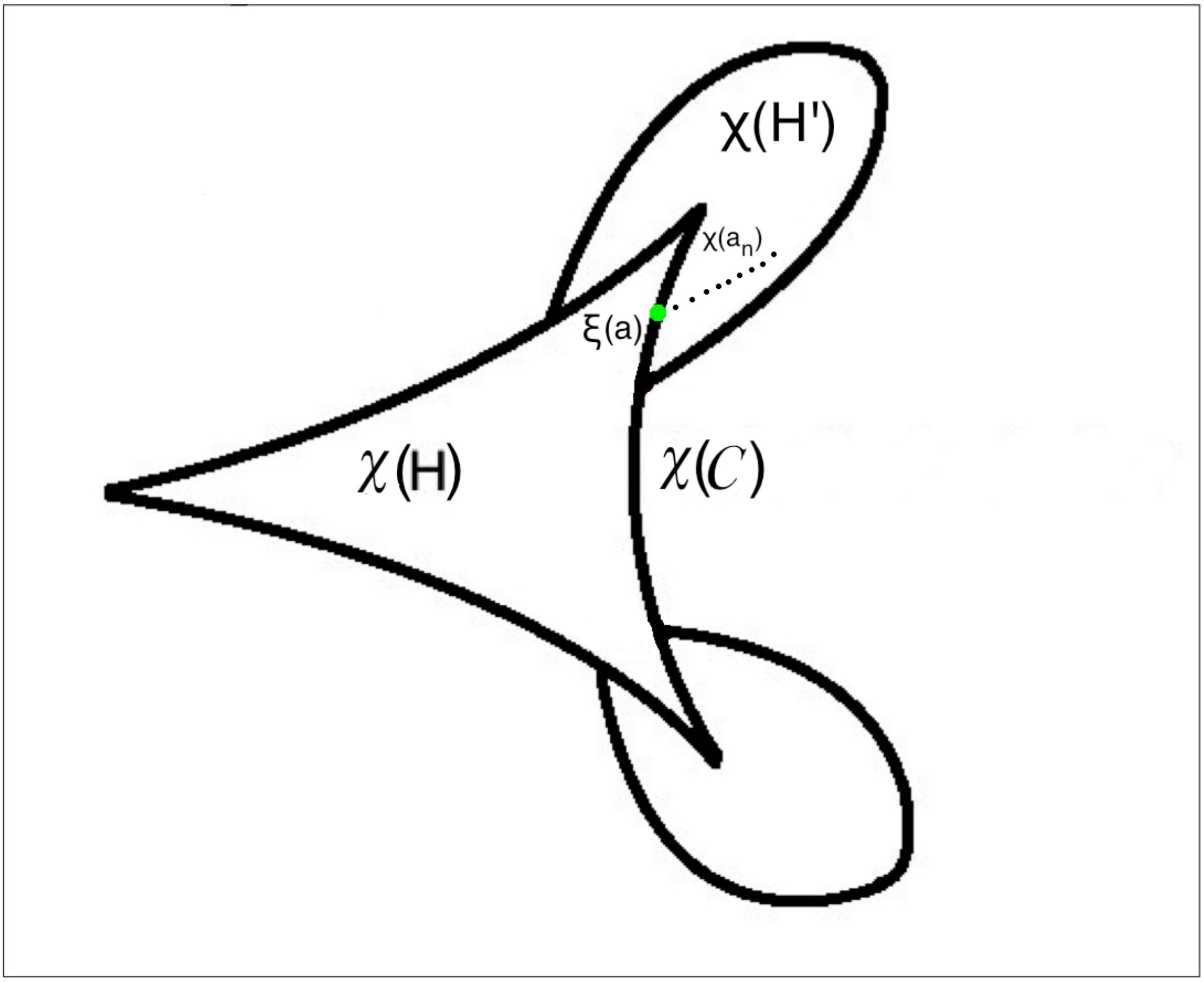}
\end{center}
\caption{The straightening map $\chi$, restricted to $\overline{H}$, is a homeomorphism. On the other hand, a continuous extension $\xi$ of $\chi\vert_{H'}$ to $\cC\cap\partial H'$ must respect the fixed point indices of the parabolic cycles of $\sigma_{a}$ and $R_{\xi(a)}$.}
\label{bifurcation_point_2}
\end{figure}

Since the multipliers of attracting periodic orbits are preserved by $\chi$, it follows that $R_{\chi(a_n)}^{\circ 2}$ has two distinct $k$-periodic attracting cycles with multipliers $\lambda_{a_n}$ and $\overline{\lambda}_{a_n}$. As $\{\chi(a_n)\}$ converges to the odd period parabolic parameter $(\alpha,A)$, the same limiting relation (\ref{multiplier_index}) holds for the fixed point index of the parabolic cycle of $R_{\alpha,A}^{\circ 2}$ as well. In particular, the parabolic fixed point index of $R_{\alpha,A}^{\circ 2}$ is also $\tau$. Therefore, $(\alpha,A)$ must be the unique parameter on $\chi(\mathcal{C})\cap\partial \chi(H')$ with a parabolic cycle of index $\tau$. 

On the other hand, parabolic cusps are qc rigid in $\cC(\mathfrak{L}_0)$. Hence, $\chi$ sends the parabolic cusp on $\overline{\mathcal{C}}\cap\partial H'$ to the parabolic cusp on $\overline{\chi(\mathcal{C})}\cap\partial \chi(H')$, and is continuous at the cusp.

It now follows that $\xi$ is the required continuous extension of $\chi\vert_{H'}$ to $\overline{\cC}\cap\partial H'$. 
\end{proof}

As a complementary result, let us mention that $\chi$ is a homeomorphism restricted to the closure $\overline{H}$ of every hyperbolic component $H$ of odd period. We denote the Koenigs ratio map of the hyperbolic component $H$ (respectively, $\chi(H)$) by $\widetilde{\zeta}_H$ (respectively, $\zeta_{\chi(H)}$). 

\begin{proposition}\label{homeo_odd_closure}
As $a$ in $H$ (respectively, $c$ is $\chi(H)$) approaches a simple parabolic parameter with critical {\'E}calle height $h$ on the boundary of $H$ (respectively, $\chi(H)$), the quantity $\frac{1-\widetilde{\zeta}_H(a)}{1-\vert\widetilde{\zeta}_H(a)\vert^2}$ (respectively, $\frac{1-\zeta_{\chi(H)}(c)}{1-\vert\zeta_{\chi(H)}(c)\vert^2}$) converges to $\frac{1}{2}-2ih$. Consequently, $\chi$ maps the closure $\overline{H}$ of the hyperbolic component $H$ homeomorphically onto the closure $\overline{\chi(H)}$ of the hyperbolic component $\chi(H)$.
\end{proposition}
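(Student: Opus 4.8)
The plan is to treat the limiting formula for the Koenigs ratio as the technical core, and then to derive the homeomorphism as a formal consequence, using that the \emph{same} formula holds, with the same constant, for both $\mathcal{S}$ and $\mathfrak{L}_0$ (in either case the first return map on the critical Fatou component is a simple anti-holomorphic parabolic germ of odd period). First I would establish the asymptotics. Fix a simple parabolic parameter $a_0$ of odd period $k$ on an arc $\cC\subset\partial H$ with critical Ecalle height $h$, and let $a\to a_0$ within $H$. The first anti-holomorphic return map $g_a=\sigma_a^{\circ k}$ on the Fatou component containing the critical value $2$ has an attracting fixed point $p_a$ with $(g_a^{\circ 2})'(p_a)=\rho_a\in(0,1)$ and $\rho_a\to1$; at $a_0$ the germ $g_{a_0}$ is conjugate to $\zeta\mapsto\overline\zeta+\tfrac12$ in the attracting Fatou coordinate, and $h$ is by definition the imaginary part of the critical value in that coordinate (Subsection~\ref{cusp_asymp_sec_1}). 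The Koenigs ratio $\widetilde\zeta_H(a)$ is read off from the Koenigs linearizer of $g_a^{\circ 2}$ together with the position of the critical value, and as $\rho_a\to1$ this linearizer degenerates to the Fatou coordinate of $g_{a_0}$. Implementing this degeneration through the change of variable $\zeta\mapsto\frac{1-\zeta}{1-|\zeta|^2}$, which sends $\D$ into the half-plane $\{\re w>\tfrac12\}$ and opens the disk at $\zeta=1$ into a half-plane, the real part normalizes to $\tfrac12$ (because $g_{a_0}$ is tangent to $\overline\zeta+\tfrac12$), while the imaginary part records exactly $-2h$, giving the limit $\tfrac12-2ih$. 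The identical computation, verbatim, yields $\frac{1-\zeta_{\chi(H)}(c)}{1-|\zeta_{\chi(H)}(c)|^2}\to\tfrac12-2ih$ for $R_c\in\mathfrak{L}_0$.

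I expect this local analysis to be the main obstacle: one must control the convergence of the Koenigs linearizer to the Fatou coordinate finely enough to pin down the precise constant $-2i$, not merely the leading order, and this is exactly where the cusp asymptotics of Subsection~\ref{cusp_asymp_sec_1} and parabolic-implosion estimates in the spirit of \cite{IM2,MNS,HS} enter. For the two other parabolic arcs one obtains the rotated cube root of unity in place of $1$; by the three-fold symmetry of the Koenigs parametrization we may normalize to the arc along which $\widetilde\zeta_H\to1$, as in the statement.

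Granting the formula, the homeomorphism follows formally. By Proposition~\ref{continuity_hyperbolic} we have $\chi=\eta_{H^{\sharp}}^{-1}\circ\widetilde\eta_H$, both maps being homeomorphisms onto the common Blaschke model, so $\chi$ preserves the Koenigs ratio \emph{exactly}: $\zeta_{\chi(H)}(\chi(a))=\widetilde\zeta_H(a)$ for all $a\in H$. I would extend $\chi$ to $\overline H$ by the Ecalle-height-preserving homeomorphisms $\cC\to\chi(\cC)$ of Lemma~\ref{arc_to_arc} on the parabolic arcs and by the qc-rigid values of Proposition~\ref{continuity_rigid} at the cusps; by Proposition~\ref{Exactly 3} (applied to both $H$ and $\chi(H)$) the arcs and cusps exhaust the boundaries, so this defines a bijection $\widehat\chi:\overline H\to\overline{\chi(H)}$.

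For continuity at $a_0\in\cC$ of Ecalle height $h$, take $a_n\to a_0$ with $a_n\in H$; since $|\widetilde\zeta_H|\to1$ on $\partial H$ and $\zeta_{\chi(H)}(\chi(a_n))=\widetilde\zeta_H(a_n)$, we get $|\zeta_{\chi(H)}(\chi(a_n))|\to1$, so $\chi(a_n)$ cannot accumulate in the open set $\chi(H)$ and, by compactness of $\overline{\chi(H)}$, accumulates only on $\partial\chi(H)$. Any accumulation point $c_\infty$ satisfies $\frac{1-\zeta_{\chi(H)}(\chi(a_n))}{1-|\zeta_{\chi(H)}(\chi(a_n))|^2}=\frac{1-\widetilde\zeta_H(a_n)}{1-|\widetilde\zeta_H(a_n)|^2}\to\tfrac12-2ih$, a finite value of real part $\tfrac12$; hence $c_\infty$ is not a cusp (where the Ecalle height degenerates to $\pm\infty$), and it lies on the arc $\chi(\cC)$ distinguished by the same limiting cube root of unity. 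Applying the formula on the $\mathfrak{L}_0$ side then forces the Ecalle height of $c_\infty$ to equal $h$, so $c_\infty$ is the unique such point, namely $\widehat\chi(a_0)$. Thus $\chi(a_n)\to\widehat\chi(a_0)$; combined with the continuity supplied directly along the arcs (Lemma~\ref{arc_to_arc}) and at the cusps (Proposition~\ref{continuity_rigid}), the map $\widehat\chi$ is continuous on $\overline H$. Being a continuous bijection from the compact set $\overline H$ onto the Hausdorff space $\overline{\chi(H)}$, it is a homeomorphism, which is the assertion.
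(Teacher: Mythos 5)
Your overall architecture coincides with the paper's: the asymptotic formula is obtained by degenerating Koenigs linearizing coordinates to Fatou coordinates as in \cite[Lemma~6.1]{IM2} (the paper simply cites that lemma), and the homeomorphism is then deduced formally from preservation of the Koenigs ratio on $H$, preservation of critical Ecalle height on the arcs, injectivity of $\chi$, and compactness of $\overline{H}$. However, there is a genuine error in your continuity argument: the claim that the three parabolic arcs on $\partial H$ are distinguished by the Koenigs ratio tending to the three different cube roots of unity, which you use both to ``normalize to the arc along which $\widetilde{\zeta}_H\to1$'' and, crucially, to place the accumulation point $c_\infty$ on the arc $\chi(\cC)$ rather than on one of the other two arcs of $\partial\chi(H)$. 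No such labeling exists. The proposition you are proving (and \cite[Lemma~6.1]{IM2}) asserts the limit $\frac12-2ih$ at \emph{every} simple parabolic parameter on $\partial H$; since a finite limit of $\frac{1-\zeta}{1-\vert\zeta\vert^2}$ with $\vert\zeta\vert\to1$ forces $\zeta\to1$, the Koenigs ratio tends to $1$ along \emph{all three} arcs. (Your reading would make the statement false on two of the three arcs. A sanity check: the Koenigs ratio is a conformal conjugacy invariant, and in the model case of the period-one component of the Tricorn the three boundary arcs are permuted by the symmetry $c\mapsto e^{2\pi i/3}c$, which leaves the Koenigs ratio invariant --- this is precisely why the ratio map is a $3$-to-$1$ cover; it cannot separate the arcs.) Consequently each of the three arcs of $\partial\chi(H)$ carries a point of Ecalle height $h$ whose approach from inside $\chi(H)$ produces the identical limit $\frac12-2ih$, so your formula-based argument cannot decide which of these three points $c_\infty$ is; likewise, the formula says nothing about transversal approach to cusps, so your assertion that $c_\infty$ cannot be a cusp ``where the Ecalle height degenerates to $\pm\infty$'' is not established by it.

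The missing ingredient is exactly part (2) of Lemma~\ref{arc_to_arc}, which you invoke only for continuity along the arcs: if $a_n\to a_0\in\cC$ (from anywhere in $\cC(\mathcal{S})$, in particular from $H$), then every accumulation point of $\{\chi(a_n)\}_n$ lies on the arc $\chi(\cC)$ itself. This is proved not with the Koenigs asymptotics but with the Douady--Hubbard bounded-dilatation argument: subsequential limits of the straightenings are quasiconformally conjugate to $R_{\chi(a_0)}$, and every parameter of $\cC(\mathfrak{L}_0)$ quasiconformally conjugate to a map in $\chi(\cC)$ lies in $\chi(\cC)$. This simultaneously rules out cusps (which are not quasiconformally conjugate to simple parabolics) and pins down the correct arc. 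Once $c_\infty\in\chi(\cC)$ is known, your Ecalle-height computation via the formula on both sides, together with injectivity of the critical Ecalle height parametrization of the arc, identifies $c_\infty=\chi(a_0)$; the rest of your argument (continuity at cusps by Proposition~\ref{continuity_rigid}, continuous bijection from a compact space onto a Hausdorff space) then goes through and agrees with the paper's proof.
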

\begin{proof}
The proof of the first statement is similar to that of \cite[Lemma~6.3]{IM2}. Since $\chi$ preserves Koenigs ratio (of parameters in $H$) and critical {\'E}calle height (of simple parabolic parameters on $\partial H$), it follows that $\chi$ extends continuously to $\partial H$. By Proposition \ref{continuity_hyperbolic} and Lemma \ref{arc_to_arc}, $\chi(\overline{H})$ is indeed the closure of the hyperbolic component $\chi(H)$. Since $\chi$ is injective, it is a homeomorphism from $\overline{H}$ onto $\overline{\chi(H)}$.
\end{proof}

We are now in a position to show that continuity of $\chi$ on $\cC$ imposes a severe restriction on the index functions $\ind_{\mathcal{C}}$ and $\ind_{\mathcal{\chi(C)}}$.

\begin{lemma}[Uniform Height-Index Relation]\label{uniform}
Let $\cC$ be a parabolic arc in $\cC(\mathcal{S})$. If $\chi$ is continuous at every parameter on $\cC$, then the functions $ \ind_{\mathcal{C}}$ and $\ind_{\mathcal{\chi(C)}}$ are identically equal.
\end{lemma}
\begin{proof}
Recall that $\chi$ preserves critical {\'E}calle height of simple parabolic parameters. By Proposition \ref{cont_ext}, continuity of $\chi$ on $\cC$ would imply that $\chi$ also preserves the index of simple parabolic cycles (for parameters in $\overline{\cC}\cap\partial H'$). But this means that the indices of the parabolic cycles of $\sigma_{a(h)}^{\circ 2}$ and $R_{c(h)}^{\circ 2}$ are equal for all values of $h$ in an unbounded interval. Since the index functions $\ind_{\mathcal{C}}$ and $\ind_{\mathcal{\chi(C)}}$ are real-analytic, we conclude that the indices of the parabolic cycles of $\sigma_{a(h)}^{\circ 2}$ and $R_{c(h)}^{\circ 2}$ are equal for all real values of $h$; i.e., $\ind_{\mathcal{C}}\equiv\ind_{\mathcal{\chi(C)}}$.
\end{proof}

\begin{remark}
The above condition on index functions seems unlikely to hold in general. This criterion can be used to prove discontinuity of $\chi$ on certain low period parabolic arcs of $\cC(\mathcal{S})$. 

For an analogous discussion of discontinuity of straightening maps for the Tricorn, see \cite[Proposition~8.1]{IM2}.
\end{remark}

\subsubsection{Possible Discontinuity on QC Non-rigid Parameters}\label{chi_discont_beltrami_arcs}
We have seen so far that $\chi$ is continuous at all qc rigid parameters of $\cC(\mathcal{S})$. This includes parameters with an even-periodic neutral cycle, and parabolic cusps of odd period. It was also shown that $\chi$ is continuous at hyperbolic parameters (which are not qc rigid, except for the centers). On the other hand, we analyzed the behavior of $\chi$ at odd period simple parabolic parameters (which are also not qc rigid in $\cC(\mathcal{S})$), and concluded that $\chi$ is not necessarily continuous at such parameters. We now turn our attention to the remaining qc non-rigid parameters in $\cC(\mathcal{S})$.

Let us assume that $\sigma_a\in\cC(\mathcal{S})$ admits a real one-dimensional quasiconformal deformation space in $\cC(\mathcal{S})$, and $\mu$ is a non-trivial $\sigma_a$-invariant Beltrami coefficient which is supported on $\partial K_a$. Set $m:=\vert\vert\mu\vert\vert_\infty\in\left(0,1\right)$. Then, $t\mu$ is also a non-trivial $\sigma_a$-invariant Beltrami coefficient supported on $\partial K_a$, for $t\in\left(-\frac1m,\frac1m\right)$. By the measurable Riemann mapping theorem with parameters, we obtain quasiconformal maps $\{h_t\}$ (where $t\in\left(-\frac1m,\frac1m\right)$) with associated Beltrami coefficients $t\mu$ such that $h_t$ fixes $\pm 2$ and $\infty$, and $\{h_t\}$ depends real-analytically on $t$. According to Proposition~\ref{schwarz_qcdef}, $h_t\circ\sigma_a\circ h_t^{-1}\in\cC(\mathcal{S})$ for all $t\in\left(-\frac1m,\frac1m\right)$. This produces a real-analytic arc of quasiconformally conjugate parameters in $\cC(\mathcal{S})$ which contains $a$ in its interior. This arc, which is the full quasiconformal deformation space of $\sigma_a$ in $\cC(\mathcal{S})$, is called the \emph{queer Beltrami arc} containing $a$.

The following proposition can be proven following the arguments of Lemma~\ref{arc_to_arc}.

\begin{lemma}\label{beltrami_arc_discont}
Let $\Gamma$ be a queer Beltrami arc in $\cC(\mathcal{S})$. Then,

1) $\chi$ is a homeomorphism from $\Gamma$ onto some queer Beltrami arc in $\cC(\mathfrak{L}_0)$, and

2) if $\{a_n\}\subset\cC(\mathcal{S})$ converges to $a\in\Gamma$, then every accumulation point of $\{\chi(a_n)\}$ lies on the queer Beltrami arc $\chi(\Gamma)$ in $\cC(\mathfrak{L}_0)$.
\end{lemma}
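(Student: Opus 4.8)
The plan is to imitate the proof of Lemma~\ref{arc_to_arc}, with the quasiconformal deformation parameter $t$ of the queer arc playing the role that the critical Ecalle height played on a parabolic arc. Fix $a\in\Gamma$, let $\mu$ be the non-trivial $\sigma_a$-invariant Beltrami coefficient supported on $\partial K_a$ that generates $\Gamma$ (as in Subsection~\ref{chi_discont_beltrami_arcs}), and let $\Phi:=\Phi_a\circ\eta_a$ be the hybrid conjugacy between $\sigma_a$ and $R_{\chi(a)}$ produced by Theorem~\ref{straightening_schwarz}. Since $\Phi$ is a \emph{hybrid} conjugacy, $\overline{\partial}\Phi=0$ almost everywhere on the non-escaping set $K_a$, and in particular on $\partial K_a$ (which has positive area, as $\mu$ is non-trivial). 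Hence the push-forward $\nu:=\Phi_*\mu$ is a non-trivial $R_{\chi(a)}$-invariant Beltrami coefficient supported on $\mathcal{J}_{\chi(a)}=\partial\mathcal{K}_{\chi(a)}$, with $\vert\vert\nu\vert\vert_\infty=\vert\vert\mu\vert\vert_\infty$; integrating $\{t\nu\}$ via the measurable Riemann mapping theorem with parameters produces a queer Beltrami arc through $\chi(a)$ in $\cC(\mathfrak{L}_0)$, whose members I denote $R_t:=H_t\circ R_{\chi(a)}\circ H_t^{-1}$, where $H_t$ solves $t\nu$ and fixes $\infty,0,1$. (That $R_t$ stays in $\mathfrak{L}_0$ follows as in the construction of queer arcs, the deformation being supported away from the parabolic basin so that the critical Ecalle height is unchanged.)

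For part (1) I would prove directly that $\chi(a(t))=R_t$ for every $t$, where $\sigma_{a(t)}:=h_t\circ\sigma_a\circ h_t^{-1}$ and $h_t$ solves $t\mu$ (so that $\Gamma=\{a(t)\}$). The key point is that $H_t\circ\Phi\circ h_t^{-1}$ is a hybrid conjugacy between $\sigma_{a(t)}$ and $R_t$. Indeed, because $\Phi$ is conformal on $K_a$ and $\nu=\Phi_*\mu$, the composition $H_t\circ\Phi$ carries the Beltrami coefficient $\Phi^*(t\nu)=t\mu$ on $K_a$---exactly the Beltrami coefficient of $h_t$ there---so $H_t\circ\Phi\circ h_t^{-1}$ is conformal on $h_t(K_a)$, i.e.\ on the non-escaping set of $\sigma_{a(t)}$. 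By the uniqueness of straightening (Theorem~\ref{straightening_schwarz}(2)) together with the injectivity of $\chi$ (Proposition~\ref{chi_injective_prop}), this forces $\chi(a(t))=R_t$. Since $\Phi_*$ is a real-linear bijection from the $\sigma_a$-invariant Beltrami coefficients on $\partial K_a$ onto the $R_{\chi(a)}$-invariant ones on $\mathcal{J}_{\chi(a)}$ (this is the mechanism behind Lemma~\ref{rigid_equiv}), the two deformation spaces have equal dimension and the arc $\{R_t\}$ is the \emph{full} quasiconformal deformation space of $R_{\chi(a)}$, hence a genuine queer Beltrami arc. Finally $t\mapsto R_t$ is real-analytic (MRMT with parameters) and $\chi$ is injective, so $\chi\vert_{\Gamma}$ is a continuous bijection onto $\chi(\Gamma)$ with continuous inverse---a homeomorphism.

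For part (2) I would run the compactness argument of \cite[\S II.7, p.~313]{DH2}, exactly as in Proposition~\ref{continuity_rigid}. Given $\{a_n\}\subset\cC(\mathcal{S})$ with $a_n\to a\in\Gamma$ and $\chi(a_n)\to(\alpha,A)$, the dilatations of the hybrid conjugacies (equivalently, of the quasiregular extensions $\mathbf{G}_{a_n}$ built in Lemma~\ref{straightening_lemma}) are locally uniformly bounded near $a$; extracting a limit of these conjugacies yields a quasiconformal conjugacy between $R_{(\alpha,A)}$ and $R_{\chi(a)}$. Thus $(\alpha,A)$ lies in the quasiconformal deformation space of $R_{\chi(a)}$, which by part (1) is precisely $\chi(\Gamma)$; here one uses, as in Lemma~\ref{arc_to_arc}(2), that any member of $\cC(\mathfrak{L}_0)$ quasiconformally conjugate to a map in $\chi(\Gamma)$ already lies in $\chi(\Gamma)$.

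The step I expect to be the main obstacle is the Beltrami-cancellation in part (1): it must be justified that $\Phi$ is conformal \emph{almost everywhere on $\partial K_a$}, so that $\Phi^*\Phi_*=\mathrm{id}$ on the relevant coefficients and the supports of $\mu$ and $\nu$ correspond bijectively. This is precisely where the meaning of ``hybrid'' ($\overline{\partial}\Phi=0$ a.e.\ on the non-escaping set, which contains $\partial K_a$) is indispensable, and where one must know that $\partial K_a$ carries positive area---otherwise $\mu$ is trivial and $\Gamma$ degenerates to a point. A secondary point is the local uniform dilatation bound in part (2), which relies on the quasiregular extension $\mathbf{G}_a$ being constructed with locally bounded dilatation near $a$, just as in the proofs of Lemma~\ref{straightening_lemma} and Proposition~\ref{continuity_rigid}.
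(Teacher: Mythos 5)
Your proposal is correct and takes essentially the same route as the paper: the paper offers no standalone proof of this lemma, saying only that it can be proved "following the arguments of Lemma~\ref{arc_to_arc}", and your write-up is exactly that adaptation, with the invariant Beltrami coefficient transferred through the hybrid conjugacy (the mechanism behind Lemma~\ref{rigid_equiv}) playing the role of the critical Ecalle height. Part (1) via uniqueness of straightening (Theorem~\ref{straightening_schwarz}) and part (2) via the Douady--Hubbard compactness argument already used in Proposition~\ref{continuity_rigid} and Lemma~\ref{image_closed} is precisely what the paper intends.
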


Finally, let $\sigma_a\in\cC(\mathcal{S})$ admit a real two-dimensional quasiconformal deformation space $\cC(\mathcal{S})$, and $\mu_1$, $\mu_2$ be $\R$-independent non-trivial $\sigma_a$-invariant Beltrami coefficients supported on $\partial K_a$. Then, $\left(t_1\mu_1+t_2\mu_2\right)$ is also a non-trivial $\sigma_a$-invariant Beltrami coefficient supported on $\partial K_a$ whenever $t_1, t_2\in\R$, and $\vert\vert t_1\mu_1+t_2\mu_2\vert\vert_\infty<1$. As in the previous case, this produces an open set of quasiconformally conjugate parameters in $\cC(\mathcal{S})$ containing $a$. This open set, which is the full quasiconformal deformation space of $\sigma_a$ in $\cC(\mathcal{S})$, is called the \emph{queer component} containing $a$.

\begin{lemma}\label{cont_queer}
$\chi$ is a homeomorphism from a queer component of $\cC(\mathcal{S})$ onto some queer component of $\cC(\mathfrak{L}_0)$.
\end{lemma}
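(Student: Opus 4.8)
The plan is to parallel the treatment of queer Beltrami arcs in Lemma~\ref{beltrami_arc_discont}, promoting the one-parameter deformation argument to the two-parameter case and showing that $\chi$ intertwines the deformation parametrizations on the two sides. Fix $a$ in a queer component $Q$ of $\cC(\mathcal{S})$, and let $\Phi:=\Phi_a\circ\eta_a$ denote the hybrid conjugacy between $\sigma_a$ and $R_{\chi(a)}$ produced by Theorem~\ref{straightening_schwarz}. As in the proof of Lemma~\ref{rigid_equiv}, push-forward by $\Phi$ carries $\sigma_a$-invariant Beltrami coefficients supported on $\partial K_a$ to $R_{\chi(a)}$-invariant Beltrami coefficients supported on $\partial\mathcal{K}_{\chi(a)}$, and this assignment $\mu\mapsto\Phi_*\mu$ is a norm-preserving real-linear isomorphism between the two deformation spaces (its inverse being push-forward by $\Phi^{-1}$). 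By hypothesis $\sigma_a$ has a real two-dimensional deformation space, so $R_{\chi(a)}$ does as well, and hence $\chi(a)$ lies in a queer component $Q^\sharp$ of $\cC(\mathfrak{L}_0)$.

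The heart of the argument is to verify that $\chi$ conjugates the deformation map of $Q$ to that of $Q^\sharp$ through this linear isomorphism. Given a $\sigma_a$-invariant $\mu$ supported on $\partial K_a$ with $\|\mu\|_\infty<1$, let $h_\mu$ be the normalized solution (fixing $\pm2$ and $\infty$) so that, by Proposition~\ref{schwarz_qcdef}, $\sigma_b:=h_\mu\circ\sigma_a\circ h_\mu^{-1}\in\cC(\mathcal{S})$ with $b\in Q$; similarly let $H_\nu$ be the normalized solution of $\nu:=\Phi_*\mu$, producing the deformation $R_\beta$ of $R_{\chi(a)}$ inside $Q^\sharp$ (via the analogue of Proposition~\ref{schwarz_qcdef} for $\mathfrak{L}_0$ used to define queer components in $\cC(\mathfrak{L}_0)$). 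Then $H_\nu\circ\Phi\circ h_\mu^{-1}$ is a hybrid conjugacy between the pinched anti-quadratic-like restriction of $\sigma_b$ and $R_\beta$: the two normalized solutions introduce exactly the dilatations of $\mu$ and $\nu$, which cancel against those of $\Phi$ on the non-escaping set (where $\overline\partial\Phi=0$ transports $\mu$ to $\nu$ by construction), so the composite is conformal on $K_b=h_\mu(K_a)$. By the uniqueness part of Theorem~\ref{straightening_schwarz}(2), $\chi(b)=\beta$; that is, $\chi$ intertwines the two deformations.

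Consequently $\chi|_Q$ factors as the deformation parametrization of $Q$, followed by the linear isomorphism $\mu\mapsto\Phi_*\mu$ on the open unit ball of the deformation space, followed by the inverse of the deformation parametrization of $Q^\sharp$. Each deformation parametrization is a homeomorphism from an open ball onto the corresponding queer component (this is how these components are built), and the middle map is a linear homeomorphism of balls; hence $\chi|_Q$ is a homeomorphism of $Q$ onto $Q^\sharp$, with injectivity also guaranteed by Proposition~\ref{chi_injective_prop}. The main obstacle is the bookkeeping in the second paragraph: one must check that $\Phi_*\mu$ really is $R_{\chi(a)}$-invariant and supported on $\partial\mathcal{K}_{\chi(a)}$ even though $\Phi$ is only defined near the non-escaping set, and that the global deformation of $R_{\chi(a)}$ by $\nu$ stays within $\mathfrak{L}_0$ and within a single queer component, so that the deformation parametrizations on both sides are genuine homeomorphisms onto the full quasiconformal deformation spaces. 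This is exactly what is needed to make the intertwining identity $\chi(b)=\beta$ and the factorization above meaningful, and it follows by combining the invariance $\overline\partial\Phi=0$ on the non-escaping set with the deformation results of Proposition~\ref{schwarz_qcdef} and its rational-family counterpart.
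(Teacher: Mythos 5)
The paper states this lemma with no proof at all (treating it as the two-dimensional analogue of the queer Beltrami arc discussion), and your argument — transporting invariant Beltrami coefficients supported on $\partial K_a$ through the hybrid conjugacy, then using the uniqueness part of Theorem~\ref{straightening_schwarz} to show $\chi$ intertwines the deformation of $\sigma_a$ by $\mu$ with the deformation of $R_{\chi(a)}$ by $\Phi_*\mu$ — is precisely the machinery the paper deploys for the analogous statements in Lemma~\ref{rigid_equiv}, Lemma~\ref{image_closed} and Corollary~\ref{qc_closed}, so your proposal is correct and follows the intended route. The only caveat, inherited from the paper's own informal definition of queer components as images of two-parameter Beltrami deformations, is that your final factorization takes for granted that these deformation parametrizations are homeomorphisms onto the components; this is implicit in the paper's construction rather than proved there, and bijectivity of $\chi\vert_Q$ can alternatively be obtained by pulling back an arbitrary deformation of $R_{\chi(a)}$ via $\Phi$ and invoking Proposition~\ref{schwarz_qcdef} together with Proposition~\ref{chi_injective_prop}.
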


\subsection{Almost Surjectivity}\label{almost_surjective}

We will now describe the image of the straightening map $\chi$. Let us start with some preliminary results.

\begin{lemma}\label{image_closed}
$\chi(\cC(\mathcal{S}))$ is closed in $\cC(\mathfrak{L}_0)$.
\end{lemma}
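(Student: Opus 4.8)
The plan is to take an arbitrary sequence $\chi(a_n)$ in $\chi(\cC(\mathcal{S}))$ converging to some $(\alpha,A)\in\cC(\mathfrak{L}_0)$ and to show that $(\alpha,A)$ itself belongs to the image. Since $\cC(\mathcal{S})\subset S$ is bounded, $\overline{\cC(\mathcal{S})}$ is a compact subset of $\C$, so I would first pass to a subsequence along which $a_n\to a\in\overline{\cC(\mathcal{S})}$. The first step is to locate $a$ inside $\cC(\mathcal{S})$: if instead $a\in\overline{\cC(\mathcal{S})}\setminus\cC(\mathcal{S})$, then Lemma~\ref{chi_proper}(1) forces every accumulation point of $\{\chi(a_n)\}$ to lie in $\overline{\cC(\mathfrak{L}_0)}\setminus\cC(\mathfrak{L}_0)$, which contradicts $\chi(a_n)\to(\alpha,A)\in\cC(\mathfrak{L}_0)$. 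Hence $a\in\cC(\mathcal{S})$.

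The second step is to identify $(\alpha,A)$ with the straightening of $\sigma_a$ up to quasiconformal conjugacy. I cannot invoke continuity of $\chi$ at $a$, since $\chi$ may genuinely be discontinuous there (Subsection~\ref{chi_discont_subsec}); instead I would rerun the bounded-dilatation argument already employed in Proposition~\ref{continuity_rigid}. Because the quasiregular extensions $\mathbf{G}_{a_n}$ are built by gluing a fixed parabolic model outside a continuously varying region, the hybrid conjugacies $\Phi_{a_n}$ between $\mathbf{G}_{a_n}$ and $R_{\chi(a_n)}$ have locally uniformly bounded dilatation near $a$. A normal-families argument then extracts a quasiconformal limit conjugating $\mathbf{G}_a$ to $R_{(\alpha,A)}$, and comparing this with the hybrid conjugacy $\Phi_a$ between $\mathbf{G}_a$ and $R_{\chi(a)}$ shows that $R_{(\alpha,A)}$ and $R_{\chi(a)}$ are quasiconformally conjugate.

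The final step is to promote this quasiconformal conjugacy to membership in the image. The conjugacy between $R_{(\alpha,A)}$ and $R_{\chi(a)}$ carries an $R_{\chi(a)}$-invariant Beltrami coefficient; pulling it back through $\Phi_a$ to the dynamical plane of $\sigma_a$ yields a $\sigma_a$-invariant Beltrami coefficient on $\widehat{\C}$, and Proposition~\ref{schwarz_qcdef} then produces a parameter $b\in\cC(\mathcal{S})$ with $\chi(b)=(\alpha,A)$, so that $(\alpha,A)\in\chi(\cC(\mathcal{S}))$ and the image is closed. I expect the main obstacle to be the second step: one must check that the limiting quasiconformal map is an \emph{honest} conjugacy on the whole sphere rather than merely a semiconjugacy, which requires matching the attracting Fatou coordinates on the parabolic basins in the limit — precisely the delicate point handled by the Douady--Hubbard argument cited in the proof of Proposition~\ref{continuity_rigid}.
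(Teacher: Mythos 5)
Your proposal is correct and follows essentially the same route as the paper's proof: properness (Lemma~\ref{chi_proper}) to place the subsequential limit $a$ inside $\cC(\mathcal{S})$, the Douady--Hubbard bounded-dilatation argument to obtain a quasiconformal conjugacy between $R_{\chi(a)}$ and $R_{(\alpha,A)}$, and then pulling back the invariant Beltrami form through the hybrid conjugacy and deforming within $\mathcal{S}$ via Proposition~\ref{schwarz_qcdef}. The paper only makes explicit the final identification that you compress: the composed conjugacies form a hybrid equivalence between $R_{\chi(\widetilde{a})}$ and $R_{(\alpha,A)}$, and uniqueness of hybrid classes in $\cC(\mathfrak{L}_0)$ (Remark~\ref{hybrid_class}) then gives $\chi(\widetilde{a})=(\alpha,A)$.
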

\begin{proof}
Let $\left(\alpha_0,A_0\right)\in\cC(\mathfrak{L}_0)$, and $\{(\alpha_n,A_n)\}_n$ be a sequence in $\chi(\cC(\mathcal{S}))$ converging to $(\alpha_0,A_0)$. By assumption, there exists $\{a_n\}_n\in\cC(\mathcal{S})$ such that $\chi(a_n)=(\alpha_n,A_n)$, for all $n\in\mathbb{N}$. By Lemma~\ref{chi_proper}, we can assume (possibly after passing to a subsequence) that $a_n\to a\in\cC(\mathcal{S})$. It now follows by the arguments used in the proof of Proposition~\ref{continuity_rigid} that $R_{\chi(a)}$ and $R_{\alpha_0,A_0}$ are quasiconformally conjugate. 

Let $\upsilon$ be a quasiconformal conjugacy between $R_{\chi(a)}$ and $R_{\alpha_0,A_0}$, and $\mu$ be the Beltrami form defined by $\upsilon$. Since $R_{\chi(a)}$ has a connected Julia set, the Beltrami form $\mu$ can be selected so that it is supported on $\mathcal{K}_{\chi(a)}$ and is $R_{\chi(a)}$-invariant. Pulling $\mu$ back by a hybrid equivalence $\Phi_a\circ\eta_a$ between a pinched anti-quadratic-like restriction of $\sigma_a$ and $R_{\chi(a)}$, we obtain an $\sigma_a$-invariant Beltrami form $\mu_0$ supported on $K_a$ such that $\mu$ and $\mu_0$ have the same dilatation at corresponding points. By Proposition~\ref{schwarz_qcdef}, there exists a quasiconformal homeomorphism $\Upsilon$ integrating $\mu_0$ and conjugating $\sigma_a$ to some map $\sigma_{\widetilde{a}}$ in the family $\mathcal{S}$. Clearly, $\widetilde{a}\in\cC(\mathcal{S})$. It is now easy to see that $\upsilon\circ\left(\Phi_a\circ\eta_a\right)\circ\Upsilon^{-1}\circ\left(\Phi_{\widetilde{a}}\circ\eta_{\widetilde{a}}\right)^{-1}$ is a hybrid equivalence between $R_{\chi(\widetilde{a})}$ and $R_{\alpha_0,A_0}$. Since each hybrid equivalence class in $\cC(\mathfrak{L}_0)$ is a singleton (compare Remark~\ref{hybrid_class}), we have that $(\alpha_0,A_0)=\chi(\widetilde{a})\in\chi(\cC(\mathcal{S}))$. This completes the proof.
\end{proof}

Here is an important corollary of the proof of the above lemma.

\begin{corollary}\label{qc_closed}
Let
$(\alpha _{1},A_{1}), (\alpha _{2},A_{2})\in \mathcal{C}(
\mathfrak{L}_{0})$ be such that $R_{\alpha _{1},A_{1}}$ and
$R_{\alpha _{2},A_{2}}$ are quasiconformally conjugate. If $(\alpha_1,A_1)\in\chi(\cC(\mathcal{S}))$, then $(\alpha_2,A_2)\in\chi(\cC(\mathcal{S}))$ as well.
\end{corollary}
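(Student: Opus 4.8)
The plan is to extract the construction from the second half of the proof of Lemma~\ref{image_closed} and run it directly on the given quasiconformal conjugacy, rather than on one produced as a limit of a convergent sequence. First I would fix a parameter $a\in\cC(\mathcal{S})$ with $\chi(a)=(\alpha_1,A_1)$, and let $\upsilon$ denote the given quasiconformal conjugacy between $R_{\alpha_1,A_1}=R_{\chi(a)}$ and $R_{\alpha_2,A_2}$, with associated Beltrami coefficient $\mu$. The key preliminary step is to arrange that $\mu$ is $R_{\chi(a)}$-invariant and supported on $\mathcal{K}_{\chi(a)}$ (the complement of the parabolic basin). This is exactly where connectedness of the Julia set enters: since $(\alpha_1,A_1)\in\cC(\mathfrak{L}_0)$, the basin $\mathcal{B}_{\chi(a)}$ is simply connected and, by Proposition~\ref{unicritical_parabolic}, is conformally conjugate to $(B,\D)$, and the same holds for $(\alpha_2,A_2)$. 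Hence $\upsilon$ may be modified on the basins so as to be conformal there, after which $\mu$ vanishes on $\mathcal{B}_{\chi(a)}$ and is $R_{\chi(a)}$-invariant.

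Next I would transport $\mu$ to the dynamical plane of $\sigma_a$. Pulling $\mu$ back by a hybrid equivalence $\Phi_a\circ\eta_a$ between a pinched anti-quadratic-like restriction of $\sigma_a$ and $R_{\chi(a)}$ — a map that is conformal on the filled Julia set — yields a $\sigma_a$-invariant Beltrami form $\mu_0$ supported on $K_a$, with the same dilatation at corresponding points. By Proposition~\ref{schwarz_qcdef} there is a quasiconformal homeomorphism $\Upsilon$ integrating $\mu_0$ and conjugating $\sigma_a$ to a map $\sigma_{\widetilde{a}}$ in the family $\mathcal{S}$; since $\Upsilon$ is a conjugacy, $\widetilde{a}\in\cC(\mathcal{S})$.

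Finally I would assemble the map $\left(\Phi_{\widetilde{a}}\circ\eta_{\widetilde{a}}\right)\circ\upsilon\circ\left(\Phi_a\circ\eta_a\right)^{-1}\circ\Upsilon^{-1}$ and check, exactly as in the proof of Lemma~\ref{image_closed}, that it is a hybrid equivalence between $R_{\chi(\widetilde{a})}$ and $R_{\alpha_2,A_2}$: it is quasiconformal, and it is conformal on the relevant filled Julia set because each of its factors is conformal there (the two hybrid equivalences by Definition~\ref{hybrid_def}, the map $\upsilon$ by the normalization in the first step, and $\Upsilon$ since $\mu_0$ is supported on $K_a$ and the comparison is made on its complement). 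Since each hybrid equivalence class in $\cC(\mathfrak{L}_0)$ is a singleton (Remark~\ref{hybrid_class}), this forces $\chi(\widetilde{a})=(\alpha_2,A_2)$, whence $(\alpha_2,A_2)\in\chi(\cC(\mathcal{S}))$.

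I expect the only genuine subtlety to be the preliminary normalization making $\mu$ invariant and supported on $\mathcal{K}_{\chi(a)}$: one must invoke connectedness of the Julia set to replace $\upsilon$ by a conjugacy that is conformal on the parabolic basin, so that the resulting Beltrami form can be carried cleanly through the hybrid equivalence, which is only guaranteed conformal on the filled Julia set. Everything after that is the bookkeeping verification that the composed map is genuinely hybrid and respects the normalization (fixing $\infty$, $0$, and the critical point $1$) used to pin down members of $\cC(\mathfrak{L}_0)$.
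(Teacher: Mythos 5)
Your construction is the paper's own: the corollary is stated there precisely as a byproduct of the proof of Lemma~\ref{image_closed}, and your steps (normalize the Beltrami form of $\upsilon$ to be $R_{\chi(a)}$-invariant and supported on $\mathcal{K}_{\chi(a)}$ using Proposition~\ref{unicritical_parabolic}, pull it back by the hybrid equivalence, integrate via Proposition~\ref{schwarz_qcdef}, compose, and invoke Remark~\ref{hybrid_class}) reproduce the second half of that proof with the sequential limit stripped out. The gap is in your final verification. You assert that the composed map is conformal on the filled Julia set ``because each of its factors is conformal there,'' and in particular that $\upsilon$ is conformal there ``by the normalization in the first step'' and that $\Upsilon$ is conformal there because ``$\mu_0$ is supported on $K_a$ and the comparison is made on its complement.'' Both claims are false. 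Your first step makes $\upsilon$ conformal on the basins, i.e.\ its Beltrami coefficient $\mu$ is supported exactly on $\mathcal{K}_{\chi(a)}$ --- so $\upsilon$ is precisely \emph{not} conformal a.e.\ on the filled Julia set; if it were, $\upsilon$ would itself be a hybrid equivalence between $R_{\alpha_1,A_1}$ and $R_{\alpha_2,A_2}$, and Remark~\ref{hybrid_class} would force $(\alpha_1,A_1)=(\alpha_2,A_2)$, making the corollary vacuous. Likewise $\Upsilon$ has Beltrami coefficient $\mu_0$ supported on $K_a$, and in the composition $\Upsilon^{-1}$ is evaluated on $K_{\widetilde{a}}=\Upsilon(K_a)$, the image of the support, not on its complement; so this factor is not conformal on the relevant set either.

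The step you want is nevertheless true, but for a cancellation reason that must be spelled out. Tracking pullbacks of the standard structure across the composition, restricted to the filled Julia sets: the hybrid map $\Phi_{\widetilde{a}}\circ\eta_{\widetilde{a}}$ and its inverse contribute nothing (conformal on $K_{\widetilde{a}}$, resp.\ $\mathcal{K}_{\chi(\widetilde{a})}$); $\Upsilon^{-1}$ carries the standard structure on $K_{\widetilde{a}}$ to $\mu_0$ on $K_a$, since $\Upsilon$ integrates $\mu_0$; the hybrid map $\Phi_a\circ\eta_a$, being conformal on $K_a$, transports $\mu_0$ to $\mu$ on $\mathcal{K}_{\chi(a)}$ --- this is exactly how $\mu_0$ was defined; and finally $\upsilon$, whose Beltrami coefficient is $\mu$, carries $\mu$ back to the standard structure. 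The dilatations of $\Upsilon^{-1}$ and $\upsilon$ cancel exactly, and that, not factor-wise conformality, is why the composition is conformal a.e.\ on $\mathcal{K}_{\chi(\widetilde{a})}$ and hence a hybrid equivalence. (Incidentally, the displayed composition, copied from the paper, has its factors in an order that does not typecheck under the convention $\Phi_a\circ\eta_a:\ \sigma_a$-plane $\to R_{\chi(a)}$-plane used in Proposition~\ref{hybrid_preserves_symbol}; the map you want is $\upsilon\circ\left(\Phi_a\circ\eta_a\right)\circ\Upsilon^{-1}\circ\left(\Phi_{\widetilde{a}}\circ\eta_{\widetilde{a}}\right)^{-1}$.) With these corrections your argument is complete and coincides with the paper's.
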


Recall that in Subsection~\ref{parameter_tessellation}, we described a uniformization $\pmb{\Psi}:S\setminus\cC(\mathcal{S})\to\D_2$ of the exterior of the connectedness locus $\cC(\mathcal{S})$ (in the parameter space). Using the map $\pmb{\Psi}$, we defined the parameter rays of $\mathcal{S}$ (see Definition \ref{para_ray_schwarz}). As the first step towards a description of the image of $\chi$, we will now use these parameter rays to show that $\chi(\cC(\mathcal{S}))$ contains all dyadic tips of $\cC(\mathfrak{L}_0)$ (see Definition \ref{def_dyadic_tip_rat} for the definition of dyadic tips).

Similar to Definition~\ref{def_dyadic_tip_rat}, we will call a parameter $a\in\cC(\mathcal{S})$ a dyadic tip of pre-period $k$ if the critical value $2$ (of $\sigma_a$) maps to the cusp point $-2$ in exactly $k$ steps; i.e., $k\geq 1$ is the smallest integer such that $\sigma_a^{\circ k}(2)=-2$.

\begin{remark}
The only dyadic tip of pre-period $1$ in $\cC(\mathcal{S})$ is $a=\frac52$.
\end{remark} 

We say that a parameter $a\in\cC(\mathcal{S})$ is \emph{critically pre-periodic} if the critical point $c_a$ (or equivalently, the critical value $2$) of $\sigma_a$ is strictly pre-periodic.

\begin{lemma}\label{misi_rays}

\noindent\begin{enumerate}\upshape

\item If $\theta\in\left(\frac13,\frac23\right)$ is strictly pre-periodic under $\rho$, then the parameter ray of $\mathcal{S}$ at angle $\theta$ lands at a critically pre-periodic parameter such that in the corresponding dynamical plane, the dynamical ray at angle $\theta$ lands at the critical value $2$.

\item For every critically pre-periodic parameter $a_0$ of $\cC(\mathcal{S})$, the arguments of the parameter rays (at pre-periodic angles) of $\mathcal{S}$ landing at $a_0$ are exactly the arguments of the dynamical rays that land at the critical value $2$ in the dynamical plane of $\sigma_{a_0}$.
\end{enumerate}
\end{lemma}
\begin{proof}
1) Let $a_0$ be an accumulation point of the parameter ray of $\mathcal{S}$ at angle $\theta$. Standard arguments from polynomial dynamics (for instance, see \cite[Theorem~37.35]{L6}) can be used to show that $a_0$ is a  critically pre-periodic parameter such that in the dynamical plane of $\sigma_{a_0}$, the dynamical ray at angle $\theta$ lands at the critical value $2$. We include the details for completeness.

By Proposition~\ref{per_rays_land}, the dynamical ray of $\sigma_{a_0}$ at angle $\theta$ lands at some repelling or parabolic pre-periodic point $w$ (as $\theta$ is strictly pre-periodic under $\rho$, the landing point cannot be periodic). Let us suppose that $\sigma_{a_0}$ is a parabolic map. Note that as the landing point of the dynamical $\theta$-ray of $\sigma_{a_0}$ is not periodic, the ray does not land on the parabolic periodic point on the boundary of the Fatou component of $\sigma_{a_0}$ containing the critical value $2$. Since $\partial K_{a_0}$ is locally connected and repelling periodic points are dense on the limit set (these properties hold true for the straightened map $R_{\chi(a_0)}$, and are preserved under hybrid equivalences), it follows that there exists a cut-line through repelling periodic points on $\partial K_{a_0}$ separating the $\theta$-dynamical ray from the critical value $2$. But such cut-lines remain stable under small perturbation. Therefore, for parameters sufficiently close to $a_0$, the $\theta$-dynamical ray stays away from the critical value $2$. However, this is impossible as there are parameters near $a_0$ on the $\theta$-parameter ray for which the critical value $2$ lies on the $\theta$-dynamical ray. This contradiction shows that $\sigma_{a_0}$ is not parabolic; i.e., the dynamical ray of $\sigma_{a_0}$ at angle $\theta$ lands at some repelling pre-periodic point $w$.

We suppose that $w$ is not the critical value $2$ of $\sigma_{a_0}$, and will arrive at a contradiction. If $w$ is not a pre-critical point either, then for nearby parameters, the $\theta$-dynamical ray would land at the real-analytic continuation of the repelling pre-periodic point $w$, and would stay away from the critical value $2$. But there are parameters near $a_0$ on the $\theta$-parameter ray for which the critical value $2$ lies on the $\theta$-dynamical ray, a contradiction. Hence $w$ must be a pre-critical point implying that the critical value $2$ of $\sigma_{a_0}$ is strictly pre-periodic. So $a_0$ is a critically pre-periodic parameter. This implies that $\partial K_{a_0}$ is a dendrite and repelling periodic points are dense on it (once again, these properties hold true for the straightened map $R_{\chi(a_0)}$, and are preserved under hybrid equivalences). Hence, the dynamical ray at angle $\theta$ landing at $w$ can be separated from the critical value $2$ by a pair of dynamical rays landing at a common repelling periodic point. Once again, this separation line remains stable under perturbation, contradicting the existence of parameters near $a_0$ on the $\theta$-parameter ray. Hence, $w$ must be the critical value $2$ of $\sigma_{a_0}$.

We claim that $a_0$ is the unique parameter in $\cC(\mathcal{S})$ with the property that the dynamical ray at angle $\theta$ lands at the critical value $2$. Since the limit set of a ray is connected, this will prove that the parameter ray at angle $\theta$ indeed lands at $a_0$.

To prove the claim, let us assume that there exists another parameter $a_1$ with the same property. Clearly, for each of the maps $R_{\chi(a_0)}$ and $R_{\chi(a_1)}$, the critical value $-1$ is strictly pre-periodic. Moreover, by Proposition~\ref{hybrid_preserves_symbol}, the $\mathcal{E}(\theta)$-dynamical rays of $R_{\chi(a_0)}$ and $R_{\chi(a_1)}$ land at the corresponding critical values $R_{\chi(a_0)}(-1)$ and $R_{\chi(a_1)}(-1)$. It now follows that the external parameter ray of the parabolic Tricorn at angle $\mathcal{E}(\theta)$ lands both at $\chi(a_0)$ and $\chi(a_1)$ implying that $\chi(a_0)=\chi(a_1)$. Since $\chi$ is injective (Proposition~\ref{chi_injective_prop}), we conclude that $a_0=a_1$. This completes the proof.

2) Let $\mathcal{A}\subset(1/3,2/3)$ be the set of angles of the dynamical rays of $\sigma_{a_0}$ that land at the critical value $2$. By the first part of this proposition, the angles of the parameter rays (at pre-periodic angles) of $\mathcal{S}$ landing at $a_0$ are contained in $\mathcal{A}$.

Now pick $\theta\in\mathcal{A}$, and let $a_1$ be the landing point of the parameter ray of $\mathcal{S}$ at angle $\theta$. Then, the dynamical ray of $\sigma_{a_1}$ at angle $\theta$ lands at the critical value $2$. By the proof of the first part, we know that there can be at most one parameter in $\cC(\mathcal{S})$ whose dynamical $\theta$-ray lands at the critical value $2$. Therefore, $a_0=a_1$, i.e., the parameter ray of $\mathcal{S}$ at angle $\theta$ lands at $a_0$. As $\theta$ was an arbitrary element of $\mathcal{A}$, it follows that all parameter rays at angles in $\mathcal{A}$ land at $a_0$. The proof is now complete.
\end{proof}

\begin{lemma}[Counting Dyadic Tips]\label{dyadic_tips_schwarz}
The number of dyadic tips of $\cC(\mathcal{S})$ of pre-period $k$ is equal to the number of dyadic tips of $\cC(\mathfrak{L}_0)$ of pre-period $k$.
\end{lemma}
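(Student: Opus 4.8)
The plan is to reduce the counting problem on both sides to a purely combinatorial count of pre-images of the cusp angle under the respective external maps, and then transport that count across the circle conjugacy $\mathcal{E}$.

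First I would set up the dynamical description of dyadic tips in $\cC(\mathcal{S})$. If $a\in\cC(\mathcal{S})$ is a dyadic tip of pre-period $k$, then $\sigma_a^{\circ k}(2)=-2$ with $k$ minimal. By Proposition~\ref{per_rays_land}(2), the $\frac13$-dynamical ray of $\sigma_a$ lands at $-2$, and the iterated pre-images of this ray land at the iterated pre-images of $-2$; consequently the critical value $2$ is the landing point of a dynamical ray at a unique angle $\theta_a\in\mathrm{Per}(\rho)$ with $\rho^{\circ k}(\theta_a)=\frac13$ and $k$ minimal. Denote by $D_k^\rho\subset\mathrm{Per}(\rho)$ the set of such angles (exact pre-period $k$ pre-images of the cusp angle $\frac13=\frac23$), and by $D_k^B\subset\mathrm{Per}(B)$ the analogous set of exact pre-period $k$ pre-images of the cusp angle $0$ under $B$. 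The assignment $a\mapsto\theta_a$ then gives a well-defined map from dyadic tips of pre-period $k$ into $D_k^\rho$.

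Next I would promote this to a bijection between dyadic tips and dyadic angles, using the escape-locus uniformization $\pmb{\Psi}$ of Theorem~\ref{escape_unif_thm} and the parameter rays of $\mathcal{S}$ (Definition~\ref{para_ray_schwarz}). Following the standard Douady--Hubbard scheme (as adapted to the anti-holomorphic setting), one shows that for each $\theta\in D_k^\rho$ the parameter ray of $\mathcal{S}$ at angle $\theta$ lands at a dyadic tip of pre-period $k$, and that at this tip the dynamical ray at the same angle $\theta$ lands at the critical value $2$; this is the usual ``parameter angle $=$ dynamical angle at the critical value'' correspondence, and it realizes every $\theta\in D_k^\rho$ by a tip. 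Rigidity of these combinatorially defined (pre-cusp) parameters --- which follows from the same hybrid-conjugacy and Weyl's-lemma argument used in Proposition~\ref{chi_injective_prop} and in the uniqueness part of Theorem~\ref{straightening_schwarz} --- shows that a tip is uniquely determined by its collection of landing angles, and that the number of rays landing at a given tip is a combinatorial invariant of the angle. Together these statements identify the number of dyadic tips of pre-period $k$ with the cardinality of $D_k^\rho$ divided by the fixed number of rays per tip.

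Carrying out exactly the same analysis for $\cC(\mathfrak{L}_0)$ --- now using the parameter rays and the combinatorics of $B$ supplied in Appendix~\ref{anti_rational_parabolic} and Definition~\ref{def_dyadic_tip_rat} --- identifies the number of dyadic tips of pre-period $k$ in the parabolic Tricorn with $\# D_k^B$ divided by the same per-tip multiplicity, since the two external maps have identical Markov structure given by the matrix $M$. Finally, since $\mathcal{E}:\partial\mathcal{Q}\to\mathbb{T}$ conjugates $\rho$ to $B$ and sends the cusp angle $\frac13=\frac23$ to the cusp angle $0$, it restricts for every $k$ to a bijection from $D_k^\rho$ onto $D_k^B$; hence $\#D_k^\rho=\#D_k^B$, and the two counts of dyadic tips agree. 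The main obstacle is the middle step: proving that the parameter rays of $\mathcal{S}$ at dyadic angles actually land, that they land at dyadic tips of the correct pre-period, and that the resulting correspondence is bijective. This is precisely where the escape-locus uniformization of Theorem~\ref{escape_unif_thm} and a contraction/landing argument in the hyperbolic metric of $\D_2$ are required, together with the rigidity of pre-cusp parameters; once this landing theory is in place in the Schwarz family, the symmetric treatment on the rational side and the transport by $\mathcal{E}$ are routine.
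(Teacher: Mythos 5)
Your proposal is correct and follows essentially the same route as the paper: reduce the count on each side to the set of angles of exact pre-period $k$ over the cusp angle under the external map (using Proposition~\ref{per_rays_land} for uniqueness of the ray at the critical value, and the adapted Douady--Hubbard parameter-ray landing theory from Theorem~\ref{escape_unif_thm} for the bijection between tips and angles), then transport the count through the conjugacy $\mathcal{E}$ between $\rho$ and $B$. The paper does exactly this, citing the adaptation of the corresponding parameter-ray results of \cite{LLMM2} for the middle step and noting the per-tip multiplicity is one on both sides, so the two counts coincide with $\#\{\theta:\rho^{\circ k}(\theta)=\tfrac13,\ \rho^{\circ(k-1)}(\theta)\neq\tfrac13\}$ and its $B$-analogue.
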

\begin{proof}
Note that by Proposition~\ref{per_rays_land}, in the dynamical plane of every dyadic tip $a_0$ of pre-periodic $k\geq1$, there exists a unique dynamical ray at angle $\theta$ landing at the critical value $2$ such that $\rho^{\circ k}(\theta)=\frac13$. It now follows from Lemma~\ref{misi_rays} that the number of dyadic tips of $\cC(\mathcal{S})$ of pre-period $k\geq1$ is equal to 
$$
\#\lbrace\theta\in(\frac13,\frac23):\rho^{\circ k}(\theta)=\frac13,\ \rho^{\circ (k-1)}(\theta)\neq\frac13\rbrace.
$$

A completely analogous argument shows that the number of dyadic tips of $\cC(\mathfrak{L}_0)$ of pre-period $k\geq1$ is equal to 
$$
\#\lbrace\theta\in\R/\Z:B^{\circ k}(\theta)=0,\ B^{\circ (k-1)}(\theta)\neq0\rbrace.
$$

Since $\rho:\partial\mathcal{Q}\to\partial\mathcal{Q}$ and $B:\R/\Z\to\R/\Z$ are topologically conjugate, the cardinalities of the above two sets are equal. The conclusion follows.
\end{proof}

\begin{proposition}[Onto Dyadic Tips of $\mathfrak{L}_0$]\label{onto_dyadic}
$\chi(\cC(\mathcal{S}))$ contains all dyadic tips of $\mathfrak{L}_0$.
\end{proposition}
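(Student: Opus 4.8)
The plan is to reduce the statement to a counting argument on finite sets, using three facts already available: the injectivity of $\chi$ (Proposition~\ref{chi_injective_prop}), the equality of the number of dyadic tips of each pre-period in the two loci (Lemma~\ref{dyadic_tips_schwarz}), and the assertion that $\chi$ carries dyadic tips to dyadic tips \emph{of the same pre-period}. Granting the third fact, surjectivity onto the dyadic tips is automatic: an injection between two finite sets of the same cardinality is a bijection.

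The first thing I would establish is that $\chi$ maps a dyadic tip of $\cC(\mathcal{S})$ of pre-period $k$ to a dyadic tip of $\cC(\mathfrak{L}_0)$ of pre-period $k$. Fix such a tip $a\in\cC(\mathcal{S})$, so $k\geq 1$ is minimal with $\sigma_a^{\circ k}(2)=-2$. The entire critical orbit $2,\sigma_a(2),\dots,\sigma_a^{\circ k}(2)=-2$ lies in $K_a$ (recall $-2$ is a repelling fixed point on $\partial K_a$ by Subsection~\ref{cusp_asymp_sec_1}), so it lies in the domain of the hybrid equivalence $\Phi_a\circ\eta_a$ of Theorem~\ref{straightening_schwarz}. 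This equivalence conjugates $\sigma_a$ to $R_{\chi(a)}$, carries the critical value $2$ to the critical value of $R_{\chi(a)}$ and the cusp point $-2$ to the parabolic fixed point $\infty$ of $R_{\chi(a)}$. Applying the conjugacy relation to $\sigma_a^{\circ k}(2)=-2$ shows that the critical value of $R_{\chi(a)}$ maps to $\infty$ in exactly $k$ steps; equivalently, this can be read off Proposition~\ref{hybrid_preserves_symbol}, since $\mathcal{E}$ sends the unique ray angle $\theta$ landing at $2$ (with $\rho^{\circ k}(\theta)=\tfrac13$) to a ray angle $\mathcal{E}(\theta)$ landing at the critical value of $R_{\chi(a)}$ (with $B^{\circ k}(\mathcal{E}(\theta))=0$). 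Hence $\chi(a)$ is a dyadic tip of $\cC(\mathfrak{L}_0)$ of pre-period $k$.

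Writing $D_k^{\mathcal{S}}$ and $D_k^{\mathfrak{L}_0}$ for the finite sets of dyadic tips of pre-period $k$ in the two loci, the previous paragraph gives $\chi(D_k^{\mathcal{S}})\subseteq D_k^{\mathfrak{L}_0}$, Proposition~\ref{chi_injective_prop} makes $\chi|_{D_k^{\mathcal{S}}}$ injective, and Lemma~\ref{dyadic_tips_schwarz} gives $\lvert D_k^{\mathcal{S}}\rvert=\lvert D_k^{\mathfrak{L}_0}\rvert<\infty$. Therefore $\chi(D_k^{\mathcal{S}})=D_k^{\mathfrak{L}_0}$, so every dyadic tip of pre-period $k$ of $\cC(\mathfrak{L}_0)$ lies in the image of $\chi$; taking the union over $k\geq 1$ yields the claim. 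The main obstacle is the first step, namely guaranteeing that $\chi$ respects the dyadic-tip structure and preserves the pre-period exactly: the relevant critical orbit terminates at the cusp/pinching point $-2$, where $\sigma_a$ admits no local (anti-)holomorphic extension, and at the corresponding parabolic point $\infty$ of $R_{\chi(a)}$. One must therefore know that the hybrid equivalence extends continuously to these boundary points and genuinely conjugates the boundary dynamics there; this is precisely the content packaged in Proposition~\ref{hybrid_preserves_symbol} together with the cusp asymptotics of Subsection~\ref{cusp_asymp_sec_1}, so invoking them sidesteps any direct regularity analysis at $-2$.
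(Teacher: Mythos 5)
Your proof is correct and takes essentially the same approach as the paper: $\chi$ carries dyadic tips of pre-period $k$ to dyadic tips of the same pre-period, and then injectivity (Proposition~\ref{chi_injective_prop}) together with the counting Lemma~\ref{dyadic_tips_schwarz} forces $\chi$ to be a bijection between these finite sets, giving surjectivity onto all tips. The only difference is that you spell out, via the hybrid equivalence and Proposition~\ref{hybrid_preserves_symbol}, the pre-period-preservation step that the paper dismisses as ``evident''.
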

\begin{proof}
Let us fix $k\geq1$. Evidently, $\chi$ maps a dyadic tip of pre-period $k$ of $\cC(\mathcal{S})$ to a dyadic tip of the same pre-period of $\cC(\mathfrak{L}_0)$. By Proposition~\ref{chi_injective_prop} and Lemma~\ref{dyadic_tips_schwarz}, $\chi$ is a bijection between the dyadic tips of pre-period $k$ of $\cC(\mathcal{S})$ and the dyadic tips of the same pre-period of $\cC(\mathfrak{L}_0)$. This completes the proof.
\end{proof}

The importance of the previous lemma stems from the fact that the closure of the dyadic tips of $\cC(\mathfrak{L}_0)$ contains ``most" parameters on $\partial\cC(\mathfrak{L}_0)$. This will allow us to show that the image of $\chi$ is sufficiently large.

\begin{proposition}\label{onto_para}
$\chi(\cC(\mathcal{S}))$ contains all parabolic parameters of $\cC(\mathfrak{L}_0)$.
\end{proposition}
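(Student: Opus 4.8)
The plan is to leverage the two structural facts about the image of $\chi$ that are already in hand: it is closed in $\cC(\mathfrak{L}_0)$ (Lemma~\ref{image_closed}) and it contains every dyadic tip of $\cC(\mathfrak{L}_0)$ (Proposition~\ref{onto_dyadic}). Together these give that $\chi(\cC(\mathcal{S}))$ contains the closure (taken within $\cC(\mathfrak{L}_0)$) of the set $D$ of dyadic tips. The entire proposition then reduces to the combinatorial assertion that every parabolic parameter of $\cC(\mathfrak{L}_0)$ lies in $\overline{D}$ or, for the non-rigid parameters, at least in the quasiconformal saturation of $\overline{D}$.

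First I would dispose of the quasiconformally rigid parabolic parameters, namely those of even period and the parabolic cusps of odd period. For such a parameter $(\alpha_0,A_0)$ the goal is to exhibit a sequence of dyadic tips converging to it. This is where the combinatorial model enters: by the Appendix the Julia-set and boundary combinatorics of the family $\mathfrak{L}_0$ are governed by the Blaschke product $B$ of~(\ref{blaschke_map}) and the associated angle dynamics, and under the topological conjugacy $\mathcal{E}$ this matches the model $\rho$ used on the $\cC(\mathcal{S})$ side. A rigid parabolic parameter is cut out by finitely many periodic parameter angles, and the pre-periodic angles whose $B$-orbit eventually hits the distinguished angle $0$ (precisely the angles carrying dyadic tips) accumulate on those periodic angles. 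Transporting this accumulation to the parameter plane via the landing properties of parameter rays (in the spirit of \cite{LLMM2}) yields dyadic tips converging to $(\alpha_0,A_0)$; since the image is closed and $(\alpha_0,A_0)\in\cC(\mathfrak{L}_0)$, we conclude $(\alpha_0,A_0)\in\chi(\cC(\mathcal{S}))$.

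It remains to treat the simple parabolic parameters of odd period, which lie on parabolic arcs and are not quasiconformally rigid. Rather than approximate each such parameter individually, I would use the quasiconformal invariance of the image. By the analog for $\mathfrak{L}_0$ of Proposition~\ref{parabolic_arcs_schwarz}, all parameters on a fixed parabolic arc $\cC^\sharp$ are quasiconformally conjugate to one another, and the two cusps bounding $\cC^\sharp$ already lie in the image by the rigid case above. Hence it suffices to produce a single parameter of $\cC^\sharp$ in $\chi(\cC(\mathcal{S}))$: once one point of $\cC^\sharp$ is in the image, Corollary~\ref{qc_closed} forces all of $\cC^\sharp$ into it. Such a point is obtained by showing that some sequence of dyadic tips accumulates on $\cC^\sharp$ — using the bifurcation structure of Section~\ref{hyp_comp_sec} (the period-$2k$ components bifurcating across $\cC^\sharp$, Proposition~\ref{ThmBifArc_schwarz}) together with the combinatorial density of pre-periodic angles — whence closedness of the image places the limit point on $\cC^\sharp$ inside $\chi(\cC(\mathcal{S}))$, and the qc-invariance corollary completes the arc.

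I expect the main obstacle to be precisely this accumulation step: establishing that dyadic tips genuinely cluster onto every parabolic parameter (and in particular onto each parabolic arc) \emph{within} $\cC(\mathfrak{L}_0)$, rather than drifting off to the degenerate locus $\overline{\cC(\mathfrak{L}_0)}\setminus\cC(\mathfrak{L}_0)$. The delicate ingredients are the transfer of density of pre-periodic angles under $B$ into genuine accumulation of parameters in the plane, and the control of limits provided (via the closedness of the image) by the properness statement of Lemma~\ref{chi_proper} and the identification of the degenerate boundary. Once this accumulation is secured, the combination of closedness of $\chi(\cC(\mathcal{S}))$ and the quasiconformal-invariance Corollary~\ref{qc_closed} makes the remainder of the argument routine.
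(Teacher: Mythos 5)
Your proposal follows essentially the same route as the paper's proof: the image of $\chi$ contains the closure of the dyadic tips (Proposition~\ref{onto_dyadic} combined with Lemma~\ref{image_closed}), even-period parabolic parameters and at least one point on each parabolic arc are obtained as limits of dyadic tips, and Corollary~\ref{qc_closed} then sweeps the whole arc into the image. The only cosmetic difference is your treatment of parabolic cusps, which you approximate directly by dyadic tips, whereas the paper gets them for free as limits of arc points once every parabolic arc is known to lie in the closed set $\chi(\cC(\mathcal{S}))$ — a cleaner route you may want to adopt, since it avoids any delicate analysis of ray accumulation near cusps.
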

\begin{proof}
Let us first assume that $(\alpha,A)\in\cC(\mathfrak{L}_0)$ is a parabolic parameter of even period. It follows by a straightforward parabolic perturbation argument that $(\alpha,A)$ lies in the closure of the dyadic tips of $\cC(\mathfrak{L}_0)$. Indeed, the iterated pre-images of $\infty$ are dense on the Julia set of $R_{\alpha,A}$, and these can be followed continuously when the parameter $(\alpha,A)$ is slightly perturbed. Using \cite[Proposition~2.2]{Lei1}, one can slightly perturb $(\alpha,A)$ to ensure that for such a perturbed map, the parabolic cycle splits into repelling cycles, and the critical point $-1$ eventually escapes through the `gates' formed by these repelling periodic points and lands on an iterated pre-image of $\infty$. Clearly, this produces dyadic tips of $\cC(\mathfrak{L}_0)$ arbitrarily close to $(\alpha,A)$. Proposition~\ref{onto_dyadic} and Lemma~\ref{image_closed} now imply that $(\alpha,A)\in\chi(\cC(\mathcal{S}))$.

Now let $(\alpha,A)\in\cC(\mathfrak{L}_0)$ be a simple parabolic parameter of odd period. Then, $(\alpha,A)$ lies on some parabolic arc $\cC$. Using the parabolic perturbation arguments of \cite[Theorem~7.3]{HS} and \cite[Proposition~2.2]{Lei1}, one can show that some parameter $(\alpha',A')\in\cC$ (lying on the non-bifurcating sub-arc of $\cC$) belongs to the closure of dyadic tips. Once again, it follows by Proposition~\ref{onto_dyadic} and Lemma~\ref{image_closed} that $(\alpha',A')\in\chi(\cC(\mathcal{S}))$. Finally, since $R_{\alpha,A}$ and $R_{\alpha',A'}$ are quasiconformally conjugate, Corollary~\ref{qc_closed} implies that $(\alpha,A)\in\chi(\cC(\mathcal{S}))$.

Since parabolic cusps lie on the boundary of parabolic arcs, and every parabolic arc of $\cC(\mathfrak{L}_0)$ is contained in $\chi(\cC(\mathcal{S}))$, it follows from Lemma~\ref{image_closed} that the parabolic cusps of $\cC(\mathfrak{L}_0)$ are also contained in the image of $\chi$.
\end{proof}

\begin{proposition}[Onto Hyperbolic Components]\label{onto_hyp}
$\chi(\cC(\mathcal{S}))$ contains every hyperbolic component of $\cC(\mathfrak{L}_0)$.
\end{proposition}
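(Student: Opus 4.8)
The plan is to exhibit, for each hyperbolic component $H^\sharp$ of $\cC(\mathfrak{L}_0)$, a hyperbolic component $H$ of $\cC(\mathcal{S})$ with $\chi(H)=H^\sharp$, by locating a parabolic boundary parameter of $H^\sharp$ that is already known to lie in the image of $\chi$ and then pulling it back. First, let $k$ be the period of $H^\sharp$, and choose a parabolic parameter $(\alpha_0,A_0)\in\partial H^\sharp$ whose parabolic cycle has exact period $k$: for even $k$ I would take the root (the unique multiplier-$1$ parameter on $\partial H^\sharp$), while for odd $k$ I would take any point on a parabolic arc contained in $\partial H^\sharp$. Such a boundary parameter exists by the structure of hyperbolic components of $\cC(\mathfrak{L}_0)$ developed in Appendix~\ref{anti_rational_parabolic} (the exact analogues of Propositions~\ref{ThmIndiffBdyHyp_schwarz},~\ref{hyp_odd_parabolic} and~\ref{Exactly 3}). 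By Proposition~\ref{onto_para}, $(\alpha_0,A_0)\in\chi(\cC(\mathcal{S}))$, so there is $a_0\in\cC(\mathcal{S})$ with $\chi(a_0)=(\alpha_0,A_0)$.

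Next, since the hybrid conjugacy underlying $\chi$ is conformal on the non-escaping set, it preserves the period and multiplier of cycles; hence $\sigma_{a_0}$ has a parabolic cycle of exact period $k$, and $a_0$ is a parabolic parameter of $\cC(\mathcal{S})$ of period $k$. By Proposition~\ref{ThmIndiffBdyHyp_schwarz}, $a_0$ lies on the boundary of a hyperbolic component $H$ of period $k$ of $\cC(\mathcal{S})$. By Proposition~\ref{continuity_hyperbolic}, $\chi(H)$ is a hyperbolic component of $\cC(\mathfrak{L}_0)$; and since $\chi$ preserves the multiplier (equivalently, Koenigs ratio) of the attracting cycle, $\chi(H)$ also has period $k$.

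It then remains to identify $\chi(H)$ with $H^\sharp$. For this I would use that $\chi$ extends continuously to $a_0$ from within $\overline{H}$: when $k$ is even, $a_0$ is a qc rigid parabolic parameter, so Proposition~\ref{continuity_rigid} applies; when $k$ is odd, Proposition~\ref{homeo_odd_closure} together with Lemma~\ref{arc_to_arc} shows that $\chi$ is a homeomorphism from $\overline{H}$ onto $\overline{\chi(H)}$. In either case $(\alpha_0,A_0)=\chi(a_0)\in\partial\chi(H)$. Thus $(\alpha_0,A_0)$ is a period-$k$ parabolic parameter lying on the boundary of both $H^\sharp$ and $\chi(H)$.

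The main obstacle is precisely this last identification. It rests on the structural uniqueness statements in $\cC(\mathfrak{L}_0)$: the root of an even-period hyperbolic component determines that component uniquely, and a parabolic arc bounds a unique odd-period hyperbolic component (so the parabolic arc through $(\alpha_0,A_0)$, being the $\chi$-image of the arc $\cC\subset\partial H$ by Lemma~\ref{arc_to_arc}, is a common boundary arc of $H^\sharp$ and $\chi(H)$). Granting these uniqueness facts, which follow from the description of hyperbolic components and their boundaries in $\cC(\mathfrak{L}_0)$, we conclude $\chi(H)=H^\sharp$. The period-one component is handled identically, using the parabolic arcs on $\partial\widetilde{H}$ furnished by Proposition~\ref{hyp_one_not_proper} and the uniqueness of the period-one component in each of $\cC(\mathcal{S})$ and $\cC(\mathfrak{L}_0)$. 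Since $H^\sharp$ was arbitrary, $\chi(\cC(\mathcal{S}))$ contains every hyperbolic component of $\cC(\mathfrak{L}_0)$.
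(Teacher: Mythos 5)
Your overall strategy---pulling back a distinguished parabolic boundary parameter of $H^\sharp$ via Proposition~\ref{onto_para} and then identifying the hyperbolic component it bounds---is the same as the paper's, and two of your cases are essentially sound. For odd period your route is a legitimate variant of the paper's: the paper pulls back the parabolic \emph{cusp} on $\partial H^\sharp$ and uses qc rigidity of cusps together with Proposition~\ref{continuity_rigid}, whereas you pull back an arc point and invoke Lemma~\ref{arc_to_arc} and Proposition~\ref{homeo_odd_closure}; both hinge on a comparable structural uniqueness fact (a parabolic arc, resp.\ cusp, lies on the boundary of a unique odd-period hyperbolic component).

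The genuine gap is in your even-period case. Hyperbolic components of $\cC(\mathfrak{L}_0)$ of even period $2k$ come in two kinds (cf.\ Proposition~\ref{comb_class_para}): those not bifurcating from an odd-period component, which do have a root (a qc rigid parabolic parameter of period $2k$ with multiplier $1$, so your argument works there), and those bifurcating from an odd-period component of period $k$. For the latter there is \emph{no} root: the locus on $\partial H^\sharp$ where the multiplier of the attracting $2k$-cycle tends to $1$ is not a point but a whole sub-arc of the parent's parabolic arc (Proposition~\ref{index_increasing_1}), and the parameters on it have parabolic cycles of the \emph{odd} period $k$, not $2k$; moreover they are simple parabolic parameters, hence qc deformable, so Proposition~\ref{continuity_rigid} does not apply to them (indeed $\chi$ may be discontinuous there, cf.\ Subsection~\ref{chi_discont_subsec}). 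Thus your prescribed choice, ``the unique multiplier-$1$ parameter on $\partial H^\sharp$ whose parabolic cycle has exact period equal to the period of $H^\sharp$,'' does not exist for these components, and this whole class of components is left uncovered by your argument. The paper treats it as a separate case: it pulls back the period-$k$ parabolic \emph{cusp} on $\partial H^\sharp$ (furnished by Propositions~\ref{ThmBifArc_para} and~\ref{index_increasing_1}); the cusp is qc rigid, so continuity of $\chi$ holds at its preimage, its preimage is a cusp bounding a period-$k$ component from which a period-$2k$ component of $\cC(\mathcal{S})$ bifurcates (Propositions~\ref{ThmIndiffBdyHyp_schwarz},~\ref{ThmBifArc_schwarz}), and the fact that a cusp lies on the boundary of a unique period-$2k$ component yields the identification. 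Your proof can be completed along these lines (or by pulling back a boundary parameter of period $2k$ with multiplier $e^{2\pi i p/q}$, $q\geq 2$, which is qc rigid), but as written it is incomplete.
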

\begin{proof}
Let us first consider a hyperbolic component $H$ of odd period $k$ of $\cC(\mathfrak{L}_0)$. Then there is a parabolic cusp $(\alpha,A)$ of period $k$ on $\partial H$. By Proposition~\ref{onto_para}, there exists $a\in\cC(\mathcal{S})$ with $\chi(a)=(\alpha,A)$. Evidently, $a$ is a parabolic cusp of period $k$ of $\cC(\mathcal{S})$. By Proposition~\ref{ThmIndiffBdyHyp_schwarz}, $a$ lies on the boundary of a hyperbolic component $H_1$ of period $k$ of $\cC(\mathcal{S})$. Choose a sequence $\{a_n\}_n\in H_1$ converging to $a$. Since $\chi(a)$ is quasiconformally rigid in $\cC(\mathfrak{L}_0)$, it follows by Proposition~\ref{continuity_rigid} that $\{\chi(a_n)\}_n$ converges to $\chi(a)=(\alpha,A)$. But $\{\chi(a_n)\}_n$ is contained in the hyperbolic component $\chi(H_1)$ of period $k$, and hence, $(\alpha,A)\in\partial\chi(H_1)$. However, a parabolic cusp lies on the boundary of a unique $k$-periodic hyperbolic component. Hence we must have $\chi(H_1)=H$. 

We now consider a hyperbolic component $H$ of period $2k$ for some odd integer $k$ (of $\cC(\mathfrak{L}_0)$) such that $H$ bifurcates from a hyperbolic component of period $k$. By Propositions~\ref{ThmBifArc_para} and~\ref{index_increasing_1}, there is a parabolic cusp $(\alpha,A)$ of period $k$ on $\partial H$. By Proposition~\ref{onto_para}, there exists a parabolic cusp $a\in\cC(\mathcal{S})$ of period $k$ with $\chi(a)=(\alpha,A)$. By Proposition~\ref{ThmIndiffBdyHyp_schwarz}, $a$ lies on the boundary of a hyperbolic component $H_1$ of period $k$ of $\cC(\mathcal{S})$, and by Proposition~\ref{ThmBifArc_schwarz}, there is a hyperbolic component $H_1'$ of period $2k$ of $\cC(\mathcal{S})$ bifurcating from $H_1$ across $a$. An argument similar to the one used in the previous case now shows that $\chi(H_1')=H$. 

Finally, let $H$ be a hyperbolic component of even period $k$ (of $\cC(\mathfrak{L}_0)$) not bifurcating from any hyperbolic component of odd period. In this case, $H$ has a unique root point $(\alpha,A)$, which is an even-periodic parabolic parameter where the multiplier map of $H$ takes the value $+1$. By Proposition~\ref{onto_para}, there exists $a\in\cC(\mathcal{S})$ with $\chi(a)=(\alpha,A)$. By Proposition~\ref{ThmIndiffBdyHyp_schwarz} and Proposition~\ref{ThmEvenBif_schwarz}, $a$ lies on the boundary of a hyperbolic component $H_1$ of period $k$ of $\cC(\mathcal{S})$. Since $(\alpha,A)$ is quasiconformally rigid in $\cC(\mathfrak{L}_0)$ and $(\alpha,A)$ lies on the boundary of a unique $k$-periodic hyperbolic component, it follows by the same line of arguments used in the previous cases that $\chi(H_1)=H$. 
\end{proof}

We summarize the above results in the following corollary.

\begin{corollary}\label{onto_hyperbolic_closure}
$\chi(\cC(\mathcal{S}))$ contains the closure of all hyperbolic parameters in the parabolic Tricorn $\cC(\mathfrak{L}_0)$.
\end{corollary}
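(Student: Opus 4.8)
The plan is to obtain this statement as a formal consequence of the two surjectivity inputs established just above, namely the fact that the image $\chi(\cC(\mathcal{S}))$ already engulfs every hyperbolic component of the parabolic Tricorn, together with the topological closedness of this image. First I would invoke Proposition~\ref{onto_hyp}, which shows that $\chi(\cC(\mathcal{S}))$ contains every hyperbolic component of $\cC(\mathfrak{L}_0)$ (treating separately the odd period components, the period $2k$ components bifurcating from odd period ones, and the even period primitive components via Propositions~\ref{ThmIndiffBdyHyp_schwarz}, \ref{ThmEvenBif_schwarz}, \ref{ThmBifArc_schwarz}). In particular, the full set of hyperbolic parameters of the parabolic Tricorn is contained in $\chi(\cC(\mathcal{S}))$.

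The only remaining task is to upgrade containment of the hyperbolic parameters to containment of their closure, and for this I would appeal to Lemma~\ref{image_closed}, which asserts that $\chi(\cC(\mathcal{S}))$ is closed in $\cC(\mathfrak{L}_0)$. Since a closed subset of $\cC(\mathfrak{L}_0)$ that contains a given set must also contain its closure taken relative to $\cC(\mathfrak{L}_0)$, I conclude immediately that $\chi(\cC(\mathcal{S}))$ contains the closure of the set of all hyperbolic parameters. This completes the argument in one line once the ingredients are in place.

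I do not anticipate a genuine obstacle at this final step, since all of the substance has been front-loaded into the preceding results: the onto-hyperbolic-components statement rests on the parabolic bifurcation analysis together with continuity of $\chi$ at the quasiconformally rigid parabolic cusps and roots (Proposition~\ref{continuity_rigid}, Proposition~\ref{onto_para}), while the closedness of the image relies on the properness of $\chi$ and $\chi^{-1}$ from Lemma~\ref{chi_proper} feeding into Lemma~\ref{image_closed}. The single point that merits care in the write-up is that the closure here is understood \emph{relative to the ambient connectedness locus} $\cC(\mathfrak{L}_0)$, so that the closedness asserted in Lemma~\ref{image_closed}---closedness \emph{in} $\cC(\mathfrak{L}_0)$ rather than in $\widehat{\C}$ or in $\overline{\cC(\mathfrak{L}_0)}$---is precisely what the deduction requires; no statement is being made about accumulation on $\overline{\cC(\mathfrak{L}_0)}\setminus\cC(\mathfrak{L}_0)$, which is exactly the locus excluded by the properness of Lemma~\ref{chi_proper}.
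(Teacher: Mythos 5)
Your proposal is correct and follows exactly the paper's own proof, which deduces the corollary from Proposition~\ref{onto_hyp} together with Lemma~\ref{image_closed}. Your closing remark that the closure must be taken relative to $\cC(\mathfrak{L}_0)$ (matching the sense in which Lemma~\ref{image_closed} asserts closedness) is a sound clarification consistent with how the paper later uses this statement.
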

\begin{proof}
This follows from Proposition~\ref{onto_hyp} and Lemma~\ref{image_closed}
\end{proof}

\begin{remark}
Conjecturally, hyperbolic parameters are dense in $\cC(\mathfrak{L}_0)$. If this conjecture were true, the straightening map $\chi$ would be a bijection from $\cC(\mathcal{S})$ onto $\cC(\mathfrak{L}_0)$.
\end{remark}

\begin{corollary}\label{onto_ray_impression}
For each $\theta\in\left(\frac13,\frac23\right)$, there exists some $a\in\cC(\mathcal{S})$ such that $\chi(a)$ lies in the impression of the parameter ray (of $\mathfrak{L}_0$) at angle $\theta$.
\end{corollary}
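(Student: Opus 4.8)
The plan is to realize a point of the impression $I_{\cC(\mathfrak{L}_0)}(\theta)$ as a limit of dyadic tips, all of which already lie in the image of $\chi$. First I would fix $\theta\in(\frac13,\frac23)$ and approximate it by dyadic angles. Since $\rho:\partial\mathcal{Q}\to\partial\mathcal{Q}$ (equivalently $B:\mathbb{T}\to\mathbb{T}$ via $\mathcal{E}$) is expansive and admits a Markov partition with the strictly positive transition matrix $M$, the set of angles eventually mapped to $\frac13$ (the dyadic angles) is dense; so I may pick dyadic angles $\theta_n\to\theta$. For each $n$, the parameter ray of $\mathfrak{L}_0$ at angle $\theta_n$ lands at a dyadic tip $(\alpha_n,A_n)\in\partial\cC(\mathfrak{L}_0)$ (the landing statement for pre-periodic parameter rays of $\mathfrak{L}_0$ being the analogue of the one used in Lemma~\ref{dyadic_tips_schwarz}), and by Proposition~\ref{onto_dyadic} we have $(\alpha_n,A_n)\in\chi(\cC(\mathcal{S}))$.

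Next I would extract a convergent subsequence $(\alpha_n,A_n)\to(\alpha,A)$ using compactness of $\cC(\mathfrak{L}_0)$, and verify two complementary facts about the limit. On the dynamical side, the points $(\alpha_n,A_n)=\chi(a_n)$ all lie in $\chi(\cC(\mathcal{S}))$, which is closed in $\cC(\mathfrak{L}_0)$ by Lemma~\ref{image_closed}; hence $(\alpha,A)\in\chi(\cC(\mathcal{S}))$ and there is $a\in\cC(\mathcal{S})$ with $\chi(a)=(\alpha,A)$. On the parameter side, I would show $(\alpha,A)\in I_{\cC(\mathfrak{L}_0)}(\theta)$: for each $n$ choose a point $z_n$ on the parameter ray at angle $\theta_n$ so close to $\partial\cC(\mathfrak{L}_0)$ that its image under the escape-locus uniformization of $\mathfrak{L}_0$ is within $\frac1n$ of $(\alpha_n,A_n)$, while $z_n$ lies within $\frac1n$ of the prime end at angle $\theta_n$. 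Then $z_n$ converges to the prime end at angle $\theta$ and its uniformized images converge to $(\alpha,A)$, so $(\alpha,A)$ lies in the impression by definition. Combining the two facts gives $a\in\cC(\mathcal{S})$ with $\chi(a)\in I_{\cC(\mathfrak{L}_0)}(\theta)$, as required.

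The genuinely delicate step is the second one, namely that accumulation points of the landing values of rays at angles $\theta_n\to\theta$ necessarily fall into the impression of the ray at the limiting angle $\theta$. This is a standard prime-end continuity argument, but it rests on the escape-locus uniformization of $\mathfrak{L}_0$ (established in the appendix) being a homeomorphism with the expected boundary behavior, so that the diagonal choice of feet $z_n$ does produce a sequence in the parameter domain genuinely converging to the prime end at angle $\theta$. Everything else—the density of dyadic angles, the compactness extraction, and the closedness of the image—is routine given Proposition~\ref{onto_dyadic} and Lemma~\ref{image_closed}.
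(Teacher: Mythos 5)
Your argument is essentially the paper's own proof with the details filled in: the paper deduces the corollary from Proposition~\ref{onto_dyadic} and Lemma~\ref{image_closed} together with the (uncited, unproved) fact that every parameter-ray impression meets the closure of the dyadic tips of $\cC(\mathfrak{L}_0)$, and your density-of-dyadic-angles plus prime-end/diagonal argument is precisely a proof of that fact. One caveat: $\cC(\mathfrak{L}_0)$ is \emph{not} compact ($\overline{\cC(\mathfrak{L}_0)}\setminus\cC(\mathfrak{L}_0)$ consists of the maps with a degenerate parabolic point at $\infty$), so extracting a subsequential limit of the dyadic tips \emph{inside} $\cC(\mathfrak{L}_0)$ --- which is needed both to invoke Lemma~\ref{image_closed} and for the conclusion itself --- requires ruling out accumulation on that degenerate locus rather than an appeal to compactness; this point is also left implicit in the paper's one-line proof.
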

\begin{proof}
This follows from Lemma~\ref{image_closed} and the fact that the impression of every parameter ray intersects the closure of all dyadic tips of $\cC(\mathfrak{L}_0)$.
\end{proof}

\section{Homeomorphism between Model Spaces}\label{model_homeo}

The goal of this section is to show that the straightening map $\chi$ can be slightly modified so that it induces a homeomorphism between the ``abstract connectedness locus'' $\widetilde{\cC(\mathcal{S})}$ (defined below) and the abstract parabolic Tricorn $\widetilde{\cC(\mathfrak{L}_0)}$ (see Appendix~\ref{anti_rational_parabolic}).

\subsection{The Abstract Connectedness Locus $\widetilde{\cC(\mathcal{S})}$}\label{abstract_conn_locus_schwarz}

Let $a\in\cC(\mathcal{S})$. Recall that by Proposition~\ref{per_rays_land}, all dynamical rays of $\sigma_a$ at angles in $\mathrm{Per}(\rho)$ (i.e., at pre-periodic angles) land on $\partial K_a$.

\begin{definition}[Pre-periodic Laminations, and Combinatorial Classes]\label{schwarz_lami_comb_class_def}

\noindent\begin{enumerate}\upshape
\item For $a\in\cC(\mathcal{S})$, the \emph{pre-periodic lamination} of $\sigma_a$ is defined as the equivalence relation on $\mathrm{Per}(\rho)\subset\partial\mathcal{Q}\cong\faktor{\left[\frac13,\frac23\right]}{\{\frac13\sim\frac23\}}$ such that $\theta, \theta'\in\mathrm{Per}(\rho)$ are related if and only if the corresponding dynamical rays land at the same point of $\partial K_a$.

\item Two parameters $a$ and $a'$ in $\cC(\mathcal{S})$ are said to be \emph{combinatorially equivalent} if they have the same pre-periodic lamination.  

\item The combinatorial class $\mathrm{Comb}(a)$ of $a\in\cC(\mathcal{S})$ is defined as the set of all parameters in $\cC(\mathcal{S})$ that are combinatorially equivalent to $a$.

\item A combinatorial class $\mathrm{Comb}(a)$ is called \emph{periodically repelling} if for every $a'\in\mathrm{Comb}(a)$, each periodic orbit of $\sigma_a$ is repelling.
\end{enumerate}
\end{definition} 

\begin{proposition}\label{straightening_preserves_comb_class} 
For $a\in\cC(\mathcal{S})$, the homeomorphism $\mathcal{E}:\partial\mathcal{Q}\to\R/\Z$ maps the pre-periodic lamination of $\sigma_a$ onto the pre-periodic lamination of $R_{\chi(a)}$. As a consequence, two parameters $a$ and $a'$ in $\cC(\mathcal{S})$ are combinatorially equivalent if and only if $\chi(a),\chi(a')\in\cC(\mathfrak{L}_0)$ are so.
\end{proposition}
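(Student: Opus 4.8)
The plan is to deduce the proposition essentially as a corollary of Proposition~\ref{hybrid_preserves_symbol}, which already records the compatibility of ray-landing patterns under the hybrid equivalence $\Phi_a\circ\eta_a$. The key observation I would exploit is that $\Phi_a\circ\eta_a$ restricts to a homeomorphism from $\partial K_a$ onto $\mathcal{J}_{\chi(a)}=\partial\mathcal{B}_{\chi(a)}$ conjugating $\sigma_a$ to $R_{\chi(a)}$ on these boundaries; in particular it carries (pre-)periodic points of $\sigma_a$ bijectively onto (pre-)periodic points of $R_{\chi(a)}$. Since by Proposition~\ref{per_rays_land} (and its counterpart in Appendix~\ref{anti_rational_parabolic}) every angle in $\mathrm{Per}(\rho)$ (resp. $\mathrm{Per}(B)$) is the angle of a ray landing at such a point, each equivalence class of the pre-periodic lamination is exactly the (finite, by Proposition~\ref{rep_para_landing_point}) set of angles whose rays share a common landing point.

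For the first assertion, I would check the two inclusions. Fix $\theta,\theta'\in\mathrm{Per}(\rho)$. If they are equivalent in the lamination of $\sigma_a$, then the dynamical rays of $\sigma_a$ at angles $\theta,\theta'$ land at a common point $w\in\partial K_a$; by Proposition~\ref{hybrid_preserves_symbol} the rays of $R_{\chi(a)}$ at angles $\mathcal{E}(\theta),\mathcal{E}(\theta')$ both land at $\Phi_a\circ\eta_a(w)$, so $\mathcal{E}(\theta)$ and $\mathcal{E}(\theta')$ are equivalent in the lamination of $R_{\chi(a)}$. For the converse, suppose $\mathcal{E}(\theta),\mathcal{E}(\theta')$ land at a common point $w'\in\mathcal{J}_{\chi(a)}$; writing $w'=\Phi_a\circ\eta_a(w)$ for the (pre-)periodic point $w\in\partial K_a$, Proposition~\ref{hybrid_preserves_symbol} says that $\mathcal{E}$ maps the full set of angles of $\sigma_a$-rays landing at $w$ bijectively onto the full set of angles of $R_{\chi(a)}$-rays landing at $w'$. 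As $\mathcal{E}$ is injective and $\mathcal{E}(\theta),\mathcal{E}(\theta')$ lie in the latter set, $\theta,\theta'$ lie in the former, i.e. their rays land at the common point $w$. Hence $\mathcal{E}$ carries the lamination of $\sigma_a$ onto that of $R_{\chi(a)}$.

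The second assertion then follows formally because $\mathcal{E}:\faktor{\left[\frac13,\frac23\right]}{\{\frac13\sim\frac23\}}\to\R/\Z$ is a single fixed homeomorphism, independent of the parameter. Indeed, $a$ and $a'$ are combinatorially equivalent precisely when the pre-periodic laminations of $\sigma_a$ and $\sigma_{a'}$ coincide as equivalence relations on $\mathrm{Per}(\rho)$; applying the bijection $\mathcal{E}$ to both, this holds if and only if the $\mathcal{E}$-images coincide, which by the first assertion are exactly the pre-periodic laminations of $R_{\chi(a)}$ and $R_{\chi(a')}$. Thus $a$ and $a'$ are combinatorially equivalent if and only if $\chi(a)$ and $\chi(a')$ are so. The only step requiring genuine care is the converse direction of the first assertion, where one must know that every (pre-)periodic landing point of $R_{\chi(a)}$ is hit by the conjugacy $\Phi_a\circ\eta_a$ and that the angle sets at corresponding points match exactly (not merely inject); both facts are supplied by Proposition~\ref{hybrid_preserves_symbol} together with the bijectivity of $\Phi_a\circ\eta_a$ and of $\mathcal{E}$ on the relevant (pre-)periodic sets.
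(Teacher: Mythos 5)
Your proposal is correct and follows exactly the paper's route: the paper's own proof is the one-line statement that the result ``follows directly from Proposition~\ref{hybrid_preserves_symbol}'', and your argument is precisely the natural expansion of that citation, with the two inclusions and the parameter-independence of $\mathcal{E}$ spelled out. The points you flag as needing care (surjectivity of the hybrid conjugacy $\Phi_a\circ\eta_a$ from $\partial K_a$ onto $\mathcal{J}_{\chi(a)}$, preservation of (pre-)periodicity, and exact matching of angle sets) are indeed what makes the citation legitimate, and all are supplied by Theorem~\ref{straightening_schwarz} and Proposition~\ref{hybrid_preserves_symbol} as you say.
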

\begin{proof}
This follows directly from Proposition~\ref{hybrid_preserves_symbol}.
\end{proof}

We are now ready to give a complete classification of the non-repelling combinatorial classes of $\cC(\mathcal{S})$.

\begin{proposition}[Classification of Combinatorial Classes]\label{comb_class_schwarz}
Every combinatorial class $\mathrm{Comb}(a)$ of $\cC(\mathcal{S})$ is of one of the following four types.
\begin{enumerate}
\item $\mathrm{Comb}(a)$ consists of an even period hyperbolic component that does not bifurcate from an odd period hyperbolic component, its root point, and the irrationally neutral parameters on its boundary.

\item $\mathrm{Comb}(a)$ consists of an even period hyperbolic component that bifurcates from an odd period hyperbolic component, the unique parabolic cusp and the irrationally neutral parameters on its boundary.

\item $\mathrm{Comb}(a)$ consists of an odd period hyperbolic component and the parabolic arcs on its boundary.

\item $\mathrm{Comb}(a)$ is periodically repelling.
\end{enumerate}
\end{proposition}
\begin{proof}
Let us assume that $\mathrm{Comb}(a)$ is not periodically repelling. Then there exists $\widetilde{a}\in\mathrm{Comb}(a)$ such that $\sigma_{\widetilde{a}}$ has a non-repelling periodic orbit. But then, $R_{\chi(\widetilde{a})}$ has a non-repelling periodic orbit. Therefore, the combinatorial class $\mathrm{Comb}(\chi(\widetilde{a}))=\mathrm{Comb}(\chi(a))$ is not periodically repelling, and hence is of one of the first three types described in Proposition \ref{comb_class_para}.

It now follows by Corollary~\ref{onto_hyperbolic_closure} that $\mathrm{Comb}(\chi(a))$ is contained in the image of $\chi$. By Proposition~\ref{straightening_preserves_comb_class}, $\chi^{-1}(\mathrm{Comb}(\chi(a)))$ is a single combinatorial class containing $a$; i.e., $\chi^{-1}(\mathrm{Comb}(\chi(a)))=\mathrm{Comb}(a)$. Since $\chi$ is injective (Proposition~\ref{chi_injective_prop}), it follows that $\chi$ is a bijection from $\mathrm{Comb}(a)$ onto $\mathrm{Comb}(\chi(a))$. Finally, since $\chi$ preserves hyperbolic, parabolic and irrationally neutral parameters, the result follows.
\end{proof}

\begin{remark}
We conjecture that every periodically repelling combinatorial class of $\cC(\mathcal{S})$ is a point. In light of Corollary~\ref{straightening_preserves_comb_class}, this will follow if the corresponding conjecture for $\cC(\mathfrak{L}_0)$ holds true.
\end{remark}

We are now in a position to define an abstract topological model for $\cC(\mathcal{S})$. We put an equivalence relation $\sim$ on $\mathbb{S}^2$ by
\begin{enumerate}
\item identifying all points in the closure of each periodically repelling combinatorial class of $\cC(\mathcal{S})$,

\item identifying all points in the closure of the non-bifurcating sub-arc of each parabolic arc of $\cC(\mathcal{S})$, and

\item identifying all points in $\mathcal{I}$.
\end{enumerate}

This is a non-trivial closed equivalence relation on the sphere such that all equivalence classes are connected and non-separating. By Moore's theorem, the quotient of the $2$-sphere by $\sim$ is again a $2$-sphere. The image of $\cC(\mathcal{S})$ under this quotient map is non-compact, but adding the class of $\mathcal{I}$ turns it into a full compact subset of the $2$-sphere.

\begin{definition}[Abstract Connectedness Locus of $\mathcal{S}$]\label{abstract_model_schwarz}
The \emph{abstract connectedness locus} of the family $\mathcal{S}$ is defined as the union of the image of $\cC(\mathcal{S})$ under the quotient map (defined by the above equivalence relation) and the class of $\mathcal{I}$. It is denoted by $\widetilde{\cC(\mathcal{S})}$.
\end{definition}

\begin{remark}
It is instructive to mention that the identifications above are designed to ``tame'' the straightening map $\chi$; i.e., to make it surjective and continuous.
\end{remark}

\subsection{Constructing The Homeomorphism}\label{homeo_construct}

In this subsection, we put together all the ingredients developed so far to construct a homeomorphism between the abstract connectedness loci of the families $\mathcal{S}$ and $\mathfrak{L}_0$.

\begin{proof}[Proof of Theorem~\ref{abstract_homeo}]
We will first define a map $\mathfrak{X}$ on $\cC(\mathcal{S})$. We begin by setting it equal to $\chi$ on all of $\cC(\mathcal{S})$ except on the closures of odd period hyperbolic components. 

Now let $H$ be a hyperbolic component of odd period $k$, $\cC$ be a parabolic arc on $\partial H$, and $H'$ be a hyperbolic component of period $2k$ bifurcating from $H$ across $\cC$. By Proposition~\ref{cont_ext}, $\chi\vert_{H'}$ has a continuous extension $\xi$ to $\overline{\cC}\cap\partial H'$. This allows us to extend $\mathfrak{X}$ to the bifurcating arcs of $\partial H$ so that it maps homeomorphically onto the bifurcating arcs of $\partial\chi(H)$. We now extend this map to the rest of $\overline{H}$ so that $\overline{H}$ maps homeomorphically onto $\overline{\chi(H)}$. 

This defines a map $\mathfrak{X}$ (possibly discontinuous on the non-bifurcating sub-arcs of parabolic arcs and on the queer Beltrami arcs) on $\cC(\mathcal{S})$. Note that $\mathfrak{X}$ agrees with $\chi$ on every periodically repelling combinatorial class. By Corollary~\ref{straightening_preserves_comb_class}, $\mathfrak{X}$ maps every periodically repelling combinatorial class of $\cC(\mathcal{S})$ to a periodically repelling combinatorial class of $\cC(\mathfrak{L}_0)$ (not necessarily surjectively). By construction, $\mathfrak{X}$ maps the non-bifurcating sub-arc of every parabolic arc of $\cC(\mathcal{S})$ onto the non-bifurcating sub-arc of the corresponding parabolic arc of $\cC(\mathfrak{L}_0)$. Therefore, $\mathfrak{X}$ descends to a map $\widetilde{\mathfrak{X}}:\widetilde{\cC(\mathcal{S})}\to\widetilde{\cC(\mathfrak{L}_0)}$ which sends the class of $\mathcal{I}=\overline{\cC(\mathcal{S})}\setminus\cC(\mathcal{S})$ to the class of $\overline{\cC(\mathfrak{L}_0)}\setminus\cC(\mathfrak{L}_0)$ (i.e., the class containing maps $R_{\alpha,A}$ with a multiple parabolic fixed point at $\infty$). Moreover, it follows by Propositions~\ref{chi_injective_prop},~\ref{comb_class_schwarz}, and Corollary~\ref{straightening_preserves_comb_class} that $\widetilde{\mathfrak{X}}$ is injective.

By Proposition~\ref{chi_proper}, the map $\widetilde{\mathfrak{X}}$ is continuous at the class of $\mathcal{I}$. By Proposition~\ref{continuity_rigid}, Lemma~\ref{cont_queer} and the above discussion, $\mathfrak{X}$ is continuous everywhere except possibly on the non-bifurcating sub-arcs of parabolic arcs and the queer Beltrami arcs. Since each queer Beltrami arc is contained in a periodically repelling combinatorial class (which are pinched to points in both $\widetilde{\cC(\mathcal{S})}$ and $\widetilde{\cC(\mathfrak{L}_0)}$), Lemma~\ref{beltrami_arc_discont} implies that possible discontinuities of $\mathfrak{X}$ on queer Beltrami arcs do not create discontinuity for the induced map $\widetilde{\mathfrak{X}}$. 

Therefore, to verify continuity of $\widetilde{\mathfrak{X}}$ on $\widetilde{\cC(\mathcal{S})}$, it suffices to look at the classes of the non-bifurcating sub-arcs of the parabolic arcs. To this end, let us fix a parabolic arc $\cC$ on the boundary of an odd period hyperbolic component $H$. Recall that discontinuity of $\mathfrak{X}$ on $\cC$ can only occur from the exterior of the union of $H$ and the hyperbolic components bifurcating from it. Now let $\lbrace a_n\rbrace_n$ be a sequence outside the union of $H$ and the hyperbolic components bifurcating from it such that $\lbrace a_n\rbrace_n$ converges to some point $a\in\cC$. It follows that $\{\mathfrak{X}(a_n)\}_n$ lies outside the union of $\mathfrak{X}(H)$ and the hyperbolic components bifurcating from it. Furthermore, any subsequential limit of $\{\mathfrak{X}(a_n)\}_n$ must lie on $\mathfrak{X}(\cC)$ (by Lemma~\ref{arc_to_arc}). So, any subsequential limit of $\{\mathfrak{X}(a_n)\}_n$ must lie on the non-bifurcating sub-arc of $\mathfrak{X}(\cC)$. Since the non-bifurcating sub-arc of every parabolic arc is collapsed to a point in $\widetilde{\cC(\mathcal{S})}$ and $\widetilde{\cC(\mathfrak{L}_0)}$, it now follows that $\widetilde{\mathfrak{X}}$ is continuous at the class of the non-bifurcating sub-arc of $\cC$. Thus, $\widetilde{\mathfrak{X}}$ is continuous everywhere on $\widetilde{\cC(\mathcal{S})}$.

Every periodically repelling combinatorial class of $\cC(\mathfrak{L}_0)$ contains at least one parameter ray impression. Thus, by Corollary~\ref{onto_ray_impression}, the image of $\mathfrak{X}$ hits every periodically repelling combinatorial class of $\cC(\mathfrak{L}_0)$. This, along with Corollary~\ref{onto_hyperbolic_closure}, implies that the image of $\widetilde{\mathfrak{X}}$ contains the boundary $\partial\widetilde{\cC(\mathfrak{L}_0)}$.

As $\widetilde{\cC(\mathcal{S})}$ is compact, it now follows that $\widetilde{\mathfrak{X}}$ is a homeomorphism from $\widetilde{\cC(\mathcal{S})}$ onto its image. Moreover, $\widetilde{\cC(\mathcal{S})}$ is a full subset of the sphere, and hence has a trivial Alexander-Kolmogorov cohomology. But this property is preserved by homeomorphisms. Thus, the image $\widetilde{\mathfrak{X}}(\widetilde{\cC(\mathcal{S})})$ also has a trivial Alexander-Kolmogorov cohomology, and hence is a full subset of the sphere. It follows that $\widetilde{\mathfrak{X}}(\widetilde{\cC(\mathcal{S})})$ also contains the interior of $\widetilde{\cC(\mathfrak{L}_0)}$.

This shows that the map $\widetilde{\mathfrak{X}}:\widetilde{\cC(\mathcal{S})}\to\widetilde{\cC(\mathfrak{L}_0)}$ is a homeomorphism.
\end{proof}

\section{The Associated Correspondences}\label{sec_corr}
In this section, we will define a correspondence on $\widehat{\C}$ by lifting $\sigma_a^{\pm 1}$ under $f_a$. We will then show that in a suitable sense, the correspondence is a ``mating" of the abstract modular group $\Z/2\Z\ast\Z/3\Z$ and a quadratic anti-rational map. 

\subsection{The Correspondence $\widetilde{\sigma_a}^*$}\label{corr_def}
We now begin with the construction of the correspondence $\widetilde{\sigma_a}\subset\widehat{\C}\times\widehat{\C}$.

Let us first consider $z\in\overline{\D}$. For such $z$, we have $\sigma_a(f_a(z))=f_a(\iota(z))$, where $\iota$ is the reflection in the unit circle. For $z\in\overline{\D}$, we say that 
$$
(z,w)\in\widetilde{\sigma_a} \iff f_a(w)=\sigma_a(f_a(z))=f_a(\iota(z)).
$$ 
Thus, the lifts of $\sigma_a$ under $f_a$ define a correspondence $\widetilde{\sigma_a}$ on $\overline{\D}\times\widehat{\C}$. 

We now turn our attention to $z\in\widehat{\C}\setminus\overline{\D}$. For such $z$, we have that $\sigma_a(f_a(\iota(z)))=f_a(z)$. Choosing a suitable branch of $\sigma_a^{-1}$, we can rewrite the previous relation as $f_a(\iota(z))=\sigma_a^{-1}(f_a(z))$. For $z\in\widehat{\C}\setminus\overline{\D}$, we say that 
$$
(z,w)\in\widetilde{\sigma_a} \iff f_a(w)=\sigma_a^{-1}(f_a(z))=f_a(\iota(z)).
$$ 
Thus, the lifts of (suitable inverse branches of) $\sigma_a^{-1}$ under $f_a$ define a correspondence $\widetilde{\sigma_a}$ on $\left(\widehat{\C}\setminus\overline{\D}\right)\times\widehat{\C}$.

Combining these two definitions of $\widetilde{\sigma_a}$ in the interior and exterior disk, we obtain the $3:3$ correspondence $\widetilde{\sigma_a}\subset\widehat{\C}\times\widehat{\C}$ defined as 
\begin{equation}
(z,w)\in\widetilde{\sigma_a} \iff f_a(w)-f_a(\iota(z))=0.
\label{corr_eqn_1}
\end{equation}

\begin{proposition}\label{inverse_lift_corr}
The correspondence $\widetilde{\sigma_a}$ defined by Equation~(\ref{corr_eqn_1}) contains all possible lifts of $\sigma_a$ (respectively, suitable inverse branches of $\sigma_a^{-1}$) when $z\in\overline{\D}$ (respectively, when $z\in\widehat{\C}\setminus\overline{\D}$) under $f_a$. More precisely,
\begin{itemize}
\item for $z\in\overline{\D}$, we have that $(z,w)\in\widetilde{\sigma_a} \iff f_a(w)=\sigma_a(f_a(z))$, and

\item for $z\in\widehat{\C}\setminus\overline{\D}$, we have that $(z,w)\in\widetilde{\sigma_a}\ \implies\ \sigma_a(f_a(w))=f_a(z)$.
\end{itemize}
\end{proposition}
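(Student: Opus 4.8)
The plan is to read off both assertions directly from the defining relation $f_a(w) = f_a(\iota(z))$ of $\widetilde{\sigma_a}$ in Equation~(\ref{corr_eqn_1}), combined with two elementary facts: $f_a$ is injective on $\overline{\D}$, and $\iota$ (reflection in the unit circle) is an anti-holomorphic involution of $\widehat{\C}$ that interchanges $\overline{\D}$ and $\widehat{\C}\setminus\D$. The only structural ingredient I need is the formula $\sigma_a = f_a \circ \iota \circ (f_a|_{\overline{\D}})^{-1}$ on $\overline{\Omega}_a$ recorded in Subsection~\ref{setup}; everything else is bookkeeping about which region a point lands in.

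For the first bullet I would fix $z \in \overline{\D}$ and use injectivity of $f_a|_{\overline{\D}}$ to write $(f_a|_{\overline{\D}})^{-1}(f_a(z)) = z$, which immediately gives the identity $\sigma_a(f_a(z)) = f_a(\iota(z))$. Substituting this equality into the defining equation $f_a(w) = f_a(\iota(z))$ turns it into $f_a(w) = \sigma_a(f_a(z))$; since I am only replacing one side of an equation by an equal quantity, both implications survive and I obtain the claimed equivalence.

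For the second bullet I would fix $z \in \widehat{\C}\setminus\overline{\D}$, so that $\iota(z) \in \D$ and hence $f_a(\iota(z)) \in \Omega_a$. Assuming $(z,w) \in \widetilde{\sigma_a}$, the relation $f_a(w) = f_a(\iota(z))$ then forces $f_a(w) \in \Omega_a \subset \overline{\Omega}_a$, so $\sigma_a$ is defined at $f_a(w)$. The key computation is $\sigma_a(f_a(w)) = f_a(\iota((f_a|_{\overline{\D}})^{-1}(f_a(w))))$: because $\iota(z) \in \overline{\D}$ and $f_a(w) = f_a(\iota(z))$, injectivity of $f_a|_{\overline{\D}}$ identifies the inverse branch as $(f_a|_{\overline{\D}})^{-1}(f_a(w)) = \iota(z)$, and then $\iota \circ \iota = \mathrm{id}$ collapses the expression to $f_a(\iota(\iota(z))) = f_a(z)$, as required.

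I do not expect a genuine obstacle here; the one point that requires care—and that explains why the second statement is phrased as an implication rather than an equivalence—is the branch of $f_a^{-1}$ implicit in the Schwarz reflection. The map $(f_a|_{\overline{\D}})^{-1}$ always selects the preimage lying in $\overline{\D}$, which in the second bullet is precisely $\iota(z)$, so the computation closes cleanly. The converse of the second bullet genuinely fails because $\sigma_a : \Omega_a' \to \Omega_a$ is two-to-one: a point $w'$ with $\sigma_a(f_a(w')) = f_a(z)$ need not satisfy $f_a(w') = f_a(\iota(z))$, which is exactly why only the forward implication is asserted.
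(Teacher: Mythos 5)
Your proof is correct and follows essentially the same route as the paper, which treats the proposition as an immediate consequence of the construction of $\widetilde{\sigma_a}$: the identity $\sigma_a\circ f_a=f_a\circ\iota$ on $\overline{\D}$ (coming from $\sigma_a=f_a\circ\iota\circ f_a^{-1}\vert_{\overline{\Omega}_a}$), injectivity of $f_a$ on $\overline{\D}$, and $\iota\circ\iota=\mathrm{id}$. Your closing remark correctly identifies why the second statement is only an implication, namely that $\sigma_a$ fails to be injective on $\sigma_a^{-1}(\Omega_a)$.
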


Note that for all $z\in\widehat{\C}$, we have $(z,\iota(z))\in\widetilde{\sigma_a}$. Removing all pairs $(z,\iota(z))$ from the correspondence $\widetilde{\sigma_a}$, we obtain a $2:2$ correspondence $\widetilde{\sigma_a}^*\subset\widehat{\C}\times\widehat{\C}$ defined as 
\begin{equation}
(z,w)\in\widetilde{\sigma_a}^*\iff \frac{f_a(w)-f_a(\iota(z))}{w-\iota(z)}=0;
\label{corr_eqn_2}
\end{equation}
\begin{equation}
\mathrm{i.e.,}\ (z,w)\in\widetilde{\sigma_a}^*\iff  \mathrm{i.e.}\ (\iota_a(z'))^2+(\iota_a(z'))w'+w'^2=3,
\label{corr_eqn_3}
\end{equation}
where $z'=a+(1-a)z$, $w'=a+(1-a)w$, and $\iota_a$ is the reflection with respect to the circle $\partial B(a,\vert 1-a\vert)$.

\begin{remark}
The correspondence $\widetilde{\sigma_a}^*$ defined above is closely related to a holomorphic correspondence defined by Bullett and Penrose \cite{BP}. Indeed, the correspondence $\widetilde{\sigma_a}^*$ is a \emph{map of triples} in the sense of the diagram in Figure~\ref{triples_pic}, and has the form described in \cite[Lemma~2]{BP}. 
\begin{figure}[ht!]
\begin{tikzpicture}
  \node[anchor=south west,inner sep=0] at (0,0) {\includegraphics[width=0.2\textwidth]{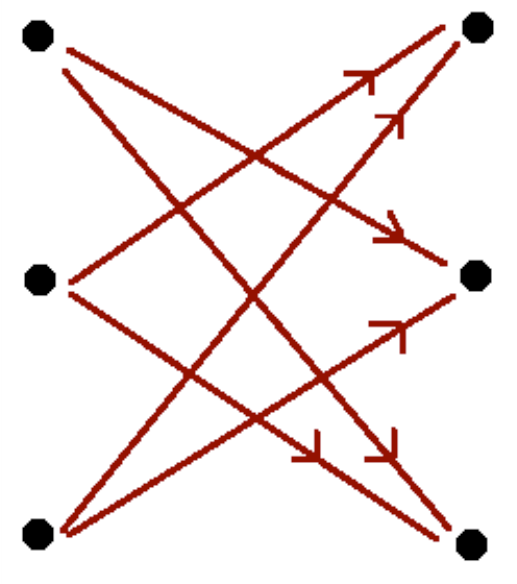}};
  \node at (-0.6,2.8) {$\overline{\D}\ni z_1$};
   \node at (4,2.8) {$w_1=\iota(z_1)\in\widehat{\C}\setminus\D$};
 \node at (-0.9,1.6) {$\widehat{\C}\setminus\D\ni z_2$};
   \node at (3.75,1.6) {$w_2=\iota(z_2)\in\overline{\D}$};
 \node at (-0.6,0.28) {$\overline{\D}\ni z_3$};
   \node at (4,0.28) {$w_3=\iota(z_3)\in\widehat{\C}\setminus\D$};
\end{tikzpicture}
\caption{If $f_a^{-1}(w)=\{w_1, w_2, w_3\}$ for some $w\in\overline{\Omega}_a$, and $z_i=\iota(w_i)$ for $i=1,2,3$, then the forward correspondence sends the triple $\left(z_1, z_2, z_3\right)$ to the triple $\left(w_1, w_2, w_3\right)$ as shown in the figure.}
\label{triples_pic}
\end{figure}
However, the key difference between their correspondence and ours is that the involution $\iota$ in our situation is anti-holomorphic (naturally arising from Schwarz reflections), while the involution in their setting is holomorphic.
\end{remark}

\subsection{Dynamics of $\widetilde{\sigma_a}^*$}\label{group_subsec}

The rest of this section will be devoted to giving a dynamical description of the correspondence $\widetilde{\sigma_a}^*$ for $a\in\cC(\mathcal{S})$. More precisely, we will partition the Riemann sphere into two $\widetilde{\sigma_a}^*$-invariant subsets such that on one of these sets, the dynamics of $\widetilde{\sigma_a}^*$ is equivalent to the action of the group $\Z/2\Z\ast\Z/3\Z$, and on the other, suitable branches of the correspondence are conjugate to a quadratic anti-rational map.

The coexistence of group structure and anti-rational map feature in the correspondence $\widetilde{\sigma_a}^*$ can be roughly explained as follows. Locally, the correspondence $\widetilde{\sigma_a}^*$ splits into two maps that are given by compositions of the reflection map $\iota$ with suitably chosen ``deck maps" of $f_a$ (compare Equation~\ref{corr_eqn_2}). In a punctured neighborhood of $\infty$, the map $f_a$ is a $3:1$ (unbranched) Galois cover with deck transformation group isomorphic to $\Z/3\Z$. This allows us to define a deck transformation of $f_a$ in a neighborhood of $\infty$ which permutes the three points in every fiber of $f_a$ in a fixed point free manner (and fixes $\infty$). The grand orbits of $\widetilde{\sigma_a}^*$ in a neighborhood of $\infty$ are then generated by the action of this order three deck transformation and the involution $\iota$ resulting in the desired group structure. On the other hand, the finite critical points of $f_a$ are obstructions to extending such a deck transformation to the entire plane. To define analogues of deck transformations of $f_a$ on a domain that contains a finite critical point (of $f_a$), we use a specific covering property of $f_a$. Recall that $f_a^{-1}(\Omega_a)=\D\sqcup V_a$ (where $V_a$ is a simply connected domain) such that each point $w$ in $\Omega_a$, except the critical value $2$, has two pre-images in $V_a$ and a unique pre-image in $\D$ (see Figure~\ref{corr_tiling_pic}). One can now define a deck transformation of $f_a$ on $V_a$ that permutes the two pre-images of $w$ in $V_a$, and another ``ramified deck map" that sends both the pre-images of $w$ in $V_a$ to the \emph{unique} pre-image (of $w$) in $\D$. The rational map feature of the correspondence comes directly from the existence of this ramified deck map on $V_a$.

\subsubsection{Dynamical Partition} 

Let us set 
$$
\widetilde{T_a^\infty}:=f_a^{-1}(T_a^\infty),\ \mathrm{and}\ \widetilde{K_a}:=f_a^{-1}(K_a).
$$  

We define tiles of rank $n$ in $\widetilde{T_a^\infty}$ as $f_a$-pre-images of tiles of rank $n$ in $T_a^\infty$. There is a unique rank $0$ tile in $\widetilde{T_a^\infty}$ (which maps as a three-to-one branched cover onto $T_a^0$, branched only at $\infty$). For $n\geq1$, every rank $n$ tile in $T_a^\infty$ lifts to three rank $n$ tiles in $\widetilde{T_a^\infty}$.

Since $K_a$ is connected, it follows that the critical value $2$ of $\sigma_a$ (which is also a critical value of $f_a$) lies in $K_a$. Hence, the $f_a$-pre-images of the finite critical values of $f_a$ lie in $\widetilde{K_a}$; i.e., $f_a^{-1}(\{2,-2\})\subset\widetilde{K_a}$. It is easy to see that $\widetilde{K_a}\cap\mathbb{S}^1=\{1\}$. Moreover, $\widetilde{K_a}\cap\overline{\D}$ is mapped univalently onto $K_a$, and $\widetilde{K_a}\setminus\D$ is mapped as a two-to-one ramified covering onto $K_a$ (ramified only at $\frac{a+1}{a-1}$) by $f_a$. Furthermore, $f_a$ maps $\widetilde{T_a^\infty}$ as a three-to-one branched cover (branched only at $\infty$) onto $T_a^\infty$. It follows that $\widetilde{K_a}$ is a connected, full, compact subset of the plane, and $\widetilde{T_a^\infty}$ is a simply connected domain (see Figure~\ref{schwarz_lift_pic}).

\begin{figure}[ht!]
\begin{center}
\includegraphics[scale=0.28]{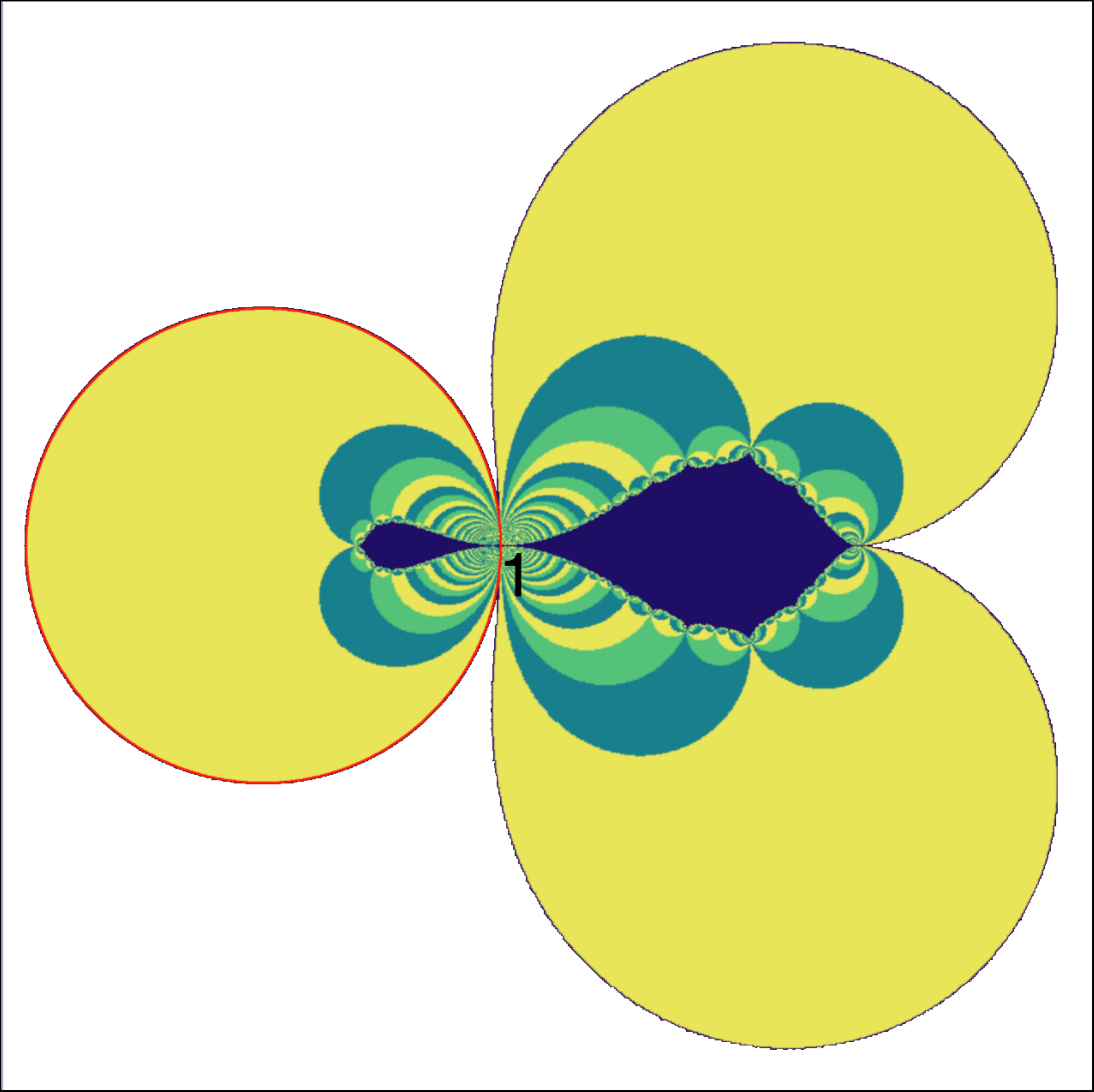}
\caption{The unit circle is marked in red. The dark blue region is $\widetilde{K_a}$, which contains $f_a^{-1}(\{2,-2\})$. Its complement is $\widetilde{T_a^\infty}$, which is a simply connected domain. $\widetilde{T_a^\infty}$ is mapped by $f_a$ as a three-to-one branched cover (branched only at $\infty$) onto $T_a^\infty$. The white region is the rank $0$ tile in $\widetilde{T_a^\infty}$. This is the only tile on which $f_a$ is ramified.}
\label{schwarz_lift_pic}
\end{center}
\end{figure}

\begin{proposition}\label{correspondence_partition}
1) Each of the sets $\widetilde{T_a^\infty}$ and $\widetilde{K_a}$ is completely invariant under the correspondence $\widetilde{\sigma_a}^*$. More precisely, if $(z,w)\in\widetilde{\sigma_a}^*$, then 
$$
z\in\widetilde{T_a^\infty}\iff w\in\widetilde{T_a^\infty},
$$ 
and 
$$z\in\widetilde{K_a}\iff w\in\widetilde{K_a}.
$$

2) $\iota(\widetilde{T_a^\infty})=\widetilde{T_a^\infty}$, and $\iota(\widetilde{K_a})=\widetilde{K_a}$.

\end{proposition}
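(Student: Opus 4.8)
The plan is to reduce the complete invariance under the correspondence to a single, more transparent statement: that each of the sets $\widetilde{T_a^\infty}$ and $\widetilde{K_a}$ is invariant under the reflection $\iota$ in the unit circle. Indeed, since $\widetilde{\sigma_a}^*\subseteq\widetilde{\sigma_a}$, any pair $(z,w)\in\widetilde{\sigma_a}^*$ satisfies the defining relation $f_a(w)=f_a(\iota(z))$ of Equation~(\ref{corr_eqn_1}). Hence $w\in\widetilde{T_a^\infty}$ iff $f_a(w)\in T_a^\infty$ iff $f_a(\iota(z))\in T_a^\infty$ iff $\iota(z)\in\widetilde{T_a^\infty}$, and the same chain holds verbatim with $K_a$ in place of $T_a^\infty$. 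Thus, once I know that $\iota$ preserves $\widetilde{T_a^\infty}$ (resp. $\widetilde{K_a}$), the desired equivalence $z\in\widetilde{T_a^\infty}\iff w\in\widetilde{T_a^\infty}$ (resp. for $\widetilde{K_a}$) follows immediately.

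For the $\iota$-invariance I will use the commutative diagram of Figure~\ref{comm_diag_schwarz}: by the very definition of $\sigma_a$, for $z\in\overline{\D}$ one has $f_a(z)\in\overline{\Omega}_a$ and $\sigma_a(f_a(z))=f_a(\iota(z))$. Combining this with the complete invariance of $T_a^\infty$ (resp. $K_a$) under $\sigma_a$ on its domain $\overline{\Omega}_a$ gives, for $z\in\overline{\D}$,
\begin{align*}
z\in\widetilde{T_a^\infty} &\iff f_a(z)\in T_a^\infty \iff \sigma_a(f_a(z))\in T_a^\infty\\
&\iff f_a(\iota(z))\in T_a^\infty\iff \iota(z)\in\widetilde{T_a^\infty}.
\end{align*}
For $z\in\widehat{\C}\setminus\overline{\D}$ I simply apply this to $\iota(z)\in\D$ and use that $\iota$ is an involution (points of $\mathbb{T}$ are fixed by $\iota$, so there is nothing to check there). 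This establishes that $\iota$ preserves $\widetilde{T_a^\infty}$, and the identical argument works for $\widetilde{K_a}$; alternatively, since $T_a^\infty\sqcup K_a=\widehat{\C}$ and hence $\widetilde{T_a^\infty}\sqcup\widetilde{K_a}=\widehat{\C}$, the $\widetilde{K_a}$ statement is the complementary one and needs no separate proof.

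The only real content to verify is the complete invariance of $T_a^\infty$ and $K_a$ under $\sigma_a$, where one must be careful that $\sigma_a$ is defined only on $\overline{\Omega}_a$. For a point $p\in\overline{\Omega}_a$, if $p\in\sigma_a^{-n}(T_a^0)$ with $n\geq1$ then $\sigma_a(p)\in\sigma_a^{-(n-1)}(T_a^0)$, while if $n=0$ then $p\in T_a^0\cap\overline{\Omega}_a=\partial\Omega_a\setminus\{-2\}$ and $\sigma_a$ fixes $p$; conversely $\sigma_a(p)\in\sigma_a^{-m}(T_a^0)$ forces $p\in\sigma_a^{-(m+1)}(T_a^0)$. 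This gives $p\in T_a^\infty\iff\sigma_a(p)\in T_a^\infty$, and the statement for $K_a$ follows by complementation together with $K_a\subseteq\Omega_a\cup\{-2\}$. I expect this boundary/domain bookkeeping — together with checking the special points $1,-2,\infty,0$ where $f_a$ or $\iota$ degenerates — to be the only delicate part; the rest is a formal consequence of the semiconjugacy and of the fact that $\widetilde{\sigma_a}^*$ is, by construction, the lift of $\sigma_a^{\pm}$ under $f_a$.
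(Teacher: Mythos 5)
Your proof is correct. The paper actually states this proposition without any proof, treating it as immediate from the definitions $\widetilde{T_a^\infty}=f_a^{-1}(T_a^\infty)$, $\widetilde{K_a}=f_a^{-1}(K_a)$ and the defining equation $f_a(w)=f_a(\iota(z))$ of the correspondence; your argument — reducing everything to $\iota$-invariance of the lifted sets via the semiconjugacy $\sigma_a\circ f_a=f_a\circ\iota$ on $\overline{\D}$, together with the (carefully bookkept) invariance of $T_a^\infty$ and $K_a$ under $\sigma_a$ on $\overline{\Omega}_a$ — supplies precisely the routine verification the paper leaves implicit.
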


\subsubsection{Group Structure of $\widetilde{\sigma_a}^*$ on $\widetilde{T_a^\infty}$}\label{group_subsec}

We will now analyze the structure of grand orbits of the correspondence $\widetilde{\sigma_a}^*$ in $\widetilde{T_a^\infty}$. To this end, we need to discuss the deck transformations of $f_a:\widetilde{T_a^\infty}\to T_a^\infty$.

\begin{lemma}\label{deck_group_lemma}
Let $a\in\cC(\mathcal{S})$. Then, $f_a: \widetilde{T_a^\infty}\setminus\{\infty\}\to T_a^\infty\setminus\{\infty\}$ is a regular three-to-one cover with deck transformation group isomorphic to $\Z/3\Z$.
\end{lemma}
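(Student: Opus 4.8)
The plan is to recognize $f_a\colon\widetilde{T_a^\infty}\to T_a^\infty$ as a proper branched covering of degree three whose only branch point is the totally ramified superattracting fixed point $\infty$, and then to delete $\infty$ and identify the resulting unbranched covering by a fundamental-group computation.

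First I would verify that the only critical point of $f_a$ lying in $\widetilde{T_a^\infty}$ is $\infty$. The finite critical points of $f_a$ are $1$ and $\frac{a+1}{a-1}$ (recorded in Subsection~\ref{setup}), with critical values $f_a(1)=-2$ and $f_a(\frac{a+1}{a-1})=2$. Since $-2\notin T_a^0$ by definition and $-2\in\partial K_a$, we have $-2\notin T_a^\infty$; and since $a\in\cC(\mathcal{S})$, the critical value $2$ lies in the non-escaping set $K_a$, so $2\notin T_a^\infty$ as well. Hence neither $1$ nor $\frac{a+1}{a-1}$ belongs to $\widetilde{T_a^\infty}=f_a^{-1}(T_a^\infty)$, and the only critical point of $f_a$ in $\widetilde{T_a^\infty}$ is $\infty$, which is totally ramified ($f_a^{-1}(\infty)=\{\infty\}$ with local degree $3$). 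Because $f_a\colon\widehat{\C}\to\widehat{\C}$ is proper, its restriction to the full preimage of the open set $T_a^\infty$ is a proper holomorphic map; together with the simple connectivity (in particular connectedness) of $\widetilde{T_a^\infty}$ recorded in the paragraph preceding the lemma, this exhibits $f_a\colon\widetilde{T_a^\infty}\to T_a^\infty$ as a degree-three branched cover whose unique branch point is $\infty$.

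Next I would remove $\infty$ from both sides. As the critical values $-2,2$ do not lie in $T_a^\infty$ and the remaining critical value $\infty$ has been deleted, the restriction $f_a\colon\widetilde{T_a^\infty}\setminus\{\infty\}\to T_a^\infty\setminus\{\infty\}$ is an honest (unbranched) covering of degree three with connected total space. I would then compute the relevant fundamental groups: since $a\in\cC(\mathcal{S})$, the set $K_a$ is a full continuum (Proposition~\ref{connected_critical} together with the discussion following Proposition~\ref{tiling_connected}), so $T_a^\infty=\widehat{\C}\setminus K_a$ is simply connected; deleting the single point $\infty$ yields $\pi_1(T_a^\infty\setminus\{\infty\})\cong\Z$, generated by a small loop around $\infty$. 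Likewise, simple connectivity of $\widetilde{T_a^\infty}$ gives $\pi_1(\widetilde{T_a^\infty}\setminus\{\infty\})\cong\Z$.

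Finally I would invoke covering space theory. A connected degree-three covering of a space with fundamental group $\Z$ corresponds to the unique index-three subgroup $3\Z$, which is normal because $\Z$ is abelian; hence the covering is regular, and its group of deck transformations is $\Z/3\Z\cong\Z_3$. Equivalently, the monodromy of the generating loop around $\infty$ is a $3$-cycle (forced by total ramification at $\infty$), and the cyclic group it generates acts simply transitively on the three sheets, which is precisely regularity with deck group $\Z_3$; the deck transformations, being automorphisms of a holomorphic covering that fix and extend holomorphically across $\infty$, are holomorphic automorphisms of $\widetilde{T_a^\infty}$. I expect the only substantive inputs to be the facts already supplied earlier — that $K_a$ is a full continuum and that $\widetilde{T_a^\infty}$ is simply connected — which feed the $\pi_1$-computation; once these are in hand, regularity and the identification of the deck group are a formal consequence of the abelianness of $\Z$.
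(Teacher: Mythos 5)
Your proof is correct and follows essentially the same route as the paper: both arguments reduce to the observation that $\pi_1(\widetilde{T_a^\infty}\setminus\{\infty\})\cong\pi_1(T_a^\infty\setminus\{\infty\})\cong\Z$, that the image subgroup under $(f_a)_*$ is the index-three subgroup $3\Z$, which is normal since $\Z$ is abelian, whence the cover is regular with deck group $\Z/3\Z$. The extra verifications you supply (that the critical values $\pm 2$ avoid $T_a^\infty$ when $a\in\cC(\mathcal{S})$, so the restricted map is an honest covering) are facts the paper establishes in the paragraph preceding the lemma rather than inside the proof, so they are consistent padding rather than a divergence.
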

\begin{proof}
Note that $\pi_1(\widetilde{T_a^\infty}\setminus\{\infty\})=\Z$, and 
$$
(f_a)_*(\pi_1(\widetilde{T_a^\infty}\setminus\{\infty\}))=3\Z\ \unlhd\ \Z=\pi_1(T_a^\infty\setminus\{\infty\}).
$$ 
This shows that the three-to-one cover $f_a: \widetilde{T_a^\infty}\setminus\{\infty\}\to T_a^\infty\setminus\{\infty\}$ is regular, and the associated deck transformation group is 
$$
\faktor{\pi_1(T_a^\infty\setminus\{\infty\})}{(f_a)_*(\pi_1(\widetilde{T_a^\infty}\setminus\{\infty\}))}\cong\Z/3\Z.
$$ 
\end{proof}

\begin{lemma}\label{deck_on_tiling_lift}
Let $a\in\cC(\mathcal{S})$. Then there exists a biholomorphism $\tau_a: \widetilde{T_a^\infty}\to\widetilde{T_a^\infty}$ such that the following hold true.
\begin{itemize}
\item $f_a\circ\tau_a=f_a$ on $\widetilde{T_a^\infty}$,

\item $\tau_a^{\circ 3}=\mathrm{id}$, and

\item $f_a^{-1}(f_a(z))=\{z,\tau_a(z),\tau_a^{\circ 2}(z)\}$, for $z\in \widetilde{T_a^\infty}$.
\end{itemize}
\end{lemma}
\begin{proof}
Let $\tau_a$ be a generator of the deck transformation group of $f_a: \widetilde{T_a^\infty}\setminus\{\infty\}\to T_a^\infty\setminus\{\infty\}$. Then, $\tau_a:\widetilde{T_a^\infty}\setminus\{\infty\}\to\widetilde{T_a^\infty}\setminus\{\infty\}$ is a biholomorphism such that $\tau_a(z)\to\infty$ as $z\to\infty$. Setting $\tau_a(\infty)=\infty$ yields a biholomorphism $\tau_a: \widetilde{T_a^\infty}\to\widetilde{T_a^\infty}$, and the required properties follow from the definition of $\tau_a$ and Lemma~\ref{deck_group_lemma}.
\end{proof}

Figure~\ref{corr_tiling_pic} shows the action of the correspondence $\widetilde{\sigma_a}^\ast$ on the lifted tiling set via the deck transformations $\tau_a, \tau_a^{\circ 2}$, and the reflection map $\iota$.

\begin{figure}[ht!]
\begin{tikzpicture}
  \node[anchor=south west,inner sep=0] at (0,0) {\includegraphics[width=1\textwidth]{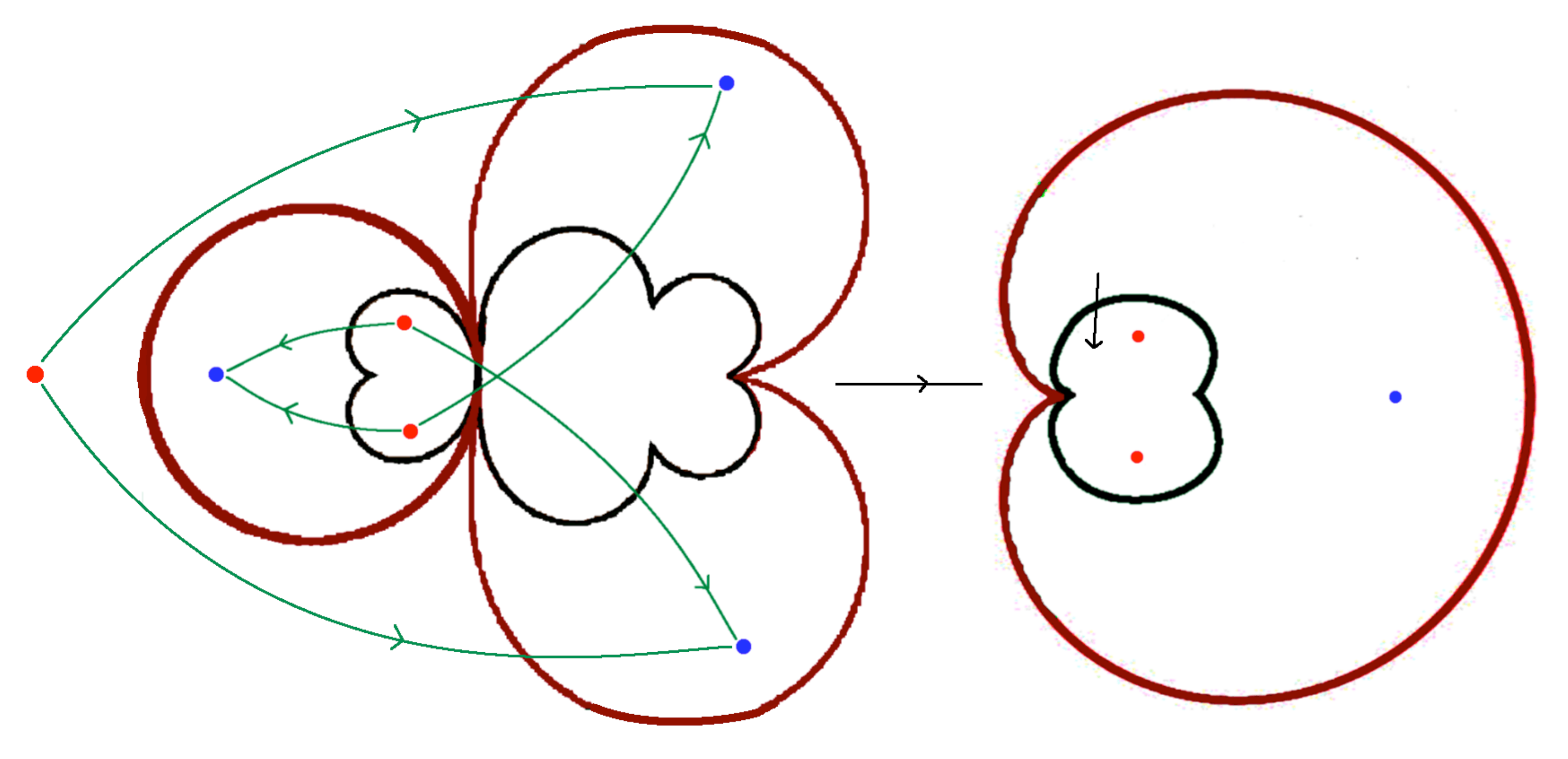}};
    \node at (0.1,2.9) {$z_2$};
   \node at (3.3,3.3) {$z_1$};
   \node at (3.3,2.9) {$z_3$};
    \node at (6.2,5.4) {$w_1$};
\node at (5.96,4.4) {$\tau_a\circ\iota$};
     \node at (1.48,3) {$w_2$};
      \node at (2.5,4.1) {$\D$};
      \node at (4.5,4.8) {$V_a$};
      \node at (2.1,3.6) {$\tau_a\circ\iota$};
      \node at (6,0.7) {$w_3$};
      \node at (6.1,1.8) {$\tau_a^{\circ 2}\circ\iota$};
      \node at (2.2,2.5) {$\tau_a^{\circ 2}\circ\iota$};
  \node at (11.3,2.7) {$w$};
   \node at (9.3,3.2) {$z_1'$};
    \node at (9.2,2.72) {$z_3'$};
  \node at (10.5,4.8) {$\Omega_a$};
 \node at (7.5,2.7) {$f_a$};
 \node at (9,4.2) {$\Omega_a'$};
  \node at (3,0.7) {$\tau_a\circ\iota$};
\node at (3,5.4) {$\tau_a^{\circ 2}\circ\iota$};
\end{tikzpicture}
\caption{The points $w_1, w_2$, and $w_3$ lie in the $f_a$-fiber of some $w\in\Omega_a\cap T_a^\infty$. One of them (namely, $w_2$) lies in $\D$, and the other two (namely, $w_1$ and $w_3$) lie in $V_a:=f_a^{-1}(\Omega_a)\setminus\D$. The points $z_i$ are the reflections of $w_i$ with respect to the unit circle (i.e., $w_i=\iota(z_i)$ for $i=1,2,3$), and $z_i'=f_a(z_i)$ for $i=1,3$. The Schwarz reflection map $\sigma_a$ sends $z_1'$ and $z_3'$ to $w$. Since the deck transformation $\tau_a$ is of order three, we can assume that $\tau_a$ sends $w_1, w_2, w_3$ to $w_2, w_3, w_1$ respectively. The forward correspondence $\widetilde{\sigma_a}^\ast$ splits into two maps $\tau_a\circ\iota$ and $\tau_a^{\circ 2}\circ\iota$ (on $\widetilde{T_a^\infty}$), and the actions of these maps on $z_1, z_2, z_3$ are shown with green arrows.}
\label{corr_tiling_pic}
\end{figure}

Since $\iota$ is an antiholomorphic involution preserving $\widetilde{T_a^\infty}$, it follows that both $\tau_a\circ\iota$ and $\tau_a^{\circ 2}\circ\iota$ are anti-conformal automorphisms of $\widetilde{T_a^\infty}$.

\begin{proposition}\label{grand_orbit_group}
For $a\in\cC(\mathcal{S})$, the grand orbits of the correspondence $\widetilde{\sigma_a}^*$ on $\widetilde{T_a^\infty}$ are equal to the orbits of $\langle\iota\rangle\ast\langle\tau_a\rangle$. Hence, the dynamics of $\widetilde{\sigma_a}^*$ on $\widetilde{T_a^\infty}$ is equivalent to the action of $\Z/2\Z\ast\Z/3\Z$.
\end{proposition}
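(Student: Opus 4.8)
The plan is to show that the correspondence $\widetilde{\sigma_a}^*$, when restricted to the lifted tiling set $\widetilde{T_a^\infty}$, decomposes into the two anti-holomorphic maps $\tau_a\circ\iota$ and $\tau_a^{\circ 2}\circ\iota$, and that the group generated by these coincides with the free product $\langle\iota\rangle\ast\langle\tau_a\rangle\cong\Z_2\ast\Z_3$. First I would use the defining equation~(\ref{corr_eqn_2}) of $\widetilde{\sigma_a}^*$ together with Lemma~\ref{deck_on_tiling_lift} to identify the two local branches of the forward correspondence on $\widetilde{T_a^\infty}$. Indeed, if $z\in\widetilde{T_a^\infty}$ then $\iota(z)$ also lies in $\widetilde{T_a^\infty}$ (since $\widetilde{T_a^\infty}$ is completely invariant under $\iota$, being the $f_a$-preimage of the $\iota_a$-invariant set $T_a^\infty$, and $f_a\circ\iota = \sigma_a\circ f_a$ on the appropriate region), and the relation $f_a(w)=f_a(\iota(z))$ with $w\neq\iota(z)$ forces $w\in\{\tau_a(\iota(z)),\tau_a^{\circ 2}(\iota(z))\}$ by the third bullet of Lemma~\ref{deck_on_tiling_lift}. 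Thus the forward branches of $\widetilde{\sigma_a}^*$ on $\widetilde{T_a^\infty}$ are exactly $\tau_a\circ\iota$ and $\tau_a^{\circ 2}\circ\iota$, and since $(\tau_a\circ\iota)^{-1}=\iota\circ\tau_a^{\circ 2}$ and $(\tau_a^{\circ 2}\circ\iota)^{-1}=\iota\circ\tau_a$, the backward branches are of the same form.

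Next I would argue that the grand orbit equivalence relation generated by these branches is precisely the orbit equivalence of the group $G:=\langle\iota,\tau_a\rangle$ acting on $\widetilde{T_a^\infty}$. The semigroup generated by the four maps $\tau_a\circ\iota$, $\tau_a^{\circ 2}\circ\iota$, $\iota\circ\tau_a$, $\iota\circ\tau_a^{\circ 2}$ generates the same orbit relation as $G$: any word in $\iota$ and the powers of $\tau_a$ can be factored into blocks each of which is one of these four generators (after inserting $\iota^2=\mathrm{id}$ or $\tau_a^{\circ 3}=\mathrm{id}$ where needed), and conversely each generator is a group element. Hence two points of $\widetilde{T_a^\infty}$ are in the same grand orbit of $\widetilde{\sigma_a}^*$ if and only if they lie in the same $G$-orbit. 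This is the content of the first assertion.

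The remaining, and genuinely substantive, step is to show that $G$ is abstractly the \emph{free} product $\langle\iota\rangle\ast\langle\tau_a\rangle$, i.e. that there are no relations beyond $\iota^2=\mathrm{id}$ and $\tau_a^{\circ 3}=\mathrm{id}$. This is where I expect the main obstacle to lie, and I would handle it by a ping-pong (Klein criterion) argument adapted to the tile structure of $\widetilde{T_a^\infty}$. The key is that $\widetilde{T_a^\infty}$ carries a Markov tessellation by tiles that are $f_a$-lifts of the tiles of $T_a^\infty$: the rank-zero tiling set has a single tile fixed (up to the $\Z_3$-action) by $\tau_a$, and $\iota$ interchanges the interior and exterior of the unit circle. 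One sets up two disjoint ``ping-pong'' regions — for instance, the union of tiles in $\overline{\D}\cap\widetilde{T_a^\infty}$ versus those in the exterior — and verifies that $\iota$ (respectively the nontrivial powers of $\tau_a$) maps one region strictly into the other. Concretely, since $f_a$ conjugates the correspondence's group action down to the reflection dynamics $\rho$ of Subsection~\ref{reflection_subsec}, whose associated ideal triangle group is already known to be the free product $\Z_2\ast\Z_3$ realizing the modular group, one can transport the absence of relations from the base: any relation in $G$ would descend to a relation among the generators of the ideal-triangle-type group acting on $\mathcal{Q}$, contradicting its known free-product structure. I would therefore first establish the isomorphism at the level of the quotient (via the conformal conjugacy $\psi_a$ of Proposition~\ref{schwarz_group} between $\sigma_a|_{T_a^\infty}$ and $\rho$), and then lift this to the free-product statement for $G$ by checking that the deck group $\langle\tau_a\rangle\cong\Z_3$ and $\langle\iota\rangle\cong\Z_2$ generate $G$ with no amalgamation, the ping-pong partition guaranteeing that reduced words act as genuinely distinct homeomorphisms of $\widetilde{T_a^\infty}$.
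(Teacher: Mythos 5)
Your main line of argument matches the paper's proof quite closely: you identify the two forward branches on $\widetilde{T_a^\infty}$ as $\tau_a\circ\iota$ and $\tau_a^{\circ 2}\circ\iota$ via Equation~(\ref{corr_eqn_2}) and Lemma~\ref{deck_on_tiling_lift}, observe that these (with their inverses) generate $\langle\iota,\tau_a\rangle$ and the same orbit equivalence, and then prove freeness by a ping-pong argument on the lifted tiling set. The only genuine difference is in how freeness is executed. The paper reduces any putative relation to one of two normal forms, $(\tau_a^{\circ k_1}\iota)\cdots(\tau_a^{\circ k_r}\iota)=\mathrm{id}$ or $=\iota$, and kills the first by a rank-growth argument (each block sends an exterior tile of rank $n$ to an exterior tile of rank $n+1$) and the second by the interior/exterior dichotomy. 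You instead invoke the standard ping-pong lemma with $X_1=\D\cap\widetilde{T_a^\infty}$ and $X_2=\widetilde{T_a^\infty}\setminus\overline{\D}$, which is valid and arguably more economical: $\iota(X_2)\subset X_1$ since $\iota$ preserves $\widetilde{T_a^\infty}$ (because $f_a\circ\iota=\sigma_a\circ f_a$ on $\overline{\D}$ and $T_a^\infty$ is $\sigma_a$-invariant --- note it is $\sigma_a$-invariance, not ``$\iota_a$-invariance'', that does the work here), while $\tau_a^{\pm1}(X_1)\subset X_2$ because $f_a\vert_{\D}$ is injective and $f_a^{-1}(\Omega_a)=\D\sqcup V_a$ with $V_a\cap\overline{\D}=\emptyset$, so each fiber meets $\D$ at most once; since $\vert\langle\tau_a\rangle\vert=3$, the conjugation trick in the ping-pong lemma replaces the paper's rank argument.

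However, the ``transport from the quotient'' thread in your last sentences is both unnecessary and incorrect, and you should delete it rather than make it a structural step (``first establish the isomorphism at the level of the quotient \ldots then lift''). First, the ideal triangle group of Subsection~\ref{reflection_subsec} is $\langle\rho_1,\rho_2,\rho_3:\rho_i^2=\mathrm{id}\rangle\cong\Z_2\ast\Z_2\ast\Z_2$, not $\Z_2\ast\Z_3$; the modular-group structure lives upstairs, it is not already present in the base. Second, and more fundamentally, the group $G=\langle\iota,\tau_a\rangle$ admits no induced action on $T_a^\infty$ or on $\mathcal{Q}$: the map $f_a$ only semi-conjugates the individual branches $\tau_a^{\circ k}\circ\iota$ to the non-invertible maps $\sigma_a^{\pm1}$, and $\iota$ does not normalize the deck group $\langle\tau_a\rangle$ (indeed $f_a\circ(\iota\tau_a\iota)$ involves $\sigma_a^{\circ2}$ and is not $f_a$), so $G$ does not permute $\langle\tau_a\rangle$-orbits and a relation in $G$ has no well-defined ``descent''. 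Fortunately your direct ping-pong upstairs is self-contained and does not need this step.
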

\begin{proof}
It follows from the definition of the correspondence $\widetilde{\sigma_a}^*$ (see Equation~\ref{corr_eqn_2}) and Lemma~\ref{deck_on_tiling_lift} that the forward correspondence $\widetilde{\sigma_a}^\ast$ splits into two maps $\tau_a\circ\iota$ and $\tau_a^{\circ 2}\circ\iota$ on $\widetilde{T_a^\infty}$.

Also note that $\tau_a=(\tau_a^{\circ 2}\circ\iota)\circ(\tau_a\circ\iota)^{-1}$, and hence $\langle\tau_a\circ\iota,\tau_a^{\circ 2}\circ\iota\rangle=\langle\iota,\tau_a\rangle$. To complete the proof, we only need to show that $\langle\iota,\tau_a\rangle$ is the free product of $\langle\iota\rangle$ and $\langle\tau_a\rangle$.

To this end, we first observe that any relation in $\langle\iota,\tau_a\rangle$ other than $\iota^{\circ 2}=\mathrm{id}$ and $\tau_a^{\circ 3}=\mathrm{id}$ can be reduced to one of the form 
\begin{equation}
(\tau_a^{\circ k_1}\circ\iota)\circ\cdots\circ(\tau_a^{\circ k_r}\circ\iota)=\mathrm{id}
\label{group_relation_1}
\end{equation} 
or 
\begin{equation}
(\tau_a^{\circ k_1}\circ\iota)\circ\cdots\circ(\tau_a^{\circ k_r}\circ\iota)=\iota,
\label{group_relation_2}
\end{equation}
where $k_1,\cdots,k_r\in\{1,2\}$.

\noindent\textbf{Case 1:} Let us first assume that there exists a relation of the form (\ref{group_relation_1}) in $\langle\iota,\tau_a\rangle$. Each $(\tau_a^{\circ k_p}\circ\iota)$ maps the interior of a tile of rank $n$ in $\widetilde{T_a^\infty}\setminus\overline{\D}$ to a tile of rank $(n+1)$ in $\widetilde{T_a^\infty}\setminus\overline{\D}$. Hence, the group element on the left of Relation~(\ref{group_relation_1}) maps the tile of rank $0$ to a tile of rank $r$. Clearly, such an element cannot be the identity map proving that there is no relation of the form (\ref{group_relation_1}) in $\langle\iota,\tau_a\rangle$.

\noindent\textbf{Case 2:} Let us now assume that there exists a relation of the form (\ref{group_relation_2}) in $\langle\iota,\tau_a\rangle$. Each $(\tau_a^{\circ k_p}\circ\iota)$ maps $\widetilde{T_a^\infty}\setminus\overline{\D}$ to itself. Hence, the group element on the left of Relation~(\ref{group_relation_2}) maps $\widetilde{T_a^\infty}\setminus\overline{\D}$ to itself, while $\iota$ maps $\widetilde{T_a^\infty}\setminus\overline{\D}$ to $\widetilde{T_a^\infty}\cap\D$. This shows that there cannot exist a relation of the form (\ref{group_relation_2}) in $\langle\iota,\tau_a\rangle$.

We conclude that $\iota^{\circ 2}=\mathrm{id}$ and $\tau_a^{\circ 3}=\mathrm{id}$ are the only relations in $\langle\iota,\tau_a\rangle$, and hence $\langle\iota,\tau_a\rangle=\langle\iota\rangle\ast\langle\tau_a\rangle\cong\Z/2\Z\ast\Z/3\Z$. 
\end{proof}

\begin{remark}\label{free_rem}
$\langle\iota,\tau_a\rangle$ is not a free product of the subgroups $\langle\tau_a\circ\iota\rangle$ and $\langle\tau_a^{\circ 2}\circ\iota\rangle$ as these generators satisfy a relation $(\tau_a\circ\iota)^{\circ 2}(\tau_a^{\circ 2}\circ\iota)^{-1}(\tau_a\circ\iota)^{\circ 2}=\tau_a^{\circ 2}\circ\iota$.
\end{remark}

\begin{remark}\label{corr_thrice_punc_sphere_rem}
According to Remark~\ref{thrice_punc_sphere_rem}, the Schwarz reflection map $\sigma_a$ induces an anti-conformal involution on a thrice punctured sphere which is obtained by taking a suitable quotient of the tiling set $T_a^\infty$ by a holomorphic dynamical system. This manifests in the action of $\Z/2\Z$ on the lifted tiling set $\widetilde{T_a^\infty}$. On the other hand, the action of a generator of $\Z/3\Z$ on the lifted tiling set (which acts by a deck transformations of $f_a$ on $\widetilde{T_a^\infty}$) corresponds to a conformal isomorphism of the thrice punctured sphere that permutes the three punctures transitively.
\end{remark}

\subsubsection{Action of $\widetilde{\sigma_a}^*$ on $\widetilde{K_a}$}\label{rat_subsec}

Let us now study the dynamics of the correspondence on the lifted non-escaping set $\widetilde{K_a}$. To do so, we will define two maps on $V_a:=f_a^{-1}(\Omega_a)\setminus\D$ that will play the role of deck transformations of $f_a$ in spite of the presence of a critical point of $f_a$ in $V_a$.

The first map $g_{1,a}=g_1:V_a\to\D$ is defined as the composition of $f_a:V_a\to\Omega_a$ and $\left(f_a\vert_{\D}\right)^{-1}:\Omega_a\to\D$. Clearly, $g_1$ is a two-to-one branched covering satisfying $f_a\circ g_1=f_a$ on $V_a$.

On the other hand, since $f_a:V_a\to\Omega_a$ is a two-to-one branched covering, there exists a biholomorphism $g_{2,a}=g_2:V_a\to V_a$ such that $f_a\circ g_2=f_a$ on $V_a$. The map $g_2$ simply permutes the two elements in each non-critical fiber of $f_a:V_a\to\Omega_a$, and fixes the critical point $\frac{a+1}{a-1}$ of $f_a$. It follows that $g_2^{\circ 2}=\mathrm{id}$ on $V_a$.

\begin{remark}
Neither of the maps $g_1$ and $g_2$ agree with $\tau_a$ on $\widetilde{T_a^\infty}\cap V_a$. Indeed, if $w_1,w_3$ are as in Figure~\ref{corr_tiling_pic}, then $g_1(w_1)=g_1(w_3)=w_2$, and $g_2(w_1)=w_3$, $g_2(w_3)=w_1$; whereas $\tau_a(w_1)=w_2$, $\tau_a(w_3)=w_1$. In fact, due to monodromy, none of the $g_i$ extends to a neighborhood of $\infty$ satisfying the functional equation $f_a\circ g_i=f_a$.

Thus, the splitting of the forward correspondence into the maps $g_1\circ\iota$ and $g_2\circ\iota$ on $\iota\left(V_a\right)=\left(f\vert_{\D}\right)^{-1}\left(\Omega_a'\right)$ is different from its splitting into the maps $\tau_a\circ\iota$ and $\tau_a^{\circ 2}\circ\iota$ on $\widetilde{T_a^\infty}\cap\iota\left(V_a\right)$. While univalence of the maps $\tau_a\circ\iota$ and $\tau_a^{\circ 2}\circ\iota$ leads to group structure in the dynamics of $\widetilde{\sigma_a}^*$ on $\widetilde{T_a^\infty}$, we will now see that the existence of a critical point of $g_1\circ\iota$ is responsible for ``rational map behavior" of suitable branches of the correspondence on $\widetilde{K_a}$.
\end{remark}

\begin{proposition}\label{corr_filled_prop}
Let $a\in\cC(\mathcal{S})$. Then, on $\widetilde{K_a}\cap\overline{\D}$, one branch of the forward correspondence is hybrid conjugate to $R_{\chi(a)}\vert_{\mathcal{K}_{\chi(a)}}$. The other branch maps $\widetilde{K_a}\cap\overline{\D}$ onto $\widetilde{K_a}\setminus\D$.

\noindent On the other hand, the forward correspondence preserves $\widetilde{K_a}\setminus\D$. Moreover, on $\widetilde{K_a}\setminus\D$, one branch of the backward correspondence is conjugate to $R_{\chi(a)}\vert_{\mathcal{K}_{\chi(a)}}$, and the remaining branch maps $\widetilde{K_a}\setminus\D$ onto $\widetilde{K_a}\cap\overline{\D}$.
\end{proposition}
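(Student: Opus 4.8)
The plan is to make the two branches of $\widetilde{\sigma_a}^*$ completely explicit on each half of $\widetilde{K_a}$ in terms of the maps $g_1,g_2$ and the reflection $\iota$, and then to read off the conjugacy to $R_{\chi(a)}$ from the functional equations $f_a\circ g_1=f_a$, $f_a\circ g_2=f_a$ together with the straightening of Theorem~\ref{straightening_schwarz}. The starting point is the fiber structure of $f_a$ over $\Omega_a$: for $w\in\Omega_a$ away from the critical value $2$, the three preimages are the unique point $p\in\D$ and the pair $q,g_2(q)\in V_a$, with $g_1(q)=g_1(g_2(q))=p$. Since $\widetilde{K_a}\subset f_a^{-1}(\Omega_a\cup\{-2\})$ and $K_a$ is completely invariant, $\widetilde{K_a}$ is $\iota$-invariant, and $\iota$ interchanges $\widetilde{K_a}\cap\overline{\D}$ and $\widetilde{K_a}\setminus\D$; this is the bookkeeping I will use throughout (cf. Proposition~\ref{correspondence_partition}).

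First I would treat the forward correspondence on $\widetilde{K_a}\cap\overline{\D}$. For $z$ there, $\iota(z)\in\widetilde{K_a}\setminus\D\subset\overline{V_a}$, so the fiber $f_a^{-1}(f_a(\iota(z)))$ equals $\{\iota(z),g_2(\iota(z)),g_1(\iota(z))\}$; deleting the diagonal term $\iota(z)$ as in Equation~(\ref{corr_eqn_2}) leaves exactly the two branches $z\mapsto g_1(\iota(z))\in\overline{\D}$ and $z\mapsto g_2(\iota(z))\in\widehat\C\setminus\D$. Using $f_a\circ g_1=f_a$ and $f_a(\iota(z))=\sigma_a(f_a(z))$ on $\overline{\D}$, one gets $f_a\circ(g_1\circ\iota)=\sigma_a\circ f_a$, so the univalent map $f_a\colon\widetilde{K_a}\cap\overline{\D}\to K_a$ conjugates $g_1\circ\iota$ to $\sigma_a|_{K_a}$; composing with the hybrid conjugacy of Theorem~\ref{straightening_schwarz} gives the asserted conjugacy to $R_{\chi(a)}|_{\mathcal{K}_a}$. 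That the other branch $g_2\circ\iota$ maps $\widetilde{K_a}\cap\overline{\D}$ onto $\widetilde{K_a}\setminus\D$ follows because $\iota$ sends $\widetilde{K_a}\cap\overline{\D}$ onto $\widetilde{K_a}\setminus\D$ and $g_2$ is a bijection of $V_a$ preserving the fibers of $f_a$, hence preserving $\widetilde{K_a}\setminus\D$.

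Next I would handle $\widetilde{K_a}\setminus\D$. For $w$ there, $\iota(w)\in\widetilde{K_a}\cap\D$ is the unique $\D$-preimage of $f_a(w)$, so both non-diagonal points of $f_a^{-1}(f_a(\iota(w)))$ lie in $V_a$; this shows the forward correspondence preserves $\widetilde{K_a}\setminus\D$. Reading the relation backwards, from $w\in\widetilde{K_a}\setminus\D$ the two preimages $z$ satisfy $\iota(z)\in\{g_1(w),g_2(w)\}$, i.e. $z=\iota(g_1(w))\in\widetilde{K_a}\setminus\D$ or $z=\iota(g_2(w))\in\widetilde{K_a}\cap\overline{\D}$. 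For the first branch $\beta(w)=\iota(g_1(w))$ I would verify directly that $h:=f_a\circ\iota\colon\widetilde{K_a}\setminus\D\to K_a$ is a homeomorphism with $h\circ\beta=\sigma_a\circ h$ (using $f_a\circ g_1=f_a$ and $\sigma_a(f_a(\iota(w)))=f_a(w)$), so $\beta$ is conjugate to $\sigma_a|_{K_a}$ and hence to $R_{\chi(a)}|_{\mathcal{K}_a}$; equivalently $\beta=\iota\circ(g_1\circ\iota)\circ\iota$ is $\iota$-conjugate to the branch of the previous paragraph. The remaining branch $w\mapsto\iota(g_2(w))$ maps $\widetilde{K_a}\setminus\D$ onto $\widetilde{K_a}\cap\overline{\D}$ by the same bijectivity of $g_2$ composed with $\iota$.

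The hard part will be the boundary and degeneracy analysis needed to promote these identities from the open sets to the closed sets $\widetilde{K_a}\cap\overline{\D}$ and $\widetilde{K_a}\setminus\D$. The maps $g_1,g_2$ are a priori defined only on the open domain $V_a$, and I must check that they (and hence the branch assignments and conjugacies) extend continuously across the distinguished points of $\widetilde{K_a}$: the pinch point $1\in\mathbb{T}$ over $-2$, where the two halves of $\widetilde{K_a}$ meet and the correspondence degenerates; the simple critical point $\frac{a+1}{a-1}\in V_a$ of $f_a$, which is the ramification point of $g_1$ and a fixed point of $g_2$, lying in $\widetilde{K_a}\setminus\D$ over the critical value $2\in K_a$; and the common endpoint $\infty$ of the two accesses, which belongs to $\widetilde{T_a^\infty}$ rather than $\widetilde{K_a}$ and so does not interfere. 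Controlling the fiber over the critical value $2$ (where two of the three preimages collide) and confirming that $f_a|_{\widetilde{K_a}\cap\overline{\D}}$ and $f_a\circ\iota|_{\widetilde{K_a}\setminus\D}$ remain genuine homeomorphisms onto $K_a$ up to the boundary is the technical core; once this is in place, the conjugacies above are global on the relevant closed sets and the proposition follows.
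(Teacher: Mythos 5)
Your proposal is correct and follows essentially the same route as the paper: you identify the forward branches on $\widetilde{K_a}\cap\overline{\D}$ as $g_1\circ\iota$ and $g_2\circ\iota$, conjugate the first to $\sigma_a$ via the univalent map $f_a\vert_{\overline{\D}}$ and then to $R_{\chi(a)}$ via Theorem~\ref{straightening_schwarz}, and treat the backward branches on $\widetilde{K_a}\setminus\D$ as lifts of $\sigma_a$ (your explicit conjugacy $h=f_a\circ\iota$ just makes concrete what the paper asserts), with the boundary-extension issues you flag being routine points the paper passes over silently. One small slip: in your third paragraph, $\iota(w)$ is the unique $\D$-preimage of $f_a(\iota(w))$, not of $f_a(w)$, but the conclusion you draw uses the correct fact, so nothing breaks.
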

\begin{proof}
It is easy to see that on $\widetilde{K_a}\cap\overline{\D}$, the forward correspondence splits into two maps $g_1\circ\iota:\widetilde{K_a}\cap\overline{\D}\to\widetilde{K_a}\cap\overline{\D}$ and $g_2\circ\iota:\widetilde{K_a}\cap\overline{\D}\to\widetilde{K_a}\setminus\D$. Moreover by Proposition~\ref{inverse_lift_corr}, the map $g_1\circ\iota$ is conjugate to $\sigma_a$ via the conformal map $f_a\vert_{\overline{\D}}$. By Theorem~\ref{straightening_schwarz}, this branch is hybrid conjugate to $R_{\chi(a)}\vert_{\mathcal{K}_{\chi(a)}}$. By definition of $g_2$, the other branch of the forward correspondence (i.e $g_2\circ\iota$) maps $\widetilde{K_a}\cap\overline{\D}$ univalently onto $\widetilde{K_a}\setminus\D$.

The fact that the forward correspondence preserves $\widetilde{K_a}\setminus\D$ follows from the covering properties of $f_a\vert_{\widetilde{K_a}}$ (more precisely, from the observation that $\widetilde{K_a}\cap\overline{\D}$ is mapped univalently by $f_a$ onto $K_a$, and $\widetilde{K_a}\setminus\D$ is mapped as a two-to-one ramified covering onto $K_a$). 

Note that the map $\iota\circ g_1: \widetilde{K_a}\setminus\D\to\widetilde{K_a}\setminus\D$ is a branch of the backward correspondence. Since $\iota$ is a topological conjugacy between this backward branch and the forward branch $(g_1\circ\iota)\vert_{\widetilde{K_a}\cap\overline{\D}}$, it follows that $f\vert_{\widetilde{K_a}\cap\overline{\D}}\circ\iota:\widetilde{K_a}\setminus\D\to K_a$ is a topological conjugacy between the backward branch $(\iota\circ g_1)\vert_{\widetilde{K_a}\setminus\D}$ and $\sigma_a\vert_{K_a}$. Invoking Theorem~\ref{straightening_schwarz}, we conclude that $(\iota\circ g_1)\vert_{\widetilde{K_a}\setminus\D}$ is topologically conjugate to $R_{\chi(a)}\vert_{\mathcal{K}_{\chi(a)}}$.

Finally, it is easy to see that the remaining branch of the backward correspondence on $\widetilde{K_a}\setminus\D$ is $\left(g_2\circ\iota\right)^{-1}$, which maps $\widetilde{K_a}\setminus\D$ onto $\widetilde{K_a}\cap\overline{\D}$.
\end{proof}

\subsubsection{$\widetilde{\sigma_a}^\ast$ as a Mating}\label{main_thm_corr_subsec}

Combining the results from the previous two subsections, we will now give a dynamical description of the correspondence $\widetilde{\sigma_a}^*$ on the whole Riemann sphere, and conclude that it is a mating of the group $\Z/2\Z\ast\Z/3\Z$ and the anti-rational map $R_{\chi(a)}$.

\begin{theorem}[Anti-holomorphic Correspondences as Matings]\label{group_rational_mating_thm}
Let $a\in\cC(\mathcal{S})$. Then the following statements hold true.
\begin{itemize}
\item Each of the sets $\widetilde{T_a^\infty}$ and $\widetilde{K_a}$ is completely invariant under the correspondence $\widetilde{\sigma_a}^*$.

\item On $\widetilde{T_a^\infty}$, the dynamics of the correspondence $\widetilde{\sigma_a}^*$ is equivalent to the action of $\Z/2\Z\ast\Z/3\Z$.

\item On $\widetilde{K_a}\cap\overline{\D}$, one branch of the forward correspondence is hybrid conjugate to $R_{\chi(a)}\vert_{\mathcal{K}_{\chi(a)}}$. The other branch maps $\widetilde{K_a}\cap\overline{\D}$ onto $\widetilde{K_a}\setminus\D$.

\noindent On the other hand, the forward correspondence preserves $\widetilde{K_a}\setminus\D$. Moreover, on $\widetilde{K_a}\setminus\D$, one branch of the backward correspondence is conjugate to $R_{\chi(a)}\vert_{\mathcal{K}_{\chi(a)}}$, and the remaining branch maps $\widetilde{K_a}\setminus\D$ onto $\widetilde{K_a}\cap\overline{\D}$.
\end{itemize}
\end{theorem}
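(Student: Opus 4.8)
The plan is to assemble the theorem from the three structural results established immediately above, organized around the $\widetilde{\sigma_a}^*$-invariant decomposition of the sphere into the lifted tiling set and the lifted non-escaping set. First I would recall that, since $a\in\cC(\mathcal{S})$, the set $\widetilde{T_a^\infty}$ is a simply connected domain while $\widetilde{K_a}=\widehat{\C}\setminus\widetilde{T_a^\infty}$ is a connected, full, compact continuum, so that $\widehat{\C}=\widetilde{T_a^\infty}\sqcup\widetilde{K_a}$ provides the required partition into which the correspondence will be fed.

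The complete invariance asserted in the first bullet is exactly the content of Proposition~\ref{correspondence_partition}, which I would simply invoke: for $(z,w)\in\widetilde{\sigma_a}^*$ one has $z\in\widetilde{T_a^\infty}\Leftrightarrow w\in\widetilde{T_a^\infty}$ and $z\in\widetilde{K_a}\Leftrightarrow w\in\widetilde{K_a}$. The second bullet is precisely Proposition~\ref{grand_orbit_group}: on $\widetilde{T_a^\infty}$ the two branches of the forward correspondence are $\tau_a\circ\iota$ and $\tau_a^{\circ 2}\circ\iota$, and the grand orbits coincide with the orbits of $\langle\iota\rangle\ast\langle\tau_a\rangle\cong\Z_2\ast\Z_3$. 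The third bullet is verbatim Proposition~\ref{corr_filled_prop}, whose proof identifies the two branches on $\widetilde{K_a}\cap\overline{\D}$ as $g_1\circ\iota$ (conjugate to $\sigma_a$, hence to $R_{\chi(a)}\vert_{\mathcal{K}_{\chi(a)}}$ via the straightening of Theorem~\ref{straightening_schwarz}) and $g_2\circ\iota$, and dually analyzes the backward correspondence on $\widetilde{K_a}\setminus\D$.

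Thus the theorem is a formal synthesis, and the only consistency to check in assembling it is that the two descriptions agree on the common boundary $\partial\widetilde{K_a}=\partial\widetilde{T_a^\infty}$, which holds automatically because both arise from restricting the single correspondence defined by Equation~(\ref{corr_eqn_2}) to complementary invariant regions. The genuinely substantive work has already been carried out in the preceding results; in particular, the main conceptual difficulty—producing an order-three deck transformation $\tau_a$ on all of $\widetilde{T_a^\infty}$ despite the finite critical points of $f_a$, and separately producing the ramified map $g_1$ and the unramified involution $g_2$ on $V_a$ that play the role of deck maps across a critical fiber—was resolved in Lemma~\ref{deck_on_tiling_lift} and the discussion preceding Proposition~\ref{corr_filled_prop}. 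I therefore expect the proof itself to consist of little more than citing Propositions~\ref{correspondence_partition},~\ref{grand_orbit_group}, and~\ref{corr_filled_prop} and recording the partition $\widehat{\C}=\widetilde{T_a^\infty}\sqcup\widetilde{K_a}$.
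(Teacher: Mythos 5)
Your proposal matches the paper's proof exactly: the theorem is proved there by simply citing Propositions~\ref{correspondence_partition}, \ref{grand_orbit_group}, and~\ref{corr_filled_prop}, precisely the synthesis you describe. The additional boundary-consistency remark you include is harmless but unnecessary, since each bullet concerns the correspondence restricted to one invariant set and no gluing claim is made.
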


\begin{proof}
The statements follow from Propositions~\ref{correspondence_partition},~\ref{grand_orbit_group}, and~\ref{corr_filled_prop}. 
\end{proof}

In light of Theorem~\ref{group_rational_mating_thm}, we say that the correspondence $\widetilde{\sigma_a}^*$ is a \emph{mating} of the rational map $R_{\chi(a)}$ and the group $\Z/2\Z\ast\Z/3\Z$. We are now ready to show that the family of correspondences $\{\widetilde{\sigma_a}^*: a\in\cC(\mathcal{S})\}$ contains matings of the abstract modular group $\Z/2\Z\ast\Z/3\Z$ with every anti-rational map in $\cC(\mathfrak{L}_0)$ that lies in the closure of hyperbolic parameters.

\begin{proof}[Proof of Theorem~\ref{almost_all_maps_mated}]
This follows from Proposition~\ref{chi_injective_prop}, Corollary~\ref{onto_hyperbolic_closure} and Theorem~\ref{group_rational_mating_thm}.
\end{proof}

\appendix

\section{A Family of Parabolic Anti-rational Maps}\label{anti_rational_parabolic}

In this appendix, we prepare some background on a certain family of quadratic anti-rational maps with a neutral fixed point. 

\subsection{The Family $\mathcal{F}$}\label{appendix_subsec_1}

Let $R$ be a quadratic anti-rational map with a neutral fixed point. Conjugating by a M{\"o}bius map, we can assume that $\infty$ is a neutral fixed point of $R$; i.e., $DR^{\circ 2}(\infty)=1$ (note that a neutral fixed point of an anti-holomorphic map is necessarily a parabolic fixed point of multiplier $1$ for the second iterate). We can also assume that $R(0)=\infty$, and $\frac{\partial R}{\partial\overline{z}}(1)=0$ (i.e., $1$ is a critical point of $R$).

Let us now describe the explicit form of such a rational map 
$$
R(z)=~\frac{p_1\overline{z}^2+q_1\overline{z}+r_1}{p_2\overline{z}^2+q_2\overline{z}+r_2}.
$$ 
The requirements $R(\infty)=\infty$ and $R(0)=\infty$ imply that $p_1\neq0$ and $p_2=r_2=0$. Therefore, we must have $q_2\neq0$. A simple computation shows that the condition $DR^{\circ 2}(\infty)=1$ translates to the relation $\vert p_1\vert=\vert q_2\vert$; i.e., $p_1=q_2e^{i\alpha}$, for some $\alpha\in\faktor{\R}{2\pi\Z}$. Finally, the condition $\frac{\partial R}{\partial\overline{z}}(1)=0$ implies that $r_1=q_2e^{i\alpha}$. Therefore, we have that 
$$
R(z)=R_{\alpha,A}=e^{i\alpha}\left(\overline{z}+\frac{1}{\overline{z}}\right)+A,
$$ 
where $\alpha\in\faktor{\R}{2\pi\Z}$, and $A=\frac{q_1}{q_2}\in\C$.

Note that $R_{\alpha,A}(-z)=-R_{\alpha,-A}(z)$ for $z\in\C$.

A direct computation shows that $\infty$ is a higher order parabolic fixed point of $R_{\alpha,A}^{\circ 2}$ when $A=0$ or $\arg{A}=\frac{\alpha}{2}\pm\frac{\pi}{2}$. Otherwise, $\infty$ is a simple parabolic fixed point of $R_{\alpha,a}^{\circ 2}$. We will only be concerned with the case when $\infty$ is a simple parabolic fixed point of $R_{\alpha,A}^{\circ 2}$. 

Therefore, up to M{\"o}bius conjugation, the family 
$$
\mathcal{F}:=\left\{R_{\alpha,A}=e^{i\alpha}\left(\overline{z}+\frac{1}{\overline{z}}\right)+A:\alpha\in\R/2\pi\Z,\ A\neq0,\ \arg{A}\in\left(\frac{\alpha}{2}-\frac{\pi}{2},\frac{\alpha}{2}+\frac{\pi}{2}\right)\right\}
$$ 
contains all quadratic anti-rational maps with a neutral fixed point which is a simple parabolic fixed point for the second iterate of the map. Moreover, a simple computation shows that no two distinct maps in $\mathcal{F}$ are M{\"o}bius conjugate.

For each map in this family, there is a unique attracting direction at $\infty$, and hence a unique basin of attraction of $\infty$ (i.e., the set of all points that converge to $\infty$ asymptotic to the unique attracting direction). Note that the immediate basin of attraction of $\infty$ is fixed under $R_{\alpha,A}$, and contains the critical point $1$ of $R_{\alpha,A}$. Since $\Deg{R_{\alpha,A}}=2$, it follows that the immediate basin of $\infty$ is completely invariant; i.e., the unique basin of attraction of $\infty$ (of $R_{\alpha,A}$) is connected. 

\begin{definition}[Basin of Infinity and Filled Julia set]\label{filled_julia_para_rat_def}
For $R_{\alpha,A}\in\mathcal{F}$, the basin of attraction of the neutral fixed point $\infty$ is denoted by $\mathcal{B}_{\alpha,A}$. The complement $\widehat{\C}\setminus\mathcal{B}_{\alpha,A}$ of the basin of attraction of $\infty$ is called the \emph{filled Julia set} of $R_{\alpha,A}$, and is denoted by $\mathcal{K}_{\alpha,A}$.
\end{definition}

Since the basin of infinity $\mathcal{B}_{\alpha,A}$ is connected, it follows that every connected component of $\Int{\mathcal{K}_{\alpha,A}}$ is simply connected. In particular, $R_{\alpha,A}$ has no Herman ring.

The basin of infinity $\mathcal{B}_{\alpha,A}$ is simply connected (equivalently, the filled Julia set $\mathcal{K}_{\alpha,A}$ is connected) if and only if $+1$ is the only critical point of $R_{\alpha,A}$ contained in $\mathcal{B}_{\alpha,A}$; or equivalently, $-1\in\mathcal{K}_{\alpha,A}$.

\subsection{Pinched Anti-quadratic-like restrictions of maps in $\mathcal{F}$}\label{appendix_subsec_2} In Definition~\ref{pinched_def}, we introduced pinched anti-quadratic-like maps. Let us observe that each member of the family $\mathcal{F}$ naturally admits a pinched anti-quadratic-like restriction. 

For any $R_{\alpha,A}\in\mathcal{F}$, let $\mathcal{P}_{\alpha,A}$ be an attracting petal at $\infty$ such that 
$$
\partial\mathcal{P}_{\alpha,A}\cap\{\vert z\vert\geq M\}=\{m e^{\pm\frac{2\pi i}{3}}: m\geq M\}
$$ 
(for some $M>0$), and the critical point $1$ lies on $\partial\mathcal{P}_{\alpha,A}$. We can also require that $\partial\mathcal{P}_{\alpha,A}$ is smooth except at $\infty$, and $R_{\alpha,A}^{-1}(\mathcal{P}_{\alpha,A})$ is simply connected. Then, $R_{\alpha,A}:R_{\alpha,A}^{-1}(\mathcal{P}_{\alpha,A})\to\mathcal{P}_{\alpha,A}$ is a degree $2$ branched covering. Setting 
$$
\mathbf{V}:=\widehat{\C}\setminus\overline{\mathcal{P}_{\alpha,A}},\ \textrm{and}\ \mathbf{U}:=\widehat{\C}\setminus\overline{R_{\alpha,A}^{-1}(\mathcal{P}_{\alpha,A})},
$$ 
it is straightforward to see that $R_{\alpha,A}:(\overline{\mathbf{U}},\infty)\to(\overline{\mathbf{V}},\infty)$ is a pinched anti-quadratic-like map.

\subsection{The Leaf $\mathfrak{L}_0$}\label{appendix_subsec_3}

We will attach a conformal invariant to the maps $R_{\alpha,A}\in\mathcal{F}$. The proof of \cite[Lemma~2.3]{HS} (which applies generally to \emph{odd} period parabolic basins of anti-holomorphic maps) provides us with a Fatou coordinate on an attracting petal in $\mathcal{B}_{\alpha,A}$ such that the Fatou coordinate conjugates $R_{\alpha,A}$ to the glide reflection $\zeta\mapsto\overline{\zeta}+\frac12$. The imaginary part of this Fatou coordinate is called the \emph{{\'E}calle height}. Since the chosen Fatou coordinate is unique up to translation by a real constant, the {\'E}calle height of a point in the petal is well-defined. In particular, since the petal on which this Fatou coordinate is defined contains the critical value $R_{\alpha,A}(1)$, the {\'E}calle height of this critical value is also well-defined. This quantity is called the \emph{critical {\'E}calle height} of $R_{\alpha,A}$ (to be more precise, the critical {\'E}calle height of $R_{\alpha,A}$ associated with the neutral fixed point $\infty$). It is a conformal conjugacy invariant of the map.

Changing the critical {\'E}calle height of $R_{\alpha,A}\in\mathcal{F}$ by a quasiconformal deformation supported on $\mathcal{B}_{\alpha,A}$ (as in \cite[Theorem~3.2]{MNS}), it is easy to see that every parameter in $\mathcal{F}$ lies in the interior of a real-analytic arc of quasiconformally equivalent parabolic parameters. The resulting arc is entirely contained in $\mathcal{F}$. Such an arc is called a \emph{parabolic arc}, and its parametrization by critical {\'E}calle height is called the \emph{critical {\'E}calle height parametrization} of the arc.

Let us denote the set of all parameters in $\mathcal{F}$ with critical {\'E}calle height $h$ by $\mathfrak{L}_h$. Then, 
$$
\mathcal{F}=\bigsqcup_{h\in\R}\mathfrak{L}_h.
$$ 
Thus, $\mathcal{F}$ is foliated by the leaves $\mathfrak{L}_h$. Each parabolic arc is transverse to the leaves. Letting a parameter $(\alpha,A)\in\mathfrak{L}_h$ flow along the parabolic arc passing through this parameter until it hits $\mathfrak{L}_{h'}$ defines a \emph{holonomy map} from the leaf $\mathfrak{L}_h$ to the leaf $\mathfrak{L}_{h'}$.

For definiteness, from now on we will work with the leaf $\mathfrak{L}_0$. Up to M{\"o}bius conjugacy, the leaf $\mathfrak{L}_0$ contains all quadratic anti-rational maps $R$ such that 
\begin{enumerate}
\item $R$ has a neutral fixed point such that this fixed point is a simple parabolic fixed point of $R^{\circ 2}$, and

\item the critical {\'E}calle height of $R$ (associated with the ``fast" critical point in the immediate basin of attraction of the simple parabolic fixed point) is $0$.
\end{enumerate}

\begin{definition}[The Parabolic Tricorn/Connectedness Locus of $\mathfrak{L}_0$]\label{conn_rat_def}
The connectedness locus of the family $\mathfrak{L}_0$ is defined as 
\begin{align*}
\cC(\mathfrak{L}_0)&=\left\{R_{\alpha,A}\in\mathfrak{L}_0: \mathcal{K}_{\alpha,A}\ \mathrm{is\ connected}\right\}\\
&=\left\{R_{\alpha,A}\in\mathfrak{L}_0: -1\in\mathcal{K}_{\alpha,A}\right\}\\
&=\left\{R_{\alpha,A}\in\mathfrak{L}_0: +1\ \textrm{is the only critical point of}\ R_{\alpha,A}\ \mathrm{in}\ \mathcal{B}_{\alpha,A}\right\}.
\end{align*} 
We call $\cC(\mathfrak{L}_0)$ the \emph{parabolic Tricorn}.
\end{definition}

\subsection{Dynamical and Parameter Rays for $\mathfrak{L}_0$}\label{appendix_subsec_4}

For $(\alpha,A)\in\cC(\mathfrak{L}_0)$, let us choose a conformal isomorphism $\pmb{\psi}_{\alpha,A}$ from $\mathcal{B}_{\alpha,A}$ onto $\D$ such that $\pmb{\psi}_{\alpha,A}(1)=0,\ \pmb{\psi}_{\alpha,A}(\infty)=1$.

\begin{proposition}\label{unicritical_parabolic}
$\pmb{\psi}_{\alpha,A}$ conjugates $R_{\alpha,A}$ to the anti-Blaschke product 
$$
B:\mathbb{D}\to\mathbb{D}, B(z)=\frac{3\overline{z}^2+1}{3+\overline{z}^2},
$$ 
which has a neutral fixed point at $1$.
\end{proposition}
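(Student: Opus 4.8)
The plan is to transport the dynamics of $R_{\alpha,A}$ on its basin to the disk via the Riemann map, recognize the resulting map as a degree-$2$ anti-Blaschke product, and then pin it down to $B$ using the normalization together with the critical Ecalle height. First I would record the structure of the basin. Since $(\alpha,A)\in\cC(\mathfrak{L}_0)$ we have $-1\in\mathcal{K}_{\alpha,A}$, so by the discussion in Subsection~\ref{appendix_subsec_1} the basin $\mathcal{B}_{\alpha,A}$ is simply connected, completely invariant, and contains exactly one (simple) critical point, namely $+1$. Hence $R_{\alpha,A}\colon\mathcal{B}_{\alpha,A}\to\mathcal{B}_{\alpha,A}$ is a proper anti-holomorphic map of degree $2$. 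Conjugating by the Riemann isomorphism $\pmb{\psi}_{\alpha,A}\colon\mathcal{B}_{\alpha,A}\to\D$ yields a proper anti-holomorphic degree-$2$ self-map $g:=\pmb{\psi}_{\alpha,A}\circ R_{\alpha,A}\circ\pmb{\psi}_{\alpha,A}^{-1}$ of $\D$. Since $\iota\circ g$ is then a proper holomorphic degree-$2$ self-map of $\D$, it is a finite Blaschke product of degree $2$, so $g$ is a degree-$2$ anti-Blaschke product. The normalization $\pmb{\psi}_{\alpha,A}(1)=0$ places the unique critical point of $g$ at $0$, while $\pmb{\psi}_{\alpha,A}(\infty)=1$ (interpreted via the boundary extension of the conjugacy at the accessible parabolic point) makes $1\in\partial\D$ a fixed point of $g$ which, since $\infty$ is a simple parabolic fixed point of $R_{\alpha,A}^{\circ 2}$, is a simple parabolic fixed point of $g^{\circ 2}$.

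Next I would check that $B$ has exactly the same normalized features. A direct computation gives $B(1)=1$ with $(B^{\circ 2})'(1)=1$, and shows that the only critical point of $B$ lying in $\overline{\D}$ is $0$ (the other critical point of the underlying rational map sits at $\infty\notin\overline{\D}$); moreover the critical orbit $0\mapsto\tfrac13\mapsto\cdots$ increases along the real axis toward $1$, so $\D$ is the parabolic basin and $B$ is itself a degree-$2$ anti-Blaschke product with critical point $0$ and simple parabolic fixed point $1$. Thus $g$ and $B$ are two degree-$2$ anti-Blaschke products sharing the same critical point and the same parabolic fixed point.

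Finally I would use the critical Ecalle height to identify them. The key point is that a degree-$2$ anti-Blaschke product with critical point at $0$ and simple parabolic fixed point at $1$ is determined, up to conjugation by a rotation of $\D$, by its critical Ecalle height: writing such a map as $g(z)=\tilde g(\overline z)$ with $\tilde g$ a degree-$2$ Blaschke product, the requirements that $0$ be the critical point and that $1$ be a parabolic fixed point ($\tilde g(1)=1$ and $\tilde g'(1)=1$) cut out a one-real-parameter family, and this remaining parameter is precisely the critical Ecalle height. Now $g$ has critical Ecalle height $0$: this is a conformal conjugacy invariant, $\pmb{\psi}_{\alpha,A}$ is conformal, and $R_{\alpha,A}\in\mathfrak{L}_0$ has critical Ecalle height $0$ by definition of the leaf. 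On the other hand $B$ commutes with $\iota$, since $\overline{B(z)}=B(\overline z)$, so its critical point $0$, its critical value $\tfrac13$, and the parabolic point $1$ all lie on the invariant real axis; hence the critical value has Ecalle height $0$, i.e. $B$ has critical Ecalle height $0$. Therefore $g$ and $B$ agree after possibly a rotation of $\D$; but both fix $0$ (as critical point) and $1$ (as parabolic point), and a rotation fixing the interior point $0$ and the boundary point $1$ is the identity. Consequently $g=B$, which is exactly the assertion that $\pmb{\psi}_{\alpha,A}$ conjugates $R_{\alpha,A}$ to $B$.

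The main obstacle is the rigidity statement invoked in the last paragraph, namely that the critical Ecalle height is a complete modulus for degree-$2$ anti-Blaschke products with the prescribed critical point and parabolic fixed point. I expect to establish it either by the explicit Blaschke parametrization sketched above (solving the critical-point and parabolicity equations and reading off the Ecalle height as the remaining degree of freedom) or, more conceptually, by the parabolic quasiconformal deformation machinery already used in Subsection~\ref{appendix_subsec_3} and in the proof of Proposition~\ref{parabolic_arcs_schwarz}, which shows that Ecalle-height-preserving deformations exhaust the conjugacy class and that the resulting conjugacy can be upgraded to a conformal one fixing the two marked points. A secondary technical point to address carefully is the boundary behavior of $\pmb{\psi}_{\alpha,A}$ at the accessible parabolic point $\infty$, needed to make the normalization $\pmb{\psi}_{\alpha,A}(\infty)=1$ and the transfer of the parabolic fixed point precise.
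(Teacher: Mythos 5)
Your proposal is correct and follows essentially the same route as the paper's proof: transfer the dynamics to $\D$ via $\pmb{\psi}_{\alpha,A}$ to get a degree-$2$ anti-Blaschke product with critical point at $0$, neutral fixed point at $1$, and critical Ecalle height $0$, then invoke uniqueness of such a map to conclude it equals $B$. The paper states this in two sentences (asserting the uniqueness without proof), whereas you additionally flag and sketch how to justify the rigidity step, which is a reasonable elaboration rather than a different approach.
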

\begin{proof}
Since the {\'E}calle height of the critical point $1$ (of $R_{\alpha,A}$) is $0$, the Riemann map $\pmb{\psi}_{\alpha,A}$ conjugates $R_{\alpha,A}$ to an anti-holomorphic Blaschke product of degree $2$ having a (unique) critical point at $0$ and a neutral fixed point at $1$ with critical {\'E}calle height $0$. The only such Blaschke product is $B$.
\end{proof}

\begin{remark}\label{hybrid_class}
Since any two maps in $\cC(\mathfrak{L}_0)$ are conformally conjugate on their basins of attraction of $\infty$, two maps in $\cC(\mathfrak{L}_0)$ are hybrid conjugate (i.e., quasiconformally conjugate in a neighborhood of the filled Julia set such that the conjugacy is conformal on the filled Julia set) if and only if they are the same.
\end{remark}

Let us now pick $(\alpha,A)\in\mathfrak{L}_0\setminus\cC(\mathfrak{L}_0)$, and choose an (extended) attracting Fatou coordinate $\psi_{\alpha,A}^{\mathrm{att}}:\mathcal{B}_{\alpha,A}\to\C$ that semi-conjugates $R_{\alpha,A}$ to $\zeta\mapsto\overline{\zeta}+\frac12$. We can further assume that $\psi_{\alpha,A}^{\mathrm{att}}(1)=0$. Let $\mathcal{P}_{\alpha,A}$ be a (maximal) attracting petal in the basin $\mathcal{B}_{\alpha,A}$ such that $\psi_{\alpha,A}^{\mathrm{att}}(\mathcal{P}_{\alpha,A})$ is the right half-plane $\{\re(z)>0\}$ and $+1\in\partial \mathcal{P}_{\alpha,A}$. Similarly, let us choose an (extended) attracting Fatou coordinate $\psi_B^{\mathrm{att}}:\D\to\C$ that semi-conjugates $B$ to $\zeta\mapsto\overline{\zeta}+\frac12$ with $\psi_{B}^{\mathrm{att}}(0)=0$. Furthermore, let $\mathcal{P}_{B}$ be a (maximal) attracting petal in the basin $\D$ (of $B$) such that $\psi_{B}^{\mathrm{att}}(\mathcal{P}_{B})$ is the right half-plane $\{\re(z)>0\}$ and $0\in\partial \mathcal{P}_{B}$. Then, $\pmb{\psi}_{\alpha,A}:=\left(\psi_{B}^{\mathrm{att}}\right)^{-1}\circ\psi_{\alpha,A}^{\mathrm{att}}:\mathcal{P}_{\alpha,A}\to \mathcal{P}_B$ is a conformal conjugacy between $R_{\alpha,A}$ and $B$. Since $(\alpha,A)\in\mathfrak{L}_0\setminus\cC(\mathfrak{L}_0)$, this conjugacy can be lifted until we hit the other critical point $-1$ of $R_{\alpha,A}$. Consequently, we get a conformal conjugacy $\pmb{\psi}_{\alpha,A}$ between $R_{\alpha,A}$ and $B$ (defined on a subset of the basin of $\infty$) such that the domain of $\pmb{\psi}_{\alpha,A}$ contains the ``slow" critical value $R_{\alpha,A}(-1)$. In fact, since $\Deg(R_{\alpha,A})=2$, we have that $\pmb{\psi}_{\alpha,A}(R_{\alpha,A}(-1))\in\D\setminus\overline{B(\mathcal{P}_B)}$.

\begin{figure}[ht!]
\begin{tikzpicture}
  \node[anchor=south west,inner sep=0] at (0,0) {\includegraphics[width=0.9\textwidth]{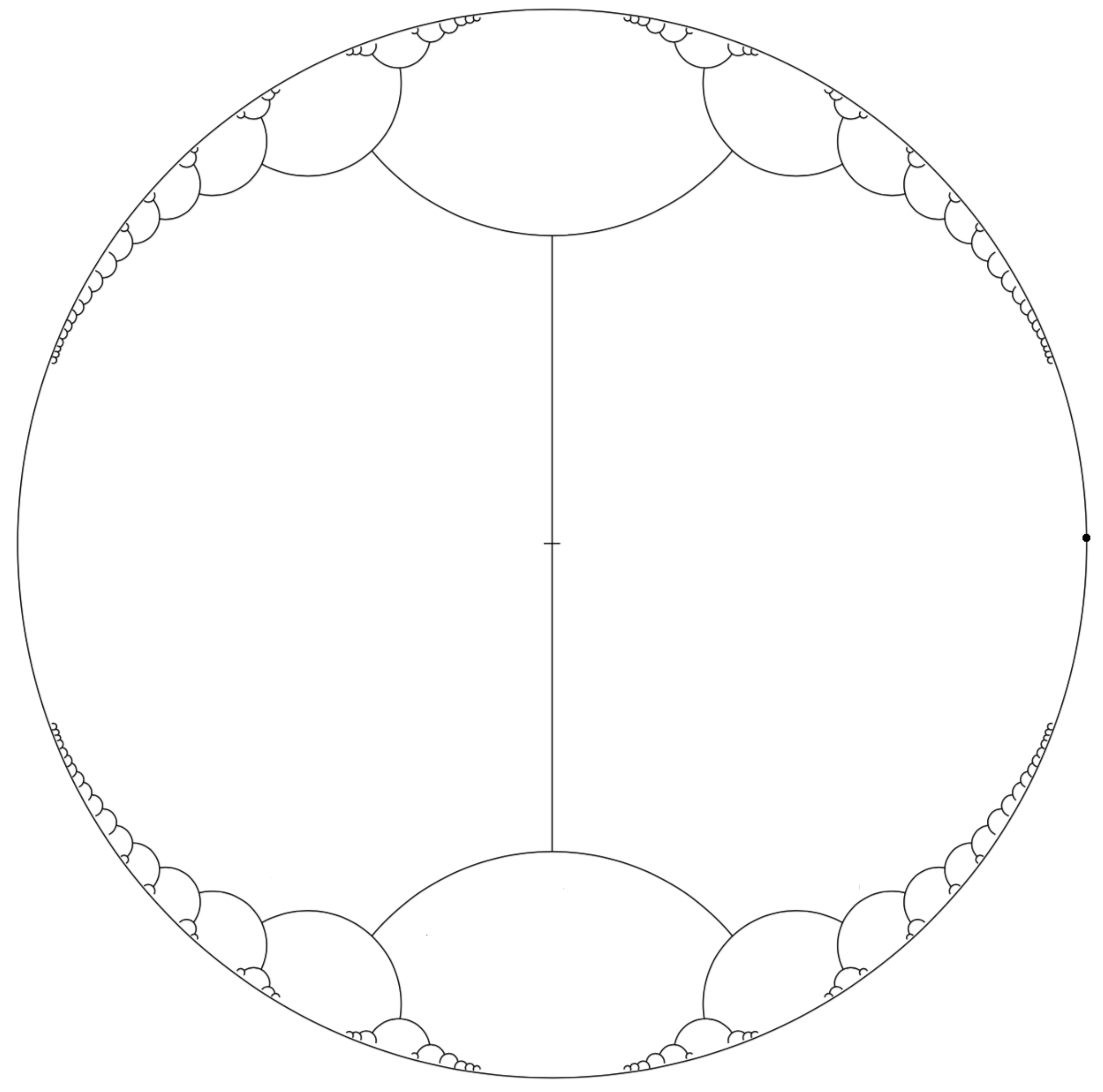}};
  \node at (5.75,2.2) {$z_1$};
  \node at (5.75,9.08) {$z_0$};
  \node at (3.84,9.4) {$z_{00}$};
  \node at (7.72,9.4) {$z_{01}$};
  \node at (7.7,2) {$z_{10}$};
  \node at (3.84,2) {$z_{11}$};
  \node at (8.56,2.2) {$z_{101}$};
  \node at (2.85,2.2) {$z_{110}$};
  \node at (2.85,9.15) {$z_{001}$};
  \node at (8.56,9.15) {$z_{010}$};
  \node at (6,5.72) {$0$};
  \node at (11,5.72) {$1$};
\end{tikzpicture}
\caption{The figure depicts rays in the parabolic basin $\D$ for the map $B$. The points $z_0$ and $z_1$ are the pre-images of the critical point $0$ under $B$. For a finite binary sequence $\left(\epsilon_1,\cdots,\epsilon_k\right)$, we have $B\left(z_{\epsilon_1,\cdots,\epsilon_k}\right)=z_{\epsilon_2,\cdots,\epsilon_k}$. For $\overline{\epsilon}=\left(\epsilon_1,\cdots,\epsilon_k,\cdots\right)\in\{0,1\}^{\N}$, the parabolic ray for $B$ with itinerary $\overline{\epsilon}$ is defined by connecting the points $0,z_{\epsilon_1},z_{\epsilon_1,\epsilon_2},\cdots$ consecutively by suitable iterated pre-images of the line segment $\left[0,\frac13\right]$. Every parabolic ray of $B$ lands at some point on $\mathbb{S}^1\cong\R/\Z$. If a parabolic ray of $B$ lands at some $\theta\in\R/\Z$, then the corresponding ray is called a parabolic ray of $B$ at angle $\theta$.}
\label{parabolic_ray_fig}
\end{figure}

Following \cite[\S 2.2]{PR1}, we can define dynamical rays for the map $B$ (see the description in Figure~\ref{parabolic_ray_fig}).

We can now use the map $\pmb{\psi}_{\alpha,A}$ to define dynamical rays for the maps $R_{\alpha,A}$.

\begin{definition}[Dynamical Rays of $R_{\alpha,A}$]\label{dyn_ray_para}
The pre-image of a parabolic ray of $B$ at angle $\theta\in\R/\Z$ under the map $\pmb{\psi}_{\alpha,A}$ is called a $\theta$-dynamical ray of $R_{\alpha,A}$.
\end{definition}

\begin{remark}\label{non_unique_ray}
Although a dynamical ray of $R_{\alpha,A}$ at an angle $\theta$ is not unique, it is easy to see that any two dynamical rays at a common angle define the same access to $\partial\mathcal{K}_{\alpha,A}$.
\end{remark}

The next result discusses landing properties of pre-periodic dynamical rays of $R_{\alpha,A}$ for $(\alpha,A)\in\cC(\mathfrak{L}_0)$. The proof is a straightforward adaption of the corresponding results for external rays of polynomials (see \cite[\S 18]{M1new}, also compare \cite[Theorems~2.4,~2.5]{PR1}, \cite[Proposition~6.34]{LLMM1}).

\begin{proposition}[Landing of Dynamical Rays]\label{rays_land_para}
Let $(\alpha,A)\in\cC(\mathfrak{L}_0)$. Then, every dynamical ray of $R_{\alpha,A}$ at a (pre-)periodic angle lands at a repelling or parabolic (pre-)periodic point on $\partial\mathcal{K}_{\alpha,A}$. Conversely, every repelling or parabolic (pre-)periodic point of $R_{\alpha,A}$ is the landing point of a finite non-zero number of (pre-)periodic dynamical rays.
\end{proposition}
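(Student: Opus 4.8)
The plan is to transport the entire question, via the conformal conjugacy $\pmb{\psi}_{\alpha,A}:\mathcal{B}_{\alpha,A}\to\D$ of Proposition~\ref{unicritical_parabolic}, to the fixed model $B$, and then run the classical arguments for landing of periodic rays while taking care of the parabolic and anti-holomorphic features. Recall that the dynamical rays are defined as $\pmb{\psi}_{\alpha,A}$-preimages of the parabolic rays of $B$ (Definition~\ref{dyn_ray_para}), that $B$ is topologically conjugate to $\overline{z}^2$ on $\mathbb{T}$, and that the landing set $\partial\mathcal{K}_{\alpha,A}=\partial\mathcal{B}_{\alpha,A}$ is the Julia set. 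Since $R_{\alpha,A}$ is anti-holomorphic, I would systematically pass to the holomorphic iterate $R_{\alpha,A}^{\circ 2}$ whenever a holomorphic tool (linearization, Snail Lemma, fixed-point index) is needed, which is exactly why the final assertion on common periods is phrased for the square of the map.

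For the forward implication, first I would treat a ray at a periodic angle $\theta$, say of period $n$ under the angle dynamics of $B$ on $\mathbb{T}$. Such a ray $\mathcal{R}$ is invariant (up to the Fatou-coordinate translation) under $g:=R_{\alpha,A}^{\circ n}$, so its accumulation set $\Lambda(\mathcal{R})\subset\partial\mathcal{K}_{\alpha,A}$ is a nonempty compact connected set that is forward invariant under $g$. Applying the Snail Lemma to $g^{\circ 2}=R_{\alpha,A}^{\circ 2n}$ (as in \cite[\S 18]{M1new}) forces $\mathcal{R}$ to land at a single periodic point $w$ whose multiplier has modulus at least one; ruling out the irrationally neutral case by the invariance of the ray leaves exactly the repelling and parabolic alternatives. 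A ray at a strictly pre-periodic angle is then handled by pulling back: it is an iterated $R_{\alpha,A}$-preimage of a periodic ray, and since along the relevant orbit one avoids the finitely many critical values, each preimage is again a ray that lands, at the corresponding preimage of the periodic landing point.

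For the converse, I would first establish landing of at least one periodic ray at each repelling and each parabolic periodic point. For a repelling cycle, the standard Douady--Hubbard construction (as in \cite[\S 18]{M1new}; compare \cite[Theorem~2.4]{PR1} in the parabolic setting) applies: using the local linearizing coordinate of $g^{\circ 2}$ at the repelling point and the fact that $\mathcal{K}_{\alpha,A}$ is connected, one produces a $g$-invariant ray accumulating at the point, which by the forward direction must land there. For a parabolic cycle, the corresponding point in the $B$-model is a neutral fixed point at which a fixed parabolic ray lands; transporting this via $\pmb{\psi}_{\alpha,A}$ and pulling back yields a landing periodic ray at the given parabolic point. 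Finiteness and the common-period statement then follow because the return map permutes the necessarily periodic rays landing at a given point, and a point of $\partial\mathcal{K}_{\alpha,A}$ admits only finitely many accesses; combined with the period-doubling bookkeeping for the anti-holomorphic return map, all landing rays share a period under $R_{\alpha,A}^{\circ 2}$.

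The hard part will be the parabolic bookkeeping rather than any new analytic idea: because the rays are Petersen--Roesch parabolic rays defined through Fatou coordinates rather than Böttcher coordinates, the a priori regularity of $\mathcal{R}$ near the Julia set is weaker, and one must invoke the parabolic-ray framework of \cite[\S 2.2]{PR1} to guarantee that the accumulation set behaves well under $g$ and that the Snail Lemma applies. Keeping track of whether the relevant return map is holomorphic or anti-holomorphic (odd versus even period), so as to state the repelling/parabolic dichotomy and the common period correctly, is the only genuinely delicate point; everything else is a faithful transcription of the polynomial arguments.
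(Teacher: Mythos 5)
Your overall strategy is exactly the paper's: its entire proof of this proposition is the one-line remark that the statement is a straightforward adaptation of the corresponding results for external rays of polynomials, citing \cite[\S 18]{M1new} and \cite[Theorems~2.4,~2.5]{PR1}, and your sketch fleshes out precisely that adaptation (transport to the Blaschke model $B$, Petersen--Roesch parabolic rays, snail lemma applied to the holomorphic iterate $R_{\alpha,A}^{\circ 2}$, pull-back for strictly pre-periodic angles, Douady--Hubbard for the converse at repelling points).

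However, one step in your converse direction would fail as stated: for a parabolic cycle of $R_{\alpha,A}$ other than the fixed point $\infty$, there is no ``corresponding neutral fixed point in the $B$-model.'' The conjugacy $\pmb{\psi}_{\alpha,A}$ is defined only on the open basin $\mathcal{B}_{\alpha,A}$ and need not extend continuously to $\partial\mathcal{B}_{\alpha,A}$ (local connectivity of the Julia set is not known here --- indeed, continuous boundary extension is essentially what landing of rays encodes), and $B$ has exactly one neutral fixed point on $\mathbb{T}$, namely $1$, which corresponds to $\infty$ via the $0$-ray. Finite parabolic cycles do occur for parameters in $\cC(\mathfrak{L}_0)$ (e.g. on parabolic arcs and at roots of hyperbolic components), they lie on $\partial\mathcal{K}_{\alpha,A}$, and they are invisible to a naive transport argument. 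For these points you need the same kind of argument as for repelling points: the classical proof that repelling \emph{and parabolic} periodic points of a map with connected Julia set are landing points of periodic rays (Milnor's \cite[\S 18]{M1new} treatment, or the prime-end/boundary-dynamics argument of \cite[Theorems~24.5, 24.6]{L6}, which is exactly what the paper invokes for the Schwarz-side analogue, Proposition~\ref{rep_para_landing_point}); one works in the repelling petals of the parabolic point and uses the induced action of $B$ on periodic prime ends of $\mathcal{B}_{\alpha,A}$. With that replacement, the rest of your outline (snail lemma on the second iterate, pull-backs, finiteness of accesses, and the common period under $R_{\alpha,A}^{\circ 2}$) goes through and coincides with the paper's intended proof.
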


As in the case for quadratic anti-polynomials (and for the Schwarz reflection family $\mathcal{S}$), the conformal position of the ``escaping" critical value provides us with a dynamically defined uniformization of the exterior of the connectedness locus.

\begin{proposition}[Uniformization of The Exterior of The Connectedness Locus]\label{unif_escape_locus_para}
The map $(\alpha,A)\mapsto\pmb{\psi}_{\alpha,A}(R_{\alpha,A}(-1))$ is a homeomorphism from $\mathfrak{L}_0\setminus\cC(\mathfrak{L}_0)$ onto $\D\setminus\overline{B(\mathcal{P}_B)}$.
\end{proposition}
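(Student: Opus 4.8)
The plan is to follow the template of Theorem~\ref{escape_unif_thm} and \cite[Theorem~1.3]{LLMM2}: writing $\Xi(\alpha,A):=\pmb{\psi}_{\alpha,A}(R_{\alpha,A}(-1))$, I would show that $\Xi$ is continuous, a local homeomorphism, and proper as a map into $\D\setminus\overline{B(\mathcal{P}_B)}$. Since the target is a simply connected surface, a continuous proper local homeomorphism onto it is a covering map, hence restricts to a homeomorphism on each connected component of its domain; a preimage-counting argument at a distinguished point then shows that $\mathfrak{L}_0\setminus\cC(\mathfrak{L}_0)$ is connected and that $\Xi$ is globally injective.

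That $\Xi$ takes values in $\D\setminus\overline{B(\mathcal{P}_B)}$ was already recorded in Subsection~\ref{appendix_subsec_4}: for $(\alpha,A)\in\mathfrak{L}_0\setminus\cC(\mathfrak{L}_0)$ the slow critical value $R_{\alpha,A}(-1)$ lies in the domain of the lifted conjugacy $\pmb{\psi}_{\alpha,A}$ and satisfies $\pmb{\psi}_{\alpha,A}(R_{\alpha,A}(-1))\in\D\setminus\overline{B(\mathcal{P}_B)}$. Continuity of $\Xi$ follows from the (anti-)holomorphic dependence of the extended attracting Fatou coordinate $\psi_{\alpha,A}^{\mathrm{att}}$ and of the conjugacy $\pmb{\psi}_{\alpha,A}$ (built from it and from the model Fatou coordinate of $B$) on the parameter, together with the fact that the orbit of the slow critical value moves continuously outside the connectedness locus. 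For local injectivity I would run the quasiconformal deformation/surgery argument exactly as in Theorem~\ref{escape_unif_thm}: given a target point one prescribes the conformal position of the slow critical value inside the basin and integrates a Beltrami form supported on $\mathcal{B}_{\alpha,A}$ that leaves the fast critical value at Ecalle height $0$ (so that one stays inside the leaf $\mathfrak{L}_0$), invoking the $\mathfrak{L}_0$-analogue of Proposition~\ref{schwarz_qcdef} to guarantee the deformed map is again of the form $R_{\alpha',A'}\in\mathfrak{L}_0$; this produces a continuous local inverse and shows $\Xi$ is open.

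The properness is the technical heart and splits into cases according to how $(\alpha,A)$ leaves $\mathfrak{L}_0\setminus\cC(\mathfrak{L}_0)$. If $(\alpha_n,A_n)\to\cC(\mathfrak{L}_0)$, then the slow critical point $-1$ is about to be captured into $\mathcal{K}_{\alpha,A}$, so its image under $\pmb{\psi}_{\alpha,A}$ is driven onto the boundary curve $\partial B(\mathcal{P}_B)$; this is the analogue of the sequences accumulating on $\cC(\mathcal{S})$ in Theorem~\ref{escape_unif_thm}. If instead $(\alpha_n,A_n)$ degenerates towards the boundary of $\mathfrak{L}_0$ inside $\mathcal{F}$ --- i.e. towards parameters with $A=0$ or $\arg A=\frac{\alpha}{2}\pm\frac{\pi}{2}$, where $\infty$ becomes a higher order parabolic fixed point --- then the attracting petals of the degenerating parabolic point merge and the hyperbolic distance (in $\mathcal{B}_{\alpha_n,A_n}$) between the slow critical value and the petal of the fast critical value blows up, forcing $\pmb{\psi}_{\alpha_n,A_n}(R_{\alpha_n,A_n}(-1))$ to accumulate on $\mathbb{T}=\partial\D$; this parallels the $\mathfrak{T}^{\pm}$ and $\re(a)\to\frac32$ cases of Theorem~\ref{escape_unif_thm}. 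In all cases $\Xi(\alpha_n,A_n)$ accumulates on $\partial(\D\setminus\overline{B(\mathcal{P}_B)})$, giving properness.

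Having concluded that $\Xi$ is a covering onto the simply connected domain $\D\setminus\overline{B(\mathcal{P}_B)}$, I would finally exhibit a point of the target with a single preimage (for instance a real-symmetric position, using the symmetry $R_{\alpha,A}(-z)=-R_{\alpha,-A}(z)$ to pin down its unique $\Xi$-preimage), which forces the domain to be connected and $\Xi$ to be a homeomorphism. The main obstacle I expect is the degenerate case of properness: controlling the behavior of the conjugacy $\pmb{\psi}_{\alpha,A}$ as the simple parabolic fixed point merges into a higher order one, and confirming that the quasiconformal surgery producing the local inverse can always be arranged to preserve the normalization defining $\mathfrak{L}_0$ (critical Ecalle height $0$ of the fast critical value) while independently moving the slow critical value.
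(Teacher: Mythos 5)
Your overall skeleton --- continuity, local invertibility via quasiconformal surgery, properness, then the covering-map argument over the simply connected target plus a preimage count at one distinguished point --- is exactly the paper's own (sketched) proof, which follows \cite[Proposition~6.5]{KN} and \cite[Theorem~1.3]{LLMM2} and takes $0$ as the distinguished point, with unique preimage $\overline{z}+\frac{1}{\overline{z}}+3$. The genuine problems are in your properness analysis, which you rightly call the technical heart. Your boundary correspondence is backwards: if $(\alpha_n,A_n)\to\cC(\mathfrak{L}_0)$, then $\Xi(\alpha_n,A_n)$ accumulates on $\mathbb{T}=\partial\D$, \emph{not} on $\partial B(\mathcal{P}_B)$. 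Test this on the real slice $\{\alpha=0,\,A>0\}$, which lies in $\mathfrak{L}_0$ by real symmetry and whose escape locus is $\{A>2\}$: as $A\to 2^{+}$, the slow critical value $R_{0,A}(-1)=A-2$ tends to the pole $0$ (a preimage of the parabolic point), so $\Xi(0,A)$ tends to the corresponding preimage $-1\in\mathbb{T}$ of the parabolic point of $B$. Indeed the image can never approach $\partial B(\mathcal{P}_B)$ along such a sequence: the parabolic point at $\infty$ is persistent, so the petal $\mathcal{P}_{\alpha,A}$ and its Fatou coordinate move continuously, and $\Xi(\alpha_n,A_n)\to\partial B(\mathcal{P}_B)$ would force $-1\in\partial\mathcal{P}$ for the limit map, placing the limit in the escape locus rather than in $\cC(\mathfrak{L}_0)$. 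The safe version of this case is the contradiction argument actually used in Theorem~\ref{escape_unif_thm}: the image cannot converge to an interior point of the target.

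More seriously, your two cases do not exhaust the ways of leaving every compact subset of $\mathfrak{L}_0\setminus\cC(\mathfrak{L}_0)$: the leaf is unbounded, and sequences with $\vert A_n\vert\to\infty$ stay in the escape locus (e.g.\ real $A\to+\infty$ above). This omitted regime is precisely where the inner boundary actually arises: for real $A\to+\infty$ the Fatou-coordinate lag between the two critical values is $O(1/A)$, so $\Xi(0,A)$ converges to the point $B(0)=\frac13$ of $\partial B(\mathcal{P}_B)$. Meanwhile the case you single out as the main obstacle --- approach to the finite degenerate locus $\{A=0\}\cup\{\arg A=\frac{\alpha}{2}\pm\frac{\pi}{2}\}$ --- does not occur: near that locus the finite fixed point about to collide with $\infty$ is attracting and captures the orbit of $-1$ (which is why the degenerate locus lies on the boundary of the period-one hyperbolic component, Proposition~\ref{hyp_comb_boundary_para}), so those parameters lie in $\cC(\mathfrak{L}_0)$; this mirrors the fact that the escape locus of $\mathcal{S}$ does not accumulate on $\mathcal{I}^0$ (Proposition~\ref{hyp_one_not_proper}), and explains why the proof of Theorem~\ref{escape_unif_thm} never needs such a case. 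Finally, a small repair to your last step: the symmetry $R_{\alpha,A}(-z)=-R_{\alpha,-A}(z)$ does not act on $\mathfrak{L}_0$ (it exchanges the normalization $\arg A\in(\frac{\alpha}{2}-\frac{\pi}{2},\frac{\alpha}{2}+\frac{\pi}{2})$ with its complement), so it cannot pin down preimages within the leaf; one should instead argue directly, as the paper does, that $\Xi(\alpha,A)=0$ forces $R_{\alpha,A}(-1)=1$ and that the only such parameter in $\mathfrak{L}_0\setminus\cC(\mathfrak{L}_0)$ is $\overline{z}+\frac{1}{\overline{z}}+3$.
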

\begin{proof}[Sketch of Proof]
This can be proved by adapting the arguments of \cite[Proposition~6.5]{KN} for our setting (also compare \cite[Theorem~1.3]{LLMM2}). One shows that the map under consideration is continuous, proper, and locally invertible. Finally, the only map $R_{\alpha,A}$ in $\mathfrak{L}_0\setminus\cC(\mathfrak{L}_0)$ with $\pmb{\psi}_{\alpha,A}(R_{\alpha,A}(-1))=0$ is $\overline{z}+\frac{1}{\overline{z}}+3$. Thus, the map is a degree one covering onto the simply connected domain $\D\setminus\overline{B(\mathcal{P}_B)}$, and hence a homeomorphism.
\end{proof}

\begin{definition}[Parameter Rays of $\mathfrak{L}_0$]\label{para_ray_para}
The pre-image of a parabolic ray at angle $\theta\in\R/\Z$ under the uniformization of Proposition~\ref{unif_escape_locus_para} is called a $\theta$-parameter ray of $\mathfrak{L}_0$.
\end{definition}

The proof of the next result is classical for parameter spaces of polynomials (see \cite{orsay,S1a}), and can be easily adapted for the family $\mathfrak{L}_0$.

\begin{proposition}\label{para_rays_land_para}
1) Let $\theta\in\R/\Z$ be periodic under $B$. Then every accumulation point of the parameter ray of $\mathfrak{L}_0$ at angle $\theta$ is a parabolic parameter $(\alpha,A)$ (i.e., $R_{\alpha,A}^{\circ 2}$ has a parabolic cycle other than the parabolic fixed point $\infty$) such that in the corresponding dynamical plane, the dynamical $\theta$-ray lands at the parabolic periodic point on the boundary of the Fatou component containing the critical value $R_{\alpha,A}(-1)$.

2) Let $\theta\in\R/\Z$ be strictly pre-periodic under $B$. Then the parameter ray of $\mathfrak{L}_0$ at angle $\theta$ lands at a Misiurewicz parameter $(\alpha,A)$ (i.e., the critical point $-1$ is strictly pre-periodic under $R_{\alpha,A}$) in $\cC(\mathfrak{L}_0)$ such that in the corresponding dynamical plane, the dynamical ray at angle $\theta$ lands at the critical value $R_{\alpha,A}(-1)$.
\end{proposition}

\subsection{Hyperbolic Components of $\cC(\mathfrak{L}_0)$}\label{appendix_subsec_5}

A parameter $(\alpha,A)\in\mathfrak{L}_0$ is called \emph{hyperbolic} if $R_{\alpha,A}$ has an attracting cycle.\footnote{This slightly differs from the standard definition of hyperbolic rational maps (see \cite[\S 19]{M1new}) since the map $R_{\alpha,A}$ necessarily has a parabolic fixed point at $\infty$. In fact, according to the definition of \cite[\S 19]{M1new}, no map in $\mathfrak{L}_0$ is hyperbolic. However, our definition, which only concerns the behavior of the forward orbit of the free critical point $-1$, is convenient for the current paper, and should not create any confusion.} For a hyperbolic parameter of $\mathfrak{L}_0$, the forward orbit of the critical point $-1$ converges to the unique attracting cycle, and hence $-1\in\mathcal{K}_{\alpha,A}$. In particular, $(\alpha,A)\in\cC(\mathfrak{L}_0)$.

The hyperbolic parameters in $\cC(\mathfrak{L}_0)$ form an open set. A connected component of the set of all hyperbolic parameters is called a \emph{hyperbolic component} of $\cC(\mathfrak{L}_0)$. It is easy to see that every hyperbolic component $H$ has an associated positive integer $k$ such that each parameter in $H$ has an attracting cycle of period $k$. We refer to such a component as a hyperbolic component of period $k$.

The following proposition gives a dynamical uniformization of the hyperbolic components in $\cC(\mathfrak{L}_0)$. For the definitions of the Blaschke product spaces and the Koenigs ratio/multiplier map appearing in the statement of the proposition, see Subsection~\ref{hyp_comp_subsec} and \cite[\S 2.1.1]{LLMM2}.

\begin{proposition}[Dynamical Uniformization of Hyperbolic Components]\label{unif_hyp_rat}
Let $H$ be a hyperbolic component in $\cC(\mathfrak{L}_0)$.
\begin{enumerate}
\item If $H$ is of odd period, then there exists a homeomorphism $\eta_H:H\to\mathcal{B}^-$ that respects the Koenigs ratio of the attracting cycle. In particular, the Koenigs ratio map is a real-analytic $3$-fold branched covering from $H$ onto the open unit disk, ramified only over the origin.

\item If $H$ is of even period, then there exists a homeomorphism $\eta_H:H\to\mathcal{B}^+$ that respects the multiplier of the attracting cycle. In particular, the multiplier map is a real-analytic diffeomorphism from $H$ onto the open unit disk.
\end{enumerate}
In both cases, $H$ is simply connected and has a unique center.
\end{proposition}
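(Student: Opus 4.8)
The plan is to carry out a quasiconformal surgery on the first-return map of the critical Fatou component, following the template established for unicritical anti-polynomials in \cite[Theorems~5.6,~5.9]{NS} and already adapted to the Schwarz family in Proposition~\ref{unif_hyp_schwarz}. Fix a hyperbolic component $H$ of period $k$, let $(\alpha,A)\in H$, and let $U$ be the Fatou component of $R_{\alpha,A}$ containing the free critical point $-1$. Since the orbit of $-1$ converges to the attracting cycle, the first-return map $R_{\alpha,A}^{\circ k}:U\to U$ is a proper degree-two branched covering with an attracting fixed point; it is holomorphic when $k$ is even and anti-holomorphic when $k$ is odd. The first step is to assign to $(\alpha,A)$ the Blaschke model of this return map, thereby defining a map $\eta_H$ from $H$ into the Blaschke space $\mathcal{B}^{+}$ (even period) or $\mathcal{B}^{-}$ (odd period); that these model spaces carry the multiplier, respectively Koenigs-ratio, structure asserted in the statement is part of the general theory of $\mathcal{B}^{\pm}$ recalled in \cite[\S 2]{LLMM2}.

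To prove that $\eta_H$ is a homeomorphism, I would establish surjectivity and injectivity simultaneously by surgery. Given any $b\in\mathcal{B}^{\pm}$, one glues $b$ into $U$ by replacing $R_{\alpha,A}^{\circ k}$ on a fundamental sub-annulus of $U$ with $b$ and quasiconformally interpolating across the gluing annulus, producing a quasiregular map $\mathbf{G}$ of degree $-2$ that agrees with $R_{\alpha,A}$ outside the grand orbit of the modified region. Spreading the standard complex structure around by the dynamics yields a $\mathbf{G}$-invariant Beltrami form $\mu$ of bounded dilatation; integrating $\mu$ by the Measurable Riemann Mapping Theorem gives a quasiconformal map $\Phi$, and one sets $R:=\Phi\circ\mathbf{G}\circ\Phi^{-1}$. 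Normalizing $\Phi$ so that $R$ fixes $\infty$, has its finite pole at $0$, and has the fast critical point (the one in the basin of $\infty$) at $+1$, the map $R$ is a quadratic anti-rational map. Reading off the Blaschke model from $R$ inverts the construction, so $\eta_H$ is injective; the surgery shows its image is all of $\mathcal{B}^{\pm}$.

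The main obstacle, and the point that genuinely uses the special structure of $\mathfrak{L}_0$, is verifying that the straightened map $R$ again lies in $\mathfrak{L}_0$, i.e. that $\infty$ remains a \emph{simple} parabolic fixed point of $R^{\circ 2}$ with critical Ecalle height $0$. This is the analogue of Proposition~\ref{schwarz_qcdef}. The key observation is that $\mu$ is supported inside $\Int{\mathcal{K}_{\alpha,A}}$, on the grand orbit of the gluing annulus in $U$, and is therefore disjoint from a neighborhood of the parabolic basin $\mathcal{B}_{\alpha,A}$ (which contains the fast critical point $+1$ and the parabolic direction at $\infty$). Hence $\Phi$ is conformal near $\mathcal{B}_{\alpha,A}$ and carries the parabolic germ at $\infty$, together with an attracting petal and its Fatou coordinate, conformally onto those of $R$; this preserves the relation $DR^{\circ 2}(\infty)=1$, the simplicity of the parabolic point, and the critical Ecalle height $0$, placing $R$ in $\mathfrak{L}_0$. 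With $\eta_H$ established as a homeomorphism, the remaining assertions follow by transporting the known properties of $\mathcal{B}^{\pm}$: the multiplier realizes $\mathcal{B}^{+}$ as a real-analytic diffeomorphic image of $\D$, while the Koenigs ratio realizes $\mathcal{B}^{-}$ as a real-analytic $3$-fold branched cover of $\D$ ramified only over the origin. In both cases $\mathcal{B}^{\pm}$ is simply connected and has a unique center, which corresponds to the unique parameter at which $-1$ is periodic (equivalently, where the invariant vanishes), completing the argument.
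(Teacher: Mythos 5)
Your overall strategy is the same as the paper's: the paper proves this proposition by asserting that the quasiconformal surgery of \cite[Theorems~5.6,~5.9]{NS} applies verbatim, and your outline --- read off the Blaschke model of the first-return map on the critical Fatou component, glue an arbitrary element of $\mathcal{B}^{\pm}$ back in by surgery, straighten, and check membership in $\mathfrak{L}_0$ --- is precisely that surgery (it is also how the paper sketches the parallel Proposition~\ref{unif_hyp_schwarz} for the Schwarz family, where the membership check is likewise flagged as the ``important detail'').

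However, the step you yourself single out as the crux is justified incorrectly. The support of $\mu$ is the grand orbit of the gluing region, and backward orbits accumulate on the entire Julia set $\partial\mathcal{K}_{\alpha,A}=\partial\mathcal{B}_{\alpha,A}\ni\infty$; hence, although $\mathrm{supp}(\mu)$ is disjoint from the open set $\mathcal{B}_{\alpha,A}$, it is \emph{not} disjoint from any neighborhood of $\mathcal{B}_{\alpha,A}$ or of $\infty$. Consequently $\Phi$ is conformal on $\mathcal{B}_{\alpha,A}$ but only quasiconformal near $\infty$, and it does not carry the germ of $R_{\alpha,A}$ at $\infty$ conformally onto that of $R$; as written, your argument gives no control on $DR^{\circ 2}(\infty)$ or on the multiplicity of the parabolic point. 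The repair is standard and uses only what you actually have, namely conformality on the basin itself: (i) $\Phi(\mathcal{B}_{\alpha,A})$ is an $R$-invariant Fatou component whose orbits converge to the boundary fixed point $\infty$, so the snail lemma (equivalently, Naishul's theorem on topological invariance of multipliers of indifferent fixed points) forces $DR^{\circ 2}(\infty)=1$; (ii) every attracting petal of $R^{\circ 2}$ at $\infty$ has an immediate basin containing a critical point of $R^{\circ 2}$, and since every critical orbit of $R^{\circ 2}$ either converges to the glued-in attracting cycle or enters $\Phi(\mathcal{B}_{\alpha,A})$ after at most one step, there is exactly one petal, i.e.\ $\infty$ is a \emph{simple} parabolic fixed point of $R^{\circ 2}$; (iii) conformality of $\Phi$ on $\mathcal{B}_{\alpha,A}$ pushes the Fatou coordinate forward, so the critical Ecalle height of $R$ equals that of $R_{\alpha,A}$, which is $0$. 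With this correction your verification that $R\in\mathfrak{L}_0$ goes through; note that this mirrors Proposition~\ref{schwarz_qcdef}, where the paper also deduces membership in the family from qc-invariant dynamical properties near the parabolic point rather than from a conformal germ at it.
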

\begin{proof}
The proofs of \cite[Theorem~5.6, Theorem~5.9]{NS} apply verbatim to our situation.
\end{proof}

By Proposition~\ref{unif_hyp_rat}, each hyperbolic component has a unique parameter for which the critical point $-1$ of the corresponding map is periodic. This parameter is called the \emph{center} of the hyperbolic component. The unique hyperbolic component of period one of $\cC(\mathfrak{L}_0)$ has center $(0,1)$.

It is well-known that for every parameter $(\alpha,A)$ on the boundary of a hyperbolic component of period $k$, the corresponding map $R_{\alpha,A}$ has a $k$-periodic neutral cycle. If $k$ is odd, then such a neutral cycle is necessarily a parabolic cycle of multiplier $1$ for the second iterate $R_{\alpha,A}^{\circ 2}$ (compare \cite[Lemma~2.5]{MNS}).

\begin{proposition}[Neutral Dynamics of Odd Period]\label{hyp_odd_parabolic_rat}  
1) The boundary of a hyperbolic component of odd period $k>1$ of $\cC(\mathfrak{L}_0)$ consists entirely of parameters having a neutral cycle of exact period $k$. In suitable local conformal coordinates, the $2k$-th iterate of such a map has the form $z\mapsto z+z^{q+1}+\ldots$ with $q\in\{1,2\}$.

2) Every parameter on the boundary of the hyperbolic component of period one is either contained in $\overline{\cC(\mathfrak{L}_0)}\setminus\cC(\mathfrak{L}_0)$ (in which case the neutral fixed point is $\infty$, and it is a multiple parabolic fixed point of $R_{\alpha,A}^{\circ 2}$) or has a neutral fixed point other than $\infty$ (with local power series as above).
\end{proposition}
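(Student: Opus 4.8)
The plan is to establish Proposition~\ref{hyp_odd_parabolic_rat} by transporting the corresponding analysis already carried out for the Schwarz family to the parabolic Tricorn, exactly as the statement of the proposition suggests with its parenthetical reference to \cite[Lemma~2.5]{MNS}. The two assertions are the exact analogues of Proposition~\ref{hyp_odd_parabolic} for the family $\mathcal{S}$, and the local dynamical arguments are insensitive to whether one works with Schwarz reflections or with the anti-rational maps $R_{\alpha,A}$; what matters is only the behaviour near a neutral periodic point of an anti-holomorphic map and the structure of the parameter space near $\partial\cC(\mathfrak{L}_0)$.

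For part (1), I would fix a hyperbolic component $H$ of odd period $k>1$ and a parameter $(\alpha,A)\in\partial H$. Since $(\alpha,A)$ lies on the boundary of a period-$k$ hyperbolic component, the corresponding map $R_{\alpha,A}$ has a $k$-periodic neutral cycle (this is the standard boundary-of-hyperbolic-component fact recalled immediately before the proposition). The first step is to rule out irrationally neutral and even-multiplier cycles on $\partial H$: because $k$ is odd, the first return map to a Fatou component is \emph{anti-holomorphic}, so its $2k$-th iterate is holomorphic with a neutral fixed point, and the arguments of \cite[Lemma~2.5]{MNS} show the multiplier of $R_{\alpha,A}^{\circ 2k}$ at the cycle must equal $1$; hence the neutral cycle is parabolic of exact period $k$. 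The local normal form $z\mapsto z+z^{q+1}+\cdots$ with $q\in\{1,2\}$ then follows from the standard analysis of how the attracting fixed point of the first-return map merges with boundary fixed points as $(\alpha,A)\to\partial H$: a first-return anti-holomorphic map has exactly three boundary fixed points, and the attracting point can merge with either one of them (giving $q=1$) or with two simultaneously (giving $q=2$), precisely mirroring the discussion preceding Definition~\ref{DefCusp_schwarz}. I would also need to confirm that $\partial H$ contains no parameters outside the locus where $R_{\alpha,A}$ is well-defined; since $\mathfrak{L}_0$ is a full family of anti-rational maps (every parameter in $\mathfrak{L}_0$ gives a genuine map, unlike the family $\mathcal{S}$ where the domain $S$ had a genuine boundary), this is automatic and there is no analogue of the $\mathcal{I}$-type exceptional arc to worry about for components of period $k>1$.

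For part (2), the hyperbolic component of period one is distinguished because one of its boundary fixed points is the \emph{persistent} parabolic fixed point $\infty$. I would argue that as $(\alpha,A)$ approaches $\partial H$, the attracting fixed point of $R_{\alpha,A}$ can either merge with one of the two \emph{finite} boundary fixed points of the first-return map --- producing a finite parabolic fixed point with local form $z\mapsto z+z^{q+1}+\cdots$ as in part (1) --- or merge with the fixed point at $\infty$. In the latter case $\infty$ becomes a higher-order (double or triple) parabolic fixed point of $R_{\alpha,A}^{\circ 2}$, which by the explicit description in Subsection~\ref{appendix_subsec_1} (the condition $A=0$ or $\arg A=\tfrac{\alpha}{2}\pm\tfrac\pi2$) means exactly that $(\alpha,A)\notin\mathcal{F}$, i.e. $(\alpha,A)\in\overline{\cC(\mathfrak{L}_0)}\setminus\cC(\mathfrak{L}_0)$. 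This dichotomy is the precise parallel of Proposition~\ref{hyp_one_not_proper}, and I would follow that proof: count the three boundary fixed points, track which merger produces a finite parabolic point versus collision with $\infty$, and identify the transition parameters.

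The main obstacle, as I see it, is carrying out the local bifurcation analysis near the neutral cycle cleanly enough to justify that $q\in\{1,2\}$ and nothing larger can occur on the boundary of a \emph{hyperbolic} component --- that is, showing that generically at most the attracting point plus one (or two) boundary repelling points coalesce, rather than a more degenerate collision. This rests on the anti-holomorphic parabolic implosion machinery of \cite{MNS,HS}, and the delicate point is the parity argument: the oddness of $k$ forces the first-return map to reverse orientation, which is what pins the multiplier of the doubled iterate to $+1$ and constrains the number of merging petals. Once that local picture is secured, both parts reduce to the combinatorial fixed-point count, and the remaining verifications are routine transcriptions of the proofs already given for the family $\mathcal{S}$.
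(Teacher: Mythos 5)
Your overall strategy matches the paper's intent: the paper offers no separate proof of this proposition at all, presenting it as an adaptation of \cite[Lemma~2.5]{MNS} (see the paragraph immediately preceding it) and as the exact analogue of Propositions~\ref{hyp_odd_parabolic} and~\ref{hyp_one_not_proper} for the family $\mathcal{S}$. Your local analysis for part (1) (oddness of $k$ forcing the anti-holomorphic first return map, hence multiplier $1$ for $R_{\alpha,A}^{\circ 2k}$, and the normal form $z\mapsto z+z^{q+1}+\ldots$ with $q\in\{1,2\}$ coming from the merger of the attracting point with one or two of the three boundary fixed points) and your merger dichotomy for part (2) are precisely the intended ingredients.

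However, there is a genuine gap in your part (1): the claim that ``$\mathfrak{L}_0$ is a full family \ldots so there is no analogue of the $\mathcal{I}$-type exceptional arc to worry about'' is false, and it omits exactly the step that occupies the second paragraph of the paper's proof of Proposition~\ref{hyp_odd_parabolic}. The family $\mathfrak{L}_0$ excludes by fiat the parameters where $\infty$ is a double/triple parabolic fixed point of $R_{\alpha,A}^{\circ 2}$ (the locus $A=0$ or $\arg A=\frac{\alpha}{2}\pm\frac{\pi}{2}$ of Subsection~\ref{appendix_subsec_1}); those parameters carry perfectly well-defined anti-rational maps, just as the maps $\sigma_{a}$ with $a\in\mathcal{I}$ are well-defined Schwarz reflections — in neither family is well-definedness the issue, but rather exclusion by a dynamical condition at the distinguished parabolic point. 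This degenerate locus is $\overline{\cC(\mathfrak{L}_0)}\setminus\cC(\mathfrak{L}_0)$, i.e.\ precisely the $\mathcal{I}$-analogue, and it appears in part (2) of the very proposition you are proving and in Proposition~\ref{hyp_comb_boundary_para}. A complete proof of part (1) must therefore rule out that $\partial H$, for $H$ of odd period $k>1$, meets this locus, and your own framework makes clear this is not automatic: an argument is needed, e.g.\ (i) the attracting $k$-cycle cannot merge into $\infty$, since at a degenerate parameter $\infty$ has multiplicity at most $4$ as a fixed point of $R^{\circ 2k}$ while it already carries multiplicity $2$ for nearby maps in $\mathfrak{L}_0$, so at most two extra fixed points of $R^{\circ 2k}$ can coalesce there, whereas a merging $k$-cycle would require $k\geq 3$ of them; and (ii) if the cycle stays bounded, its limit is a finite non-repelling cycle, which must attract or accumulate a critical orbit, and this must be shown incompatible with the petal structure at the multiple parabolic point $\infty$ (whose petal cycles consume critical orbits). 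Step (ii) needs care: if the two petals of a double parabolic point at $\infty$ are interchanged by $R$, they consume only one of the two critical orbits, so the naive count does not close the argument. In the Schwarz setting the corresponding step was settled by the explicit fact that for $a\in\mathcal{I}$ the critical orbit converges to $-2$; you have supplied no substitute for that fact here, and ``fullness of the family'' is not one.
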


As in the case for quadratic anti-polynomials, this leads to the following classification of odd periodic parabolic points.

\begin{definition}[Parabolic Cusps]\label{DefCusp_schwarz}
Let $(\alpha,A)$ be a parameter with a neutral periodic point of odd period $k$ (other than $\infty$). It is called a {\em parabolic cusp} if the local power series expansion of $R_{\alpha,A}^{\circ 2k}$ at any of its neutral periodic points has $q=2$ (in the sense of the previous proposition), and a \emph{simple parabolic parameter} otherwise.
\end{definition}

Following \cite[Lemma~3.1]{MNS}, one can show that every odd period simple parabolic parameter $(\alpha,A)\in\cC(\mathfrak{L}_0)$ is contained in a real-analytic arc of quasiconformally conjugate parameters. Such a \emph{parabolic arc} is constructed by varying the \emph{{\'E}calle height} of the critical value $R_{\alpha,A}(-1)$ from $-\infty$ to $\infty$. Moreover, each parabolic arc of period greater than one in $\cC(\mathfrak{L}_0)$ limits at parabolic cusps on both ends. One can now mimic the proofs of \cite[Proposition~3.7, Theorem~3.8]{HS} to conclude the following statements.

\begin{proposition}[Bifurcations Along Arcs]\label{ThmBifArc_para}
1) Along any parabolic arc of odd period greater than one, the holomorphic fixed point index of the parabolic cycle is a real valued real-analytic function that tends to $+\infty$ at both ends.

2) Every parabolic arc of odd period $k>1$ intersects the boundary of a hyperbolic component of period $2k$ along an arc consisting of the set of parameters where the parabolic fixed point index is at least $1$. In particular, every parabolic arc has, at both ends, an interval of positive length at which bifurcation from a hyperbolic component of odd period $k$ to a hyperbolic component of period $2k$ occurs.
\end{proposition}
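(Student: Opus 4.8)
The final statement (Proposition~\ref{ThmBifArc_para}) is explicitly flagged in the excerpt as provable by adapting the corresponding arguments from the Tricorn literature, so the plan is to transport those proofs into the setting of the family $\mathfrak{L}_0$. The two parts are the anti-rational analogues of Proposition~\ref{index_to_infinity_schwarz} (fixed point index tending to $+\infty$) and Proposition~\ref{ThmBifArc_schwarz} (bifurcation along arcs), which were themselves obtained by adapting \cite[Proposition~3.7]{HS} and \cite[Theorem~3.8, Corollary~3.9]{HS} to the Schwarz family $\mathcal{S}$. The key observation justifying a verbatim adaptation is that all ingredients are \emph{local}: the odd-period parabolic basin of an anti-holomorphic map admits a Fatou coordinate conjugating the first return map to the glide reflection $\zeta\mapsto\overline{\zeta}+\tfrac12$ (see the construction in Subsection~\ref{appendix_subsec_3} and \cite[Lemma~2.3]{HS}), and the behaviour of the holomorphic fixed point index along the parabolic arc depends only on this local model near the parabolic cycle. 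Since the maps in $\mathfrak{L}_0$ have exactly the same local parabolic structure as the unicritical anti-polynomials treated in \cite{HS}, the analytic machinery applies unchanged.

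For part~(1), I would first invoke the critical Ecalle height parametrization $a:\R\to\cC^{\sharp}$ of a parabolic arc $\cC^{\sharp}$ of period $k>1$ in $\cC(\mathfrak{L}_0)$, which exists by the discussion preceding Definition~\ref{DefCusp_schwarz}. The essential input is that $\overline{\cC^{\sharp}}$ limits at parabolic cusps on both ends — this is already asserted in Subsection~\ref{appendix_subsec_5} (``each parabolic arc of period greater than one in $\cC(\mathfrak{L}_0)$ limits at parabolic cusps on both ends''). Given this, the holomorphic fixed point index $\ind_{\cC^{\sharp}}(R_{a(h)}^{\circ 2})$ is a real-valued real-analytic function of $h$, and the proof of \cite[Proposition~3.7]{HS} shows that as $h\to\pm\infty$ the parabolic cycle degenerates toward a cusp (merger of the attracting periodic point with two repelling boundary fixed points of the first-return map), forcing the index to $+\infty$. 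I would carry over the asymptotic analysis of the index in terms of the residue of $1/(z-R^{\circ 2k}(z))$ as the multiplier approaches $1$ along the arc, exactly as in the Tricorn case.

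For part~(2), with part~(1) in hand, I would mimic \cite[Theorem~3.8]{HS}: along each end of the parabolic arc, continuity and the intermediate value theorem applied to the real-analytic index function (which attains arbitrarily large values near the cusps) produce a sub-interval where the index is at least $1$. The content of the classical bifurcation criterion is that the index being $\ge 1$ is precisely the condition under which a hyperbolic component of period $2k$ bifurcates from the odd-period component across that portion of the arc; this is the anti-holomorphic period-doubling bifurcation analyzed in \cite[Theorem~3.8, Corollary~3.9]{HS}. The parametrization and monotonicity refinements recorded in Proposition~\ref{index_increasing_1} (the $\mathfrak{L}_0$-analogue of Proposition~\ref{index_increasing}) would then pin down the bifurcating sub-arc as $\{h\ge h_0\}$ at each end.

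The main obstacle is not conceptual but verifying that the global hypothesis feeding the local argument genuinely holds in $\mathfrak{L}_0$: namely that every parabolic arc of odd period $k>1$ accumulates \emph{only} at parabolic cusps of period $k$ and does not escape $\cC(\mathfrak{L}_0)$ or accumulate at the distinguished boundary class (maps with a multiple parabolic point at $\infty$). In the Schwarz family this compactness-of-arc-closure statement required the separate input of Proposition~\ref{limit_point_conn_locus} together with the finiteness of cusps of a given period; for $\mathfrak{L}_0$ one must instead argue that an odd-period parabolic arc cannot limit on the period-one parabolic locus at $\infty$, using that the ``fast'' critical point $1$ stays in the immediate basin of $\infty$ while the second critical point $-1$ governs the period-$k$ parabolic cycle. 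Once this boundedness of the arc closure is established (it follows from the unicriticality of the relevant critical point together with the index blow-up itself, creating no circularity since the blow-up only needs the cusp limit at the \emph{reachable} endpoints), the remainder is a routine transcription of the Hubbard--Schleicher arguments.
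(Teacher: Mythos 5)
Your proposal is correct and is essentially the paper's own proof: the paper justifies this proposition precisely by noting that each parabolic arc (obtained via the critical Ecalle height deformation following \cite{MNS}) limits at parabolic cusps on both ends and then mimicking \cite[Proposition~3.7, Theorem~3.8]{HS}, which is exactly your argument for parts (1) and (2), including the use of Proposition~\ref{index_increasing_1} to pin down the bifurcating sub-arcs. One caveat on your final paragraph: the suggestion that the boundedness of the arc closure can lean on ``the index blow-up itself'' is circular (part (1) presupposes the cusp limits), so the correct justification is the other one you give --- a map with a degenerate parabolic fixed point at $\infty$ attracts the orbit of the free critical point $-1$, hence cannot carry a period-$k>1$ parabolic cycle, so the arc cannot accumulate on $\overline{\cC(\mathfrak{L}_0)}\setminus\cC(\mathfrak{L}_0)$ and its endpoints are cusps by the argument of \cite[Lemma~3.3]{MNS}.
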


Using the arguments of \cite[Lemma~2.13, Corollary~2.21]{IM2}, one can make the following sharper statement about the fixed point index of the parabolic cycle on a parabolic arc.

\begin{proposition}\label{index_increasing_1}
Let $H$ be a hyperbolic component of odd period $k$ in $\cC(\mathfrak{L}_0)$, $\mathcal{C}$ be a parabolic arc on $\partial H$, $c:\mathbb{R}\to\mathcal{C}$ be the critical {\'E}calle height parametrization of $\mathcal{C}$, and let $H'$ be a hyperbolic component of period $2k$ bifurcating from $H$ across $\mathcal{C}$. Then there exists some $h_0>0$ such that 
$$
\cC\cap\partial H'=c[h_0,+\infty).
$$ 
Moreover, the fixed point index function 
\begin{center}
$\begin{array}{rccc}
  \ind_{\mathcal{C}}: & [h_0,+\infty) & \to & [1,+\infty) \\
  & h & \mapsto & \ind_{\mathcal{C}}(R_{c(h)}^{\circ 2})\\
   \end{array}$
\end{center}
is strictly increasing, and hence a bijection (where $\ind_{\cC}(R_{c(h)}^{\circ 2})$ stands for the holomorphic fixed point index of the $k$-periodic parabolic cycle of $R_{c(h)}^{\circ 2}$).
\end{proposition}
 
We can now adapt the arguments of \cite[\S 5]{MNS} for the current setting to prove the following result on the structure of the boundaries of odd period hyperbolic components of $\cC(\mathfrak{L}_0)$.
 
\begin{proposition}[Boundaries of Odd Period Hyperbolic Components]\label{hyp_comb_boundary_para}
1) The boundary of every hyperbolic component of odd period $k>1$ of $\cC(\mathfrak{L}_0)$ is a topological triangle having parabolic cusps as vertices and parabolic arcs as sides.

2) The boundary of the unique period one hyperbolic component of $\cC(\mathfrak{L}_0)$ consists of two parabolic arcs, a parabolic cusp, and $\overline{\cC(\mathfrak{L}_0)}\setminus\cC(\mathfrak{L}_0)$ (where the corresponding maps have a multiple parabolic fixed point at $\infty$).
\end{proposition}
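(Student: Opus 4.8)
The plan is to transport the boundary structure theory for odd-period hyperbolic components of the Tricorn developed in \cite[\S 5]{MNS} to the family $\mathfrak{L}_0$; the argument will run in close parallel to the proofs of Proposition~\ref{Exactly 3} and Proposition~\ref{hyp_one_not_proper} for the family $\mathcal{S}$. First I would record that, by Proposition~\ref{hyp_odd_parabolic_rat}(1), every parameter on the boundary of a hyperbolic component $H$ of odd period $k>1$ carries a parabolic cycle of exact period $k$ whose $2k$-th return map has local normal form $z\mapsto z+z^{q+1}+\cdots$ with $q\in\{1,2\}$; thus $\partial H$ is partitioned into simple parabolic parameters ($q=1$) and parabolic cusps ($q=2$). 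By the parabolic-arc construction recalled in the appendix (following \cite[Lemma~3.1, Theorem~3.2]{MNS}), each simple parabolic parameter is an interior point of a real-analytic parabolic arc parametrized by critical Ecalle height $h\in\R$, and, for $k>1$, the index blow-up of Proposition~\ref{ThmBifArc_para} together with the monotonicity of Proposition~\ref{index_increasing_1} forces each such arc to limit at a parabolic cusp at each of its two ends. Hence $\partial H$ is a finite union of parabolic arcs meeting at parabolic cusps, and the content of part (1) is reduced to the count of arcs and cusps and their adjacency pattern.

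For the count I would pass to the dynamical plane of the center of $H$ and analyze the characteristic periodic Fatou component $U$ (the one containing the critical point $-1$). Its first-return map $R_{\alpha,A}^{\circ k}\vert_U$ is an anti-holomorphic proper self-map of degree two; by the dynamical uniformization of Proposition~\ref{unif_hyp_rat} it is conjugate to a member of the Blaschke family $\mathcal{B}^-$, whose boundary circle map is topologically conjugate to $\overline{z}^2$ and therefore has exactly three fixed points (the three solutions of $\overline{z}^2=z$ on $\mathbb{T}$), cf.\ Proposition~\ref{unicritical_parabolic}. As the parameter approaches $\partial H$, the attracting $k$-cycle of $R_{\alpha,A}$ collides with one of the three repelling cycles sitting at these boundary fixed points: a collision with a single one produces a simple parabolic parameter, while a simultaneous collision with two of them produces a parabolic cusp. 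Since the three boundary fixed points are cyclically arranged, distinguishing which one(s) participate yields exactly three combinatorial types of arc and three of cusp. Translating this combinatorial data through the ray portraits of Proposition~\ref{rays_land_para}, exactly as in \cite[\S 5]{MNS} and \cite[\S 7]{LLMM2}, shows that $\partial H$ consists of precisely three parabolic arcs with each pair of consecutive arcs sharing a common cusp endpoint, so that $\partial H$ is a topological triangle; this establishes part (1).

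For part (2) the same analysis applies to the unique period-one component, with one essential modification: one of the three boundary fixed points of the characteristic (now fixed) Fatou component is the persistent parabolic fixed point $\infty$ itself rather than a repelling cycle. Consequently only the two remaining boundary fixed points can collide with the attracting fixed point to produce honest parabolic boundary behavior, yielding two parabolic arcs together with their common cusp, whereas the merger of the attracting fixed point with $\infty$ is exactly the degeneration in which $\infty$ becomes a double/triple parabolic fixed point of $R_{\alpha,A}^{\circ 2}$, i.e.\ the parameter leaves $\cC(\mathfrak{L}_0)$ into $\overline{\cC(\mathfrak{L}_0)}\setminus\cC(\mathfrak{L}_0)$ (compare Proposition~\ref{hyp_odd_parabolic_rat}(2) and the analogous Proposition~\ref{hyp_one_not_proper} for $\mathcal{S}$). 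Hence $\partial H$ consists of two parabolic arcs, one parabolic cusp, and $\overline{\cC(\mathfrak{L}_0)}\setminus\cC(\mathfrak{L}_0)$, as claimed.

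The main obstacle is the bookkeeping behind the ``exactly three'' count: one must verify that the three boundary fixed points genuinely yield three \emph{distinct} arcs, and that these arcs close up cyclically through the cusps rather than pairing differently or accumulating. This is where the combinatorial rigidity furnished by the landing ray portraits (Proposition~\ref{rays_land_para}), combined with the strict monotonicity of the fixed-point index along arcs (Propositions~\ref{ThmBifArc_para} and~\ref{index_increasing_1}), does the decisive work, since it pins down the bifurcation to a well-defined period-$2k$ component at each end of each arc and thereby the adjacency of arcs and cusps. The remaining verifications --- that the cusps are isolated, that there are only finitely many arcs, and (for $k>1$) that no part of $\partial H$ escapes $\cC(\mathfrak{L}_0)$ --- follow from the same compactness and injectivity considerations already used in Proposition~\ref{Exactly 3} and the parabolic-arc discussion, transported essentially verbatim.
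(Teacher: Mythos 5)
Your proposal is correct and takes essentially the same route the paper intends: the paper proves this proposition only by remarking that the arguments of \cite[\S 5]{MNS} adapt to $\mathfrak{L}_0$, and the detailed adaptation is exactly the one you give, mirroring the proofs of Proposition~\ref{Exactly 3} and Proposition~\ref{hyp_one_not_proper} for the family $\mathcal{S}$ (merger of the attracting cycle with one, respectively two, of the three fixed points on the boundary of the characteristic Fatou component, with the persistent parabolic point $\infty$ playing the role that the cusp $-2$ and the arc $\mathcal{I}$ play in the period-one case). The only cosmetic difference is that you deduce that arcs of period $k>1$ terminate at cusps from the index blow-up of Proposition~\ref{ThmBifArc_para}, whereas the paper obtains this directly from the compactness/finiteness argument borrowed from \cite{MNS} before establishing the index asymptotics; since both statements are available as cited propositions, this does not affect the validity of your argument.
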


\subsection{Beltrami Arcs and Queer Components}\label{appendix_subsec_6}

We will now briefly discuss qc non-rigid parameters in $\cC(\mathfrak{L}_0)$. Note that $(\alpha,A)\in\cC(\mathfrak{L}_0)$ is qc non-rigid if and only if $R_{\alpha,A}$ admits a non-trivial invariant Beltrami coefficient that is supported on its filled Julia set. By the classification of Fatou components of rational maps (and the fact that Siegel parameters, even period parabolic parameters, and odd period parabolic cusps are qc rigid in $\cC(\mathfrak{L}_0)$), it follows that if such a Beltrami coefficient is supported on $\Int{\mathcal{K}_{\alpha,A}}$, then the corresponding map lies either in a hyperbolic component or on a parabolic arc. It remains to discuss qc non-rigid parameters in $\cC(\mathfrak{L}_0)$ for which the corresponding invariant Beltrami coefficient is supported on its Julia set. We do this in the next couple of paragraphs.

Let us first assume that $R_{\alpha,A}\in\cC(\mathfrak{L}_0)$ admits a real one-dimensional quasiconformal deformation space in $\cC(\mathfrak{L}_0)$, and $\mu$ is a non-trivial $R_{\alpha,A}$-invariant Beltrami coefficient which is supported on $\partial\mathcal{K}_{\alpha,A}$. Set $m:=\vert\vert\mu\vert\vert_\infty\in\left(0,1\right)$. Then, $t\mu$ is also a non-trivial $R_{\alpha,A}$-invariant Beltrami coefficient supported on $\partial\mathcal{K}_{\alpha,A}$, for $t\in\left(-\frac1m,\frac1m\right)$. By the measurable Riemann mapping theorem with parameters, we obtain quasiconformal maps $\{h_t\}$ (where $t\in\left(-\frac1m,\frac1m\right)$) with associated Beltrami coefficients $t\mu$ such that $h_t$ fixes $0, 1$, and $\infty$, and $\{h_t\}$ depends real-analytically on $t$. Hence, $h_t\circ R_{\alpha,A}\circ h_t^{-1}\in\cC(\mathfrak{L}_0)$ for all $t\in\left(-\frac1m,\frac1m\right)$. This produces a real-analytic arc of quasiconformally conjugate parameters in $\cC(\mathfrak{L}_0)$ which contains $(\alpha,A)$ in its interior. This arc, which is the full quasiconformal deformation space of $R_{\alpha,A}$ in $\cC(\mathfrak{L}_0)$, is called the \emph{queer Beltrami arc} containing $(\alpha,A)$.

Finally, let $R_{\alpha,A}\in\cC(\mathfrak{L}_0)$ admit a real two-dimensional quasiconformal deformation space in $\cC(\mathfrak{L}_0)$, and $\mu_1$, $\mu_2$ be $\R$-independent non-trivial $R_{\alpha,A}$-invariant Beltrami coefficients which are supported on $\partial\mathcal{K}_{\alpha,A}$. Then, $\left(t_1\mu_1+t_2\mu_2\right)$ is also a non-trivial $R_{\alpha,A}$-invariant Beltrami coefficient supported on $\partial\mathcal{K}_{\alpha,A}$ whenever $t_1, t_2\in\R$, and $\vert\vert t_1\mu_1+t_2\mu_2\vert\vert_\infty<1$. As in the previous case, this produces an open set of quasiconformally conjugate parameters in $\cC(\mathfrak{L}_0)$ containing $(\alpha,A)$. This open set, which is the full quasiconformal deformation space of $R_{\alpha,A}$ in $\cC(\mathfrak{L}_0)$, is called the \emph{queer component} containing $(\alpha,A)$.

\subsection{Dyadic Tips of $\cC(\mathfrak{L}_0)$}\label{appendix_subsec_7}
Let us now define an important class of critically pre-periodic parameters in $\cC(\mathfrak{L}_0)$.

\begin{definition}[Dyadic Tips of $\cC(\mathfrak{L}_0)$]\label{def_dyadic_tip_rat}
We say that $(\alpha,A)\in\cC(\mathfrak{L}_0)$ is a \emph{dyadic tip of pre-period $k$} if the critical point $-1$ (of $R_{\alpha,A}$) maps to the parabolic fixed point $\infty$ in exactly $k$ steps; i.e., $k\geq 1$ is the smallest integer such that $R_{\alpha,A}^{\circ k}(-1)=\infty$.
\end{definition}

Note that only the $0$-ray of $R_{\alpha,A}$ lands at the parabolic point $\infty$, and hence in the dynamical plane of a dyadic tip of pre-period $k$ of $\cC(\mathfrak{L}_0)$, the critical value is the landing point of only one dynamical ray. This ray has an angle $\theta$ such that $B^{\circ k}(\theta)=0$.

\subsection{Abstract Parabolic Tricorn}\label{appendix_subsec_8}

By Proposition~\ref{rays_land_para}, for $(\alpha,A)\in\cC(\mathfrak{L}_0)$, all dynamical rays of $R_{\alpha,A}$ at angles in $\mathrm{Per}(B)$ (i.e., at pre-periodic angles under $B$) land on $\partial\mathcal{K}_{\alpha,A}$. This leads to the following definitions.

\begin{definition}[Pre-periodic Laminations, and Combinatorial Classes]\label{para_lami_comb_class_def}
i) For $(\alpha,A)\in\cC(\mathfrak{L}_0)$, the \emph{pre-periodic lamination} of $R_{\alpha,A}$ is defined as the equivalence relation on $\mathrm{Per}(B)\subset\R/\Z$ such that $\theta, \theta'\in\mathrm{Per}(B)$ are related if and only if the corresponding dynamical rays of $R_{\alpha,A}$ land at a common point of $\partial\mathcal{K}_{\alpha,A}$.

ii) Two parameters $(\alpha,A)$ and $(\alpha',A')$ in $\cC(\mathfrak{L}_0)$ are said to be \emph{combinatorially equivalent} if the corresponding maps have the same pre-periodic lamination.  

iii) The combinatorial class $\mathrm{Comb}(\alpha,A)$ of $(\alpha,A)\in\cC(\mathfrak{L}_0)$ is defined as the set of all parameters in $\cC(\mathfrak{L}_0)$ that are combinatorially equivalent to $(\alpha,A)$.

iv) A combinatorial class $\mathrm{Comb}(\alpha,A)$ is called \emph{periodically repelling} if for every $(\alpha',A')\in\mathrm{Comb}(\alpha,A)$, each periodic orbit of $R_{\alpha,A}$ is repelling.
\end{definition} 

The following proposition gives a rough description of the combinatorial classes of $\cC(\mathfrak{L}_0)$. 

\begin{proposition}[Description of Combinatorial Classes]\label{comb_class_para}
Every combinatorial class $\mathrm{Comb}(\alpha,A)$ of $\cC(\mathfrak{L}_0)$ is of one of the following four types.
\begin{enumerate}
\item $\mathrm{Comb}(\alpha,A)$ consists of an even period hyperbolic component that does not bifurcate from an odd period hyperbolic component, its root point, and the irrationally neutral parameters on its boundary,

\item $\mathrm{Comb}(\alpha,A)$ consists of an even period hyperbolic component that bifurcates from an odd period hyperbolic component, the unique parabolic cusp and the irrationally neutral parameters on its boundary,

\item $\mathrm{Comb}(\alpha,A)$ consists of an odd period hyperbolic component and the parabolic arcs on its boundary,

\item $\mathrm{Comb}(\alpha,A)$ is periodically repelling.
\end{enumerate}

\end{proposition}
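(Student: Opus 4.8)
The plan is to follow the strategy used for the Tricorn (quadratic anti-polynomials) in \cite{MNS,IM2}, adapting it to the parabolic anti-rational family $\mathfrak{L}_0$ with the aid of the boundary-structure results collected above. The organising principle is a dichotomy: either every representative of the class is periodically repelling, or it is not. If $\mathrm{Comb}(\alpha,A)$ is periodically repelling, then by definition we are in case (4) and there is nothing to prove, so I would assume that some representative $R_{\alpha',A'}\in\mathrm{Comb}(\alpha,A)$ carries a non-repelling cycle other than the parabolic fixed point $\infty$.

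The key dynamical input is that each $R_{\alpha',A'}$ has exactly two critical points $\pm 1$, that $+1$ lies permanently in $\mathcal{B}_{\alpha',A'}$ and is consumed by the parabolic fixed point $\infty$, so that $-1$ is the only \emph{free} critical point. By the standard relations between non-repelling cycles and critical orbits of rational maps, $R_{\alpha',A'}$ therefore has at most one non-repelling cycle besides $\infty$, to which the free critical point $-1$ is committed; hence $R_{\alpha',A'}$ is hyperbolic, or parabolic of some finite period, or carries a Siegel disk or a Cremer cycle.

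The heart of the argument is to show that the pre-periodic lamination of Definition~\ref{para_lami_comb_class_def} is constant on each hyperbolic component and to pin down exactly how it propagates to the neutral boundary parameters. On a hyperbolic component $H$ all maps are quasiconformally conjugate, with the repelling (pre-)periodic points moving holomorphically; by the landing theorem (Proposition~\ref{rays_land_para}) the pre-periodic rays landing at these points retain their angles and their co-landing relations, so the pre-periodic lamination is constant on $H$ and $H\subset\mathrm{Comb}(\alpha',A')$. To extend this across $\partial H$ I would invoke the boundary descriptions (Propositions~\ref{unif_hyp_rat}, \ref{hyp_odd_parabolic_rat}, \ref{ThmBifArc_para}, \ref{index_increasing_1}, \ref{hyp_comb_boundary_para}) and treat three cases. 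When $H$ has odd period, along a simple parabolic arc the attracting cycle merges with a single repelling cycle while the co-landing pattern of pre-periodic rays is unchanged, so the parabolic arcs lie in the class of $H$, whereas at a cusp two repelling cycles collide and the lamination changes; this yields type (3). When $H$ has even period and does not bifurcate from an odd-period component, its root (where the multiplier equals $+1$) together with the irrationally neutral parameters on $\partial H$ carry the lamination of $H$, giving type (1). When $H$ has even period $2k$ and bifurcates from an odd-period component of period $k$, the relevant root is the parabolic cusp at the end of the corresponding parabolic arc, which with the irrationally neutral boundary parameters carries the lamination of $H$, giving type (2). Exactness---that the class contains \emph{only} the stated parameters---follows because distinct hyperbolic components, and any periodically repelling parameter, carry distinct laminations, the ray portrait at the degenerating cycle serving to distinguish them. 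Should the non-repelling representative be parabolic or irrationally neutral rather than hyperbolic, the same boundary propositions place it on $\partial H$ for a unique hyperbolic component $H$ of the appropriate period, reducing matters to the hyperbolic case.

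I expect the main obstacle to be this propagation step across the neutral boundary: one must prove that the pre-periodic lamination is \emph{exactly} invariant along simple parabolic arcs, roots, and irrationally neutral boundary parameters, yet genuinely changes at cusps and across the bifurcation loci. This requires a careful stability analysis of (pre-)periodic ray landing under parabolic bifurcation---controlling how periodic points and the rays landing at them behave as an attracting cycle merges with a single repelling one, as opposed to when two repelling cycles collide---and it is precisely here that the Tricorn arguments of \cite{MNS,IM2} must be transplanted faithfully to the parabolic anti-rational setting.
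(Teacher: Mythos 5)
The paper does not actually prove this proposition: it is stated at the end of Appendix~\ref{anti_rational_parabolic} as a ``rough description'' of the combinatorial classes, with its justification implicitly deferred to the Tricorn literature (\cite{MNS}, \cite{HS}, \cite{IM2}), whose boundary-structure results the appendix has just transplanted to $\mathfrak{L}_0$ (Propositions~\ref{unif_hyp_rat}, \ref{hyp_odd_parabolic_rat}, \ref{ThmBifArc_para}, \ref{index_increasing_1}, \ref{hyp_comb_boundary_para}). So there is no written proof to compare yours against; your outline is exactly the adaptation the authors have in mind, and its skeleton is sound: Fatou--Shishikura with the single free critical point $-1$, constancy of the pre-periodic lamination on hyperbolic components, and the parity-sensitive boundary analysis. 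Your key mechanism is also the right one: along a parabolic arc the attracting cycle merges with a repelling cycle of the \emph{same} period, so no two ray landing points collide and the lamination is unchanged, whereas at a cusp, and inside an even-period component bifurcating across the arc (where a pair of periodic points of the return map annihilates, forcing the rays that landed there to re-land on a different repelling cycle), new identifications appear.

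Two steps, however, remain genuinely unproven in your sketch. The first is the propagation step you flag yourself: the verification that ray landing degenerates in exactly the way described through each type of parabolic transition (in particular, that upon crossing from an arc into a period-doubled component the displaced rays acquire precisely the cusp's identifications). The second, which you dispose of in one clause, is the exactness half --- that $\mathrm{Comb}(\alpha,A)$ contains \emph{nothing but} the listed parameters. This requires (i) that distinct hyperbolic components have distinct pre-periodic laminations (via Thurston rigidity of their centers, say, or via parameter-ray separation), and (ii) that no parameter all of whose cycles are repelling can share the lamination of a hyperbolic parameter; in the holomorphic setting (ii) is proved with wakes bounded by landing parameter rays, but in antiholomorphic families parameter rays at odd-periodic angles do not land --- by Proposition~\ref{para_rays_land_para} they only accumulate on parabolic parameters/arcs --- so the separation argument has to be run with accumulation sets of rays instead. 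Since the paper itself supplies no argument, your proposal is not below the paper's own standard for this statement, but these are the two points a complete proof would have to supply.
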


We are now ready to define an abstract topological model for $\cC(\mathfrak{L}_0)$. We put an equivalence relation $\sim$ on $\mathbb{S}^2$ by
\begin{enumerate}
\item identifying all parameters in the closure of each periodically repelling combinatorial class of $\cC(\mathfrak{L}_0)$,

\item identifying all parameters in the closure of the non-bifurcating sub-arc of each parabolic arc of $\cC(\mathfrak{L}_0)$, and

\item identifying all parameters in $\overline{\cC(\mathfrak{L}_0)}\setminus\cC(\mathfrak{L}_0)$ (the corresponding maps have a multiple parabolic fixed point at $\infty$).
\end{enumerate}

This is a non-trivial closed equivalence relation on the sphere such that all equivalence classes are connected and non-separating. By Moore's theorem, the quotient of the $2$-sphere by $\sim$ is again a $2$-sphere. The image of $\cC(\mathfrak{L}_0)$ under this quotient map is non-compact, but adding the class of $\overline{\cC(\mathfrak{L}_0)}\setminus\cC(\mathfrak{L}_0)$ turns it into a full, compact subset of the $2$-sphere.

\begin{definition}[The Abstract Parabolic Tricorn/Abstract Connectedness Locus of $\mathfrak{L}_0$]\label{abstract_model_para}
The \emph{abstract parabolic Tricorn} (or, the abstract connectedness locus of the family $\mathfrak{L}_0$) is defined as the union of the image of $\cC(\mathfrak{L}_0)$ under the quotient map (defined by the above equivalence relation) and the class of $\overline{\cC(\mathfrak{L}_0)}\setminus\cC(\mathfrak{L}_0)$. It is denoted by $\widetilde{\cC(\mathfrak{L}_0)}$.
\end{definition}

\begin{remark}
The abstract parabolic Tricorn $\widetilde{\cC(\mathfrak{L}_0)}$ can also be described as the quotient of the unit disk under a suitable lamination. 

Let us identify the angles of all parameter rays of $\mathfrak{L}_0$ at pre-periodic angles (under $B$) that land at a common (parabolic or Misiurewicz) parameter or accumulate on a common parabolic arc of $\mathfrak{L}_0$. This defines an equivalence relation on $\mathrm{Per}(B)\cap\partial\D$. We then consider the smallest closed equivalence relation on $\partial\D$ generated by the above relation. Taking the hyperbolic convex hull of each of these equivalence classes in $\overline{\D}$ yields a geodesic lamination of $\D$. Finally, we consider the quotient of $\overline{\D}$ by collapsing each hyperbolic convex hull obtained above to a single point. The resulting continuum coincides with the abstract connectedness locus $\widetilde{\cC(\mathfrak{L}_0)}$ (see \cite[\S 9.4.2]{L6} for a general discussion on the construction of pinched disk models of planar continua). 
\end{remark}

\section{Mating $\overline{z}^d$ with The Abstract Hecke Groups}\label{deltoid_corr}

In this appendix, we will construct for each integer $d\geq 2$, an anti-holomorphic correspondence that is a mating of the anti-polynomial $\overline{z}^d$ with the abstract Hecke group $\Z/2\Z\ast\Z/(d+1)\Z$. The correspondence will arise from a univalent restriction of a suitable degree $d+1$ rational map.

\begin{proposition}\label{hypocycloid_quadrature} 
The map $f(z):=z+1/dz^d$ is injective on $\D^*:=\widehat{\mathbb{C}}\setminus\overline{\D}$. Moreover, $f(\mathbb{S}^1)$ is a Jordan curve.
\end{proposition}

\begin{proof} 
Note that $f$ has a $(d-1)$-fold critical point at the origin, and $(d+1)$ simple critical points on $\mathbb{S}^1$. In particular, $f$ has no critical point in $\D^*$. So it suffices to show that $f(\mathbb{S}^1)$ is a Jordan curve (indeed, this implies that $f(\D^*)$ is simply connected, and hence uniqueness of analytic continuation yields an inverse branch of $f$ that maps $f(\D^*)$ onto $\D^*$).

We now prove injectivity of $f\vert_{\mathbb{S}^1}$. To this end, pick $z,w\in\mathbb{S}^1$ with $z\neq w$, and suppose that $f(z)=f(w)$. Note that:
\begin{align*}
f(z)=f(w) \implies \displaystyle\sum_{j=0}^{d-1} z^{d-1-j}w^j=d\cdot z^dw^d\implies \left\vert \displaystyle \sum_{j=0}^{d-1}z^{d-1-j}w^j\right\vert=d.
\end{align*}
By the triangle inequality and the fact that $z,w\in\mathbb{S}^1$, we now conclude that all the complex numbers $z^{d-1-j}w^j$ (for $j=0, \cdots, d-1$) have the same argument. But this implies that $z=w$, a contradiction. Therefore, $f$ is injective on $\mathbb{S}^1$, and hence $f(\mathbb{S}^1)$ is a Jordan curve.
\end{proof}

By Proposition~\ref{simp_conn_quad}, $\Omega:=f(\D^*)$ is a quadrature domain with associated Schwarz reflection map 
$$
\sigma:=f\circ\iota\circ\left(f\vert_{\overline{\D^*}}\right)^{-1}:\overline{\Omega}\to\widehat{\C},
$$ 
where $\iota$ is the reflection in the unit circle. The map $\sigma$ is anti-meromorphic on $\Omega$, and fixes $\partial\Omega$ pointwise. In fact, $\sigma$ has a $d$-fold pole at $\infty$, and no other critical point in $\Omega$. Moreover, $\sigma:\sigma^{-1}(\Omega)\to\Omega$ is a proper branched covering map of degree $d$ (branched only at $\infty$), and $\sigma:\sigma^{-1}(\Int{\Omega^c})\to\Int{\Omega^c}$ is a degree $(d+1)$ covering map. Since $f$ has $(d+1)$ critical points on $\mathbb{S}^1$, it follows that $\partial\Omega$ has $(d+1)$ singular points. We will denote the set of these $(d+1)$ singular points of $\partial\Omega$ by $S$.

We define $T\equiv T(\sigma):=\hat{\mathbb{C}}\setminus \Omega$. We further set $T^0\equiv T^0(\sigma)=T\setminus S$, and 
$$
T^\infty\equiv T^\infty(\sigma):=\bigcup_{n\geq0} \sigma^{-n}(T^0).
$$ 
We will call $T^\infty$ the \emph{tiling set} of $\sigma$. For any $n\geq0$, the connected components of $\sigma^{-n}(T^0)$ are called \emph{tiles} of rank $n$. Two distinct tiles have disjoint interior. The \emph{non-escaping set} of $\sigma$ is defined as 
$$
K\equiv K(\sigma):=\widehat{\C}\setminus T^\infty\subset\Omega\cup S.
$$

A direct generalization of \cite[Proposition~5.6]{LLMM1} shows the following.

\begin{proposition}\label{simp_conn_prop} 
$T^\infty$ is a simply connected domain, and hence $\overline{T^\infty}$ is a compact, connected set.
\end{proposition}

Furthermore, $\infty$ is a super-attracting fixed point of $\sigma$; more precisely, $\infty$ is a fixed critical point of $\sigma$ of multiplicity $(d-1)$. We denote the basin of attraction of $\infty$ by $\mathcal{B}_\infty\equiv \mathcal{B}_\infty(\sigma)$. The following result asserts that the tiling set and the basin of infinity of $\sigma$ are Jordan domains with a common boundary (see Figure~\ref{deltoid_reflection_julia_fig}).
 
\begin{proposition}\label{basin_inf_prop}
1) $K=\overline{\mathcal{B}_\infty}$.

2) $\mathcal{B}_\infty$ is a Jordan domain, and $\sigma:\overline{\mathcal{B}_\infty}\to\overline{\mathcal{B}_\infty}$ is topologically conjugate (conformally on the interior) to $\overline{z}^d:\overline{\D}\to\overline{\D}$.
\end{proposition} 
\begin{proof}
1) For a proof of this fact where $d=2$, see \cite[Theorem~5.11]{LLMM1}. The general case follows from \cite[Corollary~4.11]{LMM20}.

2) In the $d=2$ case, the result follows from \cite[Theorem~5.11, Proposition~5.26]{LLMM1}. While the arguments of \cite{LLMM1} directly generalize to the higher degree case, here is an alternative route to proving the assertion.

Since $\sigma$ has no critical point other than $\infty$ in $\mathcal{B}_\infty$, the proof of \cite[Theorem 9.3]{M1new} implies that there exists a conformal map conjugating $\overline{z}^d\vert_{\D}$ to $\sigma\vert_{\mathcal{B}_\infty}$. Since $\partial\mathcal{B}_\infty$ is locally connected \cite[Proposition~4.2]{LMM20}, this conformal isomorphism extends to a continuous semi-conjugacy between $\overline{z}^d\vert_{\mathbb{S}^1}$ and $\sigma\vert_{\partial\mathcal{B}_\infty}$. To complete the proof, it now suffices to argue that $\partial\mathcal{B}_\infty$ is a Jordan curve. To this end, we note that by \cite[Proposition~4.19]{LMM20}, every cut-point of $\partial\mathcal{B}_\infty$ must land on a double point of $f(\mathbb{S}^1)$ under some iterate of $\sigma$. But since $f(\mathbb{S}^1)$ is a Jordan curve, it does not have any double point. This rules out the existence of cut-points on $\partial\mathcal{B}_\infty$, and shows that $\partial\mathcal{B}_\infty$ is a Jordan curve.
\end{proof}

We define the $d:d$ anti-holomorphic correspondence $\mathfrak{C}\subset\widehat{\C}\times\widehat{\C}$ as

\begin{equation}
(z,w)\in\mathfrak{C}\iff \frac{f(w)-f(\iota(z))}{w-\iota(z)}=0.
\label{corr_eqn_4}
\end{equation}

The proof of Proposition~\ref{inverse_lift_corr} applies mutatis mutandis to the current setting, and yields the following description of the correspondence $\mathfrak{C}$ as lifts of forward and backward branches of $\sigma$ under $f$.

\begin{proposition}\label{inverse_lift_corr_1}
The correspondence $\mathfrak{C}$ defined by Equation~(\ref{corr_eqn_4}) contains all possible lifts of $\sigma$ (respectively, suitable inverse branches of $\sigma^{-1}$) when $z\in\overline{\D^*}$ (respectively, when $z\in\D$) under $f$. More precisely,
\begin{itemize}
\item for $z\in\overline{\D^*}$, we have that $(z,w)\in\mathfrak{C} \iff f(w)=\sigma(f(z))$, and

\item for $z\in\D$, we have that $(z,w)\in\mathfrak{C}\ \implies\ \sigma(f(w))=f(z)$.
\end{itemize}
\end{proposition}

Finally, we set 
$$
\widetilde{T^\infty}:=f^{-1}(T^\infty),\qquad \widetilde{K}:=f^{-1}(K),
$$ 
and call them the \emph{lifted tiling set} and \emph{lifted non-escaping set}, respectively. 

\begin{figure}[ht!]
{\includegraphics[width=0.45\linewidth]{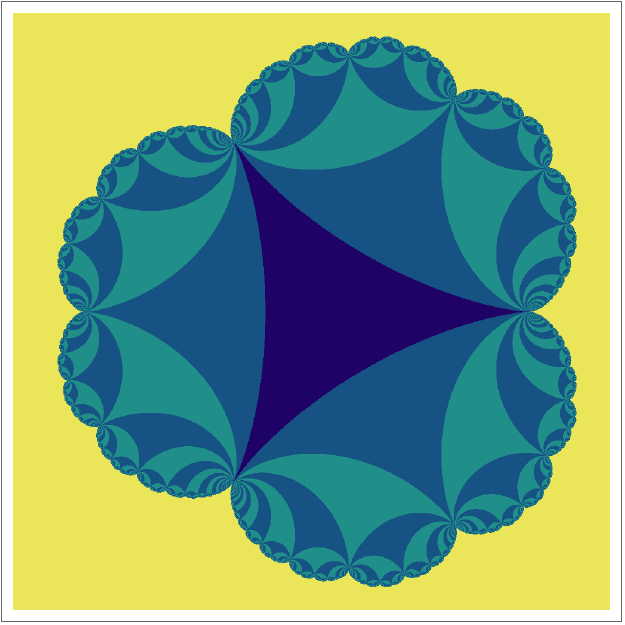}}\ {\includegraphics[width=0.45\linewidth]{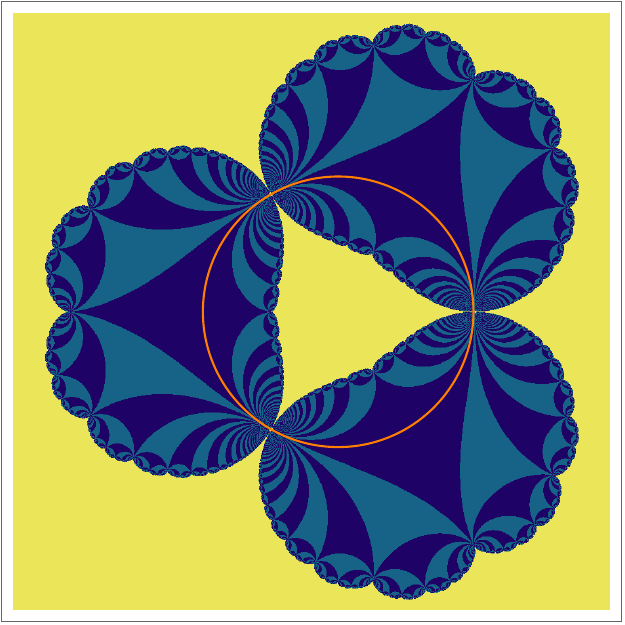}}
\caption{Left: The dynamical plane of the Schwarz reflection map associated with $f(z)=z+\frac{1}{2z^2}$ is shown. The `cauliflower' curve is the common boundary of the tiling set and the basin of infinity. Both these domains are Jordan. Right: The dynamical plane of the correspondence $\mathfrak{C}$ is shown. The blue/green region is the lifted tiling set, and the yellow region is the lifted non-escaping set.}
\label{deltoid_reflection_julia_fig}
\end{figure}

\begin{figure}[ht!]
{\includegraphics[width=0.45\linewidth]{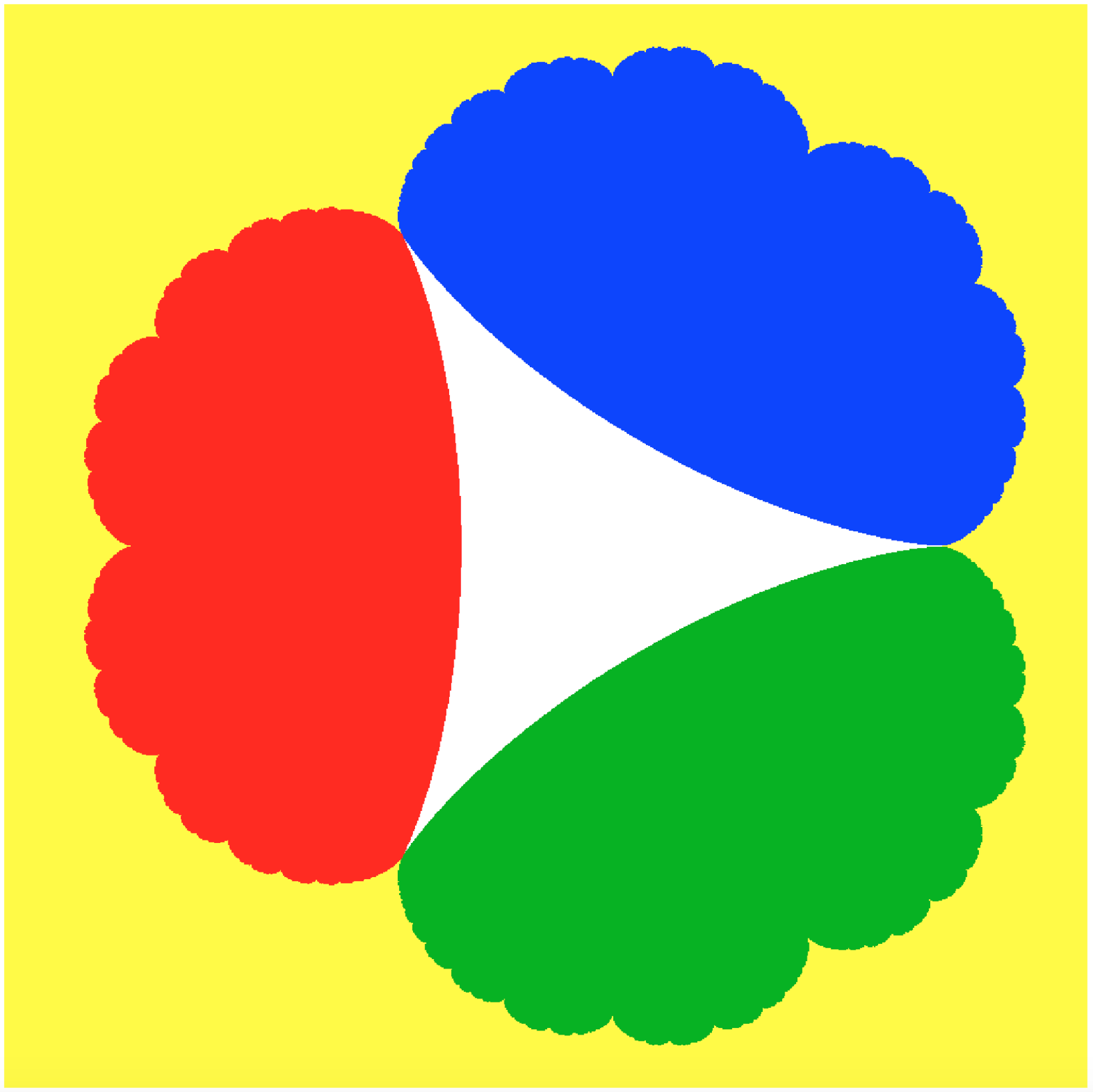}}\ {\includegraphics[width=0.45\linewidth]{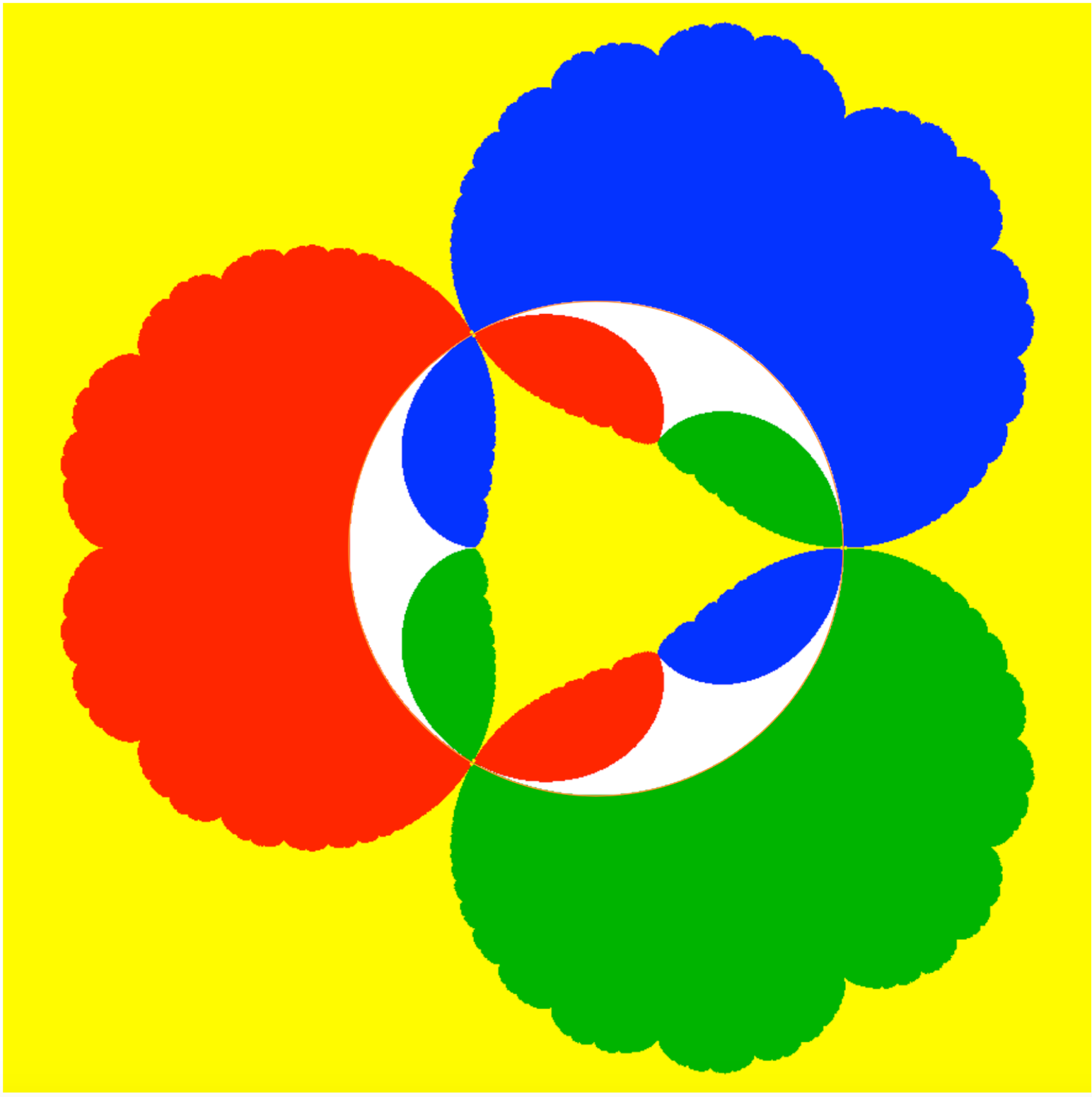}}
\caption{Left: The dynamical plane of the Schwarz reflection map arising from $f(z)=z+\frac{1}{2z^2}$ is shown. The basin of infinity $\mathcal{B}_\infty$ is the yellow region, and the tiling set (the interior of the `cauliflower' shaped region) is the union of the white, blue, red, and green regions. The white region stands for $T^0$. Right: The figure shows the dynamical plane of the correspondence arising from $f$. The two connected yellow regions comprise the pre-image of $\mathcal{B}_\infty$ under $f$. The interiors of the three `cauliflower' shaped regions stand for $\widetilde{T^\infty_j}$, for $j=1,2,3$. Each $\widetilde{T^\infty_j}$ is mapped univalently onto $T^\infty$ by $f$. The white region in each $\widetilde{T^\infty_j}$ is mapped to $T^0$, and the colored regions in each $\widetilde{T^\infty_j}$ are mapped to the corresponding colored regions in $T^\infty$ under $f$. The deck transformation $\tau$ carries the colored regions of $\widetilde{T^\infty_j}$ to the corresponding colored regions of $\widetilde{T^\infty_{j+1}}$.}
\label{deltoid_corr_fig}
\end{figure}

The next proposition follows directly from the definitions.

\begin{proposition}\label{correspondence_partition_1}
1) Each of the sets $\widetilde{T^\infty}$ and $\widetilde{K}$ is completely invariant under the correspondence $\mathfrak{C}$. More precisely, if $(z,w)\in\mathfrak{C}$, then 
$$
z\in\widetilde{T^\infty}\iff w\in\widetilde{T^\infty},
$$ 
and 
$$z\in\widetilde{K}\iff w\in\widetilde{K}.
$$

2) $\iota(\widetilde{T^\infty})=\widetilde{T^\infty}$, and $\iota(\widetilde{K})=\widetilde{K}$.
\end{proposition}

Note that $\mathcal{B}_\infty$ contains only one critical value (namely, $\infty$) of $f$, and this critical value has exactly two pre-images, namely $0$ and $\infty$. Moreover, $f$ is locally injective near $\infty$, and locally $d:1$ near $0$. Recall also that $f$ is univalent on $\overline{\D^*}$. As $f^{-1}(\mathcal{B}_\infty)$ is disjoint from $\mathbb{S}^1$, it now follows that $f^{-1}(\mathcal{B}_\infty)$ has exactly two connected components; one of them is contained in $\D^*$ and maps conformally onto $\mathcal{B}_\infty$ under $f$, and the other is contained in $\D$ and maps as a $d:1$ branched cover onto $\mathcal{B}_\infty$ under $f$ (see Figure~\ref{deltoid_corr_fig}). It also follows from the mapping properties of $f$ that $\widetilde{K}\cap\mathbb{S}^1$ consists precisely of the $(d+1)$-st roots of unity (in fact, these are the points where the two connected components of $f^{-1}(\mathcal{B}_\infty)$ touch), and $\widetilde{K}\setminus\D$ (respectively, $\widetilde{K}\cap\overline{\D}$) is mapped univalently (respectively, as a $d:1$ ramified covering) onto $K$ under $f$.

\begin{proposition}\label{dynamics_filled_julia_corr_prop}
1) $\widetilde{K}\cap\overline{\D}$ is forward invariant, and hence, $\widetilde{K}\setminus\D$ is backward invariant under $\mathfrak{C}$.

2) $\mathfrak{C}$ has a forward branch carrying $\widetilde{K}\setminus\D$ onto itself with degree $d$, and this branch is topologically conjugate to $\sigma:K\to K$. 

3) $\mathfrak{C}$ has a backward branch carrying $\widetilde{K}\cap\overline{\D}$ onto itself with degree $d$, and this branch is topologically conjugate to $\sigma:K\to K$. 
\end{proposition}
\begin{proof}
1) By Proposition~\ref{correspondence_partition_1}, we have that 
$$
\iota(\widetilde{K}\setminus\D)=\widetilde{K}\cap\overline{\D}.
$$ 
Moreover, the fact that the degree $(d+1)$ rational map $f$ sends $\overline{\D^*}$ homeomorphically onto $\overline{\Omega}$ implies that each $z\in K$ has exactly one pre-image in $\widetilde{K}\setminus\D$ and exactly $d$ pre-images (counted with multiplicity) in $\widetilde{K}\cap\overline{\D}$. The statement now follows from the above observations and the definition of $\mathfrak{C}$.

2) Let us set $V:=f^{-1}(\Omega)\cap\D$, and define $g:\overline{V}\to\overline{\D^*}$ as the composition of $f:\overline{V}\to\overline{\Omega}$ and $\left(f\vert_{\overline{\D^*}}\right)^{-1}:\overline{\Omega}\to\overline{\D^*}$. By definition, $g$ is a $d:1$ branched covering satisfying $f\circ g=f$ on $\overline{V}$. It follows that 
$$
g\circ\iota:\widetilde{K}\setminus\D\to\widetilde{K}\setminus\D
$$ 
is a $d:1$ forward branch of the correspondence. 

Clearly, the forward branch $(g\circ\iota)\vert_{\widetilde{K}\setminus\D}$ is topologically conjugate to $\sigma\vert_{K}$ via the univalent map $f:\widetilde{K}\setminus\D\to K$.

3) Note that the map 
$$
\iota\circ g=\iota\circ\left(f\vert_{\overline{\D^*}}\right)^{-1}\circ f: \widetilde{K}\cap\overline{\D}\to\widetilde{K}\cap\overline{\D}
$$ 
is a backward branch of the correspondence $\mathfrak{C}$ carrying $\widetilde{K}\cap\overline{\D}$ onto itself with degree $d$. 

Finally, $\iota$ is a topological conjugacy between the backward branch $(\iota\circ g)\vert_{\widetilde{K}\cap\overline{\D}}$ and the forward branch $(g\circ\iota)\vert_{\widetilde{K}\setminus\D}$, and hence $f\vert_{\widetilde{K}\setminus\D}\circ\iota:\widetilde{K}\cap\overline{\D}\to K$ is a topological conjugacy between the backward branch $(\iota\circ g)\vert_{\widetilde{K}\cap\overline{\D}}$ and $\sigma\vert_{K}$.
\end{proof}

We will now describe the group structure of the correspondence on the lifted tiling set. Since $T^\infty$ is a simply connected domain containing no critical value of $f$, it follows that $\widetilde{T^\infty}$ consists of exactly $(d+1)$ simply connected domains each of which maps conformally onto $T^\infty$ under $f$. Let us call them $\widetilde{T^\infty_1},\cdots,\widetilde{T^\infty_{d+1}}$, so 
$$
\widetilde{T^\infty}=\bigsqcup_{j=1}^{d+1}\widetilde{T^\infty_j}.
$$ 
To analyze the structure of grand orbits of the correspondence $\mathfrak{C}$ on $\widetilde{T^\infty}$, we need to discuss the deck transformations of $f:\widetilde{T^\infty}\to T^\infty$. As $\widetilde{T^\infty}$ consists of exactly $(d+1)$ simply connected domains each of which maps conformally onto $T^\infty$ under $f$, we can define a map 
$$
\tau:\widetilde{T^\infty}\to\widetilde{T^\infty}
$$ 
satisfying the conditions
\begin{enumerate}
\item $\tau(\widetilde{T^\infty_j})=\widetilde{T^\infty_{j+1}},\ j\in\Z/(d+1)\Z,$ and

\item $f\circ\tau=f$, for $z\in \widetilde{T^\infty}$,
\end{enumerate}
where the components $\widetilde{T^\infty_j}$ are labeled counter-clockwise. Clearly, $\tau$ is a conformal isomorphism of $\widetilde{T^\infty}$ satisfying 
$$
\tau^{\circ (d+1)}=\mathrm{id},\ \textrm{and}\ f^{-1}(f(z))=\{z,\tau(z),\cdots,\tau^{\circ d}(z)\}\ \forall\ z\in \widetilde{T^\infty}.$$ 
Since $\iota$ is an antiholomorphic involution preserving $\widetilde{T^\infty}$, it follows that each of $\tau\circ\iota,\cdots,\tau^{\circ d}\circ\iota$ is an anti-conformal automorphism of $\widetilde{T^\infty}$.

Here is an explicit description of $\tau$. We set $\omega:=e^{\frac{2\pi i}{d+1}}$, and denote the inverse branch of $f$ that sends $T^\infty$ onto $\widetilde{T^\infty_j}$ by $f_j$. Observe that the map $z\mapsto f_j(\omega^df(z))$ is a conformal rotation (of order $d+1$) of the simply connected domain $\widetilde{T^\infty_j}$ around the point $f_j(0)$.

\begin{proposition}[Description of Deck Transformations]\label{deck_description_prop}
1) $\omega\widetilde{T^\infty}=\widetilde{T^\infty}$; more precisely, $\omega \widetilde{T^\infty_j}=\widetilde{T^\infty_{j+1}}$, for $j\in\Z/(d+1)\Z$.

2) $\tau(z)=\omega f_j(\omega^df(z))$, for $z\in\widetilde{T^\infty_{j}},\ j\in\Z/(d+1)\Z$.
\end{proposition}
\begin{proof}
1) Note that both $f$ and $\iota$ commute with $z\mapsto\omega z$. Hence, $\sigma$ commutes with $z\mapsto\omega z$. It follows that $\omega T^\infty= T^\infty$. The fact that $f$ commutes with $z\mapsto\omega z$ now implies that $\omega\widetilde{T^\infty}=\widetilde{T^\infty}$. Clearly, $z\mapsto\omega z$ must permute the connected components of $\widetilde{T^\infty}$. Our labeling of the components $\widetilde{T^\infty_j}$ now guarantees that $\omega \widetilde{T^\infty_j}=\widetilde{T^\infty_{j+1}}$, for $j\in\Z/(d+1)\Z$.

2) As mentioned earlier, the map $z\mapsto f_j(\omega^df(z))$ is a conformal rotation of the simply connected domain $\widetilde{T^\infty_j}$ around the point $f_j(0)$ (for $j\in\Z/(d+1)\Z$). Thus, the map 
\begin{equation}
z\mapsto \omega f_j(\omega^df(z)),\ z\in\widetilde{T^\infty_j},
\label{deck_definition}
\end{equation}
is a conformal automorphism of $\widetilde{T^\infty}$ that carries $\widetilde{T^\infty_{j}}$ onto $\widetilde{T^\infty_{j+1}}$. Finally, the identity 
$$
f(\omega f_j(\omega^df(z)))=\omega f(f_j(\omega^df(z)))=f(z),\ z\in\widetilde{T^\infty_{j}},
$$ 
shows that the map defined by Equation~\ref{deck_definition} is a deck transformation for $f:\widetilde{T^\infty}\to T^\infty$. It follows that the map defined by Equation~\ref{deck_definition} coincides with $\tau$.
\end{proof}

We are now ready to conclude that the correspondence $\mathfrak{C}$ is a mating of the abstract Hecke group $\Z/2\Z\ast\Z/(d+1)\Z$ and the anti-polynomial $\overline{z}^d$.

\begin{theorem}[$\mathfrak{C}$ as a Mating]\label{group_poly_mating_thm_1}
The anti-holomorphic correspondence $\mathfrak{C}$ is a mating of $\Z/2\Z\ast\Z/(d+1)\Z$ and $\overline{z}^d$ in the following sense.
\begin{itemize}
\item On $\widetilde{T^\infty}$, the dynamics of the correspondence $\mathfrak{C}$ is equivalent to a $\Z/2\Z\ast\Z/(d+1)\Z$-action. More precisely, for each $z\in\widetilde{T^\infty}$, the grand orbit of $z$ under $\mathfrak{C}$ is equal to the $\langle\iota,\tau\rangle\cong\Z/2\Z\ast\Z/(d+1)\Z$-orbit of $z$.

\item The $d:1$ forward (respectively, backward) branch of $\mathfrak{C}$ carrying $\widetilde{K}\setminus\D$ (respectively, $\widetilde{K}\cap\overline{\D}$) onto itself is topologically conjugate to $\overline{z}^d\vert_{\overline{\D}}$ such that the conjugacy is conformal on the interior.
\end{itemize} 
\end{theorem}
\begin{proof}
By construction of $\tau$ and the definition of $\mathfrak{C}$, we have that if $z\in\widetilde{T^\infty}$, then $(z,w)\in\mathfrak{C}$ if and only if $w\in\{\tau\circ\iota(z), \cdots,\tau^{\circ d}\circ\iota(z)\}$. 
Also note that $\tau=(\tau^{\circ 2}\circ\iota)\circ(\tau\circ\iota)^{-1}$, and hence $\langle\tau\circ\iota,\tau^{\circ 2}\circ\iota,\cdots,\tau^{\circ d}\circ\iota\rangle=\langle\iota,\tau\rangle$ (considered as subgroups of the group of all conformal and anti-conformal automorphisms of $\widetilde{T^\infty}$). Finally, the fact that $\langle\iota,\tau\rangle$ is the free product of $\langle\iota\rangle$ and $\langle\tau\rangle$ easily follows by the arguments used in the proof of Proposition~\ref{grand_orbit_group}.

In light of Proposition~\ref{dynamics_filled_julia_corr_prop}, to complete the proof, it suffices to show that $\sigma\vert_{K}$ is topologically conjugate to $\overline{z}^d\vert_{\overline{\D}}$ such that the conjugacy is conformal on the interior. But this is precisely the content of Proposition~\ref{basin_inf_prop}.
\end{proof}


\begin{thebibliography}{LLMM18b}

\bibitem[Ahl06]{A3}
L.~V. Ahlfors.
\newblock {\em Lectures on Quasiconformal Mappings (Second Edition)}, volume~38
  of {\em University Lecture Series}.
\newblock American Mathematical Society, Providence, R.I., 2006.

\bibitem[AS73]{AS1}
D.~Aharonov and H.~S. Shapiro.
\newblock A minimal-area problem in conformal mapping - preliminary report.
\newblock Research bulletin trita-mat-1973-7, Royal Institute of Technology,
  1973.

\bibitem[AS76]{AS}
D.~Aharonov and H.~S. Shapiro.
\newblock Domains on which analytic functions satisfy quadrature identities.
\newblock {\em J. Analyse Math.}, 30:39--73, 1976.

\bibitem[AS78]{AS2}
D.~Aharonov and H.~S. Shapiro.
\newblock A minimal-area problem in conformal mapping - preliminary report:
  Part {II}.
\newblock Research bulletin {TRITA-MAT}, Royal Institute of Technology, 1978.

\bibitem[BF14]{BrFa}
B.~Branner and N.~Fagella.
\newblock {\em Quasiconformal surgery in holomorphic dynamics}, volume 141 of
  {\em Cambridge studies in advanced mathematics}.
\newblock Cambridge University Press, Cambridge, UK, 2014.

\bibitem[BL20]{BuLo1}
S.~Bullett and L.~Lomonaco.
\newblock Mating quadratic maps with the modular group {II}.
\newblock {\em Inventiones Mathematicae}, 220:185--210, 2020.

\bibitem[BP94]{BP}
S.~Bullett and C.~Penrose.
\newblock Mating quadratic maps with the modular group.
\newblock {\em Inventiones Mathematicae}, 115:483--511, 1994.

\bibitem[DH85]{DH2}
A.~Douady and J.~H. Hubbard.
\newblock On the dynamics of polynomial-like mappings.
\newblock {\em Ann. Sci. Ec. Norm. Sup.}, 18:287--343, 1985.

\bibitem[DH07]{orsay}
A.~Douady and J.~H. Hubbard.
\newblock {\em {\'E}tude dynamique des polyn\^omes complexes}.
\newblock Soci{\'e}t{\'e} {M}ath{\'e}matique de {F}rance, 2007.

\bibitem[HS14]{HS}
John Hubbard and Dierk Schleicher.
\newblock {M}ulticorns are not path connected.
\newblock {\em Frontiers in Complex Dynamics: In Celebration of John Milnor's
  80th Birthday}, pages 73--102, 2014.

\bibitem[IM21]{IM2}
H.~Inou and S.~Mukherjee.
\newblock Discontinuity of straightening in antiholomorphic dynamics: I.
\newblock {\em Transactions of the American Mathematical Society},
  \url{https://doi.org/10.1090/tran/8381}, 2021.

\bibitem[KN04]{KN}
Y.~Komori and S.~Nakane.
\newblock Landing property of stretching rays for real cubic polynomials.
\newblock {\em Conformal Geometry and Dynamics}, 8:87--114, 2004.

\bibitem[Lei00]{Lei1}
Tan Lei.
\newblock Local properties of the {M}andelbrot set at parabolic points.
\newblock In Tan Lei, editor, {\em The Mandelbrot Set, Theme and Variations},
  number 274 in Lecture Note Series, pages 130--160. London Mathematical
  Society, Cambridge, 2000.

\bibitem[LLMM18a]{LLMM1}
S.-Y. Lee, M.~Lyubich, N.~G. Makarov, and S.~Mukherjee.
\newblock Dynamics of {S}chwarz reflections: the mating phenomena.
\newblock \url{https://arxiv.org/abs/1811.04979}, 2018.

\bibitem[LLMM18b]{LLMM2}
S.-Y. Lee, M.~Lyubich, N.~G. Makarov, and S.~Mukherjee.
\newblock {S}chwarz reflections and the {T}ricorn.
\newblock \url{https://arxiv.org/abs/1812.01573}, 2018.

\bibitem[LM16]{LM}
S.~Y. Lee and N.~G. Makarov.
\newblock Topology of quadrature domains.
\newblock {\em Journal of The American Mathematical Society}, 29:333--369,
  2016.

\bibitem[LMM20]{LMM20}
K.~Lazebnik, N.~G. Makarov, and S.~Mukherjee.
\newblock Bers slices in families of univalent maps.
\newblock \url{https://arxiv.org/abs/2007.02429}, 2020.

\bibitem[Lom15]{Lu}
Luna Lomonaco.
\newblock Parabolic-like mappings.
\newblock {\em Ergodic Theory and Dynamical Systems}, 35:2171--2197, 2015.

\bibitem[Lyu20]{L6}
M.~Lyubich.
\newblock Conformal geometry and dynamics of quadratic polynomials, vol i-ii.
\newblock \url{http://www.math.stonybrook.edu/~mlyubich/book.pdf}, 2020.

\bibitem[Mil06]{M1new}
J.~Milnor.
\newblock {\em Dynamics in one complex variable}.
\newblock Princeton University Press, New Jersey, 3rd edition, 2006.

\bibitem[MNS17]{MNS}
S.~Mukherjee, S.~Nakane, and D.~Schleicher.
\newblock On {M}ulticorns and {U}nicorns {II}: bifurcations in spaces of
  antiholomorphic polynomials.
\newblock {\em Ergodic Theory and Dynamical systems}, 37:859--899, 2017.

\bibitem[NS03]{NS}
S.~Nakane and D.~Schleicher.
\newblock On {M}ulticorns and {U}nicorns {I} : Antiholomorphic dynamics,
  hyperbolic components and real cubic polynomials.
\newblock {\em International Journal of Bifurcation and Chaos}, 13:2825--2844,
  2003.

\bibitem[PR10]{PR1}
C.~L. Petersen and P.~Roesch.
\newblock Parabolic tools.
\newblock {\em Journal of Difference Equations and Applications}, 16:715--738,
  2010.

\bibitem[Sch00]{S1a}
D.~Schleicher.
\newblock Rational parameter rays of the {M}andelbrot set.
\newblock {\em Ast\'erisque}, 261:405--443, 2000.

\bibitem[War42]{War}
S.~E. Warschawski.
\newblock On conformal mapping of infinite strips.
\newblock {\em Transactions of the American Mathematical Society},
  51(2):280--335, 1942.

\end{thebibliography}

\end{document}